\newtheorem {theo}{Theorem}[section]
\newtheorem {theo*}{Theorem}[]
\newtheorem {lemme}[theo]{Lemma}
\newtheorem {prop}[theo]{Proposition}
\newtheorem {cor}[theo]{Corollary}
\theoremstyle{definition}
\newtheorem {defi}[theo]{Definition}
\newtheorem {nota}[theo]{Notation}
\theoremstyle{remark}
\newtheorem {remarque}[theo]{Remark}
\def\e{\varepsilon}
\def\h{\mathfrak{h}}
\def\C{{\mathcal C}}
\def\D{{\mathcal D}}
\def\F{{\mathcal F}}
\def\FF{{\mathbb F}}
\def\HH{\mathcal{H}}
\def\Hom{{\textnormal{Hom}}}
\def\I{{\mathcal I}}
\def\K{{\mathcal K}}
\def\M{{\mathcal M}}
\def\N{{\mathbb N}}
\def\O{{\mathbb O}}
\def\P{\mathfrak{P}}
\def\R{{\mathbb R}}
\def\SS{{\mathfrak S}}
\def\Sei{{\textnormal{Sei}}}
\def\T{{\mathcal T}}
\def\X{{\mathbb X}}
\def\Z{{\mathbb Z}}
\def\2Z{{\fract{\Z}/{2\Z}}}
\def\gr{{gr}}
\def\ib{{\overline{i}}}
\def\kb{{\overline{k}}}
\def\p{\partial}
\def\Cone{{\textnormal{Cone}}}
\def\Im{{\textnormal{Im}}}
\def\Ker{{\textnormal{Ker}}}
\def\Id{{\textnormal{Id}}}
\def\Int{{\textnormal{Int}}}
\def\Hex{{\textnormal{Hex}^\circ}}
\def\Pent{{\textnormal{Pent}^\circ}}
\def\Pol{{\textnormal{Pol}}}
\def\PPol{{\P\textnormal{ol}}}
\def\Rect{{\textnormal{Rect}^\circ}}
\def\Sn{{\widetilde{\SS}_n}}
\def\Spike{{\textnormal{Spike}}}
\def\deg{{\textnormal{deg}}}
\def\ie{{\it i.e. }}
\def\qdim{{\textnormal{{\it q}-dim}}}
\def\rk{{\textnormal{rk}}}
\def\siecle#1{\textsc{\romannumeral #1}\textsuperscript{e}~siècle}
\newcommand{\noi}{\noindent}
\newcommand{\disp}{\displaystyle}
\def\fract#1/#2{\hbox{\leavevmode
  \kern.1em \raise .25ex \hbox{\the\scriptfont0 $#1$}\kern-.1em }\big/
  {\hbox{\kern-.15em \lower .5ex \hbox{\the\scriptfont0 $#2$}} }}
\newcommand{\dessin}[2]{
  \vcenter{\hbox{\includegraphics[height=#1]{#2}}}}
\newcommand{\dessinH}[2]{
  \vcenter{\hbox{\includegraphics[width=#1]{#2}}}}
\newcommand{\func}[3]{
  #1\colon #2\longrightarrow#3}
\newcommand{\plong}[3]{
  #1\colon #2\hookrightarrow#3}
\newcommand{\Gsig}[1]{
   {\sigma_{#1}}}
\title{\LARGE Généralisation de l'homologie d'Heegaard-Floer\\aux entrelacs singuliers\\\&\\ Raffinement de l'homologie de Khovanov\\aux entrelacs restreints}
\author{\normalsize Benjamin \textsc{Audoux}}
\date{\normalsize\today}
\begin{document}

\thispagestyle{empty}
\setcounter{page}{-1}

\begin{center}
{\Large \sffamily \bfseries DOCTORAT DE L'UNIVERSIT\'E DE TOULOUSE\\
délivré par l'Université Toulouse III -- Paul Sabatier\\
\vspace{5pt} en MATH\'EMATIQUES PURES\\}
\vspace{10pt}
{\large \sffamily \bfseries par\\}
\vspace{20pt}
{\huge \sffamily \bfseries Benjamin Audoux \\}
\vspace{20pt}
{\large \sffamily \bfseries intitulé\\}
\vspace{20pt}
{\Large \sffamily \bfseries  Généralisation de l'homologie d'Heegaard-Floer\\aux entrelacs singuliers\\\&\\ Raffinement de l'homologie de Khovanov\\aux entrelacs restreints\\}

\vspace{75pt}

\noindent {\large \sffamily \bfseries soutenu le 5 décembre 2007}
{\large \sffamily \bfseries devant le jury composé de\\}

\vspace{10pt}
{\sffamily \large
\begin{tabular}{llr}

 {\bfseries C. Blanchet} & Université Paris VII  & Rapporteur\\
 {\bfseries V. Colin} & Université de Nantes  & Examinateur\\
 {\bfseries T. Fiedler} & Université Toulouse III  & Directeur\\
 {\bfseries S. Orevkov} & Université Toulouse III  & Examinateur\\
 {\bfseries P. Ozsv\'ath} & Columbia University  & Rapporteur\\
 {\bfseries V. Vershinin} & Université Montpellier II  & Président\\
\end{tabular}
}
\vfill
\vspace{10pt}
{\sffamily \textnormal
Institut de Mathématiques de Toulouse, UMR 5580, UFR MIG\\
Laboratoire \'Emile Picard,\\
Université Paul Sabatier 31062 TOULOUSE Cédex 9
}
\end{center}

\newpage
\thispagestyle{empty}
$\ $





\newpage

{\Large Résumé~:}\\

La catégorification d'un invariant polynomial d'entrelacs $I$ est un invariant de type homologique dont la caractéristique d'Euler graduée est égale à $I$.
On pourra citer la catégorification originelle du polynôme de Jones par M. Khovanov ou celle du polynôme d'Alexander par P. Ozsv\'ath et Z. Szab\'o.
Outre leur capacité accrue à distinguer les n\oe uds, ces nouveaux invariants de type homologique semblent drainer beaucoup d'informations d'ordre géométrique.\\
D'autre part, suite aux travaux de I. Vassiliev dans les années 90, un invariant polynomial d'entrelacs peut être étudié à l'aune de certaines propriétés, dites de type fini, de son extension naturelle aux entrelacs singuliers, c'est-à-dire aux entrelacs possédant un nombre fini de points doubles transverses.\\
La {\bf première partie} de cette thèse s'intéresse aux liens éventuels entre ces deux procédés, dans le cas particulier du polynôme d'Alexander. 
Dans cette optique, nous donnons d'abord une description des entrelacs singuliers par diagrammes en grilles.
Nous l'utilisons ensuite pour généraliser l'homologie de Ozsv\'ath et Szab\'o aux entrelacs singuliers.
Outre la cohérence de sa définition, nous montrons que cet invariant devient acyclique sous certaines conditions annulant naturellement sa caractéristique d'Euler. Ce travail s'insère dans un programme plus vaste de catégorification des théories de Vassiliev.\\

Dans une {\bf seconde partie}, nous nous proposons de raffiner l'homologie de Khovanov aux entrelacs restreints.
Ces derniers correspondent aux diagrammes d'entrelacs quotientés par un nombre restreint de mouvements de Reidemeister.
Les tresses fermées apparaissent notamment comme sous-ensemble de ces entrelacs restreints.
Un tel raffinement de l'homologie de Khovanov offre donc un nouvel outil pour une étude plus ciblée des n\oe uds et de leurs déformations.

\vspace{1cm}

{\Large Abstract~:}\\

A categorification of a polynomial link invariant is an homological invariant which contains the polynomial one as its graded Euler characteristic.
This field has been initiated by Khovanov categorification of the Jones polynomial. Later, P. Ozsv\'ath and Z. Szab\'o gave a categorification of Alexander polynomial.
Besides their increased abilities for distinguishing knots, this new invariants seem to carry many geometrical informations.\\
On the other hand, Vassiliev works gives another way to study link invariant, by generalizing them to singular links \ie links with a finite number of rigid transverse double points.\\
The {\bf first part} of this thesis deals with a possible relation between these two approaches in the case of the Alexander polynomial.
To this purpose, we extend grid presentation for links to singular links.
Then we use it to generalize Ozsv\'ath and Szab\'o invariant to singular links.
Besides the consistency of its definition, we prove that this invariant is acyclic under some conditions which naturally make its Euler characteristic vanish.
This work can be considered as a first step toward a categorification of Vassiliev theory.\\

In a {\bf second part}, we give a refinement of Khovanov homology to restricted links.
Restricted links are link diagrams up to a restricted set of Reidemeister moves.
In particular, closed braids can be seen as a subset of them.
Such a refinement give then a new tool for studying knots and their deformations.


\tableofcontents

\chapter*{Remerciements}
\label{chap:Thanks}
\addcontentsline{toc}{part}{Remerciements}

Une page peut-elle mesurer le champ de mes reconnaissances ?
La tentation est grande de les masquer naïvement par des drapés de vaines logorrhées dont la louable dithyrambie ne ferait qu'escamoter, maladroitement, une sincérité pourtant tangible, bien que pudique.\\
Je ne le ferais pas.\\

Puisque la recherche est un héritage, je tiens néanmoins à remercier Thomas Fiedler, pour son rôle de directeur de thèse~;
Michel Boileau, pour la direction de mon mémoire de DEA~;
Christian Blanchet et Peter Ozsv\'ath, pour avoir rapporté ma thèse en dépit de leurs calendriers~;
ainsi que Vincent Colin, Stepan Orevkov et Vladimir Vershinin, pour avoir accepté de former mon jury.\\

Mais c'est également des équipes, ouvertes.
En cela, je sais gré au GDR Tresses de m'avoir, deux fois par an, ouvert ses portes pluridisciplinaires~;
à l'IMM de Montpellier d'avoir accueilli un pauvre toulousain au sein de son groupe de travail AlgHom/GQ~;
à toutes les éditions de la Llagonne de m'avoir initié aux couleurs de la cuisine catalane, et de leurs mathématiques aussi~;
ainsi qu'à tous les laboratoires qui m'ont invité, de leur accueil, de leur écoute et de leurs conseils.\\

Le protocole n'est cependant pas tout.
Et que ce soit pour les discussions professionellement enrichissantes, parfois, ou pour les moments de détente, souvent, je serais bien ingrat si je ne citais pas tous mes compagnons de route~: Etienne Galais, Jérome Petit, Emmanuel Wagner, Anne-Laure Thiel, Anne, Cécile, Guitta, Iman, Eva, Julien, Landry, Alexandre, Kuntal, Guillaume, Mathieu, etc.
Sans oublier Vincent Florens qui, lui, l'a essarté, la route.\\

Moi, pour qui les voix de l'administration sont et resteront souvent impénétrables, je ne peux que rendre grâce à Agnès Requis, Yveline Pannabière, Jocelyne Picard et à leur compétence toujours souriante~;
et, étrangement, à Julien Roques, pour ses nombreux conseils et aides.\\

Si vous lisez ces lignes, c'est que j'ai pu les taper.
Que l'informatique n'en soit pas remercié mais plus certainement ses mercenaires, Maxime et Gachette, pour toutes les heures passées à installer/réparer/programmer/compiler~;
ainsi que Marc, pour son efficacité et sa discrétion en \LaTeX.\\

Nous parlions d'héritage.
Pourrais-je taire ma gratitude envers mes parents, qui, tout au long des trentes dernières années, m'ont toujours soutenu dans mes choix.
Etait-ce bien raisonable ?
Envers mes grands-parents, et notamment mon grand-père paternel, pour avoir partagé sa passion pour les sciences.
Envers mon frère, oui, car on se ressemble finalement plus qu'il n'y parait.
Et envers ma tante, pour être présente de chaque côté de l'Atlantique.\\

Ne nous mentons, toutefois, pas.
De trois années de thèse, le pot est, sans aucun doute, l'élément qui reste le plus longtemps dans les mémoires.
A ce titre, Alain \& Gwenola, pour les desserts~;
Nicolas \& Céline, pour les cakes~;
Brigitte, pour les tartes et le vin~;
et enfin Romain, pour sa journée du 1er décembre, chacun est passible d'une part sensible du mérite.\\

Malheureusement, mes craintes premières se concrétisent.
Comment vous remercier tous, vous dont le prénom commence par A, Z, E, R, T, Y, U, I, O, P, Q, S, D, F, G, H, J, K, L, M, W, X, C, V, B ou N, et qui saurez, en lisant ces mots, tolérer mon indolence, comprenant que je préfère que vous m'en vouliez tous, un peu, plutôt que certains, beaucoup.\\

Enfin, pour donner l'illusion d'une conclusion, je témoignerai sobrement de ma reconnaissance envers Séb \& Solenn, Bianca et Céline, notamment pour leur décence de ne pas m'obliger à dire pourquoi.\\

Mais l'illusion s'évanouit vite et l'horizon, déjà, s'éclaire de mes silences.
Et tous ces "entre autre", implicites à chaque phrase, resteront les garants diaphanes du poids de ma gratitude.





\chapter*{Introduction}
\label{chap:Intro}
\addcontentsline{toc}{part}{Introductions}
\addcontentsline{toc}{section}{Introduction (français)}

Dégageons-nous de leur origine marine et donnons aux n\oe uds une définition mathématique\footnote[1]{Pour un exposé plus approfondi, on pourra se référer, par exemple, à \cite{Burde}.}. Pour cela, on considère les plongements lisses $\plong{\nu}{\bigsqcup^\ell S^1}{\R^3}$ de $\ell\in\N^*$ cercles orientés disjoints dans l'espace ambiant. Deux plongements $\nu_1$ et $\nu_2$ sont dits \emph{isotopes ambiants}\index{isotopy!ambient} s'il existe une famille continue d'homéomorphismes $(\func{h_t}{\R^3}{\R^3})_{t\in[0,1]}$ tels que $h_0\equiv \Id$ et $\nu_2\equiv \nu_1\circ h_1$. Un \emph{entrelacs} (\emph{link})\footnote[2]{Les traductions anglaises des termes importants sont données entre parenthèses.}\index{link} est une classe de tels plongements à isotopie ambiante près. Lorsque $\ell$ vaut un, on parle alors de \emph{n\oe ud} (\emph{knot})\index{knot}. La classe d'un plongement planaire, \ie dont l'image est contenue dans un plan, est appelée \emph{n\oe ud} ou \emph{entrelacs trivial} (\emph{trivial knot} or \emph{link})\index{trivial!knot}\index{trivial!link}. On le note $U_\ell$.\\

Cette définition des entrelacs est directement inspirée de la perception sensitive que nous en avons. Il existe cependant de nombreuses descriptions alternatives. Il s'agit, le plus souvent, d'objets combinatoires que l'on considère à certaines opérations élémentaires près. Pour définir un invariant d'entrelacs, il suffit alors de le faire pour chacun de ces objets, puis de vérifier que l'invariance par chacune des opérations élémentaires.\\
Les \emph{diagrammes d'entrelacs} (\emph{diagrams})\index{diagram} demeurent sans doute l'approche de ce type la plus naturelle et la plus usitée. Il s'agit, pour un entrelacs donné, de la projection sur un plan $P\subset\R^3$ d'un de ses représentants en position générale par rapport à $P$. Par position générale, on entend un représentant tel que les singularités de sa projection sur $P$ se limitent à un nombre fini de points doubles transverses. Les entrelacs ne possédant pas de tels représentants sont dits sauvages et ne seront pas étudiés dans cette thèse. Chaque point double est appelé \emph{croisement} (\emph{crossing})\index{crossing}. En de tels points, les ombres de deux brins de l'entrelacs se rencontrent. Un choix d'orientation sur l'orthogonal de $P$ permet de déterminer leurs hauteurs relatives par rapport au plan $P$. Pour les différencier, on interrompt légèrement le tracé du brin passant en-dessous.
$$
\dessin{5cm}{Projection} \hspace{.5cm} \leadsto \hspace{.4cm} \dessin{1.4cm}{Diagramme}
$$
Bien entendu, un même entrelacs est représenté par plusieurs diagrammes. Notamment, il est clair qu'une déformation d'un diagramme n'affectant pas ses croisements préserve l'entrelacs sous-jacent. Il en est de même pour les trois mouvements suivants, dits \emph{mouvements de Reidemeister} (\emph{Reidemeister moves})\index{moves!Reidemeister}:
\begin{eqnarray}
\begin{array}{c}
  \dessinH{1cm}{MoveI2} \leftrightarrow \ \dessinH{1cm}{MoveI1}\\[.5cm]
\textrm{type I}
\end{array}
\hspace{1.2cm}
\begin{array}{c}
  \dessinH{1cm}{MoveII1} \leftrightarrow \dessinH{1cm}{MoveII2}\\[.5cm]
\textrm{type II}
\end{array}
\hspace{1.2cm}
\begin{array}{c}
  \dessinH{1.4cm}{MoveIII1} \leftrightarrow \dessinH{1.4cm}{MoveIII2}\\[.5cm]
\textrm{type III}
\end{array}.
\label{eq:ReidMoves}
\end{eqnarray}

Réciproquement, Kurt Reidemeister a démontré que deux diagrammes représentant le même entrelacs sont toujours reliés par une suite finie de mouvements de Reidemeister \cite{Reidemeister}.\\

La notion d'entrelacs revêt donc désormais un aspect plus combinatoire. En particulier, les croisements d'un diagramme offrent, pour l'étude des entrelacs, un nouvel outil dont l'apparente simplicité n'est qu'un masque de prélat vénitien. Selon l'orientation des brins, on peut distinguer deux types de croisement que l'on nomme, par convention, \emph{positif} (\emph{positive})\index{crossing!positive} et \emph{négatif} (\emph{negative})\index{crossing!negative} selon le modèle suivant:
$$
\begin{array}{c}
  \dessin{.9cm}{Pos}\\[.5cm]
\textrm{croisement positif}
\end{array}
\hspace{2cm}
\begin{array}{c}
  \dessin{.9cm}{Neg}\\[.5cm]
\textrm{croisement negatif}
\end{array}.
$$
On désigne par \emph{inversion de croisement} (\emph{switch})\index{crossing!switch}\index{switch|see{crossing}} l'opération qui change le type d'un croisement. Cela correspond à laisser l'entrelacs se traverser lui-même en un point. Pour tout diagramme $D$, on appelle \emph{nombre d'entortillement} (\emph{writhe})\index{writhe} le nombre total $w(D)$ de croisements positifs diminué du nombre total de croisements négatifs. Si ce nombre est préservé par les mouvements de Reidemeister de type II et III, il varie par contre de un à chaque mouvement de type I. Il n'est donc pas canoniquement associé à un entrelacs mais à chacun de ses diagrammes. Il se révèle toutefois d'une importance capitale dans de nombreuses constructions.\\
Deux n\oe uds orientés $K_1$ et $K_2$ peuvent être fusionés. Il s'agit de la \emph{somme connexe} (\emph{connected sum})\index{connected sum}, notée $K_1\# K_2$. Les deux n\oe uds sont ouverts chacun en un point, puis les quatres extrémités, ainsi créées, recollées de manière à obtenir une unique composante connexe orientée de façon cohérente.
$$
\dessin{1.35cm}{3_1}\ \# \ \dessin{1.8cm}{8_19}\ = \ \dessin{1.8cm}{Sum}
$$
Le n\oe ud résultant ne dépend pas du choix de ces deux points. Dans le cas d'entrelacs à plusieurs composantes, il est nécessaire de préciser lesquelles sont ainsi fusionées.\\

Les deux invariants polynomiaux $\triangle$ et $V$, appelés respectivement \emph{polynôme d'Alexander} et \emph{polynôme de Jones} (\emph{Alexander} and \emph{Jones polynomials}), ont certainement été les avancées les plus marquantes en théorie des n\oe uds. Si leurs définitions originelles sont très géométriques et algébriques, on leur connait désormais des descriptions purement combinatoires basées sur les diagrammes. Intéressons-nous plus particulièrement au polynôme de Jones \cite{StateJones}.\\
Soit $D$ un diagramme d'entrelacs avec $k$ croisements. Chacun de ses croisements peut être lissé de façon à le faire disparaitre. Il y a pour cela deux choix possibles que, par convention, on nomme \emph{$A$} et \emph{$A^{-1}$--lissage} (\emph{$A$}\index{smoothing!Asmoothing@$A$--smoothing} or \emph{$A^{-1}$--smoothing})\index{smoothing!Bsmoothing@$A^{-1}$--smoothing} selon le modèle suivant:
$$
\xymatrix@!0 @R=.8cm @C=2.5cm {
& **[r] \dessin{.9cm}{ARes} \hspace{.5cm} A\textrm{--lissage}\\
\dessin{1.15cm}{Crux} \ar[ur]!L \ar[dr]!L & \\
& **[r] \dessin{.9cm}{BRes} \hspace{.5cm} A^{-1}\textrm{--lissage}\\
}.
$$
On dit également que le croisement est \emph{$A$} ou \emph{$A^{-1}$--lissé} (\emph{$A$} or \emph{$A^{-1}$--smoothed}). Un seul de ces lissages permet d'induire canoniquement une orientation sur le diagramme lissé. On l'appelle \emph{lissage de Seifert} (\emph{Seifert smoothing})\index{smoothing!Seifert}. En opérant un lissage de Seifert sur tous les croisements d'un diagramme, on obtient canoniquement un nouveau diagramme sans croisement, appelé \emph{état de Seifert} (\emph{Seifert state})\index{smoothing!Seifert state}\index{Seifert state|see{smoothing}}. Il n'est toutefois pas isolé. Il y a, au total, $2^k$ façons de lisser entièrement $D$. Puisqu'il ne possède plus de croisement, chaque lissage complet $s$ correspond à un jeu de cercles non orientés plongés dans le plan. On note $d(s)$ le nombre de cercles et $\sigma (s)$ le nombre de croisements $A$--lissés diminué du nombre de croisements $A^{-1}$--lissés. On peut alors définir le polynôme de Jones par

\begin{eqnarray}
  V(D):=(-A)^{-3w(D)}\sum_{\substack{s\textrm{ lissage}\\[.1cm] \textrm{complet de }D}} A^{\sigma(s)}(-A^2-A^{-2})^{d(s)}.
\label{eq:Jones} 
\end{eqnarray}
\index{polynomial!Jones}
\index{V@$V$|see{polynomial, Jones}}
De cette définition, l'invariance par tous les mouvements de Reidemeister dérive naturellement. Le polynôme de Jones est également caractérisé par sa \emph{relation d'écheveau} (\emph{skein relation})\index{skein relation}, c'est-à-dire par sa valeur sur le n\oe ud trivial et par une relation linéaire entre ses valeurs sur trois diagrammes qui ne diffèrent que dans un disque contenant alternativement un croisement positif, un croisement négatif et leur lissage de Seifert. En effet, tout entrelacs pouvant être dénoué, \ie rendu trivial, par un nombre fini d'inversions de croisement, une telle relation permet un calcul récursif de $V$. Le polynôme de Jones satisfait
\begin{gather*}
  A^4V(\dessin{.5cm}{Pos})-A^{-4}V(\dessin{.5cm}{Pos})=(A^2-A^{-2})V(\dessin{.5cm}{Seif});\\[.3cm]
  V(U_\ell)=(-A^2-A^{-2})^\ell.
\end{gather*}
Dans \cite{Formal}, Louis Kauffman donne une description de ce type pour le polynôme d'Alexander. Nous n'en ferons toutefois pas usage dans cette thèse. Nous nous contenterons donc de donner sa relation d'écheveau:
\begin{gather*}
  \triangle(\dessin{.5cm}{Pos})-\triangle(\dessin{.5cm}{Pos})=(t^\frac{1}{2}-t^{-\frac{1}{2}})\triangle(\dessin{.5cm}{Seif});\\[.3cm]
  \triangle(U_1)=1 \hspace{1.2cm} \triangle(U_\ell)=0, \forall \ell \geq 2.
\end{gather*}
\index{polynomial!Alexander}
\index{Delta@$\triangle$|see{polynomial, Alexander}}

Dans un \siecle{20} moribond, le polynôme de Jones s'est offert une seconde noce. En interprétant ses coefficients comme les dimensions de modules libres dont les générateurs sont donnés par les indices de sommation d'une reformulation de (\ref{eq:Jones}), Mikhail Khovanov l'a élevé à l'éden fonctoriel \cite{Khovanov}. Ces modules peuvent en effet être agencés en un complexe de chaîne gradué sur lequel une différentielle vient naturellement s'apposer. Bien que contenant strictement plus d'informations, les groupes d'homologies associés, appelés \emph{homologie de Khovanov} (\emph{Khovanov homology})\index{homology!Khovanov} sont alors eux-même invariants par tous les mouvements de Reidemeister. On dit qu'ils \emph{catégorifient} (\emph{categorify}) \index{categorification} le polynôme de Jones dans la mesure où ce dernier est récupéré comme la caractéristique d'Euler graduée de cette homologie.\\

Comme l'a souligné Oleg Viro \cite{Viro}, une fois les modules mis en place, la différentielle de Khovanov est imposée au vu des graduations à respecter. Le polynôme d'Alexander fut plus long à mûrir. Ce siècle avait deux ans lorsque l'homologie d'Heegaard-Floer perça sous les travaux de Peter Ozsv{\'a}th et Zolt{\'a}n Szab{\'o} \cite{OS1}. Il s'agit d'une homologie associée à toute variété fermée de dimension $3$ à l'aide d'une technologie analytique et géométrique fine\index{homology!Heegaard--Floer}. Un an plus tard, P. Ozsv{\'a}th, Z. Szab{\'o}  et, indépendamment, Jacob Rasmussen, montrèrent que tout n\oe ud plongé dans une telle variété induit une filtration, dite d'Alexander, sur son homologie d'Heegard-Floer \cite{OS2} \cite{Rasmussen}. L'homologie graduée associée, appelée \emph{homologie d'Heegaard-Floer pour les noeuds} (\emph{knot Floer homology}), donne alors un invariant du n\oe ud catégorifiant le polynôme d'Alexander. La construction fut ensuite étendue au cas des entrelacs \cite{OS3}. Si cette approche semble être une lucarne particulièrement cristalline vis-à-vis des nombreuses propriétés géométriques de cet invariant, elle n'est, malheureusement, que peu adaptée aux calculs.\\
Au prix d'une plus grande opacité, il existe cependant une autre approche purement combinatoire, basée sur une remarque de Sucharit Sarkar \cite{MOS} et utilisant une description des entrelacs par diagrammes en grille développée par Peter Cromwell \cite{Cromwell} puis Ivan Dynnikov \cite{Dynnikov}. C'est cette approche que nous développerons et utiliserons dans cette thèse.\\

Il est possible de généraliser légèrement la notion d'entrelacs. Pour cela, on ne considére plus seulement les plongements mais également les immersions $\func{\nu}{\bigsqcup^\ell S^1}{\R^3}$ possédant un nombre fini de points doubles transverses, \ie de points doubles tels que les deux vecteurs tangents ne sont pas colinéaires. On demande à toute isotopie ambiante de préserver à chaque instant cette condition de transversalité\index{isotopy!ambiant}. On parle alors de \emph{n\oe uds} ou d'\emph{entrelacs singuliers} (\emph{singular knots}\index{knot!singular} or \emph{links})\index{link!singular}. La notion de diagramme est conservée, les points doubles étant représentés sans interruption du tracé. Pour garder une correspondance bijective avec les entrelacs singuliers, il est toutefois nécessaire d'ajouter deux nouveaux types de mouvements de Reidemeister \cite{Kauffman}:
$$
\begin{array}{c}
  \dessinH{1cm}{MoveIV1} \leftrightarrow \dessinH{1cm}{MoveIV2}\\[.5cm]
\textrm{type IV}
\end{array}
\hspace{1.2cm}
\begin{array}{c}
  \dessinH{1.4cm}{MoveV1} \leftrightarrow \dessinH{1.4cm}{MoveV2}\\[.5cm]
\textrm{type V}
\end{array}
$$\index{moves!Singular Reidemeister}
La somme connexe\index{connected sum} $K_1{}_{p_1}\#_{p_2}$ de deux n\oe uds singuliers $K_1$ et $K_2$ est encore définie, mais il est nécessaire de préciser en quels points $p_1\in K_1$ et $p_2\in K_2$ la fusion doit s'effectuer.\\ 

Pour désingulariser un point double d'un entrelacs orienté, il existe deux possibilités que le produit vectoriel des deux vecteurs tangents permet de distinguer de façon intrinsèque. Par convention, on les appelle \emph{$0$} et \emph{$1$--résolution} (\emph{$0$} and  \emph{$1$--resolution})\index{resolution!Aresolution@$0$--resolution}\index{resolution!Bresolution@$1$--resolution} selon le modèle suivant:
$$
\xymatrix@!0 @R=1.2cm @C=3.5cm {
& **[r] \dessin{1.6cm}{Desing2} \hspace{.5cm} 0\textrm{--résolution}\\
\dessin{2.05cm}{Desing1} \ar[ur]!L \ar[dr]!L & \\
& **[r] \dessin{1.6cm}{Desing3} \hspace{.5cm} 1\textrm{--résolution}\\
}.
$$
On dit également que le point double est \emph{$0$} ou \emph{$1$--résolu} (\emph{$0$} or \emph{$1$--resolved}). Au niveau des diagrammes, ils correspondent respectivement à la création d'un croisement positif et négatif.\\
Tout invariant $I$ à valeurs réelles~--- ou plus généralement à valeurs dans un $\Z$--module~--- défini sur les entrelacs peut être naturellement prolongé aux entrelacs singuliers {\it via} la formule
$$
I(\dessin{.5cm}{Double}) := I(\dessin{.5cm}{Pos}) - I(\dessin{.5cm }{Neg}).
$$
Cette extension de $I$ permet ainsi de mesurer son comportement vis-à-vis d'une inversion de croisement.\\
Quel que soit l'invariant initial, on a, par construction:
\begin{eqnarray}
I(L{}_*\#_p\dessin{.4cm}{huit12}\hspace{-.05cm}{}_{p'}\#_* L')=\left(\ \dessin{1.1cm}{Torsion}\right)=0
\label{eq:Trivial}
  \end{eqnarray}
où $L$ et $L$ sont deux entrelacs potentiellement singuliers.\\
Pour tout entier naturel $m$, on dit que $I$ est un \emph{invariant de Vassiliev d'ordre fini inférieur à m} (\emph{Vassiliev invariant of order $\leq m$})\index{Vassiliev invariants} s'il s'annule pour tout entrelacs singulier possédant au moins $m+1$ points doubles. On dit également qu'il est \emph{de type fini} (\emph{of finite type})\index{finite type invariants|see{Vassiliev invariants}}. Une large majorité des invariants d'entrelacs connus se décomposent en somme d'invariants de type finis. Joan Birman et Xiao-Song Lin \cite{Birman} ont notamment montré que les coefficients des polynômes d'Alexander et de Jones, après un changement de variable approprié, sont de type fini.\\
Dans les travaux de Maxim Kontsevitch \cite{Kontsevich} puis de Dror Bar-Natan \cite{BarNatanVass}, les invariants de Vassiliev restreints aux n\oe uds et d'un ordre fixé trouvent une interprètation comme dual d'un certain module de type fini. Conjecturalement, ils seraient pourtant denses parmi les invariants d'entrelacs et, pris dans leur ensemble, permettraient de distinguer tous les n\oe uds.\\

Un invariant d'entrelacs polynomial peut donc être catégorifié d'une part et généralisé aux entrelacs singuliers d'autre part. Il est naturel de s'interroger sur les liens éventuels entre ces deux procédés.
$$
\xymatrix@!0@R=1.1cm@C=3.5cm{& {\begin{array}{c} \textrm{Catégorification $\I$~:}\\[.1cm] I=\displaystyle{\sum_{i,j}} (-1)^i \rk \I_i^j q^j \end{array}} \ar[]!R;[rd]!UL& \\
  **[l] \textrm{Invariant polynomial } I \ar[]!UR;[ru]!L \ar[]!DR;[rd]!L & & **[r] ? \\
  & {\begin{array}{c}\textrm{Théorie de Vassiliev~:} \\[.1cm] I(\dessin{.5cm}{Double}):= I(\dessin{.5cm}{Pos}) - I(\dessin{.5cm }{Neg}) \end{array}} \ar[]!R;[ru]!DL & \\}
$$
Mais auparavant, il est nécessaire de généraliser ces nouveaux invariants de type homologique aux entrelacs singuliers. Malheureusement, l'ensemble des complexes de chaîne n'est pas naturellement muni d'une structure de $\Z$--module. Néanmoins, à tout morphisme entre deux complexes de chaîne, on peut en associer un troisième, appelé \emph{cône de recollement} (\emph{mapping cone}), dont la caractéristique d'Euler est obtenue comme différence des caractéritiques d'Euler des complexes de départ et d'arrivée. Le morphisme nul est cependant le seul à être défini sans équivoque entre deux complexes quelconques. Et si l'homologie $H$ associée au cône de recollement de cette application nulle, \ie  la somme des deux homologies intiales, est bien un invariant pour un noeud singulier, elle ne vérifie pas, en général, la relation
\begin{eqnarray}
H\left(\ \dessin{1.1cm}{Torsion}\right)\cong 0
\label{eq:Trivial2}  
\end{eqnarray}
liée à l'égalité (\ref{eq:Trivial}), relation qu'il serait pourtant raisonnable d'exiger. Cela impose donc, pour chaque invariant de type homologique, une construction au cas par cas d'un morphisme plus spécifique entre les complexes de chaîne associés à chacune des résolutions d'un point double. C'est l'approche préconisée par Nadya Shirokova dans \cite{Shiro}, qu'elle applique ensuite à l'homologie de Khovanov puis, en colla\-boration avec Ben Webster, à l'homologie de Khovanov-Rozansky \cite{ShiroWeb}. Cela se fait toutefois au prix d'une perte de généralité dans la construction, l'invariance par choix du représentant d'un entrelacs singulier et la relation (\ref{eq:Trivial2}) n'ayant alors plus rien d'automatique.\\

Dans {\bf la première partie} de cette thèse, nous proposons de définir une généralisation de l'homologie d'Heegaard-Floer aux entrelacs singuliers, que l'on notera $\widehat{HF}$ ou, dans sa forme la plus générale, $H^-$. Pour cela, nous étendrons d'abord la description par diagrammes en grille au cas singulier. Nous décrirons à cette occasion l'ensemble $\M$ des mouvements élémentaires sur les grilles singulières permet\-tant d'obtenir la correspondance bijective suivante:
$$
\{\textrm{Entrelacs singuliers}\} \ \longleftrightarrow \ \fract{\{\textrm{Grilles singulières}\}}/{\M}.
$$
A l'aide de ces grilles, nous définirons ensuite un \emph{morphisme d'inversion} (\emph{switch morphism})
$$
\func{f}{C^-(\dessin{.5cm}{Pos})}{C^-(\dessin{.5cm}{Neg})}
$$
où $C^-(L)$ représente un complexe de chaîne pour l'homologie d'Heegaard-Floer de l'entrelacs régulier $L$. Nous montrerons alors que l'homologie associée au cone de recollement de cette application est un invariant de l'entrelacs singulier sous-jacent. Par le biais d'un cube de résolution, cette construction sera ensuite étendue à des entrelacs possédant plusieurs points doubles. L'homologie dépend alors d'un choix d'orientation pour chacun des points doubles de l'entrelacs, \ie d'un choix d'orientation pour chaque plan engendré par les deux vecteurs tangents à l'entrelacs en un point double.\\
L'homologie d'Heegaard-Floer pour les entrelacs réguliers est munie de nombreuses filtrations dont les homologies graduées associées sont autant d'invariants pour les entrelacs. Il existe toutefois des relations fortes entre elles. Nous montrerons que toutes ces variantes ainsi que leurs relations sont conservées dans le cas singulier. Plusieurs choix arbitraires sont également faits durant la construction. Les différentes alternatives, leurs pertinences et leurs relations seront discutées. Par ailleurs, cette discussion sera l'occasion de donner au passage une preuve supplémentaire de la relation d'écheveau vérifiée par la caractéristique d'Euler de ces homologies.\\
Un programme en Ocaml a été implémenté et certains résultats sont donnés en annexe. Sur les quelques exemples calculés, la relation (\ref{eq:Trivial2}) a toujours été vérifiée. Cependant, nous ne prouverons que le cas particulier où l'entrelacs $L'$ est trivial. Cela correspond aux entrelacs possédant une boucle singulière.\\
La question de comportements ``de type fini'' reste enfin ouverte. Quelques pas seront toutefois faits dans cette direction en exhibant un inverse à homotopie près pour le morphisme d'inversion. Cet inverse ne respecte bien entendu pas la filtration d'Alexander mais cette défaillance reste minime et contrôlée dans la mesure où ce morphisme est filtré de degré $1$.\\
Dans \cite{OSSing}, P. Ozsv\'ath, Andr\'as Stipsicz and Z. Szab\'o donnent une autre généralisation $HFS$ de l'homologie d'Heegard-Floer aux entrelacs singuliers. Leur approche et leurs objectifs sont cependant assez éloignés des nôtres. Notamment, un point double, dans leur papier, doit être compris comme une arête épaissie au sens des travaux de \cite{Yamada}. Cela aboutit aux suites exactes suivantes:
$$
\xymatrix@!0@C=1.7cm@R=1.7cm{
HFS\left(\dessin{.5cm}{ARes}\right)\{1\} \ar[rr] && HFS\left(\dessin{.5cm}{Pos}\right) \ar[dl]\\
& \widetilde{HFS}\left(\dessin{.5cm}{Double}\right) \ar[ul]&
} \hspace{1.5cm}
\xymatrix@!0@C=1.5cm@R=1.5cm{
HFS\left(\dessin{.5cm}{Neg}\right) \ar[rr] && HFS\left(\dessin{.5cm}{ARes}\right)\{-1\} \ar[dl]\\
& \widetilde{HFS}\left(\dessin{.5cm}{Double}\right) \ar[ul]&
}.
$$
où $\widetilde{HFS}$ est une version modifée nécessitant un quotient supplémentaire. Dans notre construction, un point double est perçu comme la transcription d'une inversion de croisement et cela conduit à la suite exacte:
$$
\xymatrix@!0@C=1.7cm@R=1.7cm{
\widehat{HF}\left(\dessin{.5cm}{Pos}\right) \ar[rr] && \widehat{HF}\left(\dessin{.5cm}{Neg}\right) \ar[dl]\\
& \widehat{HF}\left(\dessin{.5cm}{Double}\right) \ar[ul]&
}.
$$

{\bf La seconde partie} traite de l'homologie de Khovanov et de son comportement vis-à-vis de certains mouvements de Reidemeister. Nous nous intéresserons donc à ces derniers et à leurs relations. Comme explicité précédemment en (\ref{eq:ReidMoves}), on distingue généralement trois types de mouvements en fonction de la singularité des courbes planes impliquée. Cependant, en tenant compte de l'orientation, les mouvements de types II et III peuvent être subdivisés en sous-catégories:
$$
\xymatrix@!0 @R=.9cm @C=2.5cm {
& \dessin{.6cm}{sing1}\\
\dessin{.75cm}{SingII} \ar[ur]!L \ar[dr]!L & \\
& \dessin{.6cm}{sing1bis}\\
} \hspace{2cm}
\xymatrix@!0 @R=.9cm @C=2.5cm {
& \dessin{.9cm}{sing2}\\
\dessin{1.1cm}{SingIII} \ar[ur]!L \ar[dr]!L & \\
& \dessin{.9cm}{Sing2bis}\\
}.
$$
D'autre part,comme le montre la figure \ref{fig:Trick} (\S\ref{par:relationReidMoves}), tous ces mouvements ne sont pas indépendants. On peut notamment s'intéresser aux classes d'équivalence induites par les mouvements de type II parmi les mouvements de type III. En étudiant le graphe de ces corrélations donné dans la figure \ref{fig:TypeIII} (\S\ref{par:relationReidMoves}), on remarque:
\begin{itemize}
\item[-] que les mouvements correspondant à la singularité $\dessin{.5cm}{sing1bis}$ séparent les mouvements de type III en deux classes dont les éléments de l'une sont les images miroir des éléments de l'autre;
\item[-] que les mouvements correspondant à la singularité $\dessin{.5cm}{sing1}$ relient tous les mouvements correspondant à la singularité $\dessin{.75cm}{Sing2bis}$ mais isolent les deux mouvements correspondant à $\dessin{.75cm}{sing2}$.
\end{itemize}
Les mouvements correspondant aux singularités $\dessin{.5cm}{sing1}$ et $\dessin{.75cm}{Sing2bis}$ forment donc une certaine unité. Ils correspondent exactement aux mouvements locaux intervenant lors d'isotopies de diagrammes de tresses. Cela justifie leur dénomination anglophone \emph{braid-like}.\\
Nous nous intéresserons aux diagrammes d'entrelacs considérés aux mouvements braid-like uniquement. Cela définit un nouvel objet généralisant la notion de tresse fermée dans un tore solide. Nous montrerons qu'une telle restriction sur les isotopies permet de raffiner le polynôme de Jones en scindant son indéterminé en deux. Nous montrerons également que ce raffinement se propage à l'homologie de Khovanov dont on définira une version trigraduée. Tous ces raffinements peuvent être vus comme des généralisations des constructions faites, notamment dans le cas des tresses fermées, par J\'ozef Przytycki et ses collaborateurs \cite{Hoste}\cite{Asaeda}.\\
De grands progrès ont été faits dans la compréhension des n\oe uds en utilisant le lien fort existant entre n\oe uds et tresses fermées. Par notre généralisation, nous offrons un nouvel outil fourni avec un équipement d'invariants de type homologique. Cela permet de disséquer certains mécanismes internes d'invariance de l'homologie de Khovanov lors des mouvements non braid-like.\\
A noter que la technologie développée dans le cas braid-like s'adapte également aux isotopies de diagrammes ne faisant intervenir que des singularités de type $\dessin{.5cm}{sing1}$ et $\dessin{.75cm}{sing2}$. Là encore, cela permet de cibler l'étude d'un entrelacs en restreignant sa mobilité.


\chapter*{Short introduction}
\label{chap:ShortIntro}
\addcontentsline{toc}{section}{Short introduction (english)}

A \emph{link} is defined as a smooth embedding in $\R^3$ of $\ell$ disjoint copies of the oriented circle, up to ambient isotopies. If $\ell=1$, we say it is a \emph{knot}. A link can be described by a generic projection on a $\R^2$-plane with the data at each crossing of which strand is overpassing the second. Such a projection is called a \emph{diagram}. To recover the notion of links, one has to consider them up to the Reidemeister moves shown in Figure \ref{fig:EnReidMoves}.\\

\begin{figure}[!h]
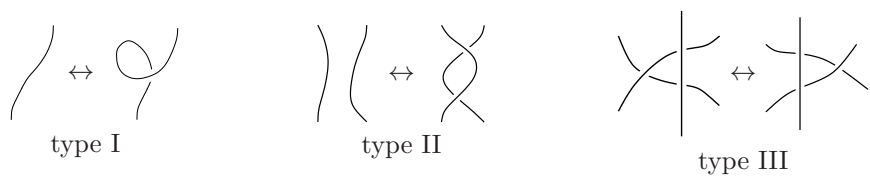

  $$
  \begin{array}{c}
    \dessinH{1cm}{MoveI2} \leftrightarrow \ \dessinH{1cm}{MoveI1}\\[.5cm]
    \textrm{type I}
  \end{array}
  \hspace{1.2cm}
  \begin{array}{c}
    \dessinH{1cm}{MoveII1} \leftrightarrow \dessinH{1cm}{MoveII2}\\[.5cm]
    \textrm{type II}
  \end{array}
  \hspace{1.2cm}
  \begin{array}{c}
    \dessinH{1.4cm}{MoveIII1} \leftrightarrow \dessinH{1.4cm}{MoveIII2}\\[.5cm]
    \textrm{type III}
  \end{array}
  $$
  \caption{Reidemeister moves}
  \label{fig:EnReidMoves}
\end{figure}

A \emph{link invariant} is an application defined on the set of links. Most known invariants, such as Alexander or Jones polynomials, can be defined combinatorially using diagrams.\\

Now, we consider a polynomial link invariant $I$, \ie an application which associates a Laurent polynomial to every link.\\
One can try to categorify $I$. Practically, such a \emph{categorifcation} means defining a graded chain complexes $\I=(\I_i^j)_{i,j\in\Z}$ associated to any diagram $D$ such that
\begin{enumerate}
\item the homology $H_*(\I)$ depends only of the underlying link $K$;
\item the graded Euler characteristic $\xi_\gr (\I) = \displaystyle{\sum_{i,j}}(-1)^i \rk \I_i^jq^j$ is equal to $I(K)$ with undetermined $q$.
\end{enumerate}
Categorification is worthwhile since it defines a new link invariant which usually distinguishes more links than the initial polynomial invariant, and which is generally endowed with ``good'' functorial properties with regard to the category of cobordisms.\\
In 1999, M. Khovanov initiated this field by categorifying the Jones polynomial. Four years later, P. Ozsv\'ath, Z. Sz\'abo and, independently, J. Rasmussen were defining the link Floer homology which categorifies the Alexander polynomial. The latter construction was geometric, using some kind of Floer theory on Heegaard splitting compatible with a given knot. However, during the summer 2006, in collaboration with C. Manolescu, S. Sarkar and D. Thurston, they gave an equivalent construction based on grid diagram presentation, which is a way to describe link diagrams using combinatorial square grids.\\     

On the other hand, according to the \emph{Vassiliev theory}, one can generalize any polynomial invariant $I$ to singular links, \ie immersions of oriented circles in $\R^3$ with a finite number of rigid transverse double points. This can be done using the following iterative formula:
\begin{equation}
I(\dessin{.5cm}{Double}) := I(\dessin{.5cm}{Pos}) - I(\dessin{.5cm }{Neg}).
  \label{eq:Vassiliev}  
\end{equation}
Then one defines \emph{finite type invariants of order $k$} for every integer $k$ as the invariants which vanish on every knots with, at least, $k+1$ doubles points. It defines a filtration on polynomial invariants. Most known invariants are combinations of finite type invariants. Moreover the question is still open to know if the union of all finite type invariants is strong enough to distinguish all knots.\\

Thus, a given polynomial invariant $I$ can be studied trough the Vassiliev theory in one hand, or through a categorification $\I$ on the other. It is natural to raise the question of a possible relation between these two constructions.
$$
\xymatrix@!0@R=1.1cm@C=3.5cm{& {\begin{array}{c} \textrm{Categorification $\I$:}\\[.1cm] I=\displaystyle{\sum_{i,j}} (-1)^i \rk \I_i^j q^j \end{array}} \ar[]!R;[rd]!UL& \\
  **[l] \textrm{Invariant polynomial } I \ar[]!UR;[ru]!L \ar[]!DR;[rd]!L & & **[r] ? \\
  & {\begin{array}{c}\textrm{Vassiliev theory:} \\[.1cm] I(\dessin{.5cm}{Double}):= I(\dessin{.5cm}{Pos}) - I(\dessin{.5cm }{Neg}) \end{array}} \ar[]!R;[ru]!DL & \\}
$$
In other words, can the formula (\ref{eq:Vassiliev}) be categorified ?\\
There is a natural candidate.
Actually, any chain complex morphism $\func{f}{C_1}{C_2}$ gives raise to a third chain complex, called $\Cone (f)$, of which the Euler charateristic is the difference between the Euler characteristic of $C_1$ and $C_2$.
In the Khovanov and in the link Floer cases, several chain maps can be considered. Unfortunatly, they do not behave reasonably when applied to a knot with a small singular loop. As a matter of fact, the associated homology is usually non trivial. But yet, in this instance, the polynomial case clearly satisfies:
$$
I\left(\ \dessin{.9cm}{SmallLoop}\right)=0.
$$ 
For a categorification $H$ of (\ref{eq:Vassiliev}), it is then natural to require the following condition:
\begin{equation}
H\left(\ \dessin{.9cm}{SmallLoop}\right) \equiv 0.
\label{eq:Condition}
\end{equation}

The {\bf first part} of this thesis is dedicated to the construction of such a singular generalization of the link Floer homology. For this purpose,
\begin{itemize}
\item we generalize the grid diagram presentation to singular links, stating and proving a Reidemeister-like result for a given set of elementary moves;
\item and we apply it to the the construction of a homological invariant $H^-$ generalizing the link Floer homology to singular links, categorifying (\ref{eq:Vassiliev}) and satisfying (\ref{eq:Condition}).
\end{itemize}
This invariant depends only on the singular link and on a choice of orientation for its double points \ie a choice of orientation for all the plan spanned by the two tangent vectors at a double point.\\
Then we discuss the construction and give a few properties of $H^-$.\\

The {\bf second part} concerns the Khovanov homology and some of its internal behavior with regard to oriented Reidemeister moves.\\
Actually, as shown in Figure \ref{fig:OrReidMoves}, when paying attention to the orientations of the involved strands, one can distinguish two kind of Reidemeister moves of type II and III.
\begin{figure}[!h]
$$
  \xymatrix@!0 @R=.9cm @C=2.5cm {
& \dessin{.6cm}{sing1}\\
\dessin{.75cm}{SingII} \ar[ur]!L \ar[dr]!L & \\
& \dessin{.6cm}{sing1bis}\\
} \hspace{2cm}
\xymatrix@!0 @R=.9cm @C=2.5cm {
& \dessin{.9cm}{sing2}\\
\dessin{1.1cm}{SingIII} \ar[ur]!L \ar[dr]!L & \\
& \dessin{.9cm}{Sing2bis}\\
}
$$
  \caption{Distinction between oriented Reidemeister moves}
  \label{fig:OrReidMoves}
\end{figure}
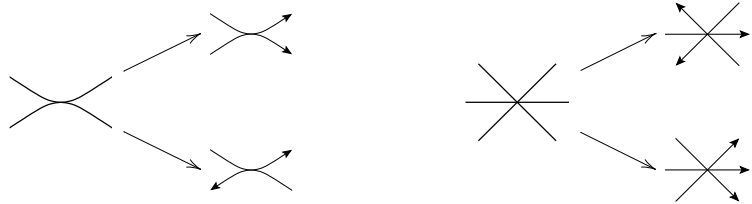
If restricting to the case of closed braids, only the moves corresponding to the singularities of type $\dessin{.5cm}{sing1}$ or $\dessin{.75cm}{Sing2bis}$ can occur during an isotopy. We call them \emph{braid-like moves} and we define the set $\K_{br}$ of \emph{braid-like links} as the set of link diagrams up to braid-like moves. They can be seen as \emph{universal planar transverse} links.\\
Actually, if $\xi$ is any given vector field in the plane, one can define $\K_\xi$ as the set of knot diagrams which are transverse to $\xi$ up to the isotopies which preserve this transversality condition\footnote[1]{Here, the transversality condition lives in the plane at the level of diagrams. The relation with 3--dimensional transverse conditions still need to be clarified.}. Then, there is a natural map
$$
\func{\phi_\xi}{\K_\xi}{\K_{br}}.
$$
Then, any invariant defined on the latter induces an invariant on the former.\\
In the specific case of closed braids, which corresponds to a radial vector field, this map is injective. Braid-like links are thus a generalization of closed braids.\\

In this thesis, we define
\begin{itemize}
\item a refinement $V_{br}$ of the Jones polynomial which is only invariant under braid-like moves;
\item and a bigraded categorification $\HH_{br}$ of $V_{br}$ which is also invariant under braid-like moves.
\end{itemize}
By spreading out the Khovanov homology in a second grading, the braid-like refinement gives a point of view on its internal mechanisms, for instance when performing a non braid-like Reidemeister move of type II. This is of particular interest since the Khovanov--Rozansky generalizations of Khovanov homology fail in being invariant under this move.\\
Although there is no geometrical interpretation for it, to date, the constructions given in the braid-like case can be modified to be invariant only under the moves corresponding to the singularities $\dessin{.5cm}{sing1}$ or $\dessin{.75cm}{sing2}$. Here again, it brings to light some particularities of the invariance phenomena.


\chapter*{User's guide}
\label{chap:Guide}
\addcontentsline{toc}{part}{User's guide}

This thesis is composed of two independent parts. Each part is divided in chapters, sections and subsections. The part and the chapter are not mentioned in the numbering of sections, subsections and propositions.\\

In order to lighten the presentation, paragraphs have been numbered and titled. The numbering is continuous from the begining until the end. It should not be understood as part of the hierarchical text structure but as a labeling which clarify the semantic one and an help for the localization of propositions. With this intention, each time we refer to a proposition, or something equivalent, which does not belong to the current paragraph, we indicate between parenthesis the paragraph where the reader would find it. Moreover, it lifts all the ambiguities occurring because of the numbering of the propositions which does not mention the chapter.\\

Some definitions are given in the body text and some in the introduction. To deal with this, an index is given at the end of the document.\\

In order to make the navigation inside the document easier, references and citations are enhanced with hypertext links.\\

The next two pages are devoted to a schematic plan for the logical organization of the thesis. Framed zones corresponds to some remainder ou prerequisites.

\newpage

\begin{figure}[m]
  $$
  \begin{xy}
    (-44,0)*{\begin{xy} (0,25)*+{\begin{array}{c} \\[-.4cm]
            \textrm{Grid diagrams}\\[.2cm]
            \dessin{2.2cm}{GridKnot}\end{array}}="A";
        (0,-25)*+{\begin{array}{c} \textrm{Link Floer homology}\\[.2cm]
            C^-\left(\dessin{1.2cm}{Knot}\right) \leadsto
            H^-\left(\dessin{1.2cm}{Knot}\right)\\[-.1cm] \
          \end{array}}="B"; **\frm{-} \ar@{<-}"B";"A"^(.47)*[@]{\hbox
          to 0pt{\hss\txt{\scriptsize \cite{MOST}}\hss}}
        \ar@{}"B"!<-2.4cm,1.9cm>;"A"!<-2.4cm,1.9cm>^*[@]{\hbox to
          0pt{\hss\txt{Chapter I}\hss}}
      \end{xy}}="D"; (54,21)*{\begin{array}{c} \\[-.4cm]
        \textrm{Singular grid diagrams}\\[.2cm]
        \dessin{1.83cm}{SGridKnot}\end{array}}="E";
    (38,-45)*{\begin{array}{c} \textrm{Switch morphism}\\[.2cm]
        \func{f}{C^-(\dessin{.5cm}{Pos})}{C^-(\dessin{.5cm}{Neg})} \end{array}}="F";
    (-40,-93)*{\begin{array}{c} \textrm{Singular link}\\\textrm{Floer homology}\\[.2cm]
        H^-\left(\dessin{1.2cm}{Sing8}\right) = \Cone\left( \dessin{2.5cm}{Cube}\right) \\[-.1cm] \
      \end{array}}="G";
    (52,-83)*{\begin{array}{c} \\[-.4cm]
        \textrm{Discussion}\\
        \textrm{on definitions}\end{array}}="H";
    (49,-116)*{H^-\left( \dessin{.9cm}{SmallLoop} \right)\cong 0}="I";
    (14,-139)*{\begin{array}{c} \\[-.4cm]
        \textrm{Filtrated inverse}\\
        \textrm{of degree }1\textrm{ for }f\end{array}}="J"
    
    \ar"D"!<4.2cm,1.9cm>;"E"!<-2.4cm,-.2cm>^{\txt{\scriptsize Section
        II.1}} \ar"D"!<3.4cm,-3cm>;"F"!UL|{}="aa"
    \ar"E"!D!<-.15cm,-.7cm>;"F"!U!<.2cm,.7cm>|(.6){}="bb"
    \ar@{.}@/^.7cm/"aa";"bb"^(.48)*[@]{\hbox to
      0pt{\hss\txt{\scriptsize Section II.2}\hss}}
    \ar@{<-}"G"!UR!<-1cm,.3cm>;"F"!DL!<0cm,-.4cm>^(.52)*[@]{\hbox
      to 0pt{\hss\txt{\scriptsize Section II.3}\hss}}_(.52)*[@]{\hbox
      to 0pt{\hss\txt{\scriptsize and II.4}\hss}}
    \ar"G"!R!<.4cm,.2cm>;"H"!L!<-.6cm,-.2cm>^*[@]{\hbox
      to 0pt{\hss\txt{\scriptsize Section III.1}\hss}}_(.52)*[@]{\hbox
      to 0pt{\hss\txt{\scriptsize and III.4}\hss}}
    \ar"G"!R!<.2cm,-1.2cm>;"I"!L!<-.6cm,.2cm>^*[@]{\hbox
      to 0pt{\hss\txt{\scriptsize Section III.3}\hss}}
    \ar"G"!D!<3.2cm,0cm>;"J"!UL!<1cm,.3cm>^(.49)*[@]{\hbox
      to 0pt{\hss\txt{\scriptsize Section III.2}\hss}}
    \ar"F"!D!<-.2cm,-.4cm>;"H"!U!<-.9cm,.4cm>^*[@]{\hbox
      to 0pt{\hss\txt{\scriptsize Section III.1}\hss}}_(.52)*[@]{\hbox
      to 0pt{\hss\txt{\scriptsize and III.4}\hss}}
  \end{xy}
  $$
\vspace{1cm}
\begin{center}
  {\large Part I}
\end{center}
\end{figure}

\newpage

\begin{figure}[m]
  $$
  \begin{xy}
    (0,0)*{\begin{xy}
        (0,112)*+{\begin{array}{c} \\[-.4cm]
            \textrm{Link diagrams}\\[.2cm]
            \dessin{2cm}{5_1}\end{array}}="A";
        (0,70)*+{\begin{array}{c} \\[-.4cm]
            \textrm{Link isotopies}\\[.2cm]
            \dessin{.6cm}{SingI}\hspace{.3cm}
            \dessin{.6cm}{SingII}\hspace{.3cm}
            \dessin{.9cm}{SingIII}\end{array}}="B2";
        (0,35)*+{\begin{array}{c} \\[-.4cm]
            \textrm{Jones polynomial}\\[.2cm]
            V(D)\in\Z [A,A^{-1}]\end{array}}="C2";
        (0,1)*+{\begin{array}{c} \\[-.4cm]
            \textrm{Khovanov homology}\\[.2cm]
            \HH(D)= \oplus_{i,j}\HH_i^j\\[-.2cm] \ \end{array}}="D2";
        \ar@{<-}"B2"!U!<0cm,.2cm>;"A"!D^(.53)*[@]{\hbox
          to 0pt{\hss\txt{\scriptsize \cite{Reidemeister}}\hss}}
        \ar@{<-}"C2"!U!<0cm,.2cm>;"B2"!D!<0cm,-.2cm>^(.53)*[@]{\hbox
          to 0pt{\hss\txt{\scriptsize \cite{StateJones}}\hss}}
        \ar@{<-}"D2"!U!<0cm,.2cm>;"C2"!D!<0cm,-.2cm>^(.53)*[@]{\hbox
          to 0pt{\hss\txt{\scriptsize \cite{Khovanov}}\hss}}
      \end{xy}}="M" *\frm{-};
    (-56,11)*{\begin{array}{c} \\[-.4cm]
        \textrm{Braid-like isotopies}\\[.2cm]
        \dessin{.6cm}{sing1}\hspace{.4cm}
        \dessin{.9cm}{Sing2bis}\end{array}}="B1";
    (56,11)*{\begin{array}{c} \\[-.4cm]
        \textrm{Star-like isotopies}\\[.2cm]
        \dessin{.6cm}{sing1}\hspace{.4cm}
        \dessin{.9cm}{sing2}\end{array}}="B3";
    (-56,-25)*{\begin{array}{c} \\[-.4cm]
        \textrm{Braid-like}\\\textrm{Jones polynomial}\\[.2cm]
        V_{br}(D)\in\Z [A,A^{-1},H,H^{-1}] \end{array}}="C1";
    (56,-25)*{\begin{array}{c} \\[-.4cm]
        \textrm{Star-like}\\\textrm{Jones polynomial}\\[.2cm]
        V_{st}(D)\in\Z [A,A^{-1},H,H^{-1}] \end{array}}="C3";
    (-56,-60)*{\begin{array}{c} \\[-.4cm]
        \textrm{Braid-like}\\\textrm{Khovanov homology}\\[.2cm]
        \HH_{br}(D)= \oplus_{i,j,k}\HH_i^{j,k} \end{array}}="D1";
    (56,-60)*{\begin{array}{c} \\[-.4cm]
        \textrm{Star-like}\\\textrm{Khovanov homology}\\[.2cm]
        \HH_{st}(D)= \oplus_{i,j,k}\HH_i^{j,k} \end{array}}="D3";
    \ar@{<-}"C1"!U!<0cm,.2cm>;"B1"!D!<0cm,-.2cm>^(.53)*[@]{\hbox
      to 0pt{\hss\txt{\scriptsize Section IV.2}\hss}}
    \ar@{<-}"C3"!U!<0cm,.2cm>;"B3"!D!<0cm,-.2cm>^(.53)*[@]{\hbox
      to 0pt{\hss\txt{\scriptsize Section IV.2}\hss}}
    \ar@{<-}"D1"!U!<0cm,.2cm>;"C1"!D!<0cm,-.2cm>^(.53)*[@]{\hbox
      to 0pt{\hss\txt{\scriptsize Chapter V}\hss}}
    \ar@{<-}"D3"!U!<0cm,.2cm>;"C3"!D!<0cm,-.2cm>^(.53)*[@]{\hbox
      to 0pt{\hss\txt{\scriptsize Chapter V}\hss}}
    \ar@{<-}"B1"!UR!<-.9cm,.3cm>;"M"!UL!<-.5cm,-2.2cm>^(.53)*[@]{\hbox
      to 0pt{\hss\txt{\scriptsize Section IV.1}\hss}}
    \ar"M"!UR!<.5cm,-2.2cm>;"B3"!UL!<.9cm,.3cm>^(.47)*[@]{\hbox
      to 0pt{\hss\txt{\scriptsize Section IV.1}\hss}}
  \end{xy}
  $$
\vspace{1cm}
\begin{center}
  {\large Part II}
\end{center}
\end{figure}


\part{Singular link Floer homology}

\chapter{Knot Floer homology}
\label{chap:Floer}

\section{Some basics in algebra}
\label{sec:Algebra}

In this first section, we set down some basic notation and definitions. For a thorough treatment, we refer the reader to \cite{Cartan}, \cite{Gelfand} or \cite{Weibel}.\\

Unless explicitly stated, by module we will mean a $\Z$--module.

\subsection{Structures on modules}
\label{ssec:Grad&Filt}

\subsubsection{Graded modules}
\label{par:GrModules}

Let $r$ be a positive integer.\\

A \emph{$r$--grading} on a module $M$ is a decomposition of $M$ into direct summands $\oplus_{\ib\in\Z^r}M_\ib$ where $M_\ib$ is the \emph{homogeneous module of elements of degree $\ib$}. We say that $M$ is \emph{$r$--graded}.\\\index{module!graded}

Let $M$ and $M'$ be two $r$--graded modules.\\

A \emph{graded map}\index{map!graded} $\func{f}{M}{M'}$ of degree $\kb\in \Z^r$ is a sequence ${(\func{f_\ib}{M_\ib}{M_{\ib+\kb}})}_{\ib\in\Z^r}$ of linear maps. When omitted, the degree of a graded map is zero \ie it preserves the grading.\\

When $r=1$, we say that $M$ is graded. The grading is then denoted by $\Z$--indices.\\
If $M_i$ is finitely generated for all integer $i$, we define the \emph{graded dimension}\index{dimension!graded}\index{dimension!qdim@$\qdim$|} as the formal series
$$
\qdim M=\disp{\sum_{i\in\Z}}\rk M_i.q^i\in \Z[[q,q^{-1}]].
$$
For all integer $l$, we define also the \emph{shift operation of height $l$}\index{shift operation
} which associates to M a new graded module $M\{l\}$ defined by $M\{l\}_i:=M_{i-l}$ for all integer $i$.\\

When $r=2$, we say that $M$ is bigraded. The first grading is then denoted by $\Z$--indices whereas the second is denoted by $\Z$--exponents.\\
Under the same finite generation condition, the \emph{bigraded dimension}\index{dimension!bigraded} is the formal series
$$
\qdim M=\disp{\sum_{i,j\in\Z}}\rk M_i^j.t^iq^j\in \Z[[t,t^{-1},q,q^{-1}]].
$$

When $r=3$, we say that $M$ is trigraded. The first grading is then denoted by $\Z$--indices whereas the lasttwo are denoted by $\Z^2$--exponents.\\
Under the same finite generation condition, the \emph{trigraded dimension}\index{dimension!trigraded} is the formal series
$$
\qdim M=\disp{\sum_{i,j,k\in\Z}}\rk M_i^{j,k}.t^iA^jH^k\in \Z[[t,t^{-1},A,A^{-1},H,H^{-1}]].
$$

In each  case , if $M$ is finitely generated, then the (tri,bi)graded dimension is a polynomial with non negative integer coefficients.\\

The module $M\otimes M'$ is naturally $r$--graded by the convention $\deg(x\otimes x')=\deg(x)+\deg(x')$ for all homogeneous elements $x\in M$ and $x'\in M'$.\\

Suppose now that $M$ is endowed with another $r'$--grading. Then $M$ is naturally $(r+r')$--graded by considering the intersection of homogeneous modules.

\subsubsection{Filtrated modules}
\label{par:FiModules}

Let $r$ be a positive integer.\\

  A \emph{filtration}\index{module!filtrated} on a ($r$--graded) module $M$ is an increasing sequence of ($r$--graded) modules
$$
{(\F_iM|\F_iM \subset \F_{i+1}M)}_{i\in\Z}
$$
such that $\cup_i \F_iM=M$ and  $\cap_i \F_iM=\emptyset$. We say that $M$ is \emph{filtrated}.\\
The filtration is \emph{bounded below}\index{module!filtrated!bounded below} if there exists an integer $i_0$ such that $\F_{i_0}M=\emptyset$.\\ 

 A \emph{filtrated (graded) map}\index{map!filtrated} $\func{f}{(M,\F)}{(M',\F')}$ of degree $k\in\N$ is a (graded) map $f$ such that $\Im f_{|\F_iM}\subset \F'_{i+k}M'$ for all integer $i$. When omitted, the degree of a filtrated map is zero \ie it preserves the filtration.\\

A filtration is naturally defined on any graded module $M$ by
$$
\forall i\in\Z, \F_iM=\bigoplus_{r\leq i}M_r.
$$
A graded map becomes then a filtrated one of the same degree.\\

Conversely, being given a filtration $\F$ on a ($r$--graded) module $M$, one can define a ($(r+1)$--)graded module by
$$
\forall i\in\Z, M_i:=\fract{\F_iM}/{\F_{i-1}M}.
$$
If the filtration is bounded below, then the filtrated and the graded modules are isomorphic as modules.\\
To any filtrated map $f$ of degree $k$, one can associate a graded one $f_\gr$ by composing it with the surjections ${(\func{s_i}{\F_iM}{\fract{\F_iM}/{\F_{i-1}M}})}_{i\in\Z}$.

\subsection{Chain complexes}
\label{ssec:Chain}

\subsubsection{Chain complexes}
\label{par:Chain}

  A \emph{chain complex}\index{chain complex} $(C,\p)$ is a graded module $C=\oplus_{i\in\Z}C_i$ together with a graded map $\p$ of degree $-1$, called \emph{differential}\index{differential}, such that $\p^2_{|C_i}=\p_{i-1}\p_i\equiv 0$ for all integer $i$.\\
When not pertinent, the differential will be omitted and only the underlying module will be specified.\\

Let $(C,\p)$ and $(C',\p')$ be two chain complexes. A \emph{chain map}\index{map!chain} $\func{f}{(C,\p)}{(C',\p')}$ is a graded map ${(f_i)}_{i\in\Z}$ such that for all integer $i$, $f_{i-1}\p_i+\p'_i f_n\equiv 0$, \ie such that every square in the following diagram is anti-commuting:
$$
\xymatrix @!0 @C=2cm @R=2cm {
\cdots & C_{i-1} \ar[l]_{\p_{i-1}} \ar[d]^{f_{i-1}} & C_i \ar[l]_(.45){\p_i} \ar[d]^{f_i} & C_{i+1} \ar[l]_{\p_{i+1}} \ar[d]^{f_{i+1}} & \cdots \ar[l]_(.47){\p_{i+2}}\\
\cdots & C'_{i-1} \ar[l]_{\p'_{i-1}} & C'_i \ar[l]_(.45){\p'_i} & C'_{i+1} \ar[l]_{\p'_{i+1}} & \cdots \ar[l]_(.47){\p'_{i+2}} .
}
$$
Usually, chain maps are required to commute with the differentials, but within sight of our use of them, anti-commutativity is more convenient. Nevertheless, the usual composition of two chain maps is thus not more a chain map. However, by multiplying $f_i$ by $(-1)^i$, it is straightforward to turn an anti-commuting map into a commuting one and vice versa. Hence, this inconvenient is not annoying at all.\\

For any integer $l$, we define the \emph{shift operation of height $l$}\index{shift operation} which associates to any chain complex $(C,\p)$ a new chain complex $(C[l],\p[l])$ defined by $C[l]:=C\{l\}$ and $\p[l]_i:=\p_{i-l}$ for all integer $i$.\\

When converging, the \emph{Euler characteristic} $\xi(C)$\index{Euler characteristic} of a chain complex $C$ is defined as the graded dimension of $C$ evaluated at $-1$ \ie
$$
\xi(C):=\big(\qdim C\big)(-1)=\sum_{i\in\Z}(-1)^i\rk C_i.
$$
\index{Xi@$\xi$|see{Euler characteristic}}
\subsubsection{Graded chain complexes}
\label{par:GrChain}

  A \emph{graded chain complex}\index{chain complex!graded} is a chain complex $(C,\p)$ where $C_i$ is graded and $\p_i$ respects this grading for all integer $i$. The module $C$ is then bigraded. By convention, the chain complex grading is considered as the first one. The map $\p$ is hence graded of degree (-1,0).\\

A \emph{graded chain map}\index{map!chain!graded} of degree $k$ for any integer $k$ is a chain map $f$ such that $f_i$ is graded of degree $k$ for all integer $i$.\\

For any integer $l$, the shift operation $.\{l\}$ can be extended to graded chain complexes by applying it simultaneously to all the graded modules.\\

When converging, the \emph{graded Euler characteristic}\index{Euler characteristic!graded} $\xi_\gr(C)$ of a graded chain complex $C$ is defined as the bigraded dimension of $C$ with the first variable evaluated at $-1$ \ie
$$
\xi_\gr(C)=\big(\qdim C\big)(-1,q)=\sum_{i,j\in\Z}(-1)^i\rk C_i^j.q^j.
$$
\index{Xigr@$\xi_{gr}$|see{Euler characteristic, graded}}

If $C$ is finitely generated, then $\xi_\gr(C)$ is a polynomial with integer coefficients.\\

One can extend the notion of $r$--grading to higher values of r.

\subsubsection{Filtrated chain complexes}
\label{par:FiChain}

  A \emph{filtrated chain complex}\index{chain complex!filtrated} is a ($r$--graded) chain complex $(C,\p)$ where $C_i$ is filtrated and $\p_i$ respects this filtration for all integer $i$. We say that the filtration on $(C,\p)$ is \emph{bounded below}\index{chain complex!filtrated!bounded below} if it is bounded below on $C_i$ for all integer $i$.\\

A \emph{filtrated chain map}\index{map!chain!filtrated} of degree $k$ for any non negative integer $k$ is a (graded) chain map $f$ such that $f_i$ is filtrated of degree $k$ for all integer $i$.

\begin{lemme}
  Let $C=\oplus_{i,j\in\Z}C_i^j$ be a bigraded module and $\func{\p}{C}{C}$ a graded map of degree $-1$ with regard to the first grading and which preserves the filtration $\F$ associated to the second one. If $\p^2\equiv 0$, then
  \begin{itemize}
  \item[-] $\p_\gr$, the graded part of $\p$ with regard to $\F$, satisfies $\p_\gr^2\equiv 0$ as well;
  \item[-] $\big((C,\F),\p\big)$ is a filtrated chain complex;
  \item[-] $(C,\p_\gr)$ is a graded chain complex.
  \end{itemize}
\end{lemme}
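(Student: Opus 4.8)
The plan is to split $\p$ according to how much it lowers the second grading and to recognise $\p_\gr$ as its ``diagonal'' part. For each $k\geq 0$, let $\func{\p^{(k)}}{C}{C}$ be the map whose restriction to each $C_i^j$ is the composite $C_i^j\hookrightarrow C\xrightarrow{\,\p\,}C\twoheadrightarrow C_{i-1}^{j-k}$. Since $\p$ has degree $-1$ for the first grading and preserves the filtration $\F$ attached to the second one, one has $\p(C_i^j)\subset\bigoplus_{r\leq j}C_{i-1}^r$; as this is a direct sum, only finitely many of the $\p^{(k)}(x)$ are nonzero for a fixed homogeneous $x$, so the equality $\p=\sum_{k\geq 0}\p^{(k)}$ makes sense (the sum is finite on every element). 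Each $\p^{(k)}$ is graded of degree $-1$ for the first grading, and $\p^{(0)}$ moreover preserves the second grading. Because $\F$ is the filtration of a genuine grading, the canonical map $C^j\hookrightarrow\F_jC\twoheadrightarrow\fract{\F_jC}/{\F_{j-1}C}$ is an isomorphism, and under it the map induced by $\p$ on $\fract{\F_jC}/{\F_{j-1}C}$ is exactly $\p^{(0)}$; that is, $\p_\gr=\p^{(0)}$.

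First I would prove $\p_\gr^2\equiv 0$. Composing the decomposition with itself gives $\p^2=\sum_{k,l\geq 0}\p^{(k)}\p^{(l)}$, again finite on every element, and $\p^{(k)}\p^{(l)}$ maps $C_i^j$ into $C_{i-2}^{j-k-l}$. Hence the part of $\p^2$ that preserves the second grading is $\sum_{k+l=0}\p^{(k)}\p^{(l)}=\p^{(0)}\p^{(0)}=\p_\gr^2$. Since $\p^2\equiv 0$, each of its homogeneous components for the second grading vanishes, so $\p_\gr^2\equiv 0$.

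The last two assertions are then immediate from the definitions of \S\ref{par:FiChain} and \S\ref{par:GrChain}. On the one hand, $(C,\p)$ is a chain complex for the first grading ($C$ is graded, $\p$ has degree $-1$, and $\p^2\equiv 0$ by hypothesis), each first-homogeneous piece $C_i$ is filtrated by the restriction of $\F$, and $\p_i$ respects $\F$ by assumption: this is precisely a filtrated chain complex. On the other hand, $(C,\p_\gr)$ is a chain complex for the first grading ($C$ is graded, $\p_\gr$ has degree $-1$ by the above, and $\p_\gr^2\equiv 0$ by the previous paragraph), each $C_i$ is graded by the second grading, and $\p_\gr$ restricted to $C_i$ respects it by the very construction of $\p^{(0)}$: this is a graded chain complex.

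No deep obstacle is expected. The only points requiring care are that the decompositions $\p=\sum_k\p^{(k)}$ and $\p^2=\sum_{k,l}\p^{(k)}\p^{(l)}$ are merely \emph{locally finite} --- which still suffices to isolate the grading-preserving component --- and the identification of the abstractly defined $\p_\gr$ with the explicit component $\p^{(0)}$.
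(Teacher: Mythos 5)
The paper states this lemma without proof, so there is no argument in the text to compare against. Your proof is correct and is the natural one: writing $\p=\sum_{k\geq 0}\p^{(k)}$ with $\p^{(k)}$ the component dropping the second degree by exactly $k$, identifying $\p_\gr$ with $\p^{(0)}$ via the isomorphism $C^j\cong\fract{\F_jC}/{\F_{j-1}C}$, and reading off $\p_\gr^2$ as the degree-preserving component of $\p^2$, which vanishes because $\p^2$ does; the two remaining bullet points are definitional once this is in place. The point you flag about local finiteness of the decomposition (only finitely many $\p^{(k)}(x)$ are nonzero for each homogeneous $x$) is exactly the right care to take, and it is all that is needed to isolate the component $\sum_{k+l=0}\p^{(k)}\p^{(l)}$.
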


\subsubsection{Homologies}
\label{par:Homologies}

The condition $\p^2\equiv 0$ implies that $\Im \p_{i+1} \subset \Ker \p_i$ for all integer $i$.

\begin{defi}
  The \emph{homology}\index{homology} $H(C,\p)$ of a ($r$--graded, filtrated) chain complex $(C,\p)$ is the graded ($(r+1)$--graded, filtrated) module ${\big(H_i:=\fract{\Ker\p_i}/{\Im\p_{i+1}}\big)}_{i\in\Z}$.\\
The modules ${(H_i)}_{i\in\Z}$ are also called \emph{homology groups}\index{homology!groups}. If they are all null, we say that $(C,\p)$ is \emph{acyclic}\index{chain complex!acyclic} \index{acyclic|see{chain complex}}.
\end{defi}

Homology groups can be seen as a ($r$--graded, filtrated) chain complex with a trivial differential.
 
\begin{prop}
  A chain map $\func{f}{C}{C'}$ induces a map $\func{f_*}{H(C)}{H(C')}$ on the associated homology groups.
\end{prop}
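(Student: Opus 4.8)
The plan is to verify the standard diagram-chase that produces the induced map on homology, paying attention to the paper's anti-commutativity convention. First I would fix an integer $i$ and check that $f_i$ sends cycles to cycles: if $x\in\Ker\p_i$, then I need $\p'_i(f_i(x))=0$. From the defining relation of a chain map, $f_{i-1}\p_i+\p'_i f_i\equiv 0$, so $\p'_i f_i(x) = -f_{i-1}\p_i(x) = -f_{i-1}(0)=0$. Hence $f_i(\Ker\p_i)\subset\Ker\p'_i$.

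Next I would check that $f_i$ sends boundaries to boundaries: if $x=\p_{i+1}(y)$ for some $y\in C_{i+1}$, then using the chain map relation at index $i+1$, namely $f_i\p_{i+1}+\p'_{i+1}f_{i+1}\equiv 0$, we get $f_i(x)=f_i\p_{i+1}(y)=-\p'_{i+1}f_{i+1}(y)\in\Im\p'_{i+1}$. Therefore $f_i(\Im\p_{i+1})\subset\Im\p'_{i+1}$. Combining the two inclusions, $f_i$ descends to a well-defined homomorphism $f_{i,*}\colon H_i(C)=\fract{\Ker\p_i}/{\Im\p_{i+1}}\longrightarrow \fract{\Ker\p'_i}/{\Im\p'_{i+1}}=H_i(C')$ by the universal property of the quotient; well-definedness is exactly the boundary-to-boundary statement. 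Assembling these over all $i$ gives the graded map $f_*\colon H(C)\to H(C')$. In the $r$--graded or filtrated setting, one observes that $f_i$ preserves the extra grading (resp. filtration degree) because it does so before passing to homology, so $f_*$ inherits the same degree; this requires no new argument.

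There is no real obstacle here: the only thing to be careful about is the sign, since this paper uses anti-commuting rather than commuting chain maps. But as noted in the text around paragraph~\ref{par:Chain}, a sign $(-1)^i$ converts one convention to the other, and in any case the signs above are harmless --- multiplying a representative by $-1$ does not change a homology class. So the proof is a routine two-step diagram chase, and the statement follows.
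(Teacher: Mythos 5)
Your proof is correct and is the standard diagram chase; the paper states this proposition without proof, treating it as a well-known fact, and your argument (cycles to cycles, boundaries to boundaries, with the observation that the anti-commutativity sign is harmless) is precisely the expected one.
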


A chain map $f$ is a \emph{quasi-isomorphism}\index{map!chain!quasi-isomorphism}\index{quasi-isomorphism|see{map}} if the induced map $f_*$ is an isomorphism on homology groups.

\begin{prop}\label{Euler}
  Let $(C,\p)$ be a graded chain complex then
$$
\xi_\gr \big(H(C,\p)\big)=\xi_\gr(C,\p).
$$
\end{prop}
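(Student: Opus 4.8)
The plan is to prove the classical Euler--Poincar\'e identity $\xi_\gr\big(H(C,\p)\big)=\xi_\gr(C,\p)$ by reducing everything to the first (chain complex) grading and working one homogeneous slice of the second grading at a time. Since $\p$ has bidegree $(-1,0)$, for each fixed second degree $j$ the subcollection $C^j=(C_i^j)_{i\in\Z}$ together with $\p$ restricted to it forms an ordinary (singly) graded chain complex, and $H(C,\p)_i^j=\fract{\Ker(\p_i|_{C_i^j})}/{\Im(\p_{i+1}|_{C_{i+1}^j})}=H(C^j,\p)_i$. Hence it suffices to prove, for an ordinary chain complex $(C,\p)$ of finite total rank with $C_i$ finitely generated, the numerical identity $\sum_i(-1)^i\rk H_i=\sum_i(-1)^i\rk C_i$, and then multiply by $q^j$ and sum over $j$. (Implicitly one needs $\xi_\gr$ to converge, i.e. $C$ bounded and finitely generated in each degree, which is the situation where the statement makes sense.)

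For the reduced statement I would use the rank--nullity theorem applied to each $\p_i\colon C_i\to C_{i-1}$. Writing $z_i=\rk\Ker\p_i$ and $b_i=\rk\Im\p_{i+1}$, rank--nullity for $\Z$-modules gives $\rk C_i=z_i+\rk\Im\p_i=z_i+b_{i-1}$, while by definition $\rk H_i=z_i-b_i$ (using that $\Im\p_{i+1}\subset\Ker\p_i$ and that rank is additive on short exact sequences of $\Z$-modules). Then
\[
\sum_i(-1)^i\rk C_i=\sum_i(-1)^i(z_i+b_{i-1})=\sum_i(-1)^i z_i+\sum_i(-1)^i b_{i-1}=\sum_i(-1)^i z_i-\sum_i(-1)^i b_i=\sum_i(-1)^i\rk H_i,
\]
the middle step being a reindexing $i\mapsto i+1$ in the second sum, all sums being finite by the convergence hypothesis so that rearrangement is legitimate.

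The only genuinely delicate points, none of them hard, are bookkeeping ones: first, that $\rk$ is well behaved on subquotients of finitely generated $\Z$-modules, i.e. additive along short exact sequences, so that $\rk H_i=\rk\Ker\p_i-\rk\Im\p_{i+1}$ even though $H_i$ may have torsion (torsion contributes $0$ to the rank, which is exactly why the Euler characteristic is insensitive to it); and second, that the rearrangement and reindexing of the alternating sums is justified, which holds because $\xi_\gr$ is assumed to converge, meaning only finitely many $C_i^j$ are nonzero. So I expect the main ``obstacle'' to be purely expository: making precise that all series in sight are actually finite in each second-degree slice and that rank is exact on $\Z$-modules up to torsion; the algebraic core is the two-line cancellation above.
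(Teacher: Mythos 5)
Your proof is correct, and since the paper states this proposition without proof (treating it as standard homological algebra), your rank--nullity argument is exactly the classical Euler--Poincar\'e reasoning the paper implicitly invokes. The reduction to fixed second-degree slices is valid because the differential has bidegree $(-1,0)$, and the additivity of rank on short exact sequences of finitely generated $\Z$-modules handles the torsion correctly, so nothing is missing.
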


\subsubsection{Cohomologies}
\label{par:Cohomologies}

  A \emph{cochain complex}\index{cochain complex} $(C,\p)$ is a graded module $C=\oplus_{i\in\Z}C^i$ together with a graded map $\p$ of degree $1$ called \emph{codifferential}\index{codifferential} such that $\p^2_{|C_i}=\p^{i+1}\p^i\equiv 0$ for all integer $i$.\\

Mutatis mutandis, all the definitions and the statements made for chain complexes have their counterpart for cochain complexes. In particular, one can define bigraded cochain complexes as well as bigraded cohomologies.\\
Usual notation for bigraded cochain complexes is to exchange exponents and indices.\\ 

To any (bigraded) chain complex $(C,\p)$, we can naturally associated a dual (bigraded) cochain complex $(C^*,\p^*)$ defined by
\begin{gather*}
  \forall i\in\Z, C^{*,i}:=\Hom(C_i,\Z)\\[.2cm]
  \forall \phi\in C^*, \p^*(\phi)=\phi\circ \p.
\end{gather*}

The following proposition is a corollary of Universal Coefficient Theorem for cohomology:
\begin{prop}\label{UCT}
  For any bigraded chain complex $(C,\p)$ and for any integers $i$, $j$ and $k$ the homology groups $H^{*,i}_{j,k}(C^*)$ and $\big(H_i^{j,k}(C)/\T_i^{j,k}(C)\big)\oplus T_{i+1}^{j,k}(C)$ are isomorphic where $T(D)$ denotes the torsion part of $H(C)$.
\end{prop}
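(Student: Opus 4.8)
The assertion is a degreewise repackaging of the classical Universal Coefficient Theorem, so the plan is to strip off every grading and then feed one graded piece at a time into the ungraded statement. First I would fix internal degrees $j$ and $k$: since $\p$ is graded of degree $-1$ for the homological grading and preserves the internal bigrading, the family ${\big(C_i^{j,k},\p_i\big)}_{i\in\Z}$ is an ordinary chain complex of $\Z$--modules, and (using finite generation to let the relevant $\Hom$ commute with the direct sums) its dual cochain complex is the corresponding piece of $(C^*,\p^*)$. Hence $H_i^{j,k}(C)$ is the $i$-th homology of this chain complex while $H^{*,i}_{j,k}(C^*)$ is the $i$-th cohomology of its dual, and the problem is reduced to a single ungraded chain complex of finitely generated free abelian groups.

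On such a complex I would then invoke the Universal Coefficient Theorem for cohomology with $\Z$--coefficients: for every $i$ there is a short exact sequence
\[
0\longrightarrow\textnormal{Ext}^1_\Z\!\big(H_{i+1}^{j,k}(C),\Z\big)\longrightarrow H^{*,i}_{j,k}(C^*)\longrightarrow\Hom\big(H_i^{j,k}(C),\Z\big)\longrightarrow 0,
\]
which splits, albeit not canonically; the homological shift in the $\textnormal{Ext}$ term is the one prescribed by the grading conventions fixed for $(C^*,\p^*)$, and is what produces the index $i+1$ of the statement. It then remains to identify the two outer terms over $\Z$: writing a finitely generated abelian group $G$ as its free part plus its torsion part $T(G)$ and using $\Hom(\Z/n,\Z)=0$, $\Hom(\Z,\Z)=\Z$, $\textnormal{Ext}^1_\Z(\Z,\Z)=0$ and $\textnormal{Ext}^1_\Z(\Z/n,\Z)\cong\Z/n$, one gets $\Hom(G,\Z)\cong G/T(G)$ and $\textnormal{Ext}^1_\Z(G,\Z)\cong T(G)$. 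Applying this with $G=H_i^{j,k}(C)$ and $G=H_{i+1}^{j,k}(C)$, and using the (non-canonical) splitting, yields exactly the stated isomorphism $H^{*,i}_{j,k}(C^*)\cong\big(H_i^{j,k}(C)/T_i^{j,k}(C)\big)\oplus T_{i+1}^{j,k}(C)$.

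The argument is essentially formal and I do not expect any substantive obstacle; the only thing that needs care is the index bookkeeping, namely checking that the homological shift in the $\textnormal{Ext}$ term is the one genuinely forced by the present sign conventions (differential of degree $-1$, dual codifferential of degree $+1$, exponents and indices exchanged in the dual), together with confirming that freeness and finite generation of the $C_i^{j,k}$ are indeed available as hypotheses. Both points are routine once made explicit.
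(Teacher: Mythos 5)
Your strategy matches what the paper intends (the paper offers no proof and simply invokes the Universal Coefficient Theorem): fix the internal degrees $j,k$, apply the classical UCT for cohomology to the resulting complex of abelian groups, and then identify $\Hom(G,\Z)\cong G/T(G)$ and $\textnormal{Ext}^1_\Z(G,\Z)\cong T(G)$ for finitely generated $G$. You are also right that ``any bigraded chain complex'' is too loose: one needs each $C_i^{j,k}$ to be a finitely generated free abelian group for the UCT to apply and for the graded piece of $C^*$ to be $\Hom(C_i^{j,k},\Z)$; these hold in the Khovanov application but should be stated as hypotheses.

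However, the index of the $\textnormal{Ext}$ term in your short exact sequence is precisely the point you flagged as ``needs care'', and it is wrong. With the paper's conventions --- $\p$ of degree $-1$, $C^{*,i}=\Hom(C_i,\Z)$, $\p^*(\phi)=\phi\circ\p$ of degree $+1$, hence $H^{*,i}(C^*)=\Ker(\p^*|_{C^{*,i}})/\Im(\p^*|_{C^{*,i-1}})$ --- the classical UCT for cohomology reads
\[
0\longrightarrow\textnormal{Ext}^1_\Z\big(H_{i-1}^{j,k}(C),\Z\big)\longrightarrow H^{*,i}_{j,k}(C^*)\longrightarrow\Hom\big(H_i^{j,k}(C),\Z\big)\longrightarrow 0,
\]
so the torsion contribution comes from homological degree $i-1$, not $i+1$. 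A one-line sanity check: take $C_1=\Z\xrightarrow{\times n}C_0=\Z$, so $H_0\cong\Z/n$ and all other $H_i$ vanish; then $\p^*\colon C^{*,0}\to C^{*,1}$ is multiplication by $n$, giving $H^{*,1}(C^*)\cong\Z/n$ and $H^{*,0}(C^*)=0$, which only the $i-1$ shift reproduces. Carrying the bookkeeping through therefore yields $H^{*,i}_{j,k}(C^*)\cong\big(H_i^{j,k}(C)/T_i^{j,k}(C)\big)\oplus T_{i-1}^{j,k}(C)$; the index $i+1$ in the proposition is a misprint in the paper, which your sketch reproduced rather than corrected. (Consistently, the downstream proposition relating $\HH_i^{j,k}(D)$ to $\HH(\overline{D})$ should carry $T_{-i-1}^{-j,-k}(\overline{D})$ rather than $T_{-i+1}^{-j,-k}(\overline{D})$, in agreement with the known Khovanov mirror duality once one translates the paper's differential of degree $-1$ into the usual cohomological one of degree $+1$.)
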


\subsection{Algebraic tools}
\label{ssec:Tools}

All definitions and statements in this section do hold, {\it mutatis mutandis}, for $r$--graded and filtrated chain complexes.

\subsubsection{Short and long exact sequences}
\label{par:ExSequences}

A \emph{long exact sequence}\index{exact sequence!long} is an acyclic  chain complex.\\

A \emph{short exact sequence}\index{exact sequence!short} of chain complexes is a triplet $(C,C',C'')$ of chain complexes together with an injective chain map $\func{f}{C}{C'}$ and a surjective one $\func{g}{C'}{C''}$. We denote it by
$$
\xymatrix{
0 \ar[r] & C \ar@{^(->}[r]^f & C' \ar@{->>}[r]^g & C'' \ar[r] & 0.
}
$$

\begin{theo}
  Let $\xymatrix{0 \ar[r] & C \ar@{^(->}[r]^f & C' \ar@{->>}[r]^g & C'' \ar[r] & 0}$ be a short exact sequence of chain complexes. Then there are natural maps $\func{d_i}{H_i(C'')}{H_{i-1}(C)}$ for all integer $i$ such that
$$
\xymatrix@!0@C=2.5cm{
\cdots & H_{i-1}(C) \ar[l]_(.55){{(f_*)}_{i-1}} & H_i(C'') \ar[l]_(.42){d_i} & H_i(C') \ar[l]_(.45){{(g_*)}_i} & H_i(C) \ar[l]_(.45){{(f_*)}_i} & H_{i+1}(C'') \ar[l]_{d_{i+1}} & \cdots \ar[l]_(.35){{(g_*)}_{i+1}}
},
$$
also denoted
$$
\xymatrix@!0@C=1.5cm@R=1.5cm{
H(C'') \ar[rr]^{d} && H(C) \ar[dl]^{f_*}\\
& H(C') \ar[ul]^{g_*}&
},
$$
is a long exact sequence.
\end{theo}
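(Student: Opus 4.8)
The plan is the classical zig-zag (snake) diagram chase, adapted to the anti-commuting convention in use here. First I would construct the connecting maps $d_i$. Given a class $[z]\in H_i(C'')$ represented by a cycle $z\in C''_i$, use surjectivity of $g$ to pick $y\in C'_i$ with $g_i(y)=z$. Then $g_{i-1}(\p'_i y)=\p''_i g_i(y)=\p''_i z=0$ (up to the sign coming from anti-commutativity, which is immaterial since we work with a single element), so by exactness $\p'_i y=f_{i-1}(x)$ for a unique $x\in C_{i-1}$, and $x$ is a cycle because $f_{i-2}(\p_{i-1}x)$ equals $\p'_{i-1}\p'_i y=0$ and $f_{i-2}$ is injective. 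Set $d_i([z]):=[x]\in H_{i-1}(C)$.

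The next step is to verify that $d_i$ is well defined: a different lift $y'$ of $z$ differs from $y$ by an element of $\Im f_i$, and changing $z$ within its homology class by $\p''_{i+1}w$ changes $y$ by a lift of $\p''_{i+1}w$ plus something in $\Im f_i$; in both cases the resulting $x$ changes only by a boundary in $C$. Linearity of $d_i$ is immediate from the construction once well-definedness is known. One also checks routinely that $d_i$ is natural with respect to morphisms of short exact sequences, since all choices involved can be made compatibly.

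It then remains to prove exactness of the displayed sequence at its three types of vertices. At $H_i(C')$: $(g_*)_i\circ(f_*)_i=0$ because $g\circ f=0$, and conversely if $g_i(y)$ is a boundary then after subtracting a boundary we may assume $g_i(y)=0$, so $y\in\Im f_i$ and, $y$ being a cycle, its $f$-preimage is a cycle mapping to $[y]$. At $H_i(C'')$: $d_i\circ(g_*)_i=0$ since if $z=g_i(y)$ with $y$ a cycle we may take the lift to be $y$ itself, whence $x=0$; conversely if $d_i([z])=0$ then, adjusting the lift $y$ by an $f$-image of a suitable chain in $C$, we can arrange $\p'_i y=0$, so $[z]=(g_*)_i([y])$. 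At $H_{i-1}(C)$: $(f_*)_{i-1}\circ d_i=0$ because $f_{i-1}(x)=\p'_i y$ is a boundary; conversely if $f_{i-1}(x)=\p'_i y$ for some $y$, then $g_i(y)$ is a cycle whose class has $d_i$-image $[x]$. Assembling these, the composite is acyclic, i.e. a long exact sequence, and the equivalent triangular display is just a renaming.

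The main obstacle is not conceptual but bookkeeping: one must keep the various lifts and their ambiguities straight throughout the well-definedness argument and the six exactness inclusions, and must confirm that the anti-commutativity sign convention adopted in \S\ref{par:Chain} propagates harmlessly (it only ever introduces an overall sign on an element, never affecting whether something is a cycle or a boundary). Everything else is the standard diagram chase, which I would carry out on the two-row ladder diagram displayed in \S\ref{par:Chain}.
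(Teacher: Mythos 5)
Your sketch is the standard zig-zag (snake) argument, and it is correct; the anti-commutativity convention only perturbs elements by an overall sign, which, as you observe, never affects whether a chain is a cycle or a boundary, and the one place it matters (transporting the ambiguity in the lift $y$ to a boundary in $C$) works out cleanly because $\p'_i f_i = -f_{i-1}\p_i$ still lands the correction in $\Im\p$. The paper itself does not prove this theorem — it is stated without proof, with the reader referred to the standard references cited at the start of the section — so there is no competing argument to compare against; your proof is exactly what one would find in those sources.
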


Exact sequences are very efficient for computing homology groups. Actually, if a short exact sequence involves an acyclic chain complex, then the two other ones share the same homology.\\
Moreover, the following proposition holds:

\begin{prop}[$5$--lemma]
  Let
$$
\xymatrix{
E \ar[d]^\cong_e & D \ar[l] \ar[d]^\cong_d & C \ar[l] \ar[d]_c & B \ar[l] \ar[d]^\cong_b & A \ar[l] \ar[d]^\cong_a\\
E' & D' \ar[l] & C' \ar[l] & B' \ar[l] & A' \ar[l]
}
$$
be a $5$ piece chain map between two long exact sequences. Moreover, suppose that $a$, $b$, $d$ and $e$ are isomorphisms, then $c$ is also an isomorphism.
\end{prop}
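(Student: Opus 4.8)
The plan is to run the classical diagram chase, which here needs nothing beyond the two four-lemmas: I would prove separately that $c$ is injective and that $c$ is surjective, noting in passing that only the bijectivity of $b$ and $d$, the surjectivity of $a$ and the injectivity of $e$ are actually used (so the statement is stated slightly more strongly than necessary). First I fix notation: write the maps of the upper row as $\delta\colon A\to B$, $\gamma\colon B\to C$, $\beta\colon C\to D$, $\alpha\colon D\to E$, those of the lower row with primed letters, record the commutativity of the four squares (up to a sign, which is irrelevant since every argument only manipulates kernels, images and $0$), namely $b\delta=\delta'a$, $c\gamma=\gamma'b$, $d\beta=\beta'c$, $e\alpha=\alpha'd$, and recall the exactness identities $\Im\gamma=\Ker\beta$, $\Im\delta=\Ker\gamma$, $\Im\beta=\Ker\alpha$ together with their primed analogues.

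For injectivity I start from $x\in C$ with $c(x)=0$: push it forward, since $d\beta(x)=\beta'c(x)=0$ and $d$ is injective we get $\beta(x)=0$, so by exactness $x=\gamma(y)$ for some $y\in B$; then $\gamma'b(y)=c\gamma(y)=0$ puts $b(y)$ in $\Ker\gamma'=\Im\delta'$, say $b(y)=\delta'(z')$; lifting $z'$ along the surjection $a$ to $z\in A$ gives $b\delta(z)=\delta'a(z)=b(y)$, hence $\delta(z)=y$ by injectivity of $b$, and finally $x=\gamma(y)=\gamma\delta(z)=0$ by exactness at $B$.

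For surjectivity I start from $w'\in C'$: lift $\beta'(w')$ along the surjection $d$ to $v\in D$; then $e\alpha(v)=\alpha'd(v)=\alpha'\beta'(w')=0$, and injectivity of $e$ forces $\alpha(v)=0$, so $v=\beta(u)$ for some $u\in C$; now $\beta'c(u)=d\beta(u)=d(v)=\beta'(w')$, so $c(u)-w'\in\Ker\beta'=\Im\gamma'$, say $c(u)-w'=\gamma'(t')$; lifting $t'$ along the surjection $b$ to $t\in B$ yields $c(u)-w'=\gamma'b(t)=c\gamma(t)$, hence $w'=c\bigl(u-\gamma(t)\bigr)$. Combining the two chases, $c$ is an isomorphism.

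I do not expect a genuine obstacle here: this is the textbook chase, and the only thing to be careful about is the bookkeeping of which square and which exactness relation is invoked at each arrow, plus checking that the argument goes through in the announced generality. The latter is immediate, since every step is phrased purely in terms of kernels, images, the zero element and exactness, all of which are available for $r$--graded and filtered chain complexes (indeed in any abelian category), so no separate verification is needed in those settings.
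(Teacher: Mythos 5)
Your chase is correct in every detail: the injectivity half uses only the surjectivity of $a$ and the injectivity of $b$ and $d$, the surjectivity half uses only the surjectivity of $b$ and $d$ and the injectivity of $e$, the invocations of exactness and of commuting squares are all in the right places, and your aside about signs correctly disposes of the paper's anti-commuting convention for chain maps, since the chase never manipulates anything but kernels, images and $0$. The paper itself states this proposition without proof, deferring to the textbook references cited at the head of the section, so there is no in-text argument to compare against; what you have written is precisely the standard four-lemma chase those references give.
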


\subsubsection{Spectral sequences}
\label{par:SpecSequences}

Let $n$ be a non negative integer.\\

A \emph{spectral sequence}\index{spectral sequence} is the data of:
\begin{itemize}
\item[-] a sequence ${(E^s)}_{s\in\N}$ of bigraded modules;
\item[-] graded maps $\func{\p^s}{E^s}{E^s}$ of degree $(s,1-s)$ for all non negative integer $s$;
\item[-] graded isomorphisms between $E^s$ and $\fract{\Ker \p^{s-1}}/{\Im \p^{s-1}}$ for all positive integer $s$.
\end{itemize}
For all non negative integer $s$, $E^s$ is called the \emph{$s^{\textrm{th}}$ page}. We say the spectral sequence starts at $E_0$.\\

A spectral sequence \emph{converges}\index{spectral sequence!convergence} to a bigraded module $H$ if for any degree $(i,j)\in\Z^2$, there exists an non negative integer $s_{ij}$ such that for all $s\geq s_{ij}$, ${(E^s)}_i^j$ is isomorphic to $H_i^j$.\\

\begin{prop}
  A filtrated chain complex $\big((C,\F),\p\big)$ naturally determines a spectral sequence which starts at the associated graded chain complex $(C,\p_\gr)$ (after the change of indices $(p,q)=(j,-i-j)$).\\
Moreover, if the filtration is bounded below, then the spectral sequence converges to $H(C)$.
\end{prop}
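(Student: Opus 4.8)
The plan is to construct the spectral sequence directly from the filtration, following the classical machinery (as in \cite{Weibel} or \cite{Gelfand}), while tracking the two gradings we actually have. Recall that our chain complex $(C,\p)$ carries the homological $\Z$--grading (indices $i$) and, via the filtration $\F$, a second grading; I set $\F_pC$ for the $p$-th filtration level and work with the induced filtration on $C_i$ for each $i$. The building blocks are the approximate cycles and boundaries: for $s\geq 0$ put
$$
Z_p^s:=\{x\in \F_pC\ |\ \p x\in \F_{p-s}C\},\qquad B_p^s:=\p\big(Z_{p+s}^{s}\big)\cap \F_pC,
$$
and define $E^s_p:=Z_p^s\big/\big(Z_{p-1}^{s-1}+B_p^{s-1}\big)$. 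These inherit the homological grading, so each $E^s$ is bigraded once we record both $p$ and $i$ (the change of indices $(p,q)=(j,-i-j)$ in the statement is just a bookkeeping convention so that the differentials land in the advertised bidegree). The differential $\p^s$ on $E^s$ is the map induced by $\p$: since $\p$ lowers the filtration by at least $s$ on $Z^s_p$ and $\p^2=0$, it is well defined, squares to zero, and sends $E^s_p\to E^s_{p-s}$; in the $(p,q)$ coordinates and with the homological index this is exactly degree $(s,1-s)$.

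The three axioms of a spectral sequence are then verified in order. First, well-definedness of $\p^s$ and $(\p^s)^2=0$: both are formal consequences of the nesting $Z^{s+1}_p\subset Z^s_p$, $B^{s-1}_p\subset B^s_p$ and of $\p^2\equiv 0$, so this is a routine diagram chase. Second, the key isomorphism $E^{s}\cong \Ker\p^{s-1}/\Im\p^{s-1}$: this is the standard computation identifying the homology of the $s$-th page with the $(s{+}1)$-th page, and it reduces to showing that the preimage under the quotient map of $\Ker\p^{s-1}$ is $Z^s_p+Z^{s-1}_{p-1}$ and that of $\Im\p^{s-1}$ is $B^s_p+Z^{s-1}_{p-1}$; again a bookkeeping chase with the defining subquotients. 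Third, for the starting page one checks $E^0_p=\F_pC/\F_{p-1}C$ with differential induced by $\p$, which is precisely the graded chain complex $(C,\p_\gr)$ of the previous lemma in \S\ref{par:FiChain}; this is immediate from the definition of $\p_\gr$ as the composition of $\p$ with the surjections onto the associated graded.

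For the convergence statement, assume the filtration is bounded below, i.e. for each $i$ there is $p_0$ with $\F_{p_0}C_i=\emptyset$; combined with $\cup_p\F_pC=C$ this gives, in each fixed bidegree, that the filtration is finite. Then for $s$ large (depending on that bidegree) one has $Z^s_p=Z^\infty_p:=\F_pC\cap\Ker\p$ and $B^{s}_p=B^\infty_p:=\F_pC\cap\Im\p$, so $E^s_p$ stabilizes to $\big(\F_pC\cap\Ker\p\big)\big/\big(\F_{p-1}C\cap\Ker\p+\F_pC\cap\Im\p\big)$, which is exactly the $p$-th graded piece of the filtration induced on $H(C)$. Hence in every bidegree the pages are eventually isomorphic to the corresponding piece of $H(C)$, which is the definition of convergence.

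The genuinely delicate point is not any single computation but keeping all the gradings consistent simultaneously: the homological index $i$, the filtration degree $p$, and the reindexing $(p,q)=(j,-i-j)$ must be threaded through $Z^s_p$, $B^s_p$ and the induced $\p^s$ so that the claimed bidegree $(s,1-s)$ comes out correctly and so that "bounded below on each $C_i$" really does force finiteness in each bidegree. Once the indices are pinned down, every step is the classical exact-couple / filtered-complex argument and I would simply cite \cite{Weibel} for the verifications rather than reproduce them.
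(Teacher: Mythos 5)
The paper states this proposition without proof, as standard background with references to \cite{Cartan}, \cite{Gelfand} and \cite{Weibel}, so there is no argument to compare against; your construction of the pages via the approximate cycles $Z_p^s$ and boundaries $B_p^s$ is the classical one and the verification that $E^0=\gr C$, that $\p^s$ is well defined of the stated bidegree, and that $E^{s+1}\cong H(E^s,\p^s)$ are all correct.

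There is, however, a genuine gap in the convergence paragraph. You assert that ``bounded below'' together with exhaustiveness gives, in each fixed bidegree, that the filtration is \emph{finite}, and from this that both $Z_p^s$ and $B_p^s$ stabilize. The first half is right: $Z_p^s=\{x\in\F_pC_n:\p x\in\F_{p-s}C_{n-1}\}$ freezes once $\F_{p-s}C_{n-1}=0$. But the second half is false: $B_p^{s-1}=\p\bigl(Z_{p+s-1}^{s-1}\bigr)=\F_pC_n\cap\p\bigl(\F_{p+s-1}C_{n+1}\bigr)$, which involves filtration levels of $C_{n+1}$ that \emph{increase} with $s$, and bounded-below plus exhaustive does not make the filtration on $C_{n+1}$ stop (take $C_1=\bigoplus_{m\geq 0}\Z e_m$ with $\F_pC_1=\bigoplus_{m\leq p}\Z e_m$, $C_0=\bigoplus_{m\geq 0}\Z f_m$ with $\F_pC_0=C_0$ for $p\geq 0$, $\p e_m=f_m$: then $H_*=0$ but $E^s_{0}$ in degree $0$ is $C_0/\bigoplus_{m<s}\Z f_m\neq 0$ for all $s$). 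What bounded-below-and-exhaustive actually gives is the weaker statement that $E^\infty_{p}:=\bigl(\F_pC\cap\Ker\p\bigr)\big/\bigl(\F_{p-1}C\cap\Ker\p+\F_pC\cap\Im\p\bigr)$ is the associated graded of $H(C)$; pagewise stabilization in the sense of paragraph \ref{par:SpecSequences} needs the filtration to be bounded on \emph{both} sides in each homological degree (e.g.\ finitely generated pieces), which happens to hold in all of the paper's applications. Note also that Corollary \ref{Filt->Acycl} survives without stabilization: if $E^1=0$ then $E^\infty=0$, hence $\gr H(C)=0$, and a bounded-below exhaustive filtration with trivial associated graded forces $H(C)=0$. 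You should either add the bounded-above hypothesis, or replace the stabilization claim by the $E^\infty$ version and note that the corollary only needs the latter.
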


\begin{cor}\label{Filt->Acycl}
  If a chain complex $C$ can be endowed with a filtration such that the associated graded chain complex is acyclic, then $C$ is acyclic. 
\end{cor}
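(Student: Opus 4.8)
The plan is to derive this directly from the spectral sequence proposition stated immediately above (the one asserting that a bounded-below filtered chain complex determines a spectral sequence converging to $H(C)$, starting at the associated graded complex $(C,\p_\gr)$). Concretely, suppose $C$ carries a filtration $\F$ such that $(C,\p_\gr)$ is acyclic. First I would invoke that proposition to obtain the associated spectral sequence: its first page is (up to the change of indices $(p,q)=(j,-i-j)$) the homology of the associated graded complex, i.e.\ $H(C,\p_\gr)$. By hypothesis $(C,\p_\gr)$ is acyclic, so $H(C,\p_\gr)=0$, hence the first page of the spectral sequence vanishes identically.

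Next I would observe that once some page $E^s$ of a spectral sequence is zero, every subsequent page is zero as well: each later page is a subquotient ($\Ker\p^s/\Im\p^s$) of the previous one, so zero propagates forward. Therefore $E^s=0$ for all $s$ large enough (in fact for all $s\geq 1$). Then, since the filtration is bounded below, the same proposition guarantees that the spectral sequence converges to $H(C)$: for each bidegree $(i,j)$ there is an $s_{ij}$ with $(E^s)_i^j\cong H_i^j(C)$ for all $s\geq s_{ij}$. But $(E^s)_i^j=0$ for all such $s$, so $H_i^j(C)=0$ for every bidegree, which is to say $H(C)=0$, i.e.\ $C$ is acyclic.

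One small point to make explicit: the corollary as stated speaks of ``a chain complex $C$ endowed with a filtration'', and the spectral sequence proposition requires the filtration to be bounded below in order to get convergence. I would note that in the intended applications this boundedness is automatic (the complexes in question are finitely generated, or at least bounded), so the hypothesis of Corollary~\ref{Filt->Acycl} should be read as including bounded-belowness; alternatively one restates the corollary with that assumption. The main (and really only) obstacle is this convergence bookkeeping — making sure the change of indices is applied consistently and that the vanishing of every page genuinely forces the vanishing of the limit — but given the spectral sequence proposition as a black box, there is no substantive difficulty, and the proof is essentially the two paragraphs above.
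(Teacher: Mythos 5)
Your argument is correct and is exactly the intended one: the paper states Corollary~\ref{Filt->Acycl} without a separate proof, treating it as an immediate consequence of the preceding spectral sequence proposition, and your two-paragraph deduction (first page $E^1 = H(C,\p_\gr)$ vanishes, zero propagates to all later pages since each is a subquotient of the previous, convergence then forces $H(C)=0$) is precisely how that deduction goes. You are also right to flag that the corollary as written omits the bounded-below hypothesis needed for convergence; in the paper's applications all filtrations are bounded below, so this is an implicit standing assumption rather than a real gap, but it is good that you made it explicit.
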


\subsection{On chain maps}
\label{ssec:OnMaps}

All definitions and statements in this section do hold, {\it mutatis mutandis}, for graded and filtrated chain maps.

\subsubsection{Chain homotopies}
\label{par:HomChain}

A chain map $\func{f}{(C,\p)}{(C'\p')}$ between chain complexes is \emph{null homotopic}\index{map!chain!null homotopic}\index{null homotopix|see{map}} if there exists a graded map $\func{h}{C}{C'}$ of degree 1 such that
$$
f\equiv \p' h - h \p
$$
or such that
$$
f+ \p' h + h \p \equiv 0.
$$
Two chain maps are \emph{homotopic}\index{map!chain!homotopic}\index{homotopic|see{map}} if their sum or their difference is null homotopic.\\

\begin{prop}\label{induction}
  Two homotopic chain maps $f$ and $g$ induce the same map $f_*\equiv g_*$ on the homology groups.
\end{prop}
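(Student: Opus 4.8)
The statement to prove is Proposition \ref{induction}: two homotopic chain maps $f$ and $g$ induce the same map $f_* \equiv g_*$ on homology groups.

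\medskip

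The plan is to reduce immediately to the case of a null homotopic chain map. By the definition in paragraph \ref{par:HomChain}, saying $f$ and $g$ are homotopic means that $f+g$ (or $f-g$) is null homotopic, and since the assignment $h \mapsto h_*$ on homology is additive (it is induced by linear maps between the quotient modules $\fract{\Ker\p_i}/{\Im\p_{i+1}}$), it suffices to show that a null homotopic chain map $\varphi = f+g$ induces the zero map $\varphi_* \equiv 0$. Then $f_* + g_* \equiv 0$, i.e.\ $f_* \equiv g_*$ (working, as everywhere in the excerpt, with $\Z$--modules where $-1 = +1$ is not assumed, so one should carry both the $+$ and $-$ conventions through in parallel; the argument is identical).

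\medskip

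So first I would take a null homotopic $\varphi\colon (C,\p)\to(C',\p')$, with a degree $1$ graded map $h\colon C\to C'$ satisfying $\varphi + \p' h + h\p \equiv 0$, that is $\varphi_i = -\p'_{i+1} h_i - h_{i-1}\p_i$ for every integer $i$. Next I would pick a homology class in $H_i(C)$, represented by a cycle $x \in \Ker\p_i$, so $\p_i x = 0$. Evaluating, $\varphi_i(x) = -\p'_{i+1}h_i(x) - h_{i-1}\p_i(x) = -\p'_{i+1}h_i(x)$, which lies in $\Im\p'_{i+1}$. Hence $\varphi_i(x)$ is a boundary in $C'$, so its class in $H_i(C') = \fract{\Ker\p'_i}/{\Im\p'_{i+1}}$ is zero. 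Since this holds for every cycle $x$, the induced map $\varphi_* \colon H_i(C) \to H_i(C')$ is zero in every degree $i$. Applying this to $\varphi = f \pm g$ and using additivity gives $f_* \equiv g_*$.

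\medskip

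There is essentially no obstacle here: the only point requiring a word of care is the sign bookkeeping, since the excerpt works with anti-commuting chain maps and defines "homotopic" via either the sum or the difference being null homotopic, so one must make sure the computation $\varphi_i = -\p'_{i+1}h_i - h_{i-1}\p_i$ is the correct expansion of the chosen convention and that the additivity step is applied consistently. One should also note in passing that the same verbatim argument applies in the $r$--graded and filtrated settings (the homotopy $h$ respecting the extra structure), and for cochain complexes after reversing the arrows, so Proposition \ref{induction} will be used freely in those contexts later.
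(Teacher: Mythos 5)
Your proof is correct and is the standard argument; the paper states Proposition \ref{induction} without proof, treating it as a basic fact from homological algebra, so there is nothing in the text to compare against. You correctly reduce to showing a null homotopic chain map $\varphi$ induces zero on homology, and the computation $\varphi_i(x) = -\p'_{i+1}h_i(x)$ for a cycle $x$ (so that $\varphi_i(x)$ is a boundary) is exactly the expected verification, with the sign conventions of paragraph \ref{par:HomChain} tracked properly.
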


Two chain complexes $C$ and $C'$ are \emph{homotopy equivalent}\index{chain complex!homotopy equivalence}\index{homotopy equivalence|see{chain complex}} if there exist two chain maps $\func{f}{C}{C'}$ and $\func{g}{C'}{C}$ such that $fg$ and $gf$ are respectively homotopic to the identity map on $C$ and $C'$.\\

  According to proposition \ref{induction}, the maps $f_*$ and $g_*$ are inverse one to each other. Thus, the chain complexes $C$ and $C'$ share the same homology groups.\\
The converse is not necessarily true. In this thesis, we will deal only with homology groups and not with chain complexes up to homotopy equivalence. However, homotopies remain an effective tool to prove that a map is a quasi-isomorphism.


\subsubsection{Mapping cones}
\label{par:MapCones}

Let $\func{f}{(C,\p)}{(C',\p')}$ a chain map. The \emph{mapping cone}\index{mapping cone} of $f$ is the chain complex $\Cone(f):=C[1]\oplus C'$ with differential $\p_c$ defined by
$$
\p_c(x)=\left\{
    \begin{array}{ll}
      \p(x) + f(x) & \textrm{if }x\in C[1]\\[.15cm]
      \p'(x)       & \textrm{if }x\in C'.
    \end{array}
\right.
$$
\begin{lemme}
  The map $\p_c$ is actually a differential \ie it satisfies $\p_c^2\equiv 0$.
\end{lemme}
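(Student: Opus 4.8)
The plan is to verify directly that $\p_c^2 \equiv 0$ by a case analysis on whether an element lies in $C[1]$ or in $C'$, using the three facts we already have available: $\p^2 \equiv 0$, ${\p'}^2 \equiv 0$, and the anti-commutation relation $f\p + \p' f \equiv 0$ coming from the definition of a chain map (see \S\ref{par:Chain}).

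First I would note that, since $\Cone(f) = C[1]\oplus C'$ is a direct sum, it suffices to check $\p_c^2 \equiv 0$ on homogeneous elements of each summand. For $x \in C'$ we have $\p_c(x) = \p'(x) \in C'$, hence $\p_c^2(x) = \p_c(\p'(x)) = {\p'}^2(x) = 0$ because $\p'$ is the differential of the chain complex $(C',\p')$. For $x \in C[1]$ we have $\p_c(x) = \p(x) + f(x)$, where $\p(x) \in C[1]$ (recall $\p[1]$ is just $\p$ with shifted indices, so as a map it is still $\p$) and $f(x) \in C'$. Applying $\p_c$ again and using linearity together with the two cases just identified, $\p_c^2(x) = \p_c(\p(x)) + \p_c(f(x)) = \big(\p(\p(x)) + f(\p(x))\big) + \p'(f(x)) = \p^2(x) + \big(f\p + \p'f\big)(x)$. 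The first term vanishes since $\p^2 \equiv 0$, and the second term vanishes by the anti-commutation of the chain map $f$ with the differentials. Hence $\p_c^2 \equiv 0$ on all of $\Cone(f)$.

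One small point worth spelling out is that $\p_c$ is indeed a graded map of degree $-1$, so that the statement ``$\p_c$ is a differential'' makes sense: on the $C[1]$ summand, $\p$ drops the (shifted) chain degree by $1$ and $f$ preserves it but lands in $C'$ one step lower after the shift, so both components decrease the total degree by $1$; on the $C'$ summand $\p'$ already has degree $-1$. I would mention this briefly for completeness, though the heart of the lemma is just the computation above.

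There is essentially no main obstacle here: the only subtlety is bookkeeping the sign convention, since in this text chain maps anti-commute with differentials rather than commute (cf.\ the remark in \S\ref{par:Chain}), and the mapping cone differential is defined with a plus sign $\p(x)+f(x)$; one must make sure the cross term that appears is exactly $(f\p + \p'f)(x)$ and not $(f\p - \p'f)(x)$. With the conventions fixed as in the excerpt, the cross term is $(f\p+\p'f)(x)$, which is zero by hypothesis, so everything closes up cleanly.
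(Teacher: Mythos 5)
Your proof is correct, and since the paper states this lemma without proof, yours is precisely the standard verification one would supply: split $\Cone(f) = C[1]\oplus C'$, observe $\p_c^2 = {\p'}^2 = 0$ on the $C'$ summand, and on the $C[1]$ summand collect the cross terms into $\p^2 + (f\p + \p'f)$, both of which vanish — the latter by the paper's anti-commutation convention for chain maps. Your note on the degree bookkeeping (that $f$, read as a component of $\p_c$ on the shifted summand, does have total degree $-1$) and your caution about the sign convention are exactly the two small points worth spelling out; the argument is complete.
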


By construction, it verifies the following short exact sequence:
$$
\xymatrix{
0 \ar[r] & C' \ar@{^(->}[r] & \Cone(f) \ar@{->>}[r] & C[1] \ar[r] & 0
}.
$$
This leads to the long exact sequence:
$$
\xymatrix @!0 @C=1.5cm @R=1.5cm{
H(C) \ar[rr]^{f_*} && H(C') \ar[dl]\\
& H\big(\Cone(f)\big) \ar[ul]&.
}
$$

\begin{cor}
  The mapping cones of two homotopic chain maps share the same homology.
\end{cor}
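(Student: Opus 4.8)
The plan is to produce an explicit isomorphism of chain complexes between $\Cone(f)$ and $\Cone(g)$, from which the statement follows immediately, since an isomorphism of complexes induces an isomorphism on homology. Write $\func{f}{(C,\p)}{(C',\p')}$ and $g$ for the two homotopic chain maps, and fix a degree $1$ graded map $\func{h}{C}{C'}$ realizing the homotopy, so that $f-g=\p'h-h\p$ (with the conventions of \S\ref{par:HomChain}). The key observation is that, as graded modules, $\Cone(f)=\Cone(g)=C[1]\oplus C'$; their differentials $\p_c^f$ and $\p_c^g$ agree on the $C'$ summand and on the internal part of the $C[1]$ summand, and differ only by the term $f$ versus $g$ in the component $C[1]\to C'$.

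I would then define $\func{\Phi}{\Cone(f)}{\Cone(g)}$ by $\Phi(x)=x+h(x)$ for $x\in C[1]$ and $\Phi(y)=y$ for $y\in C'$; in block form this is $\left(\begin{smallmatrix}\Id & 0\\ h & \Id\end{smallmatrix}\right)$. It is manifestly bijective, with inverse given by $x\mapsto x-h(x)$ on $C[1]$ and the identity on $C'$. It remains to check that $\Phi$ intertwines the two differentials. On $C'$ this is immediate, as both differentials restrict to $\p'$ there and $\Phi$ is the identity. On $C[1]$, expanding $\p_c^g\circ\Phi(x)=\p(x)+g(x)+\p'h(x)$ and $\Phi\circ\p_c^f(x)=\p(x)+h\p(x)+f(x)$ and comparing, the required equality reduces exactly to the homotopy identity $f-g=\p'h-h\p$; this is the only place the hypothesis enters. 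Hence $\Phi$ is an isomorphism of chain complexes, so it induces an isomorphism $H\big(\Cone(f)\big)\cong H\big(\Cone(g)\big)$.

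I do not expect a genuine obstacle here: the whole content is that the homotopy $h$ assembles into an off-diagonal block turning $\p_c^f$ into $\p_c^g$. The only point requiring a little care is sign bookkeeping, because chain maps are taken to anti-commute with differentials in \S\ref{par:HomChain} and homotopy is defined there only up to an overall sign; one simply fixes the sign of $h$ (and, if needed, multiplies the $C[1]$-component of $\Phi$ by a sign) so that the computation on the $C[1]$ summand closes. Alternatively, one could invoke the long exact sequences attached in \S\ref{par:MapCones} to $\Cone(f)$ and $\Cone(g)$ together with the $5$--lemma, using that $f_*=g_*$ by Proposition \ref{induction}; but producing the ladder between the two long exact sequences amounts to exhibiting the same map $\Phi$, so the direct argument is preferable.
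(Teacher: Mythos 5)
Your proof is correct: with the paper's conventions ($\Cone(f)=C[1]\oplus C'$, $\p_c=\p+f$ on $C[1]$ and $\p'$ on $C'$), the block map $\left(\begin{smallmatrix}\Id & 0\\ h & \Id\end{smallmatrix}\right)$ is degree-preserving (since $h$ has degree $1$ on $C$, hence degree $0$ as a map $C[1]\to C'$), visibly invertible, and intertwines the two cone differentials exactly when $f-g=\p'h-h\p$; so it is an isomorphism of complexes and the corollary follows. The paper itself offers no written proof: the statement is placed as a corollary of the long exact sequence of the cone together with Proposition \ref{induction} ($f_*=g_*$), which is the ladder-plus-$5$--lemma route you mention at the end and correctly identify as ultimately requiring a comparison map anyway. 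Your explicit isomorphism is therefore at least as strong as what the paper implies — it gives an isomorphism at the chain level, not only on homology — and it is the standard argument. The only point to tighten is the sign discussion: because the paper's notion of homotopy allows $f\pm g\equiv \pm(\p'h\pm h\p)$, the correction is not always a single global sign on $h$ or on the $C[1]$-component; in general one must twist degreewise (e.g.\ replace $h_i$ by $(-1)^ih_i$, and similarly use $x\mapsto -x$ on $C[1]$ to pass from $\Cone(g)$ to $\Cone(-g)$). These are routine adjustments and do not affect the validity of your argument.
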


\begin{cor}\label{AcyclicCone}
  A chain map is a quasi-isomorphism if and only if its mapping cone is acyclic.
\end{cor}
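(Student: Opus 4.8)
The statement to prove is Corollary \ref{AcyclicCone}: a chain map $f$ is a quasi-isomorphism if and only if $\Cone(f)$ is acyclic.

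The plan is to read off the conclusion directly from the long exact sequence associated to the mapping cone, which is displayed just before the statement. First I would recall that the short exact sequence $0 \to C' \hookrightarrow \Cone(f) \twoheadrightarrow C[1] \to 0$ yields, via the theorem on short exact sequences, a long exact sequence which (unwinding the shift $C[1]$, so that $H_i(C[1]) = H_{i-1}(C)$) takes the triangular form
$$
\xymatrix @!0 @C=1.5cm @R=1.5cm{
H(C) \ar[rr]^{f_*} && H(C') \ar[dl]\\
& H\big(\Cone(f)\big) \ar[ul]&,
}
$$
where the connecting map $H(C[1]) \to H(C')$ is identified with $f_*$ — this identification is exactly the content of the discussion preceding the statement.

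Then the argument is a standard diagram chase, or more precisely an application of exactness. If $\Cone(f)$ is acyclic, then $H\big(\Cone(f)\big) = 0$, so in the long exact sequence every third term vanishes; exactness at those spots forces $f_* : H(C) \to H(C')$ to be simultaneously injective (its kernel is the image of the map out of $H(\Cone(f)) = 0$) and surjective (its cokernel injects into $H(\Cone(f)) = 0$), hence an isomorphism, \ie $f$ is a quasi-isomorphism. Conversely, if $f$ is a quasi-isomorphism, then $f_*$ is an isomorphism in every degree; exactness of the long sequence at $H\big(\Cone(f)\big)$ then squeezes it between the surjection $f_*$ and the injection $f_*$ (image of a surjection equals everything, kernel of an injection is zero), so $H\big(\Cone(f)\big) = 0$ and $\Cone(f)$ is acyclic.

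There is essentially no obstacle here: everything needed — the mapping cone construction, the short exact sequence, the long exact sequence with the connecting map identified as $f_*$ — has already been established in the preceding lemma, short exact sequence, and long exact sequence. The only mild subtlety worth a sentence is bookkeeping the degree shift in $C[1]$ so that the connecting homomorphism lands in the right degree and is genuinely $f_*$ rather than $f_*$ composed with a shift; once that is noted, the proof is a one-line invocation of exactness in both directions. The same reasoning applies \emph{mutatis mutandis} to $r$--graded and filtrated chain maps, as announced at the head of the section.
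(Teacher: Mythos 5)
Your argument is correct and is precisely the one the paper intends: the corollary is stated immediately after the long exact sequence of the mapping cone, with no further proof given, and your diagram chase (acyclicity of the middle term forces $f_*$ to be an isomorphism, and conversely) is the standard reading of that exact triangle. The remark about the degree shift in $C[1]$ identifying the connecting map with $f_*$ is exactly the right bookkeeping point and matches the paper's display.
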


\begin{cor}\label{QuasiIso}
  Let $f$ be a filtrated chain map. If the associated graded chain map $f_\gr$ is a quasi-isomorphism then $f$ is a quasi-isomorphism.
\end{cor}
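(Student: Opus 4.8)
The plan is to run everything through the mapping cone, combining the equivalence ``quasi-isomorphism $\Leftrightarrow$ acyclic cone'' (Corollary \ref{AcyclicCone}) with the filtration criterion for acyclicity (Corollary \ref{Filt->Acycl}).

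First I would put a filtration on $\Cone(f)=C[1]\oplus C'$. If $f$ has filtration degree $k$, set $\F_i\Cone(f):=\F_iC[1]\oplus\F_{i+k}C'$ (for $k=0$ this is just the obvious filtration). One checks that the cone differential $\p_c$ respects it: on the $C[1]$-summand $\p_c=\p+f$, where $\p$ preserves $\F$ and $f$ shifts it by $k$, so $\F_iC[1]$ is sent into $\F_iC[1]\oplus\F_{i+k}C'=\F_i\Cone(f)$; on the $C'$-summand $\p_c=\p'$ preserves $\F$. Hence $\big((\Cone(f),\F),\p_c\big)$ is a filtrated chain complex, and it is bounded below whenever the filtrations on $C$ and $C'$ are.

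The heart of the argument is the identification of the associated graded. Taking $\gr$ of a direct sum is the direct sum of the $\gr$'s, so $\gr_i\Cone(f)=\gr_iC[1]\oplus\gr_{i+k}C'$, and a direct computation of the induced differential gives $\p_\gr+f_\gr$ on the first summand and $\p'_\gr$ on the second. Comparing with the definition of the mapping cone, this is precisely $\Cone(f_\gr)$, the index shift by $k$ on the $C'$-summand being exactly the one already carried by $f_\gr$. Thus $(\Cone f)_\gr\cong\Cone(f_\gr)$ as graded chain complexes.

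With this in hand the conclusion is automatic: since $f_\gr$ is a quasi-isomorphism, Corollary \ref{AcyclicCone} gives that $\Cone(f_\gr)$ is acyclic; so $\Cone(f)$ is a filtrated chain complex whose associated graded complex is acyclic, and Corollary \ref{Filt->Acycl} forces $\Cone(f)$ itself to be acyclic; applying Corollary \ref{AcyclicCone} once more, now to $f$, we conclude that $f$ is a quasi-isomorphism. The only step requiring genuine care is the third one --- the identification $(\Cone f)_\gr\cong\Cone(f_\gr)$, in particular pinning down the index shifts so that the graded part of $\p_c$ really recovers $f_\gr$ rather than $0$ --- together with checking that the boundedness needed to invoke Corollary \ref{Filt->Acycl} is inherited from that on $C$ and $C'$; everything else is formal.
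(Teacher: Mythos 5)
Your proof is correct and follows essentially the same route as the paper's: both endow $\Cone(f)$ with the inherited filtration, identify the associated graded with $\Cone(f_\gr)$, and deduce acyclicity of $\Cone(f)$ from the spectral sequence (you via the packaged Corollary~\ref{Filt->Acycl}, the paper by invoking the spectral sequence directly). Your version spells out the details the paper leaves implicit — the explicit degree-$k$ shift in the filtration on the $C'$ summand, the identification $(\Cone f)_\gr\cong\Cone(f_\gr)$, and the bounded-below hypothesis — which is exactly the right care to take.
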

\begin{proof}
  The mapping cone of $f$ inherits a filtration. Now, the first page of the spectral sequence associated to this filtration is the homology of the mapping cone of $f_\gr$ which is actually acyclic. The spectral sequence converges then to zero and the mapping cone of $f$ is acyclic.
\end{proof}


\subsubsection{Cubes of maps}
\label{par:MapsCubes}

Let $k$ be a non negative integer.\\
First, a few notation have to be set.

\begin{nota}
 Let $I=(i_1,\cdots,i_k)$ be an element of $\{0,1,\star\}^k$.\\
For all $j\in\llbracket 1,\cdots,k\rrbracket$ and $a\in\{0,1,\star\}$, we denote
 \begin{itemize}
 \item[-] by $0(I)$ the set $\big\{j\in\llbracket 1,k\rrbracket |i_j=0\big\}$;
 \item[-] by $I(j:a)$ the $k$--uple obtained from $I$ by inserting $a$ as the $j^{\textrm{th}}$ element.
 \end{itemize}
By abuse of notation, we will uncurrify them and write $I(i:a,j:b)$ instead of $\big(I(i:a)\big)(j:b)$ if $i<j$ and instead of $\big(I(i-1:a)\big)(j:b)$ if $i>j$.
\end{nota}

A \emph{$k$--dimensional cube of maps}\index{map!cube of} $\C$ is a set ${\{C_I\}}_{I\in\{0,1\}^k}$ of chain complexes associated with chain maps $\func{f_{I(j:\star)}}{C_{I(j:0)}}{C_{I(j:1)}}$ for all $I\in \{0,1\}^{k-1}$ and $j\in\llbracket 1,k\rrbracket$. The cube is \emph{straigth}\index{map!cube of!straigth} if
$$
f_{I(i:\star,j:1)}\circ f_{I(i:0,j:\star)}+f_{I(i:1,j:\star)}\circ f_{I(i:\star,j:0)}
$$
for all $I\in \{0,1\}^{k-2}$ and all $i\neq j\in\rrbracket 1,k\llbracket$.\\

If $\C$ is straight, then its \emph{mapping cone in direction $j\in\llbracket 1,k\rrbracket$} is the straight $(k-1)$--dimensional cube of maps defined by ${(\Cone(f_{I(j:\star)}))}_{I\in\{0,1\}^{k-1}}$ and the chain maps induced by the chain maps of $\C$.

\begin{lemme}
  The chain complex obtained by applying iteratively mapping cones to a cube of maps does not depend on the order in which the cones have been applied.
\end{lemme}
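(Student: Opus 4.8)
The plan is to prove the statement by induction on $k$, the dimension of the cube, showing that the iterated mapping cone is well-defined by checking that any two orders of taking cones agree. The key technical point is that taking the mapping cone in two different directions commutes up to a canonical identification, and then invoking associativity of this construction to conclude that all orders coincide.

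First I would make precise what "applying iteratively mapping cones" means: starting from a straight $k$-dimensional cube $\C$, one picks a direction $j_1\in\llbracket 1,k\rrbracket$, forms the mapping cone of $\C$ in direction $j_1$ (which by the definition given is again a straight $(k-1)$-dimensional cube of maps), then picks a direction in the remaining $k-1$ coordinates, and so on, until after $k$ steps one is left with a $0$-dimensional cube, i.e. a single chain complex. The claim is that this final chain complex is independent of the sequence $(j_1,j_2,\ldots,j_k)$ of choices. Since every permutation of $\llbracket 1,k\rrbracket$ is a product of adjacent transpositions, it suffices to show that swapping the order of two consecutive cone operations, say in directions $i$ and $j$, leaves the resulting cube unchanged. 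Thus the heart of the matter is the base case of a $2$-dimensional cube: given
$$
\xymatrix @!0 @C=1.5cm @R=1.5cm{
C_{00} \ar[r]^{f_{\star 0}} \ar[d]_{f_{0\star}} & C_{10} \ar[d]^{f_{1\star}}\\
C_{01} \ar[r]_{f_{\star 1}} & C_{11}
}
$$
with the straightness relation $f_{\star 1}\circ f_{0\star}+f_{1\star}\circ f_{\star 0}\equiv 0$, one must show that $\Cone(f_{0\star}\to f_{1\star})$ and $\Cone(f_{\star 0}\to f_{\star1})$ — meaning the cone of the induced map between the two horizontal cones, resp. the two vertical cones — yield the same chain complex.

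For this base case, I would simply write down both chain complexes explicitly. Taking the cone in direction $2$ first gives the cube $\Cone(f_{0\star})\to\Cone(f_{1\star})$ with underlying modules $C_{00}[1]\oplus C_{01}$ and $C_{10}[1]\oplus C_{11}$; taking the cone of that in direction $1$ then gives the module $\big(C_{00}[1]\oplus C_{01}\big)[1]\oplus\big(C_{10}[1]\oplus C_{11}\big)\cong C_{00}[2]\oplus C_{01}[1]\oplus C_{10}[1]\oplus C_{11}$. Doing it in the other order yields $C_{00}[2]\oplus C_{10}[1]\oplus C_{01}[1]\oplus C_{11}$, which is the same module after the obvious reordering of summands. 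The differential in both cases, read off term by term from the definition of $\p_c$ in \S\ref{par:MapCones}, is the map sending $C_{00}$ to $C_{00}$ by $\p$, to $C_{01}$ by $f_{0\star}$, to $C_{10}$ by $f_{\star 0}$, and to $C_{11}$ by $f_{\star 1}\circ f_{0\star}$ (equivalently $-f_{1\star}\circ f_{\star 0}$, by straightness) — and similarly on the other summands — so the two differentials literally agree, with the straightness relation being exactly what is needed for the $C_{00}\to C_{11}$ component to be unambiguous. This also shows $\p_c^2\equiv 0$ once more, consistently with the earlier lemma. For the inductive step, one observes that taking the cone in direction $j$ is a functorial operation on cubes of maps — it sends straight cubes to straight cubes, and it is compatible with taking cones in other directions because these operations act on disjoint groups of coordinates — so the $2$-dimensional computation, applied to the appropriate $2$-dimensional faces, propagates to show that any two adjacent swaps in an arbitrary-dimensional iterated cone agree, whence by the symmetric-group argument all orders give the same answer.

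The main obstacle I anticipate is purely bookkeeping: keeping the shift operations $[\,\cdot\,]$, the signs, and the indexing of the many summands straight, and verifying that the straightness condition $f_{I(i:\star,j:1)}\circ f_{I(i:0,j:\star)}+f_{I(i:1,j:\star)}\circ f_{I(i:\star,j:0)}\equiv 0$ is precisely the hypothesis needed (and no more) to make the cross-term differentials well-defined and square to zero. There is no deep idea here — the content is that the mapping cone construction is, up to canonical isomorphism, a symmetric monoidal-type operation — but the notation must be handled carefully, and I would lean on the uncurried notation $I(i:a,j:b)$ introduced just above to tame it. No genuinely new algebraic input beyond the lemma that $\p_c^2\equiv 0$ and the definition of straightness is required.
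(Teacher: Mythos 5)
There is a genuine slip in your base case, and it is precisely where you claim to "write down both chain complexes explicitly." You assert that the differential of the iterated cone of a straight $2$--cube has a component $C_{00}\to C_{11}$ given by $f_{\star 1}\circ f_{0\star}$ (equivalently $-f_{1\star}\circ f_{\star 0}$), and that straightness is what makes this component unambiguous. This is not what the construction yields. The mapping cone of $g\colon \Cone(f_{0\star})\to\Cone(f_{1\star})$, with $g$ the map induced componentwise by $f_{\star 0}$ and $f_{\star 1}$, has differential on $C_{00}[2]\oplus C_{01}[1]\oplus C_{10}[1]\oplus C_{11}$ equal to
$$
\begin{pmatrix}\p_{00} & 0 & 0 & 0\\ f_{0\star} & \p_{01} & 0 & 0\\ f_{\star 0} & 0 & \p_{10} & 0\\ 0 & f_{\star 1} & f_{1\star} & \p_{11}\end{pmatrix},
$$
which has no $C_{00}\to C_{11}$ entry at all; only arrows increasing $\# 1(I)$ by exactly one appear. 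Straightness is used for something else entirely, namely to verify that $g$ is a chain map: the off-diagonal term in $g\circ\p_{\Cone(f_{0\star})}+\p_{\Cone(f_{1\star})}\circ g$ is exactly $f_{\star 1}f_{0\star}+f_{1\star}f_{\star 0}$, which straightness kills. (It also appears in $\p_c^2$, which is perhaps what you were computing.) Your ultimate conclusion — that the two orderings yield isomorphic complexes after the evident permutation of summands — is correct, because both differentials are the strictly lower-triangular matrix above; but the described calculation, had you actually carried it out, would have produced a contradiction with your own claim about the phantom cross-term.

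For comparison, the paper offers no argument at all for this lemma: it is stated bare and then immediately followed, in the next paragraph, by an explicit closed-form description of the generalized cone, $\oplus_{I\in\{0,1\}^k}C_I[\#0(I)]$ with $\p_c(x)=\p_I(x)+\sum_{j\in 0(I)}f_{I(j:\star)}(x)$, which is visibly symmetric in the $k$ coordinates. The intended argument is surely to verify that any iterated cone equals this formula, which is both shorter and makes the order-independence manifest without the transposition argument. Your induction-on-adjacent-transpositions route is legitimate, and the inductive step's reduction to the $2$-dimensional case is the right move; once you correct the base case as above (and drop the claim that the straightness relation determines a cross-term), the proof goes through.
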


Now we can give a direct definition of $\Cone(\C)$, the \emph{generalized cone of $\C$}\index{mapping cone!generalized cone}, as the chain complex $\oplus_{I\in \{0,1\}^k} C_I[\# 0(I)]$ with differential $\p_c$ defined on $C_I[\# 0(I)]$ for all $I\in\{0,1\}^k$ by
$$
\p_c(x)= \p_I(c)+\sum_{j\in 0(I)} f_{I(j:\star)}(x).
$$

\subsubsection{Lame cube of maps}
\label{par:LameCube}
\index{map!cube of!lame}
Being straight is an essential condition. Otherwise, the map $\p_c$ defined in the previous paragraph does not satisfy $\p_c^2=0$. However, some anti-commutativity defects can be corrected by adding ``diagonal'' maps. In restriction to a vertex, the map $\p_c$ is then defined as the sum of all the arrows leaving this vertex. If this map satifies the differential relation $\p_c^2=0$, then the generalized cone can be defined as well.\\

It will be the case in paragraph \ref{par:Completion}.


\newpage

\section{Grid diagrams}
\label{sec:Grid}

In this section, we recall grid presentations for knots and links.

\subsubsection{Grid diagrams}
\label{par:Grid}

A \emph{grid diagram}\index{diagram!grid} $G$ of size $n\in \N^*$ is a $(n\times n)$--grid  whose squares may be decorated by a $O$ or by an $X$ in such a way that each column and each row contains exactly one $O$ and one $X$.\\
We denote by $\O$ the set of $O$'s and by $\X$ the set of $X$'s.\\
A \emph{decoration}\index{decoration} is an element of $\O\cup \X$.\\

A link diagram can be associated to any grid diagram. For this purpose, one should join the $X$ to the $O$ in each column by a straight line and, then, join again the $O$ to the $X$ in each row by a straight line which underpasses all the vertical ones. Each decoration is then replaced by a right angled corner which can be smoothed.\\
By convention, the link will be oriented by running the horizontal strands from the $O$ to the $X$.\\

\begin{figure}[h]
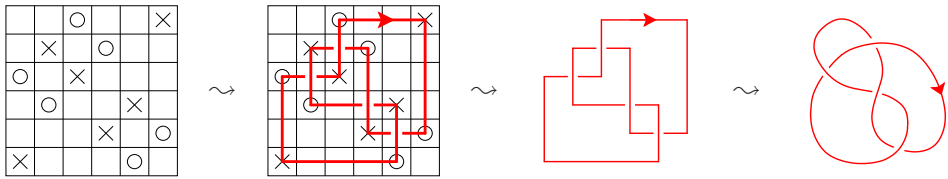

$$
\dessin{2.3cm}{Grid} \hspace{.3cm} \leadsto \hspace{.3cm}\dessin{2.3cm}{GridKnot} \hspace{.3cm} \leadsto \hspace{.3cm} \dessin{2.3cm}{SqKnot} \hspace{.3cm} \leadsto \hspace{.3cm} \dessin{2.3cm}{Knot}
$$
\caption{From grid diagrams to knots}
\label{fig:G->K}
\end{figure}

Every link can be described by a grid diagram. Even better, every link diagram (up to isotopy) can be described in this way. To this end, rotate locally all the crossings of a given diagram in order to get only orthogonal intersections with vertical overpassing strands. Then, perform an isotopy to get only horizontal and vertical strands such that none of them are colinear, and incidently to get right angled corners. Now, according to the link orientation, turn the corners into $O$ or $X$ and draw the grid by separating pairs of decorations in columns and in rows. An illustration of this process is given by the Figure \ref{fig:G->K} read from right to left.

\subsubsection{Elementary moves}
\label{par:ElemMoves}

Of course, different grid diagrams can lead to the same link. However, this is controlled by the following theorem:

\begin{theo}[Cromwell \cite{Cromwell}, Dynnikov \cite{Dynnikov}]\label{def:Moves}
  Any two grid diagrams which describe the same link can be connected by a finite sequence of the following elementary moves:
  \begin{description}
  \item[Cyclic permutation]\index{moves!grid!Cyclic permutation} cyclic permutation of the columns (resp. rows);
  \item[Commutation]\index{moves!grid!Commutation} commutation of two adjacent columns (resp. rows) under the condition that all the decorations of one of the two commuting columns (resp. rows) are strictly above (resp. strictly on the right of) the decorations of the other one;
  \item[Stabilization/Destabilization]\index{moves!grid!(de)stabilization@stabilization} addition (resp. removal) of one column and one row by replacing (resp. substituting) locally a decorated square by (resp. to) a $(2\times 2)$--grid containing three decorations in such a way that it remains globally a grid diagram.
  \end{description}\index{moves!grid}
\end{theo}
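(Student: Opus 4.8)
The statement to prove is the Cromwell–Dynnikov theorem: two grid diagrams presenting the same link are related by a finite sequence of cyclic permutations, commutations, and (de)stabilizations. Since this is a classical result attributed to Cromwell and Dynnikov, my proof plan is to follow the standard route through \emph{arc presentations} (also called ``book presentations'') of links, which is the natural intermediary object. First I would set up the dictionary: a grid diagram of size $n$ encodes $n$ horizontal and $n$ vertical segments; bending this rectangular diagram so that all $n$ vertical segments become pages of an open book (half-planes sharing a common binding axis) and all horizontal segments become arcs, each meeting the axis in two of $n$ distinguished points, gives exactly an arc presentation. Conversely, every arc presentation flattens back to a grid diagram, and this correspondence is a bijection up to the combinatorial data. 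Under this dictionary, the three grid moves translate into the moves on arc presentations: cyclic permutation of rows/columns corresponds to cyclically sliding the distinguished points around the binding circle and cyclically relabelling the pages; commutation corresponds to exchanging the order of two arcs whose endpoints do not interleave along the axis, or two pages that can be swapped; stabilization/destabilization corresponds to splitting one arc into two across an added point and an added page (the $(2\times 2)$ local replacement with three decorations).

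\textbf{Key steps, in order.} (1) Establish the bijection between grid diagrams (up to the trivial relabelling of which square is which) and arc presentations, being careful about orientations and the $O$/$X$ convention for running horizontal strands. (2) Show each of the three grid elementary moves induces, and is induced by, one of the standard arc-presentation moves (exchange moves and (de)stabilization of arcs). This is a purely local, picture-by-picture check. (3) Invoke (or reprove) the theorem that any two arc presentations of the same link are connected by a finite sequence of these arc moves. The cleanest self-contained argument here goes through Reidemeister's theorem: take two link diagrams of the same link, connect them by Reidemeister moves (type I, II, III) and planar isotopies; then show that (a) every link diagram can be isotoped into ``rectangular'' (grid-like) form, (b) a planar isotopy between two rectangular diagrams is realized by a sequence of commutations and cyclic permutations, and (c) each Reidemeister move, performed on rectangular diagrams, is realized by a bounded sequence of grid moves — this last point requires exhibiting, for each of the three Reidemeister moves, an explicit local model in grid form together with the finite sequence of stabilizations, destabilizations and commutations that realizes it. (4) Conclude by composing these finite sequences.

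\textbf{Main obstacle.} The genuinely substantial step is 3(c): verifying by hand that each Reidemeister move, once both sides are put in rectangular/grid form, can be effected by grid moves. The type-I and type-II moves are relatively painless — they essentially \emph{are} (de)stabilizations together with a commutation — but the type-III move requires a somewhat delicate sequence of stabilizations followed by a cascade of commutations followed by destabilizations, and one must check the commutation hypothesis (that the relevant rows/columns have all their decorations on one side) is satisfied at each intermediate stage, possibly after first enlarging the grid. A secondary subtlety is step 3(b): one must argue that a combinatorial isotopy of grid diagrams — continuously moving the horizontal and vertical segments without crossing changes — decomposes into the discrete commutation and cyclic-permutation moves; this is a standard ``general position / finitely many combinatorial types'' argument but needs to be stated carefully. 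Since the result is classical, in the write-up I would likely present the arc-presentation dictionary in full and then either give the Reidemeister-move pictures explicitly or simply cite \cite{Cromwell} and \cite{Dynnikov} for step 3, depending on how self-contained this chapter is meant to be; given the thesis context, citing the original sources for the core combinatorial lemma while providing the dictionary is the proportionate choice.
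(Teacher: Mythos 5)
Your plan is correct, and its ``self-contained'' branch (step 3(a)--(c)) is essentially the argument the paper gives: the paper's proof is exactly a Reidemeister-based sketch, with no detour through arc presentations. It first notes every link diagram is a grid, then requires that during a diagram isotopy all crossings stay rigidly orthogonal with vertical strands overpassing except for finitely many instants when a crossing is rotated about itself; the only point it flags as delicate in your step 3(b) is precisely this exceptional half rolling-up of a crossing, which it realizes by an explicit short sequence of grid moves. It then realizes the Reidemeister moves directly on suitably prepared grids: type I by a single (de)stabilization, type II by a single commutation, and -- contrary to your expectation that type III needs a stabilization--commutation--destabilization cascade -- type III also by a single commutation, the point being that one is allowed to first normalize the grid by elementary moves so that the three strands involved sit in adjacent rows/columns in the right pattern. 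So the difference between your write-up and the paper's is one of framing rather than substance: the arc-presentation dictionary you propose is a perfectly valid equivalent formulation (and is how Cromwell and Dynnikov state the result), but it adds a translation layer the paper does not need, since the grid moves are checked against diagram isotopies and Reidemeister moves directly; conversely, your version would make the citation of the original sources cleaner if you chose not to redo step 3. Either route is acceptable; if you carry out the direct route, the two concrete items to supply are the grid realization of the crossing half-rotation and the prepared-position pictures showing each Reidemeister move as a single (de)stabilization or commutation.
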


The condition for a commutation move is slightly more restricting than usually stated. However, it is easily seen that, up to cyclic permutations, both definitions are equivalent.

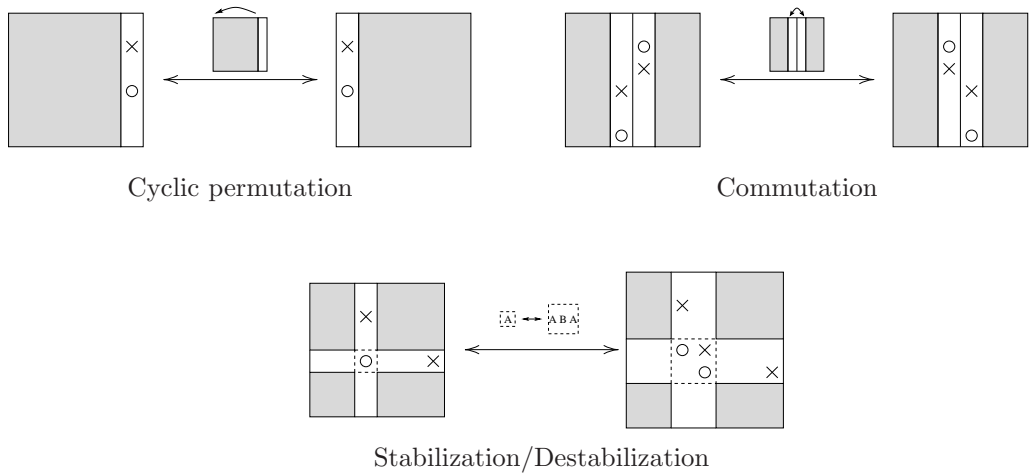
\begin{figure}[h]
  \begin{gather*}
\hspace{-.6cm}
\begin{array}{ccc}
  \xymatrix@C=2cm{\dessin{2.1cm}{CPerm1} \ar@{<->}[r]^{\dessin{.9cm}{CPermAct}} & \dessin{2.1cm}{CPerm2}}
&  &
  \xymatrix@C=2cm{\dessin{2.1cm}{Comm1} \ar@{<->}[r]^{\dessin{.9cm}{CommAct}} & \dessin{2.1cm}{Comm2}}\\[1cm]
\textrm{Cyclic permutation} && \textrm{Commutation}
\end{array}\\[.6cm]
\begin{array}{c}
  \xymatrix@C=2cm{\dessin{2.1cm}{Stab1} \ar@{<->}[r]^{\dessin{.9cm}{StabAct}} & \dessin{2.1cm}{Stab2}}\\[1cm]
  \textrm{Stabilization/Destabilization}
\end{array}
\end{gather*}
  \caption{Elementary grid diagram moves}
  \label{fig:Moves}
\end{figure}

\begin{proof}[Sketch of proof]
  As pointed out above, any link diagram can be described by a grid. Moreover, for any isotopy of diagram, one can require that all the crossings are rigidly kept orthogonal with vertical overpassing strands except for a finite number of exceptional times when a crossing is turned over around itself. Then, isotopies of diagram can be realized by elementary grid moves; the most tricky move being the exceptional half rolling up:
$$
\xymatrix{\dessin{1cm}{Rollup1} \ \ar@{<->}[r] & \dessin{2cm}{Rollup1bis} \ar@{<->}[r]  & \dessin{2cm}{Rollup2bis} \ar@{<->}[r]& \dessin{2cm}{Rollup2}}.
$$
Now, any grid diagram can be modified using elementary grid moves in such a way that the Reidemeister moves can all be realized as follows:
\begin{center}
  \begin{tabular}{p{4cm}p{6cm}p{1.5cm}}
    \begin{tabular}{l} Reidemeister move I \\ (using a (de)stabilization)\end{tabular}: & \centering{$\xymatrix{\dessin{1.8cm}{GReidI1} \ \ar@{<->}[r] & \ \dessin{1.8cm}{GReidI2}}$;}&
  \end{tabular}
  \begin{tabular}{p{4cm}p{6cm}p{1.5cm}}
    \begin{tabular}{l} Reidemeister move II \\ (using a commutation)\end{tabular}: & \centering{$\xymatrix{\dessin{1.8cm}{GReidII1} \ \ar@{<->}[r] & \ \dessin{1.8cm}{GReidII2}}$;}&
  \end{tabular}
  \begin{tabular}{p{4cm}p{6cm}p{1.5cm}}
    \begin{tabular}{l} Reidemeister move III \\ (using a commutation)\end{tabular}: & \centering{$\xymatrix{\dessin{2cm}{GReidIII1} \ \ar@{<->}[r] & \ \dessin{2cm}{GReidIII2}}$.}&
  \end{tabular}
\end{center}
\vspace{-.5cm}
\end{proof}


\section{Combinatorial link Floer homology}
\label{sec:Combi}

Now, we will recall briefly the combinatorial construction for link Floer homology given in \cite{MOST}. Nevertheless, we will consider only the case filtrated by $\Z$ since the multi-filtrated refinement does not suit our singular generalization.\\

Let $G$ be a regular grid diagram of size $n$ which represents a link $L$ with $\ell$ connected components. To such a grid, we associate a graded chain complex $C^-(G)$.\\
We will first describe the generators of a module $\widehat{C}^-(G)$. Then we will define two gradings on these generators. At this point, we will be in position to define $C^-(G)$ and, finally, to set a differential $\partial^-$ on it.

\subsection{Generators}
\label{ssec:Generators}

\subsubsection{Generators as sets of dots}
\label{par:DotsGen}

Basically, $\widehat{C}^-(G)$ is generated by all the possible one-to-one correspondances between rows and columns of $G$. Such a generator can be depicted on the grid by drawing a dot at the bottom left corner of the common squares of associated row and column. Then, generators are sets of $n$ dots arranged on the grid lines' intersections such that every line contains exactly one point, except the rightmost and the uppermost ones which do not contain any.

\subsubsection{Generators as permutations}
\label{par:PermGen}

However, as soon as columns have been numbered from the left to the right and rows from the bottom to the top, generators can also be understood as elements of $\SS_n$, the group of permutations of the $n$ first integers.\\

The notation $x$ will denote the first description \ie a set of $n$ dots while $\Gsig{x}$ will denote the associated permutation.\\

\begin{figure}[h]
  $$
    \xymatrix@!0@R=.7cm{&&\dessin{2.3cm}{Gridx1}\\ x_1 \ar[ur] \ar[dr] &&\\ &&  \Gsig{x_1}=\textcolor{red}{Id}} \hspace{2cm}  \xymatrix@!0@R=.7cm{&&\dessin{2.3cm}{Gridx2}\\ x_2 \ar[ur] \ar[dr] &&\\ && \Gsig{x_2}= \textcolor{red}{\tau_{1,4}\tau_{1,2}}}
  $$
  \caption{Descriptions of the generators}
  \label{fig:Descript}
\end{figure}
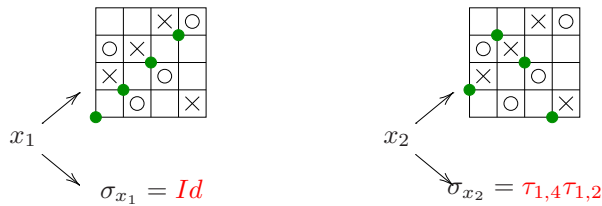

\subsection{Gradings}
\label{ssec:Grad}

To define the gradings, we consider the grid as embedded in the $\R^2$--plane, the horizontal and the vertical lines being parallel respectively to the $x$--axis and the $y$--axis. The decorations as well as the dots defining generators are then assimilated to the coordinates of their gravity center.

\subsubsection{Maslov and Alexander gradings}
\label{par:MaslAlex}

First, a few notation are required.

\begin{nota}
  Let $A$ and $B$ be two finite subsets of $\R^2$.\\
  We define $\I(A,B)$ as the number of pairs $((a_1,a_2),(b_1,b_2))\in
  A\times B$ satisfying $a_1<b_1$ and $a_2<b_2$ \ie
$$
\I(A,B):=\#\{(a,b)\in A\times B|a \textrm{ lies in the open
  south-west quadrant of }b\}.
$$
Then, we set $M_B(A):=\I(A,A)-\I(A,B)-\I(B,A)+\I(B,B)+1$.
\end{nota}

Now, we can define a Maslov grading $M$, which will be used as the homological degree, and an Alexander grading $A$ which will induce a filtration.\\

Let $x$ be a generator of $C^-(G)$, we set
\begin{itemize}
\item[-] $M(x):=M_\O(x)$;
\item[-] $A(x):=\frac{1}{2}(M_\O(x)-M_\X(x)) -\frac{n-\ell}{2}$.
\end{itemize}
\index{grading!Maslov}\index{grading!Alexander}\index{filtration!Alexander}
\begin{prop}[\cite{MOST}]\label{PermPreserves}
  The gradings $M$ and $A$ are invariant under cyclic permutations of rows or columns.
\end{prop}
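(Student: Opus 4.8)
The plan is to show that both $M$ and $A$ are unchanged when we cyclically permute the columns (the case of rows being symmetric, after exchanging the roles of the two coordinates, i.e. reflecting the grid across the diagonal). Since a cyclic permutation of columns is generated by a single elementary move --- moving the leftmost column to the rightmost position (or vice versa) --- it suffices to analyze that single move. Write $G$ for the original grid and $G'$ for the grid obtained by this move; a generator $x$ of $C^-(G)$, viewed as a set of $n$ dots, is carried to a generator $x'$ of $C^-(G')$ by the obvious bijection, and similarly the decorations $\O$ and $\X$ are carried to $\O'$ and $\X'$. Because $M$ and $A$ are built entirely out of the quantities $\I(A,B)$ for $A,B$ ranging over $x$, $\O$, $\X$, the whole statement reduces to understanding how $\I(A,B)$ changes under the cyclic shift of the first column.

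Next I would set up coordinates carefully. Think of the columns of $G$ as sitting at abscissae $1,\dots,n$ and, after the move, the old column $1$ sits at abscissa $n+\tfrac12$ (to the right of everything) --- or, what is cleaner, relabel so that $G'$ has its columns again at $1,\dots,n$ via the shift $i \mapsto i-1$ for $i \geq 2$ and $1 \mapsto n$. Under this relabeling, a dot or decoration in column $j\geq 2$ has its abscissa decreased by $1$ but keeps its ordinate; the single dot/decoration that lived in column $1$ gets its abscissa sent to $n$ and also keeps its ordinate. The key observation is that $\I(A,B)$ counts pairs $(a,b)$ with $a$ strictly southwest of $b$, and this ``strictly southwest'' relation between two points in distinct columns is \emph{preserved} by the shift, \emph{except} for pairs involving the point that jumped from column $1$ to column $n$: that point moves from being leftmost to being rightmost.

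So the heart of the argument is a bookkeeping lemma: if $p$ is the dot (or decoration) in column $1$ and $q$ is in column $j \geq 2$, then before the move $p$ is strictly west of $q$, so $p$ is strictly southwest of $q$ iff $p$ is strictly south of $q$; after the move $p$ is strictly east of $q$, so $q$ is strictly southwest of $p$ iff $q$ is strictly south of $p$. Summing over all $q$, the total contribution to $\I(\,\cdot\,,\,\cdot\,)$ of pairs in which $p$ is the west member decreases by $\#\{q : q \text{ strictly south of } p\}$, while the contribution of pairs in which $p$ is the east member increases by $\#\{q : q \text{ strictly south of } p\} = \#\{q : p \text{ strictly north of } q\}$; since both $x$ and the decoration sets have exactly one element in column $1$ and in each row, these two counts agree and the net change in each relevant $\I(A,B)$ cancels. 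One must do this a little more carefully when $A$ and $B$ are not the same set (e.g. $\I(x,\O)$ versus $\I(\O,x)$), but the same cancellation goes through because the number of rows strictly below a given row is an absolute quantity independent of which set's column-$1$ element we look at. Carrying this through shows $\I(A,B) = \I(A',B')$ for every pair of interest, hence $M_\O, M_\X$, and therefore $M$ and $A$, are unchanged.

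The main obstacle, and the only place where real care is needed, is precisely this cancellation when the two arguments of $\I$ are different finite sets: one has to check that the ``lost'' southwest-pairs on the west side are exactly matched by the ``gained'' southwest-pairs on the east side, and this uses in an essential way that in a grid diagram every row contains exactly one $O$, one $X$, and (for a generator) one dot. Once that combinatorial identity is in hand, invariance of $M$ is immediate and invariance of $A$ follows since $A$ is an integer combination of $M_\O$ and $M_\X$ plus the constant $-\tfrac{n-\ell}{2}$, which is manifestly unchanged. Finally, the row case follows by the diagonal reflection symmetry of the whole setup, which swaps the two coordinates and swaps ``columns'' with ``rows'' while preserving the definition of $\I$ up to this reflection.
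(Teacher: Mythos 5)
Your overall strategy---reduce to the single move that carries the leftmost column to the rightmost position and track how the counts $\I(\cdot,\cdot)$ change---is the right one, and your reduction of the row case to the column case by the diagonal reflection is fine. But the key claim on which your argument rests is false: the individual quantities $\I(A,B)$ are \emph{not} invariant under a cyclic permutation of the columns. Your bookkeeping is mis-stated: when the column-$1$ point $p$ of $A$ is moved to the far right, the pairs that are \emph{lost} in $\I(A,B)$ are the pairs $(p,b)$ with $b\in B$ strictly \emph{north} of $p$ (before the move $p$ is west of everything, so $(p,b)$ counts exactly when $b$ lies above $p$), while the pairs that are \emph{gained} are the pairs $(a,p_B)$ with $a\in A$ strictly \emph{south} of the column-$1$ point $p_B$ of $B$. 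These two numbers are counts of points north of one row versus south of a (generally different) row, and they do not agree. Concretely, for a $2\times 2$ grid with the generator $x$ given by the identity permutation one has $\I(x,x)=1$, while after cyclically permuting the columns the new generator $x'$ satisfies $\I(x',x')=0$; more generally $\I(x,x)$ changes by $2r-n-1$, where $r$ is the row (counted from the bottom) of the dot in the moved column. So the statement ``$\I(A,B)=\I(A',B')$ for every pair of interest'' cannot be the basis of the proof.

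What is true, and what the standard argument establishes, is that the changes cancel only in the alternating combination $M_B(A)=\I(A,A)-\I(A,B)-\I(B,A)+\I(B,B)+1$. If the dot, the $O$ and the $X$ of the moved column lie in rows $r$, $s$ and $t$ respectively, a direct count (taking into account the half-integer offsets between dots and decorations, which is where the ``one decoration per row'' hypothesis enters) gives $\Delta\I(x,x)=2r-n-1$, $\Delta\I(\O,\O)=2s-n-1$ and $\Delta\I(x,\O)=\Delta\I(\O,x)=r+s-n-1$, whence $\Delta M_\O(x)=0$; the same computation with $\X$ in place of $\O$ gives $\Delta M_\X(x)=0$, and invariance of $M$ and $A$ follows. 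So your proof needs this four-term cancellation carried out explicitly (or an equivalent argument); as written, the asserted term-by-term invariance of each $\I(A,B)$ is a genuine gap, not merely a detail to be ``done a little more carefully.''
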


As a corollary, the gradings $M$ and $A$ are well defined even if the grid is considered as a torus one \ie even if we identify its right boundary with its left one and its top boundary with its bottom one.\\

For any grid $G'$, we denote by $\T_{G'}$ this associated torus. However, the local notions of left, right, above, below, horizontal and vertical, as well as the orientation inherited from the planar grid $G'$ are kept.

\subsection{Definition of $C^-(G)$}
\label{ssec:Action}

We label the elements of $\O$ by integers from $1$ to $n$ but this numbering will be totally transparent in the following construction. For each $O_i$ in $\O$, we defined an indetermined $U_{O_i}$, also denoted $U_i$.

\subsubsection{$C^-(G)$ as a $\Z[U_1,\cdots,U_n]$--module}
\label{par:Z[X]-act}

We endow $\Z[U_1,\cdots,U_n]$ with a bigrading by setting
$$
\deg (U_1^{k_1}\cdots U_n^{k_n})=(-2\sum k_i,-\sum k_i)
$$
for all $(k_1,\cdots,k_n)\in \N^n$.\\

Now we define $C^-(G)$ as the bigraded module $\widehat{C}^-(G)\otimes \Z[U_1,\cdots,U_n]$. It can be seen as a bigraded module over the bigraded ring $\Z[U_1,\cdots,U_n]$ where $\deg(U_i)=(-2,-1)$ for all $i\in\llbracket1,n\rrbracket$. The generators are then those given in paragraph \ref{par:DotsGen} or \ref{par:PermGen}.

\subsection{Differential}
\label{ssec:Diff}

The differential can be described in several ways, depending on the point of view we have on the generators of $C^-(G)$. In the light of this thesis, it will be more convenient to define it by counting rectangles on $\T_G$.

\subsubsection{Rectangles}
\label{par:Rect}

  Let $x$ and $y$ be two generators of $C^-(G)$. A \emph{rectangle $\rho$ connecting $x$ to $y$}\index{polygon!rectangle}\index{rectangle|see{polygon}} is an embedded rectangle in $\T_G$ which satisfies:
  \begin{itemize}
  \item[-] edges of $\rho$ are embedded in the grid lines;
  \item[-] opposite corners of $\rho$ are respectively in $x\setminus y$ and $y\setminus x$;
  \item[-] except on $\partial \rho$, the sets $x$ and $y$ coincide;
  \item[-] according to the orientation of $\rho$ inherited from the one of $\T_G$, horizontal boundary components of $\rho$ are oriented from points of $x$ to points of $y$.
  \end{itemize}

\begin{remarque}
  If it a rectangle connecting $x$ to $y$ does exist, then $\Gsig{x}\circ \Gsig{y}^{-1}$ is a transposition.
\end{remarque}

A rectangle $\rho$ is \emph{empty}\index{polygon!empty}\index{empty|see{polygon}} if $\Int(\rho)\cap x=\emptyset$.\\
We denote by $\Rect(G)$ the set of all empty rectangles on $G$ and by $\Rect(x,y)$ the set of those which connect $x$ to $y$.\\

\begin{figure}[h]
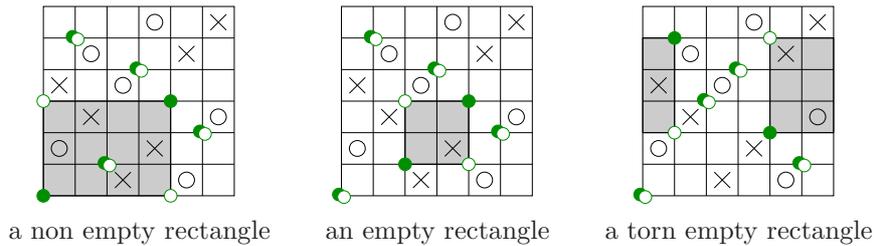

  \begin{center}
    $$
  \begin{array}{ccccc}
    \dessin{3cm}{Rect1} & & \dessin{3cm}{Rect2} & & \dessin{3cm}{Rect3}\\[1cm]
    \textrm{a non empty rectangle} && \textrm{an empty rectangle} && \textrm{a torn empty rectangle}
  \end{array}
$$
\caption{Examples of rectangles: {\footnotesize dark dots describe
    the generator $x$ while hollow ones describe $y$. Rectangles are
    depicted by shading. Since a rectangle is embedded in the torus
    and not only in the rectangular grid, it may be ripped in several
    pieces as in the case on the right.}}
\end{center}
  \label{fig:Rect}
\end{figure}

A rectangle $\rho\in \Rect(G)$ is \emph{torn}\index{polygon!torn} if $\rho\cap l\neq \emptyset$ where $l$ is the leftmost vertical line of the grid $G$. The line $l$ may intersect $\rho$ on its boundary.

\begin{prop}[Manolescu-Ozsv\'ath-Szab\'o-Thurston \cite{MOST}]\label{RectGradings}$\ $\\
  Let $x$ and $y$ be two generators of $C^-(G)$ and $\rho$ an element of $\Rect(x,y)$. Then
  \begin{gather*}
    M(x) - M(y) = 1-2\#(\rho\cap \O)\\[.3cm]
    A(x) - A(y) = \#(\rho\cap\X)-\#(\rho\cap \O)
  \end{gather*}
\end{prop}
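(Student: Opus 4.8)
The plan is to compute the two grading differences directly from the combinatorial definitions of $M$ and $A$, using the bilinear quantity $\I(\cdot,\cdot)$ and the auxiliary function $M_B$. Since $A(x)-A(y) = \frac12\big((M_\O(x)-M_\X(x)) - (M_\O(y)-M_\X(y))\big) = \frac12\big((M_\O(x)-M_\O(y)) - (M_\X(x)-M_\X(y))\big)$ (the $-\frac{n-\ell}{2}$ terms cancel), it suffices to establish a single master formula: for any finite set $B\subset\R^2$ in general position with respect to the grid, $M_B(x) - M_B(y) = 1 - 2\,\#(\rho\cap B)$. Granting that, the first equation is the case $B=\O$, and the second follows by subtracting the case $B=\X$ from the case $B=\O$ and dividing by $2$, yielding $A(x)-A(y) = \frac12\big((1-2\#(\rho\cap\O)) - (1-2\#(\rho\cap\X))\big) = \#(\rho\cap\X) - \#(\rho\cap\O)$.

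To prove the master formula, first I would reduce to the planar picture: by Proposition~\ref{PermPreserves} the gradings are invariant under cyclic permutations, so I may cyclically permute rows and columns so that the rectangle $\rho$ is genuinely embedded in the planar grid (untorn) and, say, sits away from the boundary; the quantity $\#(\rho\cap B)$ is visibly unchanged by such permutations when $\rho$ is drawn in the torus. Then I expand $M_B(x) - M_B(y)$ using the definition: since $x$ and $y$ differ only at the two corners of $\rho$ — write $x\setminus y = \{p_1,p_2\}$ (the two corners belonging to $x$) and $y\setminus x = \{q_1,q_2\}$ (the two corners belonging to $y$), with $p_1,q_1$ the lower pair and $p_2,q_2$ the upper pair, sharing rows/columns appropriately — every term $\I(x,x)-\I(y,y)$, $\I(x,B)-\I(y,B)$, $\I(B,x)-\I(B,y)$ telescopes into a sum of contributions indexed by the common points of $x$ and $y$ together with a bounded number of cross-terms among $p_1,p_2,q_1,q_2$ and against $B$. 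The key geometric observation is that a common point $z$ (a point of $x\cap y$) lies in the southwest quadrant of $p_i$ but not of the corresponding $q_i$ (or vice versa) exactly when $z$ lies in the interior of $\rho$; since $\rho$ is \emph{empty}, $\Int(\rho)\cap x = \emptyset$, so all such cancellations vanish except along $\partial\rho$, and the contributions from points of $x$ on $\partial\rho$ are handled by the orientation convention on the horizontal edges of $\rho$. What survives is (i) a constant coming from the self-interaction terms among the four corners $p_1,p_2,q_1,q_2$ — this produces the $+1$, matching the Maslov index of a rectangle as a Whitney disk — and (ii) the interaction with $B$: a point $b\in B$ contributes to $\I(x,B)-\I(y,B)$ and $\I(B,x)-\I(B,y)$ in a way that is nonzero precisely when $b$ lies strictly inside $\rho$, each such $b$ contributing $-2$ in total. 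This gives $M_B(x)-M_B(y) = 1 - 2\,\#(\Int(\rho)\cap B)$, and since $B$ is in general position one replaces $\Int(\rho)\cap B$ by $\rho\cap B$.

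The main obstacle I anticipate is the careful bookkeeping in step (ii): tracking which of the four corners $p_1,p_2,q_1,q_2$ sees a given $b\in B$ in its southwest quadrant, as a function of the position of $b$ relative to the four quadrants cut out by the horizontal and vertical lines through the corners of $\rho$. One must split the plane into the region inside $\rho$, the four ``edge-adjacent'' strips, and the four ``corner'' regions, and verify that the alternating sum of quadrant-membership indicators vanishes in every region except the interior of $\rho$, where it equals $-2$ after combining the $\I(x,B)-\I(y,B)$ and $\I(B,x)-\I(B,y)$ contributions. The analogous corner-versus-corner computation giving the $+1$ is a small finite check (four points), but it too must be done with attention to the orientation convention and to the fact that corners lying on a shared grid line are not counted as being in each other's \emph{open} southwest quadrant. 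Once the sign conventions are pinned down this is purely mechanical, which is presumably why the authors attribute the statement to \cite{MOST} rather than reproving it here.
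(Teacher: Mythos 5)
Your proposal is correct and follows essentially the same route as the paper's sketch. In both, one reduces to the untorn case via Proposition~\ref{PermPreserves}, telescopes the difference $M_B(x)-M_B(y)$ to pairs involving the four corners of $\rho$, partitions the plane into the nine regions cut out by the grid lines through those corners, and checks that only the interior of $\rho$ contributes ($-2$ per point of $B$) while the corner-versus-corner pairs supply the $+1$ --- with both grading formulas obtained from this single master computation for $M_B$.
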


{\noindent \it Sketch of proof.}  Let $x$ and $y$ be two generators connected by an empty rectangle $\rho$. Since the gradings are invariant by cyclic permutations of the rows and the columns (Prop. \ref{PermPreserves}, \S\ref{par:MaslAlex}), we can assume that $\rho$ is not ripped in pieces when seen on the grid only.\\
{\parpic[r]{$\dessin{2.4cm}{9zones}$}
\vspace{-.15cm}The integers $M(x)$, $M(y)$, $A(x)$ and $A(y)$ are computed by counting pairs of points. Their values change only because of pairs involving corners of $\rho$.\\
The only case when both points are corners of $\rho$ gives the $1$--term in $M(x)-M(y)$. Otherwise, we divide the grid in nine areas as shown on the right. The number of pairs involving corners of $\rho$ depends then only on the area where the second point lies. Actually, only points in area IX gives surviving terms. Since they lie in $\rho$, they must be decorations and they appear twice.\hfill $\square$

}

\subsubsection{Differential}
\label{par:Diff}

To define the differential, a map
$$
\func{O_i(\ .\ )}{\{\textrm{embedded polygons in }\T_G\}}{\{0,1\}}
$$
has to be set first for every $O_i$ in $\O$, sending a polygon $\pi$ to $1$ if $O_i \in \pi$ and to $0$ otherwise.\\

We set the map $\func{\p_G^-}{C^-(G)}{C^-(G)}$ as the morphism of $\Z[U_{O_1},\cdots,U_{O_n}]$--modules defined on the generators by
$$
\p_G^-(x)=\sum_{y \textrm{ generator}\textcolor{white}{R}} \sum_{\rho\in \Rect(x,y)} \e(\rho)U_{O_1}^{O_1(\rho)}\cdots U_{O_n}^{O_n(\rho)}\cdot y,
$$
where $\func{\e}{\Rect(G)}{\{\pm 1\}}$ is a map which will be defined in paragraph \ref{par:OrientMap}.

\begin{remarque}
  One can define $\e$ as the constant map which sends every rectangle to $1$ but then, the $\Z$--coefficients should be turned into $\FF_2=\fract \Z / {2\Z}$ ones. 
\end{remarque}

\begin{theo}[Manolescu-Ozsv\'ath-Szab\'o-Thurston \cite{MOST}]\label{D0Preserves}$\ $\\
  The map $\p_G^-$ is a differential which decreases the Maslov grading by $1$ and preserves the filtration induced by the Alexander grading.
\end{theo}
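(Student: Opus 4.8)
The plan is to verify the three assertions of Theorem \ref{D0Preserves} in turn: that $\p_G^-$ squares to zero, that it drops the Maslov grading by exactly $1$, and that it does not increase the Alexander grading. The grading statements are the quickest, so I would dispatch those first, before attacking $(\p_G^-)^2\equiv 0$.

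\emph{Gradings.} By Proposition \ref{RectGradings} (\S\ref{par:Rect}), for every empty rectangle $\rho\in\Rect(x,y)$ one has $M(x)-M(y)=1-2\#(\rho\cap\O)$. The term $\e(\rho)U_{O_1}^{O_1(\rho)}\cdots U_{O_n}^{O_n(\rho)}\cdot y$ appearing in $\p_G^-(x)$ carries Maslov degree $M(y)-2\#(\rho\cap\O)$, since each $U_{O_i}$ has bidegree $(-2,-1)$ and the total $U$--power equals $\#(\rho\cap\O)$. Thus the output summand has Maslov degree $M(y)-2\#(\rho\cap\O)=M(x)-1$, which proves $\p_G^-$ is homogeneous of Maslov degree $-1$. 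For the Alexander filtration: again by Proposition \ref{RectGradings}, $A(x)-A(y)=\#(\rho\cap\X)-\#(\rho\cap\O)$, so the output summand sits in Alexander degree $A(y)-\#(\rho\cap\O)=A(x)-\#(\rho\cap\X)\le A(x)$. Hence $\p_G^-$ maps $\F_i C^-(G)$ into $\F_i C^-(G)$, i.e. it preserves the Alexander filtration. (Its associated graded piece is the part where $\#(\rho\cap\X)=0$.)

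\emph{Differential relation.} The main work is $(\p_G^-)^2\equiv 0$. Expanding, $(\p_G^-)^2(x)=\sum_{z}\Big(\sum_{y}\sum_{\rho_1\in\Rect(x,y)}\sum_{\rho_2\in\Rect(y,z)}\e(\rho_1)\e(\rho_2)\,U^{\rho_1}U^{\rho_2}\Big)\cdot z$, writing $U^{\rho}$ for the monomial $U_{O_1}^{O_1(\rho)}\cdots U_{O_n}^{O_n(\rho)}$. One analyzes the composite domains $\rho_1*\rho_2$ region by region, exactly as in the cylindrical Heegaard--Floer / \cite{MOST} argument. Generically a composite $\rho_1*\rho_2$ that is not a disjoint union of two rectangles decomposes in \emph{exactly two} ways as an ordered pair of empty rectangles: the cases are an ``L/hexagon'' shape, an ``annular'' (width-one) shape, and a vertical/horizontal adjacency. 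In each such case the two decompositions pass through different intermediate generators but yield the same underlying region, hence the same monomial $U^{\rho_1}U^{\rho_2}$, and the orientation map $\e$ (to be constructed in \S\ref{par:OrientMap}) is chosen precisely so that the two contributions carry opposite signs and cancel. The remaining composites --- those that are an honest disjoint union of two empty rectangles --- also pair up, since swapping the order of two disjoint rectangles gives the two decompositions, and again $\e$ is set up to make them cancel. So the total coefficient of each $z$ vanishes. Over $\FF_2$ one simply counts: each bad region has an even number (two) of decompositions, so everything cancels mod $2$, which is the content of the Remark after the definition of $\e$.

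\emph{Main obstacle.} The genuinely delicate point is the sign bookkeeping: one must know that the map $\func{\e}{\Rect(G)}{\{\pm1\}}$ constructed in \S\ref{par:OrientMap} really does assign opposite signs to the two decompositions of every ``bad'' composite region, simultaneously and consistently over the whole grid (this is the usual coherent-orientation issue, resolved in \cite{MOST} by a spanning-tree / cocycle argument on a suitable graph of generators). Since \S\ref{par:OrientMap} is where $\e$ is defined, in this proof I would invoke the defining property of $\e$ as a black box --- namely that it trivializes the relevant $\FF_2$--cocycle of ``number of decompositions'' --- and reduce $(\p_G^-)^2\equiv0$ to the purely combinatorial classification of composite regions sketched above, leaving the verification that $\e$ has that property to \S\ref{par:OrientMap}. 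Everything else (the U--power count $U^{\rho_1}U^{\rho_2}$ being decomposition-independent, and the two grading computations) is routine given Proposition \ref{RectGradings}.
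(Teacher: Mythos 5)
The grading computations are fine and flesh out the paper's one-line invocation of Proposition \ref{RectGradings}. The disjoint-rectangle case and the L-shape (shared-corner) case also match the paper's argument. However, your treatment of the annular case contains a genuine error.

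You assert that a ``width-one annular shape'' is one of the composite regions that ``decomposes in exactly two ways as an ordered pair of empty rectangles,'' and that the cancellation therefore comes from the two decompositions of that same region. This is not what happens. A thin vertical annulus is bounded by two adjacent vertical grid lines carrying exactly one dot of $x$ each; these two dots cut the annulus into precisely two complementary pieces, and the orientation condition on rectangles (``horizontal boundary components oriented from points of $x$ to points of $y$'') forces one piece to lie in $\Rect(x,z)$ and the other in $\Rect(z,x)$. There is no second choice of cut and no second intermediate generator, so the annulus has \emph{exactly one} decomposition as an ordered composite of empty rectangles, and it contributes a single, uncancelled term $\pm U_{O_i}\cdot x$ to $(\p_G^-)^2(x)$. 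The paper's proof handles this by a different pairing altogether: each such vertical thin annulus is matched with the horizontal thin annulus through the same $O$--decoration, and the sign conditions (V) (vertical annuli get product of signs $-1$) and (H) (horizontal annuli get $+1$) in \S\ref{par:SignAssignment} are designed precisely so that this vertical/horizontal pair cancels. If you argue, as you do, that $\e$ merely assigns opposite signs to the two decompositions of one region, you have no mechanism to kill the annular terms, and $(\p_G^-)^2 \not\equiv 0$ would remain unproven. You should separate the annular case out, observe that each vertical thin annulus through $O_i$ pairs canonically with a horizontal thin annulus through the same $O_i$ (both contribute a $U_{O_i}\cdot x$ term), and invoke the distinct sign conventions for vertical versus horizontal annuli to get the cancellation.
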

\begin{proof}[Sketch of proof]
  The fact that it decreases the Maslov grading and preserves the Alexander filtration is a consequence of Proposition \ref{RectGradings} (\S\ref{par:Rect}).\\
  To prove that $\p_G^-$ is a differential, we show that all terms in ${\p_G^-}^2$ arise in cancelling pairs. Those terms are juxtapositions of two rectangles.\\
When the rectangles have disjoint sets of corners, the two decompositions differ from the order in which pairs of dots are moved.\\
When they do share a corner, they form a L--shape. As shown in Figure \ref{fig:Bidecomp}, it can be then broken up into rectangles in two different ways.\\

\begin{figure}
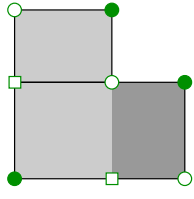

  $$  
  \dessin{2.95cm}{Bidecomp}  
  $$
  \caption{The two L--decompositions: {\footnotesize dark dots describe the initial generator while hollow ones describe the final one. The squares describe intermediate states. One decomposition is given by its border and the second by shading of different intensity.}}
  \label{fig:Bidecomp}
\end{figure}

When they do share more than one corner then they share all the corners and their union is an annulus around the torus $\T_G$ which must be of height or width one since it can't contain any dot of the initial generator. Then vertical and horizontal annuli cancel together when they share the same $\O$--decoration.
\end{proof}

\subsection{Orientation}
\label{ssec:Orient}

We will use the description of the sign refinement given by E. Gallais in the last part of \cite{Gallais}.

\subsubsection{Sign assignment}
\label{par:SignAssignment}

A \emph{sign assignment}\index{sign assignment} is a function $\func{\e}{\Rect(G)}{\{\pm 1\}}$ satisfying:
\begin{itemize}
\item[(Sq)] for any four rectangles $\rho_1\in\Rect(x,z)$, $\rho_2\in\Rect(z,y)$, $\rho'_1\in\Rect(x,z')$ and $\rho'_2\in\Rect(z',y)$ where $x$, $y$, $z$ and $z'$ are generators of $C^-(G)$, satisfying 
$$
\rho_1\neq\rho'_1 \ \textrm{ and }\ \rho_1\cup \rho_2=\rho'_1\cup \rho'_2,
$$
then
$$
\e(\rho_1)\e(\rho'_1)\e(\rho_2)\e(\rho'_2)=-1\ ;
$$
\item[(V)] for any two rectangles$\rho_1\in\Rect(x,z)$ and $\rho_2\in\Rect(z,x)$ such that $\rho_1\cup \rho_2$ is a vertical annulus then
$$
\e(\rho_1)\e(\rho_2)=-1\ ;
$$
\item[(H)] for any two rectangles $\rho_1\in\Rect(x,z)$ and $\rho_2\in\Rect(z,x)$ such that $\rho_1\cup \rho_2$ is an horizontal annulus then
$$
\e(\rho_1)\e(\rho_2)=1.
$$
\end{itemize}

\begin{theo}[Manolescu-Ozsv\'ath-Szab\'o-Thurston \cite{MOST}]\label{SignAssigment}$\ $\\
If $\func{\e}{\Rect(G)}{\{\pm 1\}}$ is a sign assignment, then Theorem \ref{D0Preserves} (\S\ref{par:Diff}) holds. Moreover, if $\e$ and $\e'$ are two sign assignments, then the associated chain complexes are isomorphic.
\end{theo}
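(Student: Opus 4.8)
The plan is to handle the two assertions separately, in both cases relying on the topological bookkeeping already set up in the sketch of Theorem~\ref{D0Preserves} (\S\ref{par:Diff}).

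For the first assertion, the grading part is immediate: Proposition~\ref{RectGradings} (\S\ref{par:Rect}) shows that each summand $\e(\rho)\,U_{O_1}^{O_1(\rho)}\cdots U_{O_n}^{O_n(\rho)}\cdot y$ of $\p_G^-(x)$ lands in Maslov degree $M(x)-1$ and Alexander degree $\leq A(x)$ whatever the value $\e(\rho)\in\{\pm 1\}$, so $\p_G^-$ still drops the Maslov grading by one and preserves the Alexander filtration. The remaining point is $(\p_G^-)^2\equiv 0$ over $\Z$. The sketch of Theorem~\ref{D0Preserves} already organizes the monomials of $(\p_G^-)^2(x)$ --- each indexed by a juxtaposition of two empty rectangles --- into the pairs (i) the two orderings of two moves with disjoint corners, (ii) the two decompositions of an $L$-shape (Figure~\ref{fig:Bidecomp}), and (iii) a vertical and a horizontal annulus around $\T_G$ through a common $\O$-decoration. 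So I would just observe that the sign assignment makes each such pair cancel with the right sign: in cases (i) and (ii) the two rectangle pairs satisfy $\rho_1\neq\rho_1'$ and $\rho_1\cup\rho_2=\rho_1'\cup\rho_2'$, hence cover the same decorations (so the two monomials carry identical powers of each $U_{O_i}$), and axiom~(Sq) gives $\e(\rho_1)\e(\rho_1')\e(\rho_2)\e(\rho_2')=-1$; in case (iii) the vertical annulus contributes $\e(\rho_1)\e(\rho_2)\,U_{O_i}\cdot x=-U_{O_i}\cdot x$ by~(V) and the horizontal one $+U_{O_i}\cdot x$ by~(H). Hence $(\p_G^-)^2\equiv 0$, and Theorem~\ref{D0Preserves} holds over~$\Z$.

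For the second assertion, I would put $\delta:=\e\,\e'\colon\Rect(G)\to\{\pm 1\}$ and rewrite the three axioms for $\e$ and for $\e'$ as: $\delta(\rho_1)\delta(\rho_1')\delta(\rho_2)\delta(\rho_2')=1$ whenever $\rho_1\cup\rho_2=\rho_1'\cup\rho_2'$ with $\rho_1\neq\rho_1'$, and $\delta(\rho_1)\delta(\rho_2)=1$ on both annulus configurations. The claim is that this forces $\delta$ to be a coboundary, i.e.\ there is a function $\eta$ from the set of generators to $\{\pm 1\}$ with $\delta(\rho)=\eta(x)\eta(y)$ for all $\rho\in\Rect(x,y)$. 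To build $\eta$: the ``rectangle graph'' --- generators as vertices, empty rectangles as edges --- is connected (a rectangle changes $\sigma_x$ by a transposition, and, up to cyclic permutations of rows and columns, every adjacent transposition is realized by an empty rectangle), so fixing $\eta(x_0):=1$ one sets $\eta(x):=\prod_k\delta(\rho_k)$ along any chain of empty rectangles from $x_0$ to $x$, the displayed relations on $\delta$ giving independence of the chosen chain. Then for $\rho\in\Rect(x,y)$ one gets $\e(\rho)\,\eta(y)=\eta(x)\,\e'(\rho)$ (multiply $\delta(\rho)=\eta(x)\eta(y)$ by $\e'(\rho)\eta(y)$ and use $\e'(\rho)^2=\eta(y)^2=1$), so the $\Z[U_1,\dots,U_n]$-linear map $\Phi$ defined on generators by $\Phi(x)=\eta(x)\,x$ is an isomorphism of bigraded modules which intertwines the two differentials $\p_G^-$; the chain complexes built from $\e$ and $\e'$ are therefore isomorphic.

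The only genuinely non-formal step is the one hidden in the construction of $\eta$: that the rectangle graph is connected and, more importantly, that the space of its loops is generated by the ``square'' relations of type~(Sq) together with the ``annulus'' relations of types~(V)--(H), so that a function trivial on those relations really is a coboundary. In the actual write-up I would either import this combinatorial lemma from \cite{MOST}, or --- following the reference to \cite{Gallais} --- take Gallais' explicit sign assignment for $\e'$ and exhibit $\eta$ through a closed formula, sidestepping the path-independence argument altogether.
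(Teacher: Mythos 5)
The paper states this theorem as a black box attributed to \cite{MOST}; no proof appears in the text, so there is no in-paper argument to compare against. Your reconstruction is essentially the argument of \cite{MOST} and it is correct where you carry it out. For the first assertion, the grading statement is indeed sign-independent by Proposition \ref{RectGradings}, and the three cancellation types from the sketch of Theorem \ref{D0Preserves} are precisely the configurations governed by (Sq), (V), and (H); your observation that the paired monomials carry the same $U$-powers is the point that lets the signs do the work over $\Z$. For the second assertion, setting $\delta:=\e\e'$ and seeking $\eta$ with $\delta(\rho)=\eta(x)\eta(y)$ is the correct gauge-transformation framework, and you have honestly located the only substantive content: the combinatorial lemma that the cycle space of the rectangle graph is generated by the square loops (the (Sq) configurations) and the two families of annulus loops. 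That lemma is genuinely proved in \cite{MOST} and is not a formality, so importing it is legitimate.

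One caveat about your proposed shortcut: fixing Gallais' explicit $\e'$ does not by itself let you sidestep path-independence. To show that an arbitrary sign assignment $\e$ is gauge-equivalent to Gallais' one you would still need either the path-independence lemma or some other structural input --- in \cite{Gallais} this is the identification of sign assignments with sections of the spin extension $\Sn$, which is exactly the device the paper records in paragraphs \ref{par:SpinExt}--\ref{par:OrientMap}. So the ``closed formula'' route replaces the combinatorial lemma by a group-theoretic one rather than eliminating it.
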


\subsubsection{Spin extension of $\SS_n$}
\label{par:SpinExt}

If we want to deal with $\Z$--coefficients, we need to give some signs to permutations. That is why we consider $\Sn$ the \emph{spin extension of $\SS_n$}\index{spin extension of Sn@spin extension of $\SS_n$}\index{Sn@$\Sn$|see{spin extension of $\SS_n$}} defined as:
$$
\Sn = \left\langle z,\widetilde{\tau}_{ij} \textrm{ for } 1\leq i\neq j\leq j \left|
    {\scriptsize \begin{tabular}{ll}
        \begin{tabular}{l}
          $z^2=1$ \\[.05cm]
          $z\widetilde{\tau}_{ij}= \widetilde{\tau}_{ij}z=\widetilde{\tau}_{ji}, \widetilde{\tau}_i^2=z$\\[.05cm]
          $\widetilde{\tau}_{ij}\widetilde{\tau}_{i'j'}=z\widetilde{\tau}_{i'j'}\widetilde{\tau}_{ij}$\\[.05cm]
          $\widetilde{\tau}_{ij}\widetilde{\tau}_{jk}\widetilde{\tau}_{ij}=\widetilde{\tau}_{jk}\widetilde{\tau}_{ij}\widetilde{\tau}_{jk}=\widetilde{\tau}_{ik}$
        \end{tabular}
        &
        \begin{tabular}{l}
          $1\leq i\neq j\neq k \leq n$\\[.1cm]
          $1\leq i'\neq j'\leq n$ \\[.1cm]
          $\{i,j\}\cap \{i',j'\}=\emptyset$
        \end{tabular}      
      \end{tabular} } \right.
\right\rangle.
$$

The generators $\widetilde{\tau}_{ij}$ are called \emph{transpositions}\index{transposition}.\\

We denote by $\iota_n$ the natural injection of $\Sn$ into $\widetilde{\SS}_{n+1}$.\\

\begin{prop}
  For $n\geq 4$, the group $\Sn$ is a non-trivial extension of $\SS_n$ by $\fract \Z / {2\Z}$
$$
\xymatrix{1 \ar[r] & \fract \Z / {2\Z} \ar@{^(->}[r]^i & \Sn \ar@{->>}[r]^p & \SS_n \ar[r]& 1}
$$
where $i$ maps $1$ to $z$ and where $p$ maps $\widetilde{\tau}_{ij}$ to the transposition $\tau_{i,j}$ for all $1\leq i \neq j\leq n$ and $z$ to $Id$.
\end{prop}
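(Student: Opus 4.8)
The plan is to verify the three required properties directly from the presentation of $\Sn$: that the map $p$ is well defined and surjective, that its kernel is exactly $\{1,z\}\cong\fract\Z/{2\Z}$, and that the extension does not split (non-triviality) for $n\geq 4$. First I would check that $p$ is a well-defined homomorphism onto $\SS_n$: this amounts to verifying that the images $\tau_{ij}$ of the generators $\widetilde\tau_{ij}$ (together with $p(z)=\Id$) satisfy the relators of the given presentation. Sending $z\mapsto\Id$ kills $z$, so $z^2=1$, $z\widetilde\tau_{ij}=\widetilde\tau_{ji}$ and $\widetilde\tau_i^2=z$ become the Coxeter relations $\tau_{ij}^2=1$ (and $\tau_{ij}=\tau_{ji}$) in $\SS_n$; the relation $\widetilde\tau_{ij}\widetilde\tau_{i'j'}=z\widetilde\tau_{i'j'}\widetilde\tau_{ij}$ with disjoint supports becomes commutation of disjoint transpositions; and the braid-type relation $\widetilde\tau_{ij}\widetilde\tau_{jk}\widetilde\tau_{ij}=\widetilde\tau_{jk}\widetilde\tau_{ij}\widetilde\tau_{jk}=\widetilde\tau_{ik}$ becomes the usual $\tau_{ij}\tau_{jk}\tau_{ij}=\tau_{jk}\tau_{ij}\tau_{jk}=\tau_{ik}$ in $\SS_n$. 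Since the transpositions $\tau_{ij}$ generate $\SS_n$, surjectivity of $p$ is immediate, and $i\colon\fract\Z/{2\Z}\hookrightarrow\Sn$, $1\mapsto z$, makes sense once $z\neq 1$ in $\Sn$ and $z^2=1$.

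Next I would pin down the kernel of $p$. Clearly $z\in\Ker p$, so it suffices to show $\Ker p\subseteq\{1,z\}$, equivalently that $\Sn$ has order exactly $2\,|\SS_n|$. One clean way: observe that $z$ is central (this follows from the relations $z\widetilde\tau_{ij}=\widetilde\tau_{ij}z$, and $z$ is a word in the generators) and that modulo the central subgroup $\langle z\rangle$ the presentation of $\Sn$ collapses exactly to the Coxeter presentation of $\SS_n$; hence $\Sn/\langle z\rangle\cong\SS_n$ and $|\Sn|\le 2\,|\SS_n|$, with equality as soon as $z\neq 1$. To see $z\neq 1$ in $\Sn$ — and simultaneously to get non-triviality of the extension for $n\geq 4$ — I would exhibit a concrete group $E$ of order $2\,|\SS_n|$ together with a surjection $\Sn\twoheadrightarrow E$ in which the image $\bar z$ of $z$ is a nontrivial central involution that is not a commutator-trivial splitting obstruction; concretely, one can take $E$ to be a genuine double cover of $\SS_n$ (e.g. one of the classical Schur covers $\widetilde\SS_n$, available for $n\ge4$) and check the relators map correctly. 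This forces $|\Sn|=2|\SS_n|$ and $\Ker p=\langle z\rangle$, and exhibits $\Sn$ as a central extension of $\SS_n$ by $\fract\Z/{2\Z}$.

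Finally, for the non-triviality claim when $n\geq 4$, I would argue that the extension does not split, i.e. there is no homomorphic section $s\colon\SS_n\to\Sn$ with $p\circ s=\Id$. If such $s$ existed, then $s(\tau_{ij})$ would be an element of $\Sn$ of order dividing $2$ lying over $\tau_{ij}$; but the relation $\widetilde\tau_i^2=z$ (for the generators squaring to $z$) together with the braid relation shows that the two preimages $\widetilde\tau_{ij}$ and $z\widetilde\tau_{ij}$ of a transposition both square to $z\ne1$, so \emph{no} preimage of a transposition is an involution. Hence no section exists and the extension is non-split; since $\langle z\rangle$ is central, a non-split central extension by $\fract\Z/{2\Z}$ is in particular non-trivial. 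The main obstacle I anticipate is the bookkeeping in the kernel computation — proving $|\Sn|=2|\SS_n|$ rigorously rather than just $\le$ — which is why the step of producing an explicit order-$2|\SS_n|$ quotient (a known double cover of $\SS_n$, valid precisely for $n\ge4$) is the crux of the argument; everything else is routine relator-checking.
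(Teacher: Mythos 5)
The paper itself states this proposition without proof; it is imported from Gallais's construction of the sign assignment (\cite{Gallais}), cited at the start of the subsection. So there is no in-paper argument to compare yours against, and I will assess the proposal on its own merits.

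Your structure is sound, and you correctly identify that everything hinges on proving $z\ne 1$ in $\Sn$ (equivalently, $|\Sn|=2\,n!$). The well-definedness of $p$, the computation $\Sn/\langle z\rangle\cong\SS_n$ giving $|\Sn|\le 2\,n!$, and the non-splitness argument from $\widetilde\tau_{ij}^2=z=(z\widetilde\tau_{ij})^2$ are all correct; the last point in particular is clean and is the standard way to see that this double cover does not split. However, the crucial inequality $|\Sn|\ge 2\,n!$ is exactly where you stop short: you invoke ``a genuine double cover $E$ of $\SS_n$'' and a surjection $\Sn\twoheadrightarrow E$, but constructing $E$ and checking the relators there \emph{is} the entire content of the proposition. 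Two specific things remain to be done. First, one must exhibit a concrete model in which $z$ acts nontrivially --- for instance the pullback of $\mathrm{Pin}(n-1)\to O(n-1)$ along the standard representation $\SS_n\hookrightarrow O(n-1)$, or an explicit projective representation realizing a nonzero class in $H^2(\SS_n;\fract{\Z}/{2\Z})$. Second, since the classical presentations of the two Schur covers are given in terms of the Coxeter generators $\widetilde\sigma_i=\widetilde\tau_{i,i+1}$, one must check that the paper's relations on \emph{all} ordered pairs $\widetilde\tau_{ij}$ collapse to those Coxeter relations with the correct signs (in particular that the cover at hand is the one in which transpositions lift to elements of order $4$, not $2$), so that the surjection onto $E$ is actually well-defined. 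You acknowledge the first gap but not the second. Neither is fatal --- both are standard --- but as written the proposal reduces the proposition to an unreferenced classical fact rather than proving it, which is essentially what the paper also does by deferring to Gallais.
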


\begin{lemme}\label{SpinProp}
  For all distinct positive integers $i$, $j$, $k$ and $l$, the transpositions satisfy:
  \begin{enumerate}
  \item[i)] $\widetilde{\tau}_{ij}\widetilde{\tau}_{ji}=1$;
  \item[ii)] $\widetilde{\tau}_{ij}\widetilde{\tau}_{ki}\widetilde{\tau}_{i,j}=\widetilde{\tau}_{li}\widetilde{\tau}_{kj}\widetilde{\tau}_{li}$;
  \item[iii)] $\widetilde{\tau}_{ij}\widetilde{\tau}_{jk}\widetilde{\tau}_{ki}=\widetilde{\tau}_{kj}$. 
  \end{enumerate}
\end{lemme}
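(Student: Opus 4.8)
The plan is to derive all three identities directly from the defining relations of $\Sn$, exploiting that $z$ is central of order two. The one preliminary observation I would make is that the relation $z\widetilde{\tau}_{ij}=\widetilde{\tau}_{ji}$ implies $\widetilde{\tau}_{ij}=z\,\widetilde{\tau}_{ji}$; together with $z^2=1$ and the centrality of $z$, this ``swap $\Leftrightarrow$ factor of $z$'' dictionary is the only bookkeeping device needed. Likewise $\widetilde{\tau}_{ij}^{\,2}=z$ will be used repeatedly.

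For (i), I would simply compute $\widetilde{\tau}_{ij}\widetilde{\tau}_{ji}=\widetilde{\tau}_{ij}\,(z\,\widetilde{\tau}_{ij})=z\,\widetilde{\tau}_{ij}^{\,2}=z^2=1$. As an immediate corollary this records that $\widetilde{\tau}_{ij}^{-1}=\widetilde{\tau}_{ji}$, a fact I would then invoke freely in (ii) and (iii).

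For (ii), the idea is to show that both sides equal $\widetilde{\tau}_{kj}$. On the left, I would rewrite $\widetilde{\tau}_{ij}\widetilde{\tau}_{ki}\widetilde{\tau}_{ij}=(z\,\widetilde{\tau}_{ji})(z\,\widetilde{\tau}_{ik})(z\,\widetilde{\tau}_{ji})=z\,(\widetilde{\tau}_{ji}\widetilde{\tau}_{ik}\widetilde{\tau}_{ji})$ and apply the braid relation $\widetilde{\tau}_{ab}\widetilde{\tau}_{bc}\widetilde{\tau}_{ab}=\widetilde{\tau}_{ac}$ with $(a,b,c)=(j,i,k)$, obtaining $z\,\widetilde{\tau}_{jk}=\widetilde{\tau}_{kj}$. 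On the right, since $i,j,k,l$ are distinct the index sets $\{l,i\}$ and $\{k,j\}$ are disjoint, so the commutation relation gives $\widetilde{\tau}_{li}\widetilde{\tau}_{kj}=z\,\widetilde{\tau}_{kj}\widetilde{\tau}_{li}$, whence $\widetilde{\tau}_{li}\widetilde{\tau}_{kj}\widetilde{\tau}_{li}=z\,\widetilde{\tau}_{kj}\,\widetilde{\tau}_{li}^{\,2}=z\,\widetilde{\tau}_{kj}\,z=\widetilde{\tau}_{kj}$. Matching the two computations proves (ii).

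For (iii), I would use the braid relation in the form $\widetilde{\tau}_{jk}\widetilde{\tau}_{ki}\widetilde{\tau}_{jk}=\widetilde{\tau}_{ji}$ (that is, $(a,b,c)=(j,k,i)$), so that $\widetilde{\tau}_{jk}\widetilde{\tau}_{ki}=\widetilde{\tau}_{ji}\,\widetilde{\tau}_{jk}^{-1}=\widetilde{\tau}_{ji}\widetilde{\tau}_{kj}$ by (i); then $\widetilde{\tau}_{ij}\widetilde{\tau}_{jk}\widetilde{\tau}_{ki}=\widetilde{\tau}_{ij}\widetilde{\tau}_{ji}\widetilde{\tau}_{kj}=\widetilde{\tau}_{kj}$, again by (i). The only genuine care required throughout is tracking the central factor $z$ produced whenever the two indices of a transposition are interchanged or two disjoint transpositions are commuted; once that accounting is in place, none of the three verifications presents a real obstacle. (Implicitly (ii) needs $n\geq 4$ so that $i,j,k,l$ can be taken pairwise distinct, and (iii) needs $n\geq 3$.)
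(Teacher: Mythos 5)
Your proof is correct and uses essentially the same ingredients as the paper: part (i), the braid relation, and the bookkeeping of the central factor $z$. The paper spells out only part (iii) (treating (i) and (ii) as immediate from the defining relations), and your manipulation of (iii) — solving the braid relation for $\widetilde{\tau}_{jk}\widetilde{\tau}_{ki}$ and then cancelling via (i) — is a minor rearrangement of the same argument.
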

\begin{proof}
  Only the third point may need a proof:
  \begin{eqnarray*}
    \widetilde{\tau}_{ij}\widetilde{\tau}_{jk}\widetilde{\tau}_{ki} & = & \widetilde{\tau}_{ij}\widetilde{\tau}_{jk}\widetilde{\tau}_{ij}\widetilde{\tau}_{ji}\widetilde{\tau}_{ki}\\
    & = & \widetilde{\tau}_{ik}\widetilde{\tau}_{ji}\widetilde{\tau}_{ki}\\
    & = & z\widetilde{\tau}_{ik}\widetilde{\tau}_{ji}\widetilde{\tau}_{ik} = z\widetilde{\tau}_{jk}=\widetilde{\tau}_{kj}.
  \end{eqnarray*}
\end{proof}

\subsubsection{Section $s$}
\label{par:Section}

Now, we define a section $\func{s:=s_n}{\SS_n}{\Sn}$
$$
\xymatrix{1 \ar[r] & \fract \Z / {2\Z} \ar@{^{(}->}[r]^i & \Sn \ar@{->>}[r]^p & \SS_n \ar[r] \ar@/^/[l]^{s_n} & 1}.
$$
by induction on $n$.
  \begin{description}
  \item[Case $n=1$] $s_1(Id)=1$.
  \item[Case $n>1$]$\ $
\begin{itemize}
    \item[-] $\forall i\in \llbracket1,n-1\rrbracket$, $s_n\big(\tau_{in}\big)=\widetilde{\tau}_{i,n}$;
    \item[-] $\forall \sigma\in \SS_n$, $s_n(\sigma)=\iota_{n-1}\big(s_{n-1}(\sigma\tau_{\sigma^{-1}(n),n})\big)s_n(\tau_{\sigma^{-1}(n),n})$ where the permutation $\sigma\tau_{\sigma^{-1}(n),n}$ is seen as an element of $\SS_{n-1}$ since it lets $n$ fixed.
    \end{itemize}
  \end{description}

\begin{prop}
  The map $s$ is a section \ie $p\circ s=\textnormal{Id}$.\\
In particular, for every pair of permutations $(\sigma,\sigma')\in\SS_n^2$, the term $s(\sigma)^{-1}s(\sigma')s(\sigma^{-1}\sigma')^{-1}$ is a power of $z$.
\end{prop}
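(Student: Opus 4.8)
The plan is to prove the first assertion, $p\circ s=\textnormal{Id}$, by induction on $n$, and then to read off the ``in particular'' clause as an immediate consequence. The base case $n=1$ is trivial, since $p\circ s_1$ sends $\textnormal{Id}$ to $p(1)=\textnormal{Id}$. Before the inductive step I would record the one auxiliary compatibility that makes everything go through: writing $\iota'_{n-1}\colon\SS_{n-1}\hookrightarrow\SS_n$ for the natural inclusion, one has $p_n\circ\iota_{n-1}=\iota'_{n-1}\circ p_{n-1}$. This is checked on the generators of $\widetilde{\SS}_{n-1}$: both composites send $\widetilde{\tau}_{ij}\mapsto\tau_{i,j}$ and $z\mapsto\textnormal{Id}$, hence they agree as homomorphisms. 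I would also note the immediate fact that, for $i\in\llbracket 1,n-1\rrbracket$, $p_n(s_n(\tau_{in}))=p_n(\widetilde{\tau}_{i,n})=\tau_{i,n}$.

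For the inductive step, let $\sigma\in\SS_n$ and set $k=\sigma^{-1}(n)$. If $k=n$, then $\sigma$ fixes $n$, the defining formula degenerates to $s_n(\sigma)=\iota_{n-1}\big(s_{n-1}(\sigma)\big)$, and applying $p_n$ together with the compatibility above and the induction hypothesis gives $p_n(s_n(\sigma))=\iota'_{n-1}\big(p_{n-1}(s_{n-1}(\sigma))\big)=\iota'_{n-1}(\sigma)=\sigma$. If $k<n$, then $\sigma\tau_{k,n}$ fixes $n$ and may be viewed in $\SS_{n-1}$, and from
$$
s_n(\sigma)=\iota_{n-1}\big(s_{n-1}(\sigma\tau_{k,n})\big)\cdot s_n(\tau_{k,n})
$$
one gets, using that $p_n$ is a homomorphism, the compatibility, the induction hypothesis, and $p_n(s_n(\tau_{k,n}))=\tau_{k,n}$, that $p_n(s_n(\sigma))=(\sigma\tau_{k,n})\cdot\tau_{k,n}=\sigma\tau_{k,n}^2=\sigma$. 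This closes the induction, so $p\circ s=\textnormal{Id}$.

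Finally, for $(\sigma,\sigma')\in\SS_n^2$ put $t=s(\sigma)^{-1}s(\sigma')s(\sigma^{-1}\sigma')^{-1}$. Since $p$ is a homomorphism and $p\circ s=\textnormal{Id}$, we get $p(t)=\sigma^{-1}\sigma'(\sigma^{-1}\sigma')^{-1}=\textnormal{Id}$, so $t\in\Ker p$. By the non-trivial extension sequence stated above, $\Ker p=\textnormal{Im}\, i=\{1,z\}$, and as $z^2=1$ both elements of $\{1,z\}$ are powers of $z$; hence $t$ is a power of $z$. I do not expect a genuine obstacle here: the only point that requires care is reading the recursive definition of $s_n$ consistently in the degenerate case $\sigma^{-1}(n)=n$ (where it must be interpreted as $s_n(\sigma)=\iota_{n-1}(s_{n-1}(\sigma))$, equivalently the factor $s_n(\tau_{n,n})$ is taken to be $1$); once that is fixed, the argument is pure bookkeeping with the short exact sequence.
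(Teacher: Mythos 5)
Your proof is correct. The paper in fact states this proposition without proof (it is treated as a routine verification), so there is nothing to compare against; your argument -- induction on $n$ for $p\circ s=\textnormal{Id}$, using the compatibility $p_n\circ\iota_{n-1}=\iota'_{n-1}\circ p_{n-1}$ checked on generators, and then reading the ``in particular'' clause off the exact sequence $\Ker p=\{1,z\}$ -- is exactly the natural one, and your handling of the degenerate case $\sigma^{-1}(n)=n$ (interpreting the recursion as $s_n(\sigma)=\iota_{n-1}(s_{n-1}(\sigma))$) is the intended reading of the definition.
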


\subsubsection{Orientation map}
\label{par:OrientMap}

We define the map $\func{c}{\SS_n\times\SS_n}{\fract \Z /{2\Z}}$ by
$$
\forall (\sigma,\sigma')\in\SS_n^2,s(\sigma)^{-1}s(\sigma')s(\sigma^{-1}\sigma')^{-1}=z^{c(\sigma,\sigma')}. 
$$

\begin{remarque}
  Actually, $c$ is a $2$-cocycle in $C^2(\SS_n,\fract \Z / {2\Z})$.
\end{remarque}

The orientation map $\e$ will depend as well on the way a rectangle is split into pieces when seen on the grid $G$ and not on the torus $\T_G$.\\

 If $\rho$ is an empty rectangle which connects $x$ to $y$, then we set
$$
\e(\rho)=\left\{\begin{array}{ll}(-1)^{1+c(\Gsig{x},\Gsig{y})}&\textrm{if }\rho\textrm{ is torn}\\(-1)^{c(\Gsig{x},\Gsig{y})}&\textrm{otherwise}\end{array}\right.
$$

\begin{theo}[Gallais \cite{Gallais}]
  The map $\func{\e}{\Rect(G)}{\{\pm 1\}}$ is a sign assigment as defined in paragraph \ref{par:SignAssignment}.
\end{theo}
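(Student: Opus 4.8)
The plan is to strip the definition of $\e$ down to congruences modulo $2$ for the cocycle $c$, corrected by the number of \emph{torn} rectangles involved, and then to read those congruences off the presentation of $\Sn$. The one algebraic input used throughout is that every transposition squares to $z$ in $\Sn$: from $\widetilde{\tau}_{ij}\widetilde{\tau}_{ji}=1$ and $z\widetilde{\tau}_{ij}=\widetilde{\tau}_{ji}$ (the relations of \S\ref{par:SpinExt}, see also Lemma \ref{SpinProp}) one gets $\widetilde{\tau}_{ij}^2=z$, and since $s(\tau_{ij})=\widetilde{\tau}_{ij}$ by the very construction of the section $s$ (\S\ref{par:Section}), it follows that $s(\tau)^2=z$ for every transposition $\tau\in\SS_n$. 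On the geometric side, recall from the Remark in \S\ref{par:Rect} that whenever $\Rect(x,y)\neq\emptyset$ the element $\Gsig{x}^{-1}\Gsig{y}$ is a transposition; hence the definition of $c$ rewrites as
\[
s(\Gsig{y})=z^{c(\Gsig{x},\Gsig{y})}\,s(\Gsig{x})\,s(\Gsig{x}^{-1}\Gsig{y}),
\]
with $s(\Gsig{x}^{-1}\Gsig{y})$ a transposition generator of $\Sn$.

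For conditions (V) and (H): if $\rho_1\in\Rect(x,z)$, $\rho_2\in\Rect(z,x)$ and $\rho_1\cup\rho_2$ is an annulus, then $\Gsig{x}^{-1}\Gsig{z}=\Gsig{z}^{-1}\Gsig{x}=:\tau$ is a single transposition, and applying the displayed identity twice gives $s(\Gsig{x})=z^{c(\Gsig{x},\Gsig{z})+c(\Gsig{z},\Gsig{x})}\,s(\Gsig{x})\,s(\tau)^2$, whence $c(\Gsig{x},\Gsig{z})+c(\Gsig{z},\Gsig{x})\equiv 1\pmod 2$. It then remains to count torn rectangles. A vertical annulus lies in a single column and, after a cyclic permutation of the columns, is disjoint from the distinguished leftmost line $l$, so neither $\rho_1$ nor $\rho_2$ is torn and $\e(\rho_1)\e(\rho_2)=(-1)^{c(\Gsig{x},\Gsig{z})+c(\Gsig{z},\Gsig{x})}=-1$, which is (V). A horizontal annulus instead sweeps across all the columns, so (again after a harmless cyclic permutation) exactly one of $\rho_1,\rho_2$ wraps past $l$ and is torn, which flips the sign to $\e(\rho_1)\e(\rho_2)=(-1)^{1+c(\Gsig{x},\Gsig{z})+c(\Gsig{z},\Gsig{x})}=+1$, which is (H).

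For condition (Sq), with $\rho_1\cup\rho_2=\rho_1'\cup\rho_2'$ and $\rho_1\neq\rho_1'$, I would distinguish the same two geometric cases as in the proof that $\p_G^-$ is a differential (\S\ref{par:Diff}). If the common region is a disjoint union of two rectangles carrying disjoint transpositions $a$ and $b$, then computing $s(\Gsig{y})$ along the two orders of composition yields $z^{\varepsilon_1}\,s(\Gsig{x})\,\widetilde{a}\,\widetilde{b}$ and $z^{\varepsilon_2}\,s(\Gsig{x})\,\widetilde{b}\,\widetilde{a}$, where $\varepsilon_1+\varepsilon_2$ is exactly the exponent of $(-1)$ occurring in (Sq); since $\widetilde{a}\,\widetilde{b}=z\,\widetilde{b}\,\widetilde{a}$ (disjoint supports) one gets $\varepsilon_1+\varepsilon_2\equiv 1\pmod 2$, while $\{\rho_1,\rho_2\}=\{\rho_1',\rho_2'\}$ as sets of rectangles, so the torn corrections occur an even number of times and cancel; hence the (Sq) product equals $-1$. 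If the common region is an L-shape, the two intermediate generators $z$ and $z'$ carry transpositions sharing one index, and the two corresponding length-two words of $\Sn$ are two liftings of one $3$-cycle; a direct manipulation using the relation $\widetilde{\tau}_{ij}\widetilde{\tau}_{jk}\widetilde{\tau}_{ij}=\widetilde{\tau}_{jk}\widetilde{\tau}_{ij}\widetilde{\tau}_{jk}=\widetilde{\tau}_{ik}$ together with Lemma \ref{SpinProp} shows these liftings differ by $z$, so after reducing (by a cyclic permutation) to an L-shape disjoint from $l$ one again obtains $-1$.

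I expect the genuinely delicate points to be the bookkeeping of torn rectangles in the presence of wrapping around the torus --- one must check that (Sq), (V) and (H) are stable under the cyclic permutations used to move the relevant region into generic position off the line $l$, i.e. that the reduction to the planar, untorn situation is legitimate --- and, in the L-shape case, the verification that the two geometric factorisations of the $3$-cycle really do produce liftings differing by $z$ in $\Sn$, uniformly over all configurations. Everything else is a routine unwinding once $s(\tau)^2=z$ and the commutation and hexagon relations of $\Sn$ are in hand.
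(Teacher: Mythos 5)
The paper does not prove this statement; it is quoted from Gallais \cite{Gallais}, so there is no in-text proof to compare yours against. Your strategy is the natural one: translate (Sq), (V) and (H) into parity constraints on the cocycle $c$, obtain those constraints from the defining relations of $\Sn$ (through $s(\tau)^2=z$, the disjoint commutation $\widetilde{\tau}_{ij}\widetilde{\tau}_{i'j'}=z\widetilde{\tau}_{i'j'}\widetilde{\tau}_{ij}$, and the braid relation together with Lemma \ref{SpinProp}), and account separately for the $(-1)$'s attached to torn rectangles. The identity $c(\sigma,\sigma')+c(\sigma',\sigma)\equiv 1\pmod 2$ when $\sigma^{-1}\sigma'$ is a transposition and the disjoint-corners half of (Sq) are both computed correctly.

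The one step that is not legitimate as written is the reduction to generic position by cyclic permutations of the columns, invoked both for (V)/(H) and for the L-shape case of (Sq). The function $\e$ is defined relative to the leftmost line $l$ of a \emph{fixed} grid $G$; cyclically permuting the columns produces a different grid with a different leftmost line, and hence a priori a different function $\e'$, and in this thesis the comparison of $\e$ with $\e'$ is deduced \emph{from} the uniqueness of sign assignments (\S\ref{par:Invariance}, citing \cite{MOST} and \cite{Gallais}) --- which presupposes the very theorem you are proving. Replacing this by a direct count on the fixed grid is necessary: for (V) the two pieces of a vertical annulus occupy the same width-one column, so they meet $l$ simultaneously or not at all and the torn parity is always even, independently of where the column sits; for (H) the annulus wraps once horizontally around the torus, so generically exactly one piece is ripped across $l$ and the parity is odd. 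The degenerate positions where a corner of the annulus lies on $l$ itself, and the coupling of the torn convention with the choice $\widetilde{\tau}_{ij}$ versus $\widetilde{\tau}_{ji}$ in \S\ref{par:AltDiff}, are precisely where the remaining bookkeeping lives, and the same care is needed in the L-shape case of (Sq). You flag these points honestly, but until they are carried out directly on the fixed grid the argument is a sound outline rather than a complete proof.
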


\subsubsection{Alternative description}
\label{par:AltDiff}

Since we will use it in the paragraph \ref{par:SemiSingMap}, we should say a few words about the alternative description of $\p_G^-$ given in \cite{Gallais}.\\

Let assume that $C^-(G)$ is no more generated by elements of $\SS_n$ but by elements of $\Sn$ up to the relation $z\sim -1$. As a set, these generators are straightly in bijection with $\SS_n$ but the group law is now endowed with some twisted signs.\\

Let $\widetilde{\sigma}_x$ and $\widetilde{\sigma}_y$ be two such generators of $C^-(G)$. Furthermore, suppose that $\widetilde{\sigma}_x^{-1}\widetilde{\sigma}_y$ is a transposition $\widetilde{\tau}_{ij}$ for some $i\neq j\in\llbracket1,n\rrbracket$.\\
Let $\rho\in\Rect(x,y)$ be the rectangle joining $x$ to $y$ of which the bottom left corner is the $i^\textrm{th}$ dot of $x$ counting from left to right. Then we set
$$
\Rect(\widetilde{\sigma}_x,\widetilde{\sigma}_y)=\left\{
  \begin{array}{ll}
    \{\widetilde{\tau}_{ij}\} & \textrm{ if }\rho\textrm{ is empty and not torn}\\[.1cm]
      \{\widetilde{\tau}_{ji}\} & \textrm{ if }\rho\textrm{ is empty and torn}\\[.1cm]
    \emptyset & \textrm{ otherwise.}
  \end{array}
\right.
$$

Now, the differential $\p_G^-$ can be defined as well on generators by
$$
\p_G^-(\widetilde{\sigma}_x)=\sum_{\widetilde{\sigma}_y \textrm{ generator}\textcolor{white}{R}} \sum_{\widetilde{\tau}\in \Rect(\widetilde{\sigma}_x,\widetilde{\sigma}_y)} U_{O_1}^{O_1(\rho)}\cdots U_{O_n}^{O_n(\rho)}\cdot \widetilde{\sigma}_x\widetilde{\tau}.
$$


\subsection{Link Floer homologies}
\label{ssec:Homology}

\subsubsection{Filtrations and gradings}
\label{par:FloerHomologies}

The chain complex $C^-(G)$ is endowed with several filtrations: the Alexander one and, moreover, for each $O\in \O$, setting $\F_i^OC^-(G):=U_O^i.C^-(G)$ for all non negative integer $i$ defines another filtration on $C^-(G)$ which is preserved by the differential. We call it \emph{$O$--filtration}\index{filtration!Ofiltration@$O$--filtration}. For each of them, one can consider the associated graded chain complex.\\

It is a corollary of Proposition \ref{RectGradings} (\S\ref{par:Rect}) that the graded differential associated to the Alexander filtration is obtained from the definition of $\p_G^-$ given in the paragraph \ref{par:Diff} (or \ref{par:AltDiff}), adding the condition that the rectangles do not contain any $X$.\\

Concerning the $O$--filtration associated to $O\in\O$, it can be read directly in the definition of $\p_G^-$ that the associated graded differential is obtained when considering only those rectangles which do not contain $O$. Equivalently, the differential is obtained when sending the variable $U_O$ to zero.\\

By construction, all the filtrations are bounded below.

\subsubsection{Table of homologies}
\label{par:TableHomologies}
Here is a table of the notation for the different homologies thus obtained.\\

For the sake of clarity, we assume that the elements of $\O$ are numbered in such a way that the $\ell$ first ones belong to the $\ell$ different components of $L$.

\begin{center}
  \begin{tabular}{|c|cccc|}
    \cline{2-5}
    \multicolumn{1}{c|}{} & \multicolumn{4}{|c|}{Alexander filtration}\\
    \cline{2-5}
    \multicolumn{1}{c|}{} & filtrated & & & graded\\ 
    \hline
{\Large \strut} -- & $H_*^-(G)$ & \multicolumn{2}{c}{$\leadsto$} & ${HL}_*^-(G)$ \\ 
    \hline 
 {\Large \strut}    $U_1=\cdots=U_\ell=0$ & $\widehat{H}_*(G)$ & \multicolumn{2}{c}{$\leadsto$} & ${\widehat{HL}}_*(G)$ \\
    \hline
 {\Large \strut}    $U_1=\cdots=U_n=0$ & $\widetilde{H}_*(G)$ & \multicolumn{2}{c}{$\leadsto$} & ${\widetilde{HL}}_*(G)$ \\
    \hline
  \end{tabular}
\end{center}
\index{homology!knot Floer}\index{homology!link Floer}

\subsubsection{Link invariants}
\label{par:LinkInv}

Let $V$ be a bigraded module generated by two elements, one of degree $(0,0)$ and one of degree $(-1,-1)$.
\begin{prop}[\cite{MOST}]\label{UiUj}
   $\widetilde{HL}_*(G)\equiv \widehat{HL}_*\otimes V^{\otimes (n-\ell)}$.
\end{prop}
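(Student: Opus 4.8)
\emph{Overall strategy.} I would remove the variables $U_{\ell+1},\dots,U_n$ one at a time, each removal tensoring the homology with $V$. Let $C_\ell$ denote $\widehat{C}^-(G)\otimes\Z[U_{\ell+1},\cdots,U_n]$ equipped with the differential associated to the Alexander filtration, so that $H_*(C_\ell)=\widehat{HL}_*(G)$; then the complex computing $\widetilde{HL}_*(G)$ is exactly the quotient $C_\ell/(U_{\ell+1},\dots,U_n)$ (see the table of \S\ref{par:TableHomologies}). Hence it suffices to prove the local statement: if $C$ is any complex of the form $\widehat{C}^-(G)\otimes\Z[U_{i_1},\cdots,U_{i_r}]$ with $\{i_1,\dots,i_r\}\subseteq\{\ell+1,\dots,n\}$ and the Alexander--graded differential induced by $\p^-_G$, and if $m\in\{i_1,\dots,i_r\}$, then $H_*(C/U_mC)\cong H_*(C)\otimes V$ as bigraded modules; applying this $n-\ell$ times yields the proposition, the base case $n=\ell$ being trivial since $V^{\otimes 0}=\Z$.

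\emph{The crux: $U_m$ acts null-homotopically on $C$.} The one non-formal ingredient, which I would quote from \cite{MOST} or reprove following it, is the lemma: if two $O$--markings $O_i,O_j$ lie on the same component of $L$, then multiplication by $U_i$ and by $U_j$ are chain homotopic on $C^-(G)$, and likewise for the Alexander--graded differential. One fixes an $X$--marking sharing a row or column with both $O_i$ and $O_j$, lets $H$ count empty rectangles through that $X$ (with the usual $U$--weights and signs), and checks that in $\p H+H\p$ every juxtaposition of two such rectangles cancels in pairs except the width--one vertical annulus through the column of that $X$ and the height--one horizontal annulus through its row, contributing $U_i$ and $-U_j$, the signs being forced by axioms $(V)$ and $(H)$ of the sign assignment (\S\ref{par:SignAssignment}); one then chains such homotopies along the component. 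Being $\Z[U_1,\cdots,U_n]$--linear, this homotopy descends to the quotient $C$; there the $O$--marking representing the component of $O_m$ has index $\le\ell$, so its variable is already zero in $C$, and we conclude that multiplication by $U_m$ is null-homotopic on $C$.

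\emph{The formal part.} Since $C$ is free over $\Z[U_m]$ and $\p^-_G$ is $\Z[U_m]$--linear, multiplication by $U_m$ is an injective chain map, giving a short exact sequence of graded chain complexes $0\to\overline{C}\xrightarrow{\,U_m\,}C\to C/U_mC\to 0$, where $\overline{C}$ is $C$ regraded so that $U_m$ becomes degree--preserving. As in \S\ref{par:MapCones}, the natural surjection $\Cone(U_m\colon\overline{C}\to C)\to C/U_mC$ has kernel $\Cone(\Id_{\overline{C}})$, which is acyclic, so by Corollary~\ref{AcyclicCone} it is a quasi-isomorphism. By the previous paragraph $U_m$ is homotopic to the zero map, and mapping cones of homotopic chain maps have isomorphic homology, so
\[
H_*(C/U_mC)\;\cong\;H_*\big(\Cone(U_m)\big)\;\cong\;H_*\big(\Cone(0)\big)\;=\;H_*(\overline{C}[1])\oplus H_*(C).
\]
Since $\deg U_m=(-2,-1)$ while forming the mapping cone adds $1$ to the homological grading, the summand $H_*(\overline{C}[1])$ is a copy of $H_*(C)$ with its bigrading translated by $(-1,-1)$; hence $H_*(C/U_mC)\cong H_*(C)\otimes V$, which is the local statement, and the iteration gives $\widetilde{HL}_*(G)\equiv\widehat{HL}_*(G)\otimes V^{\otimes(n-\ell)}$.

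\emph{Main obstacle.} Everything but the lemma of the second paragraph is formal homological algebra — the mapping--cone calculus of \S\ref{ssec:OnMaps} together with freeness over $\Z[U_m]$. The genuinely hands--on step is that lemma: writing down the homotopy $H$, verifying $\p H+H\p=U_i-U_j$ with the correct signs coming from the sign assignment, and checking that it survives passage to the Alexander--graded differential, where rectangles meeting an $X$ are discarded. That is the ingredient I would import from \cite{MOST}.
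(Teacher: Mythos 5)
Your proposal is correct and follows essentially the same route as the paper: the same non-formal ingredient (the chain homotopy $H$ counting empty rectangles through a fixed $X_*$, giving $\p H + H\p = U_i - U_j$ when $O_i,O_j$ lie on the same component), followed by the identification of the quotient $C/U_mC$ with $\Cone(U_m)\simeq\Cone(0)$ and a degree count, iterated one variable at a time. Your phrasing of the iteration (starting from $C_\ell$ and killing $U_{\ell+1},\dots,U_n$ one by one) is a slightly cleaner packaging of the paper's "each time we make the quotient by $U_k$ \dots the homology is tensorized by $V$," and the only minor imprecision is in the position of $X_*$ relative to $O_i$ and $O_j$ — it should share a \emph{row} with one of them and a \emph{column} with the other, not merely "a row or column with both" — but since you explicitly flag that step as the one to import from \cite{MOST}, this does not affect the argument.
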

\begin{proof}[Sketch of proof.]
  First, we prove that, for any $i,j\in\llbracket 1,n\rrbracket$, the multiplications by $U_i$ and by $U_j$ are homotopic maps as soon as $O_i$ and $O_j$ belong to the same component of the link. This can be done gradually. First we consider $i$ and $j$ such that there is an $X$--decoration, denoted by $X_*$, which lies in the same row than $O_i$ and the same column than $O_j$. Then we can set the $\Z[U_{O_1},\cdots,U_{O_n}]$--linear map
$$
\func{H}{C^-(G)}{C^-(G)}
$$
defined on the generators by
$$
\p_G^-(x)=\sum_{y \textrm{ generator}\textcolor{white}{R}} \sum_{\substack{\rho\in \Rect(x,y)\\ X_*\in\rho}} \e(\rho)U_{O_1}^{O_1(\rho)}\cdots U_{O_n}^{O_n(\rho)}\cdot y.
$$
Most terms in $\p^-_G\circ H- H\circ\p^-_G$ vanish for the same reasons than in the proof of \ref{D0Preserves} (\S\ref{par:Diff}). However, among vertical and horizontal thin annuli, only those which contain $X_*$ do arise. As a matter of fact, $\p_G^-\circ H - H\circ \p_G^-$ is equal to the multiplication by $U_i-U_j$.\\
Such maps can now be added in order to prove that multiplications by $U_i$ and $U_j$ are homotopic for any suitable couple $(i,j)$.\\

Now, some algebra says that $\fract {C^-(G)}/{(U_i,U_j)}$ is quasi-isomorphic to the mapping cone of the multiplication by $\func{U_j}{\left(\fract {C^-(G)}/{U_i}\right)[-2]\{-1\}}{\fract {C^-(G)}/{U_i}}$. But since this last map is homotopic to the zero map, we obtain that $\fract {C^-(G)}/{(U_i,U_j)}$ is quasi-isomorphic to $\fract {C^-(G)}/{U_i}\otimes V$.\\
Similarly, each time we make the quotient by $U_k$ for some $k\in\llbracket 1,n\rrbracket$ such that $O_k$ belongs to the same component than $O_i$, the homology is tensorized by $V$.
\end{proof}

\begin{theo}[Manolescu-Ozsv\'ath-Sarkar \cite{MOS}]$\ $\\
The data $(C^-(G),\p_G^-)$ is a chain complex for the Heegaard-Floer homology $CF^-(S^3)$ over a Heegaard diagram with $n$ basepoints and with grading induced by $M$. Moreover the filtration induced by $A$ coincides with the link filtration of $CF^-(S^3)$.
\end{theo}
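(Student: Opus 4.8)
The plan is to realize $(C^-(G),\p_G^-)$ as the Heegaard--Floer complex of an explicit multi-pointed Heegaard diagram and then to perform the holomorphic--disk count combinatorially. First I would build the diagram: take the torus $\T_G$ of \S\ref{par:MaslAlex} as Heegaard surface, let $\alpha_1,\dots,\alpha_n$ be the $n$ horizontal circles and $\beta_1,\dots,\beta_n$ the $n$ vertical ones, and use the set $\O$ as the $n$ basepoints, keeping the elements of $\X$ as the link basepoints. One checks that this presents $S^3$: deleting all but one row and one column leaves the standard genus--one splitting of $S^3$, and each remaining adjacent pair of basepoints amounts to a stabilization of the Heegaard diagram, i.e.\ a connected sum with $(S^3,S^2)$. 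The oriented link read off $(\O,\X)$ as in \S\ref{par:Grid} is then exactly $L$, sitting in this $S^3$.

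Next I would match the algebraic data. The intersection points $\mathbb{T}_\alpha\cap\mathbb{T}_\beta\subset\mathrm{Sym}^n(\T_G)$ are precisely the one-to-one correspondences between rows and columns, hence the generators of \S\ref{par:DotsGen}; tensoring with $\Z[U_1,\dots,U_n]$, one variable per element of $\O$, recovers $CF^-$. For the differential, one must identify, for generators $x$ and $y$, the signed count of Maslov--index--one pseudo-holomorphic disks $\phi\in\pi_2(x,y)$ weighted by $U_{O_1}^{O_1(\phi)}\cdots U_{O_n}^{O_n(\phi)}$. The decisive point is combinatorial and local: since every $\alpha$-- and $\beta$--curve is straight, each domain is a sum of unit squares, and Lipshitz's Maslov index formula $\mu(\phi)=e(\phi)+n_x(\phi)+n_y(\phi)$ (Euler measure plus average local multiplicities at the endpoints) together with positivity of holomorphic domains forces any index--one domain with no coordinate of $x$ in its interior to be an embedded rectangle on $\T_G$ in the sense of \S\ref{par:Rect}; conversely each such empty rectangle carries a unique holomorphic representative up to reparametrization, counted with a sign. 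Equivalently the grid is a \emph{nice} diagram, so neither bigons nor more complicated regions contribute. This identifies the differential with $\p_G^-$ of \S\ref{par:Diff} once the coherent orientation of the moduli spaces is encoded as a sign assignment in the sense of \S\ref{par:SignAssignment}; that this can be done, and that any two choices give isomorphic complexes, is the content of Theorem \ref{SignAssigment}, compatible with the description in \S\ref{par:OrientMap}.

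It remains to match the gradings. The Heegaard--Floer Maslov grading is characterized, up to one additive constant, by $M(x)-M(y)=\mu(\phi)-2\,n_\O(\phi)$ for every $\phi\in\pi_2(x,y)$; by Proposition \ref{RectGradings} (\S\ref{par:Rect}) the combinatorial grading $M_\O$ satisfies exactly this relation on rectangles, and one pins down the constant by evaluating both sides on a reference generator, e.g.\ one with $\Gsig{x}=\Id$, so $M=M_\O$. Similarly, taking the $X$--basepoints as the second family of link basepoints, the link filtration of $CF^-(S^3)$ is governed by $A(x)-A(y)=n_\X(\phi)-n_\O(\phi)$, with the overall shift fixed by the requirement that the graded Euler characteristic be the suitably normalized Alexander polynomial; comparing with $A(x)=\tfrac12\big(M_\O(x)-M_\X(x)\big)-\tfrac{n-\ell}{2}$ of \S\ref{par:MaslAlex} yields the stated coincidence of filtrations. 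The main obstacle is the analytic input hidden in the second paragraph --- the transversality, Gromov compactness, and gluing needed to show that the index--one moduli spaces consist exactly of the empty rectangles, each a single coherently oriented point; once that is granted, everything reduces to the combinatorial identities already recorded in \S\ref{sec:Combi}.
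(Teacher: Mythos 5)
The thesis does not prove this theorem; it is quoted from Manolescu--Ozsv\'ath--Sarkar \cite{MOS} and used as a black box, so there is no proof in the paper to compare against. Your sketch is a faithful reconstruction of the argument from that source: the grid torus with horizontal and vertical circles and the $\O$'s (resp.\ $\X$'s) as basepoints is a multi-pointed Heegaard diagram for $(S^3,L)$, the fact that every region is a square combined with the index formula and positivity forces index-one domains to be precisely the empty rectangles, and Proposition \ref{RectGradings} supplies the relative-grading identities pinning down $M$ and $A$ as the Maslov grading and Alexander filtration.
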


\begin{cor}
   $\widetilde{HL}_*(G)\equiv \widehat{HFK}_*(L)\otimes V^{\otimes (n-\ell)}$.
\end{cor}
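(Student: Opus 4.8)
The plan is to obtain this corollary directly by composing the two preceding results, with essentially no new computation. First, the Manolescu--Ozsv\'ath--Sarkar theorem identifies $(C^-(G),\p_G^-)$ with a chain complex for $CF^-(S^3)$ built from a Heegaard diagram carrying $n$ basepoints, in such a way that the Maslov grading $M$ is the homological grading and the Alexander grading $A$ induces the link filtration of $CF^-(S^3)$. On the combinatorial side, sending $U_1=\cdots=U_\ell=0$ (recall from \S\ref{par:TableHomologies} that these labels are chosen so that exactly one lies on each of the $\ell$ components of $L$) produces the complex whose homology is $\widehat{H}_*(G)$; under the above identification this amounts to keeping a single basepoint on each component, i.e.\ to the hat flavour $\widehat{CF}(S^3)$ equipped with its link filtration. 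Passing to the associated graded complex for the Alexander filtration is, by definition, the link Floer homology, so $\widehat{HL}_*(G)\equiv \widehat{HFK}_*(L)$.

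Second, I would invoke Proposition \ref{UiUj}, which has already been proved: $\widetilde{HL}_*(G)\equiv \widehat{HL}_*(G)\otimes V^{\otimes(n-\ell)}$, where $V$ is the rank-two bigraded module with generators in degrees $(0,0)$ and $(-1,-1)$. Combining this with the identification of the previous paragraph gives $\widetilde{HL}_*(G)\equiv \widehat{HFK}_*(L)\otimes V^{\otimes(n-\ell)}$, which is the claim.

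The only delicate point — and the ``main obstacle'', such as it is — is bookkeeping rather than mathematics: one must check that the $V$ appearing in Proposition \ref{UiUj} is literally the same bigraded module as the one used to define $\widehat{HFK}$ (it is, by construction), and that the numbering convention on $\O$ fixed in \S\ref{par:TableHomologies} genuinely spreads the first $\ell$ labels over the distinct components of $L$, so that killing $U_1,\dots,U_\ell$ corresponds to the hat flavour and not to an over- or under-quotient. Once these are granted, the corollary follows formally from the displayed isomorphisms and no further argument is needed.
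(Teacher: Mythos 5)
Your proof is correct and follows the same logic the paper implicitly intends: apply the Manolescu--Ozsv\'ath--Sarkar identification to see that the associated graded of the hat-flavored combinatorial complex (one $U_O$ per component killed) recovers $\widehat{HFK}(L)$, then tensor up by $V^{\otimes(n-\ell)}$ via Proposition~\ref{UiUj} (\S\ref{par:LinkInv}). The paper states this corollary without a written proof, and the sequencing of results — Proposition~\ref{UiUj} immediately followed by the MOS theorem — indicates exactly the composition you carried out. Your caution about the grading conventions on $V$ and the numbering of the $\O$'s is appropriate; both are consistent with \S\ref{par:TableHomologies} and \S\ref{par:LinkInv}, so nothing further is needed.
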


\begin{cor}
  The homology $\widehat{HL}_*(G)$ is an invariant of the link associated to $G$. 
\end{cor}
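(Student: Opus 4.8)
The plan is to deduce the statement directly from the two results immediately preceding it, Proposition \ref{UiUj} and the corollary $\widetilde{HL}_*(G)\equiv\widehat{HFK}_*(L)\otimes V^{\otimes(n-\ell)}$, together with one purely algebraic cancellation argument. Proposition \ref{UiUj} gives $\widetilde{HL}_*(G)\equiv\widehat{HL}_*(G)\otimes V^{\otimes(n-\ell)}$, while the preceding corollary identifies the same module with $\widehat{HFK}_*(L)\otimes V^{\otimes(n-\ell)}$, an object depending only on the underlying link $L$ and on the integer $n$. Comparing the two descriptions yields an isomorphism of bigraded $\Z$-modules
$$
\widehat{HL}_*(G)\otimes V^{\otimes(n-\ell)}\ \cong\ \widehat{HFK}_*(L)\otimes V^{\otimes(n-\ell)},
$$
so it only remains to cancel the factor $V^{\otimes(n-\ell)}$ from both sides.

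To do this I would first record that $V$ is a free $\Z$-module of rank two, concentrated in bidegrees $(0,0)$ and $(-1,-1)$, so that $M\otimes V\cong M\oplus M\{(-1,-1)\}$ (a diagonal shift by $(-1,-1)$) for any bigraded module $M$; in particular $\widetilde{HL}_*(G)$, being the homology of a finitely generated complex over $\Z$, is bounded and finitely generated, hence so is $\widehat{HL}_*(G)$. Then I would prove the cancellation lemma: if $A$ and $B$ are bounded, finitely generated bigraded $\Z$-modules with $A\otimes V\cong B\otimes V$, then $A\cong B$. The argument is of Krull--Schmidt type: decompose $A$ and $B$ into indecomposable bigraded summands, each of the form $C\{d\}$ with $C$ cyclic of prime-power order (or $\Z$) and $d\in\Z^2$ a shift; for each isomorphism type $C$, encode the set of shifts that occur by a Laurent polynomial $f_A^C\in\Z[s^{\pm1},t^{\pm1}]$. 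Tensoring with $V$ multiplies each $f_A^C$ by $(1+s^{-1}t^{-1})$ and preserves the type $C$, and $(1+s^{-1}t^{-1})$ is not a zero divisor in the domain $\Z[s^{\pm1},t^{\pm1}]$, so $A\otimes V\cong B\otimes V$ forces $f_A^C=f_B^C$ for every $C$, i.e. $A\cong B$. Applying this $(n-\ell)$ times gives $\widehat{HL}_*(G)\cong\widehat{HFK}_*(L)$, and since the right-hand side depends only on $L$, not on the grid $G$ chosen to present it, the homology $\widehat{HL}_*(G)$ is a link invariant (in fact it equals $\widehat{HFK}_*(L)$).

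The only genuinely non-formal step is the cancellation lemma, and even there the subtlety is mild: one must resist arguing via graded Euler characteristics or free ranks alone, since that would discard the torsion, and instead run the generating-function / Krull--Schmidt argument above, which treats the torsion summands uniformly. Everything else is bookkeeping with identities already established in the text.
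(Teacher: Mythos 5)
Your argument is correct and matches what the paper tacitly intends: the corollary is stated without proof, and the natural deduction is exactly the one you give --- combine Proposition~\ref{UiUj} with the preceding corollary $\widetilde{HL}_*(G)\cong\widehat{HFK}_*(L)\otimes V^{\otimes(n-\ell)}$ to obtain $\widehat{HL}_*(G)\otimes V^{\otimes(n-\ell)}\cong\widehat{HFK}_*(L)\otimes V^{\otimes(n-\ell)}$, then cancel the tensor factor $V^{\otimes(n-\ell)}$. The only remark worth making is that the paper silently treats the cancellation as obvious; your Krull--Schmidt/generating-function lemma supplies a justification the text omits, and a slightly lighter alternative would be to argue diagonal by diagonal (fix $i-j$, use boundedness to identify the top nonzero bidegree of both sides, and peel off summands by the cancellation property of finitely generated abelian groups), which avoids invoking unique decomposition in full generality but amounts to the same underlying fact.
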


\begin{theo}[\cite{MOST}]
  The bigraded homology $\widehat{HL}_*(G)$ is finitely generated over $\Z$ and categorifies the Alexander polynomial in the sense that
$$
\Delta(L)(q) = \xi_\gr(\widehat{HL}_*(G)).
$$
\end{theo}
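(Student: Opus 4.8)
The plan is to separate the finiteness assertion, which is purely combinatorial, from the Euler characteristic assertion, which carries all the content, and then to reduce the latter to a determinant identity.

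\emph{Finite generation.} Setting every variable $U_i$ to zero, the complex $\widetilde{C}(G)$ is, as a $\Z$--module, free of finite rank $n!$ with basis $\SS_n$, and its Alexander--associated graded $\widetilde{C}_\gr(G)$ has the same underlying module; hence every homology group $\widetilde{HL}_*(G)$ is finitely generated over $\Z$. By Proposition \ref{UiUj} (\S\ref{par:LinkInv}) one has $\widetilde{HL}_*(G)\cong\widehat{HL}_*(G)\otimes V^{\otimes(n-\ell)}$, and since $V^{\otimes(n-\ell)}$ is $\Z$--free of rank $2^{n-\ell}$ this module is isomorphic to $\big(\widehat{HL}_*(G)\big)^{\oplus 2^{n-\ell}}$; a direct summand of a finitely generated $\Z$--module being finitely generated, $\widehat{HL}_*(G)$ is finitely generated over $\Z$.

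\emph{Reduction to a combinatorial identity.} The module $\widetilde{C}_\gr(G)$ underlies a graded chain complex with homology $\widetilde{HL}_*(G)$, so Proposition \ref{Euler} (\S\ref{par:Homologies}) gives
$$
\xi_\gr\big(\widetilde{HL}_*(G)\big)=\xi_\gr\big(\widetilde{C}_\gr(G)\big)=\sum_{\sigma\in\SS_n}(-1)^{M(\sigma)}q^{A(\sigma)}.
$$
On the other hand $\xi_\gr(V)=1-q^{-1}$, so multiplicativity of $\xi_\gr$ on tensor products together with Proposition \ref{UiUj} yields $\xi_\gr\big(\widetilde{HL}_*(G)\big)=(1-q^{-1})^{n-\ell}\,\xi_\gr\big(\widehat{HL}_*(G)\big)$. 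The theorem is therefore equivalent to the identity
$$
\sum_{\sigma\in\SS_n}(-1)^{M(\sigma)}q^{A(\sigma)}=(1-q^{-1})^{n-\ell}\,\Delta(L)(q).
$$
Next I would rewrite the left--hand side as a determinant. Unwinding the definitions of \S\ref{par:MaslAlex}, the term $\I(x,x)$ counts the non--inverted pairs of $\Gsig{x}$, so it contributes $(-1)^{\binom{n}{2}}\textnormal{sgn}(\Gsig x)$, while every remaining term of $M$ and of $A$ is a constant plus a sum, over the dots $p$ of $x$, of a quantity depending only on the lattice point of $p$ and on the fixed sets $\O$, $\X$. A direct computation thus gives
$$
(-1)^{M(\sigma)}q^{A(\sigma)}=\varepsilon_0\,q^{a_0}\,\textnormal{sgn}(\sigma)\prod_{i=1}^{n}m_{i\,\sigma(i)},
$$
where $\varepsilon_0\in\{\pm1\}$ and $a_0\in\tfrac12\Z$ depend only on $G$, and where $m_{ij}$ is an explicit monomial $\pm q^{w_{ij}}$ attached to the lattice point $(i,j)$, with $w_{ij}\in\tfrac12\Z$ a signed count of the decorations surrounding $(i,j)$. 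Summing over $\sigma$,
$$
\sum_{\sigma\in\SS_n}(-1)^{M(\sigma)}q^{A(\sigma)}=\varepsilon_0\,q^{a_0}\det\big(m_{ij}\big)_{1\le i,j\le n}.
$$

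\emph{Identifying the determinant.} It remains to show that $\det(m_{ij})$ equals $(1-q^{-1})^{n-\ell}\Delta(L)(q)$ up to a sign and a power of $q$, the residual monomial being then fixed by the symmetry $\Delta(L)(q)=\pm\Delta(L)(q^{-1})$, which the left--hand side inherits from the symmetry of the grid data exchanging the roles of $\O$ and $\X$. The matrix $(m_{ij})$ is the classical ``grid'' Alexander matrix of $L$, and I would verify the claim by the skein method: first one checks that $\det(m_{ij})$ is only multiplied by a unit $\pm q^{k}$ under a commutation or a cyclic permutation, and by $\pm q^{k}(1-q^{-1})$ under a (de)stabilization, so that by Theorem \ref{def:Moves} (\S\ref{par:ElemMoves}) the quantity $(1-q^{-1})^{-(n-\ell)}\det(m_{ij})$ is a well defined invariant of $L$ up to $\pm q^{k}$; then one realizes an oriented skein triple $(L_+,L_-,L_0)$ by grid diagrams agreeing outside a small block and checks that the corresponding determinants satisfy $\det_{+}-\det_{-}=(q^{1/2}-q^{-1/2})\det_{0}$ modulo the same normalization, and that the invariant of the $\ell$--component unlink is $1$ for $\ell=1$ and $0$ for $\ell\ge 2$; by the characterization of $\Delta$ recalled in the introduction this forces the identity, and the symmetry argument removes the remaining ambiguity. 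Alternatively, one can bypass this computation by invoking the theorem of Manolescu--Ozsv\'ath--Sarkar quoted above, identifying $(C^-(G),\p_G^-)$ with a Heegaard--Floer chain complex of $S^3$ carrying the knot filtration, together with Ozsv\'ath--Szab\'o's computation that the Euler characteristic of $\widehat{HFK}$ is the Alexander polynomial. The main obstacle is exactly this determinantal identity: all the other steps are formal bookkeeping, whereas the skein verification for $\det(m_{ij})$ requires a careful analysis of how the block realizing a crossing change or an oriented smoothing embeds into a grid and of the effect of the intermediate grid moves on the determinant.
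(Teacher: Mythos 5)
Your argument is correct and follows essentially the route of the source that the thesis itself cites for this statement (no proof is given in the paper, which quotes it from \cite{MOST}): finite generation via the rank-$n!$ complex $\widetilde{C}(G)$ combined with Proposition \ref{UiUj}, and the Euler characteristic via the permutation state sum $\sum_{\sigma}(-1)^{M(\sigma)}q^{A(\sigma)}$, recognized as the determinant of a monomial (winding-number) matrix and identified with $(1-q^{-1})^{n-\ell}\Delta(L)$ up to units. The one step you defer, the determinantal identity, is precisely the classical input invoked in \cite{MOST} (equivalently, the identification of the complex with the knot Floer complex via \cite{MOS} together with Ozsv\'ath--Szab\'o's earlier computation of its Euler characteristic), so nothing essential is missing.
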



\vspace{2cm}
Now the background has been set, we can generalize it to the singular case.


\chapter{Singular link Floer homology}
\label{chap:Singular}




 \section{Singular grid diagrams}
\label{sec:SingularGrid}

We generalize grid diagrams to singular link cases.

\subsection{Singular columns}
\label{ssec:SingColumns}

\subsubsection{Singular grid diagrams}
\label{par:SGrid}

  A \emph{singular grid diagram}\index{diagram!grid!singular} $G$ of size $(n,k) \in \N^*\times\N$ is a $(n\times (n+k))$--grid  whose squares may be decorated by a $O$ or by an $X$ in such a way that each column and each row contain exactly one $O$ and one $X$, except for $k$ columns which contain exactly two $O$'s and two $X$'s. Moreover, in these $k$ singular columns, two decorations surrounding a third one must be of different kinds.\\

As in the regular case, every singular grid diagram gives rise to a singular link. The process is almost identical. First join the decorations in regular columns. For singular ones, connect the uppermost decoration to the third one and the second to the lowermost by vertical lines slightly bended to the right (or, equivalently to the left) in such a way that the two curves intersect in one singular double point. Then join again the decorations in rows, taking care to underpass vertical strands when necessary.

\begin{figure}[h]
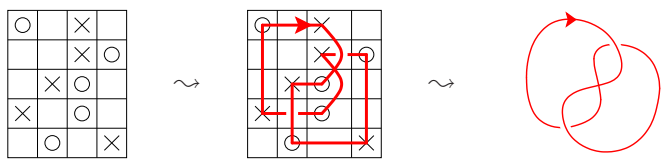

   $$
  \dessin{2cm}{SGrid} \hspace{.5cm} \leadsto \hspace{.5cm}\dessin{2cm}{SGridKnot} \hspace{.5cm} \leadsto \hspace{.5cm} \dessin{2.3cm}{SKnot}
  $$
  \caption{From singular grid diagrams to singular knots}
  \label{fig:SG->SK}
\end{figure}

\begin{prop}
  Every singular link can be described by a singular grid diagram.
\end{prop}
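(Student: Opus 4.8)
The plan is to adapt, essentially verbatim, the procedure turning a link diagram into a grid (\S\ref{par:Grid}, together with the sketch of proof of Theorem~\ref{def:Moves}); the only genuinely new point is the treatment of the double points.

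First I would fix a diagram $D$ of the given singular link $L$ and bring each of its finitely many transverse double points into a local normal form. A rigid transverse double point is, inside a small disc and up to a local ambient isotopy preserving transversality at every time, an orthogonal crossing of two straight strands; I would then isotope that crossing, still inside the disc, into the shape of a \emph{singular column}: two nearly vertical arcs bent slightly to one side, meeting in exactly one transverse double point, with their four free ends lined up at four heights of a single vertical strip, exactly as in the picture of \S\ref{par:SGrid}. Since the double point remains a genuine transverse double point throughout, the new picture still represents $L$. Away from the double points I would run the classical normalisation of \S\ref{par:Grid}: rotate the ordinary crossings so that they become orthogonal with vertical overpassing strand, then isotope the whole diagram so that every remaining arc is horizontal or vertical, no two of them colinear, and every corner a right angle; whenever two corners or two arcs accidentally share a row or a column, I would separate them by a small isotopy, inserting grid stabilisations where needed, always performed away from the singular strips so that the latter survive intact.

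Then I would read off the decorated grid as in the regular case: every corner becomes an $O$ or an $X$, and the link is oriented by running horizontal strands from $O$ to $X$. As in \S\ref{par:Grid}, each ordinary row and column then carries precisely one $O$ and one $X$, while each singular strip becomes a column carrying four decorations, two of each kind. It remains to verify the compatibility condition of \S\ref{par:SGrid}, namely that in a singular column two decorations surrounding a third one are of different kinds. This is automatic from the orientation: each of the two arcs forming a singular column is an oriented sub-arc of $L$ with a horizontal strand attached at each of its ends, so one end is a point where a horizontal strand leaves (an $O$) and the other a point where one arrives (an $X$); since these two arcs join the uppermost decoration to the third one and the second to the lowermost, both "surrounding" pairs consist of opposite kinds. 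Finally, the grid-to-link construction of \S\ref{par:SGrid} applied to the diagram just produced gives back $D$ up to isotopy — the two vertical arcs of a singular column underpassing horizontal strands just like ordinary vertical ones, and recreating the double point between themselves — so the resulting singular grid diagram describes $L$.

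The step I expect to be the main obstacle is the interplay between the local double-point normal form and the global bookkeeping: one must be sure the move bringing a double point into singular-column shape can be realised inside an arbitrarily small disc without disturbing the rest of $D$, and that neither the planar normalisation nor the stabilisations needed to make all ordinary rows and columns carry a single $O$ and a single $X$ ever collapse a singular strip or force two decorations of the same singular column into one row. Granting this, the rest is a routine transcription of the regular argument.
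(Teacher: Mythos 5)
Your proof is correct, but it follows a genuinely different route from the paper's. The paper first desingularizes the double points, invokes the already-established existence of a regular grid for the resulting regular link, and then \emph{re-singularizes} one double point at a time: it performs two stabilizations and a few commutations near the crossing that replaced the double point so that the overpassing vertical strand and an adjacent column can be merged into a singular column, and it treats separately the case (arising with several double points) where that vertical strand already belongs to a previously created singular column. You instead go directly at the singular diagram: each double point is put into singular-column normal form by a local transversality-preserving isotopy, the rest of the diagram is normalized as in the regular case, and you observe that the resulting four decorations automatically satisfy the alternation requirement because each of the two oriented vertical arcs (joining decoration $1$ to decoration $3$ and decoration $2$ to decoration $4$, counting from the top) must have an $O$ at one end and an $X$ at the other, by consistency of the orientation with the attached horizontal strands. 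The paper's route reduces everything to elementary grid moves applied to a grid already known to exist, a purely combinatorial manipulation in keeping with the Cromwell--Dynnikov theorem it just recalled; your route is the more direct topological argument, closer in spirit to the regular-grid existence argument itself. Both are sound; the bookkeeping caveat you flag (keeping the singular strips intact through the global normalization and separations) is real but routine, and is the exact mirror of the care the paper must take when the overpassing vertical strand is already singular.
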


\begin{proof}
  Consider a planar diagram for a given singular link with one double point and choose a way to desingularize it. Now, according to the process given previously, consider a grid diagram which corresponds to this regular diagram.\\

{\parpic[r]{$\dessin{2.15cm}{Cross}$\hspace{.65cm}}
{\vspace{.5cm}Then the singular point appears as a regular crossing, \ie with four decorations arranged within the cross pattern depicted on the right where $A$ and $A'$, as well as $B$ and $B'$, are decorations of different kinds.\vspace{.4cm}}

\parpic[r]{$\dessin{2.15cm}{ZCross}$ \hspace{.5cm}}
{\vspace{.7cm}Perform two stabilizations and a few commutations.\vspace{1.2cm}}

\parpic[r]{$\dessin{2.15cm}{SCross}$ \hspace{.5cm}}
{\vspace{.7cm}Finally, re-singularize the link by merging the $B$ and the leftmost $A$--columns.\vspace{.6cm}}

}

{\parpic[l]{\hspace{.5cm} $\dessin{2.15cm}{CrossSing}$}
{\vspace{.6cm}\noi In the case of a singular link with more than one doubles points, it may occur that the vertical strand is already part of a singular column.\vspace{.7cm}}

\parpic[l]{\hspace{.5cm} $\dessin{2.15cm}{SCrossSing}$}
{\vspace{.6cm}\noi And then, we are back to the precedent case by performing, here again, two stabilizations and a few commutations.\vspace{1cm}}

}
\end{proof}

\subsubsection{Singular grid and elementary regular grid moves}
\label{par:SingGridEtElemMoves}

Obviously, circular permutations and commutations of rows or columns, as defined in paragraph \ref{par:ElemMoves}, leave invariant the associated singular link, even if it involves a singular column.\\

Concerning (de)stabilizations, things are not that easy. Actually, as shown below, some destabilizations, involving a singular column may modify the associated link.
$$
\xymatrix@C=1.5cm{\dessin{2.5cm}{NotStab1}\ar@{<->}[r]|{\dessin{.5cm}{wrong}}&\dessin{2.5cm}{NotStab2}}
$$
In order to avoid this phenomenon, we require that the intersection of the $(2\times 2)$--square involved in a (de)stabilization with a singular column contains at most one decoration.
In other words, two adjacent decorations in the $(2\times 2)$--square cannot belong to a singular column.

\parpic[r]{\xymatrix@C=.5cm{\dessin{2.2cm}{SingStab1}\ar@{<->}[r]&\dessin{2.2cm}{SingStab2}} \hspace{.5cm}}
{\vspace{.2cm}
\begin{remarque}
  Nevertheless, some stabilizations which are henceforth banned can leave the link invariant. An example is given on the right. However, they are less natural in the sense that they involve (trivially) a decoration which does not belong to the strand we are putting a bend in.
\end{remarque}

}
\parpic[l]{\xymatrix@C=.5cm{\dessin{2.2cm}{StabTrick1}\ar@{<->}[r]&\dessin{2.2cm}{StabTrick2}} \hspace{.5cm}}
{\vspace{.2cm}But actually, as shown on the left, those moves can be replaced by more justified ones using a few rows commutations.\\
Even if we will not use it in this thesis, the same trick can be used to replace a stabilization affecting a $O$ decoration by a stabilization affecting an $X$ one.

}
\vspace{.3cm}

\subsubsection{Flip moves and co}
\label{par:Flip}

Now, we introduce the \emph{flip move}\index{moves!grid!flip} which is a grid move involving a singular column. It corresponds to the vertical reversing of a special $(4\times5)$--subgrid as shown in Figure \ref{fig:FlipMove}.

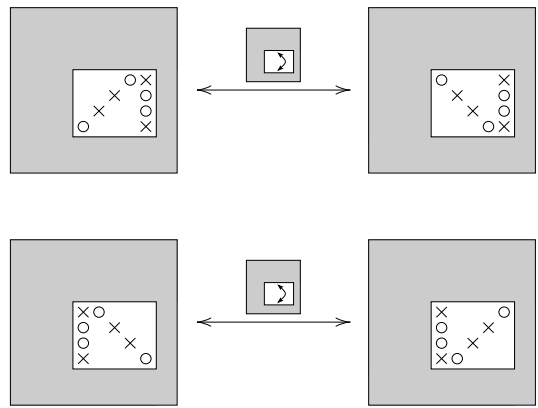
\begin{figure}[!h]
  $$
  \begin{array}{c}
    \xymatrix@C=2cm{\dessin{2.5cm}{Flip1} \ar@{<->}[r]^{\dessin{.9cm}{FlipAct}} & \dessin{2.5cm}{Flip2}}\\[1.5cm]
    \xymatrix@C=2cm{\dessin{2.5cm}{Flips1} \ar@{<->}[r]^{\dessin{.9cm}{FlipAct}} & \dessin{2.5cm}{Flips2}}
  \end{array}
  $$
  \caption{Flip moves}
  \label{fig:FlipMove}
\end{figure}

Actually, there are many moves which are equivalent to flip moves. The following lemma gives some of them.

\begin{lemme}\label{EquivalentMoves}
  Up to regular grid moves, all the following moves are equivalent:
$$
\xymatrix@!0@C=6.3cm@R=3.3cm{
\fbox{$\dessin{1.9cm}{Ch11} \sim \dessin{1.9cm}{Ch12}$} \ar@{=>}[r]^{\textrm{\ref{subfig:A}}} &
\fbox{$\dessin{1.9cm}{Uuneven1} \sim \dessin{1.9cm}{Uuneven2}$} \ar@{=>}[d]^{\textrm{\ref{subfig:B}}} \\
\fbox{$\dessin{1.9cm}{Ch21} \sim \dessin{1.9cm}{Ch22}$} \ar@{=>}[u]_{\textrm{\ref{subfig:D}}} \ar@{=>}[d]^{\textrm{\ref{subfig:E}}} &
\fbox{$\dessin{1.9cm}{Fflip1} \sim \dessin{1.9cm}{Fflip2}$} \ar@{=>}[l]_{\textrm{\ref{subfig:C}}} \\
\fbox{$\dessin{1.9cm}{Flipt1} \sim \dessin{1.9cm}{Flipt2}$} \ar@{=>}[r]^{\overline{\textrm{\ref{subfig:C}}}}  &
\fbox{$\dessin{1.9cm}{Ch2t1} \sim \dessin{1.9cm}{Ch2t2}$} \ar@{=>}[u]_{\overline{\textrm{\ref{subfig:E}}}}  \ar@{=>}[d]^{\overline{\textrm{\ref{subfig:D}}}}  \\
\fbox{$\dessin{1.9cm}{Unevent1} \sim \dessin{1.9cm}{Unevent2}$} \ar@{=>}[u]_{\overline{\textrm{\ref{subfig:B}}}} &
\fbox{$\dessin{1.9cm}{Ch1t1} \sim \dessin{1.9cm}{Ch1t2}$} \ar@{=>}[l]_{\overline{\textrm{\ref{subfig:A}}}}}
$$
  Another set of equivalent moves is obtained by swapping the nature of every decoration.
\end{lemme}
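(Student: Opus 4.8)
\emph{Strategy.} The plan is to treat each implication arrow of the diagram separately, exhibiting in each case an explicit way to realize the boxed move on its target side by a finite sequence of cyclic permutations, (suitably constrained) commutations and (restricted) (de)stabilizations together with a single application of the boxed move on its source side; these sequences are precisely what the sub-figures \ref{subfig:A}--\ref{subfig:E} record. Every regular grid move of Theorem~\ref{def:Moves} is invertible, and so is each boxed move (reversing the distinguished subgrid is again an instance of the same move), so each arrow is in fact an equivalence, and it is enough to note that these arrows make the graph on the eight boxed moves connected: $(a),(b),(c),(d)$ form a cycle through the four moves of the top two rows; $(e)$ then attaches the move in the left of the third row, $\overline{(c)}$ the one to its right, $\overline{(d)}$ the move in the left of the bottom row and $\overline{(a)}$ the one to its right, the remaining arrows $\overline{(b)},\overline{(e)}$ closing the figure. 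Hence adjoining any single one of the eight moves to the list of regular grid moves produces the same equivalence relation on singular grid diagrams. The closing sentence of the lemma then needs no further work: the moves of Theorem~\ref{def:Moves} and the admissibility condition on (de)stabilizations from paragraph~\ref{par:SingGridEtElemMoves} are symmetric under interchanging $O$'s and $X$'s, an operation under which the associated singular link only changes by a global reversal of orientation; so the identical sequences of moves establish the $O\leftrightarrow X$ version.

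\emph{A representative arrow.} To carry out, say, $(a)$, I would first use cyclic permutations of rows and columns to place the relevant bounded subgrid in a normal position, then create a spare column and row by a stabilization performed away from the singular column, commute the two freshly created decorations across that subgrid one elementary step at a time --- verifying at each step the strict ``above / on the right'' separation hypothesis of Theorem~\ref{def:Moves} --- and finally destabilize on the opposite side, whereupon the target configuration appears. The barred arrows $\overline{(a)},\dots,\overline{(e)}$ would not be drawn afresh: the whole picture is invariant under the reflection reversing the vertical order of the rows of the distinguished $(4\times 5)$--subgrid (the symmetry already apparent in Figure~\ref{fig:FlipMove}) composed with a cyclic permutation, and this reflection matches each un-barred move with its barred partner; so I would prove $(a)$ through $(e)$ and invoke the symmetry for the rest.

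\emph{Where the work is.} The whole difficulty lies in the bookkeeping rather than in any idea. For every intermediate grid one has to check that: (i) it is still a legal singular grid diagram, \ie each row and column carries exactly one $O$ and one $X$ except the $k$ singular columns, in which the central decoration is flanked by decorations of the opposite kind; (ii) each commutation used satisfies the strict separation condition of Theorem~\ref{def:Moves}; (iii) each (de)stabilization satisfies the extra constraint of paragraph~\ref{par:SingGridEtElemMoves} that its $(2\times 2)$--square meets a singular column in at most one decoration; and (iv) the associated singular link is genuinely unchanged --- in particular that the two branches through a double point have not been accidentally exchanged, which is exactly the pathology of the forbidden destabilization pictured in paragraph~\ref{par:SingGridEtElemMoves}. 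Since all the subgrids involved are of bounded size, each of (i)--(iv) is a finite and routine verification that can be read off the figures; the one genuinely delicate point is the stabilizations, where one must sometimes first insert the row-commutation trick of paragraph~\ref{par:SingGridEtElemMoves} in order to slide a stabilization onto the strand in which the bend is really being introduced before the move becomes admissible.
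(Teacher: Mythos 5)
Your global plan matches the paper's: each arrow of the diagram is established by exhibiting an explicit chain of regular grid moves interleaved with a single use of the source singular move (this chain is exactly what sub-figures \ref{subfig:A}--\ref{subfig:E} record), one observes that all moves involved are invertible so each arrow is an equivalence, and one notes that the resulting graph on the eight boxed moves is connected. Both you and the paper leave the grid-by-grid verification to the reader and the figures; that is an honest match in approach, not a gap.

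The one place you commit to a concrete argument beyond ``read it off the figures'' is the derivation of the barred arrows from the unbarred ones, and there you invoke a symmetry that does not match the one the paper uses and whose correctness is doubtful as stated. You claim the overall picture is invariant under ``reversing the vertical order of the rows of the distinguished $(4\times 5)$--subgrid \ldots composed with a cyclic permutation,'' and that this matches each unbarred move to its barred partner. The paper instead derives the barred proof figures by a \emph{horizontal} reflection (reversing the column order) \emph{together with swapping the nature of every decoration}. The decoration swap is not a cosmetic difference: the constraint on a singular column (a decoration must be flanked by decorations of opposite kinds) is itself symmetric under $O\leftrightarrow X$, and the barred configurations differ from the unbarred ones in precisely this decoration data, so a pure geometric flip without the $O\leftrightarrow X$ swap will not carry an unbarred commutation/(de)stabilization certificate to a valid certificate for the barred move. (Elsewhere in the paper, the analogous derivation for the fourth Reidemeister~IV case similarly pairs a reflection with a decoration swap; omitting the swap is not an available option.) As written, your symmetry claim is the single step of your argument that cannot be waved off as routine bookkeeping, and it is the one most likely to fail on inspection; either replace it with the paper's horizontal reflection plus $O\leftrightarrow X$ swap, or supply the barred figures independently. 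A minor point: your sentence describing which box each of $\overline{(d)},\overline{(a)}$ attaches mislabels the left/right entries of the bottom row, though your conclusion that the graph is connected is still correct.
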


Every logical implication is labeled by the figure illustrating its proof. Overlining means that the proof pictures have to be horizontally reflected and the nature of decorations swapped.

\begin{figure}[p]
  \centering
  \subfigure[]{
    $$
    \xymatrix@!0@C=2.5cm@R=3.1cm{
      \dessin{2cm}{A1} \ar@{<=>}[dr]|(.55){}="Nya1" \ar@{<=>}[rr]|{}="Nya2"&& \dessin{2cm}{A3} \\
      & \dessin{2cm}{A2} \ar@{<->}[ur] &
      \ar@{.>}@/_.4cm/"Nya1"!<.2cm,.2cm>;"Nya2"!<0cm,-.2cm>|(.55){\ \Uparrow\ }}
    $$
    \label{subfig:A}}
  \subfigure[]{
  $$
  \xymatrix@!0@C=1.6cm@R=1.2cm{
    &&& \dessin{2cm}{BB1} \ar@{<->}[rrrd] &&&  \\
    \dessin{2cm}{BB2} \ar@{<->}[rrru]|{\ r \ }  \ar@{<->}[rrdd] &&&&&& \dessin{2cm}{BB3} \ar@{<->}[lldd]\\
    &&&&&& \\
    && \dessin{2cm}{BB4}  \ar@{<=>}[lldd]|{}="Nya1" && \dessin{2cm}{BB5}  \ar@{red}@{<=>}[rrdd]|{}="Nya2" && \\
    &&&&&& \\
    \dessin{2cm}{BB6}  \ar@{<->}[rrrdd] &&&&&& \dessin{2cm}{BB7}  \ar@{<->}[llldd]|{\ r \ } \\
    &&&&&& \\
    &&&\dessin{2cm}{BB8} &&&
  \ar@{<.}@/_.9cm/"Nya1"!<.18cm,-.13cm>;"Nya2"!<-.18cm,-.13cm>|{\ \Leftarrow\ }}
  $$
    \label{subfig:B}}
  \caption{Implications between singular grid moves: {\footnotesize Simple arrows stand for a combination of commutations and cyclic permutations of the rows when they are $r$--labeled, of commutations and cyclic permutations of the columns when they are $c$--labeled and of commutations and (de)stabilizations when they are unlabeled. Double arrows stand for singular grid moves.}}
\end{figure}
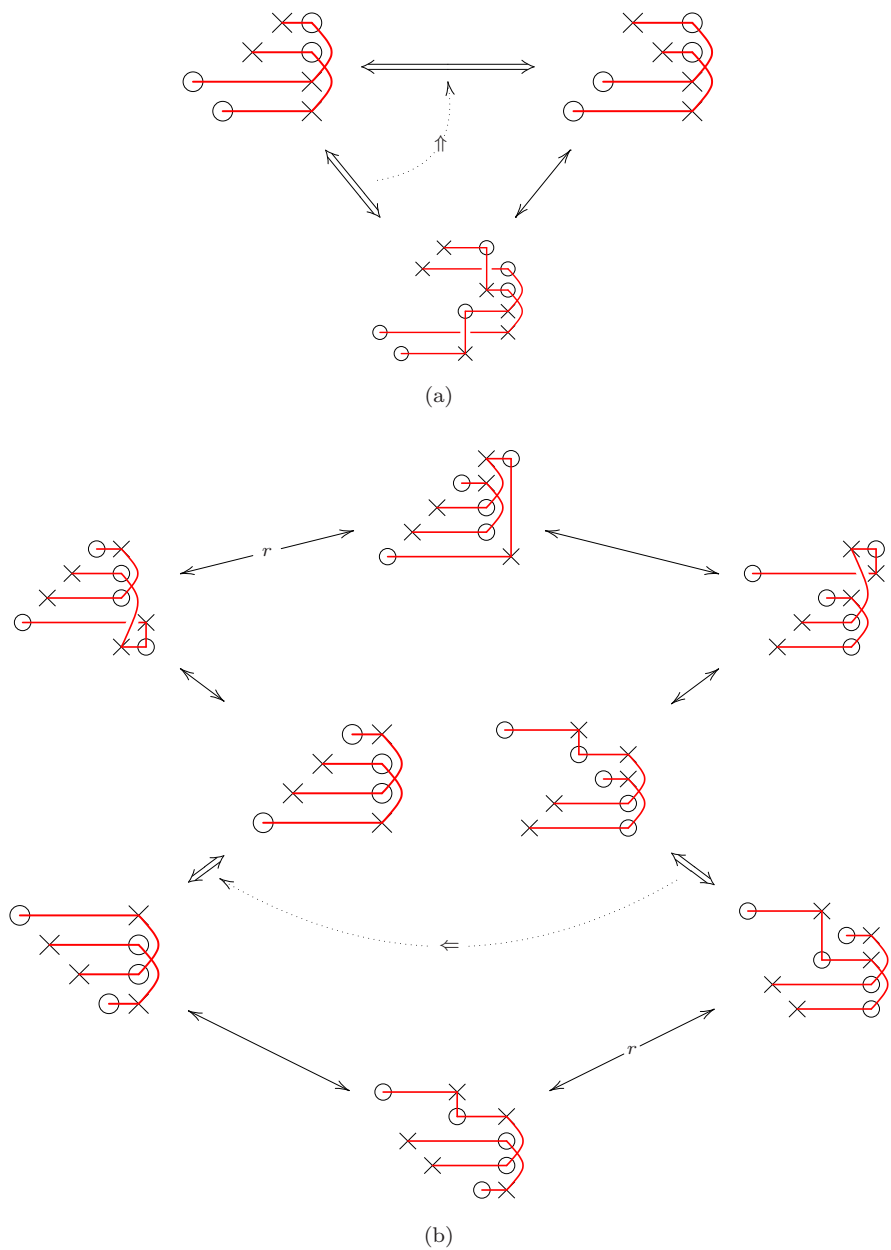

\addtocounter{figure}{-1}
\addtocounter{subfigure}{2}
\begin{figure}[p]
  \centering
  \subfigure[]{
    $$
    \xymatrix@!0@C=1.8cm@R=1.4cm{
      &\dessin{2cm}{C1} \ar@{<=>}[rr]|{}="Nya1" && \dessin{2cm}{C5} \ar@{<->}[rdd]& \\
      &&&&\\
      \dessin{2cm}{C2} \ar@{<->}[uur] &&&& \dessin{2cm}{C4} \ar@{<->}[dll]|{\ r\ } \\
      && \dessin{2cm}{C3} \ar@{<=>}[ull]|{}="Nya2" &&
      \ar@{<.}@/^.5cm/"Nya1"!<0cm,-.2cm>;"Nya2"!<.2cm,.2cm>|{\ \Uparrow\ }}
    $$
    \label{subfig:C}}
  \subfigure[]{
  $$
  \xymatrix@!0@C=1.6cm@R=1.2cm{
    && \dessin{2cm}{D7} \ar@{<->}[rr]|{\ r \ }  &&  \dessin{2cm}{D6} \ar@{<->}[rrd]|{\ r \ } &&  \\
    \dessin{2cm}{D8} \ar@{<->}[rru] \ar@{<->}[rrdd] &&&&&& \dessin{2cm}{D5} \ar@{<->}[lldd]\\
    &&&&&& \\
    && \dessin{2cm}{D9}  \ar@{<=>}[lldd]|{}="Nya1" && \dessin{2cm}{D4}  \ar@{red}@{<=>}[rrdd]|{}="Nya2" && \\
    &&&&&& \\
    \dessin{2cm}{D1}  \ar@{<->}[rrrdd] &&&&&& \dessin{2cm}{D3}  \ar@{<->}[llldd]|{\ r \ } \\
    &&&&&& \\
    &&&\dessin{2cm}{D2} &&&
  \ar@{<.}@/_.9cm/"Nya1"!<.18cm,-.13cm>;"Nya2"!<-.18cm,-.13cm>|{\ \Leftarrow\ }}
  $$
    \label{subfig:D}}
  \caption{Implications between singular grid moves: {\footnotesize Simple arrows stand for a combination of commutations and cyclic permutations of the rows when they are $r$--labeled, of commutations and cyclic permutations of the columns when they are $c$--labeled and of commutations and (de)stabilizations when they are unlabeled. Double arrows stand for singular grid moves.}}
\end{figure}

\addtocounter{figure}{-1}
\addtocounter{subfigure}{4}
\begin{figure}[h!]
  \centering
  \subfigure[]{
  $$
  \xymatrix@!0@C=1.6cm@R=1.2cm{
    && \dessin{2cm}{E7} \ar@{<->}[rr]|{\ c \ }  &&  \dessin{2cm}{E6} \ar@{<->}[rrd] &&  \\
    \dessin{2cm}{E8} \ar@{<->}[rru] \ar@{<->}[rrdd]|{\ r \ } &&&&&& \dessin{2cm}{E5} \ar@{<->}[lldd]\\
    &&&&&& \\
    && \dessin{2cm}{E9}  \ar@{<=>}[lldd]|{}="Nya1" && \dessin{2cm}{E4}  \ar@{red}@{<=>}[rrdd]|{}="Nya2" && \\
    &&&&&& \\
    \dessin{2cm}{E1}  \ar@{<->}[rrrdd] &&&&&& \dessin{2cm}{E3}  \ar@{<->}[llldd] \\
    &&&&&& \\
    &&&\dessin{2cm}{E2} &&&
  \ar@{<.}@/_.9cm/"Nya1"!<.18cm,-.13cm>;"Nya2"!<-.18cm,-.13cm>|{\ \Leftarrow\ }}
  $$
    \label{subfig:E}}
  \caption{Implications between singular grid moves: {\footnotesize Simple arrows stand for a combination of commutations and cyclic permutations of the rows (and maybe some (de)stabilizations) when they are $r$--labeled, of commutations and cyclic permutations of the columns when they are $c$--labeled and of commutations and (de)stabilizations when they are unlabeled. Double arrows stand for singular grid moves.}}
\end{figure}

\subsubsection{Singular elementary moves}
\label{par:SElemMoves}

Now, we can state the singular counterpart of Theorem \ref{def:Moves} (\S\ref{par:ElemMoves}) about elementary moves decomposition.

\begin{theo}\label{SingElemMoves}
  Any two singular grid diagrams which describe the same singular link can be connected by a finite sequence of
  \begin{itemize}
  \item[-] cyclic permutations of rows or columns (possibly singular);
  \item[-] commutations of rows or columns (possibly singular);
  \item[-] stabilizations and destabilizations (under the restrictions given in paragraph \ref{par:SingGridEtElemMoves});
  \item[-] flips (or any equivalent moves).
  \end{itemize}
\end{theo}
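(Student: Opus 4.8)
The plan is to mimic the proof of Theorem~\ref{def:Moves} (\S\ref{par:ElemMoves}) as closely as possible, treating the double points as an extra feature of the diagram to be kept in a standard position, and to absorb every new phenomenon into flip moves by means of Lemma~\ref{EquivalentMoves} (\S\ref{par:Flip}). One direction is essentially already settled: cyclic permutations and commutations of rows and columns, singular or not, obviously preserve the associated singular link (\S\ref{par:SingGridEtElemMoves}); the restricted (de)stabilizations do so precisely because of the restriction imposed in \S\ref{par:SingGridEtElemMoves}, which is exactly what rules out the link-changing destabilization displayed there; and a flip move does so by the invariance of a rigid transverse double point under the vertical reflection of the $(4\times5)$--subgrid of Figure~\ref{fig:FlipMove}. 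So what must be proved is the converse: any two singular grid diagrams $G_1$, $G_2$ carrying the same singular link are joined by such moves.

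For this, I would first pass from grids to diagrams: $G_1$ and $G_2$ yield two singular link diagrams of the same singular link, hence, by Kauffman's singular Reidemeister theorem \cite{Kauffman}, they differ by a planar isotopy together with finitely many singular Reidemeister moves of types~I to~V. As in the regular case one performs the planar isotopy while keeping every crossing rigidly orthogonal with vertical overpassing strand and, in addition, keeping every double point in the standard shape carried by a singular column, at the cost of finitely many exceptional times: the half roll-up of a strand (treated on the grid exactly as in Theorem~\ref{def:Moves}), the half roll-up of a strand running next to a double point, and the half turn or vertical flip of a double point. I would check that on the grid each of these last two exceptional moves is a flip move or one of the moves of Lemma~\ref{EquivalentMoves}, hence admissible. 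This reduces the problem to realising each of the five singular Reidemeister moves by admissible grid moves.

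Moves~I, II and~III involve no double point and are realised as in Theorem~\ref{def:Moves} by commutations and (de)stabilizations; the only extra care is that when such a move occurs next to a singular column one uses the permitted commutations and cyclic permutations of singular columns, and, if a forbidden destabilization would be needed, one reroutes it through a few row commutations as in the remarks of \S\ref{par:SingGridEtElemMoves}. For moves~IV and~V, which do involve a double point, I would bring the relevant part of the grid to a normal form around the singular column, realise the move there, and return; because of the restriction on destabilizations this cannot be done with the naive grid picture, and the detours needed are exactly the equivalences gathered in Lemma~\ref{EquivalentMoves}, each costing only regular grid moves and a single flip. Assembling these pieces yields the required finite sequence of elementary moves.

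The step I expect to be the main obstacle is this last one. The restriction forbidding a $(2\times2)$ destabilization square to meet a singular column in two adjacent decorations makes the obvious grid realisations of moves~IV and~V illegal, so one must show that every such realisation can be pushed through flip moves; this is precisely what Lemma~\ref{EquivalentMoves} is designed to supply, and in the end the argument is a careful but essentially routine reduction of moves~IV and~V to that lemma and to Theorem~\ref{def:Moves}.
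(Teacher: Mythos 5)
Your proposal follows essentially the same route as the paper's proof: reduce to Kauffman's singular Reidemeister theorem after putting the singular columns in a standard position, handle the rigid isotopy with its finitely many exceptional turns of crossings and double points (the full roll-up of a double point being absorbed by stabilizations and commutations), realize moves I--III as in Theorem~\ref{def:Moves}, and realize the singular moves IV and V -- each split into several cases by the rigidity of the double point -- by explicit grid sequences in which the flips (or the equivalent moves of Lemma~\ref{EquivalentMoves}) carry the genuinely singular part. The paper simply makes explicit, in Figures~\ref{fig:Reid4} and~\ref{fig:Reid5}, the case-by-case realizations that you defer to a routine verification, with type~V needing only commutations and type~IV the flip-type moves.
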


\begin{proof}
Let $G_1$ and $G_2$ be two singular grid diagrams which describe the same link.\\

First, we assume that the decorations in any singular columns are in adjacent cases, the two circles being above the two crosses. Moreover, we assume that every decoration which share a row with one of the two middle decorations of a singular column is located on the left of this column. All the doubles points of the associated links are thus in the following position:
$$
\dessin{2cm}{LocalDouble}.
$$

We suppose now that the diagrams $D_1$ and $D_2$ associated to the grids $G_1$ and $G_2$, using the above convention for bending the singular vertical strands, are isotopic. As in the regular case, one can require that the isotopy connecting the two diagrams rigidly preserves a neigborhood of the crossings and of the double points except for a finite number of exceptional times when a crossing is turned over or a double point fully rolled up around itself. Compared with the proof of Theorem \ref{def:Moves} (\S \ref{par:ElemMoves}), only the latter need some attention. However, it can also be easily realized as
$$
\dessin{1.8cm}{Roll1}\ \sim \dessin{2.6cm}{Roll2}
$$
using stabilizations and commutations. The grids $G_1$ and $G_2$ can thus be connected by a sequence of regular grid moves.\\

If the associated diagrams are not isotopic, then we need to realize the Reidemeister moves. For the regular ones, we refer to the proof of Theorem \ref{def:Moves} (\S \ref{par:ElemMoves}). Because of the required rigidity condition on double points, each of the last two Reidemeister moves splits into four cases. Figures \ref{fig:Reid4} and \ref{fig:Reid5} handle with all of them.\\

\begin{figure}[b!]
  $$
  \begin{array}{lc}
    \fbox{$\dessin{1.3cm}{RM51}\sim\dessin{1.3cm}{RM51bis}$}\ \ : \hspace{1cm} &
    \xymatrix{\dessin{2.45cm}{R511} \ar@{<->}[r] & \dessin{2.45cm}{R512}} \\[.4cm]
    \fbox{$\dessin{1.3cm}{RM52}\sim\dessin{1.3cm}{RM52bis}$}\ \ :  \hspace{1cm} &
    \xymatrix{\dessin{2.45cm}{R521} \ar@{<->}[r] & \dessin{2.45cm}{R522}} \\[.4cm]
    \fbox{$\dessin{1.3cm}{RM53}\sim\dessin{1.3cm}{RM53bis}$}\ \ : \hspace{1cm} &
    \xymatrix{\dessin{2.45cm}{R531} \ar@{<->}[r] & \dessin{2.45cm}{R532}} \\[.4cm]
    \fbox{$\dessin{1.3cm}{RM54}\sim\dessin{1.3cm}{RM54bis}$}\ \ :  \hspace{1cm} &
    \xymatrix{\dessin{2.45cm}{R541} \ar@{<->}[r] & \dessin{2.45cm}{R542}}
  \end{array}
  $$
  \caption{Realization of singular Reidemeister moves V: {\footnotesize Arrows stand for a sequence of  commutations.}}
  \label{fig:Reid5}
\end{figure}
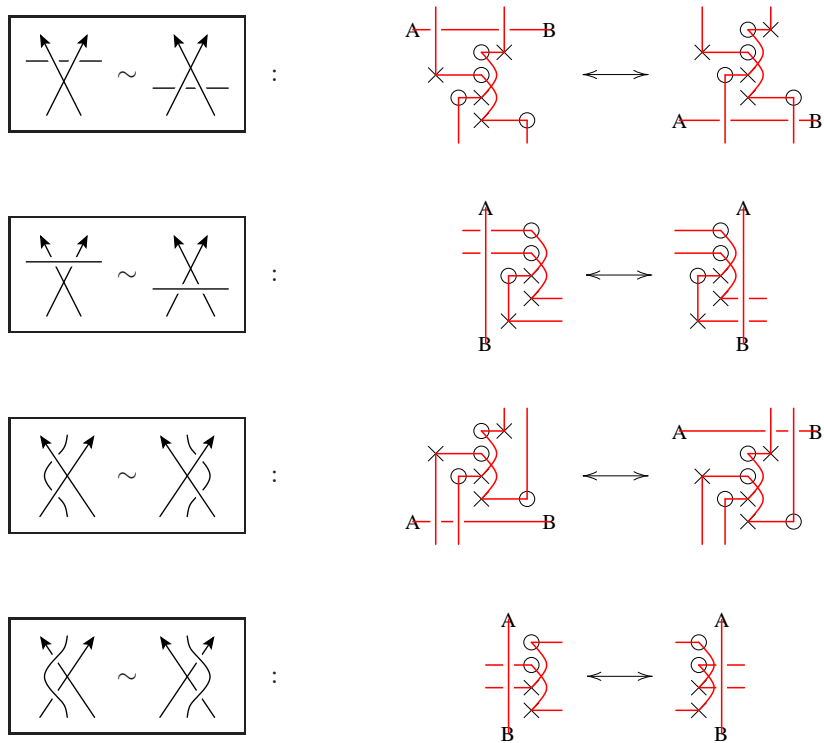

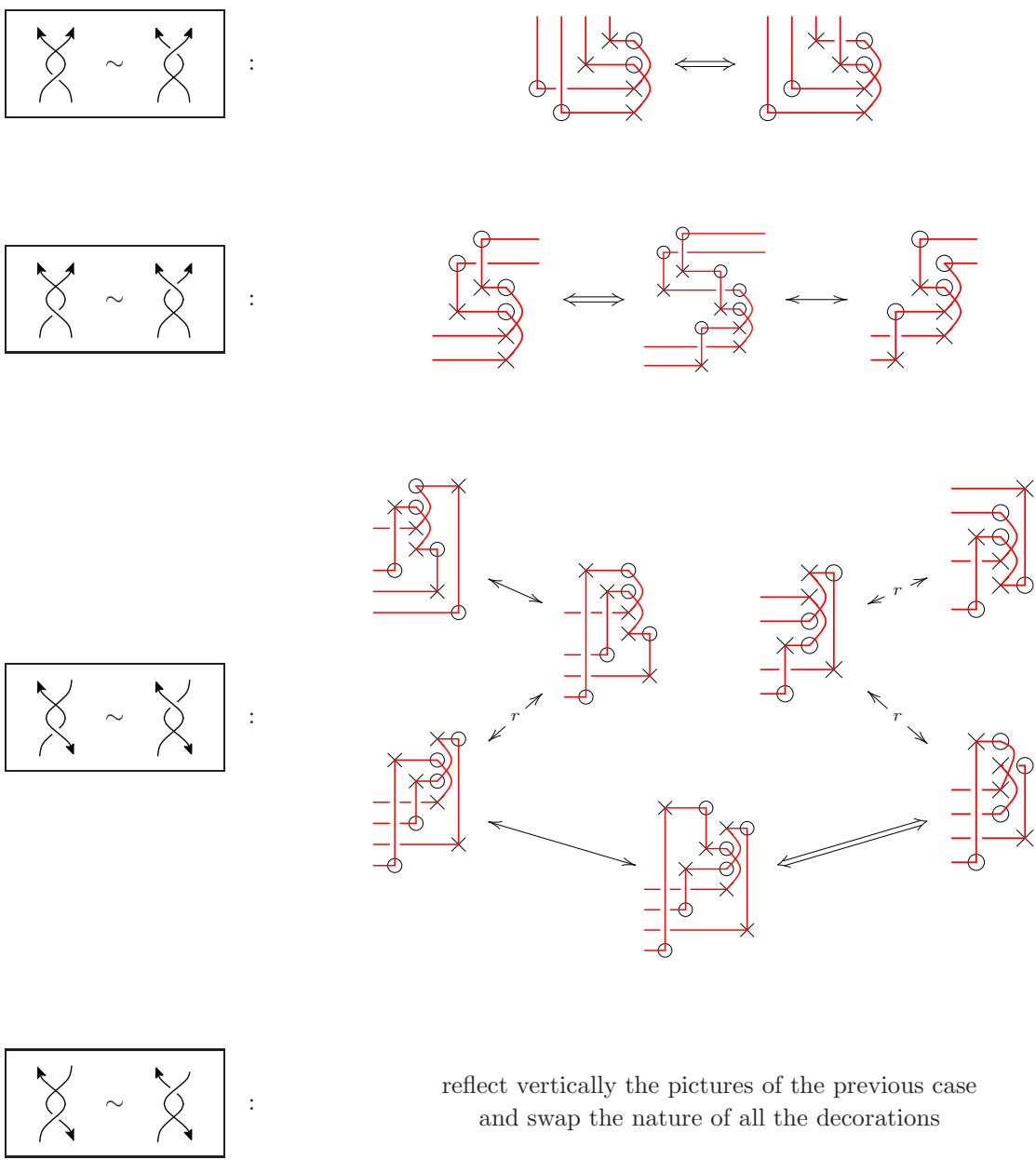
\begin{figure}[p]
  $$
  \begin{array}{lc}
    \fbox{$\dessin{1.3cm}{RM41}\sim\dessin{1.3cm}{RM41bis}$}\ \ : \hspace{1cm} &
    \xymatrix{\dessin{2.1cm}{R411} \ar@{<=>}[r] & \dessin{2.1cm}{R412}} \\[1.8cm]
    \fbox{$\dessin{1.3cm}{RM42}\sim\dessin{1.3cm}{RM42bis}$}\ \ :  \hspace{1cm} &
    \xymatrix{\dessin{2.45cm}{R421} \ar@{<=>}[r] & \dessin{2.45cm}{R422} \ar@{<->}[r] & \dessin{2.45cm}{R423}}\\[2cm]
    \fbox{$\dessin{1.3cm}{RM43}\sim\dessin{1.3cm}{RM43bis}$}\ \ : \hspace{1cm} &
    \vcenter{\hbox{\xymatrix@!0@C=1.35cm@R=1.2cm{\dessin{2.45cm}{R431} \ar@{<->}[drr] &&&&&& \dessin{2.45cm}{R437}\\
      && \dessin{2.45cm}{R432} \ar@{<->}[ddll]|{\ r\ } && \dessin{2.45cm}{R436} \ar@{<->}[urr]|{\ r\ } &&\\
      &&&&&&\\
      \dessin{2.45cm}{R433} \ar@{<->}[drrr] &&&&&& \dessin{2.45cm}{R435} \ar@{<->}[uull]|{\ r\ }\\
      &&& \dessin{2.45cm}{R434} \ar@{<=>}[urrr] &&&}}} \\[4.5cm]
    \fbox{$\dessin{1.3cm}{RM44}\sim\dessin{1.3cm}{RM44bis}$}\ \ :  \hspace{1cm} &
    \vcenter{\hbox{
        \begin{minipage}{.5\linewidth}
          \begin{center}
            reflect vertically the pictures of the previous case and swap the nature of all the decorations
          \end{center}
        \end{minipage}}}
  \end{array}
  $$
  \caption{Realization of singular Reidemeister moves IV: {\footnotesize Simple arrows stand for a combination of commutations and cyclic permutations of the rows (and maybe some (de)stabilizations) when they are $r$--labeled and of commutations and (de)stabilizations when they are unlabeled. Double arrows stand for singular grid moves.}}
  \label{fig:Reid4}
\end{figure}

Finally, if the two grids are in general position, then we can modify them using regular grid moves in order to reach the conditions required above.
\end{proof}

\subsection{Singular rows}
\label{ssec:SingRows}

Of course, considering singular rows instead of singular columns give a totally similar description of singular links. But one can also mix them.

\subsubsection{Mixed singular grids}
\label{par:SingRC}

In this subsection, we consider \emph{mixed singular grids}\index{diagram!grid!mixed singular} which allow singular columns and singular rows. It means that some columns and some rows may contain exactly two decorations of each kind, in such an arragngement that the two middle decorations are surrounded by decorations of different kinds. The associated singular link is then obtained by connecting the decorations of the singular columns by slightly bended strands, then the decorations of the regular columns by straight lines, then the decorations of the regular rows and finally the decorations of the singular rows. In this process, when a line is crossing another one, it underpasses it.

\begin{figure}[h]
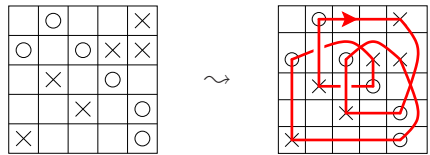

   $$
  \dessin{2cm}{SSGrid} \hspace{.5cm} \leadsto \hspace{.5cm}\dessin{2cm}{SSGridKnot}
  $$
  \caption{From mixed singular grid diagrams to singular knots}
  \label{fig:SSG->SSK}
\end{figure}

\subsubsection{Mixed elementary grid moves}
\label{par:MixedElemMoves}

A double point can be represented by mean of a singular column or of a singular row. It is thus necessary to introduce a new elementary move which connect the two possibilities. For this purpose, we define the \emph{rotation moves}\index{moves!grid!rotation} which replaces a specific $(3\times 4)$-subgrid by a $(4\times 3)$ one, up to the adding or the removal of some empty pieces of rows or columns. Pictures of the moves are depicted in Figure \ref{fig:RotationMove}.

\begin{figure}[!h]
$$
\begin{array}{ccc}
  \xymatrix{\dessin{2.5cm}{RotG1} \ar@{<->}[r] & \dessin{2.5cm}{RotG2}} & \hspace{.5cm} &  \xymatrix{\dessin{2.5cm}{RotD1} \ar@{<->}[r] & \dessin{2.5cm}{RotD2}}\\
\textrm{Left rotation} && \textrm{Right rotation}
\end{array}
$$
  \caption{Rotation moves}
  \label{fig:RotationMove}
\end{figure}
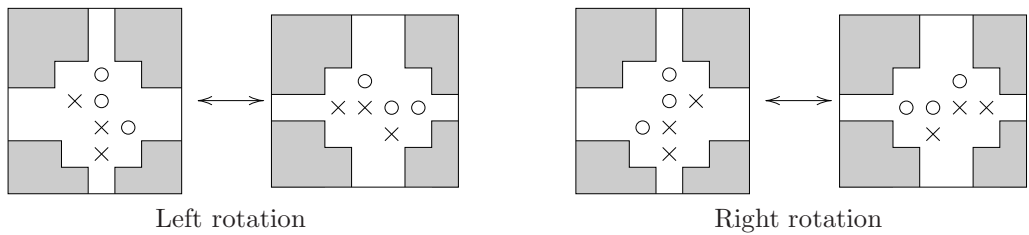

Using these moves, any mixed singular grid can be connected to a grid with no more singular rows. Then, the Theorem \ref{SingElemMoves} (\S \ref{par:SElemMoves}) can be applied. However, it can be sharpened. As a matter of fact, Figure \ref{fig:Rot->Flip} and its image under horizontal reflecting illustrate how flip moves can be replaced by rotation moves. Moreover, any singular column involved in a commutation or a cyclic permutation move can be laid down ; the singular commutation is then replaced by regular ones.

\begin{figure}[t!]
$$
\xymatrix@!0@C=1.5cm@R=2cm{
&& \dessin{2cm}{L1} \ar@{<=>}[rrrr]|{}="Ska0" &&&& \dessin{2cm}{L9} \ar@{<->}[drr] && \\
\dessin{2cm}{L2} \ar[urr] &&&&&&&& \dessin{2cm}{L8} \ar@{<=>}[dlll]|{}="Ska2" \\
&&& \dessin{2cm}{L3} \ar@{<=>}[ulll]|{}="Ska1" && \dessin{2cm}{L7} \ar@{<->}[drr] \\
& \dessin{2cm}{L4} \ar@{<->}[urr] &&&&&& \dessin{2cm}{L6} \ar@{<->}[dlll]|c \\
&&&& \dessin{2cm}{L5} \ar@{<->}[ulll]|c
\ar@{<.>}@/_.4cm/"Ska1"!<.2cm,.2cm>;"Ska0"!<-.1cm,-.2cm>|(.7){}="SKA3" \ar@{<.>}@/^.4cm/"Ska2"!<-.2cm,.2cm>;"Ska0"!<.1cm,-.2cm>|(.7){}="SKA4"
\ar@{.}@/_.4cm/"SKA3";"SKA4"|\Uparrow
}
$$
  \caption{From rotations to flips: {\footnotesize Simple arrows stand for a combination of commutations and cyclic permutations of the columns when they are $c$--labeled and of commutations and (de)stabilizations when they are unlabeled. Double arrows stand for singular grid moves.}}
  \label{fig:Rot->Flip}
\end{figure}
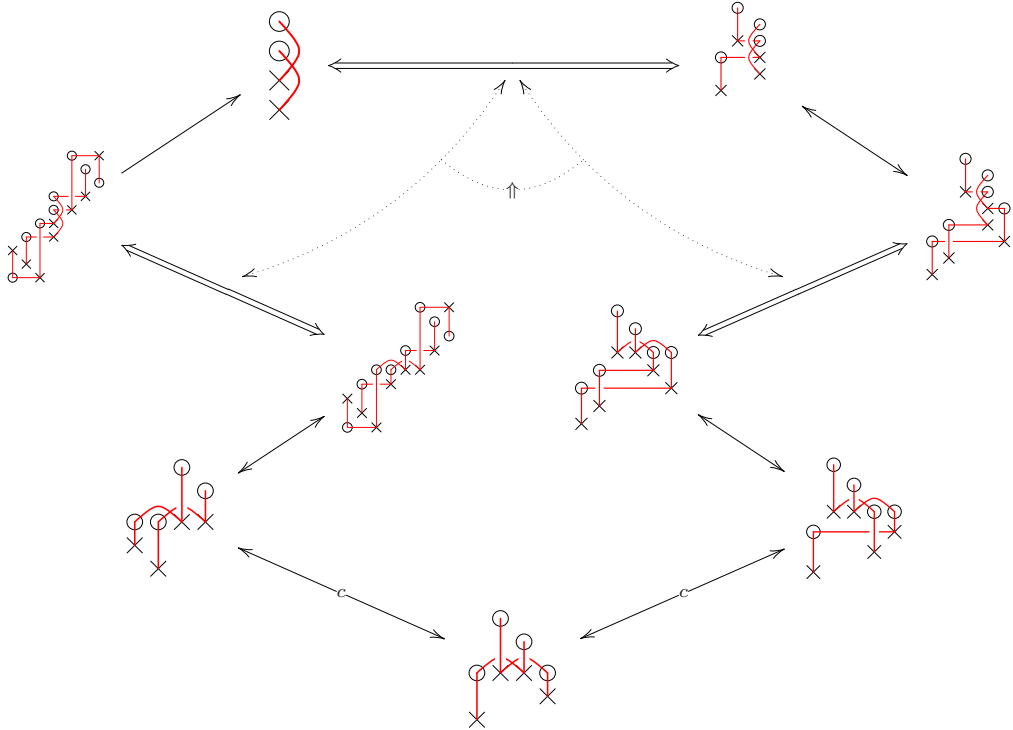

These remarks prove the following theorem:
\begin{theo}
  Any two mixed singular grid diagrams which describe the same singular link can be connected by a finite sequence of
  \begin{itemize}
  \item[-] cyclic permutations of regular rows or columns;
  \item[-] commutations of regular rows or regular columns;
  \item[-] stabilizations and destabilizations (under the restrictions given in paragraph \ref{par:SingGridEtElemMoves});
  \item[-] rotations.
  \end{itemize}
\end{theo}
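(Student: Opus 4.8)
The plan is to reduce to the already-established Theorem \ref{SingElemMoves} (\S\ref{par:SElemMoves}), which governs singular grid diagrams having singular columns only and allows the larger move set \{(possibly singular) cyclic permutations, (possibly singular) commutations, (de)stabilizations, flips\}, and then to trade away the two kinds of moves that occur there but are absent from the present statement: the singular cyclic permutations / commutations, and the flips.

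First I would use the rotation moves to normalise the presentations. Starting from a mixed singular grid $G$, a rotation move turns a singular row into a singular column (allowing, as the move permits, the adjunction or removal of a few empty rows or columns), so applying one rotation per singular row connects $G$ to a singular grid $\widetilde G$ with only singular columns, representing the same singular link. Hence, given two mixed singular grids $G_1, G_2$ describing the same link, it is enough to connect the column-only grids $\widetilde G_1$ and $\widetilde G_2$ using only the moves listed in the statement, since the passages $G_i \leadsto \widetilde G_i$ already consist of rotations.

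By Theorem \ref{SingElemMoves}, $\widetilde G_1$ and $\widetilde G_2$ are connected by a finite sequence of (possibly singular) cyclic permutations, (possibly singular) commutations, (de)stabilizations and flips; it remains to rewrite each flip and each \emph{singular} cyclic permutation / commutation as a finite sequence of regular cyclic permutations, regular commutations, (de)stabilizations and rotations. For a flip this is precisely the content of Figure \ref{fig:Rot->Flip} together with its mirror image under horizontal reflection, which exhibits a flip as a composition of rotations, regular column commutations, regular cyclic permutations of columns and (de)stabilizations. For a singular cyclic permutation or commutation involving a singular column $C$, I would first ``lay $C$ down'' --- apply a rotation turning $C$ into a singular row --- so that the move is now performed in a grid whose only singular line is a row; in that configuration the cyclic permutation or commutation of the remaining (regular) columns is an honest regular move, after which one stands the singular row back up with the inverse rotation. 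Concatenating these replacements throughout the sequence produced by Theorem \ref{SingElemMoves} gives a connection of $\widetilde G_1$ to $\widetilde G_2$, hence of $G_1$ to $G_2$, through the four permitted move types.

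The routine but slightly delicate point --- the one I expect to be the main obstacle --- is the figure-level bookkeeping: checking that the rotation--flip equivalence of Figure \ref{fig:Rot->Flip} and the ``lay down / act / stand up'' trick really invoke only the claimed regular moves and rotations, and in particular that the auxiliary empty rows and columns introduced by the (de)stabilizations and by the rotation moves can always be placed so as to respect the restriction on (de)stabilizations near singular lines recalled in \S\ref{par:SingGridEtElemMoves}. Once those local verifications are in place, the substitution argument above completes the proof.
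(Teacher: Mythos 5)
Your proposal matches the paper's argument essentially step for step: normalise both grids to singular-columns-only via rotations, invoke Theorem \ref{SingElemMoves}, rewrite flips as rotations plus regular moves using Figure \ref{fig:Rot->Flip} and its reflection, and replace singular commutations/cyclic permutations by the ``lay down with a rotation, act with regular moves, stand back up'' trick. The paper states these substitutions somewhat more tersely, but the route is the same; your closing remark about verifying the (de)stabilization restrictions near singular lines is exactly the kind of bookkeeping the paper leaves implicit.
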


The main part of this thesis will not use this refined mixed description. However, Appendix \ref{appendix:MixedSimplification} shows how the construction can be extended to mixed grids and how it simplifies the proofs of invariance.


 \section{Switch morphism}
\label{sec:Wall}

Let $G^+$ and $G^-$ be two regular grid diagrams of size $n$ which differ from a commutation of two adjacent columns which is \textbf{not} a commutation move as defined in paragraph \ref{par:ElemMoves}. If the two uppermost decorations of the two commuting columns are of the same kind, then, among the two grids, $G^+$ is the diagram where the line passing through these two decorations has a negative slope. Otherwise, $G^+$ is the diagram where this line has a positive slope.

\subsection{Grid and link configurations}
\label{ssec:PosNegResol}

\subsubsection{Standard configuration}
\label{par:PosNegCrossing}

\begin{figure}[h]
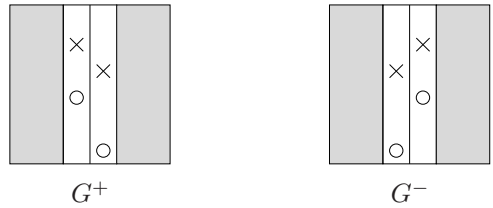

$$
\begin{array}{ccc}
  \dessin{2.5cm}{Gp} & \hspace{1cm} & \dessin{2.5cm}{Gm}\\[1cm]
G^+ && G^-
\end{array}
$$  
  \caption{The standard configuration for $G^+$ and $G^-$}
  \label{fig:StandardConfiguration}
\end{figure}

\begin{prop}\label{Standard}
 The possible arrangements of the decorations are the \emph{standard configuration}\index{standard configuration} shown in Figure \ref{fig:StandardConfiguration} and its images by cyclic permutations of the rows. In particular, when the two commuting columns are merged and considered cyclically by identifying their top and bottom boundary, decorations of the same kind are vertically side by side.
\end{prop}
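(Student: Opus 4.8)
The plan is to reduce the statement to a short combinatorial observation about where the two $O$'s and two $X$'s of the two commuting columns can sit on the torus $\T_G$, and then read off the standard configuration up to a cyclic permutation of the rows. Throughout, I am in the situation relevant to this section: the column swap genuinely changes the underlying link, so it cannot be realised by a sequence of commutations and cyclic permutations of rows — in particular the preamble's slope convention is meaningful, which already forces the topmost rows not to be "tied".

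First I would record the count: by definition of a grid diagram the two commuting columns together carry exactly two $O$'s and two $X$'s, the two $O$'s lie in distinct rows, and the two $X$'s lie in distinct rows (each row of $G$ meets exactly one $O$ and one $X$). The first real step is to show these four decorations occupy \emph{four distinct} rows. If not, an $O$ of one commuting column and an $X$ of the other share a row; I would then observe that, after a suitable cyclic permutation of the rows, all decorations of one of the two columns lie strictly above all decorations of the other, so that the swap is a commutation move in the sense of Theorem \ref{def:Moves} — contrary to hypothesis. (Here one uses the remark following that theorem: the restrictive and the usual formulations of the commutation move agree up to cyclic permutations of rows.) So the four decorations lie in four distinct rows; call them, in cyclic order of height on $\T_G$, $D_1,D_2,D_3,D_4$.

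Next I would use the hypothesis that the swap is \emph{not} a commutation move: this forces the two pairs of decorations to interleave around the torus, i.e.\ one commuting column occupies the positions $D_1,D_3$ and the other occupies $D_2,D_4$. Indeed, if the two pairs did not interleave, a cyclic permutation of the rows would again put all decorations of one column strictly above those of the other, a genuine commutation move. Since each commuting column carries exactly one $O$ and one $X$, the decorations $D_1$ and $D_3$ are of opposite kinds, and so are $D_2$ and $D_4$. Hence the cyclic sequence of kinds is one of $(O,O,X,X)$, $(O,X,X,O)$, $(X,O,O,X)$, $(X,X,O,O)$, and it can never be the alternating pattern $(O,X,O,X)$, since that would make $D_1$ and $D_3$ agree. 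In every one of the four cases the two $O$'s are cyclically consecutive and so are the two $X$'s: this is exactly the asserted property when the two columns are merged into one and the top and bottom boundaries are identified. A final cyclic permutation of the rows brings $D_1,\dots,D_4$ to the heights drawn in Figure \ref{fig:StandardConfiguration}, giving the standard configuration; the slope convention fixed before the proposition then tells $G^+$ from $G^-$, and both sub-cases — the two uppermost decorations of the same kind, or of different kinds — occur among the four patterns.

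The only delicate point, and the step I would be most careful with, is the passage from "not a commutation move" to the interleaving condition: it rests entirely on the equivalence of the two notions of commutation move up to cyclic permutations of rows, and on disposing of the degenerate arrangements in which two of the four decorations share a row. Once four distinct rows and interleaving are in hand, the rest is a direct inspection of the four surviving patterns.
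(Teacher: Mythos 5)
Your approach parallels the paper's own proof — use interleaving to show same‑kind decorations are cyclically adjacent, then read off the standard configuration up to cyclic permutations — but where the paper opens with ``by hypothesis, the decorations appear alternatively on each column,'' you try to derive this from the ``not a commutation move'' hypothesis. That is the right instinct, but your step ruling out shared rows is wrong. You claim that if an $O$ of one commuting column and an $X$ of the other share a row, then ``after a suitable cyclic permutation of the rows, all decorations of one of the two columns lie strictly above all decorations of the other.'' In fact the opposite holds: a shared row makes such a separation impossible. If column $a$ carries its two decorations in rows $\{r,r'\}$ and column $a+1$ in rows $\{r,r''\}$ with $r'\neq r''$, then after any cyclic permutation both columns still have a decoration in (the image of) row $r$, so neither column's decorations are strictly above the other's, and the swap remains \emph{not} a commutation move in the sense of Theorem~\ref{def:Moves}. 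The shared‑row configuration is therefore not excluded by the stated hypothesis; it has to be ruled out on other grounds (it makes the slope convention preceding the proposition ill‑defined when the two \emph{uppermost} decorations coincide, but that still leaves the case where only the lowermost ones share a row), or simply treated as an implicit standing hypothesis, which is in effect what the paper does.

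Once four distinct rows and interleaving are granted, your classification of the four cyclic kind‑patterns, the observation that in each of them the two $O$'s and the two $X$'s are cyclically consecutive, and the final reduction to the standard configuration by a cyclic permutation of the rows are all correct and agree with the paper's reasoning.
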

\begin{proof}
  By hypothesis, the decorations appear alternatively on each column. Every decoration is hence vertically surrounded by the decorations of the other column and consequently decorations of different kinds. One of them is thus of the same kind than the surrounded one. This proves the second part of the statement.\\
Now, everytime a cyclic permutation of the rows affects the uppermost decorations, it changes simultaneously their possible equalness of nature and the slope sign of the line passing through them. This concludes the proof.
\end{proof}

\subsubsection{Associated links}
\label{par:AssLinks}

\begin{prop}\label{PosNeg}
  The two links $L^+$ and $L^-$ respectively associated to $G^+$ and $G^-$ differ only from a crossing which is positive in $L^+$ and negative in $L^-$.
\end{prop}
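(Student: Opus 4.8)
The plan is to work entirely locally, in the $(n \times 2)$--block formed by the two commuting columns, since everything outside this block is literally identical in $G^+$ and $G^-$. By Proposition \ref{Standard} (\S\ref{par:PosNegCrossing}) we may, up to cyclic permutation of the rows (which does not change the associated links), assume the two columns are in the standard configuration of Figure \ref{fig:StandardConfiguration}. First I would draw the two link diagrams in this block according to the recipe of paragraph \ref{par:Grid}: in each column join the $X$ to the $O$ by a vertical segment, then join the $O$ to the $X$ in each row by a horizontal segment underpassing the verticals, and orient horizontal strands from $O$ to $X$. The two vertical strands coming from the two columns cross exactly once inside the block (this is precisely the content of ``the columns do not satisfy the commutation condition'': the decorations interleave vertically, so the two vertical segments overlap in their $y$--ranges and must intersect), and the horizontal strands contribute no crossing between the two pieces we are comparing since outside the block the diagrams agree.

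Next I would check that this single crossing is the only difference. Away from the two commuting columns the grids agree, so the corresponding link diagrams agree there; inside the block, the vertical segments in $G^+$ and $G^-$ occupy the same rows but in swapped columns, which is exactly an exchange of the two strands at the crossing — i.e.\ a switch in the sense of the introduction (\emph{inversion de croisement}). So $L^+$ and $L^-$ are identical diagrams except at one crossing, whose sign I must now compute. Using the orientation convention (horizontal strands run from $O$ to $X$, each vertical strand inherits its orientation from the column's $X$--to--$O$ / $O$--to--$X$ joining together with the row connections), I would read off the two tangent directions at the crossing point in each of the two standard pictures and compare with the sign conventions fixed in the introduction (the \texttt{Pos}/\texttt{Neg} local models). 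The definition of $G^+$ given just before \S\ref{ssec:PosNegResol} — ``$G^+$ is the grid where the line through the two uppermost decorations has negative slope'' when those decorations are of the same kind, positive slope otherwise — was tailored so that the overpassing strand in $G^+$ is the one making the crossing positive; this is the computation to carry out carefully.

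The main obstacle is bookkeeping of orientations: one has to track how the orientation of each of the two vertical strands in the block is determined by the global orientation convention (which depends on whether the relevant $O$ sits above or below the relevant $X$ in its column, and on how the horizontal strands at the top and bottom of the block connect), and then verify that in \emph{every} row--arrangement allowed by Proposition \ref{Standard} the resulting crossing sign is $+$ for $G^+$ and $-$ for $G^-$. Since Proposition \ref{Standard} says all configurations are cyclic permutations of the standard one, and since a cyclic permutation of the rows that moves the uppermost pair of decorations simultaneously flips both the ``same kind / different kind'' alternative and the slope sign (as noted in the proof of Proposition \ref{Standard}), the definition of $G^+$ is invariant under such permutations; hence it suffices to do the orientation check once, in the standard configuration, and the general case follows. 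Finally, that $L^+$ and $L^-$ really are the two links obtained from a common diagram by switching that one crossing — rather than, say, differing by a Reidemeister move as well — is immediate because the rest of the diagram is untouched, so the proof concludes.
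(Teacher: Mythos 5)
Your plan matches the paper's proof in structure: reduce to the standard configuration of Proposition~\ref{Standard} using cyclic row permutations, then verify the crossing sign by direct inspection, noting that cyclic row permutations respect the definition of $G^+$ (as you correctly observe from the proof of Proposition~\ref{Standard}) so one check suffices.

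However, the parenthetical justification you give for the existence of the crossing is wrong as stated. Two vertical grid segments sitting in adjacent columns are parallel lines at different $x$--coordinates; they do not intersect merely because their $y$--ranges overlap. The crossing that distinguishes $L^+$ from $L^-$ actually arises because the \emph{horizontal} segment emanating from one of the two ``middle'' decorations (the decorations sandwiched, in the cyclic vertical order, between the other column's decorations) passes under the \emph{vertical} segment of the neighboring commuting column. Equivalently, once the right-angled corners are smoothed, the two full strands running through the block — each a vertical segment together with its adjoining horizontal arcs — do cross, but this crossing happens between a horizontal portion of one strand and the vertical portion of the other, not between the two vertical grid segments. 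The paper makes this concrete with an additional normalization you omit: it performs cyclic permutations of the \emph{columns} so that the two decorations sharing rows with the middle decorations lie on the left of the block, which fixes which horizontal strand creates the crossing and in which column, so that the sign can be read off unambiguously from a single picture. Your ``careful bookkeeping of orientations'' would, in practice, force you to rediscover this normalization (or to check several cases). So the proof plan is sound and follows the paper's route, but the stated reason the crossing exists needs to be replaced by the correct horizontal-over-vertical picture before the orientation check can be carried out.
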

\begin{proof}
  Since $L^+$ (resp. $L^-$) is invariant under cyclic permutations of the rows of $G^+$ (resp. $G^-$) and according to Proposition \ref{Standard} (\S\ref{par:PosNegCrossing}), it is sufficient to check it for the standard configuration.\\
  Moreover, even if it means to perform a few cylic permutations of the columns first, we can assume that the two decorations belonging to the same rows than the middle decorations of the commuting columns are located on the left side of these two columns.\\
  Now, the statement is easy to check:
$$
\xymatrix@!0@R=1.5cm@C=2.6cm{&\dessin{1.7cm}{PosGrid}\ar@{^<-_>}[r]&\dessin{1.7cm}{NegGrid}&\\
\dessin{1.7cm}{PosDiag} \ar@{<-}[ur]&&&\dessin{1.7cm}{NegDiag} \ar@{<-}[ul]}
$$
\end{proof}

\subsection{More polygons on torus}
\label{ssec:Polygons}

\subsubsection{Combined grid}
\label{par:CombGrid}

The aim of the next sections is to define a filtrated chain map $\func{f}{C^-(G^+)}{C^-(G^-)}$.\\

For this purpose, it will be convenient to draw at the same time $G^+$ and $G^-$ on a combined grid $G_{Comb}$.
\begin{eqnarray}\label{fig:CombGrid}
  \hspace{.5cm} \dessin{3.8cm}{CombinedG} \hspace{.5cm}
\begin{array}{cl}
  \xymatrix{\ar@*{[red]}@{-}[r]&} & G^+\\[3mm]
  \xymatrix{\ar@*{[blue]}@{--}[r]&} & G^-
\end{array}
\end{eqnarray}

\begin{remarque}\label{Dependance}
  We fix that $\alpha$ and $\beta$ never intersect each other on a horizontal grid line. There are several ways to do it and, even if we omit to denote it in the notation, all the following constructions will depend on this choice.
\end{remarque}

In this context, a generator of $C^-(G^+)$ (resp. $C^-(G^-)$) is represented by $n$ dots, one of which is located on $\alpha$ (resp. $\beta$ ).

\subsubsection{Pentagons}
\label{par:Pentagons}

As for the definition of the differential, we consider the torus $\T_{Comb}:=\T_{G_{Comb}}$. We denote by $c\in\alpha\cap\beta$ the intersection point located just below an $X$ and just above a $O$ (see (\ref{fig:CombGrid})).\\

  Let $x$ be a generator of $C^-(G^+)$ and $y$ a generator of  $C^-(G^-)$. A \emph{pentagon connecting $x$ to $y$}\index{polygon!pentagon}\index{pentagon|see{polygon}} is an embedded pentagon $\pi$ in $\T_{Comb}$ which satisfies:
  \begin{itemize}
  \item[-] edges of $\pi$ are embedded in the grid lines (including $\alpha$ and $\beta$);
  \item[-] the point $c$ is a corner of $\pi$;
  \item[-] starting at $c$ and running positively along the boundary of $\pi$, according to the orientation of $\pi$ inherited from the one of $\T_{Comb}$, the corners of $\pi$ are successively and alternatively a point of $x$ and a point of $y$;
  \item[-] except on $\p \pi$, the sets $x$ and $y$ coincide;
  \item[-] the interior of $\pi$ does not intersect $\alpha\cup\beta$ in a neighborhood of $c$.
  \end{itemize}
The corner $c$ is called the \emph{peak of $\pi$}\index{polygon!peak}\index{peak|see{polygon}}.\\

Note that a pentagon $\pi$ connecting $x$ to $y$ has a unique oriented vertical edge which joins a point of $x$ to a point of $y$. We say that $\pi$ \emph{points toward the right}\index{polygon!to point toward}\index{point toward (to)|see{polygon}} if this edge is oriented toward the bottom of the grid. Otherwise, we say that it \emph{points toward the left}. For a generic choice of arcs $\alpha$ and $\beta$, it corresponds to the direction toward which the peak is pointing.\\

A pentagon $\pi$ is \emph{empty} if $\Int(\pi)\cap x=\emptyset$.\\
We denote by $\Pent(G_{Comb})$ the set of all empty pentagons on $G_{Comb}$ and by $\Pent(x,y)$ the set of those which connect $x$ to $y$.

\begin{figure}[h]
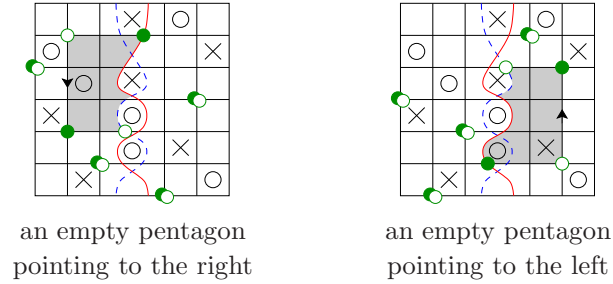

 $$
  \begin{array}{ccc}
 \dessin{3.05cm}{Pent1} & \hspace{1cm} & \dessin{3.05cm}{Pent2}\\[1cm]
\textrm{an empty pentagon} && \textrm{an empty pentagon}\\
\textrm{pointing to the right} && \textrm{pointing to the left}
  \end{array}
$$ 
  \caption{Examples of pentagons: {\footnotesize dark dots describe the generator $x$ while hollow ones describe $y$. Pentagons are depicted by shading.}}
    \label{fig:Pentagons}
\end{figure}

\subsubsection{Spikes}
\label{par:Spikes}

    Let $x$ be a generator of $C^-(G^+)$ and $y$ a generator of  $C^-(G^-)$. A \emph{spike}\index{polygon!spike}\index{spike|see{polygon}} connecting $x$ to $y$ is a triangle $\tau$ in $\T_{Comb}$, possibly crossed, which satisfies:
  \begin{itemize}
  \item[-] edges of $\tau$ are embedded in the grid lines (including $\alpha$ and $\beta$);
  \item[-] the point $c$ is a corner of $\tau$;
  \item[-] starting at $c$ and running positively along the boundary of $\tau$, according to the orientation of $\tau$ inherited near $c$ from the one of $\T_{Comb}$, the corners of $\tau$ are a point of $y$ and then of $x$;
  \item[-] except on $\p \tau$, the sets $x$ and $y$ coincide.
  \end{itemize}
The grid line which contains the horizontal edge of $\tau$ is called the \emph{support of $\tau$}\index{polygon!spike!support}\index{support|see{polygon}}.

\begin{figure}[h]
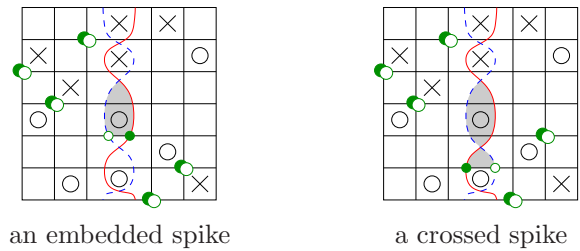

 $$
  \begin{array}{ccc}
 \dessin{3.05cm}{Spike} & \hspace{1cm} &\dessin{3.05cm}{CrossedSpike}\\[1cm]
\textrm{an embedded spike} && \textrm{a crossed spike}
  \end{array}
$$ 
  \caption{Examples of spikes: {\footnotesize dark dots describe the generator $x$ while hollow ones describe $y$. Spikes are depicted by shading.}}
    \label{fig:Spikes}
\end{figure}

\begin{remarque}
  Essentially, a spike moves a dot from $\alpha$ to $\beta$ along its support.
\end{remarque}

\subsubsection{Spikes and pentagons}
\label{par:SpiPent}

\begin{prop}\label{Pent->Rect}
  For a given pentagon $\pi\in\Pent(G_{Comb})$, there is a unique spike $\tau$ such that $\pi\cup \tau$ is a rectangle $\rho\in \Rect(G^+)$.
\end{prop}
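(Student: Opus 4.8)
The strategy is to reconstruct $\tau$ explicitly from $\pi$ as the ``missing notch'' at the peak $c$, to check that the glued region is an empty rectangle on $G^+$, and to read off uniqueness from the combinatorics. First I would decode the shape of $\pi$. A pentagon has exactly five corners; list them in positive cyclic order starting at the peak as $c=c_0,c_1,c_2,c_3,c_4$. Since $c$ is a corner of $\pi$ lying on $\alpha\cap\beta$ while $\Int(\pi)$ avoids $\alpha\cup\beta$ near $c$, the two edges of $\pi$ issuing from $c$ must run one along $\alpha$ and one along $\beta$; say $[c,c_1]\subset\alpha$ and $[c_4,c]\subset\beta$. Because a generator of $C^-(G^+)$ has a single dot on $\alpha$ and none on $\beta$ (and vice versa for $C^-(G^-)$), the alternation axiom forces $c_1,c_3\in x$ and $c_2,c_4\in y$, with $c_1$ the dot of $x$ on $\alpha$ and $c_4$ the dot of $y$ on $\beta$; the remaining three edges $[c_1,c_2]$, $[c_2,c_3]$, $[c_3,c_4]$ lie along ordinary horizontal and vertical grid lines.

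Next I would build $\tau$ as the (possibly crossed) triangle with vertices $c$, $c_4$, $c_1$, bounded by the sub-arc of $\beta$ from $c$ to $c_4$, a horizontal grid segment from $c_4$ to $c_1$, and the sub-arc of $\alpha$ from $c_1$ back to $c$. One must first see that $c_1$ and $c_4$ do lie on a common horizontal grid line, so that the middle edge is legitimate; I would settle this from the standard configuration of $G^+$ and $G^-$ (Proposition~\ref{Standard}, \S\ref{par:PosNegCrossing}) together with the prescribed position of $c$ (just below an $X$, just above an $O$): tracing $\alpha$ and $\beta$ away from $c$ into the wedge occupied by $\pi$, the first horizontal grid line they meet is the same. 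Granting this, $\tau$ satisfies every clause of the spike definition of \S\ref{par:Spikes}: its edges lie in grid lines (including $\alpha,\beta$); $c$ is a corner; running positively from $c$ with the orientation inherited near $c$ — which is opposite to that of $\pi$, since $\pi$ leaves $c$ toward $c_1$ while $\tau$ leaves $c$ toward $c_4$ — one meets $c_4\in y$ and then $c_1\in x$; and $x,y$ coincide off $\p\tau$, because they already coincide off $\p\pi$ and the only edge of $\p\tau$ not on $\p\pi$ is $[c_4,c_1]$, whose relative interior contains no dot ($\pi$ is embedded and empty, and that segment lies on a line already pinned down by $\pi$).

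Then I would verify that $\rho:=\pi\cup\tau$ lies in $\Rect(G^+)$ and that $\tau$ is forced. The two arcs along $\alpha$ and $\beta$ are common edges of $\pi$ and $\tau$ carrying opposite induced orientations, so they cancel and $\p(\pi\cup\tau)=[c_1,c_2]\cup[c_2,c_3]\cup[c_3,c_4]\cup[c_4,c_1]$, an axis-parallel embedded rectangle on $\T_{G^+}$ with opposite corners $\{c_1,c_3\}\subset x$ and $\{c_2,c_4\}$; once $\beta$ is forgotten $c_4$ (like $c_2$) sits at an honest $G^+$ grid intersection, so $\{c_2,c_4\}$ are the dots of a generator $x'$ obtained from $x$ by the associated transposition, and indeed $x'$ equals $y$ as a set of dots. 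The remaining axioms of $\Rect(G^+)$ — coincidence of the two generators off $\p\rho$, the orientation condition on the horizontal edges, and emptiness (here $\Int(\rho)$ is the union of $\Int(\pi)$, which misses $x$ since $\pi$ is empty, of $\Int(\tau)$, and of the two cancelled open arcs, in each of which a dot of $x$ would contradict the embeddedness of $\pi$ or the one-dot-per-line constraint) — all descend from $\pi$ and $\tau$. For uniqueness, if $\pi\cup\tau'$ were also a rectangle $\rho'$, then $\rho'$ would be an axis-parallel rectangle containing the three grid-line edges $[c_1,c_2],[c_2,c_3],[c_3,c_4]$ of $\pi$ among its four boundary edges (the corner of $\pi$ at $c$ cannot be a corner of $\rho'$, since three of $\rho'$'s four corners are already used); this forces $\rho'=\rho$, hence $\tau'=\overline{\rho\setminus\pi}=\tau$.

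\textbf{Main obstacle.} Everything above is bookkeeping except the local claim in the second paragraph: that the wedge of $\pi$ at $c$ is cut off by $\alpha$, $\beta$ and a \emph{single} common horizontal grid line — equivalently that the three grid-line edges of $\pi$ run vertical–horizontal–vertical, so that one horizontal edge completes them into a rectangle. This is the step that genuinely uses the standard configuration near $c$, and the one I would write out in full detail.
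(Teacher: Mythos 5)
The step you flag as the ``main obstacle'' is in fact false, and your construction of $\tau$ collapses there. The edge of $\pi$ leaving $c_1$ must be horizontal, not vertical: $c_1$ is the dot of $x$ on $\alpha$, so the only grid lines through $c_1$ are $\alpha$ itself and the horizontal line through $c_1$, and an edge from $c_1$ along $\alpha$ would put $c_2\in\alpha$, contradicting that $y$'s distinguished dot is on $\beta$. Thus $[c_1,c_2]$ and, symmetrically, $[c_3,c_4]$ are both horizontal; the three non-arc edges of $\pi$ run horizontal--vertical--horizontal (not vertical--horizontal--vertical as you assert at the end), and the two horizontal edges sit at heights $h_1\neq h_2$, equality being ruled out since it would degenerate the vertical edge $[c_2,c_3]$. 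Hence $c_1$ (at height $h_1$ on $\alpha$) and $c_4$ (at height $h_2$ on $\beta$) never share a horizontal grid line, so the triangle with vertices $c,c_4,c_1$ has no grid-line edge from $c_4$ to $c_1$ and is not a spike. Independently, your verification of the last spike axiom is circular: ``$x,y$ coincide off $\p\tau$ because they already coincide off $\p\pi$'' goes the wrong way, since $\p\tau$ is essentially contained in $\p\pi$, so the complement of $\p\tau$ is \emph{larger}, and $x$ and $y$ still disagree at $c_2,c_3$, both of which lie off your $\p\tau$.

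The spike the paper actually constructs is different: its third corner is a \emph{new} point $c_4'\in\alpha$, not a corner of $\pi$, sitting on the same horizontal grid line as $c_4$, so that $[c_4,c_4']$ is the horizontal support edge. Thus $\tau$ shares exactly the two vertices $c,c_4$ and the single edge $[c,c_4]\subset\beta$ with $\pi$, connects $y$ to a \emph{new} generator $z=(x\setminus\{c_1,c_3\})\cup\{c_2,c_4'\}$ of $C^-(G^+)$, and ``moves the dot of $y$ from $\beta$ to $\alpha$''. Gluing along $[c,c_4]$, the two arcs $[c,c_1]$ and $[c_4',c]$ of $\alpha$ fuse into the vertical edge $[c_4',c_1]$, the segments $[c_3,c_4]$ and $[c_4,c_4']$ fuse into the horizontal edge $[c_3,c_4']$, and $\pi\cup\tau$ is the rectangle with corners $c_1,c_2,c_3,c_4'$ connecting $x$ to $z$ in $\Rect(G^+)$. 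Your uniqueness argument survives once the corners are corrected, since $[c_1,c_2]$, $[c_2,c_3]$ and the height-$h_2$ horizontal line through $c_3$ are forced by $\pi$ and pin down $\rho$, hence $\tau=\overline{\rho\setminus\pi}$.
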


This defines a map $\func{\phi}{\Pent(G_{Comb})}{\Rect(G^+)}$.\\

\vspace{-.45cm}
{\parpic[r]{$\dessin{3cm}{PentPeak}$\hspace{.5cm}}
\vspace{.45cm}{\noindent \it Proof.} Let consider a pentagon $\pi$ which connects $x\in C^-(G^+)$ to $y\in C^-(G^-)$. There is a unique spike $\tau$ which has two common vertices with $\pi$ such that one of them belongs to $y$. By glueing it to $\pi$ along their common edge, we embank the peak of $\pi$ and move a dot of $y$ from $\beta$ to $\alpha$. Hence, we get a rectangle in $\Rect(G^+)$ which connects $x$ to a new generator $z$ of $C^-(G^+)$.

}\hfill{$\square$}

\begin{cor}\label{PentDegree}
   Let $x$ be a generator of $C^-(G^+)$, $y$ a generator of  $C^-(G^-)$ and $\pi$ a pentagon which connects $x$ to $y$. Then
$$
M(x)-M(y)=-2\#(\pi\cap\O),
$$
$$
A(x)-A(y)=\#(\pi\cap\X)-\#(\pi\cap\O).
$$
\end{cor}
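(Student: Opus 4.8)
The plan is to reduce the statement to the grading formulas for rectangles (Proposition \ref{RectGradings}, \S\ref{par:Rect}) by means of the map $\phi$ of Proposition \ref{Pent->Rect}. Let $\tau$ be the spike for which $\rho:=\pi\cup\tau=\phi(\pi)$ is an empty rectangle of $\Rect(G^+)$. As in the proof of Proposition \ref{Pent->Rect}, $\rho$ connects $x$ to a generator $z$ of $C^-(G^+)$, and $z$ and $y$ differ only in the position of a single dot, which the spike $\tau$ slides along its horizontal support (\S\ref{par:Spikes}) between $\alpha$ and $\beta$. Since $\pi$ and $\tau$ meet only along a common edge, which lies on grid lines and on $\alpha\cup\beta$, and since every decoration sits at the centre of a grid square, no decoration lies on $\p\pi\cap\p\tau$; hence $\#(\rho\cap\O)=\#(\pi\cap\O)+\#(\tau\cap\O)$ and $\#(\rho\cap\X)=\#(\pi\cap\X)+\#(\tau\cap\X)$.

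Applying Proposition \ref{RectGradings} to $\rho\in\Rect(x,z)$ then gives
$$
M(x)-M(z)=1-2\#(\pi\cap\O)-2\#(\tau\cap\O),
$$
$$
A(x)-A(z)=\#(\pi\cap\X)+\#(\tau\cap\X)-\#(\pi\cap\O)-\#(\tau\cap\O),
$$
so it only remains to compute $M(z)-M(y)$ and $A(z)-A(y)$. As $z$ and $y$ differ by the position of one dot, one proceeds exactly as in the sketch of Proposition \ref{RectGradings}: $M$ and $A$ are evaluated through the counting function $\I$, and only those pairs of points involving the displaced dot can change. Because the two commuting columns are adjacent and $\alpha,\beta$ have been chosen generically (Remark \ref{Dependance}), moving the dot along the support of $\tau$ crosses no other generator dot, so the only surviving contributions come from the decorations swept by $\tau$, each counted once. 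A short case check — keeping track of the fact that $\tau$ may be a crossed triangle and that one of its corners is the point $c$, which is not a dot — yields
$$
M(z)-M(y)=-1+2\#(\tau\cap\O),\qquad A(z)-A(y)=\#(\tau\cap\O)-\#(\tau\cap\X).
$$

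Adding these identities to the previous two makes the spike terms cancel and leaves precisely $M(x)-M(y)=-2\#(\pi\cap\O)$ and $A(x)-A(y)=\#(\pi\cap\X)-\#(\pi\cap\O)$, which is the assertion. I expect the genuinely delicate step to be the spike computation of the middle paragraph: one must verify carefully that displacing the dot between $\alpha$ and $\beta$ disturbs no other generator dot, and that the corner $c$ is exactly what converts the rectangle constant $+1$ in Proposition \ref{RectGradings} into the $-1$ contributed by the spike, so that the net constant for the pentagon is $0$. Alternatively, one could bypass $\phi$ altogether and compute $M(x)-M(y)$ and $A(x)-A(y)$ directly, dividing $\T_{Comb}$ into regions relative to the five corners of $\pi$ as in the proof of Proposition \ref{RectGradings}; this avoids spikes but demands the same bookkeeping, now for a pentagon instead of a rectangle.
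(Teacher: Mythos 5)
Your proof is correct and essentially mirrors the paper's: both decompose $\rho=\phi(\pi)=\pi\cup\tau$, apply Proposition \ref{RectGradings} to $\rho\in\Rect(x,z)$, and settle the spike's contribution by a case check on where the support of $\tau$ sits relative to the four commuting decorations. The one presentational difference is that the paper observes that, viewed on the regular grids $G^+$ and $G^-$, $z$ and $y$ are the \emph{same} set of dots and only the decorations change places, whereas you keep the decorations fixed in $G_{Comb}$ and slide the single dot from $\alpha$ to $\beta$ — these bookkeepings are equivalent, and your closed form $M(z)-M(y)=-1+2\#(\tau\cap\O)$, $A(z)-A(y)=\#(\tau\cap\O)-\#(\tau\cap\X)$ is precisely the table the paper leaves implicit (and agrees with the sample case it records).
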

\begin{proof}
 We keep the notation of the precedent proof.\\
We will begin by three remarks.
\begin{itemize}
\item[-] Since propositions \ref{PermPreserves} (\S\ref{par:MaslAlex}) and \ref{Standard} (\S\ref{par:PosNegCrossing}) do hold, it is sufficient to deal with the standard configuration.
\item[-] As sets of dots on the regular grids $G^+$ and $G^-$, the generators $z$ and $y$ are identical; only the decorations are changing.
\item[-] The decorations contained in $\tau$ determine the vertical position of its support compared with the four commuting decorations.
\end{itemize}
Now, for each position of the support of $\tau$ compared with the decorations, it is easy to compute $M(y)$ (resp. $A(y)$) in fonction of $M(z)$ (resp. $A(z)$). For instance, if the support is between the two $O$'s, then
$$
M(y)=M(z)-1 \hspace{1cm} \textrm{and} \hspace{1cm} A(y)=A(z)-1.
$$

Finally, we conclude by using the Proposition \ref{RectGradings} (\S\ref{par:Rect}) and the decomposition of $\rho=\pi\cup \tau$.
\end{proof}

\subsection{First step toward singular link Floer homology}
\label{ssec:FirstStep}

\subsubsection{Switch map}
\label{par:WallMap}

Now, we consider the map $\func{f}{C^-(G^+)}{C^-(G^-)}$ which is the morphism of $\Z[U_{O_1},\cdots,U_{O_n}]$--modules defined on the generators by
$$
f(x)=\sum_{\substack{y \textrm{ generator}\textcolor{white}{R}\\\textrm{of }C^-(G^-)\textcolor{white}{I}}} \sum_{\pi\in \Pent(x,y)} \e(\pi)U_{O_1}^{O_1(\pi)}\cdots U_{O_n}^{O_n(\pi)}\cdot y,
$$
where $\func{\e}{\Pent(G_{Comb})}{\{\pm 1\}}$ is defined by
$$
\e(\pi)=\left\{\begin{array}{rl}\e(\phi(\pi))&\textrm{if }\pi\textrm{ is a pentagon pointing to the right}\\-\e(\phi(\pi))&\textrm{if }\pi\textrm{ is a pentagon pointing to the left.}\end{array}\right.
$$
\index{switch!morphism@morphism $f$}

\begin{figure}[b]
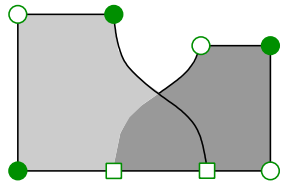

  $$
  \dessin{3cm}{AlterPent}
  $$
  \caption{Alternative decompositions for pentagons sharing their peak with a rectangle:  {\footnotesize dark dots describe the initial generator while hollow ones describe the final one. The squares describe intermediate states. One decomposition is given by its border and the second by shading of different intensity.}}
  \label{fig:AlternativePent}
\end{figure}

\begin{prop}\label{D1Preserves}
  The map $f$ preserves the Maslov grading as well as the Alexander filtration. Moreover, it anti-commutes with the differentials $\p_{G^\pm}^-$.
\end{prop}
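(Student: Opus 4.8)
The plan is to treat the three assertions of the proposition in turn, each time exploiting the combinatorial dictionary between pentagons on $G_{Comb}$ and rectangles on $G^+$ (or $G^-$) established in paragraph \ref{par:SpiPent}.

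First I would prove the statements about gradings. These follow immediately from Corollary \ref{PentDegree} (\S\ref{par:SpiPent}): a pentagon $\pi\in\Pent(x,y)$ satisfies $M(x)-M(y)=-2\#(\pi\cap\O)$ and $A(x)-A(y)=\#(\pi\cap\X)-\#(\pi\cap\O)$. Since the differential $\p^-_{G^\pm}$ already multiplies by $U_{O_i}$ for each $O_i\in\pi$, each term $\e(\pi)U_{O_1}^{O_1(\pi)}\cdots U_{O_n}^{O_n(\pi)}\cdot y$ has Maslov degree $M(y)-2\#(\pi\cap\O)=M(x)$, using $\deg(U_i)=(-2,-1)$; thus $f$ preserves the Maslov grading. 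For the Alexander filtration, the Alexander degree of that same term is $A(y)-\#(\pi\cap\O)$, which by the second displayed identity equals $A(x)-\#(\pi\cap\X)\le A(x)$, so $f$ sends $\F_iC^-(G^+)$ into $\F_iC^-(G^-)$, i.e. it preserves (indeed is filtrated of degree $0$ for) the Alexander filtration.

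The substantive part is the anti-commutation relation $f\circ\p^-_{G^+}+\p^-_{G^-}\circ f\equiv 0$. As in the proof of Theorem \ref{D0Preserves} (\S\ref{par:Diff}), I would expand both composites: every term is a juxtaposition of a rectangle (on $G^+$ or $G^-$) with a pentagon of $G_{Comb}$, and the aim is to show these terms cancel in pairs, the sign map $\e$ being designed exactly so that paired terms carry opposite signs. I would organize the juxtapositions by how the rectangle and the pentagon meet. When the rectangle $\rho$ and pentagon $\pi$ have disjoint corner sets, the moving dots commute and the region $\rho\cup\pi$ decomposes the other way as a pentagon-then-rectangle, producing the matching term in the opposite composite; the sign works out from property (Sq) of the sign assignment together with the $2$--cocycle $c$ controlling $\e$. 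When they share exactly one corner, $\rho\cup\pi$ is an ``L--shaped'' hexagonal region with the peak $c$ on its boundary, and Figure \ref{fig:AlternativePent} shows it admits exactly two decompositions into a rectangle and a pentagon — one contributing to $f\circ\p^-$, the other to $\p^-\circ f$ — again with opposite signs by (Sq). The remaining degenerate configurations (when $\rho\cup\pi$ wraps around the torus as a thin annulus adjacent to $c$, or when the pentagon itself shares more than one vertex with the rectangle) must be checked separately, using properties (V) and (H) of the sign assignment and Proposition \ref{Pent->Rect} (\S\ref{par:SpiPent}) to identify the associated spike: vertical annuli cancel among themselves while horizontal ones cancel in the expected pairs, and the direction (left/right) of the involved pentagon, which governs the extra sign in the definition of $\e(\pi)$, is exactly what makes the bookkeeping close.

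The main obstacle I anticipate is this sign bookkeeping near the peak $c$: one must verify that the asymmetric rule $\e(\pi)=\pm\e(\phi(\pi))$ according to whether $\pi$ points right or left, combined with the ``torn rectangle'' sign in the definition of $\e$ on $\Rect$ (\S\ref{par:OrientMap}), produces a globally consistent anti-commuting square in every local configuration — in particular when a pentagon degenerates to having its peak coincide with another of its corners, or when passing a dot across the column $l$. I would handle this by a careful case analysis keyed to the standard configuration of Proposition \ref{Standard} (\S\ref{par:PosNegCrossing}), reducing to it via cyclic permutations of rows (legitimate since all gradings and $\Rect/\Pent$ counts are permutation-invariant), and then checking each of the finitely many positions of the relevant supports relative to the four commuting decorations.
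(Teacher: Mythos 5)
Your proposal follows essentially the same route as the paper: the grading statements are read off from Corollary~\ref{PentDegree} exactly as you compute, and the anti-commutation is handled by organizing the pentagon--rectangle juxtapositions into alternative-decomposition pairs, with the shared-corner case resolved via the L-shape of Figure~\ref{fig:AlternativePent} and the opposite pointing directions of the two pentagons supplying the minus sign. That is the paper's argument.

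One point of imprecision worth flagging: for the degenerate annular cases, the relevant domains are not the vertical/horizontal thin annuli familiar from the $\p_G^-{}^2\equiv 0$ proof, and the axioms (V) and (H) of the sign assignment are not what makes them close. In this proposition the special terms are the two \emph{lateral} thin annuli that hug the arcs $\alpha$ and $\beta$ (one to the left, one to the right), each filled by a pentagon-plus-rectangle with a unique decomposition. They cancel \emph{against each other}: for a fixed initial generator the left and right annular domains contain the same decorations (so the same possible $U_O$ factor), and the two pentagons point in opposite directions, hence carry opposite signs by the definition of $\e$ on $\Pent(G_{Comb})$. You did correctly identify the pentagon-direction rule as the engine of the sign cancellation, but the bookkeeping is left-against-right for these lateral annuli, not a (V)/(H)-style argument.
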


\begin{proof}
  The fact that $f$ preserves the Maslov graduation and Alexander filtration is immediate after Corollary \ref{PentDegree} (\S\ref{par:SpiPent}).\\
The proof that it anti-commutes with $\p_{G^\pm}^-$ is very similar to the proof of lemma $3.1$ in \cite{MOST}. As in the proof of Theorem \ref{D0Preserves} (\S\ref{par:Diff}), it consists mainly in showing that every juxtaposition of a pentagon and a rectangle arising in $f\circ\p^-_{G^+}$ has an alternative decomposition arising in $\p^-_{G^-}\circ f$. Most cases are similar to those occuring in the proof of theroem \ref{D0Preserves}. However, when the peak of the pentagon is involved in a common edge with the rectangle, the alternative decomposition is given in Figure \ref{fig:AlternativePent}. The minus sign comes from the fact that pentagons are pointing in opposite directions.\\

There are also special cases which have only one decomposition: one of the two lateral thin annuli surrounding the arcs $\alpha$ and $\beta$ may be filled by a pentagon and a rectangle. But then, for a given initial generator, one can check that the right and left such domains do contain the same decorations. In particular, they contain simultaneously the possible $O$ decoration which may produce a $U_O$ factor. Moreover the sign is positive if the thin annulus we are dealing with is the left one and negative otherwise.\\
Finally, all the special terms cancel by pairs (see Figure \ref{fig:Cancel} for an example).
\end{proof}

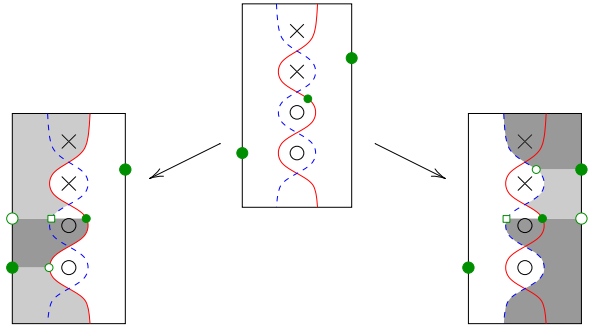
\begin{figure}
$$
\xymatrix@!0@R=1.5cm@C=3cm{& \dessin{3.1cm}{Cancel}\ar[dl] \ar[dr] &\\
\dessin{3.2cm}{CancelL} && \dessin{3.2cm}{CancelR}}
$$  
  \caption{Special cases: {\footnotesize dark dots describe the initial generator and hollow ones the intermediate state. The square is the special dot which distinguishes the final generator from the initial one. Pentagons are depicted by shading and rectangles by darker shading. Note that the involved pentagons are pointing in opposite directions, the associate two terms differ hence from a minus sign.}}
  \label{fig:Cancel}
\end{figure}

\subsubsection{Ambiguity}
\label{par:UpToHom}

As pointed out in the Remark \ref{Dependance} (\S\ref{par:CombGrid}), the map $f$ depends on the choice of arcs $\alpha$ and $\beta$. More precisely, it depends on the relative position of the intersection $c\in\alpha \cap \beta$ compared with the horizontal grid lines. Now, we deal with this dependancy.\\

Let  $(\alpha,\beta)$ and  $(\alpha',\beta')$ be two sets of arcs which are identical except around an horizontal grid line denoted by $l$. We assume that the arcs $\alpha$ and $\beta$ intersect in $c$ just above this horizontal grid line, whereas $\alpha'$ and $\beta'$ intersect just below. 

\begin{eqnarray}\label{AboveBelow}
\dessin{2.5cm}{abt}  \hspace{2cm}  \dessin{2.5cm}{abb}
\end{eqnarray}

Actually, the maps $f$ and $f'$ associated to each set of arcs are homotopic.\\

To define the homotopy map, first note that, for every generator $x\in C^-(G^+)$, there exists at most one spike of which $c$ is a corner and $l$ the support and which connects $x$ to a generator of $C^-(G^-)$. Such a spike is called a \emph{small spike}\index{polygon!spike!small}.\\

Now, we set
$$
h(x)=\left\{\begin{array}{cl}
y & \textrm{if }x\textrm{ and }y\textrm{ are connected by a small spike}\\
0 & \textrm{otherwise}
\end{array}\right.
$$
and we extend $h$ to $C^-(G^+)$ by $\Z[U_{O_1},\cdots,U_{O_n}]$--linearity.\\

\begin{prop}\label{Homotopie}
  The map $h$ preserves the Alexander filtration and increases the Maslov grading by one. Moreover, it satisfies
$$
f-f'=h\circ \p_{G^+}^-\p_{G^-}^-\circ h.
$$
\end{prop}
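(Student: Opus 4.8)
The plan is to mimic the proof that $f$ itself is a chain map (Proposition~\ref{D1Preserves}, \S\ref{par:WallMap}), analysing how the count of pentagons changes when the intersection point $c$ is slid from just above to just below the horizontal grid line $l$. First I would verify the grading statements: a small spike connecting $x$ to $y$ has support on $l$, contains at most one decoration, and moving a dot from $\alpha$ to $\beta$ across $l$ corresponds precisely to the spike half of the pentagon--spike decomposition from Proposition~\ref{Pent->Rect} (\S\ref{par:SpiPent}); hence the grading shifts are read off exactly as in the proof of Corollary~\ref{PentDegree} (\S\ref{par:SpiPent}), giving that $h$ preserves the Alexander filtration and raises the Maslov grading by one (a spike $\pi\cup\tau$ being a rectangle, and rectangles dropping Maslov by $1-2\#(\rho\cap\O)$, while $h$ omits the $U$-powers).

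For the homotopy identity, the key observation is that a pentagon for $f$ and a pentagon for $f'$ differ only in the neighbourhood of $l$: away from $l$ they are literally the same embedded polygon, and near $l$ one is obtained from the other by either (a) absorbing a small spike, or (b) not. So I would set up a bijection between the symmetric difference of $\Pent(x,y)$-for-$f$ and $\Pent(x,y)$-for-$f'$ on one side, and the terms of $h\circ\p_{G^+}^- + \p_{G^-}^-\circ h$ on the other. Concretely: a pentagon $\pi'$ counted by $f'$ but not by $f$ decomposes, near $c$, as a pentagon $\pi$ counted by $f$ juxtaposed with a small spike $\tau$ on $l$ — this gives a term of $\p_{G^-}^-\circ h$ or $h\circ\p_{G^+}^-$ depending on which side the rectangle-part of $\pi$ sits relative to $l$; conversely every juxtaposition (small spike)$\circ$(rectangle) or (pentagon)$\circ$(small spike) regroups in exactly one alternative way as a single pentagon for the other arc choice, plus the ``L-shaped'' cancelling pairs already familiar from Theorem~\ref{D0Preserves} (\S\ref{par:Diff}) and Proposition~\ref{D1Preserves}. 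The $U$-factors match because the small spike, being supported on a single grid line, contains the same $O$-decorations in both groupings; and the signs work out via the sign-assignment axioms (Sq), (V), (H) of \S\ref{par:SignAssignment} together with the sign convention $\e(\pi)=\pm\e(\phi(\pi))$ according to the pointing direction — exactly the mechanism producing the minus sign in Figure~\ref{fig:AlternativePent}.

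The main obstacle I expect is the bookkeeping of the special cases near the arcs $\alpha,\beta$ and $\alpha',\beta'$: when a pentagon or small spike wraps around one of the thin lateral annuli surrounding the arcs, or when $c$ is involved simultaneously as the peak of a pentagon and a corner of a small spike, one must check that the leftover terms either cancel in pairs or exactly account for the discrepancy $f-f'$, with the correct signs. This is the analogue of the ``special terms cancel by pairs'' paragraph in the proof of Proposition~\ref{D1Preserves}, and handling it rigorously requires a careful case division over the vertical position of the spike support relative to the four commuting decorations, just as in Corollary~\ref{PentDegree}. Once that verification is complete, the identity $f-f'=h\circ\p_{G^+}^- \, \p_{G^-}^-\circ h$ follows by summing the matched contributions.
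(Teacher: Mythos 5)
Your plan follows the paper's proof in its essential outline: the grading statement is read off from the spike computation in Corollary~\ref{PentDegree}, and the homotopy identity comes from matching the surviving terms of $f-f'$ — pentagons that become rectangles when the small spike is adjoined, as in Proposition~\ref{Pent->Rect} — with terms of $h\circ\p_{G^+}^-$ or $\p_{G^-}^-\circ h$ according to the position relative to $l$ and the pointing direction, the remaining terms of the two compositions cancelling in pairs.

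One detail in your description needs correcting: you write that ``a pentagon $\pi'$ counted by $f'$ but not by $f$ decomposes, near $c$, as a pentagon $\pi$ counted by $f$ juxtaposed with a small spike $\tau$.'' This is not the right relation. The surviving pentagons are precisely those whose embanking spike (in the sense of Proposition~\ref{Pent->Rect}) is the small spike, so adjoining the small spike produces a \emph{rectangle} $\rho$, not another pentagon; and it is the pair (rectangle in $\Rect(G^\pm)$, small spike) that biunivocally corresponds to such a pentagon. You seem to recover the right picture a sentence later (``regroups in exactly one alternative way as a single pentagon for the other arc choice''), so I read the first phrasing as a slip rather than a wrong idea, but it is exactly the sort of misstatement that would cause trouble if carried into the full case check. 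With that correction, the bookkeeping of signs and of the cancelling pairs proceeds as you describe, and the argument is the paper's argument.
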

\begin{proof}
The first part of the statement follows from the easiest computation made in the proof of Corollary \ref{PentDegree} (\S\ref{par:SpiPent}).\\
For the second part, note that the only surviving term in $f-f'$ are pentagons which are rectangles minus a small spike (compare Proposition \ref{Pent->Rect}, \S\ref{par:SpiPent}). Those are terms of $h\circ \p_{G^+}^-$ if the pentagon is above $l$ and pointing to the right or below $l$ and pointing to the left. In the two other cases, they are terms of $\p_{G^-}^-\circ h$. Because of the definition of the orientation map $\e$ on pentagons, the signs coincide.\\
The remaining terms in $h\circ \p_{G^+}^-$ cancel with terms in  $\p_{G^-}^-\circ h$.
\end{proof}

\begin{cor}
  The map $\func{f}{C^-(G^+)}{C^-(G^-)}$ is well defined up to homotopy.
\end{cor}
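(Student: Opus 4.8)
The plan is to reduce the statement to Proposition~\ref{Homotopie} (\S\ref{par:UpToHom}) together with transitivity of the homotopy relation on (filtrated) chain maps. First I would pin down exactly what $f$ depends on: by Remark~\ref{Dependance} (\S\ref{par:CombGrid}) the only latitude in the construction is the relative position of the intersection point $c\in\alpha\cap\beta$ with respect to the horizontal grid lines of $\T_{Comb}$. In fact the sets $\Pent(x,y)$ --- and hence the map $f$ --- depend on $\alpha$ and $\beta$ only through this discrete datum: any isotopy of the arcs that does not drag $c$ across a horizontal grid line induces a bijection on empty pentagons compatible with $\e$ and with the $U_{O_i}$--powers, hence leaves $f$ literally unchanged. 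So $f$ is genuinely a function of a finite combinatorial parameter, namely the slot occupied by $c$ among the horizontal lines of $\T_{Comb}$.

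Next I would connect any two admissible choices by elementary moves. If $(\alpha,\beta)$ and $(\alpha',\beta')$ differ only in that $c$ sits just above resp.\ just below one horizontal grid line $l$ --- the situation of $(\ref{AboveBelow})$ --- then Proposition~\ref{Homotopie} exhibits a map $h$ preserving the Alexander filtration and raising the Maslov grading by one, with
$$
f-f'=h\circ \p_{G^+}^- + \p_{G^-}^-\circ h,
$$
so $f$ and $f'$ are homotopic. Now any two positions of $c$ are joined by a finite sequence of such single-line crossings; composing the corresponding homotopies (a sum of null homotopic maps is again null homotopic) and using that being homotopic is an equivalence relation on chain maps (\S\ref{par:HomChain}), I conclude that all choices of arcs yield mutually homotopic maps $f$. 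Since every $h$ produced by Proposition~\ref{Homotopie} preserves the Alexander filtration, the resulting homotopy is moreover filtrated, so $f$ is well defined up to filtrated homotopy, as the mutatis mutandis conventions of \S\ref{ssec:OnMaps} require.

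The only point that really needs care is the combinatorial bookkeeping in the first two steps: one must check that every generic pair $(\alpha,\beta)$ is isotopic, rel the grid, to a configuration determined solely by the slot of $c$, and that the transitions between neighbouring slots are exactly the moves of $(\ref{AboveBelow})$ handled by Proposition~\ref{Homotopie} --- in particular that no new phenomenon occurs when $c$ passes the $O$ or the $X$ which pin it down, a situation controlled by the standard configuration of Proposition~\ref{Standard} (\S\ref{par:PosNegCrossing}). Everything else is a routine application of the homotopy formalism of \S\ref{ssec:OnMaps}.
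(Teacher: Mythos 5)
Your proof is correct and is essentially the argument the paper has in mind: the paper states the corollary immediately after Proposition~\ref{Homotopie} (\S\ref{par:UpToHom}) with no separate proof, precisely because it follows from that proposition by transitivity of chain homotopy, exactly as you lay out. Your identification of the discrete parameter (the slot of $c$ relative to horizontal grid lines) and the observation that arc isotopies preserving this slot induce bijections on empty pentagons compatible with signs and $U_{O_i}$--powers are the right combinatorial underpinning; and composing the homotopies furnished by Proposition~\ref{Homotopie} across a finite chain of single-line crossings is exactly the intended reduction.

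One remark on the caveat you raise at the end: the worry that ``$c$ passes the $O$ or the $X$'' is in fact moot, because $c$ is by definition (\S\ref{par:Pentagons}) the arc intersection of type $\dessin{.4cm}{Cstyle}$, pinned between a fixed $\X$--decoration above and a fixed $\O$--decoration below; Proposition~\ref{Standard} (\S\ref{par:PosNegCrossing}) ensures that the decorations in the merged column are cyclically arranged so that this interval is well defined and crossed only by ordinary horizontal grid lines, each handled by a single application of Proposition~\ref{Homotopie}. Sliding $c$ past one of its pinning decorations would produce a peak of type $\dessin{.4cm}{Cpstyle}$, i.e.\ the alternative variant $f'$ of \S\ref{par:DifferentPeaks}, which is a genuinely different construction and is deliberately not part of the ambiguity this corollary addresses. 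So the proof is complete without the extra verification you flag.
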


\subsubsection{Link Floer homology for a single double point}
\label{par:1SingFlHom}

According to Proposition \ref{PosNeg} (\S\ref{par:AssLinks}), the links $L^+$ and $L^-$ differ only from a crossing. So, they can be read as the two desingularizations of a link $L^0$ with a single double point. Then, the homology of the mapping cone of $f$ is already  an invariant of $L^0$. This is proved is paragraphs \ref{par:ArcsIsotopies}---\ref{par:SingleComm}.\\

For singular links with more double points, things are slightly more complicated.

\subsection{Variants}
\label{ssec:Variants}

\subsubsection{Different peaks}
\label{par:DifferentPeaks}

We can play the same game after changing the point $c$ for its cousin $c'$, the arcs intersection located just above the two commuting $\X$--decorations and below the two $\O$ ones (up to cyclicity due to the torus point of view).

$$
\dessin{3.5cm}{CombinedG2}
$$

All the proofs of this section can be adapted straighforwardly to this case. Actually, the only point which depends on the nature of the decorations is the Corollary \ref{PentDegree} (\S\ref{par:SpiPent}). But the same reasoning leads to the same statement.\\
This gives rise to a second map $f'$.\\

The two constructions only differ from the location of peaks. In one case, they are all located on $c$--like arcs intersections, below an $\X$--decoration and above a $\O$ one. Then we say that the peaks are \emph{of type $\dessin{.4cm}{Cstyle}$}\index{zzzaaa@$\dessin{.4cm}{Cstyle}$}. In the other case, the peaks are on $c'$--like arcs intersections, above an $\X$--decoration and below a $\O$ one, we say they are \emph{of type $\dessin{.4cm}{Cpstyle}$}\index{zzzaab@$\dessin{.4cm}{Cpstyle}$}. This terminology may also be applied to the associated constructions.\\

Naturally, one can also consider the sum $f_{tot}=f+f'$ but we will see in section \ref{sec:Sum} that this is of less interest.


 \section{Singular link Floer homology}
\label{sec:SingFloerHom}

The map $f$ defined in the previous section is the key ingredient for generalizing Heegaard-Floer homology to singular links.\\

Let $G$ be a singular grid diagram of size $(n,k)$. We label the $k$ singular columns by integers from $1$ to $k$, but this numbering will be totally transparent in the following construction.

\subsection{Cube of resolution}
\label{ssec:Cube}

\subsubsection{Grid resolutions}
\label{sec:GridResol}

There are essentially two ways to desingularize a singular column with respect to the connections between decorations illustrated in Figure \ref{fig:Desing}, which are imposed by the associated singular link $L$.

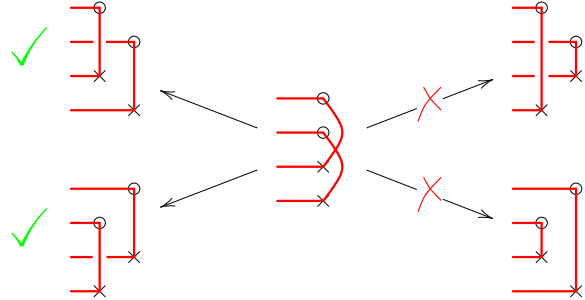
\begin{figure}[h]
$$
\xymatrix@!0@R=1.2cm@C=3.1cm{\ \textcolor{green}{\huge \checked}\ \dessin{2cm}{DesYes1}&& \dessin{2cm}{DesNo1}\\
&\dessin{2cm}{DesInit} \ar[ul] \ar[ur]|{\dessin{.5cm}{wrong}} \ar[dl] \ar[dr]|{\dessin{.5cm}{wrong}} &\\
\ \textcolor{green}{\huge \checked}\ \dessin{2cm}{DesYes2}&& \dessin{2cm}{DesNo2} }
$$
 \caption{Desingularization of a singular column}
  \label{fig:Desing}
\end{figure}

According to Proposition \ref{PosNeg} (\S\ref{par:AssLinks}), one of them corresponds to the $0$--resolution of the corresponding double point of $L$; thus we call it the \emph{$0$--resolution}\index{resolution!Aresolution@$0$--resolution} of the column. Accordingly, the other one is called \emph{$1$--resolution}\index{resolution!Bresolution@$1$--resolution}. We also say that the singular column has been \emph{positively} or \emph{negatively resolved}.\\

\subsubsection{Cube of resolution}
\label{par:ResolCube}

\begin{figure}[p]
$$
\xymatrix@C=3.4cm@R=3.4cm@!0{
& \dessin{2.2cm}{Grid010} \ar[r] \ar[dr]|!{[d];[r]}\hole&\dessin{2.2cm}{Grid011} \ar[dr]&\\
\dessin{2.2cm}{Grid000} \ar[ur] \ar[r] \ar[dr] & \dessin{2.2cm}{Grid001} \ar[ur] \ar[dr] & \dessin{2.2cm}{Grid110} \ar[r] & \dessin{2.2cm}{Grid111}\\
&\dessin{2.2cm}{Grid100} \ar[r] \ar[ur]|!{[u];[r]}\hole & \dessin{2.2cm}{Grid101} \ar[ur] & }
$$
\vspace{.1cm}
$$
\rotatebox{270}{$\leadsto$}
$$
\vspace{-2.6cm}
$$
\hspace{.5cm}\dessin{3cm}{Sing8} \hspace{9cm} \dessin{3.5cm}{GridS}
$$
\vspace{-1cm}
$$
\xymatrix@C=3.4cm@R=3.4cm@!0{
  & C^-\left(\ \dessin{1.1cm}{K010}\ \right) \ar[r]^{f_{01\star}} \ar[dr]^(.3){f_{\star 10}}|!{[d];[r]}\hole &  C^-\left(\ \dessin{1.1cm}{K011}\ \right) \ar[dr]^{f_{\star 11}}\\
 C^-\left(\ \dessin{1.1cm}{K000}\ \right) \ar[ur]^{f_{0\star 0}} \ar[r]^{f_{00\star}} \ar[dr]_{f_{\star 00}} &  C^-\left(\ \dessin{1.1cm}{K001}\ \right) \ar[ur]^(.3){f_{0\star 1}} \ar[dr]^(.3){f_{\star 01}} & C^-\left(\ \dessin{1.1cm}{K110}\ \right) \ar[r]^{f_{11\star}} & C^-\left(\ \dessin{1.1cm}{K111}\ \right)\\
& C^-\left(\ \dessin{1.1cm}{K100}\ \right) \ar[ur]^(.3){f_{1\star 0}}|!{[u];[r]}\hole \ar[r]_{f_{10\star}} &  C^-\left(\ \dessin{1.1cm}{K101}\ \right) \ar[ur]_{f_{1\star 1}} & }
$$
  \caption{Cube of resolution for a three time singularized figure eight knot}
  \label{fig:Cube}
\end{figure}
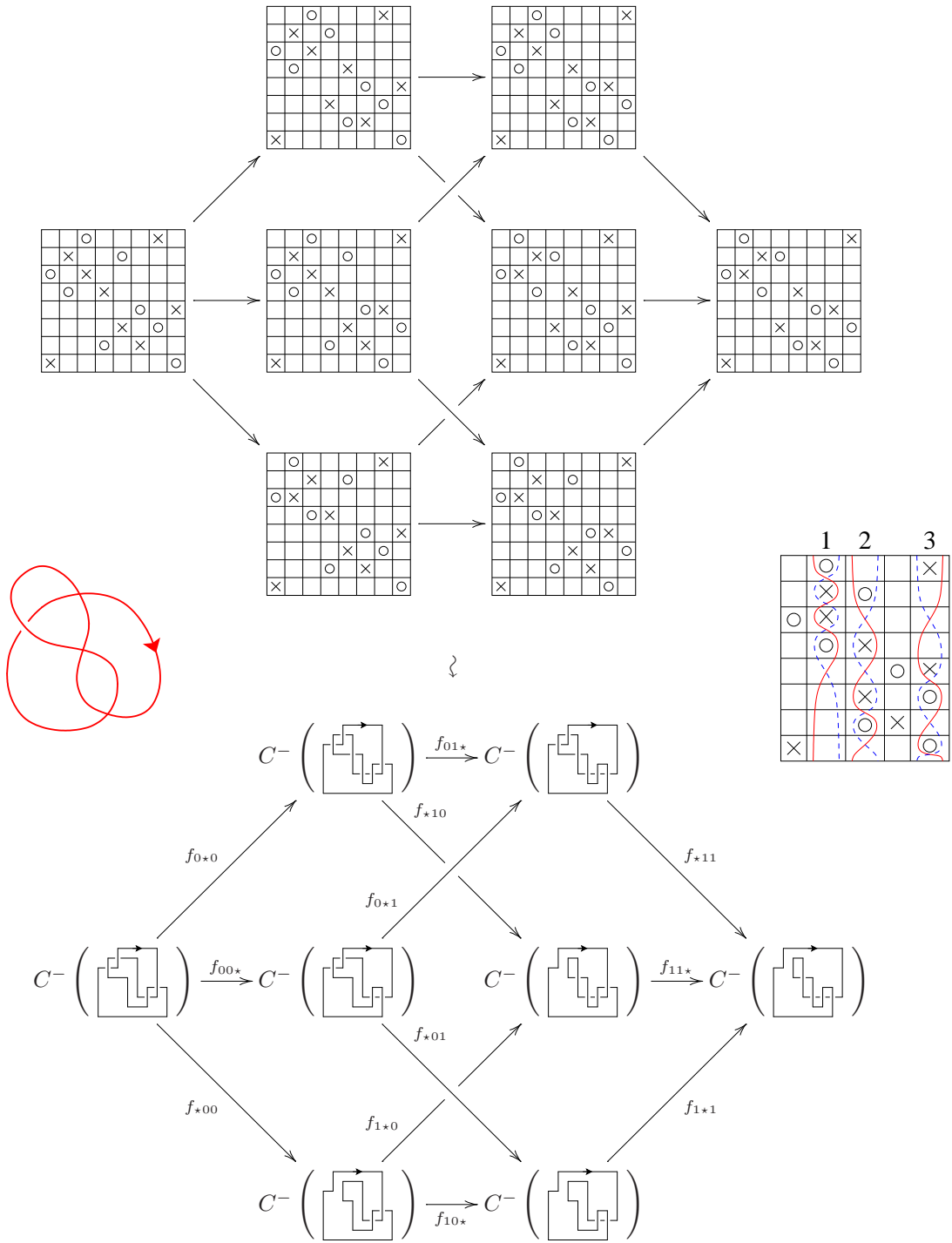

For every singular column, we choose a set of two arcs $\alpha_i$ and $\beta_i$, where $i$ is the label of the considered column, which corresponds to the construction given in paragraphs \ref{par:CombGrid}---\ref{par:WallMap}. We denote by $c_i\in\alpha_i\cap\beta_i$ the distinctive intersection of type $\dessin{.4cm}{Cstyle}$.\\

For all $I=(i_1,\cdots,i_k)\in\{0,1\}^k$, we denote by $G_I$ the regular grid obtained from $G$ by performing a $i_j$--resolution to the $j^{\textrm{th}}$ singular column for all $j\in \llbracket1,k\rrbracket$.\\

We are now in position to define a $k$--dimensional cube of maps $\C$ by considering the chain complexes $C^-(G_I)$ for all $I\in\{0,1\}^k$ and the chain maps $\func{f_{I(j:\star)}}{C^-(G_{I(j:0)})}{C^-(G_{I(j:1)})}$ defined in paragraph \ref{par:WallMap} using the arcs $\alpha_j$ and $\beta_j$  for all $I\in\{0,1\}^{k-1}$ and $j\in\llbracket 1,k\rrbracket$.\\

The cube of maps $\C$ is called the \emph{cube of resolution of G}\index{resolution!cube of}.

\begin{remarque}
If $k>1$, the cube of resolution $\C$ is \textbf{not} straight.
\end{remarque}

Actually, most of the terms in $f_{I(j:1,i:\star)}\circ f_{I(j:\star,i:0)} + f_{I(j:\star,i:1)}\circ f_{I(j:0,i:\star)}$, for all $I\in \{0,1\}^{k-2}$ and any pair of distinct integer $i\neq j\in\rrbracket 1,k \llbracket$, cancel by pairs. But the configurations of the following kind
\begin{eqnarray}\label{UniqueDec}
\dessin{3.3cm}{BiPent}
\end{eqnarray}
have a unique decomposition.\\
As sketched in paragraph \ref{par:LameCube}, we will mend this anti-commutativity defect.

\subsubsection{Hexagons}
\label{par:Hexagons}

As usual, we identify the boundaries of $G$ in order to get a torus $\T_G$.\\

Let $I\in\{0,1\}^{k-2}$ and $i\neq j\in\llbracket 1,k\rrbracket$.\\
Let $x$ be a generator of $C^-(G_{I(i:0,j:0)})$ and $y$ a generator of  $C^-(G_{I(i:1,j:1)})$. An \emph{hexagon connecting $x$ to $y$}\index{polygon!hexagon}\index{hexagon|see{polygon}} is an embedded hexagon $\eta$ in $\T_{G}$ which satisfies:
\begin{itemize}
\item[-] edges of $\eta$ are embedded in the grid lines (including $\alpha_l$'s and $\beta_l$'s);
\item[-] the points $c_i$ and $c_j$ are corners of $\eta$;
\item[-] starting $c_i$ (resp. $c_j$) and running positively along the boundary of $\eta$, according to the orientation of $\eta$ inherited from the one of $\T_{G}$, the next three corners of $\eta$ are, successively and in this order, a point of $x$, a point of $y$ and $c_j$ (resp. $c_i$);
\item[-] except on $\p \eta$, the sets $x$ and $y$ coincide;
\item[-] the interior of $\eta$ does not intersect $\alpha_i\cup\beta_i\cup\alpha_j\cup\beta_j$ in a neighborhood of $c_i\cup c_j$.
\end{itemize}
The index $I(i:0,j:0)$ is called the \emph{origin of $\eta$}\index{polygon!hexagon!origin}\index{origin|see{polygon}}.

\begin{remarque}
  This definition of an hexagon \textbf{does not} coincide with the one given in \cite{MOST} (see the clear shape in Figure \ref{fig:Hex->Rect}). 
\end{remarque}

An hexagon $h$ is \emph{empty} if $\Int(\eta)\cap x=\emptyset$.\\
We denote by $\Hex(G)$ the set of all empty hexagons and by $\Hex(x,y)$ the set of those which connect $x$ to $y$.

\subsubsection{Spikes and hexagons}
\label{par:SpiHex}

\begin{prop}\label{Hex->Rect}
  Let $I\in\{0,1\}^k$. For a given hexagon $\eta\in\Hex(G)$ with origin $I$, there is a unique pair of spikes $(\tau,\tau')$ such that $\eta\cup \tau \cup \tau'$ is a rectangle $\rho\in \Rect(G_I)$.
\end{prop}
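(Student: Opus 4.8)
The plan is to mimic exactly the structure of the proof of Proposition \ref{Pent->Rect} (\S\ref{par:SpiPent}), where a single spike ``embanks'' the peak of a pentagon to produce a rectangle; here we must do it twice, once at each of the two peaks $c_i$ and $c_j$. First I would fix an hexagon $\eta\in\Hex(G)$ with origin $I=I_0(i{:}0,j{:}0)$, connecting a generator $x$ of $C^-(G_{I(i:0,j:0)})$ to a generator $y$ of $C^-(G_{I(i:1,j:1)})$. By the definition of an hexagon, running positively along $\p\eta$ from $c_i$ one meets successively a point $p\in x$, a point $q\in y$, then $c_j$, then another point $p'\in x$, then $q'\in y$, and back to $c_i$; so $\eta$ has exactly the two ``peak'' corners $c_i,c_j$ and four corners alternating between $x$ and $y$.

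Next I would, for each peak, invoke the construction already used in the proof of Proposition \ref{Pent->Rect}: near $c_i$ there is a unique spike $\tau$ sharing two vertices with $\eta$ such that one of them lies in $y$; gluing $\tau$ to $\eta$ along the common edge fills in the corner at $c_i$ and replaces the point $q\in y$ on $\p\eta$ by a point of the corresponding $\alpha$/$\beta$-free position, i.e. it moves that dot from $\beta_i$ to $\alpha_i$. Symmetrically, near $c_j$ there is a unique spike $\tau'$ performing the analogous move for $q'$. The key local observation is that these two operations are supported in disjoint neighbourhoods of $c_i$ and of $c_j$ (the last bullet in the definition of an hexagon guarantees the interior of $\eta$ avoids $\alpha_i\cup\beta_i\cup\alpha_j\cup\beta_j$ near $c_i\cup c_j$, so $\tau$ and $\tau'$ do not interfere with each other), hence they may be carried out independently and in either order. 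After both gluings, the two peak corners have disappeared, the two remaining $y$-corners have been converted to the appropriate lattice positions, and $\eta\cup\tau\cup\tau'$ is an embedded quadrilateral all of whose edges lie on horizontal and vertical grid lines of $G_I$: that is, a rectangle $\rho\in\Rect(G_I)$ connecting $x$ to some generator $z$ of $C^-(G_I)$ (namely $z=(y\setminus\{q,q'\})\cup\{\phi\text{-images}\}$). Emptiness of $\rho$ follows from emptiness of $\eta$ together with the fact that a small spike contains no dots of $x$, exactly as in \S\ref{par:SpiPent}.

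For uniqueness I would argue that any pair of spikes $(\sigma,\sigma')$ with $\eta\cup\sigma\cup\sigma'\in\Rect(G_I)$ must, after restricting to a neighbourhood of $c_i$, produce an embedded corner there; since a spike by definition has $c$ (here $c_i$ or $c_j$) as a corner and the passage from $y$-corner to $x$-corner is forced, the spike at $c_i$ is forced to be exactly $\tau$ (this is literally the uniqueness already proved in Proposition \ref{Pent->Rect}), and likewise $\sigma'=\tau'$. I expect the only real subtlety — the main obstacle — to be bookkeeping of the combinatorics at the two peaks simultaneously: one must check that the supports of $\tau$ and $\tau'$ genuinely land on the grid lines of the \emph{resolved} grid $G_I$ and that no new intersection with $\alpha_\ell,\beta_\ell$ for $\ell\neq i,j$ is created, so that the result is honestly a rectangle in $\Rect(G_I)$ and not merely a polygon on $\T_G$. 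All of this is a routine, if slightly tedious, case analysis on the relative vertical positions of the relevant decorations, entirely parallel to the nine-zone picture used for Proposition \ref{RectGradings}; I would present it by a single figure (the ``clear shape'' alluded to in Figure \ref{fig:Hex->Rect}) rather than exhaustively in prose.
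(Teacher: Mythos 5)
Your proof is correct and matches the paper exactly, which disposes of this proposition with the single sentence ``The proof is analogous to the corresponding proof for pentagons''; you spell out that analogy by embanking the two peaks $c_i$ and $c_j$ in disjoint neighbourhoods with the unique spike of Proposition \ref{Pent->Rect}. One minor labeling slip: with your cyclic order $c_i, p, q, c_j, p', q'$, the $y$-vertex adjacent to $c_i$ is $q'$ (not $q$), so the spike at $c_i$ absorbs $q'$ and the spike at $c_j$ absorbs $q$.
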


The proof is analogous to the corresponding proof for pentagons.

\begin{figure}[h]
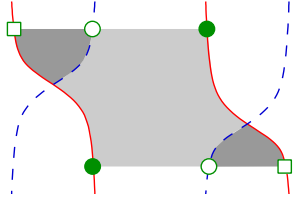

  $$
  \dessin{3cm}{HexPeaks}
  $$
  \caption{From hexagon to rectangle: {\footnotesize dark dots describe the initial generator of the hexagon while hollow ones describe the final one. The squares describe the initial generator for the two spikes.}}
  \label{fig:Hex->Rect}
\end{figure}

This defines a map $\func{\phi}{\Hex(G)}{\Rect(G)}$.

\subsubsection{Completion of the resolution cube}
\label{par:Completion}

Now, for all $I\in\{0,1\}^{k-2}$ and all $i\neq j\in\llbracket 1,k\rrbracket$, we can define the map
$$
\func{f_{I(i:\star,j:\star)}}{C^-(G_{I(i:0,j:0)})}{C^-(G_{I(i:1,j:1)})}
$$
as the morphism of $\Z[U_{O_1},\cdots,U_{O_{n+k}}]$--modules defined on the generators by
$$
f_{I(i:\star,j:\star)}(x)= \sum_{\substack{y \textrm{ generator}\textcolor{white}{R}\\ \textrm{of }C^-(G_{I(i:1,j:1)})\textcolor{white}{I}}} \sum_{\eta \in \Hex(x,y)} -\e\big(\phi(\eta)\big)U_{O_1}^{O_1(\eta)}\cdots U_{O_{n+k}}^{O_{n+k}(\eta)}\cdot y.
$$

\begin{prop}\label{D2Preserves}
  For all $I\in\{0,1\}^{k-2}$ and $i\neq j\in\llbracket 1,k\rrbracket$, the map $f_{I(i:\star,j:\star)}$ preserves the Alexander filtration and increases the Maslov grading by one.
\end{prop}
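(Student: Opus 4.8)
The statement is the exact two-dimensional analogue of Proposition \ref{PentDegree} (\S\ref{par:SpiPent}), so the strategy is to mimic that proof via the reduction from hexagons to rectangles established in Proposition \ref{Hex->Rect} (\S\ref{par:SpiHex}). First I would reduce to the standard configuration: since the Maslov and Alexander gradings are invariant under cyclic permutations of rows and columns (Proposition \ref{PermPreserves}, \S\ref{par:MaslAlex}), and since Proposition \ref{Standard} (\S\ref{par:PosNegCrossing}) applies near each of the two singular columns indexed by $i$ and $j$, it suffices to compute the grading changes when both singular columns are in standard position. Next, given an hexagon $\eta\in\Hex(x,y)$ with origin $I$, I would invoke Proposition \ref{Hex->Rect} to write $\eta\cup\tau\cup\tau'=\rho$ for a unique pair of spikes $(\tau,\tau')$ and a rectangle $\rho\in\Rect(G_I)$ connecting $x$ to some intermediate generator $z$. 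The two spikes each move one dot from a $\beta$--arc to the corresponding $\alpha$--arc, exactly as in the single-spike case.

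The core computation is then bookkeeping with the spikes. As in the proof of Corollary \ref{PentDegree}, as sets of dots the generators $z$ and $y$ agree on the underlying torus; only the decorations differ between $G_I$ and $G_{I(i:1,j:1)}$. For each spike $\tau$ (resp. $\tau'$), the decorations it contains determine the vertical position of its support relative to the four commuting decorations of the corresponding singular column, and one reads off directly how $M$ and $A$ of the endpoint shift: passing the support across a pair of same-kind decorations changes $M$ by $-1$ and $A$ by $-1$ in the relevant range, and so on, giving in each case a correction of exactly $-1$ to the Maslov grading and $-1$ to the Alexander grading per spike (the two spikes contribute independently since the two singular columns are disjoint and their neighborhoods do not interact). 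Combining the two spike contributions with Proposition \ref{RectGradings} (\S\ref{par:Rect}) applied to $\rho=\eta\cup\tau\cup\tau'$, one obtains
$$
M(x)-M(y) = \bigl(M(x)-M(z)\bigr) + \bigl(M(z)-M(y)\bigr) = \bigl(1 - 2\#(\rho\cap\O)\bigr) + \bigl(\#(\tau\cap\O)+\#(\tau'\cap\O)\bigr) \cdot(\text{sign bookkeeping}),
$$
and after cancelling the $+1$ from the rectangle formula against the two spike corrections and tracking which $\O$'s and $\X$'s lie in the spikes versus in $\eta$, the $\O$-- and $\X$--counts reorganize into $M(x)-M(y)=-2\#(\eta\cap\O)$ and $A(x)-A(y)=\#(\eta\cap\X)-\#(\eta\cap\O)$; summing over all hexagons shows $f_{I(i:\star,j:\star)}$ raises $M$ by $1$ (the shift $[\#0(I)]$ absorbs the homological offset, exactly as $f$ raises $M$ by $0$ but lives one step up the cube) and preserves the Alexander filtration.

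The main obstacle I anticipate is the sign-free grading bookkeeping across the \emph{two} spikes simultaneously: in the single-pentagon case there is one spike and a short case analysis over the vertical position of its support, but here one must check that the nine-region argument of Proposition \ref{RectGradings} still isolates only the decorations genuinely inside $\eta$ after \emph{both} spikes have been glued, and in particular that no spurious cross-terms appear between the dots moved by $\tau$ and those moved by $\tau'$. This is where the hypothesis that $i\neq j$ and that the construction's arcs $\alpha_l,\beta_l$ were chosen with the neighborhoods of $c_i$ and $c_j$ disjoint (Remark \ref{Dependance}, \S\ref{par:CombGrid}, applied columnwise) does the work: it guarantees the two spike computations decouple, so the grading change is simply additive. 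Once that independence is recorded, the rest is the same finite case-check already carried out for pentagons, applied twice.
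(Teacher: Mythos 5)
Your overall strategy — reduce to the standard configuration via Propositions \ref{PermPreserves} and \ref{Standard}, embank the two peaks with spikes via Proposition \ref{Hex->Rect}, and deduce the grading change from Proposition \ref{RectGradings} — is precisely what the paper intends when it says the proof is ``analogous to the corresponding proof for pentagons.'' Your Alexander bookkeeping is also correct: the $\#(\tau\cap\O)-\#(\tau\cap\X)$ corrections from the two spikes cancel against the extra decorations of $\rho=\eta\cup\tau\cup\tau'$, leaving $A(x)-A(y)=\#(\eta\cap\X)-\#(\eta\cap\O)$, so the Alexander filtration is preserved.

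But your Maslov formula is wrong, and the error is exactly the unit that the proposition is about. Going through the pentagon case carefully: Proposition \ref{RectGradings} gives $M(x)-M(z)=1-2\#(\rho\cap\O)$, and the spike correction has the form $M(z)-M(y)=2\#(\tau\cap\O)-1$ (in the paper's sample case, support between the two $O$'s, one has $\#(\tau\cap\O)=1$ and thus $M(z)-M(y)=1$). The rectangle's $+1$ and the spike's $-1$ cancel, and that cancellation is precisely what makes $M(x)-M(y)=-2\#(\pi\cap\O)$ for pentagons, hence $f$ Maslov-preserving. For hexagons there are \emph{two} spike corrections, each contributing a $-1$, but still only one $+1$ from the rectangle; the net result is
$$
M(x)-M(y)=-1-2\#(\eta\cap\O),
$$
not $-2\#(\eta\cap\O)$ as you wrote. (Compare Proposition \ref{InversedGrading}, where the constant $\eta(\pi)$ distinguishes pentagons from hexagons for the same structural reason.) After the $U_O$-factors absorb the $-2\#(\eta\cap\O)$, this residual $-1$ is what makes $f_{I(i:\star,j:\star)}$ raise the \emph{unshifted} Maslov grading by one. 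Your formula would make it preserve $M$, contradicting the very proposition you are proving, and the appeal to the shift $[\#0(I)]$ cannot rescue it: the statement concerns the grading before shifting, and in the total complex $C^-(G)$ the shift contributes $-2$ (both $i$ and $j$ pass from $0$-- to $1$--resolved), so only a map that raises the unshifted $M$ by $1$ produces a total differential of degree $-1$.
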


The proof is analogous to the corresponding proof for pentagons.\\

Now the maps $\{f_{I(i:\star,j:\star)}\}$ enable us to correct the defect of commutativity in $\C$ by adding large diagonals to the faces of $\C$:
$$
\xymatrix@!0@C=1.9cm@R=1.9cm{
  &C^-(G_{I(i:1,j:0)}) \ar[dr]^{f_{I(i:1,j:\star)}}&\\
  C^-(G_{I(i:0,j:0))}) \ar[ur]^{f_{I(i:\star,j:0)}}\ar[dr]_{f_{I(i:0,j:\star)}} \ar@[red][rr]^[red]{f_{I(i:\star,j:\star)}}&&C^-(G_{I(i:1,j:1)})\\
  &C^-(G_{I(i:0,j:1)}) \ar[ur]_{f_{I(i:\star,j:1)}}&
}.
$$ 

\subsection{Singular link Floer homology}
\label{ssec:SingFlHom}

The Heegaard-Floer complex for a singular grid can now be defined as the generalized cone of its completed cube of resolution.

\subsubsection{Definitions}
\label{par:MainDefinition}

The $\Z[U_{O_1},\cdots,U_{O_{n+k}}]$--module $C^-(G)$ is defined by
$$
C^-(G)= \bigoplus_{I\in \{0,1\}^k}C^-(G_I)[\# 0(I)].
$$

\begin{remarque}\label{3emeFiltration}
  The module $C^-(G)$ is naturally endowed with a third grading of which $\# O(I)$ is the degree.
\end{remarque}

We also set
$$
\p_G^-=\p_0^-+\p_1^-+\p_2^-
$$
where the three maps $\p^-_i,i=1,2,3$ are morphisms of $\Z[U_{O_1},\cdots,U_{O_{n+k}}]$--modules defined on $C^-(G_I)$ for all $I\in\{0,1\}^k$ by

\begin{gather*}
  \p_0^-(x)=\p^-_{G_I}(x)\\[.3cm]
  \p_1^-(x)=\sum_{i\in 0(I)} f_{I_{i\leftarrow\star}}(x)\\[.3cm]
  \p_2^-(x)=\sum_{\substack{i,j\in 0(I)\\i\neq j}} f_{I_{i,j\leftarrow\star}}(x),
\end{gather*}
where $I_{i\leftarrow\star}$ (resp. $I_{i,j\leftarrow\star}$) is obtained from $I$ by turning the $i^\textrm{th}$ element (resp. the $i^\textrm{th}$ and $j^\textrm{th}$ elements) into $\star$.\\
\index{homology!knot Floer!singular}\index{homology!link Floer!singular}

This map does not respect the third grading defined in Remark \ref{3emeFiltration}. But however, it does respect the induced filtration. This filtration will have a key role in the invariance proofs of section \ref{sec:Invariance}.

\subsubsection{Consistency}
\label{par:Consistency}

\begin{prop}\label{Cchain}
  The couple $\big(C^-(G),\p^-_G\big)$ is a filtrated chain complex \ie the map $\p_G^-$
  \begin{itemize}
  \item[-] decreases the Maslov grading by one;
  \item[-] preserves the Alexander filtration;
  \item[-] satisfies ${\p_G^-}^2=0$.
  \end{itemize}
\end{prop}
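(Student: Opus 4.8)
I want to verify the three bullet points for $\p_G^- = \p_0^- + \p_1^- + \p_2^-$ on $C^-(G) = \bigoplus_{I\in\{0,1\}^k} C^-(G_I)[\#0(I)]$. The grading statements are bookkeeping: $\p_0^-$ decreases the Maslov grading by one by Theorem \ref{D0Preserves} (\S\ref{par:Diff}) and preserves the Alexander filtration; $\p_1^-$ is built from the switch maps $f_{I(j:\star)}$, which preserve the Maslov grading and the Alexander filtration by Proposition \ref{D1Preserves} (\S\ref{par:WallMap}), and the shift $[\#0(I)]$ drops by one when an entry of $I$ flips from $0$ to $1$, so $\p_1^-$ also decreases Maslov by one; $\p_2^-$ is built from the hexagon maps $f_{I(i:\star,j:\star)}$, which raise Maslov by one by Proposition \ref{D2Preserves} (\S\ref{par:Completion}), while the shift drops by two, giving again a net decrease by one. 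The Alexander filtration is preserved by all three pieces for the same reasons (Propositions \ref{D1Preserves} and \ref{D2Preserves}). So the only real content is ${\p_G^-}^2 = 0$.

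**The square.** I expand ${\p_G^-}^2$ into nine terms grouped by total ``jump'' in the cube coordinate $I$. The jump-$0$ part is ${\p_0^-}^2 = 0$ by Theorem \ref{D0Preserves}. The jump-$1$ part is $\p_0^-\p_1^- + \p_1^-\p_0^-$, which must vanish edge by edge: for each edge $I(j:0)\to I(j:1)$ of the cube, $f_{I(j:\star)}$ anti-commutes with the differentials $\p_{G_{I(j:0)}}^-$ and $\p_{G_{I(j:1)}}^-$ by Proposition \ref{D1Preserves}; combined with the sign built into the shift convention for mapping cones (\S\ref{par:MapCones}), these anti-commutations are exactly what makes $\p_0^-\p_1^- + \p_1^-\p_0^-$ vanish. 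The jump-$2$ part is $\p_1^-\p_1^- + \p_0^-\p_2^- + \p_2^-\p_0^-$. Here $\p_1^-\p_1^-$ restricted to a square of the cube is $f_{I(i:1,j:\star)}\circ f_{I(i:\star,j:0)} + f_{I(i:\star,j:1)}\circ f_{I(i:0,j:\star)}$ (with cube signs), which the paragraph before \S\ref{par:Hexagons} observes is \emph{not} zero: the leftover terms are the unique-decomposition configurations pictured in (\ref{UniqueDec}). These leftover terms are precisely juxtapositions of two pentagons sharing the pattern that glues to an hexagon, and by the spike-hexagon correspondence (Proposition \ref{Hex->Rect}, \S\ref{par:SpiHex}) they match, with the sign $-\e(\phi(\eta))$ chosen in the definition of $f_{I(i:\star,j:\star)}$, exactly the terms of $\p_0^-\p_2^- + \p_2^-\p_0^-$ coming from decomposing the associated rectangle $\rho = \eta\cup\tau\cup\tau'$ into its L-shaped or annular sub-rectangles in the two ways (as in the proof of Theorem \ref{D0Preserves} and Figure \ref{fig:AlternativePent}). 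So the jump-$2$ part cancels; this is the step I expect to be the main obstacle, since it requires carefully matching the hexagon maps against the two pentagon juxtapositions and checking all the sign conventions (the explicit $-$ signs in the definitions of $f$ and $f_{I(i:\star,j:\star)}$, the cube signs from the shifts, and the $\e$ of the associated rectangle) line up, including the boundary/annular special cases analogous to those in the proof of Proposition \ref{D1Preserves}.

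**The higher jumps.** The jump-$3$ part is $\p_1^-\p_2^- + \p_2^-\p_1^- $ and the jump-$4$ part is $\p_2^-\p_2^-$. For jump $3$, I would argue that along a direction $\ell$ not involved in the hexagon, the hexagon map $f_{I(i:\star,j:\star)}$ anti-commutes with $\p_1^-$ by the same pentagon-juxtaposition analysis that gave Proposition \ref{D1Preserves} (applied to the hexagon in place of a pentagon), so these terms cancel in pairs across the cube faces. For jump $4$, $\p_2^-\p_2^-$ is a juxtaposition of two hexagons in disjoint pairs of directions $\{i,j\}$ and $\{i',j'\}$, which commute up to reordering and hence cancel against the term where the two hexagons are performed in the opposite order — analogous to the disjoint-corner case in the proof of Theorem \ref{D0Preserves}. (If any genuinely new annular degeneracy appears here, it would be handled as in the special cases of Proposition \ref{D1Preserves}.) Putting the pieces together, every homogeneous piece of ${\p_G^-}^2$ vanishes, so $\p_G^-$ is a differential and $\big(C^-(G),\p_G^-\big)$ is a filtrated chain complex, as desired.
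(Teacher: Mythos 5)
Your decomposition of ${\p_G^-}^2$ by cube-coordinate jump reproduces exactly the paper's grouping into ${\p_0^-}^2$, $\p_0^-\p_1^-+\p_1^-\p_0^-$, $\p_0^-\p_2^- + {\p_1^-}^2 + \p_2^-\p_0^-$, $\p_1^-\p_2^- + \p_2^-\p_1^-$, and ${\p_2^-}^2$, and the cancellation mechanisms you invoke (disjoint-corner anti-commutation for ${\p_2^-}^2$, shared-corner pentagon--hexagon configurations for the mixed terms, the unique-decomposition configurations of (\ref{UniqueDec}) matched against hexagon terms via Proposition \ref{Hex->Rect} and the built-in minus sign) are precisely those of the paper's proof and Figure \ref{fig:PentvsHex}. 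The paper spells out one annulus subtlety that you only gesture at --- thin horizontal annuli arise twice as rectangle--hexagon and twice as pentagon--pentagon juxtapositions, while vertical annuli cannot occur because distinct arc systems are separated by regular vertical grid lines --- but that is a detail you evidently anticipate having to handle, not a gap.
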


\begin{figure}
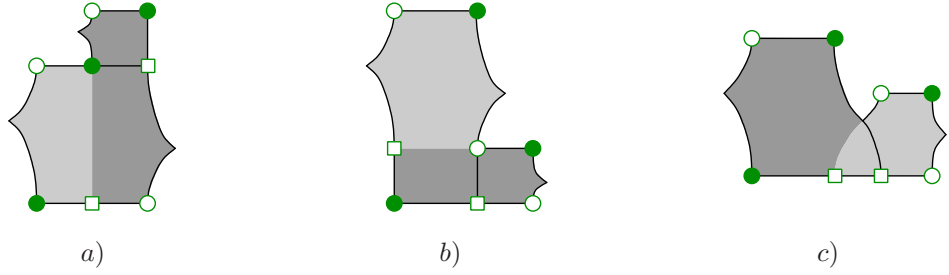

$$
\hspace{-.1cm} \begin{array}{ccccc}
\dessin{3cm}{PentvsHex1} & \hspace{1cm} & \dessin{3cm}{PentvsHex2} & \hspace{1cm} &\dessin{3cm}{PentvsHex3}\\[1.5cm]
a) && b) && c)
  \end{array}
$$
  \caption{Pentagons and hexagons: {\footnotesize dark dots describe the initial generator while hollow ones describe the final one. The squares describe intermediate states. One decomposition is given by its border and the second by shading of different intensity. In case $a)$ and $b)$, the usual anti-commutativity of signs for rectangles make the two terms cancel since the pentagons are simultaneously pointing to the same direction. In case $c)$, the order of application of the polygons is the same, but pentagons are pointing in opposite directions.}}
  \label{fig:PentvsHex}
\end{figure}

\begin{proof}
  The first two points are direct consequences of propositions \ref{D0Preserves} (\S\ref{par:Diff}), \ref{D1Preserves} (\S\ref{par:WallMap}) and \ref{D2Preserves} (\S\ref{par:Completion}). These propositions also state that ${\p^-_0}^2=0$ and $\p^-_0\circ\p^-_1+\p^-_1\circ\p^-_0=0$. Hence, it is sufficient to prove
\begin{gather}
\p^-_0\circ\p^-_2+{\p^-_1}^2+\p^-_2\circ\p^-_0=0\label{Formula1}\\[.3cm]
\p^-_1\circ\p^-_2+\p^-_2\circ\p^-_1=0\label{Formula2}\\[.3cm]
{\p^-_2}^2=0.\label{Formula3}
\end{gather}

The formula (\ref{Formula3}) is a corollary of the anti-commutativity of the signs for rectangles since the sets of hexagon corners must be disjoint. This holds everytime the two involved polygons have disjoint sets of corners.\\

For the formula (\ref{Formula2}), note that a pentagon and a hexagon can share at most one corner. Then, three configurations can occur and, for each of them, there are two cancelling decompositions. They are described in Figure \ref{fig:PentvsHex}.\\

The formula (\ref{Formula1}) finally holds for the same reasons. All the possible configurations are obtained by embanking a peak in one of the three cases in Figure \ref{fig:PentvsHex}. We can note that the minus sign in the definition of $f_{I(i:\star,j:\star)}$ is essential for all the cases derivated from $c)$ and those derivated from $a)$ by embanking a peak pointing to the left.\\
Note that the previous configurations contain the cases of thin horizontal annuli which may arise two times as a juxtaposition of a rectangle and a hexagon and two times as a juxtaposition of two pentagons. Hexagons and pairs of pentagons cannot be involved in the filing of a vertical annulus since sets of arcs $\{(\alpha_i,\beta_i)\}$ are necessarily separated by a regular vertical grid line.
\end{proof}

\subsubsection{Singular link Floer homology as a mapping cone}
\label{par:HomologyAsCone}

Even if the complex defined above is not derived from a straight cube of maps, it can be seen as a sequence of consecutive mapping cones.\\

Actually, let choose a distinguished singular column in the grid $G$, then the differential $\p_G^-$ can be split into $\p'_G$, the sum over polygons which have no peak on this distinguished singular column, and $f$, the sum over those which do have. It is clear from the proof of Proposition \ref{Cchain} (\S\ref{par:Consistency}) that $\p'_G$ is still a differential and, since ${\p^-}^2\equiv 0$, the map $f$ is indeed a chain map.\\
The chain complex $C^-(G)$ can be seen as the mapping cone of $f$.\\

We can now process recursively on each side of $f$ to get a description of $C^-(G)$ as a sequence of mapping cones. However, because of the terms from $\p^-_2$, this description strongly depends on the underlying ordering on the singular columns.

\subsubsection{Invariance}
\label{par:Invariance}

In the case of a regular grid, the associated homology is trivially the combinatorial link Floer homology of the associated link $L$ and, consequently, is an invariant of $L$. This fundamental property extends to the singular cases.

\begin{theo}\label{Invariance}
  The homology of $\big(C^-(G),\p^-_G\big)$ is an invariant of the singular link associated to $G$.
\end{theo}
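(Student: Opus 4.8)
The plan is to mimic, in the singular setting, the standard strategy for proving invariance of the combinatorial link Floer homology: show that the homology of $\bigl(C^-(G),\p^-_G\bigr)$ is unchanged under each of the elementary moves listed in Theorem \ref{SingElemMoves} (\S\ref{par:SElemMoves}), namely cyclic permutations of (possibly singular) rows or columns, commutations of (possibly singular) rows or columns, (de)stabilizations subject to the restrictions of paragraph \ref{par:SingGridEtElemMoves}, and flip moves. Since these moves suffice to connect any two singular grid diagrams representing the same singular link, invariance under each of them yields the theorem. I would organize the proof as a paragraph-by-paragraph reduction to the regular case, exploiting the description of $C^-(G)$ as an iterated mapping cone (paragraph \ref{par:HomologyAsCone}).

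First I would handle the moves that do not touch a singular column or row. For such a move, it acts ``fiberwise'' on the resolution cube: it simultaneously applies a regular grid move to every resolved grid $G_I$, $I\in\{0,1\}^k$, and the switch maps $f_{I(j:\star)}$ and the hexagon maps $f_{I(i:\star,j:\star)}$ intertwine (up to the unavoidable sign bookkeeping and up to chain homotopy) with the quasi-isomorphisms coming from the regular invariance theorems. One gets a morphism of $k$--dimensional cubes of chain complexes which is a quasi-isomorphism at each vertex; filtering the generalized cone by $\#O(I)$ as in Remark \ref{3emeFiltration}, the associated graded is the direct sum of the vertices, so the induced map on the first page of the spectral sequence is an isomorphism, and hence the map of generalized cones is a quasi-isomorphism (this is exactly the $5$--lemma / spectral sequence comparison used throughout, cf. Corollary \ref{QuasiIso} and Corollary \ref{AcyclicCone}). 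The technical content here is that the regular quasi-isomorphisms can be chosen compatibly with the arcs $\alpha_i,\beta_i$; for commutations and cyclic permutations this is straightforward, and for (de)stabilizations one invokes the regular argument while checking that the restriction in \S\ref{par:SingGridEtElemMoves} guarantees the $(2\times2)$ block stays away from the arcs.

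The genuinely new moves are those involving a singular column: a flip move (or, via Lemma \ref{EquivalentMoves}, any move equivalent to it), and commutations/cyclic permutations of a singular column. For these I would single out the affected singular column as the ``distinguished'' one in paragraph \ref{par:HomologyAsCone}, so that $C^-(G)=\Cone(f)$ where $f\colon C^-(G_0)\to C^-(G_1)$ is the switch morphism (or a direct sum of such over the remaining cube directions) between the two resolutions of that column. A move affecting only that column changes the two grids $G_0$ and $G_1$ by \emph{regular} moves (already known to induce quasi-isomorphisms) in a way compatible with the pentagon/hexagon counts, so one gets a commuting-up-to-homotopy square of switch morphisms, and the induced map on mapping cones is a quasi-isomorphism by Corollary \ref{AcyclicCone} together with the $5$--lemma applied to the long exact sequences of the cones. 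For the flip move proper one must actually exhibit the chain homotopy making the relevant square commute, and this is where the explicit combinatorics of pentagons, spikes and the sign assignment $\e$ from \S\ref{par:OrientMap} has to be pushed through; the analysis of the exceptional ``fully rolled up double point'' configuration (the $\dessin{1.8cm}{Roll1}\sim\dessin{2.6cm}{Roll2}$ picture from the proof of Theorem \ref{SingElemMoves}) reappears here and must be matched with a pentagon/spike cancellation.

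The main obstacle I expect is precisely this last point: controlling the signs and the switch/hexagon maps under a flip move when several double points are present, since the resolution cube is not straight and the corrective hexagon maps $f_{I(i:\star,j:\star)}$ interact with the move. One must verify that the homotopies produced fiberwise over the cube assemble into a homotopy of generalized cones, i.e. that the whole ``lame cube'' structure of \S\ref{par:LameCube} is respected; concretely this means rechecking the analogues of formulas (\ref{Formula1})--(\ref{Formula3}) along the one-parameter family realizing the move. I would isolate the case $k=1$ first (single double point, cube is a single arrow, $C^-(G)=\Cone(f)$), prove flip-invariance there by a direct homotopy argument, and then bootstrap to general $k$ by induction on the number of singular columns using the iterated-mapping-cone description and the spectral sequence / $5$--lemma package, so that no new sign computation is needed beyond the $k=1$ case and the already-established regular invariance.
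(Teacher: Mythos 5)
Your top-level strategy matches the paper's: reduce to invariance under the elementary moves of Theorem \ref{SingElemMoves}, filter $C^-(G)$ by the number of positively resolved singular columns (Remark \ref{3emeFiltration}), and upgrade vertex-wise quasi-isomorphisms to a quasi-isomorphism of the generalized cone via Corollary \ref{QuasiIso}. That much is the same scheme the paper runs in Section 4.

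There are, however, three substantive gaps. First, you omit an entire invariance step: the construction of $\p_G^-$ depends on the choice of arcs $\alpha_i,\beta_i$ (Remark \ref{Dependance}), and the paper must first prove independence of this choice (paragraph \ref{par:ArcsIsotopies}, via an explicit isomorphism $\varphi$ built from small spikes) before any grid move can be addressed. You gesture at ``choosing the arcs compatibly'' but never prove that different choices give isomorphic complexes; without this the move-by-move argument is not well posed.

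Second, and more seriously, your claim that a move affecting a singular column yields ``a commuting-up-to-homotopy square of switch morphisms'' — so that the induced map on cones is automatically a quasi-isomorphism by the $5$--lemma — is false as stated. The key difficulty of the semi-singular commutation (paragraph \ref{par:SemiSingMap}) is precisely that the obvious candidate map, $F_T$ on the $0$--resolution and $F_B$ on the $1$--resolution, is \emph{not} a chain map: $F_T\circ\p^-_1\neq\p^-_2\circ F_T$ when the intermediate differentials cross the singular column. The paper must invent new polygon species ($\h$exagons, $\h$eptagons, the exceptional thin hexagon $\HH$) and three correction maps $\phi_R,\phi_L,\phi_{ex}$, then verify cancellation of 144 special terms in 72 pairs with a delicate sign table. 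Your ``bootstrap from $k=1$'' cannot sidestep this, because the correction maps are needed already to \emph{define} the chain map before any filtration or spectral-sequence argument can be applied; they are not produced by induction from the $k=1$ case. The cube not being straight (Remark after paragraph \ref{par:ResolCube}) also means the ``fiberwise'' picture of a morphism of cubes of chain complexes is not the right framework: the differential contains the diagonal hexagon terms $f_{I(i:\star,j:\star)}$, and any candidate map between complexes has to intertwine with those too.

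Third, your proposal overpromises relative to what the paper actually achieves: the paper explicitly states that the proofs for fully singular commutation (point vi) and the flip move (point vii) are \emph{not} completed, only a strategy is given (paragraphs \ref{par:SingSingComm}--\ref{par:FlipInvariance}), and the missing step is exactly the ``extend the graded quasi-isomorphism to a filtrated one'' point that you also leave to ``push through the combinatorics.'' The paper's actual resolution of this is to pass to mixed singular grids in Appendix \ref{appendix:MixedSimplification}, where the flip move is replaced by a rotation move with a genuinely simpler invariance proof; your proposal does not mention this route at all.
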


The next section is devoted to the proof of this theorem.\\


\section{Invariance}
\label{sec:Invariance}

The Theorem \ref{Invariance} (\S\ref{par:Invariance}) can be divided in seven points:
\begin{enumerate}
\item[i)] invariance under isotopies of arcs $\alpha_l$'s and $\beta_l$'s;
\item[ii)] invariance under cyclic permutations of the rows or of the columns;
\item[iii)] invariance under stabilization/destabilization;
\item[iv)] invariance under commutation of two rows or two columns;
\item[v)] invariance under commutation of a regular column with a singular one;
\item[vi)] invariance under commutation of two singular columns;
\item[vii)] invariance under flip.\\
\end{enumerate}

If working with coefficient in $\fract \Z / {2\Z}$, the point ii) is trivial since the construction of the chain complex uses only polygons embedded in the torus. Moreover, the sign refinement is clearly invariant by cylic permutations of the rows.\\
Concerning cyclic permutations of the columns, it is proven in \cite{MOST} and \cite{Gallais} that the sign assignment for rectangles is essentially unique. At least, it gives isomorphic chain complexes.\\
Since the sign assignment for other polygons depends on its definition for rectangles and since a cyclic permutations of the columns will not change the direction toward which a pentagon is pointing, the invariance still holds for our construction.\\

Unfortunatly, because of the number of cases which need to be treated, proofs for points vi) and vii) have not been completed. However, we give a strategy to achieve them.

\subsection{Isotopies of arcs}

\subsubsection{Moving peaks}
\label{par:ArcsIsotopies}

To prove the invariance under isotopies of arcs $\alpha_l$'s and $\beta_l$'s, it is sufficient to deal with the intersection $c_i\in\alpha_i\cap\beta_i$ going through a horizontal grid line for a given $i\in \llbracket 1,k\rrbracket$.\\

Let $(\alpha_i,\beta_i)$ and $(\alpha'_i,\beta'_i)$ be two such sets of arcs (see (\ref{AboveBelow}), \S\ref{par:UpToHom}). We denote by $\p$ and $\p'$ the corresponding differentials.\\

Now, we consider the map $h$ defined in paragraph \ref{par:UpToHom} which associates generators connected by a small spike.\\
We set $\func{\varphi}{C^-(G)}{C^-(G)}$ as the morphism of $\Z[U_{O_1},\cdots,U_{O_{n+k}}]$--modules defined on the generators by
$$
\forall I\in\{0,1\}^{k-1}, \varphi(x) = \left\{\begin{array}{ll} x - h(x) & \textrm{if }x\in C^-(G_{I(i:0)})\\[2mm] x  & \textrm{otherwise}  \end{array} \right..
$$

\begin{lemme}
  The map $\func{\varphi}{(C^-(G),\p)}{(C^-(G),\p')}$ is an isomorphism of chain complexes.  
\end{lemme}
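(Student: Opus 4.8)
The plan is to write $\varphi=\Id-h$ and to check, in turn, that $\varphi$ is a filtered isomorphism of $\Z[U_{O_1},\cdots,U_{O_{n+k}}]$--modules and that it intertwines $\p$ with $\p'$. For the first point, observe that a small spike with corner $c_i$ and support $l$ can only connect a generator lying in a summand $C^-(G_I)$ with $i\in 0(I)$ to one lying in a summand with $i\notin 0(I)$; hence $h$ maps the first kind of summand into the second and vanishes on the second. In particular $h^2=0$, so $\varphi$ is invertible with inverse $\Id+h$. By the grading computations in the proofs of Propositions \ref{PentDegree} (\S\ref{par:SpiPent}) and \ref{Homotopie} (\S\ref{par:UpToHom}), $h$ preserves the Alexander filtration and raises the intrinsic Maslov degree by one, while simultaneously decreasing the cube shift $\#0(I)$ by one; so $\varphi$ is homogeneous of Maslov degree $0$ and filtered.

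It then remains to prove $\p'\circ\varphi=\varphi\circ\p$, which after expanding $\p'(\Id-h)=(\Id-h)\p$ becomes $\p-\p'=h\circ\p-\p'\circ h$. The idea is to split the differential along the $i$-th direction of the resolution cube, writing $\p=\p_A+\p_B+f$, where $f$ gathers the polygons having a corner at $c_i$ --- the pentagons with peak $c_i$ and the hexagons with a corner at $c_i$ --- and $\p_A$, $\p_B$ gather the remaining polygons, which, having no corner at $c_i$, do not change the resolution of the $i$-th column and hence stay within the summands with $i\in 0(I)$, respectively those with $i\notin 0(I)$. Since the arcs $\alpha_j,\beta_j$ with $j\neq i$ are untouched, $\p_A=\p'_A$ and $\p_B=\p'_B$, so $\p-\p'=f-f'$; and by the support property of $h$ noted above, $h\circ\p=h\circ\p_A$ and $\p'\circ h=\p_B\circ h$. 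Everything thus reduces to the homotopy identity
$$
f-f'=h\circ\p_A-\p_B\circ h,
$$
to be read with the sign conventions of Proposition \ref{Homotopie}.

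This identity I would establish exactly in the style of the proof of Proposition \ref{Homotopie}: a term of $h\circ\p_A$ is a small spike stacked on a rectangle, on a pentagon with peak some $c_j$ ($j\neq i$), or on a hexagon with corners $c_j,c_k$ ($j,k\neq i$), and similarly for $\p_B\circ h$. The juxtapositions (spike)$\cup$(rectangle) whose union is again a rectangle reconstruct precisely the pentagons with peak $c_i$ and yield the pentagon part of $f-f'$ --- this case is verbatim Proposition \ref{Homotopie}; the juxtapositions (spike)$\cup$(pentagon with peak $c_j$) whose union is a rectangle reconstruct the hexagons with corners $c_i$ and $c_j$ (compare Proposition \ref{Hex->Rect}, \S\ref{par:SpiHex}) and yield the hexagon part of $f-f'$; all other terms --- in particular spikes glued to hexagons not involving $i$, and juxtapositions not closing up into a counted polygon --- occur once in $h\circ\p_A$ and once in $\p_B\circ h$ and cancel in pairs. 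Since $\p'$ is itself a differential by Proposition \ref{Cchain} (\S\ref{par:Consistency}) applied with the primed arcs, $\varphi$ is then an isomorphism of chain complexes. The main obstacle is precisely this homotopy identity: its pentagon contribution is already known, but the hexagon contribution requires a fresh (albeit routine) analysis of spike--pentagon juxtapositions together with the usual careful tracking of the signs coming from $\e$ and from the pointing directions of the polygons.
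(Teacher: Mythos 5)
Your plan is the paper's proof in slightly expanded form: both reduce, via $\varphi=\Id-h$ with $h^2=0$, to the homotopy identity on the $C^-(G_{I(i:0)})$ summands and then run the small-spike juxtaposition analysis of Proposition \ref{Homotopie} (\S\ref{par:UpToHom}), identifying a shared corner between a rectangle and a small spike with a pentagon of $f-f'$, a shared corner between a pentagon and a small spike with a hexagon, and cancelling the remaining terms in pairs; your explicit splitting $\p=\p_A+\p_B+f$ together with $\p_A=\p'_A$, $\p_B=\p'_B$ merely makes precise what the paper's ``simple calculus'' leaves tacit. The one ingredient the paper states that your sketch leaves unnamed --- and which you will need to record when carrying out the ``routine'' hexagon analysis --- is that a hexagon of the cube differential and a small spike cannot share a corner: this is precisely what rules out heptagon-like juxtapositions with a unique decomposition and so guarantees that the hexagon--spike terms in $h\circ\p_A-\p_B\circ h$ are disjoint and therefore cancel.
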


Contrary to most maps in this thesis, the map $\varphi$ is indeed commuting with differentials and not anti-commuting.

\begin{proof}
  Using a simple calculus, proving that the map $\varphi$ commutes with the differentials can be reduced to proving that for all $I\in \{0,1\}^{k-1}$ and all generator $x\in C^-(G_{I(i:0)})$
  $$
  \p(x)-\p'(x)=h\circ\p(x)-\p'\circ h(x).
  $$ 
  
  Then, the proof is similar to the proof of Proposition \ref{Homotopie} (\S\ref{par:UpToHom}). When sharing a corner, a rectangle and a small spike give a pentagon, a pentagon and a small spike give an hexagon and, finally, hexagons and spikes cannot share a corner.\\
  
  The fact that $\varphi$ is a bijection is clear.
\end{proof}

The two chain complexes are isomorphic, so they share the same homology.

\subsection{Stabilization/Destabilization}

Now, we consider the following stabilization move:

\begin{eqnarray}\label{thedot}
  \begin{array}{ccc}
    \dessin{1.7cm}{Stabil1} & \longrightarrow & \dessin{1.7cm}{Stabil2}.\\[1cm]
    G && G_s\\
  \end{array}
\end{eqnarray}

We label by $1$ the new $\O$--decoration and by $2$ the one which is lying on the same row than the initial $\X$--decoration.\\

According to the nature of the initial decoration and to the square which is left empty after stabilization, there are seven others cases. Nevertheless, {\it mutatis mutandis}, the proof remains valid for all of them.\\

According to the restrictions on (de)stabilizations given in paragraph \ref{par:SingGridEtElemMoves}, $O_1$ does not belong to a singular column.\\

Here, the proof follows the same lines than in section $3.2$ of \cite{MOST}. We recall the broad outlines of it.

\subsubsection{Invariance for  regular  link Floer homology}
\label{par:RegStab}

\begin{description}
\item[i) Description of $C^-(G)$ using $G_s$] Every generator of $C^-(G)$ can be seen as drawn on $G_s$ by adding $x_0$, the dot located at the south-west corner of $O_1$ (see the dot in (\ref{thedot})). The gradings are the same and the differential is given by ignoring the conditions involving $O_1$ and $x_0$ \ie rectangles may contain $x_0$ in their interior (but not in their boundaries) and there is no multiplication by $U_1$.
\item[ii) Description of $H^-(G)$ involving $U_1$] The chain map $C^-(G)$ is quasi-isomorphic to the mapping cone $C=C_1[1]\oplus C_2$ of the map
  $$
  \xymatrix@C=2cm{C_1\simeq \left(C^-(G)\otimes\Z[U_1]\right)\{-1\}[-2]\ar[r]^(.55){\times (U_2-U_1)} & C^-(G)\otimes\Z[U_1]\simeq C_2}.
  $$
  Hence, it is sufficient to define a quasi-isomorphism from $C^-(G_s)$ to $C$.
\item[iii) Simplifying filtration] There are filtrations on $C^-(G_s)$ such that the associated graded differential is the sum only over thin rectangles which are contained in the row or in the column through $O_1$ and which do not contain $O_1$ or any $X$.
\item[iv) Graded quasi-isomorphism] The associated graded chain complex has then the following decompostion in subcomplexes (details about the notation used here are given in Appendix \ref{appendix:Subcomplexes}):
  \begin{gather*}
    \xymatrix@C=2cm{
      \left\{\dessin{1.5cm}{Sub1}\right\} \ar[r]^{\dessin{.9cm}{DiffHor}}_\sim \ar@{}|{}="Nya"  \ar@(dl,ul)"Nya"!<-1.4cm,-.2cm>;"Nya"!<-1.4cm,.4cm>^{\dessin{1.1cm}{DiffVert}} & \left\{\dessin{1.5cm}{Sub2}\right\}\ar@{}|{}="Nya2"  \ar@(dr,ur)"Nya2"!<1.4cm,-.2cm>;"Nya2"!<1.4cm,.4cm>_{\dessin{1.1cm}{DiffVert}}}\\
    \oplus\\
    \xymatrix{\left\{\dessin{1.5cm}{Sub31}\ -\ \dessin{1.5cm}{Sub32}\right\} \ar@{}|{}="Nya" \ar@(dl,ul)"Nya"!<-2.4cm,-.2cm>;"Nya"!<-2.4cm,.4cm>^0 \ar@*{[white]}@(dr,ur)"Nya"!<2.4cm,-.2cm>;"Nya"!<2.4cm,.4cm>^{\textcolor{white}{0}}}\\
    \oplus\\
    \xymatrix{\left\{\dessin{1.5cm}{Sub4}\right\}. \ar@{}|{}="Nya" \ar@(dl,ul)"Nya"!<-1.3cm,-0.2cm>;"Nya"!<-1.3cm,.4cm>^0 \ar@*{[white]}@(dr,ur)"Nya"!<1.3cm,-0.2cm>;"Nya"!<1.3cm,.4cm>_{\textcolor{white}{0}}}
  \end{gather*}
  This allows us to define a graded chain map $\func{F_{gr}}{C^-(G_s)}{C}$ by
  $$
  F_{gr}:=\left\{\begin{array}{l}
      \xymatrix@C=2cm@R=0.5cm{\dessin{1.3cm}{Gen1} \ar@{|->}[r]^(.45){Id}& \dessin{1.3cm}{Gen1}\in C_1[1]\\
        \dessin{1.3cm}{Gen2} \ar@{|->}[r]^(.45){\dessin{.9cm}{F}} & \dessin{1.3cm}{Gen1}\in C_2 \hspace{.2cm}\\
        \dessin{1.3cm}{Gen3} \ar@{|->}[r] & \hspace{.5cm} zero \hspace{1.2cm}}\end{array} \right..
  $$
  It is obviously a quasi-isomorphism for the associated graded chain complexes.
\item[v) Filtrated extension of $F_{gr}$] The map $F_{gr}$ can be extended to a map $\func{F}{C^-(G_s)}{C}$ of filtrated chain complexes. Essentially, commutativity of $F$ with rectangles which are empty except concerning $x_0$ which is actually contained in their interior (\ie rectangles embedded in the grid $G_s$ which are involved in the differential associated to $G$ as described in i) but not in the differential associated to $G_s$) imposes inductively additional terms in the definition of $F$.
\item[vi) Filtrated quasi-isomorphism] Corollary. \ref{QuasiIso} (\S\ref{par:MapCones}) completes the proof.
\end{description}

Points i), ii) and v) extend trivially to the singular case.

\subsubsection{Crushing filtration}
\label{par:CrushingFiltration}

Concerning the filtration of the point iii), we give an alternative construction than the one given in \cite{MOST}.\\
Let $x$ be a generator of $C^-(G_s)$. By construction, the extra column and row in $G_s$ can be crushed in order to get back to $G$. If doing this with $x$ drawn on the grid, it gives a set $\widetilde{x}$ of dots which is almost a generator of $C^-(G)$ except that one horizontal and one vertical lines have two dots on it. It may happen that two dots merge, the resulting dot is then counted with multiplicity two.\\
We perform a few cyclic permutations of the rows and columns in such a way that these singular vertical and horizontal grid lines are on the border of the grid. The upper right corner of the grid is then filled with an $X$ denoted by $X_*$.\\

Contrary to the convention used until this point, we draw dots of the extremal grid lines on the rightmost and uppermost ones.\\

Now, we can consider
$$
M_G(x):= M_{\O_G}(\widetilde{x})+ \#0(I)
$$
where $\O_G$ is the set of $\O$--decorations of $G$, $M_{\O_G}$ is defined in paragraph \ref{par:MaslAlex} and $I$ is the element of $\{0,1\}^k$ such that $x\in C^-(G_I)$.\\

\begin{figure}[h]
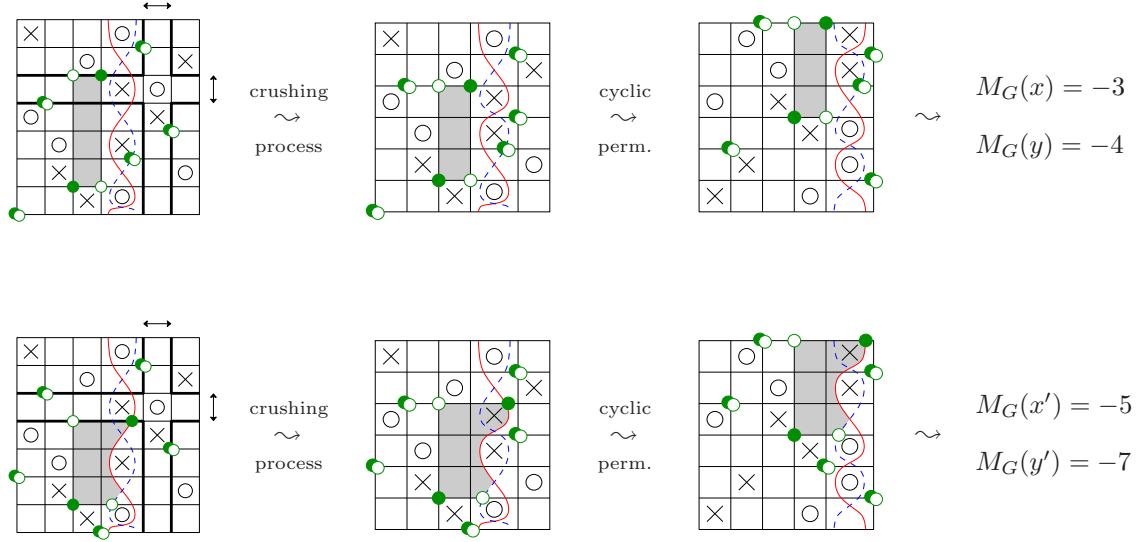

  $$
    \dessinH{3.2cm}{Crush1}\ \begin{array}{c}\textrm{\scriptsize crushing}\\[-.1cm] \leadsto\\[-.1cm] \textrm{\scriptsize process} \end{array} \dessinH{3.2cm}{Crush2} \begin{array}{c}\textrm{\scriptsize cyclic}\\[-.1cm] \leadsto\\[-.1cm] \textrm{\scriptsize perm.} \end{array} \dessinH{3.2cm}{Crush2bis}  \leadsto \ \ \begin{array}{l} M_G(x)=-3 \\[.3cm] M_G(y)=-4 \end{array}
  $$
  $$
    \dessinH{3.2cm}{Crush3}\ \begin{array}{c}\textrm{\scriptsize crushing}\\[-.1cm] \leadsto\\[-.1cm] \textrm{\scriptsize process} \end{array} \dessinH{3.2cm}{Crush4} \begin{array}{c}\textrm{\scriptsize cyclic}\\[-.1cm] \leadsto\\[-.1cm] \textrm{\scriptsize perm.} \end{array} \dessinH{3.2cm}{Crush4bis}  \leadsto \ \ \begin{array}{l} M_G(x')=-5 \\[.3cm] M_G(y')=-7 \end{array}
  $$
  \caption{Crushing row and column:  {\footnotesize dark dots describe the initial generators $x$ and $x'$ while hollow ones describe the final ones $y$ and $y'$. Polygons are depicted by shading.}}
  \label{fig:Crushing}
\end{figure}

Then, we consider the graded differential $\widetilde{\p}$ associated to the Alexander filtration and to the $O$--filtration for all $O\in\O$ (see \S\ref{par:FloerHomologies}).

\begin{prop}\label{CrushingFilt}
  $M_G$ defines a filtration on $\big(C^-(G_s),\widetilde{\p}\big)$. Moreover, the associated graded differential corresponds to the sum over rectangles which are contained in the row or in the column through $O_1$ and which do not contain $O_1$ or any $X$.\index{filtration!crushing}
\end{prop}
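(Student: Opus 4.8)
The plan is to mimic the proof of the corresponding filtration statement in section 3.2 of \cite{MOST}, adapting it to the crushing construction just introduced. First I would check that $M_G$ really defines a filtration on $\big(C^-(G_s),\widetilde{\p}\big)$, i.e. that $\widetilde{\p}$ does not increase $M_G$. Since $\widetilde{\p}$ is the graded differential associated simultaneously to the Alexander filtration and to all the $O$--filtrations, every rectangle appearing in it contains no $X$--decoration and no $O$--decoration; in particular it avoids $X_*$, $O_1$ and every other $O$. I would then split the behavior of $M_G$ under such a rectangle $\rho$ into the ``plane'' part $M_{\O_G}(\widetilde x)$ and the cube part $\#0(I)$. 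The cube part is constant along $\widetilde{\p}$ (rectangles live inside a fixed $G_I$, so $I$ is unchanged), so it suffices to treat $M_{\O_G}(\widetilde x) - M_{\O_G}(\widetilde y)$. Applying the local computation behind Proposition \ref{RectGradings} (\S\ref{par:Rect}) — counting the pairs of dots that involve corners of $\rho$, as in the nine--zone picture — but now on the crushed grid where two dots may sit on the same extremal line (possibly merged with multiplicity two), I would verify that $M_{\O_G}(\widetilde x) - M_{\O_G}(\widetilde y) = 1 - 2\#(\rho\cap\O_G)$ exactly when the crushed rectangle $\widetilde\rho$ stays an honest empty rectangle missing $O_1$ and every $X$, and that otherwise the quantity strictly drops. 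This is where the bookkeeping with the extremal lines and the doubled dot has to be done carefully; the key point is that pairs created or destroyed by the crushing always involve $x_0$ or the doubled column/row through $O_1$, and these contribute non-positively to $M_G(x)-M_G(y)$ precisely because $\rho$ is forbidden to contain $O_1$ or $X_*$.

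Once the filtration property is established, identifying the associated graded differential is the second half. A rectangle $\rho$ in $G_s$ contributes to $\widetilde{\p}_{\gr}$ (with respect to $M_G$) if and only if $M_G$ is preserved, i.e. $M_{\O_G}(\widetilde x) = M_{\O_G}(\widetilde y)$ and $\#0(I)$ unchanged. From the computation above, equality of $M_{\O_G}$ forces $1 - 2\#(\widetilde\rho\cap\O_G) = 0$, which is impossible unless the crushing degenerates the rectangle; tracking the degeneracy shows that the surviving rectangles are exactly the \emph{thin} ones — those contained in the row or the column through $O_1$ — which moreover contain neither $O_1$ nor any $X$ (the latter being automatic since $\rho$ already came from $\widetilde{\p}$). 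Conversely each such thin rectangle does preserve $M_G$. I would present this direction as a short case analysis on where $\rho$ sits relative to the extra row and column of $G_s$.

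The main obstacle I expect is the first step: getting the sign of $M_G(x) - M_G(y)$ right in all the degenerate crushing configurations, in particular when two dots of $\widetilde x$ (or of $\widetilde y$) merge into one dot of multiplicity two, and when a corner of the crushed rectangle lands on an extremal grid line that now carries two dots. The examples in Figure \ref{fig:Crushing} are meant to calibrate the conventions here, and I would lean on them heavily, checking that the definition $M_G(x) := M_{\O_G}(\widetilde x) + \#0(I)$ with dots of the extremal lines drawn on the rightmost and uppermost lines makes every forbidden (non-thin) rectangle genuinely lower $M_G$ rather than merely not raise it. The rest — the cube grading being locally constant, the reduction to $M_{\O_G}$, and the final classification of thin rectangles — is routine once the local count is pinned down.
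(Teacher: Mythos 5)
Your plan for the rectangle part tracks the paper's argument closely: crush back to $G$, use the local count from Proposition \ref{RectGradings} on $\widetilde\rho$ (and the nine-zone picture) to show that any non-degenerate empty rectangle free of decorations strictly lowers $M_{\O_G}$, and identify the surviving terms as the thin rectangles in the row or column through $O_1$. The case-splitting you defer to "bookkeeping'' is exactly what the paper does explicitly (ripped horizontally, extra dot on the right or left border, never ripped in four because of $O_1$), using complements of $\widetilde\rho$ to land each case at a drop of $1$ or $2$. That part is sound in outline.

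There is a genuine gap, though. You write that ``the cube part is constant along $\widetilde{\p}$ (rectangles live inside a fixed $G_I$, so $I$ is unchanged)'' and then argue only about rectangles. But $G_s$ is a \emph{singular} grid, so $\widetilde{\p}$ is the graded differential of the cube of resolution: it also contains pentagon terms $\p_1^-$ and hexagon terms $\p_2^-$, which move between different $G_I$'s and change $\#0(I)$ by $1$ or $2$. For these terms your plane/cube decomposition is not a free simplification — the drop in $\#0(I)$ has to be played off against the different Maslov count of Corollary \ref{PentDegree}, and the crushing process can in principle flatten a pentagon or hexagon in a way that makes $M_{\O_G}$ misbehave if a peak sits in the crushed row or column. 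The paper closes this in a separate paragraph: it invokes the restrictions of paragraph \ref{par:SingGridEtElemMoves} to guarantee the crushed column carries no peak, and points out that the arcs $\alpha$, $\beta$ can always be chosen so that no peak lies in the crushed row, after which the pentagon and hexagon analysis ``works the same.'' Without this step your argument neither establishes that $\widetilde{\p}$ respects the $M_G$-filtration, nor rules out pentagons and hexagons from the associated graded differential, so the classification ``thin rectangles only'' is not yet justified. You should add this case explicitly, and drop the claim that $I$ is constant along $\widetilde{\p}$.
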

\begin{proof}
Let $x$ and $y$ be two generators of $C^-(G_s)$.\\
In the crushing process, any empty rectangle $\rho$ on $G_s$ containing no decoration gives rise to an empty rectangle $\widetilde{\rho}$ on $G$ which is also empty of decoration except, possibly, $X_*$. It may happen that a dot is pushed into its border, but not into its interior.\\
If $\rho$ is connecting $x$ to $y$, then $\widetilde{\rho}$ is connecting $\widetilde{x}$ to $\widetilde{y}$.\\

It may happen that $\rho$ is totally flattened during the crushing process. Then $\widetilde{x}=\widetilde{y}$ and $M_G(x)=M_G(y)$. We suppose now that  $\widetilde{\rho}$ is not flat. Since $X_*$ does not interfere with $M_G$, the fact that $\widetilde{\rho}$ contains it or not, does not matter.\\

Here, the value $M_G$ is not invariant under cyclic permutations of rows and columns. Then we cannot assume that $\widetilde{\rho}$ is not ripped. Anyway, we can check independantly all the cases.\\

{\parpic[r]{$\dessin{2.2cm}{CrushCase1}\ $}
\vspace{.3cm}
If $\widetilde{\rho}$ is not ripped and does not have any extra dot on its border, then we can apply the same arguments than in the proof of Proposition \ref{RectGradings} (\S\ref{par:Rect}).\\
Hence, we have $M_G(y)=M_G(x)-1$.\\

}

{\parpic[l]{$\ \dessin{2.2cm}{CrushCase2}$}
\vspace{.1cm}
If $\widetilde{\rho}$ is horizontally ripped in two then we can use the same reasoning but applied to its horizontal complement. Since $\widetilde{\rho}$ is empty, containing no decorations, its complement must contain $k$ $O$'s and $k-1$ dots, where $k$ is the height of $\widetilde{\rho}$.\\ 
Then, we obtain $M_G(y)=M_G(x)+1+2(k-1)-2k=M_G(x)-1$.\\

}

{\parpic[r]{$\dessin{2.2cm}{CrushCase3}\ $}
\vspace{.4cm}
If $\widetilde{\rho}$ has an extra dot on its right border, then there is an extra term in $\I(\widetilde{x},\widetilde{x})$ which does not appear in $\I(\widetilde{y},\widetilde{y})$.\\ 
As a result, $M_G(y)=M_G(x)-2$.\\

}

{\parpic[l]{\ $\dessin{2.2cm}{CrushCase4}$}
\vspace{0.1cm}
If $\widetilde{\rho}$ has an extra dot on its left border, we consider, here again, its horizontal complement $\widetilde{\rho}'$. Now, the extra term appears in $\I(\widetilde{y},\widetilde{y})$ but, on the other hand, $\widetilde{\rho}$ contains only $k-2$ dots in its interior.\\ 
Finally, $M_G(y)=M_G(x)+2+2(k-2)-2k=M_G(x)-2$.\\

}

Vertical ripping can be treated in the same way.\\
At last, because of $O_1$, a rectangle cannot be ripped in four pieces.\\

Concerning pentagons and hexagons, problems occur when the crushed column and row do contain possible peaks. However, because of the restrictions given in paragraph \ref{par:SingGridEtElemMoves}, the crushed column cannot contain any of them and, if necessary, it is always possible to choose arcs $\alpha$'s and $\beta$'s such that no peak belongs to the crushed row. This precaution being taken, everything works the same.\\ 

The graduation $M_G$ induces then a filtration on $\big(C^-(G_s),\widetilde{\p}\big)$. Furthermore, it is clear that $M_G$ is only preserved by rectangles
 which are flattened during the crushing process. This corresponds exactly to rectangles contained in the row or in the column through $O_1$.
\end{proof}

The point iv) holds without any change.

\subsubsection{Extension of $F_{gr}$}
\label{par:FgrExtension}

Concerning the extension of $F_{gr}$ to $F$, the ins and outs are, once again, the same. This leads to a similar map than in \cite{MOST}, except that some peaks may have been added to the vertical boundary lines. However, this does not change the combinatorics of such domains as soon as we add a minus sign for each peak pointing toward the left.\\
Hence, the chain map $F$ is defined as the sum over polygons described in Figure \ref{fig:F}.

\begin{figure}
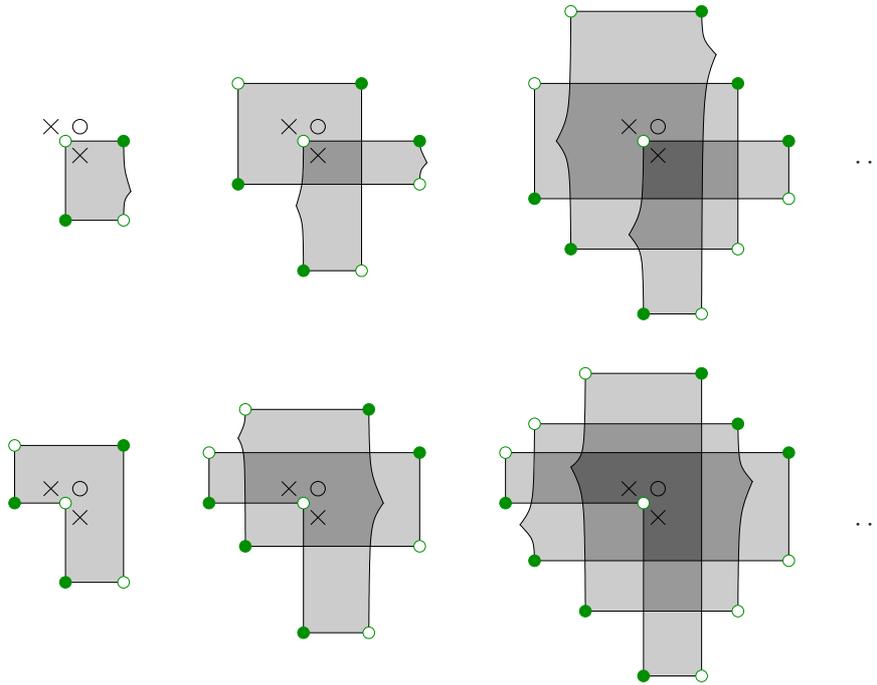

$$
\begin{array}{cccc}
  \dessin{4.8cm}{FR1} & \dessin{4.8cm}{FR2} & \dessin{4.8cm}{FR3} & \ \cdots \\[2cm]
  \dessin{4.8cm}{FL1} & \dessin{4.8cm}{FL2} & \dessin{4.8cm}{FL3} & \ \cdots 
\end{array}
$$
  \caption{Function F: {\footnotesize dark dots describe the initial generator while hollow ones describe the final one. Polygons are depicted by shading. Generators of $C^-(G_s)$ are connected to generators of $C_2$ by polygons in the top row and to generators of $C_1[1]$ by polygons in the second row. The second row contains also the trivial domain which does not move any dot. There is a canonical way to decompose any such domain $\D$ in a juxtaposition of rectangles, pentagons and hexagons. Signs are then given by multiplying the signs assigned to all these polygons and then switching it $m$ times where $m$ is the number of peaks pointing to the left in $\D$.}}
  \label{fig:F}
\end{figure}

\subsection{Regular commutation}

By a regular commutation, we mean a commutation move which involves only regular objects like rows or regular columns.\\

The proof will be a combination of constructions and proofs made earlier in this paper or in \cite{MOST}.

\subsubsection{Commutation map}
\label{par:RegCommutation}

Since the decorations of one of the two commuting columns are strictly above the decorations of the other one, the move can be seen as replacing a distinguished vertical grid line $\alpha$ by a different one $\beta$, as pictured below:

\begin{eqnarray}
\dessin{3.5cm}{CommutArcs}.
\label{fig:CommRegReg}
\end{eqnarray}

We denote by $G_\alpha$ and $G_\beta$ the corresponding grids.
Using the striking similarity between pictures (\ref{fig:CommRegReg}) and (\ref{fig:CombGrid}) (\S\ref{par:CombGrid}), we can define \emph{pentagons}\index{polygon!pentagon} and \emph{hexagons}\index{polygon!hexagon} as in paragraphs \ref{par:Pentagons} and \ref{par:Hexagons}, using hence the same terminology, but with the condition that at least one of their peaks relies on $\alpha\cap\beta$. However, there is no ambiguity here of which point in $\alpha\cap\beta$ should be used as a peak.\\

For all generators $x$ of $C^-(G_\alpha)$ and $y$ of $C^-(G_\beta)$, we denote by $\Pol^\circ(x,y)$ the set of such empty pentagons and hexagons connecting $x$ to $y$.\\
Then, we can set the map $\func{\phi_{\alpha\beta}}{C^-(G_\alpha)}{C^-(G_\beta)}$ as the morphism of $\Z[U_{O_1},\cdots,U_{O_n}]$--modules defined on the generators by
$$
\phi_{\alpha\beta}(x)=\sum_{\substack{y \textrm{ generator}\textcolor{white}{R}\\\textrm{of }C^-(G_\beta)\textcolor{white}{I}}} \sum_{\pi\in\Pol^\circ(x,y) } \e(\pi)U_{O_1}^{O_1(\pi)}\cdots U_{O_n}^{O_n(\pi)}\cdot y,
$$
where the sign $\e(\pi)$ is defined as in section \ref{par:WallMap}.\\

Contrary to the definition of $f$, the arcs $\alpha$ and $\beta$ play symmetric roles.\\

The map $\phi_{\alpha\beta}$ is a filtrated chain map. The points which should be checked to prove the anti-commutativity with the differentials and to prove that it preserves Maslov grading and Alexander filtration have already been checked in the proof of \ref{Cchain} (\S\ref{par:Consistency}).

\subsubsection{Resolution filtration}
\label{par:LastFiltration}

Now, we can consider the filtration induced by the third grading on $C^-(G)$ defined in Remark \ref{3emeFiltration} (\S\ref{par:MainDefinition}) which counts the singular columns positively resolved. The differentials and the map $\phi_{\alpha\beta}$ clearly preserve this filtration. Moreover, the associated graded chain complexes are the direct sums of the chain complexes associated to every desingularized grid, and, restricted to any of these subcomplexes, the graded map associated to $\phi_{\alpha\beta}$ is the eponyme morphism defined in \cite{MOST}. In the latter, it is proved that it is a quasi-isomorphism. The proof follows the same lines than in paragraph \ref{par:Candidate}, except that, in the present case, the inverse map does preserve the Alexander filtration.\\

Finally, Corollary \ref{QuasiIso} (\S\ref{par:MapCones}) completes the proof.\\

The same arguments can be applied to the case of rows commutations.\\

\subsubsection{Case of a single double point}
\label{par:SingleComm}

The result for a regular commutation is sufficient to deal with links with a single double point. Actually, a commutation of the singular column with a regular one can be replaced by regular commutations and cyclic permutations:
$$
\xymatrix{\dessin{2.5cm}{1SingComm1} \ar@{<->}[r] & \dessin{2.5cm}{1SingComm2} \ar@{<->}[r] & \dessin{2.5cm}{1SingComm3} \ar@{<->}[r] & \dessin{2.5cm}{1SingComm4} \ar@{<->}[r] & \dessin{2.5cm}{1SingComm5} \ar@{<->}[r] & \dessin{2.5cm}{1SingComm6}}
$$

\subsection{Semi-singular commutation}

Now, we consider the commutation of a regular column with a singular one.

\subsubsection{Reduction of the cases}
\label{par:ReductionCases}

Even if it means to perform first a few cyclic permutations of the rows, we can assume that the decorations of the regular column are all above the decorations of the singular one. It will be clear in the proof that the order in which the regular $X$ and the regular $O$ are placed has no incidence.\\

On the other hand, the proof will depend on the order in which the singular decorations are placed. Nevertheless, as shown in the Figure \ref{fig:PermutingDecorations}, up to rows and regular columns commutations, cyclic permutations and (de)stabilizations, it is possible to replace a semi-singular commutation move by another one with the four singular decorations cyclically permuted. Hence, it is sufficient to deal with the following case:
$$
\begin{array}{ccc}
  \dessin{2.2cm}{Commut1} & \ \longleftrightarrow \  & \dessin{2.2cm}{Commut2}\\[1.3cm]
G_1 && G_2
\end{array}
$$

\begin{figure}
  $$
  \hspace{-.2cm} \xymatrix@!0@C=1.3cm@R=1.15cm{
    &&\dessin{2.5cm}{PD2}\ar@{<->}[rr]&&\dessin{2.5cm}{PD3}\ar@{<->}[rr]&&\dessin{2.5cm}{PD4}\ar@{<->}[rrd]|{\ r\ }&&\\
    \dessin{2.5cm}{PD1}\ar@{<->}[rru]\ar@{<=>}[dddr]|{}="Nya1" &&&&&&&& \dessin{2.5cm}{PD5}\ar@{<=>}[dddl]|{}="Nya2" \\
    &&&&&&&& \\
    &&&&&&&& \\
    &\dessin{2.5cm}{PD9}\ar@{<->}[rr]&&\dessin{2.5cm}{PD8}\ar@{<->}[rr]&&\dessin{2.5cm}{PD7}\ar@{<->}[rr]|{\ r\ }&&\dessin{2.5cm}{PD6}&
    \ar@{<.}@/^.7cm/"Nya1"!<.23cm,.07cm>;"Nya2"!<-.23cm,.07cm>|{\ \Leftarrow \ }
}
  $$
  \caption{Cyclic permutations of the four singular decorations: {\footnotesize Simple arrows stand for a combination of regular commutations and cyclic permutations of the rows when they are $r$--labeled and of regular commutations and (de)stabilizations when they are unlabeled. Double arrows stand for non regular commutations.}}
  \label{fig:PermutingDecorations}
\end{figure}
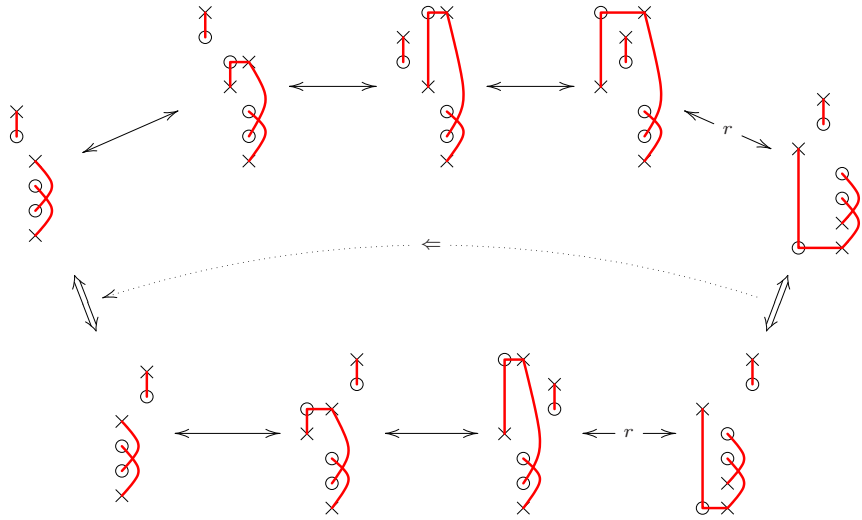

\subsubsection{Multi-combined grid}
\label{par:MultiCombGrid}

Here again, the commutation move can be seen as the replacement of distinguished vertical grid lines, but in a more sophisticated way. As illustrated in Figure \ref{fig:CommRegSing}, we choose a set of arcs such that $G_1$ (resp. $G_2$) is obtained by considering the arcs indexed by $1$ (resp. $2$) only.\\
\begin{figure}[!h]
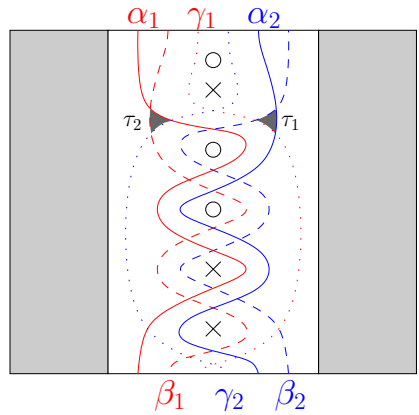

  $$
  \dessin{5.5cm}{CommRegSing}
  $$
    \caption{Commutation of a regular and a singular columns: {\footnotesize the arcs are choosen in such a way that the two dark shaded triangles $\tau_1$ and $\tau_2$ as well as the losange located between them are lying between the same two consecutive horizontal grid lines.}}
  \label{fig:CommRegSing}
\end{figure}
We denote respectively by $\p^-_1$ and $\p^-_2$ the differentials of the corresponding chain complexes. The map $f_{\alpha_1}^{\beta_1}$ (resp. $f_{\alpha_2}^{\beta_2}$) is the part of $\p^-_1$ (resp. $\p^-_2$) which corresponds to pentagons and hexagons with at least one peak on $\alpha_1\cap \beta_1$ (resp. $\alpha_2\cap \beta_2$).\\

Now, we also consider the two intermediate states defined by the sets of arcs $(\alpha_1,\alpha_2)$ and $(\beta_1,\beta_2)$. We denote by, respectively, $\p^-_\alpha$ and $\p^-_\beta$ the associated differentials. Thess intermediate states can be seen as the result of commutations of regular columns. With this point of view, we consider the maps $\phi_{\gamma_1\alpha_2}$, $\phi_{\alpha_1\gamma_2}$, $\phi_{\gamma_1\beta_2}$ and $\phi_{\beta_1\gamma_2}$ defined in paragraph \ref{par:RegCommutation}. They anti-commmute with the differentials but \textbf{not} with $f_{\alpha_1}^{\beta_1}$ and  $f_{\alpha_2}^{\beta_2}$. Figure \ref{fig:SemiSingTable} summarizes all the chain complexes and the chain maps between them.\\

\begin{figure}[h]
  \centering
  \hspace{-.2cm}
  \xymatrix@!0@C=3.5cm@R=3.5cm{\dessin{2cm}{CRS11} \ar[r]^{\phi_{\textcolor{red}{\gamma_1}\textcolor{blue}{\alpha_2}}} \ar[d]^{f_{\textcolor{red}{\alpha_1}}^{\textcolor{red}{\beta_1}}}  \ar@{}|{}="Nya"  \ar@(dl,ul)"Nya"!<-1.1cm,-.4cm>;"Nya"!<-1.1cm,.2cm>^{\p_1^0:=\p^-_1-f_{\textcolor{red}{\alpha_1}}^{\textcolor{red}{\beta_1}}} \ar@<.18cm>@{-->}[rrd]^{\phi_R} \ar@{-->}[rrd]|{\ \ \phi_{ex}\ } \ar@<-.18cm>@{-->}[rrd]_{\phi_L}& \dessin{2cm}{CRS12} \ar[r]^{\phi_{\textcolor{red}{\alpha_1}\textcolor{blue}{\gamma_2}}} \ar@{}|{}="Nyabe"  \ar@(ul,ur)"Nyabe"!<-.3cm,1.1cm>;"Nyabe"!<.3cm,1.1cm>^(.15){\p^-_\alpha}  &\dessin{2cm}{CRS13} \ar[d]^{f_{\textcolor{blue}{\alpha_2}}^{\textcolor{blue}{\beta_2}}} \ar@{}|{}="Nya3"  \ar@(dr,ur)"Nya3"!<1.05cm,-.4cm>;"Nya3"!<1.05cm,.2cm>_{\p^-_2-f_{\textcolor{blue}{\alpha_2}}^{\textcolor{blue}{\beta_2}}=:\p_2^0}\\
    \dessin{2cm}{CRS21}
    \ar[r]^{\phi_{\textcolor{red}{\gamma_1}\textcolor{blue}{\beta_2}}}\ar@{}|{}="Nya2"
    \ar@(dl,ul)"Nya2"!<-1.1cm,-.1cm>;"Nya2"!<-1.1cm,.5cm>^{\p^1_1:=\p^-_1}&
    \dessin{2cm}{CRS22}
    \ar[r]^{\phi_{\textcolor{red}{\beta_1}\textcolor{blue}{\gamma_2}}}
    \ar@{}|{}="Nyabee"
    \ar@(dl,dr)"Nyabee"!<-.3cm,-1.2cm>;"Nyabee"!<.3cm,-1.2cm>_(.15){\p^-_\beta}
    &\dessin{2cm}{CRS23}\ar@{}|{}="Nya4"
    \ar@(dr,ur)"Nya4"!<1.05cm,-.1cm>;"Nya4"!<1.05cm,.5cm>_{\p^-_2=:\p^1_2.}}
  \caption{Decomposition of a semi-singular commutation into regular ones}
\label{fig:SemiSingTable}
\end{figure}
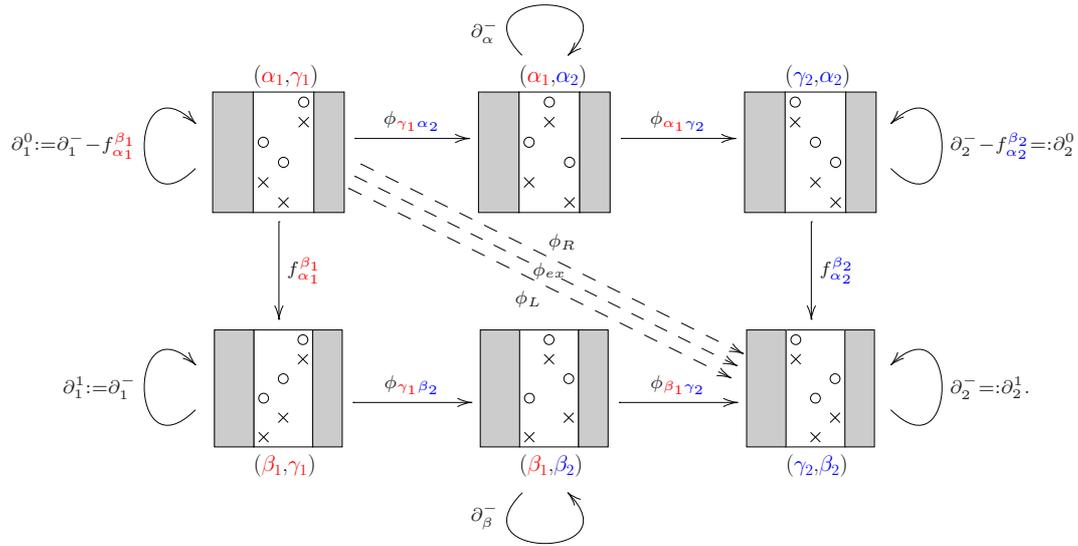

As in paragraph \ref{par:Completion}, we will adjust the commutativity defect of this diagram by adding large diagonal maps $\phi_R$, $\phi_L$ and $\phi_{ex}$.

\subsubsection{Pentagons and hexagons again}
\label{par:AgainPentHex}

To define these maps, we consider combinations of polygons. We denote by $G_{Comb}$ the multi-combined grid with all arcs $\alpha$'s, $\beta$'s and $\gamma$'s and by $\T_{Comb}:=\T_{G_{Comb}}$ the torus obtained by identifying its boundary components.\\

We denote by $E$ the set of $n$ dots arranged on the intersections of lines and arcs of $G_{Comb}$.\\

Let $x$ and $y$ be two elements of $E$. A \emph{pentagon} (resp. \emph{hexagon}) \emph{of type $R$ connecting $x$ to $y$}\index{polygon!pentagon!of type R@of type $R$} is a pentagon (resp. hexagon)\index{polygon!of type R@of type $R$} connecting $x$ to $y$ as defined in paragraph \ref{par:Pentagons} (resp. \ref{par:Hexagons}) with  $\alpha_1\cap\gamma_2\cap\tau_2$ as a peak.\\
{\it Mutatis mutandis}, we define \emph{pentagons} and \emph{hexagons of type $L$}\index{polygon!of type L@of type $L$} by substituing $\gamma_1\cap\beta_2\cap\tau_1$ to $\alpha_1\cap\gamma_2\cap\tau_2$

We denote by $\Pol^\circ_R(x,y)$ and $\Pol^\circ_L(x,y)$ the sets of empty pentagons and hexagons of type $R$ or $L$ connecting $x$ to $y$.

\begin{figure}[h]
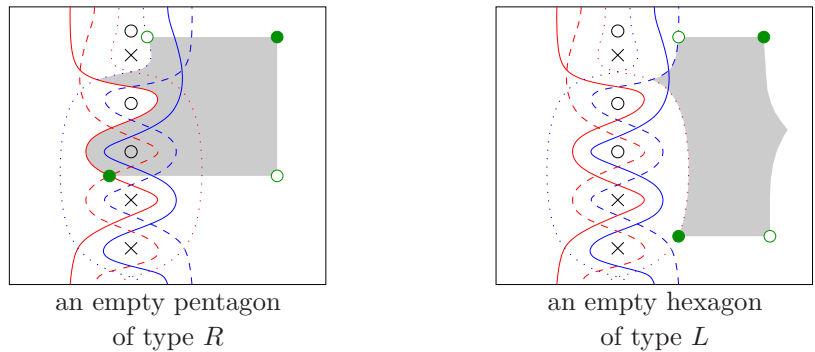

  $$
  \begin{array}{ccc}
    \dessinH{4.2cm}{PentH} & \hspace{1.5cm}& \dessinH{4.2cm}{HexB}\\[1cm]
    \textrm{an empty pentagon} && \textrm{an empty hexagon}\\
    \textrm{of type } R && \textrm{of type } L
  \end{array}
  $$
  \caption{Examples of polygons: {\footnotesize dark dots describe the generator $x$ while hollow ones describe $y$. Polygons are depicted by shading.}}
  \label{fig:ExamplesPolygons}
\end{figure}

\subsubsection{New kinds of polygons}
\label{par:NewPolygons}

Now, we need to define some new hexagons which are more in the spirit of hexagons described in \cite{MOST}. To distinguish them, we will write it $\h$exagons.\\

Let $x$ and $y$ be elements of $E$. An \emph{$\h$exagon of type $R$}\index{polygon!hexagoo@$\h$exagon} (resp. \emph{of type $L$}) \emph{connecting $x$ to $y$} is an embedded hexagon $\eta$ in $\T_{Comb}$ which satisfies:
\begin{itemize}
\item[-] edges of $\eta$ are embedded in the grid lines (including $\alpha_l$'s, $\beta_l$'s and $\gamma_l$'s);
\item[-] the intersection $\alpha_2\cap\tau_1$ (resp. $\beta_1\cap\tau_2$) is a whole edge $e$ of $\eta$;
\item[-] running positively along the boundary of $\eta$, according to the orientation of $\eta$ inherited from the one of $\T_{Comb}$, the next four corners of $\eta$ after the edge $e$ are, alternatively, points of $x$ and $y$;
\item[-] except on $\p \eta$, the sets $x$ and $y$ coincide;
\item[-] the interior of $\eta$ does not intersect $\alpha_2$ (resp. $\beta_1$) in a neighborhood of $e$.
\end{itemize}

\begin{remarque}
  Since an $\h$exagon is embbeded and has more than three vertices, the triangles $\tau_1$ or $\tau_2$ cannot lie inside. Moreover, it must contain a unique vertical edge connecting an element of $x$ to an element of $y$.
\end{remarque}

By adding a third peak, belonging to another remote singular column, on the free vertical edge of an $\h$exagon $\eta$, we define \emph{$\h$eptagons}\index{polygon!heptagoo@$\h$eptagon} of the same type and connecting the same elements of $E$ than $\eta$.\\

An $\h$exagon or an $\h$eptagon $\eta$ connecting $x$ to $y$ is \emph{empty} if $\Int(\eta)\cap x=\emptyset$.\\
We denote respectively by $\PPol^\circ_R(x,y)$ and $\PPol^\circ_L(x,y)$ the sets of empty $\h$exagons and $\h$eptagons of type $R$ and $L$ connecting $x$ to $y$.

\begin{figure}[!h]
  $$
  \begin{array}{ccc}
    \dessinH{4.2cm}{HHexB} & \hspace{1.5cm}& \dessinH{4.2cm}{HHeptH}\\[1cm]
    \textrm{an empty }\h\textrm{exagon} && \textrm{an empty }\h\textrm{eptagon}\\
    \textrm{of type } R && \textrm{of type } L
  \end{array}
  $$
  \caption{Examples of $\mathfrak{p}$olygons: {\footnotesize dark dots describe the generator $x$ while hollow ones describe $y$. Polygons are depicted by shading.}}
  \label{fig:ExamplesPolygons2}
\end{figure}

Finally, we define $\HH$\index{polygon!hexagon!H@$\HH$}\index{H@$\HH$|see{polygon}} as the following exceptional thin hexagon:
\begin{eqnarray}\label{ExceptHex}
  \dessinH{4.2cm}{ExceptHex}.
\end{eqnarray}
For any generators $x\in C^-(G_{\gamma_1,\alpha_1})$ and $y\in C^-(G_{\gamma_2,\beta_2})$, we say that \emph{$\HH$ connects $x$ to $y$} if
\begin{itemize}
\item[i)] $x$ and $y$ coincide except on $\p \HH$;
\item[ii)] $x\cap\HH$ and $y\cap\HH$ are respectively the dark and the hollow dots in the picture (\ref{ExceptHex}).
\end{itemize}
We also say that \emph{$x$ is a starting point for $\HH$ to $y$}. The generator $y$ is then determined by $x$.

\subsubsection{Signs for polygons}
\label{par:SignsPolygons}

Under the condition that $y$ is a generator of $C^-(G_{\gamma_2\beta_2})$, any element $\pi$ of $\Pol^\circ_R(x,y)$ or $\Pol^\circ_L(x,y)$ can be filled with one spike in case of a pentagon or two if it is an hexagon, in order to get a rectangle $\phi(\pi)$ in $G_{\gamma_2\beta_2}$ (compare Prop. \ref{Pent->Rect}, \S\ref{par:SpiPent} or Prop. \ref{Hex->Rect}, \S\ref{par:SpiHex}).\\

Likewise, if $x$ is a generator of $C^-(G_{\alpha_1\gamma_1})$, any element $\pi$ of $\PPol^\circ_R(x,y)$ or $\PPol^\circ_L(x,y)$ can be filled to a rectangle $\phi(\pi)$ in $G_{\alpha_1\alpha_2}$.\\

In all cases, we can associate a sign $\e(\pi)=\e(\phi(\pi))$ if $\pi$ has an even number of peaks pointing toward the left and $-\e(\phi(\pi))$ otherwise.\\

Since it has a unique peak pointing to the left, we set $\e(\HH)=-1$.

\subsubsection{Combinations of polygons}
\label{par:MendingMap}

Now, we can set the maps $\func{\phi_R,\phi_L,\phi_{ex}}{C^-(G_{\gamma_1,\alpha_1})}{C^-(G_{\gamma_2,\beta_2})}$ which are the morphisms of $\Z[U_{O_1},\cdots,U_{O_n}]$--modules defined on the generators by

$$
\begin{array}{p{.8cm}p{.1cm}p{13cm}}
  $\phi_R(x)$ & $=$ &
  $\sum_{\substack{y \textrm{ generator}\\[.05cm]
      \textrm{of }C^-(G_{\gamma_2,\beta_2})\\[.05cm]
      M(y)=M(x)\\[.05cm]
      A(y)\leq A(x)}}
  \sum_{\textcolor{white}{I}z \in E\textcolor{white}{I}}
  \sum_{\substack{p_1\in \Pol^\circ_R(x,z)\\[.05cm]
      p_2\in \PPol^\circ_R(z,y)}}
  \e(p_1)\e(p_2)U_{O_1}^{O_1(p_1)+O_1(p_2)} \cdots U_{O_n}^{O_n(p_1)+O_n(p_2)}\cdot y$,
\end{array}
$$
\vspace{.3cm}
$$
\begin{array}{p{.8cm}p{.1cm}p{13cm}}
  $\phi_L(x)$ & $=$ &
  $\sum_{\substack{y \textrm{ generator}\\[.05cm]
      \textrm{of }C^-(G_{\gamma_2,\beta_2})\\[.05cm]
      M(y)=M(x)\\[.05cm]
      A(y)\leq A(x)}}
  \sum_{\textcolor{white}{I}z \in E\textcolor{white}{I}}
  \sum_{\substack{p_1\in \Pol^\circ_L(x,z)\\[.05cm]
      p_2\in \PPol^\circ_L(z,y)}}
  \e(p_1)\e(p_2)U_{O_1}^{O_1(p_1)+O_1(p_2)} \cdots U_{O_n}^{O_n(p_1)+O_n(p_2)}\cdot y$,
\end{array}
$$
\vspace{.3cm}
$$
\begin{array}{p{.8cm}p{.1cm}p{13cm}}
  $\phi_{ex}(x)$ & $=$ &
  $\left\{\begin{array}{cl} \e(\HH)U_{O_1}^{O_1(\HH)} \cdots U_{O_n}^{O_n(\HH)}y & \textrm{if }x\textrm{ is a starting point to }y \\ 0 & \textrm{otherwise} \end{array}\right.$.
\end{array}
$$
Moreover, we denote by, respectively, $F_T$ and $F_B$ the compositions $\phi_{\alpha_1\gamma_2}\circ \phi_{\gamma_1\alpha_2}$ and $\phi_{\beta_1\gamma_2}\circ \phi_{\gamma_1\beta_2}$.

\begin{figure}[p]
\begin{center}
  \begin{tabular}{ccc}
    \begin{tabular}{c}
      \subfigure[]{$\dessinH{4.2cm}{FH3}$}
    \end{tabular}
    &
    \begin{tabular}{c}
      \subfigure[]{$\dessinH{4.2cm}{FH2}$}
    \end{tabular}
    &
    \begin{tabular}{c}
      \subfigure[]{$\dessinH{4.2cm}{FH1}$}
    \end{tabular}
  \end{tabular}
  \begin{tabular}{ccc}
    \begin{tabular}{c}
      \subfigure[]{$\dessinH{4.2cm}{FH6}$}
    \end{tabular}
    &
    \begin{tabular}{c}
      \subfigure[]{$\dessinH{4.2cm}{FH9}$}
    \end{tabular}
    &
    \begin{tabular}{c}
      \subfigure[]{$\dessinH{4.2cm}{FH4}$}
    \end{tabular}
  \end{tabular}
  \begin{tabular}{ccc}
    \begin{tabular}{c}
      \subfigure[]{$\dessinH{4.2cm}{FH8}$}
    \end{tabular}
    &
    \begin{tabular}{c}
      \subfigure[]{$\dessinH{4.2cm}{FH7}$}
    \end{tabular}
    &
    \begin{tabular}{c}
      \subfigure[]{$\dessinH{4.2cm}{FH5}$}
    \end{tabular}
  \end{tabular}
\end{center}
\begin{center}
  A. Configurations occuring in $F_H$
\end{center}
\vspace{.5cm}
\addtocounter{subfigure}{-9}
\begin{center}
  \begin{tabular}{ccc}
    \begin{tabular}{c}
      \subfigure[]{$\dessinH{4.2cm}{PhiH1}$}
    \end{tabular}
    &
    \begin{tabular}{c}
      \subfigure[]{$\dessinH{4.2cm}{PhiH5}$}\\[1.1cm]
      \subfigure[]{$\dessinH{4.2cm}{PhiH8}$}
    \end{tabular}
    &
    \begin{tabular}{c}
      \subfigure[]{$\dessinH{4.2cm}{PhiH2}$}
    \end{tabular}
  \end{tabular}
\end{center}
\begin{center}
  B. Configurations occuring in $\phi_R$
\end{center}
\caption{Configurations of polygons: {\footnotesize Dark dots describe the initial generator while hollow ones describe the final one. Squares describe intermediate states. Polygons are depicted by shading. The lightest is the first to occur whereas the darkest one is the last. Polygons are stacked in an opaque way.}}
\label{fig:ConfigFH}
\end{figure}

\addtocounter{figure}{-1}
\begin{figure}[p]
  \begin{center}
  \begin{tabular}{ccc}
    \begin{tabular}{c}
      \subfigure[]{$\dessinH{4.2cm}{FB3}$}
    \end{tabular}
    &
    \begin{tabular}{c}
      \subfigure[]{$\dessinH{4.2cm}{FB2}$}
    \end{tabular}
    &
    \begin{tabular}{c}
      \subfigure[]{$\dessinH{4.2cm}{FB1}$}
    \end{tabular}
  \end{tabular}
  \begin{tabular}{ccc}
    \begin{tabular}{c}
      \subfigure[]{$\dessinH{4.2cm}{FB6}$}
    \end{tabular}
    &
    \begin{tabular}{c}
      \subfigure[]{$\dessinH{4.2cm}{FB9}$}
    \end{tabular}
    &
    \begin{tabular}{c}
      \subfigure[]{$\dessinH{4.2cm}{FB4}$}
    \end{tabular}
  \end{tabular}
  \begin{tabular}{ccc}
    \begin{tabular}{c}
      \subfigure[]{$\dessinH{4.2cm}{FB8}$}
    \end{tabular}
    &
    \begin{tabular}{c}
      \subfigure[]{$\dessinH{4.2cm}{FB7}$}
    \end{tabular}
    &
    \begin{tabular}{c}
      \subfigure[]{$\dessinH{4.2cm}{FB5}$}
    \end{tabular}
  \end{tabular}
\end{center}
\begin{center}
  C. Configurations occuring in $F_B$
\end{center}
\vspace{.5cm}
\addtocounter{subfigure}{-9}
\begin{center}
  \begin{tabular}{ccc}
    \begin{tabular}{c}
      \subfigure[]{$\dessinH{4.2cm}{PhiB1}$}
    \end{tabular}
    &
    \begin{tabular}{c}
      \subfigure[]{$\dessinH{4.2cm}{PhiB5}$}\\[1.1cm]
      \subfigure[]{$\dessinH{4.2cm}{PhiB8}$}
    \end{tabular}
    &
    \begin{tabular}{c}
      \subfigure[]{$\dessinH{4.2cm}{PhiB2}$}
    \end{tabular}
  \end{tabular}
\end{center}
\begin{center}
  D. Configurations occuring in $\phi_L$
\end{center}
  \caption{Configurations of polygons: {\footnotesize Dark dots describe the initial generator while hollow ones describe the final one. Squares describe intermediate states. Polygons are depicted by shading. The lightest is the first to occur whereas the darkest one is the last. Polygons are stacked in an opaque way.}}
  \label{fig:ConfigFB}
\end{figure}

\subsubsection{Semi-singular commutation chain map}
\label{par:SemiSingMap}

Having set down all the notation, we can now define $\func{\psi}{C^-_1}{C^-_2}$ as
$$
\psi(x)=\left\{
  \begin{array}{cl}
    F_T(x) - \phi_R(x) + \phi_L(x) - 2\phi_{ex}& \textrm{if }x\in C^-(G_{\gamma_1\alpha_1})\\[.2cm]
    F_B(x) & \textrm{if }x\in C^-(G_{\gamma_1\beta_1}).
  \end{array}\right.
$$

\begin{prop}
  The map $\psi$ is a filtrated chain map.
\end{prop}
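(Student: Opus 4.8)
The plan is to check the two conditions in the definition of a filtrated chain map: that $\psi$ respects the Maslov grading and the Alexander filtration, and that it anti-commutes with the differentials, $\psi\circ\p^-_1+\p^-_2\circ\psi\equiv 0$. The grading/filtration part is pure bookkeeping. Each of the commutation maps $\phi_{\gamma_1\alpha_2},\phi_{\alpha_1\gamma_2},\phi_{\gamma_1\beta_2},\phi_{\beta_1\gamma_2}$ preserves the Maslov grading and the Alexander filtration — this is part of paragraph \ref{par:RegCommutation}, the relevant degree computations being those in the proof of Proposition \ref{Cchain} (\S\ref{par:Consistency}) — hence so do the compositions $F_T$ and $F_B$. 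For $\phi_R$ and $\phi_L$ the conditions $M(y)=M(x)$ and $A(y)\leq A(x)$ are built into the very definition of the maps, and the $U_{O_i}$-factors only lower the Alexander degree, so these maps are filtered of the required degree; and $\phi_{ex}$ counts only the exceptional thin hexagon $\HH$ of picture (\ref{ExceptHex}), for which the same elementary argument as in Corollary \ref{PentDegree} (\S\ref{par:SpiPent}) gives preservation of the Alexander filtration and the correct Maslov behaviour. Assembling these (and bearing in mind the shifts of the summands of the generalized cone) shows $\psi$ is a graded, filtered map of the right degree.

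The substance is the chain-map identity itself. I would write $\psi$ as a $2\times 2$ matrix with respect to the cone decompositions $C^-_1=C^-(G_{\gamma_1\alpha_1})[1]\oplus C^-(G_{\gamma_1\beta_1})$ and $C^-_2=C^-(G_{\gamma_2\alpha_2})[1]\oplus C^-(G_{\gamma_2\beta_2})$, together with $\p^-_i=\p^0_i+f_{\alpha_i}^{\beta_i}$. The identity then breaks up into finitely many equalities between sums of juxtapositions of two polygons, indexed by the pair (source summand, target summand). The commutation maps making up $F_T$ and $F_B$ already anti-commute with the internal differentials $\p^0_i$ (paragraphs \ref{par:RegCommutation} and \ref{par:Consistency}), and the bulk of the remaining terms cancel or rearrange by the same three mechanisms used throughout Chapter I: juxtapositions with disjoint corner sets reorder into one another, $L$-shapes admit two rectangle decompositions, and a peak sharing an edge with a second polygon yields an alternative decomposition with a sign governed by the peak's direction (compare the proofs of \ref{D0Preserves}, \ref{D1Preserves} and \ref{Cchain}). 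What is new is that the commutation maps do \emph{not} anti-commute with the switch maps $f_{\alpha_i}^{\beta_i}$; the resulting defect is concentrated in the configurations admitting a \emph{unique} decomposition, enumerated in Figures \ref{fig:ConfigFH} and \ref{fig:ConfigFB}, and the maps $\phi_R$, $\phi_L$, $\phi_{ex}$ — built from $\h$exagons, $\h$eptagons and the thin hexagon $\HH$ — are designed precisely so that, once composed with the differentials, they reproduce and cancel these leftover terms. The sign conventions must be tracked carefully here: the factor $-\e(\phi(\pi))$ attached to a polygon with an odd number of left-pointing peaks, the opposite signs of $\phi_R$ and $\phi_L$ in the definition of $\psi$, the coefficient $-2$ in front of $\phi_{ex}$, and the value $\e(\HH)=-1$ are exactly what makes every class of terms vanish.

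The hard part will not be conceptual but the volume of cases: around every shared corner one must list all admissible juxtapositions involving a rectangle, a pentagon (of either type), a hexagon, an $\h$exagon, an $\h$eptagon, the hexagon $\HH$ and a small spike, organize them as in Figures \ref{fig:ConfigFH}--\ref{fig:ConfigFB}, and verify that the signs annihilate each class. This is the same exhaustive combinatorics that, for the two outstanding items of Theorem \ref{Invariance} (commutation of two singular columns, and the flip move), is not carried through in this thesis; for the semi-singular commutation it stays manageable because at most one pair of mutually interacting peaks is ever present at a time.
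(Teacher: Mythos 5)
Your overall strategy is the same as the paper's: grading and filtration by construction, reduce the chain-map identity to a finite case analysis on juxtapositions of two polygons via the cone decompositions of $C^-_1$ and $C^-_2$, observe that $F_T$ and $F_B$ already intertwine the internal differentials, and verify that $\phi_R$, $\phi_L$, $\phi_{ex}$ cancel the leftover defect terms, keeping track of signs from the four sources you list (order, configuration, peaks, the sign in the definition of $\psi$). This matches the structure of the paper's proof (which then carries out the case count: 144 special configurations in 72 cancelling pairs, tabulated with the sign contributions).

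There is, however, one concrete error. You set out to verify $\psi\circ\p^-_1+\p^-_2\circ\psi\equiv 0$, i.e.\ anti-commutation. But the paper proves $\p^-_2\circ\psi-\psi\circ\p^-_1=0$, and explicitly flags this with the remark ``Contrary to most maps in this thesis, the map $\psi$ is indeed commuting with differentials and not anti-commuting.'' This is not a typo: $F_T=\phi_{\alpha_1\gamma_2}\circ\phi_{\gamma_1\alpha_2}$ is a composition of two \emph{anti}-commuting chain maps, hence commutes with the differentials, and the same holds for $F_B$. If you chase the wrong identity, the ``Order'' sign (the contribution from whether a term sits in $\p^-_2\circ\psi$ or in $\psi\circ\p^-_1$) is flipped in every one of the 72 cancelling pairs, and the bookkeeping in the sign table fails. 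You should replace the target identity with the commuting one before entering the case analysis; the rest of your argument then goes through as in the paper.
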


Contrary to most maps in this thesis, the map $\psi$ is indeed commuting with differentials and not anti-commuting.

\begin{proof}
  The fact that $\psi$ respects the grading and the filtration holds by construction.\\
  
  The goal is now to prove
  $$
  \p^-_2\circ \psi-\psi\circ \p^-_1=0.
  $$
We already know that
$$
\begin{array}{ccccc}
  F_T\circ \p_1^0  & = & \phi_{\alpha_1\gamma_2}\circ\phi_{\gamma_1\alpha_2}\circ\p^0_1 &&\\[.1cm]
  & = & - \phi_{\alpha_1\gamma_2}\circ\p_\alpha^-\circ\phi_{\gamma_1\alpha_2} &&\\[.1cm]
  & = & \p_2^0\circ\phi_{\alpha_1\gamma_2}\circ\phi_{\gamma_1\alpha_2} &  = & \p_2^0\circ F_T.
\end{array}
$$
Similarly, $F_B\circ\p_1^1=\p_2^1\circ F_B.$\\

It remains to check terms which cross the diagram from $C^-(G_{\alpha_1\gamma_1})$ to $C^-(G_{\gamma_2\beta_2})$. First we set up a list of the possible configurations for $F_T$, $F_B$, $\phi_R$ and $\phi_L$. This is done in Figure \ref{fig:ConfigFB}.\\

Then, it remains to compose these maps with $f_{\alpha_1}^{\beta_1}$, $f_{\alpha_2}^{\beta_2}$, $\p^0_1$ or $\p_2^1$. Most terms cancel two by two for similar reasons than in the proof of Theorem \ref{D0Preserves} (\S\ref{par:Diff}). But besides this, there are 144 special cases which can be gathered in 72 cancelling {\it par nobile fratrum}.  An example for each cancelling process is given in Figure \ref{fig:AllCases}.\\

However, signs should be checked. There are several ways for a minus sign to occur :
\begin{description}
\item[Order] when one side is an element of $\p^-_2\circ \psi$ whereas the other one belongs to $\psi\circ \p^-_1$;
\item[Configuration] when the signs associated to the two configurations of polygons differ. To compute it, one can use the description of the differential given in paragraph \ref{par:AltDiff}. But in most cases, it is more convenient to add spikes in order to get rectangles and then translate them in a common grid. This can be done since the sign does not depend on the decorations. Then, we can consider paths of configuration  using the elementary moves occuring in the proof of Theorem \ref{D0Preserves} (\S\ref{par:Diff}), \ie
  \begin{itemize}
  \item[-] commuting the order of occurence of two consecutive polygons with disjoint sets of corners;
  \item[-] commuting two L--decompositions.
  \end{itemize}
Each of them switch signs;
\item[Peaks] when the parities of the numbers of peaks pointing to the left differ;
\item[Maps] when $\phi_R$ or $\phi_{ex}$ is involved since they appear with a minus sign in the definition of $\psi$.
\end{description}

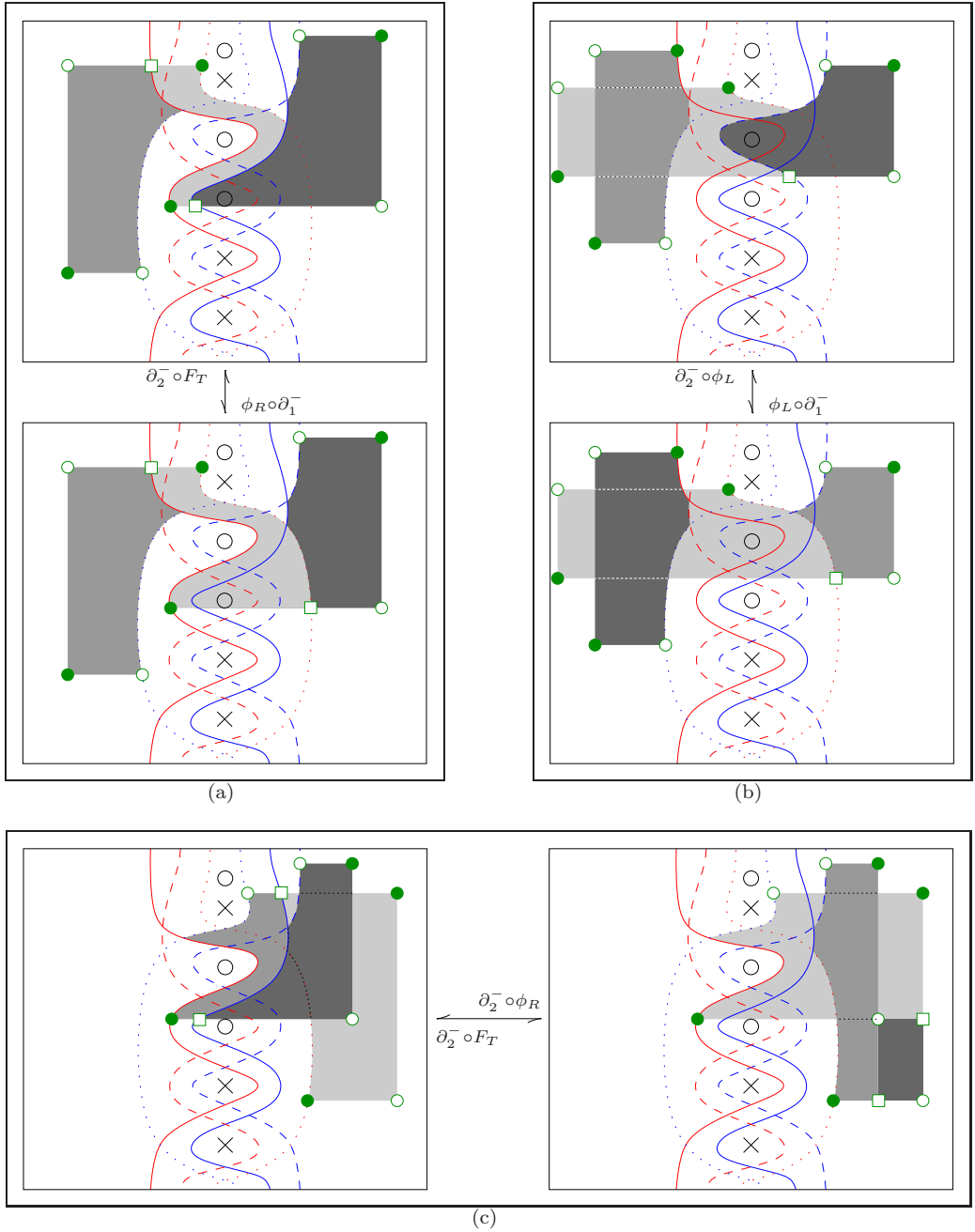
\begin{figure}[p]
  \begin{center}
    \subfigure[]{\fbox{\xymatrix@R=.6cm{
          \dessinH{5.7cm}{s4a} \ar@{^<-_>}[d]_(.46){\p_2^-\circ F_T\ }^(.53){\ \phi_R\circ \p_1^-} \\  \dessinH{5.7cm}{s4b}
        }}} \hspace{1cm}
    \subfigure[]{\fbox{\xymatrix@R=.6cm{
          \dessinH{5.7cm}{33a} \ar@{^<-_>}[d]_(.46){\p_2^-\circ \phi_L\ }^(.53){\ \phi_L\circ\p_1^-} \\  \dessinH{5.7cm}{33b}
        }}}
    \\
    \subfigure[]{\fbox{\xymatrix@R=.6cm@C=1.45cm{
          \dessinH{5.7cm}{m1a} \ar@{^<-_>}[r]_(.46){\p_2^-\circ F_T}^(.54){\p_2^-\circ\phi_R} & \dessinH{5.7cm}{m1b}
        }}}
  \end{center}
  \caption{Special cancelling pairs: {\footnotesize Dark dots describe the initial generator while hollow ones describe the final one. Squares describe intermediate states. Polygons are depicted by shading. The lightest is the first to occur whereas the darkest one is the last. Polygons are stacked in an opaque way. For each configuration, we indicate to which part of $\p^-_2\circ \psi-\psi\circ \p^-_1$ it belongs.}}
  \label{fig:AllCases}
\end{figure}

\addtocounter{figure}{-1}
\addtocounter{subfigure}{3}

\begin{figure}[p]
  \begin{center}

    \subfigure[]{\fbox{\xymatrix@R=.6cm{
          \dessinH{5.7cm}{m2a} \ar@{^<-_>}[d]_(.46){\p_2^-\circ F_T\ }^(.53){\ \phi_R\circ\p_1^-} \\  \dessinH{5.7cm}{m2b}
        }}} \hspace{1cm}
    \subfigure[]{\fbox{\xymatrix@R=.6cm{
          \dessinH{5.7cm}{m3a} \ar@{^<-_>}[d]_(.46){\p_2^-\circ F_T\ }^(.53){\ \p_2^-\circ\phi_R} \\  \dessinH{5.7cm}{m3b}
        }}} \\
    \subfigure[]{\fbox{\xymatrix@R=.6cm@C=1.45cm{
          \dessinH{5.7cm}{m4a} \ar@{^<-_>}[r]_(.46){\p_2^-\circ F_T}^(.54){\phi_R\circ\p_1^-} & \dessinH{5.7cm}{m4b}
        }}}
  \end{center}
  \caption{Special cancelling pairs: {\footnotesize Dark dots describe the initial generator while hollow ones describe the final one. Squares describe intermediate states. Polygons are depicted by shading. The lightest is the first to occur whereas the darkest one is the last. Polygons are stacked in an opaque way. For each configuration, we indicate to which part of $\p^-_2\circ \psi-\psi\circ \p^-_1$ it belongs.}}
\end{figure}

\addtocounter{figure}{-1}
\addtocounter{subfigure}{6}

\begin{figure}[p]
  \begin{center}
    \subfigure[]{\fbox{\xymatrix@R=.6cm{
          \dessinH{5.7cm}{h4a} \ar@{^<-_>}[d]_(.46){\p_2^-\circ F_T\ }^(.53){\ \phi_L\circ\p_1^-} \\  \dessinH{5.7cm}{h4b}
        }}} \hspace{1cm}
    \subfigure[]{\fbox{\xymatrix@R=.6cm{
          \dessinH{5.7cm}{h3a} \ar@{^<-_>}[d]_(.46){\p_2^-\circ F_T\ }^(.53){\ \p_2^-\circ\phi_L} \\  \dessinH{5.7cm}{h3b}
        }}}
    \\
    \subfigure[]{\fbox{\xymatrix@R=.6cm@C=1.45cm{
          \dessinH{5.7cm}{h2a} \ar@{^<-_>}[r]_(.46){\p_2^-\circ F_T}^(.54){F_B\circ\p_1^-} & \dessinH{5.7cm}{h2b}
        }}}
  \end{center}
  \caption{Special cancelling pairs: {\footnotesize Dark dots describe the initial generator while hollow ones describe the final one. Squares describe intermediate states. Polygons are depicted by shading. The lightest is the first to occur whereas the darkest one is the last. Polygons are stacked in an opaque way. For each configuration, we indicate to which part of $\p^-_2\circ \psi-\psi\circ \p^-_1$ it belongs.}}
\end{figure}

\addtocounter{figure}{-1}
\addtocounter{subfigure}{9}

\begin{figure}[p]
  \begin{center}
    \subfigure[]{\fbox{\xymatrix@R=.6cm{
          \dessinH{5.7cm}{46a} \ar@{^<-_>}[d]_(.46){\phi_L\circ \p_1^-\ }^(.53){\ \p_2^-\circ\phi_R} \\  \dessinH{5.7cm}{46b}
        }}} \hspace{1cm}
    \subfigure[]{\fbox{\xymatrix@R=.6cm{
          \dessinH{5.7cm}{55a} \ar@{^<-_>}[d]_(.46){\p_2^-\circ \phi_L\ }^(.53){\ \p_2^-\circ\phi_R} \\  \dessinH{5.7cm}{55b}
        }}}
    \\
    \subfigure[]{\fbox{\xymatrix@R=.6cm@C=1.45cm{
          \dessinH{5.7cm}{39a} \ar@{^<-_>}[r]_(.46){\phi_L\circ \p_1^-}^(.54){\p_2^-\circ\phi_L} & \dessinH{5.7cm}{39b}
        }}}
  \end{center}
  \caption{Special cancelling pairs: {\footnotesize Dark dots describe the initial generator while hollow ones describe the final one. Squares describe intermediate states. Polygons are depicted by shading. The lightest is the first to occur whereas the darkest one is the last. Polygons are stacked in an opaque way. For each configuration, we indicate to which part of $\p^-_2\circ \psi-\psi\circ \p^-_1$ it belongs.}}
\end{figure}

\addtocounter{figure}{-1}
\addtocounter{subfigure}{12}

\begin{figure}[p]
  \begin{center}
 \hspace{1cm}

    \subfigure[]{\fbox{\xymatrix@R=.6cm{
          \dessinH{5.7cm}{38a} \ar@{^<-_>}[d]_(.44){\phi_L\circ \p_1^-\ }^(.57){\ \p_2^-\circ\phi_L} \\  \dessinH{5.7cm}{38b}
        }}} \hspace{1cm}
    \subfigure[]{\fbox{\xymatrix@R=.6cm{
          \dessinH{5.7cm}{40a} \ar@{^<-_>}[d]_(.44){\phi_R\circ \p_1^-\ }^(.57){\ \p_2^-\circ\phi_R} \\  \dessinH{5.7cm}{40b}
        }}}
    \\
    \subfigure[]{\fbox{\xymatrix@R=.6cm@C=1.45cm{
          \dessinH{5.7cm}{43a} \ar@{^<-_>}[r]_(.46){\phi_L\circ \p_1^-}^(.54){\phi_R\circ\p_1^-} & \dessinH{5.7cm}{43b}
        }}}
    \\
    \vspace{1.5cm}
    \subfigure[]{\fbox{\xymatrix@R=.6cm@C=1.45cm{
          \dessinH{5.7cm}{15b} \ar@{^<-_>}[r]_(.46){\p_2^-\circ F_T}^(.54){\phi_R\circ\p_1^-} & \dessinH{5.7cm}{15a}
        }}}
  \end{center}
  \caption{Special cancelling pairs: {\footnotesize Dark dots describe the initial generator while hollow ones describe the final one. Squares describe intermediate states. Polygons are depicted by shading. The lightest is the first to occur whereas the darkest one is the last. Polygons are stacked in an opaque way For each configuration, we indicate to which part of $\p^-_2\circ \psi-\psi\circ \p^-_1$ it belongs..}}
\end{figure}

\addtocounter{figure}{-1}
\addtocounter{subfigure}{16}

\begin{figure}[p]
  \begin{center}
    \subfigure[]{\fbox{\xymatrix@R=.6cm{ \dessinH{5.7cm}{301b} \\
          \dessinH{5.7cm}{301} \ar@{^<-_>}[u]^(.57){F_B\circ\p_1^-\ }_(.44){\ \phi_{ex}\circ \p_1^-} \ar@{^<-_>}[d]^(.57){\p_2^-\circ \phi_R\ }_(.44){\ \phi_{ex}\circ \p_1^-} \\ \dessinH{5.7cm}{301a}
        }}} \hspace{1cm}
    \subfigure[]{\fbox{\xymatrix@R=.6cm{ \dessinH{5.7cm}{9a} \\
          \dessinH{5.7cm}{9} \ar@{^<-_>}[u]^(.57){\ \p_2^-\circ F_T}_(.44){\ \p_2^-\circ \phi_{ex}} \ar@{^<-_>}[d]^(.57){\ \p_2^-\circ \phi_L}_(.44){\ \p_2^-\circ \phi_{ex}} \\ \dessinH{5.7cm}{9b}
        }}}
    \\
    \subfigure[]{\fbox{\xymatrix@R=.6cm{ \dessinH{5.7cm}{18a} \\
          \dessinH{5.7cm}{18} \ar@{^<-_>}[u]^(.57){\p_2^-\circ F_T\ }_(.44){\ \phi_{ex}\circ \p_1^-} \ar@{^<-_>}[d]^(.57){\ F_B\circ\p_1^-}_(.44){\ \phi_{ex}\circ \p_1^-} \\ \dessinH{5.7cm}{18b}
        }}} \hspace{1cm}
    \subfigure[]{\fbox{\xymatrix@R=.6cm{ \dessinH{5.7cm}{10a} \\
          \dessinH{5.7cm}{10} \ar@{^<-_>}[u]^(.57){\ \p_2^-\circ F_T}_(.44){\ \p_2^-\circ \phi_{ex}} \ar@{^<-_>}[d]^(.57){\ F_B\circ\p_1^-}_(.44){\ \p_2^-\circ \phi_{ex}} \\ \dessinH{5.7cm}{10b}
        }}}
  \end{center}

  \caption{Exceptional cancelling pairs: {\footnotesize Dark dots
      describe the initial generator while hollow ones describe the
      final one. Squares describe intermediate states. Polygons are
      depicted by shading. The lightest is the first to occur
      whereas the darkest one is the last. Polygons are stacked in
      an opaque way. For each configuration, we indicate to which part of $\p^-_2\circ \psi-\psi\circ \p^-_1$ it belongs.}}
\end{figure}

Now, we can draw up a summary table:

\begin{center}
  \begin{tabular}{|l|c|c|c|c|c|c|c|c|c|c|c|c|}
    \cline{2-13}
    \multicolumn{1}{c|}{}&\rotatebox{90}{fig.(a)$\ $}&\rotatebox{90}{fig.(b)$\ $}&\rotatebox{90}{fig.(c)$\ $}&\rotatebox{90}{fig.(d)$\ $}&\rotatebox{90}{fig.(e)$\ $}&\rotatebox{90}{fig.(f)$\ $}&\rotatebox{90}{fig.(g)$\ $}&\rotatebox{90}{fig.(h)$\ $}&\rotatebox{90}{fig.(i)$\ $}&\rotatebox{90}{fig.(j)$\ $}&\rotatebox{90}{fig.(k)$\ $}&\rotatebox{90}{fig.(l)$\ $}\\
    \hline
    Order         &-&-&+&-&+&-&-&+&-&-&+&+\\
    \hline
    Configuration &+&-&+&-&+&-&+&-&+&+&-&+\\
    \hline
    Peaks         &-&-&+&+&+&+&+&+&+&-&-&-\\
    \hline
    Maps          &-&+&-&-&-&-&+&+&+&-&-&+\\
    \hline
  \end{tabular}
\end{center}
\begin{center}
  \begin{tabular}{|l|c|c|c|c|c|c|c|c|c|c|c|c|}
    \cline{2-13}
    \multicolumn{1}{c|}{}&\rotatebox{90}{fig.(m)$\ $}&\rotatebox{90}{fig.(n)$\ $}&\rotatebox{90}{fig.(o)$\ $}&\rotatebox{90}{fig.(p)$\ $}&\rotatebox{90}{fig.(q), top$\ $}&\rotatebox{90}{fig.(q), bottom$\ $}&\rotatebox{90}{fig.(r), top$\ $}&\rotatebox{90}{fig.(r), bottom$\ $}&\rotatebox{90}{fig.(s), top$\ $}&\rotatebox{90}{fig.(s), bottom$\ $}&\rotatebox{90}{fig.(t), top$\ $}&\rotatebox{90}{fig.(t), bottom$\ $}\\
    \hline
    Order         &-&-&-&-&+&-&+&+&-&+&+&-\\
    \hline
    Configuration &+&+&+&-&-&-&+&+&+&+&+&+\\
    \hline
    Peaks         &+&+&-&+&-&-&+&+&-&+&+&-\\
    \hline
    Maps          &+&+&-&-&-&+&-&-&-&-&-&-\\
    \hline
  \end{tabular}
\end{center}
  Every column has an odd number of minus signs.\\

  The \textbf{Configuration} row may need some details. As a model, we give the computations for the case (e). After having added spikes, the configurations of rectangles correspond to the two extremal terms in the following equality :
  $$
  \dessin{1.8cm}{Conf1}\ =\ -\ \dessin{1.8cm}{Conf2}\ =\ \dessin{1.8cm}{Conf3}.
  $$

Nevertheless, cases (g), (h), (i), (l), (q), (r), (s) and (t) cannot be treated in this way. However, computations can be made in the group $\Sn$. Then, cases (r), (s) and (t) are consequences of Lemma \ref{SpinProp}, i) (\S\ref{par:SpinExt}), cases (g) and (h) of ii) and case (q) of iii). Cases (i) and (l) are direct applications of the definition of $\Sn$.
\end{proof}

\begin{remarque}
  If we restrict ourself to $\FF_2$--coefficients or to the graded versions, then the exceptional term $\phi_{ex}$ can be swept.
\end{remarque}

\subsubsection{Graded quasi-isomorphisms}
\label{par:Isomorphism}

Now we can consider the filtration introduced in paragraph \ref{par:MainDefinition} and already used in paragraph \ref{par:LastFiltration}. The associated graded map $\psi_{gr}$ corresponds then to the horizontal arrows in Figure \ref{fig:SemiSingTable}. It has been proved in paragraph \ref{par:LastFiltration} that they are quasi-isomorphisms. We thus conclude using Corollary \ref{QuasiIso} (\S\ref{par:MapCones}).

\subsection{Hint for the remaining cases}
\label{SingSing}

In this section, we gives some hints for direct proofs of the invariance under the last two moves. However, using mixed singular grids, these moves can be replaced by a third one. The proof of invariance under this move is then much easier. Details are written in Appendix \ref{appendix:MixedSimplification}.

\subsubsection{Strategy}
\label{par:Strategy}

Now, we must deal with the fully singular commutation move and the flip move. Since they imply handling an indecent number of cases, we will not give complete proofs but a strategy in four steps. This is actually the strategy which has been used in most of the previous invariance proofs:
\begin{enumerate}
\item[I.] Find a filtration for which the graded parts of differentials are simpler;
\item[II.] Find a graded quasi-isomorphism;
\item[III.] Extend this graded quasi-isomorphism to a filtrated one;
\item[IV.] Apply Corollary \ref{QuasiIso} (\S\ref{par:MapCones}).
\end{enumerate}

Unfortunatly, the third step requires to check an impressive number of distincts cases. Due to the lack of time, we have to elude this step.\\

The fourth one is self-contained.\\

For the remaining two ones, we will essentially consider the filtration associated to the number of singular columns positively resolved\footnote[1]{Actually, it is even sufficient to count positively resolved columns among the singular columns involved in the move we are dealing with.} (see paragraph \ref{par:LastFiltration}). It thus reduces the problem to regular grids.\\

Now the issue is to project the considered move into the regular world.

\subsubsection{Fully singular commutation}
\label{par:SingSingComm}

Like the semi-singular commutation is an avatar of the regular one, the fully singular commutation can be seen as the composition of two semi-singular ones:
$$
\xymatrix@!0@C=3.5cm@R=3.5cm{
  \dessin{2cm}{CRSS11} \ar[r] \ar@{-->}[d] &  \dessin{2cm}{CRSS12} \ar[r] &  \dessin{2cm}{CRSS13} \ar@{-->}[d] \\
  \dessin{2cm}{CRSS21} \ar[r] &  \dessin{2cm}{CRSS22} \ar[r] &  \dessin{2cm}{CRSS23}
}.
$$
Now we can iterate the process in order to get only regular grids but we can also only consider the filtration associated to the resolution of the commuting singular column with the lowest decorations. Then, we can use four copies of the maps defined in paragraph \ref{par:SemiSingMap} which have already been proved to be quasi-isomorphisms. Step II is then completed.

\subsubsection{Flip}
\label{par:FlipInvariance}

Instead of proving directly the invariance under the flip move, we consider the following uneven move
$$
\xymatrix{\dessin{1.3cm}{UnevenMove11} \ar@{<->}[r] & \dessin{1.3cm}{UnevenMove22}}.
$$
According to Lemma \ref{EquivalentMoves} (\S\ref{par:Flip}), it is equivalent to the flip move.\\
At the level of regular grid, this move can be decomposed into regular commutations:
$$
\xymatrix@!0@C=3.5cm@R=3.5cm{
  \dessin{2cm}{GF11} \ar[r] \ar@{-->}[d] &  \dessin{2cm}{GF12} \ar[r] &  \dessin{2cm}{GF13} \ar[r] &  \dessin{2cm}{GF14}\ar@{-->}[d] \\
  \dessin{2cm}{GF21} \ar[r] &  \dessin{2cm}{GF22} \ar[r] &  \dessin{2cm}{GF23} \ar[r] &  \dessin{2cm}{GF24}
}.
$$
For each of them, we have already defined a quasi-isomorphism. Now it remains to compose them to accomplish the step II.


\vspace{2cm}
The singular definitions are now given. In the next chapter, we will discuss them and prove a few properties.


\chapter[Discussion on singular link Floer homology]{Discussion on singular\\ link Floer homology}
\label{chap:Applications}

\section{Variants}
\label{sec:Variants}

\subsection{Graded homologies}
\label{sec:GradedHomologies}

\subsubsection{Graded singular homologies}
\label{sec:GradSingHomologies}

As in the regular case, $C^-(G)$ is associated with an Alexander filtration and $O$--filtration for all $O\in\O$. And as in the reguler case, the associated graded objects are the interesting ones.

\begin{theo}
  All the graded homologies derivated from the Alexander filtration and/or $O$--filtration for some $O\in\O$ on $C^-(G)$ are invariant under the elementary singular grid moves except the stabilization which may tensorize the homology by the module $V$ defined in paragraph \ref{par:LinkInv}\footnote[1]{This phenomenon occurs if and only if the two variables $U_1$ and $U_2$ involved in the point ii) in paragraph \ref{par:RegStab} are send to zero.}.
\end{theo}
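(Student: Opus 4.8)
The proof should follow the same four-step strategy that has been used throughout the invariance section: find a filtration making the differential simpler, produce a graded quasi-isomorphism, extend it to a filtrated one, and invoke Corollary \ref{QuasiIso}. Here, however, the situation is lighter because we already have, for each elementary singular grid move, a filtrated chain map $\Phi$ between the associated complexes $C^-(G)$ and $C^-(G')$ whose associated graded map $\Phi_\gr$ (with respect to the resolution filtration of \S\ref{par:LastFiltration}) is, on each subcomplex attached to a desingularization, the corresponding regular quasi-isomorphism of \cite{MOST}. The key observation is that all the filtered maps built in Chapter \ref{chap:Singular} — the isotopy-of-arcs isomorphism $\varphi$ (\S\ref{par:ArcsIsotopies}), the stabilization map $F$ (\S\ref{par:FgrExtension}), the commutation map $\phi_{\alpha\beta}$ (\S\ref{par:RegCommutation}), the semi-singular commutation map $\psi$ (\S\ref{par:SemiSingMap}), and the maps for the last two moves obtained by composition (\S\ref{par:SingSingComm}, \S\ref{par:FlipInvariance}) — are built out of polygons (rectangles, pentagons, hexagons, \ldots) carrying explicit $U_O$--weights, so that sending some subset of the $U_O$ to zero (equivalently, passing to the associated graded object for the corresponding $O$--filtration) and/or restricting rectangles to those avoiding the $X$'s (the Alexander graded differential of \S\ref{par:FloerHomologies}) is a purely formal operation that commutes with all of these maps.

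Thus the plan is as follows. First I would record that for every elementary singular grid move other than stabilization, the filtered chain map $\Phi$ constructed in Chapter \ref{chap:Singular} remains a chain map after passing to the graded differential associated to the Alexander filtration and/or any collection of $O$--filtrations: this is immediate because the defining sums are over the very same polygons, now subject only to the extra combinatorial conditions (no $X$ inside; $O_i\notin\rho$) which are preserved under composition and cancellation exactly as in the proofs of Propositions \ref{D0Preserves}, \ref{D1Preserves}, \ref{D2Preserves} and \ref{Cchain}. Next I would argue that this graded $\Phi$ is a quasi-isomorphism: one applies the resolution filtration of Remark \ref{3emeFiltration} (\S\ref{par:MainDefinition}) to the graded complexes, whose associated graded pieces are the direct sums over desingularizations of the \emph{regular} graded combinatorial link Floer complexes; on each such piece the induced map is the graded version of the corresponding regular quasi-isomorphism of \cite{MOST}, hence a quasi-isomorphism; then Corollary \ref{QuasiIso} (\S\ref{par:MapCones}) upgrades this to the conclusion that graded $\Phi$ itself is a quasi-isomorphism, so the graded homology is unchanged by the move.

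Finally, I would treat stabilization separately, reusing the analysis of \S\ref{par:RegStab}. There the filtered map $F$ identifies $C^-(G_s)$ with the mapping cone of multiplication by $U_2-U_1$ on $C^-(G)\otimes\Z[U_1]$ (point ii), and by Proposition \ref{UiUj} this multiplication becomes homotopic to zero precisely when $O_1$ and $O_2$ lie in the same link component, in which case the cone is quasi-isomorphic to a tensor with $V$; when $U_1$ and $U_2$ are both sent to zero the cone degenerates in the expected way, whence the $\otimes V$ ambiguity, and otherwise — when at least one of $U_1,U_2$ survives — one checks directly from the description of $F_\gr$ in \S\ref{par:RegStab}.iv that it is already a graded quasi-isomorphism, so the homology is genuinely invariant. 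Collecting these cases yields the statement. The only real subtlety, and the step I expect to be the main obstacle, is bookkeeping of the stabilization case: one must verify carefully that the footnote's dichotomy is correct, i.e. that the tensor-by-$V$ phenomenon occurs if and only if the two relevant variables are simultaneously killed, which amounts to tracking precisely which $O$--filtrations are being quotiented and whether the corresponding decorations share a component — exactly the sort of homological algebra already carried out, mutatis mutandis, in the proof of Proposition \ref{UiUj}.
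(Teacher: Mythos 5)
Your proof takes essentially the same route as the paper's, which is only a one-paragraph remark: since the rules of \S\ref{par:FloerHomologies} persist, the filtrated proofs of \S\ref{par:ArcsIsotopies}--\S\ref{par:FlipInvariance} remain valid once one restricts to polygons avoiding the decorations forbidden by the chosen filtration(s), and the resolution filtration together with Corollary~\ref{QuasiIso} then upgrades the resulting graded quasi-isomorphisms exactly as you describe. The one blemish is the appeal to Proposition~\ref{UiUj} in the stabilization step: multiplication by $U_2-U_1$ on $C^-(G)\otimes\Z[U_1]$ is \emph{not} nullhomotopic (if it were, even the filtrated $H^-$ would change under stabilization, and $U_1$ is a fresh indeterminate with no $\O$--decoration of $G$ behind it, so the homotopy of Proposition~\ref{UiUj} does not apply); the $\otimes V$ phenomenon arises in the graded case because the map $\times(U_2-U_1)$ becomes literally zero once both variables are killed, which is precisely what your following clause says, so that remark should simply be dropped.
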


Since the rules given in paragraph \ref{par:FloerHomologies} still hold, it is straighforward to check that the previous proofs given for the filtrated case in paragraphs \ref{par:ArcsIsotopies}---\ref{par:FlipInvariance} remain valid as soon as we restrict ourself to the polygons which does not contains the decorations which are forbidden\footnote[2]{depending on the filtrations we are turning into gradings}.\\

Since the singular link Floer homology can be seen as an extension of link Floer homology to singular links, we keep the notation given in the table of paragraph \ref{par:TableHomologies}.

\subsubsection{Relation between graded singular homologies}
\label{par:RelationGraded}

In the regular case, Proposition \ref
{UiUj} (\S\ref{par:LinkInv}) gives a strong relation between  $\widehat{HL}_*(G)$ and  $\widetilde{HL}_*(G)$. This relation does still hold in the singular case.

\begin{prop}\label{Dunealautre}
  Let $G$ be a singular grid $G$ of size $(n,k)\in \N^*\times\N$ which is a presentation for a link with $\ell\in\N^*$ components. Then
$$
\widetilde{HL}_*(G)\equiv \widehat{HL}_*\otimes V^{\otimes (n-\ell)}
$$
where $V$ is defined in paragraph \ref{par:LinkInv}.
\end{prop}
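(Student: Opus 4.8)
The strategy is to mimic the proof of Proposition \ref{UiUj} (\S\ref{par:LinkInv}) in the regular case, showing that for any two indices $i,j\in\llbracket 1,n+k\rrbracket$ such that $O_i$ and $O_j$ lie on the same component of the link, the multiplications by $U_i$ and $U_j$ on $C^-(G)$ are homotopic as filtrated chain maps, and then deducing the statement by successively quotienting out the variables attached to the $n-\ell$ ``redundant'' basepoints. The key point is that $C^-(G)$ is built as a generalized cone over the cube of resolution, so one needs a homotopy that is compatible with all three pieces $\p_0^-$, $\p_1^-$, $\p_2^-$ of the differential.

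\textbf{First step: the building block homotopy.} As in \cite{MOST}, I would first treat a pair $(i,j)$ for which there is an $X$--decoration $X_*$ sharing the row of $O_i$ and the column of $O_j$ (or the analogous adjacency), and define the $\Z[U_1,\dots,U_{n+k}]$--linear map $H$ on $C^-(G)$ by counting, instead of rectangles, the full repertoire of domains used in the differential (rectangles for $\p_0^-$, pentagons for $\p_1^-$, hexagons for $\p_2^-$), restricted to those which contain $X_*$ in their interior, with the same sign conventions (including the minus signs attached to peaks pointing to the left, and the extra minus sign in the definition of $f_{I(i:\star,j:\star)}$). One then checks that $\p_G^-\circ H + H\circ\p_G^-$ equals multiplication by $U_i - U_j$: most terms cancel exactly as in the proofs of Theorem \ref{D0Preserves} (\S\ref{par:Diff}) and Proposition \ref{Cchain} (\S\ref{par:Consistency}), since juxtapositions of two domains with disjoint corner sets, and the two L--decompositions, cancel in pairs; the only surviving contributions are thin vertical/horizontal annuli containing $X_*$, and among pentagon/hexagon configurations the special annular terms around the arcs $\alpha_l,\beta_l$, all of which combine to produce precisely $U_i-U_j$. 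Adding such maps $H$ along a chain of valid adjacencies, as in the regular case, shows that $U_i$ and $U_j$ are homotopic whenever $O_i,O_j$ belong to the same component.

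\textbf{Second step: quotienting out variables.} Once multiplication by $U_i$ and $U_j$ are homotopic, the standard homological algebra recalled in the proof of Proposition \ref{UiUj} applies verbatim: $C^-(G)/(U_i,U_j)$ is quasi-isomorphic to the mapping cone of multiplication by $U_j$ on $\big(C^-(G)/U_i\big)[-2]\{-1\}\to C^-(G)/U_i$, and since this last map is null-homotopic, Corollary \ref{AcyclicCone} (\S\ref{par:MapCones}) together with the mapping cone long exact sequence gives $C^-(G)/(U_i,U_j)\simeq \big(C^-(G)/U_i\big)\otimes V$, where $V$ is the two-dimensional bigraded module of paragraph \ref{par:LinkInv}. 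Iterating: whenever we quotient by $U_k$ with $O_k$ on the same component as some already-killed basepoint, the homology is tensored by an extra copy of $V$. Starting from $\widehat{H}_*(G)$ ($U_1=\dots=U_\ell=0$) and passing to $\widetilde{H}_*(G)$ ($U_1=\dots=U_{n+k}=0$) we kill $n+k-\ell$ further variables; but $k$ of them are attached to $O$'s in singular columns which, by construction, introduce no extra reduction beyond the $n-\ell$ genuinely redundant basepoints of the underlying $n$-component-labelled grid — more precisely the bookkeeping of paragraph \ref{par:TableHomologies} and the size conventions give exactly $n-\ell$ tensor factors of $V$, matching the regular formula. After passing to the Alexander-graded homologies this yields $\widetilde{HL}_*(G)\equiv \widehat{HL}_*\otimes V^{\otimes(n-\ell)}$.

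\textbf{Main obstacle.} The delicate part is the sign and cancellation analysis in the first step: verifying that $\p_G^-\circ H+H\circ\p_G^-$ is \emph{exactly} $U_i-U_j$ requires re-examining every configuration type that appears in the proof of Proposition \ref{Cchain}, now with $X_*$ marked, and confirming that the peak-orientation signs and the distinguished minus sign on hexagon-maps $f_{I(i:\star,j:\star)}$ conspire so that all unwanted annular and juxtaposition terms still cancel in pairs. The bookkeeping of which $n-\ell$ variables may be freely eliminated (and the fact that the $k$ singular columns do not spoil the count) is the secondary point requiring care, but it is essentially combinatorial once the component structure of the singular link is fixed.
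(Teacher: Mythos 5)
Your proof takes essentially the same route as the paper: the author simply transposes the regular-case argument of Proposition \ref{UiUj} (a homotopy between the multiplications by $U_i$ and $U_j$ built from domains through a marked $X_*$, followed by the standard quotient algebra), and the only point singled out for attention is the special cancelling pairs of Figure \ref{fig:Cancel} --- exactly the lateral-annuli concern you isolate --- which the paper settles by observing that, for a fixed initial generator, the paired left and right domains contain the same decorations. So your proposal matches the paper's proof both in substance and in its identification of the delicate step.
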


Actually, the proof given in the regular case can be transposed to the singular one without any trouble. Only the special cases represented in Figure \ref{fig:Cancel} could potentially be worrying, but it can be easily seen if a given decoration does simultaneously intervenes or not in two such terms.

\begin{defi}
  Let $L$ be a singular link and $G$ a grid presentation for it. The homology groups $\widehat{HL}_*(G)$ are called \emph{singular link Floer homology for $L$}\index{homology!knot Floer!singular}\index{homology!link Floer!singular} and are denoted $\widehat{HL}_*(L)$.
\end{defi}

\subsection{Another choice for peaks}
\label{sec:AnotherChoice}

\subsubsection{Another singular link Floer homology}
\label{par:H'}

All along the previous chapter, we have used functions of type $\dessin{.4cm}{Cstyle}$ as key ingredients for constructing cubes of resolution and then defining homologies. But we have made the choice indeed arbitrary to consider the arcs intersections of type $\dessin{.4cm}{Cstyle}$, as kings of the dance. As pointed out in the paragraph \ref{ssec:Variants}, there is another consistent choice.
$$
\dessin{3.65cm}{CombinedG12}
$$
Actually, it is easy to check that we observe the same behavior when considering arcs intersections of type $\dessin{.4cm}{Cpstyle}$ instead.\\

This defines a second singular link Floer homology which is also an invariant of the singular link. The only point which may deserve discussion is the proof of invariance under commutation moves involving singular columns. But using the trick depicted in Figure \ref{fig:PermutingDecorations}, one can exchange the location of $\X$ and $\O$--decorations in any singular column. Then, the same proof remains valid without the slightest modification.\\

We denote this second homology by $H^-_{\dessin{.2cm}{Cpstyle}}$.

\subsection{Columns against rows}
\label{ssec:ColumnsVsRows}

\subsubsection{Flipping grids}
\label{par:FlippingGrids}

Another arbitrary choice is to consider grids with singular columns instead of singular rows. However, reflecting a grid along the line parametrized by $y=x$ swaps singular rows and singular columns. Moreover, it defines a bijection between generators which preserves Maslov and Alexander gradings. This map clearly commutes with differentials. The induced sign refinement is not exactly a sign assigment as defined in paragraph \ref{par:SignAssignment}, since it sends vertical annuli to $1$ and horizontal ones to $-1$, but replacing $U_i$ by $-U_i$ for all $i\in\llbracket 1,n\rrbracket$ corrects this.\\

Concerning the associated link, reflecting a grid only reverses its orientation.\\

In conclusion, defining a singular link Floer homology using singular rows gives the same result as composing the singular link Floer homology developped in this thesis with the operation which reverses the orientation.

\begin{remarque}
  The construction given in this thesis can be extended to mixed singular grids. It reduces the number of elementary moves and simplifies the proofs of invariance. Details are written in Appendix \ref{appendix:MixedSimplification}. Moreover, it clarifies the relation between the homologies of a singular link and of its orientation reversed image. It clarifies as well the relation between the homologies of a link and of its mirror image.
\end{remarque}


\section{Almost an inverse}
\label{sec:AlmostInverse}

Let $G$ be a grid with a distinguished singular column $S$.\\

As pointed out in paragraph \ref{par:HomologyAsCone}, the complex $C^-(G)$ can be understood as a mapping cone. The underlying map $f_S$ is defined as the sum over pentagons with at least one peak in S. Naturally, $f_S$ is not, in general, a quasi-isomorphism. However, it is not far from it.\\

We denote by $G^+$ (resp. $G^-$) the grid obtained from $G$ by resolving $S$ positively (resp. negatively) and by $\alpha_S$ and $\beta_S$ the two arcs chosen in $S$ to define $C^-(G)$.

\subsubsection{More and more peaks}
\label{par:MorePeaks}

By construction, all the polygons involved in $f_S$ have their peak on $S$, located between an $X$ and a $O$. We can define \emph{empty inversed polygons}\index{polygon!inversed} by modifying, in the definition, the point stating the location of the peak on $S$ and asking that it lies on the intersection of $\alpha_S$ and $\beta_S$, which is located between the two $\O$--decorations.\\
We denote by $\Pol^{-1}_S(x,y)$ the set of such empty polygons which connect a generator $x$ to a generator $y$.\\
\begin{figure}[h]
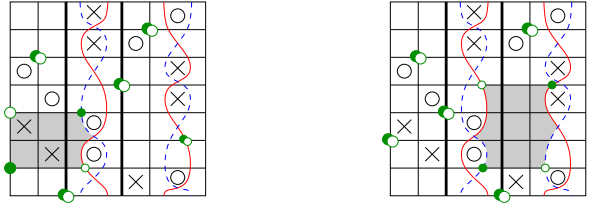

$$
\dessin{3cm}{InvPent} \hspace{2cm} \dessin{3cm}{InvHex}
$$
    \caption{Examples of empty inversed polygons: {\footnotesize dark dots describe the generator $x$ while hollow ones describe $y$. Polygons are depicted by shading. The distinguished singular column $S$ is depicted by wider vertical lines.}}
\end{figure}

Concerning the gradings, such inversed polygons behave slightly differently than the usual ones.

\begin{prop}\label{InversedGrading}
  Let $x$ and $y$ be respectively generators of $C^-(G^-)$ and $C^-(G^+)$.\\
If $\pi$ is an inversed polygon connecting $x$ to $y$, then
\begin{gather*}
  M_{G^-}(x)-M_{G^+}(y)= \eta(\pi)-2\#(\pi\cap\O)\\[.3cm]
  A_{G^-}(x)-A_{G^+}(y)= -1+\#(\pi\cap\X)-\#(\pi\cap\O),
\end{gather*}
where $\eta(\pi)$ is equal to $0$ for pentagons and $-1$ for hexagons.
\end{prop}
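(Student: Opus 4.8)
The plan is to mimic the proof of Corollary \ref{PentDegree} (\S\ref{par:SpiPent}), since an inversed polygon is, up to glueing one spike (for a pentagon) or two spikes (for a hexagon), a rectangle on $G^-$ — or equivalently on $G^+$ once one keeps careful track of which side the peak sits on. First I would establish the rectangle decomposition: for an inversed pentagon $\pi\in\Pol^{-1}_S(x,y)$ with $x$ a generator of $C^-(G^-)$ and $y$ a generator of $C^-(G^+)$, there is a unique spike $\tau$ (with support a horizontal grid line, corner at the arcs intersection $c''\in\alpha_S\cap\beta_S$ between the two $\O$'s) such that $\rho:=\pi\cup\tau$ is an empty rectangle, say in $\Rect(G^-)$ connecting $x$ to an intermediate generator $z$; for an inversed hexagon there is a unique pair $(\tau,\tau')$ playing the same role. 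This is the exact analogue of Propositions \ref{Pent->Rect} and \ref{Hex->Rect}, and the proof is the same embankment argument.

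Next I would invoke invariance of the gradings under cyclic permutations of rows and columns (Proposition \ref{PermPreserves}, \S\ref{par:MaslAlex}) together with Proposition \ref{Standard} (\S\ref{par:PosNegCrossing}) to reduce to the standard configuration of the four decorations in $S$. As in the proof of Corollary \ref{PentDegree}, the intermediate generator $z$ (obtained after glueing the spikes) is, as a set of dots, identical to $y$; only the decorations of $G^-$ versus $G^+$ differ, and these two grids differ precisely by the commutation of the two columns of $S$. So I would compute $M_{G^+}(y)-M_{G^-}(z)$ and $A_{G^+}(y)-A_{G^-}(z)$ directly, case by case according to where the support(s) of the spike(s) lie relative to the four commuting decorations. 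The key point — and the source of the $\eta(\pi)$ term — is that the inversed peak sits \emph{between the two $O$'s} rather than between an $X$ and an $O$, so the spike(s) pass through a region containing one $O$--decoration (pentagon case) or an $O$--decoration differently than in Corollary \ref{PentDegree}; this shifts the count by exactly $0$ for a pentagon and by $-1$ for a hexagon, and accounts for the $-1$ in the Alexander formula via the extra $\O$ (or absence of an $\X$) in the spike region. Then I would combine with Proposition \ref{RectGradings} (\S\ref{par:Rect}) applied to $\rho$ on $G^-$, and with the elementary relation between $M_{G^-}(z),A_{G^-}(z)$ and $M_{G^+}(y),A_{G^+}(y)$, to assemble the stated equalities.

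The main obstacle I expect is bookkeeping of signs and offsets: getting the precise value $\eta(\pi)$ right requires enumerating all the relative positions of the spike support(s) with respect to the standard configuration and checking that in every case the discrepancy with the ``direct'' pentagon/hexagon count is exactly $0$ (pentagon) or $-1$ (hexagon), and likewise that the Alexander shift is uniformly $-1$. This is the kind of nine-zones computation appearing in the sketch of Proposition \ref{RectGradings}, done twice (once for $G^-$, once for $G^+$) and with the inversed peak location; no single step is deep, but the enumeration must be exhaustive. Everything else — the rectangle decomposition, the reduction to standard configuration, the appeal to Proposition \ref{RectGradings} — is routine once that local computation is in hand.
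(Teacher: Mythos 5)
Your approach matches the paper's, whose proof is a one-line pointer to the proof of Corollary \ref{PentDegree} (\S\ref{par:SpiPent}): fill the inversed polygon with spikes to get a rectangle in $\Rect(G^-)$, reduce to the standard configuration via Propositions \ref{PermPreserves} and \ref{Standard}, compare $M$ and $A$ on the intermediate generator $z$ and on $y$ case by case according to the spike supports, and apply Proposition \ref{RectGradings}. One conceptual correction, though it does not affect the validity of the case-by-case computation you describe: the $\eta(\pi)$ term is not caused by the inversed peak location. Usual hexagons also increase the Maslov grading by one (Proposition \ref{D2Preserves}), so $\eta(\pi)$ depends only on the number of spikes glued on (zero for rectangles, one for pentagons, two for hexagons), exactly as in the non-inversed case; the only effect of placing the peak between the two $\O$--decorations rather than between an $\X$ and an $\O$ is the uniform $-1$ shift in the Alexander formula.
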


The proof is similar than for usual polygons.

\subsubsection{A candidate for an inverse...}
\label{par:Candidate}

Now, we can set $\func{g_S}{C^-(G^-)}{C^-(G^+)}$ the morphism of $\Z[U_{O_1},\cdots,U_{O_n}]$--modules defined on the generators by
$$
g_S(x)=\sum_{\substack{y \textrm{ generator}\textcolor{white}{R}\\\textrm{of }C^-(G^+)\textcolor{white}{I}}} \sum_{\pi\in \Pol^{-1}_S(x,y)} \e(\pi)U_{O_1}^{O_1(\pi)}\cdots U_{O_n}^{O_n(\pi)}\cdot y,
$$
where the sign $ \e(\pi)$ is defined as usual.

\begin{prop}
  The map $g_S$ is a chain map. Moreover $f_S\circ g_S$ and $g_S\circ f_S$ are homotopic to the identity map.
\end{prop}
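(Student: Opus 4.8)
The strategy mirrors the structure used for the switch morphism in paragraph \ref{par:WallMap} and for the homotopy in paragraph \ref{par:UpToHom}. First I would check that $g_S$ is a chain map, \ie that it anti-commutes with the differentials $\p^-_{G^\mp}$. The argument is \textit{mutatis mutandis} the one for $f_S$ in Proposition \ref{D1Preserves} (\S\ref{par:WallMap}): a juxtaposition of an inversed polygon with a rectangle arising in $g_S\circ\p^-_{G^-}$ has an alternative decomposition arising in $\p^-_{G^+}\circ g_S$, with the usual list of cancelling pairs, together with a finite set of special cases coming from thin annuli surrounding $\alpha_S$ and $\beta_S$ which cancel in pairs because the left and right such domains carry the same decorations. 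The sign bookkeeping is handled exactly as for $f_S$, the only new feature being the extra $-1$ shift in the Alexander grading recorded in Proposition \ref{InversedGrading} (\S\ref{par:MorePeaks}), which does not affect anti-commutativity.

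Next I would exhibit the homotopies. For $f_S\circ g_S$, I would build a map $\func{h}{C^-(G^-)}{C^-(G^-)}$ of Maslov degree $+1$ by counting "wide" domains that are the union of an inversed polygon and an ordinary pentagon-with-peak-in-$S$ sharing a common edge — equivalently, annular-type domains that sweep a dot from $\beta_S$ up to $\alpha_S$ and back down. The key identity to verify is
$$
f_S\circ g_S - \Id = \p^-_{G^-}\circ h + h\circ \p^-_{G^-},
$$
and the proof is the same combinatorial bookkeeping as in Proposition \ref{Homotopie} (\S\ref{par:UpToHom}): the composite $f_S\circ g_S$ decomposes the wide domains one way, $h\circ\p^-$ and $\p^-\circ h$ decompose them the other way, and the leftover terms cancel, the surviving terms being precisely the thin vertical (or horizontal) annuli around $S$, which reassemble to the identity with the correct sign. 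The same recipe with the roles of $G^+$ and $G^-$ exchanged gives the homotopy for $g_S\circ f_S$. One must be slightly careful that the degree-shift in Proposition \ref{InversedGrading} makes $f_S\circ g_S$ and $g_S\circ f_S$ of Alexander degree $+1$ rather than $0$; so strictly speaking they are \emph{filtrated} chain maps homotopic to the identity through the filtration — this is the sense in which $g_S$ is "almost an inverse", and matches the statement in the introduction that the inverse is filtered of degree $1$.

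\textbf{Main obstacle.} The routine steps (that $g_S$ commutes with differentials, and that the composites differ from $\Id$ by a boundary) are, as elsewhere in this chapter, a matter of enumerating juxtapositions of polygons; the genuine difficulty is the \emph{sign verification} in the homotopy identities. The inversed polygons carry peaks at a different arc intersection (between the two $\O$'s rather than between an $X$ and an $\O$), so the orientation map $\e$ must be re-analyzed: one has to confirm that gluing a spike to an inversed polygon still lands in $\Rect(G^\pm)$ with a predictable sign, and that the "pointing left / pointing right" dichotomy producing the $\pm\e(\phi(\pi))$ convention for $f_S$ has its correct analogue here. As in paragraph \ref{par:SemiSingMap}, the cleanest way to control these signs is to translate the composites into the spin group $\Sn$ via the alternative description of $\p^-_G$ in paragraph \ref{par:AltDiff}, and to appeal to Lemma \ref{SpinProp} (\S\ref{par:SpinExt}) for the few genuinely non-local sign relations; everything else reduces to commuting L--decompositions and reordering polygons with disjoint corner sets, each of which flips a sign in a controlled way. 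Once the homotopies are in place, Proposition \ref{induction} (\S\ref{par:HomChain}) shows $f_S$ is a quasi-isomorphism, recovering Corollary \ref{AcyclicCone} in this setting, though — again — only up to the degree-$1$ filtration shift.
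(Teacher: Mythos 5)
Your overall plan is the right one, and you correctly anticipate that the homotopy should be defined as a count of a new class of polygons, that the verification proceeds by enumerating cancelling pairs as in the earlier proofs, and that the annular terms along $S$ are what produce the identity (with the minus sign coming from verticality). You also correctly note the degree issues: $g_S$ is filtered of degree $1$ in the Alexander direction, and the homotopy statement is for the unfiltered Maslov-graded objects.

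The concrete gap is in the definition of the homotopy domains. The paper introduces a genuinely new kind of polygon to define $H_\beta$: it takes the $\h$exagons and $\h$eptagons of paragraph \ref{par:NewPolygons} and modifies the distinguished edge $e$ to be the short piece of $\beta_S$ whose endpoints are the two possible peaks (the one between an $X$ and an $O$ used by $f_S$, and the one between the two $O$'s used by $g_S$), with the interior required to stay off $\beta_S$ near both peaks. These are single hexagons or heptagons carrying $e$ as an edge; they are not unions of an inversed polygon with an ordinary pentagon, and they are not annuli. Your two proposed descriptions both miss this. A composite $g_S\circ f_S$ decomposes as a pentagon from $C^-(G^+)$ to $C^-(G^-)$ followed by an inversed polygon back to $C^-(G^+)$, and the two pieces share the \emph{corner} on $\beta_S$ carried by the intermediate generator, not an edge; if you try to force an edge-union you no longer get a well-defined hexagon with the right corner pattern. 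Your ``annular-type domains that sweep a dot from $\beta_S$ up to $\alpha_S$ and back'' is closer to describing the exceptional thin annular terms that produce the $\Id$ contribution, not the generic homotopy terms. Without the correct edge-on-$\beta_S$ polygons, the claimed identity $g_S\circ f_S + \Id + \p^-_{G^+}\circ H_\beta + H_\beta\circ\p^-_{G^+}\equiv 0$ has no chance of closing: the cancelling pairs require precisely this family so that each juxtaposition of a pentagon and an inversed polygon admits an alternative decomposition as a rectangle glued to a modified $\h$exagon (or $\h$eptagon). As for your emphasis on the signs being the main obstacle: once the domains are defined, the paper treats the sign bookkeeping as routine (it really is the same mechanism as Proposition \ref{D1Preserves} and Proposition \ref{Homotopie}, plus the minus sign for vertical annuli); the actual idea you would need to supply is the definition of $\PPol^\beta_S$, and that is what is missing here.
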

\begin{proof}
  The proof that $g_S$ is a chain map is identical to the proofs of similar statements made earlier in this thesis.\\

To prove the homotopy equivalence, new kinds of polygons have to be introduced. The definitions are the same than for $\h$exagon and $\h$eptagons (\S\ref{par:NewPolygons}) except the distinguished edge $e$ which is required to be a piece of $\beta_S$ of which extremities are the two possible peaks for usual and inversed polygons. Moreover, the interior of such polygons is required to not intersect $\beta_S$ in a neigborhood of this two peaks.\\
We denote by $\PPol^\beta_S(x,y)$ the set of such empty polygons connecting a generator $x$ to another generator $y$.\\

Then, the $\Z[U_{O_1},\cdots,U_{O_n}]$--linear map $\func{H_\beta}{C^-(G^+)}{C^-(G^+)}$ is defined on generators by
$$
H_\beta(x)=\sum_{\substack{y \textrm{ generator}\textcolor{white}{R}\\\textrm{of }C^-(G^+)\textcolor{white}{I}}} \sum_{\pi\in \PPol^{-1}_S(x,y)} \e(\pi)U_{O_1}^{O_1(\pi)}\cdots U_{O_n}^{O_n(\pi)}\cdot y,
$$
where $\e(\pi)$ is defined as usual.\\
Now, we claim that
$$
g_S\circ f_S + \Id_{C^-(G^+)} \p^-_{G^+}\circ H_\beta + H_\beta \circ \p^-_{G^+} \equiv 0. 
$$
Here again, the cancelling pairs involved in the proof are essentially identical to those of the previous proofs. Nevertheless, Figure \ref{fig:Homotopy} points out the cases specific to this relation. The identity map cancels with vertical annuli along the right or the left border of $S$. The side is determined by the vertical location of the dot lying in $S$ with repect to the two possible peaks in $S$. Finally, being a vertical annulus brings a minus sign.\\

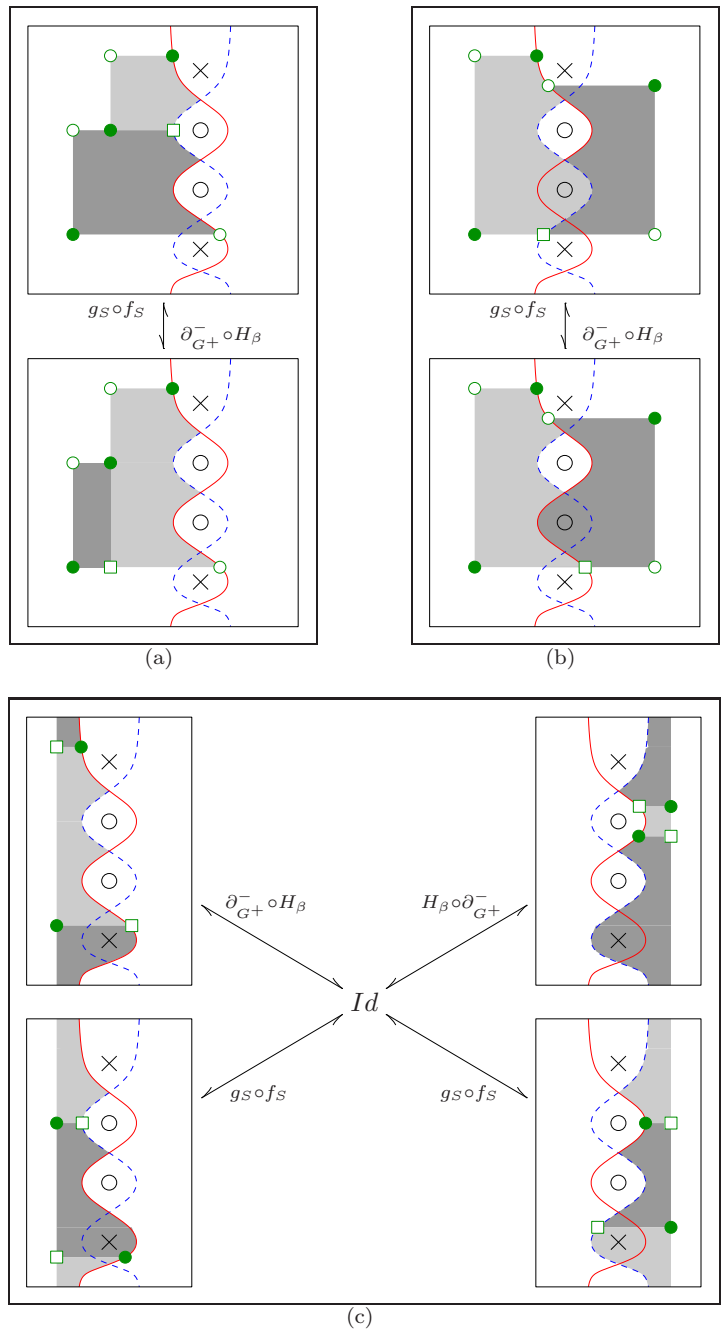
\begin{figure}[p]
  \begin{center}
    \subfigure[]{\fbox{\xymatrix@R=.6cm{
          \dessin{3.6cm}{Homo11} \ar@{^<-_>}[d]_(.45){g_S\circ f_S\ }^(.53){\ \p_{G^+}^-\circ H_\beta} \\  \dessin{3.6cm}{Homo12}
        }}} \hspace{1cm}
    \subfigure[]{\fbox{\xymatrix@R=.6cm{
          \dessin{3.6cm}{Homo21} \ar@{^<-_>}[d]_(.45){g_S\circ f_S\ }^(.53){\ \p_{G^+}^-\circ H_\beta} \\  \dessin{3.6cm}{Homo22}
        }}}
    \\
    \subfigure[]{\fbox{\xymatrix@!0@R=2cm@C=3.35cm{
          \dessin{3.6cm}{Homo31} &&  \dessin{3.6cm}{Homo61}\\
          & Id \ar@{_<-^>}[lu]_{\p_{G^+}^-\circ H_\beta} \ar@{^<-_>}[ru]^{H_\beta\circ\p^-_{G^+}} \ar@{^<-_>}[ld]^{g_S\circ f_S} \ar@{_<-^>}[rd]_{g_S\circ f_S}&\\
          \dessin{3.6cm}{Homo41} &&  \dessin{3.6cm}{Homo51}
        }}}
  \end{center}
  \caption{Cancelling pairs: {\footnotesize Dark dots describe the initial generator while hollow ones describe the final one if different. Squares describe intermediate states. Polygons are depicted by shading. The lightest is the first to occur whereas the darkest one is the last. For each configuration, we indicate to which part of the relation it belongs.}}
  \label{fig:Homotopy}
\end{figure}

A similar work can be done for $f_S\circ g_S$ by replacing $\beta_S$ with $\alpha_S$.
\end{proof}

\subsubsection{...which is not filtrated}
\label{par:NoAlexander}

Up to homotopy, $f_S$ has thus an inverse $g_S$. Nevertheless, as a corollary of Proposition \ref{InversedGrading} (\S\ref{par:MorePeaks}), the chain map $g_S$ is not filtrated since it can rise the Alexander degree by $1$. But it means that it is filtrated of degree $1$.\\

By the way, if we forget everything about the Alexander filtration, we have proved the following proposition:
\begin{prop}
  For any link $L$, the unfiltrated homology $H^-(L)$ is invariant under the switch operation. If $L$ is a regular link, then it depends only on the number of components. 
\end{prop}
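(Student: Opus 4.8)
The plan is to combine the chain-homotopy equivalence just established with the invariance results proved earlier in the chapter. The statement has two parts. For the first part, I would argue as follows. The switch operation replaces a crossing in $L$ by the opposite one; realising $L$ as the link associated to a grid $G$ with a distinguished singular column $S$ (using the grid description of singular links from Section~\ref{sec:SingularGrid}), the two desingularisations $G^+$ and $G^-$ are precisely the two links related by the switch, and by Proposition~\ref{PosNeg} (\S\ref{par:AssLinks}) they differ exactly by the sign of that crossing. The map $f_S\colon C^-(G^+)\to C^-(G^-)$ is, by the previous proposition (\S\ref{par:Candidate}), a homotopy equivalence: $f_S g_S$ and $g_S f_S$ are homotopic to the respective identity maps. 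Hence, forgetting the Alexander filtration (which $g_S$ does not respect, by \S\ref{par:NoAlexander}), $f_S$ induces an isomorphism $H^-(G^+)\cong H^-(G^-)$ on the unfiltered homology. Since $H^-(G^\pm)$ is, by the unfiltered version of Theorem~\ref{Invariance} (\S\ref{par:Invariance}), an invariant of the underlying links $L^\pm$, we conclude $H^-(L^+)\cong H^-(L^-)$, i.e.\ $H^-$ is unchanged by a switch. Strictly speaking one should note that $f_S$ is a chain map of bigraded modules that preserves the Maslov grading (Proposition~\ref{D1Preserves}, \S\ref{par:WallMap}), so the isomorphism is one of Maslov-graded modules.

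For the second part, suppose $L$ is a regular link. Any two regular links with the same number $\ell$ of components are related by a finite sequence of switches (every link can be unknotted by crossing changes, and one can moreover reach the $\ell$-component unlink $U_\ell$ this way, keeping the number of components fixed throughout since a switch never changes it). By the first part, $H^-(L)\cong H^-(U_\ell)$. It remains to observe that $H^-(U_\ell)$ is computable: taking the standard grid for $U_\ell$ of size $\ell$ (one $O$ and one $X$ per row and column, with no singular columns), the complex $C^-$ is the combinatorial link Floer complex of \cite{MOST}, and after the quasi-isomorphisms of Proposition~\ref{UiUj} (\S\ref{par:LinkInv}) its homology is a fixed module depending only on $\ell$. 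So $H^-(L)$ depends only on the number of components.

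The main obstacle I anticipate is bookkeeping the gradings and the role of the Alexander filtration, rather than any deep difficulty. Because $g_S$ is filtered only of degree $1$, the equivalence $f_S g_S\simeq \Id$, $g_S f_S\simeq \Id$ is an equivalence of \emph{unfiltered} (Maslov-graded) complexes, so the conclusion genuinely is a statement about $H^-$ with the Alexander grading discarded; one must be careful not to claim invariance of $\widehat{HL}_*$ under switches. A second, minor point to get right is that the reduction of an arbitrary regular link to $U_\ell$ by switches must keep the component count fixed — which is automatic — and that the chosen intermediate diagrams can all be presented by grids with a single distinguished singular column so that the previous proposition applies verbatim at each step; this is exactly the content of Section~\ref{sec:SingularGrid} and needs only to be invoked.
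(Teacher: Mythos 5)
Your argument is correct and takes the same route as the paper: the proposition there is stated as an immediate corollary of the homotopy equivalence between $C^-(G^+)$ and $C^-(G^-)$ established in the preceding paragraph, together with the observation that any regular link can be reduced to the unlink by switches (the remark about multi-pointed $\widehat{HF}(S^3)$ is offered as an independent confirmation, not a separate proof). One minor slip in your last step: a grid presenting $U_\ell$ has size at least $2\ell$, not $\ell$, since every row and every column must contain exactly one $O$ and one $X$; but this does not affect the conclusion, which already follows from $H^-(L)\cong H^-(U_\ell)$ without an explicit computation of the right-hand side.
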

This property was already well-known for regular links since the unfiltrated link Floer homology is nothing more than the multi-pointed Heegaard-Floer homology of the sphere $S^3$.

\subsubsection{Another candidate}
\label{par:AnotherCandidate}

A similar work can be done using the intersection of $\alpha_S$ and $\beta_S$ which is located between the two $\X$--decorations of $S$ instead of the two $\O$--ones.\\
The map $g_S$ so defined  preserves then the Alexander filtration and decreases the Maslov grading by two. Nevertheless, the compositions $g_S\circ f_S$ and $f_S\circ g_S$ are not homotopic to the identity map but to the multiplication by $U_O$ where $O$ is an $\O$--decoration from $S$.


\section{Acyclicity for singular loop}
\label{sec:Acyclicity}

A link $L$ being given, the simplest way to singularize it, is to add a contractible singular loop:
$$
\dessin{2cm}{Loop1} \ \leadsto \ \ \dessin{2cm}{Loop2}.
$$
Both resolutions lead then to the same initial link $L$. The least we can expect is that the link Floer homology for such a singular link is null. That is the point of the next theorem.
\begin{theo}\label{Loops}
  For any link $L$, possibly singular,
$$
 {\widetilde{HL}}\left(L{}_*\#_p \dessin{.4cm}{huit1}\right) = {\widetilde{HL}}\left(\ \dessin{1cm}{SmallLoop}\right)\cong 0.
$$
\end{theo}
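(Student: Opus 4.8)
The statement concerns the singular link Floer homology of a link with a small contractible singular loop, i.e.\ the homology of the mapping cone of the switch morphism $\func{f_S}{C^-(G^+)}{C^-(G^-)}$ where $G^+$ and $G^-$ both represent the \emph{same} underlying link $L$ (since resolving a contractible kink positively or negatively gives back $L$). By Corollary \ref{AcyclicCone} (\S\ref{par:MapCones}), the cone is acyclic if and only if $f_S$ is a quasi-isomorphism. So the whole theorem reduces to showing that, in this special geometric situation, the switch map $f_S$ is a quasi-isomorphism on the $\widetilde{HL}$ versions (all variables $U_i$ set to zero).

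The strategy I would follow is the one advertised in \S\ref{par:Strategy}: exhibit an explicit grid for the singular loop in which the two resolutions $G^+$ and $G^-$ differ by a single commutation of a regular column with the kink column, and for which $f_S$ either reduces to a map already known to be a quasi-isomorphism, or can be paired with the homotopy inverse $g_S$ of \S\ref{par:Candidate}. Concretely: first I would choose a grid presentation of $L{}_*\#_p\dessin{.4cm}{huit1}$ in which the singular column $S$ sits adjacent to the tiny ``kink'' piece of the diagram, so that both $G^\pm$ are grids for $L$ differing by one elementary commutation move together with a destabilization of the extra kink; by Theorem \ref{def:Moves} (\S\ref{par:ElemMoves}) and the invariance already proven in \S\ref{par:RegStab}, this identifies the chain homotopy types. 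Then I would invoke the near-inverse $g_S$ constructed in \S\ref{par:Candidate}: Proposition (\S\ref{par:Candidate}) states $f_S\circ g_S$ and $g_S\circ f_S$ are homotopic to the identity, hence $f_S$ is a homotopy equivalence of the \emph{unfiltrated} complexes. For $\widetilde{HL}$ — where all the $U_i$ are killed and we pass to the graded object associated to the Alexander filtration — I would check that, once restricted to the polygons not containing the forbidden decorations, the homotopies of \S\ref{par:Candidate} survive (this is exactly the remark made at the start of \S\ref{sec:GradSingHomologies}, that those proofs remain valid after restricting the polygons). Since $f_S\circ g_S\simeq\Id$ and $g_S\circ f_S\simeq\Id$ already at the chain level, by Proposition \ref{induction} (\S\ref{par:HomChain}) the induced maps on $\widetilde{HL}$ are mutually inverse isomorphisms; hence $f_S$ is a quasi-isomorphism and its cone is acyclic.

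An alternative, perhaps cleaner route uses Corollary \ref{Filt->Acycl} (\S\ref{par:SpecSequences}): put on $C^-(G)$ the resolution filtration of Remark \ref{3emeFiltration}, whose associated graded complex is the cone of $f_{S,\gr}$; since $G^+$ and $G^-$ are both grids for $L$, and on each side the graded map $f_{S,\gr}$ coincides with a commutation quasi-isomorphism from \cite{MOST} (\S\ref{par:LastFiltration}), the associated graded cone is acyclic, so by the corollary the whole complex is acyclic. Either way the key identification is: \emph{a contractible singular loop is, after suitable grid moves, precisely the switch configuration $G^+\leftrightarrow G^-$ with $L^+=L^-=L$}, which is where the $\dessin{.4cm}{huit1}$ picture does its work.

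\textbf{Main obstacle.} The hard part is the first step — producing the grid in which the small singular loop literally becomes the standard switch configuration of \S\ref{par:PosNegCrossing}, and verifying that the destabilization removing the kink is one of the \emph{allowed} (de)stabilizations of \S\ref{par:SingGridEtElemMoves} (the $(2\times2)$ square must meet the singular column in at most one decoration). One must also be careful that the arcs $\alpha_S,\beta_S$ used to define $f_S$ can be chosen so that no peak falls on the crushed row or column, as in the proof of Proposition \ref{CrushingFilt} (\S\ref{par:CrushingFiltration}); with that precaution the reduction to the regular commutation case of \cite{MOST} is immediate. The sign bookkeeping is routine given \S\ref{par:OrientMap}.
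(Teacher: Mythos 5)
Your proposal does not follow the paper's route, and both of your suggested routes contain a genuine gap at the crucial step.

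\textbf{On the first route.} You invoke the near-inverse $g_S$ of \S\ref{par:Candidate} and argue that since $f_S\circ g_S$ and $g_S\circ f_S$ are homotopic to the identity, the induced maps on $\widetilde{HL}$ are mutually inverse. But the whole point of \S\ref{par:NoAlexander} is that $g_S$ is only filtrated of degree $1$, not $0$: by Proposition \ref{InversedGrading} (\S\ref{par:MorePeaks}) the inversed polygons satisfy $A_{G^-}(x)-A_{G^+}(y)=-1+\#(\pi\cap\X)-\#(\pi\cap\O)$, so even after killing all polygons through $\O$ and $\X$, the graded map $g_{S,\gr}$ shifts the Alexander degree by $+1$. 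Passing to the associated graded of a filtrated chain homotopy $\Id = f_S\circ g_S + \p H + H\p$ does \emph{not} produce a graded homotopy $\Id_{\gr}=f_{S,\gr}\circ g_{S,\gr}+\cdots$, precisely because $f_{S,\gr}\circ g_{S,\gr}$ does not even land in the same Alexander degree as $\Id_{\gr}$. The remark you cite at the start of \S\ref{sec:GradSingHomologies} is about the invariance proofs of \S\ref{par:ArcsIsotopies}---\S\ref{par:FlipInvariance} surviving the graded restriction; it does not salvage $g_S$, which is exactly the exception discussed in \S\ref{par:NoAlexander}. What you actually recover this way is the (already-stated) fact that the \emph{unfiltered} $H^-$ cone is acyclic --- a much weaker statement than $\widetilde{HL}\cong 0$.

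\textbf{On the second route.} You propose filtering by the resolution degree of Remark \ref{3emeFiltration} and claim that $f_{S,\gr}$ ``coincides with a commutation quasi-isomorphism from \cite{MOST}.'' This is not the case. The switch morphism $f_S$ is a pentagon/hexagon count with peaks as set up in \S\ref{par:CombGrid}---\S\ref{par:WallMap}, between two grids $G^+$ and $G^-$ that differ by a \emph{non}-commutation column interchange; the quasi-isomorphism $\phi_{\alpha\beta}$ of \S\ref{par:RegCommutation} is a different combinatorial map associated to an \emph{allowed} commutation move. The fact that $L^+$ and $L^-$ are both $L$ (which is the only geometric special feature of the kink) tells you the targets of $f_S$ have isomorphic homology, but it does not tell you that the specific chain map $f_S$ induces an isomorphism: any number of non-quasi-isomorphisms (including the zero map) also go between quasi-isomorphic complexes. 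Moreover, passing to the associated graded of the resolution filtration kills the $\p_1$ and $\p_2$ directions, so the graded complex is the direct sum $\bigoplus_I C^-(G_I)$ with only $\p_0$ --- not the cone of $f_{S,\gr}$, and certainly not acyclic.

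\textbf{What the paper actually does.} Theorem \ref{Loops} is proved by a direct acyclicity computation, with no appeal to $g_S$. One represents the kink by a specific local column pattern (\S\ref{par:RepresentationLoop}), defines a crushing filtration $M_G$ analogous to the one used for (de)stabilization invariance (Proposition \ref{AcyclicDiff}, \S\ref{par:AnotherCrushingFiltration}) so that the graded differential sees only polygons confined to the crushed rows and column, then refines further by the grading $\kappa$ counting which arc the $S$-dot sits on (\S\ref{par:LastFiltrationExit}), and finally decomposes the resulting graded complex into an explicit list of $35$ acyclic subcomplexes (Proposition \ref{AcyclicSubcomplexes} and Appendix \ref{appendix:Subcomplexes}). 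Corollary \ref{Filt->Acycl} is then applied twice. The combinatorial work hidden behind ``acyclicity of the cone'' --- which your proposal delegates to a quasi-isomorphism that does not exist at the graded level --- is precisely the content of that decomposition.
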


This section is devoted to the proof of this statement.\\

Since $\widetilde{HL}$ is obtained from any other filtrated version as the graded homology associated to some filtration, the same statement holds for all of them.

\subsubsection{Representation of a small loop}
\label{par:RepresentationLoop}

Let $L$ be a link with $k$ singular double points and $G$ a grid presentation for it.\\
Adding a singular loop to $L$ can be seen as replacing a regular column of $G$ by the following adjacent two:
$$
\dessin{2.1cm}{avantLoop} \ \leadsto \ \ \dessin{2.1cm}{apresLoop}.
$$
Moreover, even if it means to perform a stabilization first, we can ask the four decorations of the singular column to be in adjacent rows.\\

We suppose we have now such a grid presentation $G_s$ for a link with a small singular loop. We denote by $S$ the singular column added with the loop and by $\alpha_S$ and $\beta_S$ the arcs lying in $S$.

\subsubsection{Another crushing filtration}
\label{par:AnotherCrushingFiltration}

Here, we can play the same game as in paragraph \ref{par:CrushingFiltration} and crush column and rows which have appeared when adding a loop.\\
If $x$ is a generator of $C^-(G_s)$ drawn on $G_s$, then, during the crushing process, we push the dot which belongs to $\alpha_S$ or $\beta_S$ to the first vertical grid line on its left. Thus, $x$ gives rise to a set of dots $\widetilde{x}$ on $G$ which is almost a generator of $C^-(G)$ except that exactly one horizontal and two adjacent vertical grid lines have more than one dots. Then we permute cyclically the rows and columns in such a way that the singular horizontal grid line is the bottommost and the two vertical ones the leftmost and the second rightmost. Now, the upper right corner of the grid is filled with a $O$, denoted by $O_*$, and the bottom right one by an $X$, denoted $X_*$.\\
Then, we define for all generator $x$ of $C^-(G_s)$:
$$
M_G(x):= M_{\O_G}(\widetilde{x})+ \#\big(O(I)\}\setminus\{i_S\}\big)
$$
where $\O_G$ is the set of $O$--decorations of $G$, $M_{\O_G}$ is defined in paragraph \ref{par:MaslAlex}, $I$ is an element of $\{0,1\}^k$ such that $x\in C^-(G_I)$ and $i_S$ is the label of $S$.

\begin{figure}[!h]
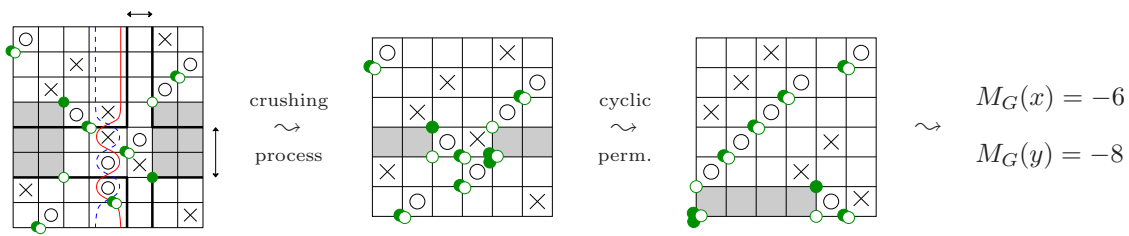

  $$
    \dessinH{3.2cm}{CrushLoop1}\ \begin{array}{c}\textrm{\scriptsize crushing}\\[-.1cm] \leadsto\\[-.1cm] \textrm{\scriptsize process} \end{array} \dessinH{3.2cm}{CrushLoop2} \begin{array}{c}\textrm{\scriptsize cyclic}\\[-.1cm] \leadsto\\[-.1cm] \textrm{\scriptsize perm.} \end{array} \dessinH{3.2cm}{CrushLoop3}  \leadsto \ \ \begin{array}{l} M_G(x)=-6 \\[.3cm] M_G(y)=-8 \end{array}
  $$
  \caption{Crushing rows and column:  {\footnotesize dark dots describe the initial generator $x$ while hollow ones describe the final one $y$. Rectangles are depicted by shading.}}
  \label{fig:CrushingLoop}
\end{figure}

\begin{prop}\label{AcyclicDiff}
  $M_G$ defines a filtration on $\widetilde{CL}(G_s)$\index{filtration!crushing}. Moreover, the associated graded differential, denoted by $\widetilde{\p}_\gr$, corresponds to the sum over polygons contained in the crushed rows and column and which do not contain any decoration.
\end{prop}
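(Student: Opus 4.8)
\textbf{Proof plan for Proposition \ref{AcyclicDiff}.}

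The plan is to mimic closely the argument of Proposition \ref{CrushingFilt} (\S\ref{par:CrushingFiltration}), since $M_G$ here is built exactly as there, only with the bookkeeping term $\#\big(O(I)\setminus\{i_S\}\big)$ in place of $\#0(I)$. First I would check that $M_G$ is non-increasing under $\widetilde{\p}$. Fix two generators $x$ and $y$ of $C^-(G_s)$ joined by an empty polygon $\pi$ (a rectangle, a pentagon, or an hexagon) appearing in $\widetilde{\p}$; recall that for $\widetilde{\p}$ the polygon carries no decoration at all. Apply the crushing process: $\pi$ becomes a domain $\widetilde{\pi}$ on $G$ which is empty of decorations except possibly $O_*$ and $X_*$, and which connects $\widetilde{x}$ to $\widetilde{y}$. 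Since neither $O_*$ nor $X_*$ enters the definition of $M_{\O_G}$, their presence in $\widetilde{\pi}$ is irrelevant, exactly as in the earlier proof. If $\widetilde{\pi}$ is totally flattened then $\widetilde{x}=\widetilde{y}$ and the $\#0(I)$-type term drops by one precisely when the polygon had a peak on $S$ (the column $i_S$ gets resolved), so $M_G(y)\le M_G(x)$, with equality exactly for the flattened polygons \emph{not} involving the peak of $S$. Otherwise $\widetilde{\pi}$ survives as a genuine (possibly ripped) domain on $G$, and I would run through the same case analysis as in \S\ref{par:CrushingFiltration}: unripped with no extra border dots gives $M_G(y)=M_G(x)-1$; horizontally or vertically ripped gives $M_G(y)=M_G(x)-1$ by passing to the complementary strip; an extra dot pushed onto a border of $\widetilde{\pi}$ produces an extra term in some $\I(\cdot,\cdot)$ and yields $M_G(y)=M_G(x)-2$. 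The only new feature is that now \emph{two} adjacent vertical lines and \emph{one} horizontal line carry a double dot after crushing, so $\widetilde{\pi}$ could in principle be ripped into more pieces than before; I would note that the presence of the $O$-decoration inside $S$ (retained after the restriction of paragraph \ref{par:SingGridEtElemMoves}, which forbids a singular column in a destabilization square) still bounds the number of pieces, and that peaks can be arranged off the crushed row by a suitable choice of the arcs $\alpha$'s and $\beta$'s, just as in the earlier argument.

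Once $M_G$ is shown to define a filtration on $\big(\widetilde{CL}(G_s),\widetilde{\p}\big)$, the graded differential $\widetilde{\p}_{\gr}$ is by definition the sum over those polygons that preserve $M_G$, which by the case analysis above are exactly the polygons flattened by the crushing process and carrying no decoration — that is, the polygons contained in the crushed rows and column. This is the content of the second assertion, and it follows formally from the first, with no further computation.

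The step I expect to be the main obstacle is verifying that $M_G$ never \emph{increases}, i.e. controlling the ripping of $\widetilde{\pi}$ into pieces around the two adjacent vertical singular lines and the horizontal one: unlike the single-column stabilization of \S\ref{par:CrushingFiltration}, here the loop introduces two vertical double-lines side by side, so one must check that a pentagon or hexagon of $\widetilde{\p}$ cannot tear into a configuration for which the $\I$-bookkeeping picks up a surviving positive contribution. The remedy is the same in spirit as before — exploit the $O$-decoration inside $S$ and the restrictions of paragraph \ref{par:SingGridEtElemMoves} to rule out four-piece decompositions, and choose the arcs generically so no peak sits on the crushed row — but it requires patiently enumerating the new ripping patterns, which is why I would treat this as the heart of the proof and leave the remaining pieces to the cited analogy with Proposition \ref{CrushingFilt}.
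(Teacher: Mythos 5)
Your overall plan — reuse the case analysis of Proposition~\ref{CrushingFilt}, treat the flattened polygons separately, and choose the arcs so that peaks other than the one on $S$ avoid the crushed strips — is exactly the structure of the paper's proof. But two of your intermediate claims are wrong, and the second one is fatal to the argument as written.

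First, the claim that ``neither $O_*$ nor $X_*$ enters the definition of $M_{\O_G}$'' is false: $O_*$ is an element of $\O_G$ and therefore does contribute to $M_{\O_G}(\widetilde{x})$. The paper's way around this is not that $O_*$ is absent from the definition, but that the reduction of \S\ref{par:CrushingFiltration} to non-ripped rectangles (ones avoiding the leftmost column and topmost row) places $O_*$ outside the domain, so its presence never affects the comparison $M_G(x)$ versus $M_G(y)$. Your sentence hands you the conclusion for free but the justification does not hold.

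Second, and more seriously, you have the effect of the peak on $S$ exactly backwards. You correctly quote the bookkeeping term as $\#\bigl(0(I)\setminus\{i_S\}\bigr)$, but then assert that ``the $\#0(I)$-type term drops by one precisely when the polygon had a peak on $S$.'' Because $i_S$ is \emph{excluded} from the count, resolving $S$ leaves $\#\bigl(0(I)\setminus\{i_S\}\bigr)$ unchanged; combined with $\widetilde{x}=\widetilde{y}$ for a flattened polygon, this gives $M_G(x)=M_G(y)$, not a strict drop. That is precisely why the $\setminus\{i_S\}$ is there: so that the pentagons and hexagons whose only peak sits on $S$, confined to the two crushed rows, survive into $\widetilde{\p}_\gr$. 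Your version would exclude them, yielding a graded differential strictly smaller than the one stated in the proposition — and, since these are the very polygons that drive the acyclicity argument of \S\ref{par:LastFiltrationExit}, the discrepancy is not cosmetic. The polygons that \emph{do} strictly drop the count are flattened ones whose peak lies on a singular column other than $S$, and the paper rules those out not by a degree computation but by the generic choice of arcs so that no such peak can lie inside the crushed strips in the first place.
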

\begin{proof}
  Arguments are essentially the same than in the proof of Proposition \ref{CrushingFilt} (\S\ref{par:CrushingFiltration}). Nevertheless, a few details are different.\\

An empty rectangle $\rho$ on $G_s$ containing no decoration gives rise to an empty rectangle $\widetilde{\rho}$ on $G$ but which may contain $O_*$ or $X_*$. The latter is not involved in $M_G$ but the first is.\\
However, the proof of Proposition \ref{CrushingFilt} reduces the reasoning to the study of non ripped rectangles \ie rectangles which do not intersect the leftmost column nor the uppermost row. Consequently, $O_*$ is not involved in the calculations. Moreover, such rectangles are not crossed by the singular grid lines and the number of dots they contain can be thus deduced as needed.\\

Because of the four crushed decorations, no rectangle can be ripped in four pieces.\\

Concerning polygons, we choose arcs $\alpha$'s and $\beta$'s such that their intersections do not belong to the crushed rows and column. This can always be done, except for $S$. Nevertheless, because of its two extremal decorations, any polygon $\pi$ with a peak on it and containing no decoration must lie entirely in the two crushed rows. If $\pi$ is connecting $x$ to $y$, then $\widetilde{x}=\widetilde{y}$ and since $S$ is not considered in the count of singular columns positively resolved, $M_G(x)=M_G(y)$.\\

The filtration induced by $M_G$ is then respected and the associated graded differential is as stated. 
\end{proof}

\subsubsection{Last filtration before exit}
\label{par:LastFiltrationExit}

In order to simplify the proof, we need to define a last filtration on $(\widetilde{CL}(G_s),\widetilde{\p}_\gr)$.\\

We denote by $\alpha'_S$ (resp. $\beta'_S$) the interior of the intersection of $\alpha_S$ (resp. $\beta_S$) with the complement of the two crushed rows. Then we can define a grading $\kappa$ for all generator $x$ of $\widetilde{CL}(G_s)$ by
$$
\kappa(x)=\#(x\cap\beta'_S) - \#(x\cap\alpha'_S).
$$

It is easy to check that the filtration associated to $\kappa$ is respected by $\widetilde{\p}_\gr$.
We denote by $\widetilde{d}_\gr$ the associated graded differential. It differs from $\widetilde{\p}_\gr$ only by forbidding the following two moves:
$$
\dessin{2.7cm}{LastFilt1} \hspace{2cm} \dessin{2.7cm}{LastFilt2}.
$$

\begin{prop}\label{AcyclicSubcomplexes}
  The chain complex $\big( \widetilde{CL}(G_s),\widetilde{d}_\gr\big)$ is acyclic.
\end{prop}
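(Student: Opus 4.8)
The plan is to exploit the fact that, by Proposition \ref{AcyclicDiff} (\S\ref{par:AnotherCrushingFiltration}) together with the definition of the last filtration $\kappa$, the differential $\widetilde{d}_\gr$ is extremely local: it is a signed sum over thin polygons which (i) contain no decoration, (ii) are entirely contained in the two crushed rows together with a neighbourhood of the arcs $\alpha_S$ and $\beta_S$, and (iii) are not one of the two moves excluded in the picture defining $\widetilde{d}_\gr$. In particular no $U$-factor ever occurs, so we may work over $\Z$ with generators indexed by sets of dots. Each generator $x$ restricts, on the local region around $S$, to one of a short list of configurations, which I will call the \emph{local type} of $x$; and $\widetilde{d}_\gr$ only modifies the local type, leaving the position of all the other dots untouched. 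Hence $\big(\widetilde{CL}(G_s),\widetilde{d}_\gr\big)$ is isomorphic to a tensor product $\mathcal{L}\otimes B$ where $B$ is the free module on the possible positions of the remaining dots, equipped with the zero differential, and $\mathcal{L}$ is the small complex generated by the admissible local types with the full $\widetilde{d}_\gr$. It therefore suffices to prove that $\mathcal{L}$ is acyclic.

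To analyse $\mathcal{L}$, I would enumerate the admissible local types --- essentially the positions of the dot lying on $S$ relative to the two distinguished arc intersections and the two crushed rows --- and identify, for each local type carrying a dot in the ``$\beta$''-position, the unique thin rectangle or spike $\sigma$ that slides this dot into the corresponding ``$\alpha$''-position (the same device as the small spike of \S\ref{par:UpToHom} and the inversed polygons of \S\ref{par:MorePeaks}). This pairing identifies $\mathcal{L}$, up to sign, with a direct sum of copies of the elementary two-step complex $\Z\xrightarrow{\ \mathrm{id}\ }\Z$, which is acyclic. Equivalently, one defines a degree-one map $K$ on $\widetilde{CL}(G_s)$ by $K(x)=\pm\,\sigma\cdot x$ when $\sigma$ applies to $x$ and $K(x)=0$ otherwise, and checks the homotopy identity $K\widetilde{d}_\gr+\widetilde{d}_\gr K=\Id$; the terms of $K\widetilde{d}_\gr+\widetilde{d}_\gr K$ in which the two polygons involved are not both $\sigma$ cancel in pairs by the now-familiar mechanism of juxtaposed thin polygons used in the proof of Theorem \ref{D0Preserves} (\S\ref{par:Diff}) and of Proposition \ref{Homotopie} (\S\ref{par:UpToHom}), while the diagonal terms produce exactly $\Id$.

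The main obstacle is, as usual, the sign bookkeeping: one must verify, using the sign assignment $\e$ of \S\ref{par:SignAssignment} and its description in \S\ref{par:OrientMap} (in particular its behaviour on thin annuli and $L$-shapes), that the distinguished pairings contribute with coefficient exactly $+1$ --- so that each $\Z\to\Z$ summand is an isomorphism and not multiplication by $0$ or by $2$ --- and that the excluded moves in the definition of $\widetilde{d}_\gr$ are precisely the ones whose presence would break this pairing. One also has to confirm the precise local shape of $\widetilde{d}_\gr$ (existence and uniqueness of $\sigma$ for every relevant generator, and the list of admissible local types), which is a finite but case-heavy check of the same nature as the verifications carried out throughout this chapter. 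If one prefers to avoid the explicit homotopy, the tensor-product reduction combined with a direct computation $H_*(\mathcal{L})=0$ is equally viable, and one could alternatively squeeze one further filtration and invoke Corollary \ref{Filt->Acycl} (\S\ref{par:SpecSequences}).
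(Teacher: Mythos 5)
Your overall strategy---split $\widetilde{d}_\gr$ into tiny local pieces and exhibit the complex as a direct sum of manifestly acyclic two-step subcomplexes, or equivalently build a contracting homotopy---is the same family of argument as the paper's, which also decomposes $\big(\widetilde{CL}(G_s),\widetilde{d}_\gr\big)$ into mapping cones of isomorphisms. But the specific implementation you sketch is too optimistic in two places, and both are exactly where the actual work of the proof lives (Appendix \ref{appendix:Subcomplexes}).

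First, the tensor factorization $\widetilde{CL}(G_s)\cong\mathcal{L}\otimes B$ is not justified and probably not true as stated. Although the polygons counted by $\widetilde{d}_\gr$ are confined to the crushed rows and column, they are thin horizontal strips and can stretch across arbitrary columns of the grid, so the ``local type'' of a generator is not determined by a bounded amount of data near $S$: which columns the dots of the crushed rows occupy is part of the local information that governs which polygons are admissible, and this interacts with the global bijection constraint on generators. Second, and more importantly, the pairing ``slide the dot on $\beta_S$ to $\alpha_S$ by a spike $\sigma$ and set $K=\pm\sigma$'' does not account for what actually happens. Inspecting the paper's decomposition, many of the subcomplexes are built not from single generators but from sums of a leading term and a remainder, the isomorphisms realizing the pairing are of several different kinds (spikes, thin rectangles, and hexagons are all used as the ``isom'' arrows), and the restricted differentials on the two sides of each cone are nonzero and vary from one subcomplex to the next; in several cases the differential is identically zero and the isomorphism is the only arrow, while in others there is a genuine chain of moves. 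Moreover the direct-sum verification is done at the module level by exhibiting a \emph{unit upper triangular} change of basis from the natural generators to the leading terms of the subcomplex generators---which is precisely the record of the corrections that your simple homotopy $K$ would have to absorb. Your phrase ``a finite but case-heavy check of the same nature as the verifications carried out throughout this chapter'' is not a deferred detail; it is the proof (about 35 subcomplexes are enumerated), and the specific pairing you propose would not by itself produce $K\widetilde{d}_\gr+\widetilde{d}_\gr K=\Id$ without the iterated cleaning that the paper carries out.
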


Actually, the graded chain complex $(\widetilde{CL}(G_s),\widetilde{d}_\gr)$ can be split into acyclic subcomplexes. Details are explicited in appendix \ref{appendix:Subcomplexes}.\\

Finally, Theorem \ref{Loops} can be deduced by applying Corollary \ref{Filt->Acycl} (\S\ref{par:SpecSequences}) twice.


\section{What about the sum ?}
\label{sec:Sum}

As pointed out at the end of paragraph \ref{par:DifferentPeaks}, one can consider at the same time peaks of types $\dessin{.4cm}{Cstyle}$ and $\dessin{.4cm}{Cpstyle}$ for a given singular column. But then, the proofs given for invariance under semi-singular commutation and acyclicity for contractible singular loops are no more valid.\\

Actually, for the first one, the second peak adds terms in $\p^-_2\circ F_T$ and $F_B\circ \p^-_1$ which are refractory to cancellation. Nevertheless, even if the proof fails, we do not know any counterexample.\\

Concerning the acyclicity for small singular loops, polygons with a peak on the added singular column are no more necessarily contained in the two crushed rows. And furthermore, we will prove in this section that even if such an homology is well defined, it would be of less interest insofar as no singular link would be able to make it null.\\

To prove it, we give an alternative description of this homology for $\FF_2$--coefficients. This description is related to the Seifert smoothing of a singular double point instead of its positive and negative resolutions. It is inspired by the results proved in \cite{OSSkein}.\\

We consider its graded version associated to the Alexander filtration. We denote it by $(HL^-_\Sigma,\p^-_\Sigma)$.\\

As specified above, the following construction works for $\FF_2$--coefficients only. Nevertheless, we will still pay attention to signs in order to stress when they make the machinery crash.

\subsection{Three dimensional cube of maps}
\label{ssec:3dimCube}

\subsubsection{Multiple $\X$--decorations}
\label{par:MultiX}

{\parpic[r]{$\dessin{3.7cm}{MultiX}$}
  Let $G$ be a singular grid diagram with a distinguished singular column such that its two $\X$--decorations are in adjacent rows.\\
  Instead of considering, as previously, only the two natural resolutions, we consider now all the possible desingularizations of the distinguished singular column as depicted in Figure \ref{fig:Desing}. The picture on the right describes all of them depending on the vertical arc and the letters $A$ or $B$ we remove. Other letters are then considered as $\X$--decorations. According to the arc we are considering, $A_+$ or $A_-$ should be ignored.\\
  We assume that the two interesting arcs intersections are respectively in the same two rows as the two $B$'s.
  
}
After Figure \ref{fig:Desing}, it is easily checked that the $B$--decorations correspond to the two positive and negative resolutions of the associated double point whereas the $A$--decorations correspond to the Seifert smoothing.

\subsubsection{Subcomplexes}
\label{par:SubComplexes}

First, we state a lemma which will be usefull for defining subcomplexes.

\begin{lemme}
  Let $G$ be a (singular) grid with two $\X$--decorations in adjacent rows and adjacent (regular) columns. They are hence arranged in diagonal. Let $x$ be the grid line intersection located between them. Then the part of $\p^-_\Sigma$ which does not involve the point $x$ is still a differential.
\end{lemme}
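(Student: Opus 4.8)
\textit{Plan of proof.} Write $\p^-_\Sigma=\p_0+\p_1$, where $\p_1$ collects those polygons (rectangles, pentagons, hexagons, and the hexagon/heptagon--type maps entering the Seifert cube) having the intersection point $x$ as one of their corners, and $\p_0$ collects all the others, so that the asserted "part not involving $x$" is $\p_0$. We must show $\p_0^2\equiv 0$. The idea is the one already used for the crushing filtrations of \S\ref{par:CrushingFiltration} and \S\ref{par:AnotherCrushingFiltration}: produce an auxiliary $\Z$--grading $\mu$ on the $\Sigma$--complex, preserved by $\p^-_\Sigma$, whose associated graded differential is exactly $\p_0$; then the Lemma of \S\ref{par:FiChain} (the graded part of a square--zero map is square--zero) gives $\p_0^2\equiv 0$.

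First I would record the geometric constraint coming from the hypothesis. Place the two diagonally situated $\X$--decorations in the squares immediately to the lower--left and to the upper--right of $x$. Since $\p^-_\Sigma$ is the graded differential associated with the Alexander filtration, every polygon it counts is disjoint from all $\X$--decorations, in particular from these two squares. Consequently no polygon of $\p^-_\Sigma$ can have $x$ in its interior, and a polygon of $\p^-_\Sigma$ having $x$ as a corner is forced to emanate from $x$ into the \emph{upper--left} or the \emph{lower--right} quadrant only (the two other quadrants would make it contain one of the two $\X$--squares). This is exactly the kind of rigidity that let us control peaks and thin domains in \S\ref{par:CrushingFiltration}.

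Next I would define $\mu$ by counting the dots of a generator lying in a region read off from the horizontal and vertical grid lines through $x$ (together with the partial--resolution count, in the spirit of \S\ref{par:AnotherCrushingFiltration}), normalised so that moving a dot across $x$ out of the upper--left quadrant and moving a dot across $x$ out of the lower--right quadrant both \emph{decrease} $\mu$ by the same amount. The verification then splits into the usual list of cases — rectangles, pentagons, hexagons, and the $\mathfrak p$olygon--type maps — checking in each that a polygon disjoint from $x$ preserves $\mu$ while one with a corner at $x$ strictly decreases it; the two quadrant cases are disposed of using the orientation rules for the relevant polygon, exactly as in the proof of Proposition \ref{Cchain} (\S\ref{par:Consistency}). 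Since we work over $\FF_2$ here, signs play no role, which is why this reduction is legitimate in the present setting.

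The main obstacle is precisely this case analysis and, within it, the bookkeeping that makes both the "a dot enters $x$" and the "a dot leaves $x$" moves decrease $\mu$ rather than merely change it — that is, choosing the region defining $\mu$ so that the orientation asymmetry between the initial and final corners of a polygon cuts the right way. Equivalently, one can bypass the filtration and argue directly from $(\p^-_\Sigma)^2\equiv 0$: the terms cancel in pairs (juxtapositions with disjoint corner sets, $L$--shaped bidecompositions, and thin annuli, as in \S\ref{par:Diff} and \S\ref{par:Consistency}), and one checks that whenever both polygons of a cancelling pair avoid $x$, so do the two polygons of its partner — the only way this could fail being an $L$--shape or an annulus whose alternative decomposition produces a new corner at $x$, which cannot happen because such a corner would force one of the two pieces to contain one of the two $\X$--decorations.
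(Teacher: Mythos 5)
Your concluding direct argument is the paper's proof: the only cancellation in $\p^2\equiv 0$ that could fail when restricting to $\p_0$ is the two--way decomposition of an $L$--shaped domain, and the only corners that can differ between the two decompositions lie in the interior of an edge of the $L$ --- so if $x$ were one of them, the $L$--shape would contain one of the two $\X$--decorations adjacent to $x$, contradicting that $\p^-_\Sigma$ only counts polygons disjoint from every $\X$. Your filtration route is a genuinely different approach. One observation dissolves the main obstacle you flag: since the two $\X$'s sit diagonally opposite $x$, a rectangle of $\p^-_\Sigma$ with a corner at $x$ can only open into the NW or SE quadrant, and in either case the orientation rule on the horizontal edge meeting $x$ forces $x$ to be a corner of the \emph{final} generator --- there is no ``leaves $x$'' move for rectangles, so the simple grading $\mu(\xi)=-1$ if $x\in\xi$, $\mu(\xi)=0$ otherwise, together with the Lemma of \S\ref{par:FiChain}, already does the work. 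One would still have to run the analogous check for the pentagon, hexagon and $\mathfrak{p}$olygon--type contributions to $\p^-_\Sigma$, which is exactly the case splitting you foresee; the paper's direct argument avoids it entirely by invoking the $\X$--obstruction once. Both proofs are valid; the direct one is shorter, while the filtration reformulation makes the crushing--filtration mechanism of \S\ref{par:CrushingFiltration} visible here as well.
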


{\parpic[l]{\hspace{.5cm}$\dessin{2.7cm}{LFigure}$}
  \vspace{.3cm}
  \textit{Proof.} Essentially, the non trivial part of the proof that $\p^2\equiv 0$ lies in the two decompositions into rectangles of a L--figure. A problem may occur if the dot $x$ is involved in one decomposition, but not in the other. But actually, the only two dots which behave in this way are contained in the interior of an edge of the L--figure. Hence, one of the two considered $\X$--decorations must lie inside the L--figure.\\
  
}
Since the two considered $\X$--decorations lie by assumption in regular columns, the same reasoning can be applied even if the grid is singular.
\hfill $\square$\\

Now, we can define several complexes since the previous lemma states that the part of the differentials which leaves the following subcomplexes stable is still a differential.\\
In this section, when talking about $\X$ or $\O$--decorations, we mean the four decorations represented in the pictures. By $x$, we mean the grid line intersection located between the two $\X$--decorations.

\begin{description}
  {\parpic[r]{$\dessin{1.8cm}{XA}$\hspace{.5cm}}
    \vspace{.2cm}\item[$\textcolor{red}{X_A}$:] the subcomplex generated by the generators containing $x$, extracted from the complex associated to the grid obtained with the plain arc and the $A$--decorations, ;
    \vspace{.25cm}
    
  }
  {\parpic[l]{\hspace{-.5cm}$\dessin{1.8cm}{YA}$\hspace{.7cm}}
    \vspace{.2cm}\item[$\textcolor{red}{Y_A}$:] the subcomplex generated by the generators which does not contain $x$, extracted from the complex associated to the grid obtained with the plain arc and the $A$--decorations, ;
    \vspace{.25cm}
    
  }
  {\parpic[r]{$\dessin{1.8cm}{XB}$\hspace{.5cm}}
    \vspace{.2cm}\item[$\textcolor{red}{X_B}$:] the subcomplex generated by the generators containing $x$, extracted from the complex associated to the grid obtained with the plain arc and the $B$--decorations, ;
    \vspace{.25cm}
    
  }
  {\parpic[l]{\hspace{-.5cm}$\dessin{1.8cm}{YB}$\hspace{.7cm}}
    \vspace{.2cm}\item[$\textcolor{red}{Y_B}$:] the subcomplex generated by the generators which does not contain $x$, extracted from the complex associated to the grid obtained with the plain arc and the $B$--decorations, ;
    \vspace{.25cm}
    
  }
  {\parpic[r]{$\dessin{1.8cm}{XpA}$\hspace{.5cm}}
    \vspace{.2cm}\item[$\textcolor{blue}{X'_A}$:] the subcomplex generated by the generators containing $x$, extracted from the complex associated to the grid obtained with the dashed arc and the $A$--decorations, ;
    \vspace{.25cm}
    
  }
  {\parpic[l]{\hspace{-.5cm}$\dessin{1.8cm}{YpA}$\hspace{.7cm}}
    \vspace{.2cm}\item[$\textcolor{blue}{Y'_A}$:] the subcomplex generated by the generators which does not contain $x$, extracted from the complex associated to the grid obtained with the dashed arc and the $A$--decorations, ;
    \vspace{.25cm}
    
  }
  {\parpic[r]{$\dessin{1.8cm}{XpB}$\hspace{.5cm}}
    \vspace{.2cm}\item[$\textcolor{blue}{X'_B}$:] the subcomplex generated by the generators containing $x$, extracted from the complex associated to the grid obtained with the dashed arc and the $B$--decorations, ;
    \vspace{.25cm}
    
  }
  {\parpic[l]{\hspace{-.5cm}$\dessin{1.8cm}{YpB}$\hspace{.7cm}}
    \vspace{.2cm}\item[$\textcolor{blue}{Y'_B}$:] the subcomplex generated by the generators which does not contain $x$, extracted from the complex associated to the grid obtained with the dashed arc and the $B$--decorations, .
    \vspace{.25cm}
    
  }
\end{description}

\subsubsection{Graded chain maps}
\label{par:SubChainMaps}

Now, we define chain maps between these complexes.

\begin{description}
\item[$\func{\p_A^x}{\textcolor{red}{Y_A}}{\textcolor{red}{X_A}}$:] the part of the differential which involves $x$. Since the whole differential \textbf{is} a differential, this map anti-commutes with the partial differentials;
\item[$\func{\p_A^x}{\textcolor{blue}{X'_A}}{\textcolor{blue}{Y'_A}}$:] idem;
\item[$\func{\p_B^x}{\textcolor{red}{X_B}}{\textcolor{red}{Y_B}}$:] idem;
\item[$\func{\p_B^x}{\textcolor{blue}{Y'_B}}{\textcolor{blue}{X'_B}}$:] idem;
\item[$\func{\Spike}{\textcolor{blue}{X'_A}}{\textcolor{red}{X_B}}$:] up to signs, the restriction to $\textcolor{blue}{X'_A}$ of the map $h$ defined in paragraph \ref{par:UpToHom} as the sum over spikes of which the support is the horizontal grid line containing $x$. Actually, it is only the isomorphism which moves the dot $x$ from the dashed arc to the plain one. It is clear that it commutes with the differentials except around the two $\O$--decorations. But since the differentials do not involve $x$, the polygons underlying the differentials cannot have an edge on the vertical grid line supporting $x$. Hence, if one of the two $\O$--decorations is inside such a polygon, then the empty square by its side is also contained in the polygon. This symetrizes the two differentials. Finally, as specified in paragraph \ref{par:Chain}, the map is made to anti-commute by adding a minus sign on odd homological degrees;
\item[$\func{\Spike}{\textcolor{red}{X_A}}{\textcolor{blue}{X'_B}}$:] idem;
\item[$\func{\Id}{\textcolor{red}{Y_A}}{\textcolor{red}{Y_B}}$:] up to sign, the identity map for sets of dots with no regard to the decorations. It commutes with the differentials for similar reasons than the map $\Spike$. In the same way, it is made to anti-commute;
\item[$\func{\Id}{\textcolor{blue}{Y'_A}}{\textcolor{blue}{Y'_B}}$:] idem;
\item[$\func{f}{\textcolor{red}{X_B}}{\textcolor{blue}{Y'_B}}$:] the map $f$ defined in paragraph \ref{par:HomologyAsCone} as the sum over pentagons and hexagons of which at least one peak belongs to the distinguished singular column. Since this peak is located in the same row as one of the two $\X$--decorations and since a polygon is not allowed to contain the other $\X$--decoration which is in the row beside, every such polygon does involve the dot $x$. Moreover, it has already been checked in paragraph \ref{par:Consistency} that it anti-commutes with the differentials;
\item[$\func{f}{\textcolor{red}{Y_B}}{\textcolor{blue}{X'_B}}$:] idem.
\end{description}

All the maps defined above are graded of some degree. This is clear for $\p_A^x$, $\p_B^x$ and $f$. For the map $\Spike$, it is a consequence of the proof of Corollary \ref{PentDegree} (\S\ref{par:SpiPent}). Finally, concerning the maps $\Id$, only the position of $x$ compared to the two $\X$--decorations changes when the two last ones are permuted. But, precisely, we are dealing with those generators which do not involve $x$.\\
For all of them, it is straightforward to check their degree.

\subsubsection{(Anti-)commutative diagram}
\label{par:CommDiag}

The following diagram summarizes all the previous chain complexes and graded chain maps. Every complex is shifted in such a way that the maps do preserve the gradings. We assume here that the Seifert smoothing has one component more as a link than the two positive and negative resolutions.

\begin{eqnarray}
  \vcenter{\hbox{\xymatrix@!0@C=3cm@R=2cm{
        \textcolor{blue}{X'_A}[-2]\{-1\} \ar[r]^(.55){\Spike}_(.55)\sim \ar[d]^{\p^x_A} & \textcolor{red}{X_B}[-1] \ar[r]^{\p^x_B} \ar[d]^{f} & \textcolor{red}{Y_B} \ar@{<-}[r]^{\textnormal{\Id}}_\sim \ar[d]^{f} & \textcolor{red}{Y_A} \ar[d]^{\p^x_A}\\
        \textcolor{blue}{Y'_A}[-1]\{-1\} \ar[r]^{\Id}_\sim &
        \textcolor{blue}{Y'_B}[-1] \ar[r]^{\p^x_B} &
        \textcolor{blue}{X'_B} \ar@{<-}[r]^{\Spike}_\sim &
        \textcolor{red}{X_A}[1] }}}
\end{eqnarray}

If the Seifert smoothing has one component less than the two other resolutions, then the Alexander grading of the four extremal complexes must be shifted by one.

\begin{lemme}
  The diagram above is commutative for $\FF_2$--coefficients.
\end{lemme}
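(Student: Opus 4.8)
The statement asserts the commutativity, over $\FF_2$, of the three squares making up the displayed diagram. Since we work with $\FF_2$--coefficients, every sign in sight is irrelevant; in particular the twists introduced to turn $\Spike$ and $\Id$ into anti-commuting maps (cf. \S\ref{par:Chain}) can be ignored, and it suffices to match composites up to sign. First I would record the two structural facts that drive everything. The three vertical maps $\p^x_A$, $\p^x_B$ and $f$ are each a sum over polygons that necessarily have a corner at the distinguished intersection point $x$: for $\p^x_A$ and $\p^x_B$ this is their definition, and for $f$ it follows because a pentagon or hexagon with a peak on the distinguished singular column has that peak in one of the two adjacent rows carrying an $\X$--decoration and, being forbidden to contain the other $\X$, must pass through $x$ --- exactly the remark already made in \S\ref{par:SubChainMaps}. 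On the other hand the horizontal maps are bijections at the level of the underlying sets of dots: $\Id$ forgets the decorations, and $\Spike$ slides the dot on $x$ from one arc of the singular column to the other while relabelling the four surrounding decorations between $A$-- and $B$--type. Granting this, the goal for each square is to produce a bijection between the juxtapositions of two polygons contributing to the two composites which preserves the source and target generators and the multiset of $\O$--decorations covered, hence the $U$--monomial weight.

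For the two outer squares this bijection is essentially the identity on regions. Given a polygon $\pi$ realising the vertical arrow on one side, the same planar region, reinterpreted on the grid obtained through the slide/relabelling encoded by $\Spike$ (or the relabelling encoded by $\Id$), realises the vertical arrow on the other side. The checks to perform are that the defining constraints --- emptiness, the corner at $x$, the peak condition when relevant, and the absence of an $\X$ in the interior --- survive the operation, and that the set of $\O$'s covered is unchanged. The first uses the fact recorded in \S\ref{par:SubChainMaps} that a polygon underlying $\p^x$ or $f$ cannot have an edge on the vertical grid line through $x$, so sliding the arc through $x$ and swapping the two adjacent decorations creates no new incidence; the second uses that whenever an $\O$--decoration of the singular column lies in the interior of such a polygon, the empty unit square beside it does too, which makes the count of covered $\O$'s symmetric under the relabelling. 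These two squares I expect to be routine.

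The middle square, $f\circ\p^x_B=\p^x_B\circ f$, is the real obstacle, and it is where essentially all the work lies. Both composites are sums over juxtapositions of a rectangle (the $\p^x_B$ factor, through $x$) with a pentagon or hexagon (the $f$ factor, with a peak on the distinguished singular column, also through $x$), so this is a localised instance of the kind of argument carried out in the proof of Proposition \ref{Cchain} (\S\ref{par:Consistency}), itself modelled on the proofs of Theorem \ref{D0Preserves} (\S\ref{par:Diff}) and Proposition \ref{D1Preserves} (\S\ref{par:WallMap}). I would run through the same trichotomy: when the rectangle and the pentagon/hexagon have disjoint sets of corners the two orders of stacking supply the two composites directly; when they share a single corner one gets the two alternative decompositions of an L--shape (Figure \ref{fig:Bidecomp}) or of a peak being embanked (Figure \ref{fig:AlternativePent}); and the residual configurations are among those already pictured in Figure \ref{fig:PentvsHex}. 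The one additional point specific to this setting is the absence of exceptional thin annular domains: a vertical annulus threading the distinguished singular column would have to avoid both of its adjacent $\X$--decorations and hence would contain a dot of the initial generator, which is impossible, while horizontal thin annuli are killed in pairs as before. Once these three bijections are in place the diagram commutes over $\FF_2$.
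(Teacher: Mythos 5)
Your division of labour is inverted relative to the paper's, and the part you dismiss as routine is where the actual argument (and the reason for the $\FF_2$ restriction) lives. For the middle square the paper does not redo any polygon stacking: since $\Cone(\func{\p_B^x}{X_B}{Y_B})$ and $\Cone(\func{\p_B^x}{Y'_B}{X'_B})$ are precisely the complexes for the two resolutions, commutativity of that square is nothing more than the chain-map property of $f$, which is already established in the proof of Proposition \ref{Cchain} (\S\ref{par:Consistency}). Your re-derivation via the L-shape/annulus trichotomy is not wrong, but it is substantially more work than needed and it is not where the content of the lemma is.

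For the outer squares your key claim --- that ``the same planar region, reinterpreted'' realises the other vertical arrow --- does not hold as stated and would not give the bijection. The vertical maps $\p^x_A$ and $f$ count \emph{different} shapes: $\p^x_A$ counts rectangles through $x$ in the $A$-grid, while $f$ counts pentagons (or hexagons) with a peak on the distinguished singular column. These are not the same region; they differ by exactly a spike. The paper's argument is precisely this spike addition/removal: gluing the small spike of $\Spike$ onto an $f$-pentagon produces a rectangle that contains the $B$-decoration but no longer the $A$-decoration, hence a term of $\p^x_A$, and conversely peeling off the spike from a $\p_A^x$-rectangle produces an $f$-pentagon. That spike bookkeeping is the check you need to make, not a ``reinterpretation of the same region.'' Relatedly, you attribute the $\FF_2$ restriction to the homological-degree twist built into $\Spike$ and $\Id$, but the paper points out that that twist alone would merely make the square anti-commute (still fine). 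The genuine obstruction over $\Z$ is that $f$ carries a left/right pentagon sign with no counterpart in $\p_A^x$, and it is this mismatch that forces $\FF_2$-coefficients.
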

\begin{proof}
  The mapping cones $\Cone(\func{\p_B^x}{\textcolor{red}{X_B}}{\textcolor{red}{Y_B}})$ and $\Cone(\func{\p_B^x}{\textcolor{blue}{Y'_B}}{\textcolor{blue}{X'_B}})$ are respectively the complexes associated to the positive and to the negative resolutions of the distinguished singular column. Then the anti-commutativity of the middle square is nothing more than arguing that $f$ is a chain map. This is implicit  in the proof of Proposition \ref{Cchain} (\S\ref{par:Consistency}).\\
  
  Now, we focus on the left square. As pointed out in the previous paragraph, any pentagon (resp. hexagon) involved in the map $f$ has $x$ as a corner. Moreover, when adding any spike involved in $h$, it results in a rectangle (resp. pentagon) which contains $B$, but no more $A$. It is thus a polygon involved in $\p_A^x$.
  $$
  \dessin{3cm}{Spikebas}\ +\ \dessin{3cm}{PartielPentbas}\ =\ \dessin{3cm}{PartielAbas}
  $$
  
  Conversely, when removing a spike, any polygon involved in $\p_A^x$ gives a polygon involved in $f$ .
  
  $$
  \dessin{3cm}{PartielAhaut}\ -\  \dessin{3cm}{Spikehaut}\ =\ \dessin{3cm}{PartielPenthaut}
  $$
  Up to signs, this proves the commutativity of the square. Now, the fact that $\p_A^x$ shifts the Maslov degree by one whereas $f$ preserves it, should make the square anti-commutes since the maps $\Spike$ and $\Id$ add a minus sign on odd homological degrees. However, $f$ also adds a minus sign for right pentagons when $\p_A^x$ does not. This is why we consider only $\FF_2$--coefficients.\\

  Similar considerations apply to the square on the right.
\end{proof}

\subsubsection{Cube of maps}
\label{par:3DCube}

Completing the above diagram with two maps
$$
\func{g:=\Id^{-1}\circ\p_B^x\circ\Spike}{\textcolor{blue}{X'_A}}{\textcolor{red}{Y_A}}
$$

$$
\func{g:=\Spike^{-1}\circ\p_B^x\circ\Id}{\textcolor{blue}{Y'_A}}{\textcolor{red}{X_A}},
$$
we get a three dimensional cube of maps:
$$
\xymatrix@!0@C=1.85cm@R=1.85cm{
  \textcolor{blue}{X'_A}[-2]\{-1\} \ar[rr]^(.55)g \ar[rd]^{\Spike}_\sim \ar[dd]^{\p_A^x} && \textcolor{red}{Y_A} \ar[dd]^(.3){\p_A^x}|!{[ld];[rd]}\hole \ar[rd]^{\Id}_\sim &\\
  & \textcolor{red}{X_B}[-1] \ar[rr]^(.35){\p_B^x} \ar[dd]^(.3)f && \textcolor{red}{Y_B} \ar[dd]^f\\
  \textcolor{blue}{Y'_A}[-1]\{-1\} \ar[rr]^(.65)g|!{[ru];[rd]}\hole \ar[rd]^(.45){\Id}_(.45)\sim && \textcolor{red}{X_A}[1] \ar[rd]^(.4){\Spike}_(.4)\sim &\\
  & \textcolor{blue}{Y'_B}[-1] \ar[rr]^{\p_B^x} && \textcolor{blue}{X'_B}.}
$$

The new faces are still commutative. This holds by definition for two of them and by composition of three commutative faces for the third one.

\subsection{Acyclicity and consequences}
\label{ssec:Acyclicity}

Let $L$ be the singular link associated to the singular grid $G$ considered in the previous section. Let $p$ be the distinguished double point of $L$.

\subsubsection{Notation for edges and faces}
\label{par:EdgesNotation}

By \emph{top}, \emph{bottom}, \emph{left}, \emph{right}, \emph{back} or \emph{front face}, we mean the obvious corresponding faces of the cube of maps.\\

A specific edge is now determined as the intersection of two faces. For instance, the front left and left front edge both denote the edge $\func{f}{\textcolor{red}{X_B[-1]}}{\textcolor{blue}{Y'_B[-1]}}$.

\subsubsection{Recognizing cones}
\label{par:RecognizingCones}

As corollaries of the definitions given in paragraphs \ref{par:SubComplexes} and \ref{par:SubChainMaps}, we can state the following lemmata:
\begin{lemme}
  The mapping cone of the front top edge is a link Floer complex for the positive resolution of $p$.
\end{lemme}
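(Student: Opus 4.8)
The front top edge of the cube is the arrow $\p_B^x\colon X_B[-1]\to Y_B$, so the plan is to identify its mapping cone directly with the graded link Floer complex of the grid $G_+$ obtained from $G$ by positively resolving the distinguished double point $p$. First I would record that, by Proposition~\ref{PosNeg} (\S\ref{par:AssLinks}) together with the description of the admissible desingularizations in \S\ref{par:MultiX}, the grid carrying the plain arc $\alpha_S$ and the $B$--decorations is precisely $G_+$; hence $\big(C^-(G_+),\p^-_\Sigma\big)$ is by definition a link Floer complex for the positive resolution of $p$, and it suffices to produce an isomorphism of (graded, filtrated) chain complexes $\Cone(\p_B^x)\cong C^-(G_+)$.

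On the level of modules this is immediate: the generators of $C^-(G_+)$ split into those containing the dot $x$ and those not containing it, spanning over $\Z[U_1,\dots,U_n]$ exactly the modules underlying $X_B$ and $Y_B$ respectively; so the module underlying $C^-(G_+)$ is $X_B\oplus Y_B$, which, once the Maslov and Alexander shifts between $x$--containing and $x$--free generators are read off from Proposition~\ref{RectGradings} (\S\ref{par:Rect}) and absorbed into the $[-1]$ and $\{-1\}$ shifts appearing in the cube, agrees with the module underlying $\Cone(\p_B^x)$.

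The real work is matching differentials. I would write $\p^-_\Sigma|_{G_+}=\p'+\p^x$, where $\p^x$ collects the empty $\X$--free rectangles having $x$ as an active corner and $\p'$ is the remainder. By the lemma of \S\ref{par:SubComplexes}, $\p'$ is already a differential, and it manifestly preserves the splitting $X_B\oplus Y_B$, restricting to the subcomplex differentials that define $X_B$ and $Y_B$. The crucial point is that $\p^x$ sends $X_B$ into $Y_B$ with \emph{no} component $Y_B\to X_B$ — equivalently, no empty $\X$--free rectangle on $G_+$ ever gains the dot $x$. This follows from the local geometry: a rectangle with corner $x$ lies in one of the four quadrants at $x$, and two of them contain one of the two $\X$--decorations flanking $x$ diagonally, hence are excluded; in each of the two remaining quadrants the orientation of $\T_{G_+}$ inherited by the rectangle forces its oriented vertical edge so that $x$ is a dot of the \emph{source} generator, i.e. $x$ can only be lost. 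Granting this, $\p^x|_{X_B}$ is by definition the map $\p_B^x$ of \S\ref{par:SubChainMaps}, so $\p^-_\Sigma|_{G_+}=(\partial_{X_B}\oplus\partial_{Y_B})+\p_B^x$ with $\p_B^x\colon X_B\to Y_B$, which, after the sign twist turning the anti-commuting $\p_B^x$ into a chain map exactly as in \S\ref{par:MapCones}, is precisely the mapping cone differential; this gives the desired isomorphism.

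I expect the orientation bookkeeping in the third paragraph to be the main obstacle: checking, in the standard configuration of the resolved singular column, that every rectangle through $x$ in the two admissible quadrants loses $x$ rather than gaining it is a small but delicate sign computation, and it is the only place where the precise placement of the $O$-- and $\X$--decorations of the resolved singular column really enters.
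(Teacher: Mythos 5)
Your approach matches the paper's: both regard the lemma as a direct unpacking of the definitions in \S\ref{par:SubComplexes} and \S\ref{par:SubChainMaps} --- the $B$--decorated plain-arc grid is $G_+$, its generators split as $X_B\oplus Y_B$, the $x$--free part of $\p^-_\Sigma$ preserves this splitting and restricts to the subcomplex differentials, and the cross term is exactly $\p_B^x$, the two flanking $\X$--decorations together with the orientation convention of \S\ref{par:Rect} ruling out any $Y_B\to X_B$ component. One small correction to your final step: in this paper's conventions chain maps \emph{anti}-commute (\S\ref{par:Chain}), the mapping cone of \S\ref{par:MapCones} is defined directly for such maps, and \S\ref{par:SubChainMaps} already notes that $\p_B^x$ anti-commutes automatically with the partial differentials, so no sign twist is needed to conclude.
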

\begin{lemme}
  The mapping cone of the front bottom edge is a link Floer complex for the negative resolution of $p$.
\end{lemme}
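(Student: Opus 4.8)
The claim to prove is the final one displayed: the mapping cone of the front bottom edge is a link Floer complex for the negative resolution of the distinguished double point $p$. The strategy mirrors exactly the one just used for the companion lemma about the front top edge and the positive resolution, which was itself a corollary of the way the subcomplexes and chain maps were set up in paragraphs \ref{par:SubComplexes} and \ref{par:SubChainMaps}. So the plan is essentially to unwind definitions and invoke the mapping-cone description of $C^-(G^-)$ already established in paragraph \ref{par:HomologyAsCone}.

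First I would identify the front bottom edge: according to the notation fixed in paragraph \ref{par:EdgesNotation}, the front face involves the $\textcolor{blue}{Y'}$ and $\textcolor{blue}{X'}$ complexes obtained with the dashed arc, and the bottom face involves the $Y$'s and $Y'$'s (the generators not containing $x$); their intersection is the map $\func{f}{\textcolor{red}{Y_B}}{\textcolor{blue}{X'_B}}$ (the other candidate, $\func{\p_B^x}{\textcolor{blue}{Y'_B}}{\textcolor{blue}{X'_B}}$, being the left front edge rather than the front bottom one — I would pin down which of the two intended edges is meant and state it explicitly, matching the bottom/front face assignment used in the proof of the front top lemma). The key observation, already noted in paragraph \ref{par:SubChainMaps} when the map $\func{\p_B^x}{\textcolor{blue}{Y'_B}}{\textcolor{blue}{X'_B}}$ was introduced, is that $\textnormal{Cone}\big(\func{\p_B^x}{\textcolor{red}{X_B}}{\textcolor{red}{Y_B}}\big)$ is the chain complex associated to the positive resolution of the singular column and $\textnormal{Cone}\big(\func{\p_B^x}{\textcolor{blue}{Y'_B}}{\textcolor{blue}{X'_B}}\big)$ is the chain complex associated to the negative resolution. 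Then I would recall from paragraph \ref{par:HomologyAsCone} that, after distinguishing the singular column $S$, the differential $\p^-_{G^-}$ on the negative-resolution complex splits as $\p'_{G^-}+f$, where $\p'_{G^-}$ is the sum over polygons with no peak on $S$ and $f$ is the sum over those with a peak on $S$; since $f$ is a chain map for $\p'_{G^-}$, the negative-resolution complex \emph{is} $\textnormal{Cone}(f)$ with $f$ restricted appropriately.

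The heart of the argument, and the only step requiring care rather than pure bookkeeping, is matching the two descriptions of the negative resolution complex: the one coming from the cube face (as $\textnormal{Cone}(\p^x_B)$ over the dashed-arc $Y'$--$X'$ complexes) and the one coming from paragraph \ref{par:HomologyAsCone} (as $\textnormal{Cone}(f)$). Concretely I would argue that, because the two $\X$--decorations sit in adjacent rows and a polygon may not contain the second $\X$--decoration, every pentagon or hexagon in the map $\func{f}{\textcolor{red}{Y_B}}{\textcolor{blue}{X'_B}}$ is forced to have $x$ as a corner — this is precisely the remark made in the bullet defining $\func{f}{\textcolor{red}{Y_B}}{\textcolor{blue}{X'_B}}$ in paragraph \ref{par:SubChainMaps}. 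Dually, a polygon with $x$ as a corner and a peak on $S$, viewed on the negatively resolved grid, decomposes (by removing or adjoining the small spike supported on the row through $x$, as in Proposition \ref{Pent->Rect}, \S\ref{par:SpiPent}) into one lying in $\textcolor{red}{Y_B}$ and one in $\textcolor{blue}{X'_B}$. This gives a bijection between the terms of $f$ on the negative-resolution complex and the terms of the edge map, compatible with the $U$-powers and, over $\FF_2$, with signs; hence $\textnormal{Cone}$ of the front bottom edge is, term by term, the negatively resolved link Floer complex.

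Assembling: by paragraph \ref{par:HomologyAsCone} the negative resolution complex is $\textnormal{Cone}(f)$; by the matching above this $\textnormal{Cone}(f)$ coincides with the mapping cone of the front bottom edge of the cube; therefore the latter is a link Floer complex for the negative resolution of $p$, which is the assertion. I expect the main obstacle to be nothing conceptual but rather the careful edge-identification in the cube together with verifying that the spike-adjunction bijection of polygons respects the homological and Alexander shifts recorded in the cube diagram of paragraph \ref{par:3DCube}; since we are working over $\FF_2$-coefficients here (as flagged at the start of Section \ref{sec:Sum}), the sign subtleties that obstruct the commutativity of the cube's faces do not interfere, and the argument is a direct corollary of the constructions, exactly parallel to the proof of the front top lemma.
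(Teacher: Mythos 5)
Your edge identification is wrong, and the argument cannot recover from it. In the cube of paragraph \ref{par:3DCube}, the front face is the $B$-face $\{X_B, Y_B, Y'_B, X'_B\}$ (the back face being the $A$-face), and the bottom face is $\{Y'_A, X_A, Y'_B, X'_B\}$; their intersection is $\{Y'_B, X'_B\}$, so the front bottom edge is $\func{\p_B^x}{\textcolor{blue}{Y'_B}}{\textcolor{blue}{X'_B}}$ --- the very candidate you set aside. The edge you prefer, $\func{f}{\textcolor{red}{Y_B}}{\textcolor{blue}{X'_B}}$, is in fact the front \emph{right} edge, and the map you call the ``left front edge'' is the front bottom one; the paper's own example fixes the front left edge as $\func{f}{\textcolor{red}{X_B[-1]}}{\textcolor{blue}{Y'_B[-1]}}$, which should have flagged the mismatch. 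Since $\Cone\big(\func{f}{\textcolor{red}{Y_B}}{\textcolor{blue}{X'_B}}\big)$ mixes a plain-arc summand with a dashed-arc one, it is not a link Floer complex of any single grid, and the spike-adjunction bijection you sketch afterwards would not correct this.

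The ``heart of the argument'' you propose also misreads paragraph \ref{par:HomologyAsCone}. That paragraph presents the \emph{singular} complex $C^-(G)$ as $\Cone(f)$, not the negative-resolution complex $C^-(G^-)$; the latter is the complex of a regular grid, has no peaks on $S$, and admits no such decomposition, so there are not two descriptions of $C^-(G^-)$ to reconcile. Once the correct edge is named, the lemma follows at once from the definitions, which is why the paper states it (and its three companions) with no proof at all: by paragraph \ref{par:SubComplexes}, $Y'_B\oplus X'_B$ is the full chain complex for the grid with the dashed arc and the $B$-decorations, identified in the proof of the commutativity lemma of paragraph \ref{par:CommDiag} as the negative resolution of $p$; by paragraph \ref{par:SubChainMaps}, $\p_B^x$ is precisely the part of its differential involving the distinguished point $x$; and $\Cone\big(\func{\p_B^x}{\textcolor{blue}{Y'_B}[-1]}{\textcolor{blue}{X'_B}}\big)=Y'_B\oplus X'_B$ reassembles the full complex. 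No polygon-matching is needed.
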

\begin{lemme}
  The mapping cones of the back left and back right edges are distinct link Floer complexes for the Seifert smoothing of $p$.
\end{lemme}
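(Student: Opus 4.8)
The plan is to read the two edges directly off the cube of maps and then to recognise each of the corresponding mapping cones as the link Floer complex of a grid presentation of the Seifert smoothing $L_\Sigma$ of $L$ at $p$. Concretely, the back left edge is $\func{\p_A^x}{\textcolor{blue}{X'_A}[-2]\{-1\}}{\textcolor{blue}{Y'_A}[-1]\{-1\}}$ and the back right edge is $\func{\p_A^x}{\textcolor{red}{Y_A}}{\textcolor{red}{X_A}[1]}$, where in both cases $\p_A^x$ denotes the ``$x$--involving'' part of the relevant Alexander--graded grid differential, as set up in \S\ref{par:SubComplexes}--\S\ref{par:SubChainMaps}; since this whole section works over $\FF_2$ there are no sign subtleties to worry about.

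First I would recall, from the discussion following \S\ref{par:MultiX}, that taking the $A$--decorations turns the distinguished singular column into a regular one carrying the Seifert smoothing; denote by $G_A$ (resp.\ $G'_A$) the regular grid so obtained using the plain arc $\alpha_S$ (resp.\ the dashed arc $\beta_S$), so that both $G_A$ and $G'_A$ are grid presentations of $L_\Sigma$. As a module, the grid complex of $G'_A$ splits as $\textcolor{blue}{X'_A}\oplus\textcolor{blue}{Y'_A}$ according to whether a generator occupies the diagonal point $x$ squeezed between the two adjacent--row $\X$--decorations of the column; by the lemma of \S\ref{par:SubComplexes} the part of its differential not involving $x$ is itself a differential and visibly respects this splitting, while the remaining part is a sum over rectangles having $x$ as a corner. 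Because $x$ is pinched diagonally between those two $\X$--decorations, such a rectangle necessarily carries the source dot off $x$, hence maps $\textcolor{blue}{X'_A}$ into $\textcolor{blue}{Y'_A}$. Thus the grid complex of $G'_A$ is exactly $\Cone\big(\func{\p_A^x}{\textcolor{blue}{X'_A}}{\textcolor{blue}{Y'_A}}\big)$, and since $\Cone(\func{f}{C}{C'})=C[1]\oplus C'$ the shifts carried by the cube vertices contribute only an overall grading shift $[-1]\{-1\}$; so the mapping cone of the back left edge is, up to this shift, the link Floer complex of $L_\Sigma$. An entirely analogous argument, with the roles of the $x$--containing and $x$--avoiding subcomplexes interchanged since the plain arc produces the opposite local arrangement around $x$, identifies the mapping cone of the back right edge with the grid complex of $G_A$ shifted by $[1]$ --- again a link Floer complex for $L_\Sigma$.

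Finally, the two cones are \emph{distinct} as chain complexes because $G_A$ and $G'_A$ are genuinely different grid diagrams (the plain and dashed arcs differ by a commutation--type replacement of a vertical grid line), even though Theorem \ref{Invariance} (\S\ref{par:Invariance}), or already its regular ancestor Theorem \ref{def:Moves} (\S\ref{par:ElemMoves}), ensures they carry the same homology $\widehat{HL}_*(L_\Sigma)$. The hard part will be the middle step: one must verify carefully, for each of the two arcs, that the grid differential really realises the claimed mapping--cone structure --- that is, that \emph{every} rectangle touching $x$ runs from the $x$--containing subcomplex to the $x$--avoiding one (respectively the reverse in the $G_A$ case) --- together with the bookkeeping of the grading shifts. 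This amounts to the same kind of local analysis of rectangles near two diagonally placed $\X$--decorations as in the lemma of \S\ref{par:SubComplexes} and the grading computation of Corollary \ref{PentDegree} (\S\ref{par:SpiPent}), but it has to be carried out twice and with care for the two possible orderings of the straddling $\X$'s.
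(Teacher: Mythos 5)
Your proposal is correct and takes essentially the same approach as the paper, which dismisses the lemma as an immediate corollary of the definitions in \S\ref{par:SubComplexes} and \S\ref{par:SubChainMaps}: your argument unpacks why the $A$--decorated grids $G_A$ and $G'_A$ present the Seifert smoothing, why their graded differentials decompose as the $x$--avoiding part plus $\p_A^x$, and hence why the two back edges have the claimed mapping cones. Your closing remark correctly flags the only real content here — checking that $\p_A^x$ is unidirectional for each arc (which, as you note, follows from where the two $\X$--decorations pinch $x$, with the plain and dashed arcs giving opposite diagonals) — which is exactly what the paper bakes silently into the type signatures $\p_A^x\colon Y_A\to X_A$ and $\p_A^x\colon X'_A\to Y'_A$.
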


And, finally:
\begin{lemme}
  The generalized cone of the front face is a singular link Floer complex for $L$. 
\end{lemme}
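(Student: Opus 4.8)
The plan is to realize the generalized cone of the front face as an iterated mapping cone, peeling off its two horizontal edges first and then recognizing what is left as the switch map of \S\ref{par:HomologyAsCone}. The first thing to check is that the front face is a straight two--dimensional cube of maps: its edges are $\p_B^x$ along the top and the bottom and $f$ along the left and the right, and the straightness relation amounts to $f\circ\p_B^x+\p_B^x\circ f\equiv 0$, which is exactly the anti--commutativity of the middle square of the diagram in \S\ref{par:CommDiag} --- itself a restatement of the fact that $f$ is a chain map, hence a consequence of Proposition \ref{Cchain} (\S\ref{par:Consistency}). Once this is in place the generalized cone is well defined, and by the first lemma of \S\ref{par:MapsCubes} it may be computed by applying mapping cones in whichever order is convenient.

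Next I would take the mapping cones of the two horizontal edges. By the corresponding lemma of \S\ref{par:RecognizingCones}, $\Cone(\p_B^x\colon \textcolor{red}{X_B}\to\textcolor{red}{Y_B})$ is a link Floer complex for the positive resolution of $p$ --- call it $C^-(G^+)$, where $G^+$ is the grid obtained by positively resolving the distinguished singular column. Concretely, $\textcolor{red}{X_B}$ and $\textcolor{red}{Y_B}$ are precisely the summands of $C^-(G^+)$ spanned by the generators that do, respectively do not, occupy the distinguished intersection $x$, and $\p_B^x$ is the part of the differential that moves a dot across $x$, so gluing these two pieces back together by a mapping cone reconstitutes $C^-(G^+)$. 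The front bottom edge is handled identically and yields $C^-(G^-)$ for the negative resolution of $p$.

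It then remains to identify the map induced on these two cones by the two vertical $f$'s. By the descriptions in \S\ref{par:SubChainMaps}, both vertical arrows sum over pentagons and hexagons carrying a peak on the distinguished singular column, and every such polygon necessarily uses the dot $x$; thus $f$ sends $\textcolor{red}{X_B}$ into $\textcolor{blue}{Y'_B}$ and $\textcolor{red}{Y_B}$ into $\textcolor{blue}{X'_B}$, and assembled over the two summands it is exactly the chain map $f\colon C^-(G^+)\to C^-(G^-)$ of \S\ref{par:HomologyAsCone}. Hence the generalized cone of the front face is $\Cone\big(f\colon C^-(G^+)\to C^-(G^-)\big)$, which by \S\ref{par:HomologyAsCone} is the singular link Floer complex $C^-(G)$ for $L$ built using the distinguished column; the shift conventions match because the $[\#0(I)]$--shifts of a generalized cone over a $2$--cube coincide with those of the iterated mapping cone.

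I expect the main obstacle to be signs. As noted just before the lemma in \S\ref{par:CommDiag}, the relevant squares only (anti)commute with $\FF_2$--coefficients, because $f$ inserts an extra sign on right--pointing pentagons where $\p_A^x$ does not, so the identification above --- and therefore the statement --- should be claimed only over $\FF_2$; upgrading it to $\Z$ would require propagating the orientation map $\e$ through the decompositions $\textcolor{red}{X_B}\oplus\textcolor{red}{Y_B}$ and $\textcolor{blue}{X'_B}\oplus\textcolor{blue}{Y'_B}$ and through the cone shifts, exactly the sort of case--by--case verification carried out elsewhere in this thesis. A minor point to keep honest is that the arcs $\alpha_S,\beta_S$ used to define $\p_B^x$ and $f$ here must be the same ones used in \S\ref{par:HomologyAsCone}, so that the two descriptions of $C^-(G)$ coincide on the nose rather than merely up to the isomorphism of \S\ref{par:ArcsIsotopies}.
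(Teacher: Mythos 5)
Your argument is correct and follows the same route the paper takes: the lemma is stated as a corollary of the definitions, and the intended justification is exactly yours — take the generalized cone first along the two $\p_B^x$--edges to reconstitute $C^-(G^+)$ and $C^-(G^-)$, then recognize the induced vertical map as the switch chain map $f$ of \S\ref{par:HomologyAsCone}, whose cone is $C^-(G)$ by definition. One small over-caution: the anti-commutativity of the \emph{front} face is precisely the assertion that $f$ is a chain map, which Proposition \ref{Cchain} gives over $\Z$; the $\FF_2$ restriction in this section is forced by the left and right squares of the diagram in \S\ref{par:CommDiag} (involving $\Spike$ and $\p_A^x$), not by the front face you are coning off, so this particular lemma does not actually depend on reducing mod $2$.
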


\subsubsection{Singular link Floer homology as a map between Seifert smoothing}
\label{par:SeifertSmoothing}

Moreover, since the top left, top right, bottom left and bottom right edges are all isomorphisms, the generalized cone of the whole cube of maps can be seen as the generalized cone of a square with acyclic complexes at its corners, it is then acyclic itself. It means the homologies of the back and the front faces are isomorphic. This proves the following theorem:

\begin{theo}\label{SeifertSmoothing}
  Let $L$ a singular link with $p$ as a double point. Let $L'$ be the link obtained by smoothing $p$ with respect to the orientation. If $L'$ has one component more than $L$, then, it exists a graded chain map
$$
\func{g}{C[-2]\{-1\}}{C'}
$$
where $C$ and $C'$ are link Floer complexes for $L'$ such that
$$
{HL}^-_\Sigma(G)\cong \Cone(g)
$$
for $\FF_2$--coefficients. If $L'$ has one component less than $L$, then the same statement is true but for a graded chain map
$$
\func{g}{C[-2]}{C'\{1\}}.
$$
\end{theo}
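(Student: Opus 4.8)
The plan is to extract the statement from the three--dimensional cube of maps $\C$ built in \S\ref{par:3DCube} by computing its generalized cone in two different ways and comparing. Everything takes place over $\FF_2$: this is precisely what makes all six faces of $\C$ commute (\S\ref{par:CommDiag}), hence makes $\C$ a straight cube of maps in the sense of \S\ref{par:MapsCubes}, so that mapping cones may be taken in any direction and the order--independence lemma of \S\ref{par:MapsCubes} applies.

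First I would take the generalized cone of $\C$ by coning first along the four ``depth'' edges, namely the two copies of $\Spike$ and the two copies of $\Id$. Each of these four maps is an isomorphism of chain complexes, so each of their mapping cones is acyclic by Corollary \ref{AcyclicCone} (\S\ref{par:MapCones}); this intermediate step replaces $\C$ by a two--dimensional cube whose four chain complexes are all acyclic. Filtering the generalized cone of such a square by the number of $0$'s --- the associated graded chain complex then carrying only the internal, acyclic, differentials --- and applying Corollary \ref{Filt->Acycl} (\S\ref{par:SpecSequences}), one concludes that $\Cone(\C)$ is acyclic.

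Next I would recompute $\Cone(\C)$ by coning first along the two directions spanning the back and the front faces of $\C$. By order independence this realizes $\Cone(\C)$ as $\Cone(G)$, where $\func{G}{\Cone(\textrm{back face})}{\Cone(\textrm{front face})}$ is the chain map induced by the four depth edges. By the lemmata of \S\ref{par:RecognizingCones}, the generalized cone of the front face is a singular link Floer complex for $L$, so $H\big(\Cone(\textrm{front face})\big)=HL^-_\Sigma(G)$; since $\Cone(G)=\Cone(\C)$ is acyclic, Corollary \ref{AcyclicCone} makes $G$ a quasi-isomorphism, whence $HL^-_\Sigma(G)\cong H\big(\Cone(\textrm{back face})\big)$.

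It remains to recognize the back face. Coning it along its two $\p_A^x$--edges first yields a one--dimensional cube $\Cone(\textrm{back left edge})\to\Cone(\textrm{back right edge})$ whose single arrow is the map induced by the two copies of $g$; by the lemmata of \S\ref{par:RecognizingCones} these two mapping cones are the two \emph{distinct} link Floer complexes $C[-2]\{-1\}$ and $C'$ for the Seifert smoothing $L'$, with the shifts dictated by the vertices of $\C$, so that $\Cone(\textrm{back face})=\Cone\big(\func{g}{C[-2]\{-1\}}{C'}\big)$ and $HL^-_\Sigma(G)\cong\Cone(g)$ when $L'$ has one component more than $L$. When $L'$ has one component fewer, the same computation applies after the Alexander shift of the four extremal complexes noted in \S\ref{par:CommDiag}, which moves the shifts of $g$ to $\func{g}{C[-2]}{C'\{1\}}$. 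The one non-formal point, and the step I expect to be the main obstacle, is exactly this last bookkeeping: one must pin down the shifts on the eight vertices of $\C$ so that the induced arrow is $g$ on the nose rather than merely up to an overall degree shift, identify correctly which of the two Seifert-smoothing complexes is $C$ and which is $C'$, and --- crucially --- observe that the sign discrepancy already flagged in \S\ref{par:CommDiag} is precisely what confines the whole identification to $\FF_2$--coefficients.
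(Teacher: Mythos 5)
Your argument follows the paper's own route exactly: cone the 3--dimensional cube along the four depth edges (which are isomorphisms, hence acyclic cones), conclude that the total generalized cone vanishes, and then re-cone front/back to deduce that $G$ is a quasi-isomorphism, identifying the back face as $\Cone(g)$ and the front face as the singular link Floer complex. The only addition is that you spell out, via the $\#0(I)$--filtration and Corollary~\ref{Filt->Acycl}, the step the paper states without justification ("it is then acyclic itself"), which is correct and fills a small elision in the original.
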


With any result of acyclicity for some singular link, this theorem would be very powerful since it would carry acyclicity when merging any two points of an acyclic link. But in this story, the murderer is the victim:
\begin{cor}
  Total singular link Floer homology $HL^-_\Sigma$ with $\FF_2$--coefficients never vanish.
\end{cor}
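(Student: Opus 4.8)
The corollary asserts that the total singular link Floer homology $HL^-_\Sigma$ (over $\FF_2$) is never acyclic. The plan is to derive this from Theorem \ref{SeifertSmoothing}, which writes $HL^-_\Sigma(G)$ as (the homology of) the mapping cone of a graded chain map $\func{g}{C[-2]\{-1\}}{C'}$ between two link Floer complexes for the \emph{same} link $L'$ (the Seifert smoothing), up to a shift and a possible one-component discrepancy in the other direction. Since $C$ and $C'$ are both link Floer complexes for $L'$, their homologies are both the (nonzero) link Floer homology $\widetilde{HL}_*(L')$, which is finitely generated and nontrivial over $\FF_2$. So the point is purely homological-algebraic: the mapping cone of a chain map between two complexes with the same nonzero homology cannot be acyclic, because of the long exact sequence it fits into.

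\textbf{Key steps.} First I would invoke Theorem \ref{SeifertSmoothing} to get the graded chain map $g$ together with the isomorphism ${HL}^-_\Sigma(G)\cong \Cone(g)$ over $\FF_2$; the orientation-reversing/component-count subtlety only shifts gradings and is irrelevant to acyclicity, so I treat both cases uniformly. Second, I would apply the long exact sequence of a mapping cone (the one displayed just after Corollary \ref{AcyclicCone}, \S\ref{par:MapCones}):
$$
\xymatrix @!0 @C=1.5cm @R=1.5cm{
H(C) \ar[rr]^{g_*} && H(C') \ar[dl]\\
& H\big(\Cone(g)\big) \ar[ul]&.
}
$$
If $\Cone(g)$ were acyclic, then $g_*\colon H(C)\to H(C')$ would be an isomorphism (this is exactly Corollary \ref{AcyclicCone}). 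Third, I would compute $H(C)\cong H(C')\cong \widetilde{HL}_*(L')$, which by the invariance results recalled in \S\ref{par:LinkInv} (together with Proposition \ref{UiUj}) is a nonzero finitely generated $\FF_2$-module, since $\widehat{HL}_*$ categorifies the Alexander polynomial and is always nonzero. Hence $g_*$ being an isomorphism is not itself a contradiction — so I need one more ingredient: that $g_*$ is in fact \emph{not} an isomorphism. This follows because $g$ shifts the Maslov grading: from the definitions in paragraph \ref{par:3DCube}, $g = \Id^{-1}\circ\p_B^x\circ\Spike$ involves the partial differential $\p_B^x$, which strictly decreases the Maslov grading; thus $g_*$ cannot be a grading-preserving isomorphism unless $\widetilde{HL}_*(L')$ is concentrated in a way incompatible with a degree shift, which it is not, as it has nontrivial graded Euler characteristic equal to $\Delta(L')(q)\neq 0$. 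Therefore $\Cone(g)$ is not acyclic, i.e. $HL^-_\Sigma$ does not vanish.

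\textbf{Main obstacle.} The delicate point is the last one: making precise why the degree-shifted map $g_*$ fails to be an isomorphism. The cleanest route is to track gradings carefully — $g$ lands from $C[-2]\{-1\}$ to $C'$, so a grading-preserving isomorphism on homology would force $\widetilde{HL}_*(L')$ to be isomorphic to a shift of itself by $(-2,-1)$ (or $(-2,0)$ in the component-deficient case), and iterating this is impossible for a nonzero bounded graded module. I would phrase the argument via the graded Euler characteristic: $\xi_\gr\big(\widetilde{HL}_*(L')\big)$ equals $\Delta(L')(q)$ up to the factor $(q^{-1/2}+q^{1/2})^{\cdot}$ coming from $V^{\otimes(n-\ell)}$, which is a nonzero element of $\Z[q,q^{-1}]$; a self-map shifting this by $q^{-1}$ and preserving Euler characteristic is impossible unless the module is zero. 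Everything else is a direct appeal to results already proved in the excerpt (Theorem \ref{SeifertSmoothing}, Corollary \ref{AcyclicCone}, Proposition \ref{UiUj}), so the write-up is short.
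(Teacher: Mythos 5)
Your proposal has the right ingredients — Theorem \ref{SeifertSmoothing}, Corollary \ref{AcyclicCone}, and the observation that a grading-shifting isomorphism on a nonzero homology is impossible — and for a singular link with a \emph{single} double point it essentially works. The genuine gap is in the step ``$H(C)\cong H(C')$ is the (nonzero) link Floer homology $\widetilde{HL}_*(L')$.'' The Seifert smoothing $L'$ removes only the one distinguished double point: if $L$ has $k\geq 2$ double points, then $L'$ is still a singular link, $C$ and $C'$ are the $\Sigma$-type complexes for that singular link, and their nonvanishing is precisely an instance of the statement you are trying to prove — it cannot be imported from the regular theory. (Note that the hat version $\widehat{HL}$ really does vanish for some singular links, cf.\ Theorem \ref{Loops}, and the extended Alexander polynomial vanishes there too, so neither ``categorifies the Alexander polynomial'' nor any Euler-characteristic comparison supplies the needed nonvanishing.) The paper closes this gap by running the argument as a descent by contradiction: assume $HL^-_\Sigma(L)=0$; then $\Cone(g)$ is acyclic, so $g_*$ is an isomorphism, and since $g$ is graded with a nontrivial shift, an extremal-degree class of $HL^-_\Sigma(L')$ would have to lie in its kernel, forcing $HL^-_\Sigma(L')=0$ as well; iterating over the double points one reaches a regular link with vanishing link Floer homology, which does not exist. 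Your write-up needs this induction (or iteration) on the number of double points, not a single application of the cone sequence.

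A secondary problem is your proposed resolution of the ``main obstacle'' via graded Euler characteristics: if $g_*$ were an isomorphism, the shift would only give $\xi_\gr=q^{\pm1}\xi_\gr$, hence $\xi_\gr=0$ — which is no contradiction exactly when $\Delta(L')=0$ (split links, and the singular cases that matter here). The robust version is the one the paper uses and you mention only in passing: a graded isomorphism with a nonzero shift from a nonzero, suitably bounded graded module to (a shift of) itself kills an extremal-degree element, which is absurd. Likewise, nonvanishing of link Floer homology for regular links does not follow from ``it categorifies $\Delta$'' (split links have $\Delta=0$); it is a separate known fact, which the paper also invokes at the final step of its descent.
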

\begin{proof}
Suppose that $L$ is a singular link such that $HL^-_\Sigma(L)$ is null. Let $L'$ be the Seifert smoothing of one of the double point of $L$. According to Theorem \ref{SeifertSmoothing}, there is a graded chain map $g$ between shifted link Floer chain complexes for $L'$ such that ${HL}^-_\Sigma(G)\cong \Cone(g)$.\\
Now, if  $HL^-_\Sigma(L')$ is non zero, any element of lowest degree must be in the kernel of $g_*$ since $g$ is graded. But according to Corollary \ref{AcyclicCone} (\S\ref{par:MapCones}), the map $g_*$ is an isomorphism. Then $HL^-_\Sigma(L')\cong 0$.\\

We can iterate this process and find a regular link with null link Floer homology, but such a link does not exist.
\end{proof}

By the way, this theorem gives another proof that link Floer homology categorifies the Alexander polynomial multiplied by $t^\frac{1-\ell}{2}$ where $\ell$ is the number of components.



\part[Khovanov homology for restricted links]{Khovanov homology\\ for restricted links}

\chapter{Jones invariants for restricted links}
\label{chap:Jones}

\section{Restricted links}
\label{sec:RestrictedLinks}

\subsection{Reidemeister moves}
\label{ssec:ReidMoves}

\subsubsection{Distinction between Reidemeister moves}
\label{par:DistinctionReidMoves}

Links in $3$--space are usually given by diagrams and isotopies of links by sequences of Reidemeister moves. It is commonly admitted that three different kinds of Reidemeister moves exist. Nevertheless, if one pays attention to orientations, each of them can be subdivided.\\
Actually, for relatively oriented links \ie up to global orientation reversing, there are exactly two different local Reidemeister moves of type I:
$$
\dessin{1.6cm}{MoveIa} \ \longleftrightarrow \ \dessin{1.6cm}{MoveI} \ \longleftrightarrow \ \dessin{1.6cm}{MoveIb},
$$
two of type II which correspond, with respect to the underlying planar curve, to the following singularities:
$$
\dessin{.8cm}{sing1} \hspace{2cm} \dessin{.8cm}{sing1bis},
$$
and eight of type III which are represented in Figure \ref{fig:TypeIII}. Six of them correspond to the singularity
$$
\dessin{1.2cm}{Sing2bis}
$$
whereas the two extremal ones correspond to
$$
\dessin{1.2cm}{sing2}.
$$

\subsubsection{Relation between Reidemeister moves}
\label{par:relationReidMoves}

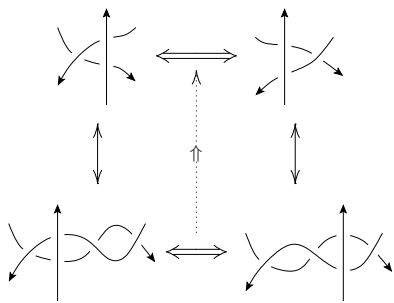
\begin{figure}
$$
  \vcenter{\hbox{\xymatrix@!0@C=2.6cm@R=2.6cm{
        \dessin{1.6cm}{Trick1} \ar@{<->}[d] \ar@{<=>}[r]|{}="Nya2" & \dessin{1.6cm}{Trick4} \ar@{<->}[d]\\
        **[l]\dessin{1.6cm}{Trick2} \ar@{<=>}[r]|{}="Nya1"  & **[r]\dessin{1.6cm}{Trick3}
        \ar@{.>}"Nya1"!<0cm,.2cm>;"Nya2"!<0cm,-.2cm>|{\Uparrow}}}}
$$
\caption{Two Reidemeister moves of type III which are equivalent up to Reidemeister II moves}
\label{fig:Trick}
\end{figure}

These moves are not all independant. For instance, the trick illustrated in figure \ref{fig:Trick} leads to the following lemma:
\begin{lemme}
  Let $I$ be a map defined on the set of diagrams. If $I$ is invariant under all Reidemeister moves of type II and under at least one of the eight moves of type III, then it is invariant under all moves of type III.
\end{lemme}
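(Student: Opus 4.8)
The plan is to exploit the trick of Figure~\ref{fig:Trick}: performing a suitable sequence of type~II moves on the top and the bottom of a type~III move turns it into a \emph{different} type~III move. Concretely, I would introduce the \emph{coincidence graph} $\Gamma$ whose eight vertices are the eight oriented type~III moves and in which two vertices are joined by an edge whenever the corresponding moves are related by such an instance of the trick; this is exactly the graph recorded in Figure~\ref{fig:TypeIII}. The first step is the local observation underlying the trick: if $m$ and $m'$ label the two type~III moves sitting at the ends of an edge of $\Gamma$, then the two diagrams related by $m$ are carried, by a sequence of type~II moves, to two diagrams related by $m'$; hence, since $I$ is invariant under all type~II moves, $I$ takes equal values on the two sides of $m$ if and only if it does on the two sides of $m'$.

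From this local statement the conclusion follows by a connectedness argument: the property ``$I$ is invariant under the type~III move labelled by the vertex $v$'' is constant along edges of $\Gamma$, hence constant on each connected component of $\Gamma$. So it suffices to check that $\Gamma$ is connected, for then the hypothesis that $I$ is invariant under one of the eight moves propagates to all of them. Here it is essential that \emph{both} flavours of type~II move are available: the edges produced by the singularity $\dessin{.4cm}{sing1}$ link together the six moves associated with the singularity $\dessin{.6cm}{Sing2bis}$ while isolating the two associated with $\dessin{.6cm}{sing2}$, and the edges produced by the singularity $\dessin{.4cm}{sing1bis}$ identify a move with its mirror image, thereby bridging the two halves and, in particular, attaching the two exceptional moves to the rest.

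Thus the remaining work is to verify that every one of the eight type~III configurations can indeed be joined to a fixed one by a chain of such tricks. I would do this by the explicit case analysis encoded in Figure~\ref{fig:TypeIII}: list the eight moves by recording the over/under pattern of the three crossings together with the cyclic pattern of the three strand orientations, and for each claimed adjacency exhibit the type~II slide (of the appropriate flavour) realizing the trick, exactly as in the $2\times2$ square of Figure~\ref{fig:Trick}. This combinatorial bookkeeping --- making sure no component of $\Gamma$ is overlooked and that every edge is genuinely produced by type~II moves alone --- is the main obstacle; everything else is formal.
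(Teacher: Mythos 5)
Your approach is the same as the paper's: use the slide of Figure~\ref{fig:Trick} to build a graph on the eight oriented type~III moves, observe that invariance under a type~III move propagates along edges of that graph once all type~II moves are available, and then invoke the connectedness of the graph recorded in Figure~\ref{fig:TypeIII}. That plan is correct, and the graph is indeed connected, so the lemma follows.

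One factual point in your second paragraph is misstated, though. You claim the edges arising from the non-braid-like singularity $\dessin{.4cm}{sing1bis}$ ``identify a move with its mirror image, thereby bridging the two halves.'' In Figure~\ref{fig:TypeIII} the mirror operation is horizontal reflection, and the $\dessin{.4cm}{sing1bis}$--edges emanate from each exceptional ($\dessin{.6cm}{sing2}$-type) move to the three braid-like moves lying on its \emph{own} side of the mirror; no $\dessin{.4cm}{sing1bis}$--edge joins a move to its mirror image. It is the braid-like $\dessin{.4cm}{sing1}$--edges that cross the mirror line and link the two mirror halves of the six braid-like moves. The part of your sentence that actually matters for connectedness --- that the $\dessin{.4cm}{sing1bis}$--edges attach the two exceptional moves to the rest --- is correct, and the explicit case analysis you propose would have caught the misattribution, so this is a slip in the anticipated picture rather than a gap in the argument.
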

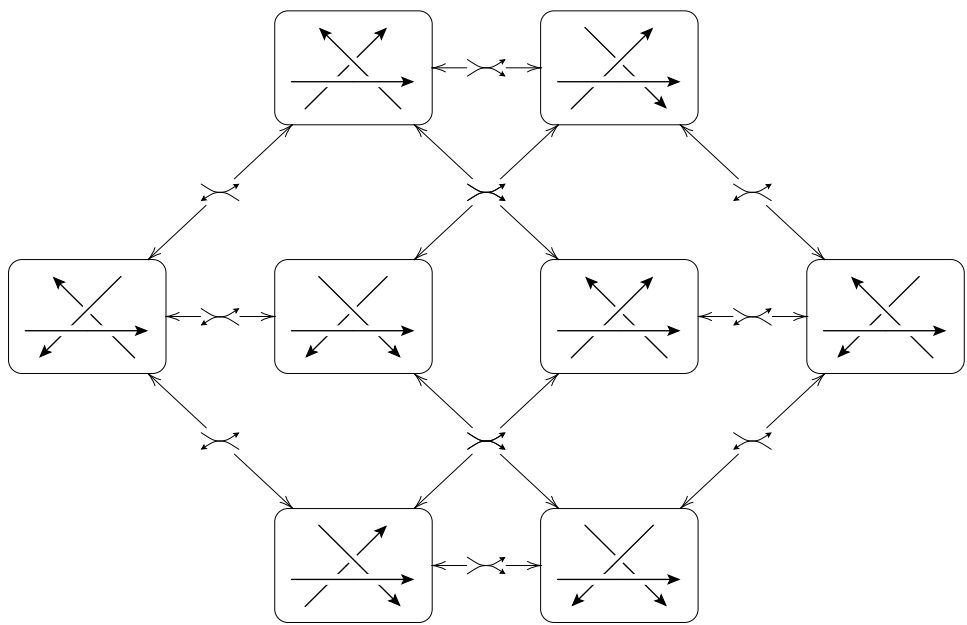
\begin{figure}
$$
\xymatrix @!0 @C=3.5cm @R=3.3cm {
& *+[F-:<5pt>]{\dessin{1.3cm}{Gh}} \ar|{\dessin{.35cm}{sing1g}}@{<->}[r] \ar|{\dessin{.35cm}{sing1g}}@{<->}[dr] &
*+[F-:<5pt>]{\dessin{1.3cm}{Dm}} \ar|{\dessin{.35cm}{sing1bisg}}@{<->}[dr] & \\
*+[F-:<5pt>]{\dessin{1.3cm}{GG}} \ar|{\dessin{.35cm}{sing1bisg}}@{<->}[ur] \ar|{\dessin{.35cm}{sing1bisg}}@{<->}[r] \ar|{\dessin{.35cm}{sing1bisg}}@{<->}[dr] &
*+[F-:<5pt>]{\dessin{1.3cm}{Gb}} \ar|{\dessin{.35cm}{sing1g}}@{<->}[ur] \ar|{\dessin{.35cm}{sing1g}}@{<->}[dr] &
*+[F-:<5pt>]{\dessin{1.3cm}{Dh}} \ar|{\dessin{.35cm}{sing1bisg}}@{<->}[r] &
*+[F-:<5pt>]{\dessin{1.3cm}{DD}} \\
& *+[F-:<5pt>]{\dessin{1.3cm}{Gmm}} \ar|{\dessin{.35cm}{sing1g}}@{<->}[ur] \ar|{\dessin{.35cm}{sing1g}}@{<->}[r] &
*+[F-:<5pt>]{\dessin{1.3cm}{Db}} \ar|{\dessin{.35cm}{sing1bisg}}@{<->}[ur]& \\
}
$$
\caption{Reidemeister III moves graph: {\footnotesize Each vertex is a Reidemeister move of type III, edges are labeled by Reidemeister moves of type II. An arrow between two vertices means that they can be replaced one by the other using the labeling Reidemeister II move as pictured in \ref{fig:Trick}. The mirror operation corresponds to reversing horizontally the graph.}}
\label{fig:TypeIII}
\end{figure}
According to Figure \ref{fig:TypeIII}, the lemma can even be refined.
\begin{lemme}
  Let $I$ be a map defined on the set of diagrams which is invariant under the Reidemeister moves of type II coresponding to the singularity $\dessin{.5cm}{sing1bis}$ and under at least one of the eight moves of type III. If, furthermore, a map $\phi$ satisfying $I\circ\mu=\phi\circ I$ exists, where $\mu$ denotes the mirror operation, then $I$ is invariant under all moves of type III.
\end{lemme}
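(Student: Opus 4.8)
The strategy is to combine the previous lemma with a mirror-symmetry argument to bootstrap invariance under the singularity $\dessin{.5cm}{sing1}$ moves from invariance under the singularity $\dessin{.5cm}{sing1bis}$ moves. First I would read off the structure of the graph in Figure \ref{fig:TypeIII}: the edges labeled by $\dessin{.35cm}{sing1bisg}$ (the singularity $\dessin{.5cm}{sing1bis}$) split the eight type-III moves into two connected components, each of which is the mirror image of the other. Fix one such component $\mathcal{C}$ and let $\mathcal{C}'=\mu(\mathcal{C})$ be the other. Since $I$ is invariant under the $\dessin{.5cm}{sing1bis}$ moves, Lemma \ref{fig:Trick}'s argument (the trick of Figure \ref{fig:Trick}) shows that if $I$ is invariant under one move in $\mathcal{C}$, it is invariant under all of them; likewise for $\mathcal{C}'$. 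So it suffices to transport invariance from one component to the other.

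Second, I would use the hypothesis. By assumption $I$ is invariant under at least one type-III move, say $R\in\mathcal{C}$ (if $R\in\mathcal{C}'$, swap the roles of $\mathcal{C}$ and $\mathcal{C}'$ in what follows). By the previous paragraph, $I$ is then invariant under every move in $\mathcal{C}$. Now let $R'\in\mathcal{C}'$; its mirror $\mu(R')$ lies in $\mathcal{C}$. If $D_1\leftrightarrow D_2$ is an instance of $R'$, then $\mu(D_1)\leftrightarrow\mu(D_2)$ is an instance of $\mu(R')\in\mathcal{C}$, so $I(\mu(D_1))=I(\mu(D_2))$. Applying the intertwining relation $I\circ\mu=\phi\circ I$, this reads $\phi(I(D_1))=\phi(I(D_2))$. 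This does not yet give $I(D_1)=I(D_2)$ unless $\phi$ is injective; to avoid assuming injectivity I would instead argue symmetrically: apply $\phi$ to the known equality $I(\mu(D_1))=I(\mu(D_2))$ using $\phi\circ I = I\circ\mu$ applied to $\mu(D_i)$, giving $I(\mu^2(D_1))=I(\mu^2(D_2))$, i.e. $I(D_1)=I(D_2)$ since $\mu^2=\mathrm{Id}$ on diagrams. That closes the gap: $I$ is invariant under $R'$, hence (again by the trick applied with $\dessin{.5cm}{sing1bis}$ moves) under all of $\mathcal{C}'$.

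Third, I would collect the pieces: $I$ is now invariant under all eight type-III moves, since $\mathcal{C}\cup\mathcal{C}'$ exhausts them. The point worth double-checking — and what I expect to be the only real subtlety — is the bookkeeping around $\mu$: one must verify that $\mu$ genuinely exchanges $\mathcal{C}$ and $\mathcal{C}'$ rather than preserving each (this is exactly the first bullet of the discussion following Figure \ref{fig:OrReidMoves}, which records that the $\dessin{.5cm}{sing1bis}$ moves separate the type-III moves into two mirror-paired classes), and that the relation $I\circ\mu=\phi\circ I$ is used in the correct direction, i.e. that one exploits $\mu^2=\mathrm{Id}$ rather than inverting $\phi$. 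Once the graph-theoretic fact and the involutivity of $\mu$ are in hand, the argument is a short diagram chase with no computation.
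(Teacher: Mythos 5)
Your argument is correct. The paper states this lemma without proof --- it is presented as a direct consequence of the graph in Figure \ref{fig:TypeIII} --- so you are supplying a missing argument rather than being judged against a written one. Your reading of the graph is accurate: the edges labeled $\dessin{.5cm}{sing1bis}$ split the eight type-III moves into two connected components that the mirror $\mu$ interchanges, which is exactly what the surrounding text and the figure caption (``the mirror operation corresponds to reversing horizontally the graph'') assert. The one genuine subtlety, which you correctly identified, is that $I\circ\mu=\phi\circ I$ applied naively gives $\phi(I(D_1))=\phi(I(D_2))$ and would seem to require $\phi$ injective. Your resolution is the right one: starting from $I(\mu(D_1))=I(\mu(D_2))$, apply $\phi$ and then use $\phi\bigl(I(\mu(D_i))\bigr)=I\bigl(\mu^2(D_i)\bigr)=I(D_i)$, so that the involutivity of $\mu$ substitutes entirely for any hypothesis on $\phi$; the ``first attempt'' equality $\phi(I(D_1))=\phi(I(D_2))$ is never actually needed.
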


It is thus natural to wonder to which extend certain Reidemeister moves can be replaced by other ones in an isotopy of links. Because of their characteristic behavior with respect to the writhe, the two movements of the type I cannot pretend to such substitutions.\\
According to the previous lemma, if one wants to distinguish Reidemeister moves of type III, it is necessary to exclude some Reidemesiter moves of type II and particularly those which correspond to the $\dessin{.5cm}{sing1bis}$ singularity.

\subsection{Braid-like isotopies}
\label{ssec:BraidLike}

\subsubsection{Braid-like links}
\label{par:BraidLikeLinks}

Figure \ref{fig:TypeIII} shows that, up the $\dessin{.5cm}{sing1}$ singularity, six Reidemeister moves are linked. Besides, it corresponds exactly to the Reidemeister moves which occur in isotopies of braid diagrams. This motivates the following definition:

\begin{defi}
A \emph{braid-like move}\index{moves!braid-like} is a Reidemeister move corresponding to one of the two following singularities of planar curve:
$$
\dessin{.75cm}{sing1} \hspace{2cm} \dessin{1.1cm}{Sing2bis}.
$$
A \emph{braid-like isotopy}\index{isotopy!braid-like} is a sequence of braid-like moves.\\
Two diagrams of links are \emph{braid-like isotopic} if they are related by a braid-like isotopy.\\
\emph{Braid-like links}\index{knot!braid-like}\index{link!braid-like} are braid-like isotopy classes.
\end{defi}

\begin{lemme}\label{jeudemove}
If $I$ is invariant under the braid-like moves of type II and under at least one braid-like move of type III, then it is also invariant under all other braid-like moves of type III.
\end{lemme}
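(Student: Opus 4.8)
The plan is to use the graph of Figure \ref{fig:TypeIII}, restricted to the braid-like part, in exactly the same way that the preceding two lemmas exploit the full graph. Recall that among the eight oriented Reidemeister moves of type III, six are braid-like (those corresponding to the singularity $\dessin{.5cm}{Sing2bis}$) and two are not (those corresponding to $\dessin{.5cm}{sing2}$). The six braid-like ones are precisely the vertices of Figure \ref{fig:TypeIII} that are joined to one another by edges labelled with the $\dessin{.5cm}{sing1}$ singularity, since the $\dessin{.5cm}{sing1bis}$-labelled edges are not available to us — they are not braid-like moves. So the first step is to observe that the subgraph of Figure \ref{fig:TypeIII} obtained by keeping only the six $\dessin{.5cm}{Sing2bis}$-vertices and only the $\dessin{.5cm}{sing1}$-labelled edges is \emph{connected}. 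This is the combinatorial heart of the argument, and it is already recorded in the excerpt: the introduction states that ``the moves corresponding to the singularity $\dessin{.5cm}{sing1}$ connect all the moves corresponding to $\dessin{.5cm}{Sing2bis}$ but isolate the two moves corresponding to $\dessin{.5cm}{sing2}$''.

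Granting that connectivity, the proof is then a one-line induction on the graph distance. Suppose $I$ is invariant under the braid-like moves of type II (in particular under the $\dessin{.5cm}{sing1}$-move) and under at least one braid-like move $m_0$ of type III. Let $m$ be any braid-like move of type III; pick a path $m_0=m^{(0)},m^{(1)},\dots,m^{(r)}=m$ in the connected subgraph described above, each consecutive pair being joined by an edge labelled $\dessin{.5cm}{sing1}$. For each edge, the ``trick'' of Figure \ref{fig:Trick} shows that an occurrence of $m^{(i)}$ can be traded, at the level of diagrams, for an occurrence of $m^{(i+1)}$ together with two applications of the $\dessin{.5cm}{sing1}$ Reidemeister II move and an ambient planar isotopy. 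Since $I$ is already invariant under that type II move and under $m^{(i)}$ (by the inductive hypothesis), it follows that $I$ is invariant under $m^{(i+1)}$. Running along the path gives invariance of $I$ under $m$, which is what we wanted.

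The step I expect to require the most care is verifying that the relevant subgraph is genuinely connected and that every one of its edges really does arise from an instance of the Figure \ref{fig:Trick} trick in which the two ``swap'' moves are of the $\dessin{.5cm}{sing1}$ type (rather than $\dessin{.5cm}{sing1bis}$). This is a finite check — one inspects the six $\dessin{.5cm}{Sing2bis}$-vertices of Figure \ref{fig:TypeIII} and confirms the $\dessin{.5cm}{sing1}$-labelled edges among them form a connected graph — but it is the place where an unlucky orientation convention could break the argument; this is exactly why the hypothesis of Lemma \ref{jeudemove} asks only for the \emph{braid-like} type II moves and not for the $\dessin{.5cm}{sing1bis}$ ones, paralleling the careful hypothesis of the second lemma of \S\ref{par:relationReidMoves}. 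No mirror map $\phi$ is needed here, unlike in that earlier refinement, because the $\dessin{.5cm}{sing1}$-subgraph on the braid-like vertices is already connected on its own, without having to identify a vertex with its mirror image.
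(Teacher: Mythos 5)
Your proof is correct and takes exactly the approach the paper intends: the lemma is left without an explicit proof in the text, being read off directly from Figure \ref{fig:TypeIII}, and you have correctly supplied the implicit argument, namely that the six braid-like type III vertices form a connected subgraph under the $\dessin{.5cm}{sing1}$-labelled edges (in fact a $6$-cycle), so invariance propagates along any path by the Figure \ref{fig:Trick} substitution. Your closing observation — that the mirror map $\phi$ is unnecessary here precisely because that subgraph is already connected on its own, in contrast to the second lemma of \S\ref{par:relationReidMoves} — is also right and is a useful clarification.
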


Braid-like links can be seen as a refinement of usual links. Moreover, since a theorem of Artin says that two closed braids in the solid torus are isotopic if and only if they are braid-like isotopic, it can also be seen as a generalization of such objets.\\
The study of links via closed braids and Markov moves, which correspond to stabilzations of braids by adding once twisted strands, has led to considerable progress in knot theory (see {\it e.g.} \cite{Birman2}). It is then natural to ask what are the equivalent of Markov moves for braid-like links. It would lead to a new description of links for which a refined Khovanov homology is already defined in this thesis.

\subsection{Star-like isotopies}
\label{ssec:StarLike}

\subsubsection{Star-like links}
\label{par:StarLikeLinks}

It is now natural to consider the complementary set of braid-like moves of type III. This leads to the following definition:
\begin{defi}
A \emph{star-like move}\index{moves!star-like} is a Reidemeister move corresponding to one of the two following singularities of planar curve:
$$
\dessin{.75cm}{sing1} \hspace{2cm} \dessin{1.1cm}{sing2}.
$$
A \emph{star-like isotopy}\index{isotopy!star-like} is a sequence of star-like moves.\\
Two diagrams of links are \emph{star-like isotopic} if they are related by a star-like isotopy.\\
\emph{Star-like links}\index{knot!star-like}\index{link!star-like} are star-like isotopy classes.
\end{defi}

From now on, \emph{restricted}\index{restricted} stands for braid-like or star-like.

\section{Jones polynomial refinements}
\label{sec:JonesRefinement}

We denote by $\C$ the set of closed unoriented $1$--manifold, possibly empty, embedded in $\R^2$ up to ambiant isotopies. Elements of $\C$ can be seen as configurations of circles in the plane. Moreover, we define $\Gamma$ as the $\Z[A,A^{-1}]$--module over $\C$.\\

Let $D$ be a link diagram.

\subsection{Kauffman states}
\label{ssec:Kauffman}

\subsubsection{Break and Seifert points}
\label{par:BreakSeifert}

Our starting point is the Kauffman bracket for framed oriented links $K$ in $3$--space. We use Kauffman's notation and terminology as depicted in \cite{StateJones}.\\

For each crossing of $D$, there is two different ways to resolve it. A \emph{Kauffman state}\index{smoothing!Kauffman state}\index{Kauffman state|see{smoothing}} is a choice of smoothing for every crossing of $D$. It is hence an element of $\C$.\\

Each Kauffman state $s$ of $D$ has a piecewise smooth structure which is induced by the orientation of $D$. The crosssings of $D$ give rise to special points of $s$. If in such a point, the piecewise orientation changes, we say that it is a \emph{break point}\index{break point}. The number of break points on any circle of $s$ is always even. The remaining special points are called \textit{Seifert points}\index{Seifert point}. In Seifert points, the orientations of the two pieces fit together.\\

\begin{figure}
$$
\xymatrix@!0 @R=.8cm @C=3.5cm {
& \dessin{1cm}{seifert} \hspace{.5cm} \textrm{Seifert points}\\
\dessin{1cm}{croix} \ar[ur]!L \ar[dr]!L & \\
& \dessin{1cm}{break} \hspace{.5cm} \textrm{break points}\\
}
$$
\caption{Break and Seifert points}
\label{fig:Seifert}
\end{figure}
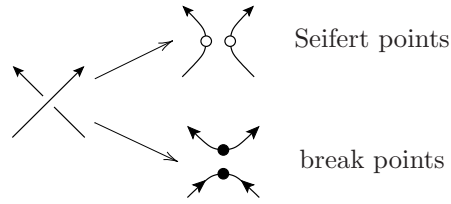

\subsection{Braid-like Jones polynomial}
\label{ssec:BraidLikeJones}

\subsubsection{Definition}
\label{par:BraidLikeJonesDef}

Let $s$ be a Kauffman state for $D$.\\
Let $c$ be a connected component of $s$. It is \emph{of type $h$}\index{circle!of type h@of type $h$} if one half of the number of break points on $c$ is even. We say it is a \emph{$h$--circle}\index{hcircle@$h$--circle|see{circle, of type $h$}}. Otherwise, it is \emph{of type $d$}\index{circle!of type d@of type $d$} and we call it a \emph{$d$--circle}\index{dcircle@$d$--circle|see{circle, of type $d$}}.

Now we define some gradings on the set of Kauffman states by
\begin{eqnarray*}
  \sigma(s) & = & \#\{A\textrm{-smoothed crossing in }s\} - \#\{A^{-1}\textrm{-smoothed crossing in }s\},\\
  d(s) & = & \#\{\textrm{circles of type }d\textrm{ in }s\},\\
  h(s) & = & \#\{\textrm{circles of type }h\textrm{ in }s\},
\end{eqnarray*}
and $c(s) \in C$ is the configuration of only the $h$--circles.

\begin{defi}
The \emph{braid-like Kauffman bracket}\index{Kauffman bracket!braid-like} $\langle D \rangle_{br} \in \Gamma$ is defined by
$$
\langle D \rangle_{br}=\sum_{\substack{s\textrm{ Kauffman}\\[.1cm] \textrm{state of }D}} A^{\sigma(s)}(-A^2 - A^{-2})^{d(s)} c(s).
$$
Then, we define the \emph{braid-like Jones polynomial}\index{polynomial!Jones!braid-like} by
$$
V_{br}(D)=(-A)^{-3w(D)}\langle D \rangle_{br}.
$$
\end{defi}
\index{Vbr@$V_{br}$|see{polynomial, Jones, braid-like}}

From this definition, it follows immediatly:
\begin{prop}\label{BraidLikeSkein}
Let $v$ be a crossing of $D$. Let $D_0$ and $D_1$ be the diagrams obtained from $D$ by performing on $v$ respectively a $A$--smoothing and a $A^{-1}$--smoothing. Then, we have
$$
{\langle D \rangle}_{br} = A {\langle D_0 \rangle}_{br} + A^{-1} {\langle D_1 \rangle}_{br}.
$$
Moreover, we have
$$
{\langle D \ \dessin{.3cm}{cer} \rangle}_{br} = (-A^2 - A^{-2}) {\langle D \rangle}_{br}.
$$
\end{prop}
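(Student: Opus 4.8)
The statement to prove, Proposition \ref{BraidLikeSkein}, has two parts: a local skein-type relation expressing $\langle D\rangle_{br}$ as an $A$-linear combination of the two smoothings at a chosen crossing $v$, and a disjoint-circle multiplicativity relation. Both are purely formal consequences of the definition of $\langle D\rangle_{br}$ as a sum over Kauffman states, so the plan is to unwind that definition carefully and match up terms; the only genuine point requiring care is that the $h$-circle configuration $c(s)$, the type counts $d(s),h(s)$, and the winding count $\sigma(s)$ all behave compatibly with the bijections of state sets I will use.

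For the first relation, I would fix the crossing $v$ and partition the Kauffman states of $D$ into those that $A$-smooth $v$ and those that $A^{-1}$-smooth $v$; the former are in canonical bijection with Kauffman states $s_0$ of $D_0$ and the latter with Kauffman states $s_1$ of $D_1$, since a Kauffman state is exactly a choice of smoothing at every crossing. Under the bijection $s\leftrightarrow s_0$ one has $\sigma(s)=\sigma(s_0)+1$ (one more $A$-smoothed crossing), while the underlying configuration of circles, hence which circles are $h$-circles versus $d$-circles, the parity counts, and therefore $d(s)=d(s_0)$, $h(s)=h(s_0)$ and $c(s)=c(s_0)$ are all unchanged — the smoothing at $v$ is already made, so there is nothing left to vary. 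Thus the contribution of such an $s$ to $\langle D\rangle_{br}$ is $A\cdot A^{\sigma(s_0)}(-A^2-A^{-2})^{d(s_0)}c(s_0)$, and summing over all $s_0$ gives $A\langle D_0\rangle_{br}$. Symmetrically the $A^{-1}$-smoothed states contribute $A^{-1}\langle D_1\rangle_{br}$, and adding the two halves yields the claim. I should be slightly careful here to note that the break/Seifert point structure, which determines $h$- versus $d$-type, depends only on the local smoothing data at each crossing together with the orientation of $D$, all of which transfer verbatim to $D_0$ and $D_1$; this is the one place a reader might want a sentence of justification.

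For the second relation, given a diagram $D\sqcup\dessin{.3cm}{cer}$ consisting of $D$ together with a disjoint crossingless circle, its Kauffman states are in bijection with those of $D$ (the extra circle has no crossing to resolve), and under this bijection $\sigma$ is unchanged. The disjoint circle is crossingless, hence carries no break points, so zero is a half of its break-point count, which is even: it is an $h$-circle. Therefore adding it increments neither $d(s)$ nor the nontrivial part of the bracket's polynomial coefficient, and $c(s)$ acquires exactly one extra $h$-circle, which is the disjoint circle. Hmm — so at face value this would give ${\langle D\sqcup\dessin{.3cm}{cer}\rangle}_{br}$ equal to $\langle D\rangle_{br}$ with an extra disjoint $h$-circle adjoined to every configuration, not the stated $(-A^2-A^{-2})\langle D\rangle_{br}$. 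The resolution must be that the intended convention identifies a disjoint trivial $h$-circle in the configuration with the scalar $(-A^2-A^{-2})$ (consistently with the classical Kauffman bracket and with the module structure of $\Gamma$ over $\Z[A,A^{-1}]$), i.e. the relation is to be read inside $\Gamma$ after this identification; I would phrase the proof of the second part as: the extra circle contributes a disjoint $h$-circle to each $c(s)$, and by the convention relating $\Gamma$ to scalar multiplication this equals multiplication by $(-A^2-A^{-2})$, term by term, whence the identity. The main obstacle is thus not any computation but pinning down precisely which normalization of $c(s)$ and which identification on $\Gamma$ is in force, so that both the disjoint-circle formula and the skein relation are simultaneously consistent; once that is stated, both parts are immediate from the state-sum definition.
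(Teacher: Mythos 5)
Your argument for the first relation is the intended one and is correct: partition the Kauffman states of $D$ by the smoothing chosen at $v$, identify each half with the states of $D_0$ (resp.\ $D_1$), and check that $\sigma$ shifts by $\pm 1$ while $d$, $h$, and $c$ are unchanged. The one point you rightly hedge on deserves to be made explicit rather than waved at: for $d(s)=d(s_0)$ and $c(s)=c(s_0)$ to hold, you must read $D_0$ and $D_1$ as \emph{partially smoothed} diagrams that retain the break or Seifert point created at $v$. If the $v$-smoothing is non-Seifert and you forget that marked point, the orientation of $D$ does not even descend to the smoothed diagram, and the circle(s) through $v$ lose two break points, which can flip $h/d$ types. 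Under the ``remembered smoothing'' convention the relation is indeed immediate, which is all the paper asserts --- it gives no proof beyond the phrase ``it follows immediately.''

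Your concern about the second identity is well founded, and it is not a defect in your reasoning. With the paper's definitions, a crossingless disjoint circle carries $0$ break points, $0/2=0$ is even, so it is an $h$-circle; it is therefore appended to the configuration $c(s)$ rather than contributing a factor $(-A^2-A^{-2})$. Hence $\langle D\sqcup\bigcirc\rangle_{br}$ equals the image of $\langle D\rangle_{br}$ under the $\Z[A,A^{-1}]$-linear endomorphism of $\Gamma$ that appends a disjoint trivial circle to each configuration, and this is \emph{not} $(-A^2-A^{-2})\langle D\rangle_{br}$ in the free module $\Gamma$: no relation identifying a disjoint $h$-circle with a scalar is introduced anywhere, and imposing one would largely collapse the $h/d$ distinction. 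The printed identity holds only after the lightening map $\chi$ is applied (and then with $(-H^2-H^{-2})$ rather than $(-A^2-A^{-2})$), or under the unstated convention you hypothesize. Since the paper offers no proof of this proposition, it neither derives nor resolves the point; you were right to flag it as an issue with the statement rather than attempt to force a formal derivation from the given definitions.
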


\subsubsection{Consistency}
\label{par:BraidLikeJonesCons}

\begin{theo}\label{InvBraidLike}
  The bracket $\langle \ .\ \rangle_{br}$ and the braid-like Jones polynomial are invariant under global orientation reversing and braid-like isotopies.
\end{theo}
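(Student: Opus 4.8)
The plan is to verify invariance move by move, exactly paralleling the classical proof that the ordinary Kauffman bracket is invariant under regular isotopy, but tracking the extra data $c(s)$ (the configuration of $h$-circles) carefully. First I would dispose of global orientation reversing: the break/Seifert dichotomy at each crossing depends only on the \emph{relative} orientation of the two strands, so reversing the orientation of every component simultaneously permutes neither the set of break points nor the classification of circles into $h$-type and $d$-type; hence $\sigma(s)$, $d(s)$, $h(s)$ and $c(s)\in\C$ are all unchanged state by state, and $w(D)$ is likewise unchanged, so $\langle D\rangle_{br}$ and $V_{br}(D)$ are invariant. It remains to treat the braid-like Reidemeister moves.

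\textbf{Braid-like Reidemeister moves I, II, III.} By Lemma \ref{jeudemove} it suffices to prove invariance under the two braid-like moves of type II (the $\dessin{.5cm}{sing1}$ singularity), under \emph{one} braid-like move of type III, and — since $V_{br}$ carries the writhe correction $(-A)^{-3w(D)}$ — under both moves of type I for the bracket up to the usual $(-A)^{\pm3}$ factor. For type II, I would apply the skein relation of Proposition \ref{BraidLikeSkein} to the two new crossings: the four resulting Kauffman states pair up exactly as in the classical computation, two of them producing the factor $(-A^2-A^{-2})$ times a state of the original diagram and the other two cancelling via $A\cdot A^{-1}(-A^2-A^{-2}) + A^2 + A^{-2} = 1$ after the bigon is removed. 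The essential new point is that in a braid-like type II move the two strands are coherently oriented through the bigon, so the small circle created in the ``cancelling'' smoothing has exactly two break points and hence is a $d$-circle (contributing the $(-A^2-A^{-2})$ factor, not a circle recorded in $c(s)$); and removing the bigon does not alter the number of break points on, nor the $h/d$ type of, any other circle, so $c(s)$ and $h(s)$ match on the surviving terms. For type III, I would expand one crossing of the triple point via Proposition \ref{BraidLikeSkein} on both sides of the move and invoke the already-established type II invariance to identify the two resulting pairs of diagrams; the braid-like hypothesis guarantees the relevant strands are coherently oriented so that the break-point count and the $h$-circle configuration are preserved under the local replacement. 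For type I, the standard one-crossing resolution gives $\langle D\rangle_{br} = (-A)^{\pm 3}\langle D'\rangle_{br}$ in $\Gamma$ because the extra kink creates a small circle with an even number ($0$ or $2$) of break points that is absorbed as a $d$-circle, leaving $c(s)$ untouched; the writhe correction in $V_{br}$ then cancels this factor.

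\textbf{Main obstacle.} The routine skein bookkeeping is easy; the delicate part is showing that each local move preserves the $h$ versus $d$ classification of \emph{every} circle of a state and the resulting configuration $c(s)\in\C$ of $h$-circles, not just the counts $\sigma,d,h$. This requires a careful local analysis of how break points are created, destroyed, or relocated along a circle as one performs a smoothing of the new crossings, and in particular checking that in a braid-like move the parity ``one half the number of break points'' on each affected circle behaves correctly — precisely the place where the braid-like restriction (excluding the $\dessin{.5cm}{sing1bis}$ type II move) is used, since a non-braid-like type II move would introduce a circle with two break points on opposite-oriented arcs that changes the parity count and breaks the invariance of $c(s)$. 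I would organize this as a short lemma: ``a braid-like Reidemeister move induces, on each Kauffman state, a bijection of the surviving circles preserving break-point parity and hence $h/d$-type, together with the required cancellation of the extra state,'' and then the theorem follows by the skein manipulations above.
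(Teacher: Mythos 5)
Your proof follows essentially the same route as the paper's: orientation reversing is trivial because all the defined quantities depend only on relative orientations; for the braid-like type~II move one expands the two crossings, observes that the small circle in the ``cancelling'' state carries two break points and is hence a $d$-circle, and applies $d=-A^2-A^{-2}$ so that three terms cancel and $c(s)$ is unchanged (incidentally, the displayed identity should read $A\cdot A^{-1}(-A^2-A^{-2})+A^2+A^{-2}=0$, with the surviving term carrying coefficient~$1$); and for type~III one invokes Lemma~\ref{jeudemove} to reduce to a single move, after which a well-chosen $A$-smoothing leaves isotopic diagrams and the remaining pair is settled by the same bigon cancellation --- this is precisely the paper's explicit four-term computation, whether or not you phrase it as ``invoke type~II.'' That much is correct.

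Two of your side remarks, however, are wrong, and they reveal a misreading of the $h$/$d$ criterion. First, Reidemeister~I is not part of the theorem: a braid-like move is by definition of singularity type $\dessin{.5cm}{sing1}$ or $\dessin{.75cm}{Sing2bis}$, and a cusp is neither. More importantly, the claim in your RI paragraph is false. The small circle pinched off by a kink arises from the Seifert-compatible smoothing of the lone crossing, so it carries one Seifert point and \emph{zero} break points; since $0/2=0$ is even, this is an $h$-circle, it \emph{does} enter $c(s)$, and $V_{br}$ is genuinely not RI-invariant --- this is exactly the content of Figure~\ref{fig:IsotopicTrivialDiagrams}, which exhibits regularly isotopic unknot diagrams that $V_{br}$ distinguishes. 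Second, in your ``main obstacle'' paragraph you assert that a non-braid-like type~II move would produce a small circle ``with two break points''; it is the reverse. The anti-parallel strands of a $\dessin{.5cm}{sing1bis}$ bigon make both corners of the small circle Seifert points, so it has zero break points and is an $h$-circle that survives into $c(s)$ --- that is precisely why the cancellation fails in the non-braid-like case. The braid-like ($\dessin{.5cm}{sing1}$) bigon is the one with two break points and a $d$-circle, as you correctly state in the body of the argument.
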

\begin{proof}
  Since the writhe is invariant under braid-like isotopies, the statements for $V_{br}$ and the bracket are equivalent.\\
  The whole construction is clearly invariant under global orientation reversing.\\
  
  Now we consider the braid-like moves of type II.\\
  When calculating the Kauffman bracket, we obtain
  $$
  \dessin{.8cm}{SchIIa2} \ = \ A^2 \dessin{.8cm}{II4} \ + \ \dessin{.8cm}{II2} \ + \ \dessin{.8cm}{II3}\ + \  A^{-2} \dessin{.8cm}{II1} .
  $$
  The circle is of type $d$ and the usual identification $d = -A^2 - A^{-2}$ implies invariance.\\
  
  Then, according to Lemma \ref{jeudemove} (\S\ref{par:BraidLikeLinks}), the invariance under braid-like moves of type III is reduced to the invariance under the following move:
  $$
  \dessin{1.7cm}{SchIIIa1an} \ \longleftrightarrow \ \dessin{1.7cm}{SchIIIa2an}
  $$
  The $A$--smoothing of $v$ leads to isotopic diagrams. Hence, we only have to prove 
  $$
  \dessin{1.16cm}{fig3a}\ = \ \dessin{1.16cm}{fig3b}.
  $$
  The calculation on the left-hand side gives
  $$
  \begin{array}{rcl}
    \dessin{1.16cm}{fig3a} & = &
    A^2 \dessin{1.16cm}{cac3} \ + \ \dessin{1.16cm}{cac1} \ + \ \dessin{1.16cm}{cac4} \ + \ A^{-2} \dessin{1.16cm}{cac2}\\[7mm]
    & = & \dessin{1.16cm}{cac1},
  \end{array}
  $$
  whereas, on the right-hand side, it gives
  $$
  \begin{array}{rcl}
    \dessin{1.16cm}{fig3b} & = &
    A^2 \dessin{1.16cm}{cbc1} \ + \ \dessin{1.16cm}{cbc2} \ + \ \dessin{1.16cm}{cbc3} \ + \ A^{-2} \dessin{1.16cm}{cbc4}\\[7mm]
    & = & \dessin{1.16cm}{cbc3}.
  \end{array}
  $$
\end{proof}

\subsubsection{Properties}
\label{par:BraidLikeJonesProp}

It is a direct consequence of Proposition \ref{BraidLikeSkein} (\S\ref{par:BraidLikeJonesDef}) that $V_{br}$ satisfies the same skein relation than the Jones polynomial. Nevertheless, it differs from the Jones polynomial since it has infinitely initial conditions. Actually, as shown in Figure \ref{fig:IsotopicTrivialDiagrams}, there are infinitely regularly isotopic\footnote[1]{\ie with the same writhe and the same Whitney index} diagrams of the trivial knot which are pairwise non braid-like isotopic.\\

\begin{figure}[!h]
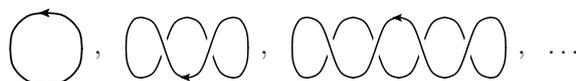

$$
\dessin{1.05cm}{1cer}\ , \ \  \dessin{1.05cm}{2cer}\ , \ \  \dessin{1.05cm}{3cer}\ , \ \ \dots
$$

\caption{Regularly, but not braid-like, isotopic trivial knots}
\label{fig:IsotopicTrivialDiagrams}
\end{figure}

Of course, the Seifert state $\Sei(D)$ obtained by resolving all the crossings of $D$ with respect to its orientation is involved in the sum which defines ${\langle D \rangle}_{br}$. But even more, as implied by the following proposition, it survives as a leading term.
\begin{prop}
  The bracket $\langle D \rangle_{br}$ is equal to $A^{w(D)} \Sei(D)$ and a remainder involving terms with strictly fewer connected components.
\end{prop}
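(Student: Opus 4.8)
The plan is to single out the Seifert state as the unique term of maximal ``complexity'', where complexity of a term means the number of connected components of its configuration in $\C$. Throughout, write $\mathfrak{s}$ for the number of Seifert circles of $D$, i.e. the number of components of $\Sei(D)\in\C$.

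First I would compute the contribution of the Seifert state itself. In $\Sei(D)$ every crossing is resolved according to the orientation, so the piecewise orientation is everywhere coherent and there are no break points; hence every circle of $\Sei(D)$ is an $h$--circle, which gives $d(\Sei(D))=0$ and $c(\Sei(D))=\Sei(D)$. Moreover $\sigma(\Sei(D))=w(D)$, since the oriented resolution of a positive (resp. negative) crossing is exactly its $A$--smoothing (resp. $A^{-1}$--smoothing), so the $A$--smoothed crossings of $\Sei(D)$ are precisely the positive ones and the $A^{-1}$--smoothed ones precisely the negative ones. Therefore the Seifert state contributes $A^{w(D)}(-A^2-A^{-2})^{0}\,\Sei(D)=A^{w(D)}\,\Sei(D)$ to the sum defining $\langle D\rangle_{br}$.

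The heart of the matter is then to show that every \emph{other} Kauffman state $s$ contributes only terms of the shape $(\text{Laurent polynomial})\cdot c(s)$ with $c(s)$ having strictly fewer than $\mathfrak{s}$ components, i.e. that $h(s)<\mathfrak{s}$ whenever $s\neq\Sei(D)$; granting this, regrouping the sum for $\langle D\rangle_{br}$ immediately gives the claimed $A^{w(D)}\Sei(D)+(\text{remainder})$ form. I would prove $h(s)<\mathfrak{s}$ by induction on the number of crossings, via the braid--like skein relation of Proposition~\ref{BraidLikeSkein}: pick a crossing $v$ and let $D_0$, $D_1$ be its two smoothings; one of them, say $D_0$ when $v$ is positive, is the oriented smoothing, so $\Sei(D_0)=\Sei(D)$ and $\mathfrak{s}(D_0)=\mathfrak{s}$, whereas the non-oriented smoothing $D_1$ satisfies $\mathfrak{s}(D_1)=\mathfrak{s}-1$ as soon as $v$ joins two distinct Seifert circles of $D$ (the ``merging'' case). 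Feeding the inductive hypothesis for $D_0$ and $D_1$ into $\langle D\rangle_{br}=A\langle D_0\rangle_{br}+A^{-1}\langle D_1\rangle_{br}$ then reproduces the leading term $A^{w(D)}\Sei(D)$ and keeps every other term below $\mathfrak{s}$ components. Alternatively one can argue directly on a fixed state $s$: following circles through the bad (non-oriented) crossings of $s$ — each carrying exactly two break points — and using that each circle carries an even number of break points, one tracks the change in the number of $h$--circles as crossings are turned one at a time away from their Seifert resolution, showing that this count can never return to $\mathfrak{s}$ once any crossing has been changed.

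The main obstacle is precisely this combinatorial control of $h$--circles. In the skein induction it appears as the need to always produce a crossing joining two distinct Seifert circles, and to handle the degenerate situation in which the Seifert graph of $D$ has only loops (so that no such crossing exists); in the direct approach it is the bookkeeping of break points modulo $4$ together with the case distinction between crossings that merge two circles and crossings that split one, plus the verification that the inequality is \emph{strict}. Once $h(s)<\mathfrak{s}$ for $s\neq\Sei(D)$ is established, the proposition follows by collecting terms as above.
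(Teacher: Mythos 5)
Your reduction is correct, and your computation of the leading term is fine: $\Sei(D)$ has no break points, so $d(\Sei(D))=0$ and $c(\Sei(D))=\Sei(D)$, and $\sigma(\Sei(D))=w(D)$ since the oriented resolution of a positive (resp.~negative) crossing is exactly its $A$-- (resp.~$A^{-1}$--) smoothing. You have also correctly identified that the whole proposition reduces to the combinatorial inequality $h(s)<\mathfrak{s}$ for every Kauffman state $s\neq\Sei(D)$.

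But neither of your two routes to that inequality is a proof, and you say so yourself. The skein induction has a deeper problem than the ``only loops'' degeneracy you flag: $D_1$ is not a link diagram but a diagram carrying a forced break point at $v$, so $\Sei(D_1)$ is not a pure $h$--circle configuration and the inductive hypothesis, as stated for link diagrams, does not apply to it; and when $v$ is a self-crossing of a single Seifert circle, $\Sei(D_1)$ has $\mathfrak{s}+1$ components, so the leading term of $A^{-1}\langle D_1\rangle_{br}$ threatens to overshoot $\mathfrak{s}$ rather than stay below it --- the bookkeeping needed to show those extra circles are $d$--circles (or to strengthen the statement to decorated diagrams) is precisely what is missing. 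Your ``direct'' alternative is the assertion $h(s)<\mathfrak{s}$ rephrased as ``the count can never return to $\mathfrak{s}$''; no mechanism is given, and in particular nothing rules out a sequence of crossing changes that first drops and then recovers the number of $h$--circles.

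The paper proves the key inequality by a quite different, extremal argument. It picks a non-Seifert state $s$ maximizing $h(s)$, removes as many break points as possible via a fixed list of local moves that preserve $h(s)$, and then either performs a move that strictly increases $h(s)$ (forcing the resulting state to be $\Sei(D)$, hence $h(s)<\mathfrak{s}$), or lands in a state with only $h$--circles that still has break points. In the latter case it builds a planar graph whose vertices are the $h$--circles and whose edges are break-point pairs, contracts a spanning tree, and uses the parity constraint (each $h$--circle has $\equiv 0 \pmod 4$ break points, hence at least $4$) to obtain the numerical inequality $2n\geq 4k$ and thereby strictly increase $h$, again forcing convergence to $\Sei(D)$. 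This graph-theoretic count is the actual content of the proposition and is not recoverable from either of your sketches as written; you should supply an argument of comparable substance for the step $h(s)<\mathfrak{s}$.
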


This proves that the Seifert state is already an invariant for braid-like links and, incidently, that all diagrams in Figure \ref{fig:IsotopicTrivialDiagrams} are pairwise non braid-like isotopic.

\begin{proof}
  Let $s$ be a Kauffman state which maximizes the number of $h$--circles among the states which are not the Seifert one. As a matter of fact, it contains some break points. Using moves $1$---$4$ in Figure \ref{fig:d-break}, we remove as many of them as possible. This operation does not modify the number of $h$--circles.

\begin{figure}[h]
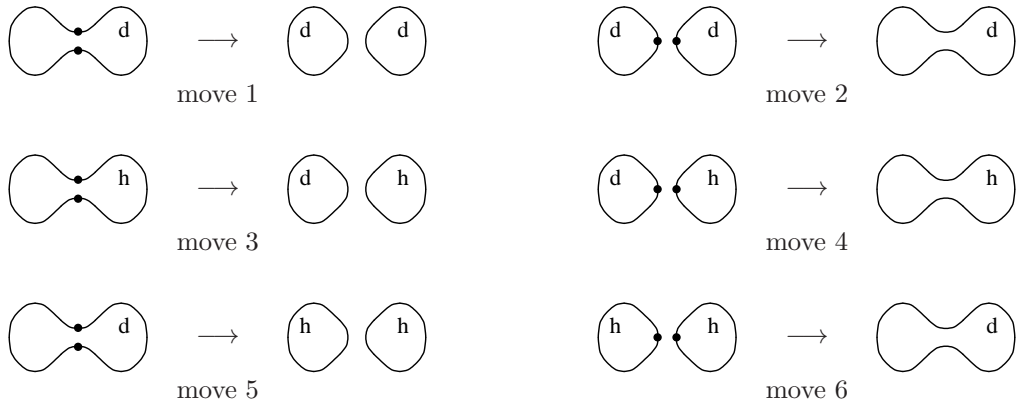

$$
\begin{array}{ccccccc}
\dessin{1cm}{rbp11} & \longrightarrow & \dessin{1cm}{rbp12} & \hspace{1.5cm} & \dessin{1cm}{rbp21} & \longrightarrow & \dessin{1cm}{rbp22}\\
&\textrm{move 1}& & & &\textrm{move 2}&\\[5mm]
\dessin{1cm}{rbp31} & \longrightarrow & \dessin{1cm}{rbp32} & & \dessin{1cm}{rbp41} & \longrightarrow & \dessin{1cm}{rbp42}\\
&\textrm{move 3}& & & &\textrm{move 4}&\\[5mm]
\dessin{1cm}{rbp11} & \longrightarrow & \dessin{1cm}{rbp52} & & \dessin{1cm}{rbp61} & \longrightarrow & \dessin{1cm}{rbp22}\\
&\textrm{move 5}& & & &\textrm{move 6}&
\end{array}
$$

\caption{Moves removing break points}
\label{fig:d-break}
\end{figure}

Then if it remains a $d$--circle, it means that a move $5$ can be performed and, as it increases the number of $h$--circles, the resulting state must be the Seifert one. The result is then proven.\\
Otherwise, we get a configuration of only $h$--circles $s'$ with the following properties:

\begin{enumerate}
\item[i)] no connected component contains an internal breaking-smoothed crossing i;e. any two break points linked by a crossing are on distinct connected components;
\item[ii)] it contains some break points since, as the last move to get the Seifert state must be a move $5$ in Figure \ref{fig:d-break}, it cannot be the Seifert state.
\end{enumerate}

Then we construct a graph $G$ by associating a vertex to each $h$--circle of $s'$ and an edge to each pair of break points linked by a cossing. Property $i)$ assures that $G$ is planar and propety $ii)$ that it contains a connected component not reduced to a point. We choose such a component $G'$ with $k \geq 2$ vertices and $n$ edges corresponding to a substate $s''$ of $s'$ with $k$ $h$--circle and $2n$ break points.\\
Now we change the smoothing of $(k-1)$ breaking-smoothed crossings of $s''$ in such a way that the number of circles decreases each time by one. This can be done because $G'$ is connected and the removal of two linked break points corresponds to the retraction of the corresponding edge in $G'$.\\
Then it remains one circle with $2(k-k+1)$ break points corresponding to a graph with a single vertex and $(n-k+1)$ loops. As this graph is still planar, we can find two adjacent break points linked by a crossing. Removing these two break points creates an $h$--circle with no break point. By repeating this operation, we create one such $h$--circle for each pair of linked break points, except for the last one which will create two such $h$--circles.\\
At last, we get a state with $(n-k+2)$ $h$--circles. Now, each $h$--circles in $s''$ was containing at least one break point, therefore at least four break points in order to be of type $h$. Thus, we have
$$
2n \geq 4k,
$$
and consequently
$$
n-k+2 \geq k+2.
$$
We have therefore increased the number of $h$--circles, so the last state must be the Seifert state.\\
This concludes the proof.
\end{proof}

In the case of closed braids in a solid torus, Hoste and Przytycki have already defined a refinement of Jones polynomial in \cite{Hoste}. The braid-like one can be seen as a generalization of it.
\begin{prop}\label{ClosedBraid}
  In the case of closed braids, $V_{br}$ coincides with the invariant of Hoste and Przytycki.
\end{prop}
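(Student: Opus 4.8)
The plan is to identify $V_{br}$ restricted to closed braids with a state sum valued in the Kauffman bracket skein module of the solid torus, which is exactly the framework of \cite{Hoste}. Recall that the Hoste--Przytycki invariant of a closed braid $\widehat\beta$ in $S^1\times D^2$ is produced by the same Kauffman bracket state sum, the only difference being that a state circle contributes the scalar $(-A^2-A^{-2})$ when it is \emph{trivial} (it bounds a disc in the ambient annulus) and the formal variable $z$ when it is \emph{essential} (isotopic to the core), after which one multiplies by $(-A)^{-3w(\widehat\beta)}$; the target module is the free $\Z[A,A^{-1}]$--module on the monomials $z^k$, $k\geq 0$. Since $\sigma(s)$ and the writhe normalisation are literally the same in both constructions, the statement reduces to two facts about an arbitrary Kauffman state $s$ of a closed braid diagram: \textbf{(i)} the $d$--circles of $s$ are precisely its trivial circles and its $h$--circles are precisely its essential circles; and \textbf{(ii)} the $h$--circles of $s$ are mutually parallel in the ambient annulus, so that under the natural map from $\Gamma$ to the Kauffman bracket skein module of the plane punctured at the braid axis (a trivial circle goes to $(-A^2-A^{-2})$, an essential one to $z$, a family of $k$ disjoint essential ones to $z^k$) the configuration $c(s)$ is sent to $z^{h(s)}$.

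First I would establish \textbf{(i)}, which is the heart of the matter. Orient $\widehat\beta$ coherently, so that every strand runs counterclockwise around the braid axis $O$; then at each crossing the two strands point the same way, hence the Seifert smoothing creates Seifert points and the other smoothing creates break points. Fix a state circle $c$ carrying $2m$ break points. Traversing $c$, its direction agrees or disagrees with this ``braid flow'' in alternating fashion, switching exactly at the break points, so $c$ splits into $m$ sub-arcs run with the flow and $m$ sub-arcs run against it. One then reads off the winding number $w(c)$ of $c$ about $O$ as the sum over these $2m$ sub-arcs of their signed angular sweeps: a forward arc sweeps a positive amount, a backward arc a negative one, and because each strand of a braid winds \emph{monotonically} around $O$ these contributions are rigidly controlled. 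As $c$ is embedded, $w(c)\in\{0,\pm1\}$, and the bookkeeping gives $w(c)=\pm1$ exactly when $m$ is even and $w(c)=0$ exactly when $m$ is odd; that is, $c$ is essential if and only if half the number $2m$ of its break points is even, i.e.\ if and only if $c$ is of type $h$, and trivial if and only if it is of type $d$. Making this winding-number count precise and uniform --- explaining how the coherent braid orientation forces the parity of the break-point number of a state circle, for state circles of arbitrary complexity --- is the step I expect to be the main obstacle; a convenient device is to compute $w(c)$ from the number of Seifert circles of $\widehat\beta$ that $c$ encloses, or via a turning-number argument.

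Granting \textbf{(i)}, point \textbf{(ii)} is immediate, since any two disjoint essential simple closed curves in an annulus are mutually parallel: the $h$--circles of $s$ form $h(s)$ nested circles enclosing $O$, so $c(s)\mapsto z^{h(s)}$ while the $d$--circles account for the factor $(-A^2-A^{-2})^{d(s)}$. Substituting into $\langle\widehat\beta\rangle_{br}=\sum_s A^{\sigma(s)}(-A^2-A^{-2})^{d(s)}c(s)$ and multiplying by $(-A)^{-3w(\widehat\beta)}$ yields exactly $(-A)^{-3w(\widehat\beta)}\sum_s A^{\sigma(s)}(-A^2-A^{-2})^{\#\{\text{trivial circles of }s\}}z^{\#\{\text{essential circles of }s\}}$, which is the Hoste--Przytycki invariant of $\widehat\beta$. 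As a consistency check one may observe, using Theorem~\ref{InvBraidLike} and Proposition~\ref{BraidLikeSkein}, that both invariants are braid-like-isotopy invariants obeying the Kauffman skein relation; since resolving all crossings of an annular diagram expresses it as a $\Z[A,A^{-1}]$--combination of the $z^k$'s, this pair of properties already pins down the invariant on all closed braids.
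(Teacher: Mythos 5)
Your proposal takes essentially the same route as the paper's. The paper likewise reduces the statement to a single topological observation about state circles of a closed braid: a state circle is contractible in the solid torus if and only if it is of type $d$. It justifies this by identifying the break points of a state circle with the critical points of the restriction of the radius function to that circle, and then asserting that the number of such critical points is $\equiv 2 \pmod 4$ exactly when the circle is contractible; your winding-number formulation (essential iff $m$ even) is the same parity statement in different language, since both come down to reading the number of break points modulo $4$. The paper's Morse-theoretic wording is perhaps a touch more self-contained, while your account makes the role of the coherent braid orientation (strands monotone in the angular direction) more visible. You are right to flag the combinatorial bookkeeping behind this parity as the genuine content of the proof; be aware, though, that the paper is just as terse at that step, so you have not overlooked anything present in its argument. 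Your supplementary points --- that disjoint essential circles in an annulus are mutually nested, so that $c(s)\mapsto z^{h(s)}$ under the comparison with the Kauffman bracket skein module of the solid torus, and the consistency check via the skein relation --- are not spelled out in the paper, but they are correct and make the identification with the Hoste--Przytycki invariant precise.
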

\begin{proof}
  A connected component of a Kauffman state for a closed braid in a solid torus is contractible in the solid torus if and only if it is of type $d$. Actually, the critical points of the restriction of the radius function on a circle correspond exactly to the break points on this circle. Moreover, the number of such critical points is congruent to $2$ modulo $4$ if and only if the circle is contractible.
\end{proof}

\subsection{Star-like Jones polynomial}
\label{ssec:StarLikeJones}

\subsubsection{Definition}
\label{par:StarLikeJonesDef}
Except concerning the definitions of $h$ and $d$--circles, the star-like refinement of Jones polynomial is identical to the braid-like one.\\ 

Let $s$ be a Kauffman state for $D$.\\
Let $c$ be a connected component of $s$. It is \emph{of type $h$}\index{circle!of type h@of type $h$} if one half of the number of break points plus the number of Seifert points on $c$ is even. We say it is a \emph{$h$--circle}\index{hcircle@$h$--circle|see{circle, of type $h$}}. Otherwise, it is \emph{of type $d$}\index{circle!of type d@of type $d$} and we call it a \emph{$d$--circle}\index{dcircle@$d$--circle|see{circle, of type $d$}}.

Now, similarly to the braid-like case, we define some gradings on the set of Kauffman states by
\begin{eqnarray*}
  \sigma(s) & = & \#\{A\textrm{-smoothed crossing in }s\} - \#\{A^{-1}\textrm{-smoothed crossing in }s\},\\
  d(s) & = & \#\{\textrm{circles of type }d\textrm{ in }s\},\\
  h(s) & = & \#\{\textrm{circles of type }h\textrm{ in }s\},
\end{eqnarray*}
and $c(s) \in C$ is the configuration of only the $h$--circles.

\begin{defi}
The \emph{star-like Kauffman bracket}\index{Kauffman bracket!star-like} $\langle D \rangle_{st} \in \Gamma$ is defined by
$$
\langle D \rangle_{st}=\sum_{\substack{s\textrm{ Kauffman}\\[.1cm] \textrm{state of }D}} A^{\sigma(s)}(-A^2 - A^{-2})^{d(s)} c(s).
$$
Moreover, we define the \emph{star-like Jones polynomial}\index{polynomial!Jones!star-like} by
$$
V_{st}(D)=(-A)^{-3w(D)}\langle D \rangle_{st}.
$$
\end{defi}
\index{Vst@$V_{st}$|see{polynomial, Jones, star-like}}

Here again, it follows from the definition:
\begin{prop}\label{StarLikeSkein}
Let $v$ be a crossing of $D$. Let $D_0$ and $D_1$ be the diagrams obtained from $D$ by performing on $v$ respectively a $A$--smoothing and a $A^{-1}$--smoothing. Then, we have
$$
{\langle D \rangle}_{st} = A {\langle D_0 \rangle}_{st} + A^{-1} {\langle D_1 \rangle}_{st}.
$$
Moreover, we have
$$
{\langle D \ \dessin{.3cm}{cer} \rangle}_{st} = (-A^2 - A^{-2}) {\langle D \rangle}_{st}.
$$
\end{prop}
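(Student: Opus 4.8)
The plan is to deduce both identities straight from the state sum that defines $\langle\,\cdot\,\rangle_{st}$, following the same reasoning as Proposition~\ref{BraidLikeSkein}: the passage from the braid-like to the star-like bracket only changes the criterion distinguishing $h$--circles from $d$--circles, and that change is local to each circle, so it does not interfere with the bookkeeping of the state sum.

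First, the skein identity. I would fix the crossing $v$ of $D$ and split the Kauffman states of $D$ into the set $S_A$ of those that $A$--smooth $v$ and the set $S_{A^{-1}}$ of those that $A^{-1}$--smooth $v$. Resolving $v$ turns these into bijections $S_A\leftrightarrow\{\text{Kauffman states of }D_0\}$ and $S_{A^{-1}}\leftrightarrow\{\text{Kauffman states of }D_1\}$. The key point to check is that these bijections are grading-compatible in the following precise sense: if $s\in S_A$ corresponds to the state $s_0$ of $D_0$, then $s$ and $s_0$ present the same configuration of circles in the plane, carrying the same orientation-induced piecewise smooth structure near every crossing (including near $v$, where the $A$--smoothing already drawn in $D_0$ reproduces the break and Seifert points that $v$ contributes to $s$); hence $d(s)=d(s_0)$, $h(s)=h(s_0)$ and $c(s)=c(s_0)$, while $\sigma(s)=\sigma(s_0)+1$ since the $A$--smoothed crossing $v$ counts $+1$ in $\sigma(s)$ but is simply absent from $D_0$. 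Symmetrically $\sigma(s)=\sigma(s_1)-1$ for $s\in S_{A^{-1}}$. Summing the defining expression for $\langle D\rangle_{st}$ over $S_A$ and over $S_{A^{-1}}$ separately and factoring out $A$ and $A^{-1}$ then gives $\langle D\rangle_{st}=A\langle D_0\rangle_{st}+A^{-1}\langle D_1\rangle_{st}$.

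For the second identity, adjoining a split trivial circle to $D$ inserts the same circle into every Kauffman state without creating or destroying a crossing, a smoothing choice, or a special point on the pre-existing circles; so $\sigma(s)$ and the part of $c(s)$ coming from the old circles are unchanged, and the only question is the type of the new circle. Drawn in standard (Morse) position it carries exactly two break points and no Seifert points, hence is a $d$--circle under the star-like criterion (just as it is under the braid-like one), so every term of the state sum acquires one extra factor $(-A^2-A^{-2})$, yielding $\langle D\ \dessin{.3cm}{cer}\rangle_{st}=(-A^2-A^{-2})\langle D\rangle_{st}$. Since the writhe of $D\ \dessin{.3cm}{cer}$ equals that of $D$ and $w(D_i)$ differs from $w(D)$ in the usual controlled way, the corresponding statements transfer to $V_{st}$ as well.

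The main obstacle is precisely the grading-compatibility claim in the first step: one must verify that replacing the crossing $v$ by either of its smoothings alters $\sigma$ only by the expected $\pm1$ and does not change the partition of any circle's special points into break and Seifert points, so that a circle of $s$ running through $v$ has the same $h/d$--type as the corresponding circle of $s_0$ (resp.\ $s_1$). This is where the star-like definition of $h/d$--circles — half the number of break points plus the number of Seifert points being even — must be shown insensitive to the local replacement of $v$ by its smoothing; once that is in hand, the remainder is the routine rearrangement of the state sum already used for $\langle\,\cdot\,\rangle_{br}$.
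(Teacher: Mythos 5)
Your derivation of the skein identity is the right approach and essentially the one the paper leaves implicit: partition the states of $D$ by the smoothing chosen at $v$, identify each part with the states of $D_0$ or $D_1$, and check that $\sigma$ shifts by $\pm1$ while $d$, $h$ and $c$ are unchanged. As you note, this forces $D_0$, $D_1$ to keep the two special points that $v$ contributes (whether Seifert or break) --- otherwise the star-like criterion, which weighs Seifert points, would flip circle types when passing from $s$ to $s_0$. Implicitly you are adopting the ``diagrams with marked special points'' convention the paper only gestures at later (``If giving a sense at adding Seifert points on diagrams''), which is fine, but it deserves to be stated since with bare diagrams the identity would simply fail.

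The circle identity, though, rests on a false premise. You assert that a split trivial circle drawn in Morse position carries two break points. By the paper's definition, special points of a Kauffman state occur only at crossings of $D$; a disjoint circle meeting no crossing has no special points at all, so $b(c)=sf(c)=0$, hence $\frac{b}{2}+sf=0$ is even and the circle is of type $h$, not $d$ --- under both the star-like and the braid-like criteria. The Morse-theoretic picture you invoke is the closed-braid characterization around Proposition~\ref{ClosedBraid}, where radius critical points coincide with break points only because contractible state circles there necessarily run through crossings of the braid; it does not transfer to a split component. For the equality $\langle D\,\dessin{.3cm}{cer}\rangle_{st}=(-A^2-A^{-2})\langle D\rangle_{st}$ to hold, $\dessin{.3cm}{cer}$ must be read as a circle already carrying a pair of break points (a $d$-circle by fiat), in which case the factor $(-A^2-A^{-2})$ is immediate from the definition and no Morse argument is available or needed; with a bare circle the two sides are genuinely distinct elements of $\Gamma$, one having an extra $h$-component in every configuration.
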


\subsubsection{Consistency}
\label{par:StarLikeJonesCons}

\begin{theo}\label{InvStarLike}
  The bracket $\langle \ .\ \rangle_{st}$ and the braid-like Jones polynomial are invariant under global orientation reversing and star-like isotopies.
\end{theo}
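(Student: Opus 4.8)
The statement is the exact analogue of Theorem~\ref{InvBraidLike} (\S\ref{par:BraidLikeJonesCons}) with the star-like notion of $h$-- and $d$--circles replacing the braid-like one, so the plan is to imitate that proof step by step and check that each place where the circle-type classification is used still goes through. First I would note that, since the writhe is invariant under star-like isotopies, the claims for $V_{st}$ and for $\langle\,\cdot\,\rangle_{st}$ are equivalent, and that the whole construction is manifestly unchanged by a global orientation reversal (reversing all orientations fixes the set of break and Seifert points, hence the type of every circle). So everything reduces to invariance under the two singularity types $\dessin{.75cm}{sing1}$ and $\dessin{1.1cm}{sing2}$ allowed in a star-like isotopy.

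Next I would treat the star-like Reidemeister~II move (the $\dessin{.5cm}{sing1}$ singularity). Expanding the Kauffman bracket at the two crossings gives the same four-term formula as in the braid-like case,
$$
\dessin{.8cm}{SchIIa2} \ = \ A^2 \dessin{.8cm}{II4} \ + \ \dessin{.8cm}{II2} \ + \ \dessin{.8cm}{II3}\ + \  A^{-2} \dessin{.8cm}{II1},
$$
and the key point to verify is that the extra circle created in the middle-surviving term is still of type $d$ with the \emph{star-like} definition: one must count its break points \emph{and} its Seifert points, check that half their sum is odd, and then invoke the identification $d=-A^2-A^{-2}$ exactly as before. This is a small local computation on the pictures in Figure~\ref{fig:Seifert}. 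Then, by the star-like analogue of Lemma~\ref{jeudemove} (\S\ref{par:BraidLikeLinks})—which holds because Figure~\ref{fig:TypeIII} shows the $\dessin{.5cm}{sing1}$--singularity edges connect the two $\dessin{.75cm}{sing2}$ moves, with the mirror operation $\phi$ supplied by the symmetry of the construction—invariance under all star-like Reidemeister~III moves reduces to invariance under a single one. For that move the $A$--smoothing of the relevant crossing already yields star-like isotopic (indeed planar-isotopic) diagrams, so it remains to check the identity of brackets obtained after the $A^{-1}$--smoothing; this is again a four-term bracket expansion on each side where, as in the braid-like proof, all but one term dies, and one verifies the two surviving configurations agree, paying attention that the $h$/$d$--type of each residual circle is computed with the Seifert-point-inclusive rule.

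The main obstacle, such as it is, is purely bookkeeping: one must recheck, in each of the local pictures appearing in the Reidemeister~II and III verifications, that the modified circle-type convention (break points \emph{plus} Seifert points, rather than break points alone) does not change which circles count as $d$--circles in the terms that are supposed to cancel or to combine via $d=-A^2-A^{-2}$, and that it does not spuriously change the $h$--circle configuration $c(s)$ recorded by the surviving terms. Since a star-like move is a planar isotopy of the underlying curve away from the crossings being modified, the Seifert points outside the move are unaffected, and inside the move the count is a finite check; I expect no conceptual difficulty, only the need to be careful that parities work out. I would also remark, as in \S\ref{par:BraidLikeJonesProp}, that the Seifert state survives as a leading term, giving that $V_{st}$ genuinely refines the Jones polynomial and is a nontrivial invariant of star-like links.
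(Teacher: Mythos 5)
Your proposal has a genuine gap in the reduction to a single Reidemeister~III move. You assert that Figure~\ref{fig:TypeIII} shows the $\dessin{.5cm}{sing1}$--labelled edges connect the two $\dessin{.75cm}{sing2}$--type Reidemeister~III moves, and so a star-like analogue of Lemma~\ref{jeudemove} holds. Looking at the graph, the opposite is true: the two star-like Reidemeister~III moves sit at the two extremes, joined to the rest of the graph \emph{only} by $\dessin{.5cm}{sing1bis}$--labelled edges (the non-star-like type~II moves). Within the subgraph of $\dessin{.5cm}{sing1}$--edges they are isolated vertices, not adjacent to each other or to anything else. This is precisely the remark made in the introduction (\S\ref{par:relationReidMoves}) that the $\dessin{.5cm}{sing1}$ moves ``isolent les deux mouvements correspondant à $\dessin{.75cm}{sing2}$''. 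The paper's own proof of Theorem~\ref{InvStarLike} opens the Reidemeister~III discussion by stating explicitly that there is no star-like equivalent of Lemma~\ref{jeudemove} and that the two star-like type~III moves must be treated separately; it then verifies both by an explicit eight-term bracket expansion on each side, grouping the contributions $(1),(2),(3)$ against $(1'),(2'),(3')$ and using $d=-A^2-A^{-2}$ for the $d$--circles and matching the $h$--circle configurations. Since your reduction step fails, your proposal never actually addresses the second star-like Reidemeister~III move.

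Your instinct to bring in the mirror operation is not misguided---the two star-like type~III moves are indeed mirror images of one another, and the paper uses exactly that duality in the categorified setting (Proposition~\ref{KhCohomologie} for $\HH_{st}$). But for the polynomial statement at hand the paper does not take that route, and if you wanted to, you would have to prove independently how $\langle\,\cdot\,\rangle_{st}$ transforms under mirroring (the writhe flips sign, $A$-- and $A^{-1}$--smoothings swap, break and Seifert points are preserved, so the circle types are preserved, etc.); you cannot get it for free from the lemma you cite, since that lemma presupposes invariance under the $\dessin{.5cm}{sing1bis}$ type~II moves, which are not star-like. As it stands, the cleanest fix is simply to carry out the direct verification for both star-like type~III moves separately, as the paper does.
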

\begin{proof}
Arguments for the invariance under global orientation reversing and star-like moves of type II are similar than for the braid-like case.\\

For invariance under star-like moves of type III, there is no star-like equivalent of Lemma \ref{jeudemove} (\S\ref{par:BraidLikeLinks}). The moves
$$
\dessin{1.7cm}{IIIb1} \ \longleftrightarrow \ \dessin{1.7cm}{IIIb2}
$$
and
$$
\dessin{1.7cm}{IIIa1} \ \longleftrightarrow \ \dessin{1.7cm}{IIIa2}
$$
need hence to be treated separately.\\

The left hand side and the right hand side of the first one give respectively the following eight contributions to the bracket:
$$
\hspace{-.2cm}
(1)
\left\{\textrm{
    \begin{tabular}{p{2.1cm}p{.8cm}p{2.1cm}}
      \begin{tabular}{p{.3cm}p{1.5cm}}
        \centering{$A$} & $\dessin{1.16cm}{III11}$ 
      \end{tabular}
      &&
      \begin{tabular}{p{.3cm}p{1.5cm}}
        \centering{$A^3$} & $\dessin{1.16cm}{III12}$ 
      \end{tabular}
      \\[.6cm]
      \begin{tabular}{p{.3cm}p{1.5cm}}
        \centering{$A^{-1}$} & $\dessin{1.16cm}{III13}$ 
      \end{tabular}
      &&
      \begin{tabular}{p{.3cm}p{1.5cm}}
        \centering{$A^{-1}$} & $\dessin{1.16cm}{III14}$ 
      \end{tabular}
    \end{tabular}}
\right.
\hspace{1.2cm}
(1')
\left\{\textrm{
    \begin{tabular}{p{5.8cm}}
      \centering{\begin{tabular}{p{.3cm}p{1.5cm}}
          \centering{$A^{-1}$} & $\dessin{1.16cm}{III1s1}$ 
        \end{tabular}}
    \end{tabular}}   
\right.
$$
$$
\hspace{-.2cm}
(2)
\left\{\textrm{
    \begin{tabular}{p{5.8cm}}
      \centering{\begin{tabular}{p{.3cm}p{1.5cm}}
          \centering{$A^{-1}$} & $\dessin{1.16cm}{III21}$ 
        \end{tabular}}
    \end{tabular}}   
\right.
\hspace{1.2cm}
(2')
\left\{\textrm{
    \begin{tabular}{p{2.1cm}p{.8cm}p{2.1cm}}
      \begin{tabular}{p{.3cm}p{1.5cm}}
        \centering{$A$} & $\dessin{1.16cm}{III2s1}$ 
      \end{tabular}
      &&
      \begin{tabular}{p{.3cm}p{1.5cm}}
        \centering{$A^3$} & $\dessin{1.16cm}{III2s2}$ 
      \end{tabular}
      \\[.6cm]
      \begin{tabular}{p{.3cm}p{1.5cm}}
        \centering{$A^{-1}$} & $\dessin{1.16cm}{III2s3}$ 
      \end{tabular}
      &&
      \begin{tabular}{p{.3cm}p{1.5cm}}
        \centering{$A^{-1}$} & $\dessin{1.16cm}{III2s4}$ 
      \end{tabular}
    \end{tabular}}
\right.
$$
\vspace{.3cm}
$$
\hspace{-.2cm}
(3)
\left\{\textrm{
\begin{tabular}{p{2.1cm}p{.8cm}p{2.1cm}}
  \begin{tabular}{p{.3cm}p{1.5cm}}
    \centering{$A$} & \centering{$\dessin{1.16cm}{III31}$} 
  \end{tabular}
  &&
  \begin{tabular}{p{.3cm}p{1.5cm}}
    \centering{$A^{-3}$} & \centering{$\dessin{1.16cm}{III32}$} 
  \end{tabular}
  \\[.6cm]
  \multicolumn{3}{c}{\begin{tabular}{p{.3cm}p{1.5cm}}
      \centering{$A$} & $\dessin{1.16cm}{III33}$ 
    \end{tabular}}
\end{tabular}}
\right.
\hspace{1.2cm}
(3')
\left\{\textrm{
\begin{tabular}{p{2.1cm}p{.8cm}p{2.1cm}}
  \begin{tabular}{p{.3cm}p{1.5cm}}
    \centering{$A$} & \centering{$\dessin{1.16cm}{III3s1}$} 
  \end{tabular}
  &&
  \begin{tabular}{p{.3cm}p{1.5cm}}
    \centering{$A^{-3}$} & \centering{$\dessin{1.16cm}{III3s2}$} 
  \end{tabular}
  \\[.6cm]
  \multicolumn{3}{c}{\begin{tabular}{p{.3cm}p{1.5cm}}
      \centering{$A$} & $\dessin{1.16cm}{III3s3}$ 
    \end{tabular}}
\end{tabular}}
\right.
$$

The closed connected components arising in $(1)$ and $(2')$ are of type $d$. This implies that $(1)=(1')$, $(2)=(2')$ and $(3)=(3')$ when summing their elements to evaluate the star-like Kauffman bracket. Moreover, the configurations of $h$--circles in $\R^2$ are the same.\\

The left hand side and the right hand side of the second move give respectively the following contributions:
$$
\hspace{-.2cm}
(1)
\left\{\textrm{
    \begin{tabular}{p{2.1cm}p{.8cm}p{2.1cm}}
      \begin{tabular}{p{.3cm}p{1.5cm}}
        \centering{$A^{-1}$} & $\dessin{1.16cm}{III11}$ 
      \end{tabular}
      &&
      \begin{tabular}{p{.3cm}p{1.5cm}}
        \centering{$A$} & $\dessin{1.16cm}{III12}$ 
      \end{tabular}
      \\[.6cm]
      \begin{tabular}{p{.3cm}p{1.5cm}}
        \centering{$A^{-1}$} & $\dessin{1.16cm}{III13}$ 
      \end{tabular}
      &&
      \begin{tabular}{p{.3cm}p{1.5cm}}
        \centering{$A$} & $\dessin{1.16cm}{III14}$ 
      \end{tabular}
    \end{tabular}}
\right.
\hspace{1.2cm}
(1')
\left\{\textrm{
    \begin{tabular}{p{5.8cm}}
      \centering{\begin{tabular}{p{.3cm}p{1.5cm}}
          \centering{$A$} & $\dessin{1.16cm}{III1s1}$ 
        \end{tabular}}
    \end{tabular}}   
\right.
$$
$$
\hspace{-.2cm}
(2)
\left\{\textrm{
    \begin{tabular}{p{5.8cm}}
      \centering{\begin{tabular}{p{.3cm}p{1.5cm}}
          \centering{$A$} & $\dessin{1.16cm}{III21}$ 
        \end{tabular}}
    \end{tabular}}   
\right.
\hspace{1.2cm}
(2')
\left\{\textrm{
    \begin{tabular}{p{2.1cm}p{.8cm}p{2.1cm}}
      \begin{tabular}{p{.3cm}p{1.5cm}}
        \centering{$A^{-1}$} & $\dessin{1.16cm}{III2s1}$ 
      \end{tabular}
      &&
      \begin{tabular}{p{.3cm}p{1.5cm}}
        \centering{$A$} & $\dessin{1.16cm}{III2s2}$ 
      \end{tabular}
      \\[.6cm]
      \begin{tabular}{p{.3cm}p{1.5cm}}
        \centering{$A$} & $\dessin{1.16cm}{III2s3}$ 
      \end{tabular}
      &&
      \begin{tabular}{p{.3cm}p{1.5cm}}
        \centering{$A^{-3}$} & $\dessin{1.16cm}{III2s4}$ 
      \end{tabular}
    \end{tabular}}
\right.
$$
\vspace{.3cm}
$$
\hspace{-.2cm}
(3)
\left\{\textrm{
\begin{tabular}{p{2.1cm}p{.8cm}p{2.1cm}}
  \begin{tabular}{p{.3cm}p{1.5cm}}
    \centering{$A^3$} & \centering{$\dessin{1.16cm}{III31}$} 
  \end{tabular}
  &&
  \begin{tabular}{p{.3cm}p{1.5cm}}
    \centering{$A^{-1}$} & \centering{$\dessin{1.16cm}{III32}$} 
  \end{tabular}
  \\[.6cm]
  \multicolumn{3}{c}{\begin{tabular}{p{.3cm}p{1.5cm}}
      \centering{$A^{-1}$} & $\dessin{1.16cm}{III33}$ 
    \end{tabular}}
\end{tabular}}
\right.
\hspace{1.2cm}
(3')
\left\{\textrm{
\begin{tabular}{p{2.1cm}p{.8cm}p{2.1cm}}
  \begin{tabular}{p{.3cm}p{1.5cm}}
    \centering{$A^3$} & \centering{$\dessin{1.16cm}{III3s1}$} 
  \end{tabular}
  &&
  \begin{tabular}{p{.3cm}p{1.5cm}}
    \centering{$A^{-1}$} & \centering{$\dessin{1.16cm}{III3s2}$} 
  \end{tabular}
  \\[.6cm]
  \multicolumn{3}{c}{\begin{tabular}{p{.3cm}p{1.5cm}}
      \centering{$A^{-1}$} & $\dessin{1.16cm}{III3s3}$ 
    \end{tabular}}
\end{tabular}}
\right.
$$
We conclude like in the first case.
\end{proof}

\subsubsection{Properties}
\label{par:StarLikeJonesProp}

If giving a sense at adding Seifert points on diagrams, Proposition \ref{StarLikeSkein} (\S\ref{par:StarLikeJonesDef}) would lead to the following identity:
$$
A^4 V_{st}\left( \dessin{.45cm}{skeinpos} \right) - A^{-4} V_{st} \left( \dessin{.45cm}{skeinneg} \right) = (A^2 - A^{-2})V_{st}\left( \dessin{.45cm}{skeinliss} \right).
$$
But $V_{st}\left( \dessin{.45cm}{skeinlissbis} \right) \neq V_{st}\left( \dessin{.45cm}{skeinliss} \right)$ and there is no chance that $V_{st}$ does satisfy Jones skein relation.\\

Nevertheless, it happens that the star-like Jones polynomial is invariant under more Reidemeister moves.
\begin{prop}\label{PairsOfI}
  The polynomial $V_{st}$ is invariant under two simultaneous and adjacent Reidemeister I moves.
\end{prop}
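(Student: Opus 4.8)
The plan is to follow the template of the proof of Theorem~\ref{InvStarLike} (\S\ref{par:StarLikeJonesCons}): resolve, by means of the skein recursion of Proposition~\ref{StarLikeSkein} (\S\ref{par:StarLikeJonesDef}), the two crossings that the pair of adjacent Reidemeister~I moves creates on a strand of $D$, and recombine the resulting contributions to the bracket. I take the two curls to be of opposite sign, so that $w(D)$ is unchanged; it is then enough to prove the identity ${\langle D\rangle}_{st}={\langle D'\rangle}_{st}$, where $D'$ is the diagram obtained by removing both curls, and to conclude by dividing by $(-A)^{3w(D)}=(-A)^{3w(D')}$.

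First I would fix the local model: write $C$ for the connected component of an arbitrary Kauffman state that carries the strand in question. Resolving one curl one way opens it into $C$, resolving it the other way pinches off a small circle $\gamma$ carrying exactly the two special points of that crossing; by the definition of type such a $\gamma$ is a $d$--circle, hence contributes the scalar $-A^2-A^{-2}$ and is absent from the configuration $c(s)$. Expanding ${\langle D\rangle}_{st}$ over the two new crossings via Proposition~\ref{StarLikeSkein} yields four families of states: both curls opened; both pinched; and the two mixed families with one curl opened and one pinched. In the first family the component $C$ acquires a number of special points divisible by $4$, so the parity of half the number of break and Seifert points on it is the same as in $D'$ and this family reproduces ${\langle D'\rangle}_{st}$ (with powers of $A$ summing to $1$); in the second family one gets $(-A^2-A^{-2})^2$ times a copy of ${\langle D'\rangle}_{st}$; in the mixed families $C$ acquires only two special points, its $h/d$ type flips, and one gets the bracket of the diagram carrying a single residual curl. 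Collecting the $\langle D'\rangle_{st}$--terms, their total scalar is $1+(-A^2-A^{-2})^2-(A^2+A^{-2})^2=1$ since $(A^2+A^{-2})(-A^2-A^{-2})=-(A^2+A^{-2})^2$, so what remains is to show that the two mixed families cancel each other.

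This last cancellation is the heart of the argument and the step I expect to be the main obstacle: one must pair the states of the two mixed families and check, crossing by crossing and component by component, that the reclassification of $C$ (and of any circle that merges with $C$ along an ``opening'' smoothing) on one side is compensated on the other, with matching signs --- exactly the bookkeeping of break/Seifert classifications that produces the balanced contributions tabulated in the proof of Theorem~\ref{InvStarLike}, now carried simultaneously through two curls. Conceptually, a single Reidemeister~I move changes ${\langle\cdot\rangle}_{st}$ only up to flipping the type of the carrying circle, and performing two adjacent such moves applies this operation twice, i.e.\ trivially; making that heuristic rigorous is precisely the combinatorial work to be done. Once the mixed terms are shown to annihilate, the identity ${\langle D\rangle}_{st}={\langle D'\rangle}_{st}$ follows, and hence $V_{st}(D)=V_{st}(D')$.
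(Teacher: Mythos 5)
Your strategy --- expand the bracket by the skein relation of Proposition~\ref{StarLikeSkein} over the two curl crossings and collect contributions --- is exactly the paper's, and your final identity $1+d^2-(A^2+A^{-2})^2=1$ is the right arithmetic. The problem is in how you reach it: you misanalyse the mixed families, and this is precisely the step you label the ``main obstacle'' and leave unfinished. Resolving a single curl affects the carrying component $C$ as follows: opening it adds \emph{two break points} to $C$; pinching it off adds \emph{one Seifert point} to $C$ and creates a circle $\gamma$ bearing one Seifert point (not two --- though $\gamma$ is indeed a $d$--circle, since $\tfrac12\cdot 0 + 1$ is odd). In the star-like count $\tfrac12(\text{break})+(\text{Seifert})$ each of these two operations contributes $+1$, i.e.\ flips the type of $C$ exactly once. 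After resolving two curls, $C$'s type has therefore been flipped twice --- hence preserved --- in \emph{every} family, including the mixed ones, where $C$ acquires $2+1=3$ new special points and keeps its type. Your claim that $C$ acquires only two special points there, flips type, and produces a residual-curl bracket is therefore incorrect.

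In consequence there is nothing left to cancel: the mixed states reduce directly to states of $D'$ carrying one $d$--circle factor each, and the coefficient of the common state sum is $A^2 d + 1 + d^2 + A^{-2}d$, in which the $A^{\pm 2}d$ terms \emph{are} the mixed contributions (equivalently, the $-(A^2+A^{-2})^2$ in your formula is itself the mixed contribution, not something to be eliminated afterwards). Substituting $d=-A^2-A^{-2}$ gives $1$, and this one-liner is the whole of the paper's computation; your closing heuristic that two adjacent RI moves flip $C$'s type twice is already the rigorous argument once the special-point bookkeeping is set right. Finally, restricting to opposite-sign curls handles only part of the statement --- the paper also verifies the configuration with one curl on each side, where the writhe and hence the $(-A)^{-3w}$ prefactor change, and declares the remaining cases analogous --- but in every case the same parity count of special points disposes of the matter directly, with no cancellation lemma required.
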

\begin{proof}
  We will consider the two following cases only:
$$
\dessin{1.2cm}{I1av} \  \longleftrightarrow  \ \dessin{1.2cm}{I1ap}
$$
$$
\dessin{1.2cm}{I2av} \ \longleftrightarrow  \ \dessin{1.2cm}{I2ap}
$$
All other cases are analoguous.\\

The left hand side of the first case gives the following contributions:
\begin{eqnarray*}
\dessin{1.1cm}{Ipair6} & = & A^2 \dessin{1.1cm}{I11} + \dessin{1.1cm}{I12} + \dessin{1.1cm}{I13} + A^{-2} \dessin{1.1cm}{I14}\\
& = & (A^2d + 1 + d^2 + A^{-2}d) \dessin{1.1cm}{Istr2}\\
& = & \dessin{1.1cm}{Istr2}.
\end{eqnarray*}

The left hand side of the second gives the contribution:
\begin{eqnarray*}
-A^3\left( A \dessin{1.1cm}{I21} + A^{-1}\dessin{1.1cm}{I22}\right) & = & -A^3\left( (A + A^{-1}d) \dessin{1.1cm}{I25}\right)\\
& = & \dessin{1.1cm}{I25},
\end{eqnarray*}
whereas its right hand side gives:
\begin{eqnarray*}
-A^{-3}\left( A \dessin{1.1cm}{I23} + A^{-1}\dessin{1.1cm}{I24}\right) & = & -A^{-3}\left( (Ad + A^{-1}) \dessin{1.1cm}{I25}\right)\\
& = & \dessin{1.1cm}{I25}.
\end{eqnarray*}
\end{proof}

\vspace{2cm}
Jones polynomial have been refined to suit the restricted cases. It is now natural to wonder how its Khovanov categorication behaves.


\chapter{Khovanov invariants for restricted links}
\label{chap:Khovanov}

This chapter uses the basics in algebra given in the first section of the first chapter. However, for convenience, we require here that chain maps commutes with differentials instead of anti-commuting.\\

Defined as elements of $\Gamma$, $V_{br}$ and $V_{st}$ resist all attempt of categorification. Nevertheless, even if they may contain less information, one can define lightened versions which can be more easily categorified.

\section{Definition of homologies for restricted links}
\label{sec:DefinitionKhovanov}

Let $D$ be a link diagram.

\subsection{Lightening and reformulation}
\label{ssec:LighteningReformulation}

\subsubsection{Lightening}
\label{par:Lightening}

First, we define a map
$$
\func{\chi}{\C}{\Z[H,H^{-1}]}
$$
\index{chi@$\chi$}
which sends any configuration of circles $c$ to $(-H^2-H^{-2})^{|c|}$ where $|c|$ denotes the number of connected components of $c$. Then we extend it to $\Gamma$ by $\Z[A,A^{-1}]$--linearity.\\

Now we define the \emph{lightened restricted Jones polynomial}\index{polynomial!Jones!lightened restricted} as the image of $V_{br}$ or $V_{st}$ by $\chi$.

\subsubsection{Enhanced  Kauffman states}
\label{par:EnhancedStates}

A \emph{enhanced state}\index{smoothing!Kauffman state!enhanced state}\index{enhanced state|see{smoothing}} $S$ of $D$ is a Kauffman state $s$ of $D$ enhanced by an assignment of a plus or a minus sign to each of the connected components of $s$.\\

Now, as stated in \cite{Viro}, $\chi(V_{br})$ and $\chi(V_{st})$ can be given the following alternative definitions
$$
\left. \begin{array}{c}
  \chi(V_{br})\\[2mm]
  \chi(V_{st})
\end{array} \right\}
 = \sum_{\substack{S\textrm{ enhanced}\\ \textrm{state of }D}} (-1)^{\frac{\sigma(S) - w(K)}{2}}(-A^2)^{\frac{\sigma(S)-3w(K)+2\tau_d(S)}{2}}(-H^2)^{\tau_h(S)},
$$
where $\tau_d$ (resp. $\tau_h$) is the difference between the number of plus and minus assigned to the $d$--circles (resp. $h$--circles).

\subsection{Restricted Khovanov homology}
\label{par:DefBraidLikeKhovanov}

As soon as the definitions of $h$ and $d$--circles have been set down, the constructions for braid-like and star-like Khovanov homologies are strictly identical. Hence, we give the details for braid-like construction only and leave to the reader the care of changing $br$--indices for $st$--ones.

\subsubsection{Trigradings}
\label{par:KhChainCplx}

For any enhanced state $S$ of $D$, we define
$$
\begin{array}{l}
i(S) = \frac{\sigma(S) - w(K)}{2}\\[2mm]
j(S) = \frac{\sigma(S)-3w(K)+2\tau_d(S)}{2}\\[2mm]
k(S) = \tau_h(S).
\end{array}
$$

It is easily checked, using the fact that any link can be unknotted by switching a finite number of crossings that they are all integers.\\
This defines three gradings on $C(D)$ the module generated by all enhanced states of $D$. So we can consider the trigraded module $C_{br}(D):={\big(C_i^{j,k}(D)\big)}_{i,j,k\in\Z}$.

\subsubsection{Braid-like Khovanov differential}
\label{par:BraidLikeDiff}

First, we assign an arbitrary ordering on the crossings of $D$.\\

For any two enhanced states $S$ and $S'$ of $D$ and for any $v$ crossing of $D$, we define an incidence number $[S:S']_v$ by
\begin{itemize}
\item[-] $[S:S']_v=1$ if the following four conditions are satisfied:
\begin{enumerate}
\item[i)] $v$ is $A$--smoothed in $S$ but $A^{-1}$--smoothed in $S'$;
\item[ii)] all the other crossings are smoothed the same way in $S$ and $S'$;
\item[iii)] common connected components of $S$ and $S'$ are labeled the same way;
\item[iv)] $j(S) = j(S')$ and $k(S) = k(S')$;
\end{enumerate}
\item[-] otherwise, $[S:S']_v=0$.
\end{itemize}

Now we can set $\func{d_{br}^D}{C_{br}(D)}{C_{br}(D)}$ as the morphism of modules defined on generators by
$$
d_{br}^D(S) =  \sum_{\substack{v\textrm{ crossing}\\A\textrm{--smoothed in }S}} d_v(S)
$$
where the partial differential $d_v$ is defined by
$$
d_v(S) = (-1)^{t^-_{v,S}} \sum_{S'\textrm{enhanced state of }D} [S,S']_v S',
$$\index{dv@$d_v$}
with $t^-_{v,S}$ the number of $A^{-1}$--smoothed crossings in $S$ labeled with numbers greater than the label of $v$.\\
\index{homology!Khovanov!braid-like}
\index{homology!Khovanov!star-like}

For any enhanced state $S$, its image by $d_{br}^D$ is the alternating sum of enhanced states obtained by switching one $A$--smoothed crossing of $S$ into a $A^{-1}$--smoothed one and by locally relabeling connected components in such a way that $j$ and $k$ are preserved and $i$ is decreased by $1$. The map $d_{br}^D$ is then graded of tridegree $(-1,0,0)$.\\
Moreover the merge of two $d$--circles or two $h$--circles always gives a $d$--circle and the merge of a $d$--circle with an $h$--circle always an $h$--circle. Thus, one can explicit the action of $d_v$, as done in Figure \ref{fig:BraidDiff}.\\

\begin{figure}
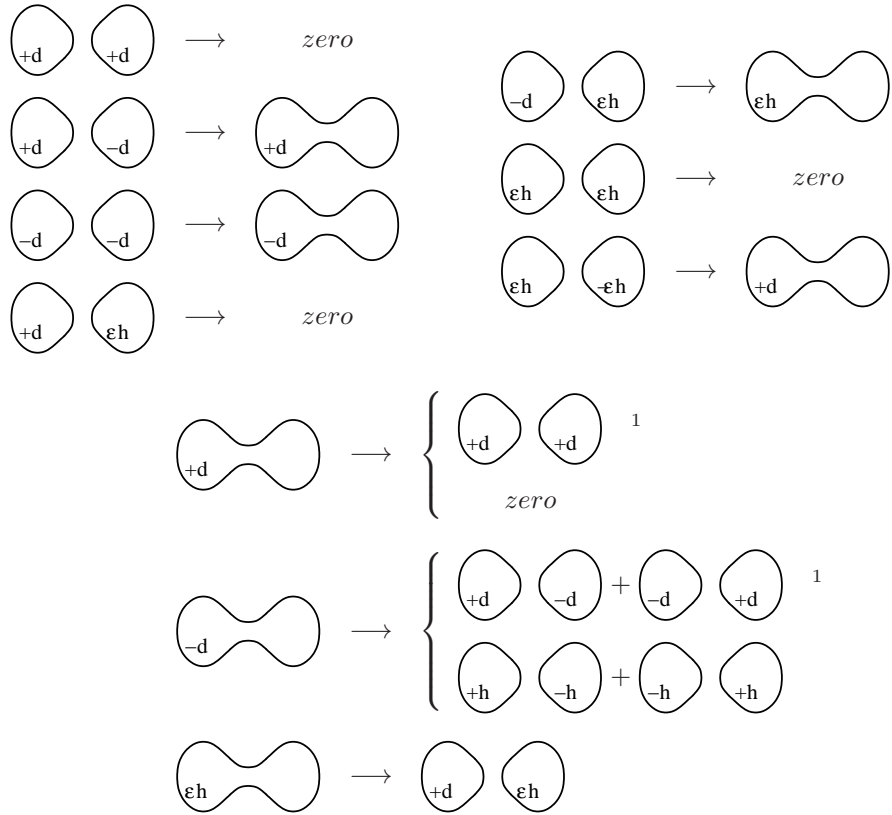

$$
\begin{array}{ccc}
\begin{array}{ccc}
\dessin{1cm}{Diff1a}& \longrightarrow &zero\\[5mm]
\dessin{1cm}{Diff2a}& \longrightarrow  &\dessin{1cm}{Diff2b}\\[5mm]
\dessin{1cm}{Diff3a}& \longrightarrow  &\dessin{1cm}{Diff3b}\\[5mm]
\dessin{1cm}{Diff4a}&  \longrightarrow  &zero\\[5mm]
\end{array}& \ \ &
\begin{array}{ccc}
\dessin{1cm}{Diff5a}& \longrightarrow  &\dessin{1cm}{Diff5b}\\[5mm]
\dessin{1cm}{Diff6a}& \longrightarrow &zero\\[5mm]
\dessin{1cm}{Diff7a}& \longrightarrow  &\dessin{1cm}{Diff7b}\\[5mm]
\end{array}
\end{array}
$$
$$
\hspace{1.5cm}
\begin{array}{ccl}
\dessin{1cm}{Diff8a}& \longrightarrow  &
\left\{\begin{array}{cl}
\dessin{1cm}{Diff8b}&\footnotemark[1]\\[5mm]
zero&
\end{array}\right.
\\[10mm]
\dessin{1cm}{Diff9a}& \longrightarrow &
\left\{ \begin{array}{cl}
\dessin{1cm}{Diff9b1}+\dessin{1cm}{Diff9b2}&\footnotemark[1]\\[5mm]
\dessin{1cm}{Diff9b3}+\dessin{1cm}{Diff9b4}&\\
\end{array} \right.
\\[12mm]
\dessin{1cm}{Diff10a}& \longrightarrow  &\dessin{1cm}{Diff10b}\\
\end{array}
$$
\caption{Definition of the partial differential $d_v$}
\label{fig:BraidDiff}
\end{figure}
\footnotetext[1]{depending on the type of the resulting circles}

\begin{prop}
  Every two distinct partial differentials anti-commute. As a consequence, $d_{br}^D$ is a differential. 
\end{prop}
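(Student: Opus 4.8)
The plan is to follow the standard Khovanov-style argument, adapted to the fact that here partial differentials merge connected components and relabel signs so that the two auxiliary gradings $j$ and $k$ are preserved. The statement has two parts: first, that for any two distinct crossings $v\neq v'$ of $D$ one has $d_v d_{v'} + d_{v'} d_v = 0$; and second, that $d_{br}^D$ squares to zero. The second part is a formal consequence of the first: writing $d_{br}^D = \sum_v d_v$, we get $(d_{br}^D)^2 = \sum_{v} d_v^2 + \sum_{v\neq v'} d_v d_{v'}$, and $d_v^2 = 0$ holds because $d_v$ can only change the smoothing at $v$ from $A$ to $A^{-1}$, never back, so applying $d_v$ twice forces the first factor to act on a state in which $v$ is already $A^{-1}$-smoothed, giving zero. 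The cross terms cancel in pairs by the anti-commutativity just established. So the whole weight of the proof rests on the anti-commutativity of distinct partial differentials.

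For that, I would fix $v\neq v'$ and an enhanced state $S$ in which both $v$ and $v'$ are $A$-smoothed (otherwise both compositions vanish on $S$), and compare the two ways of switching both crossings. Resolving $v$ and $v'$ in both orders produces the same underlying unenhanced state $s''$ with both crossings $A^{-1}$-smoothed; the question is whether the enhanced contributions agree up to sign. Here one uses that the local picture at a pair of distinct crossings is governed by the four elementary merge/relabel rules of Figure \ref{fig:BraidDiff} ($d$-circles behaving like the ``$1$'' element and $h$-circles like the ``$X$'' element of the Frobenius algebra, with the extra bookkeeping that a merge of a $d$- and an $h$-circle is an $h$-circle). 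The combinatorics of how the connected components near $v$ and $v'$ interact splits into the familiar cases — the two crossings touch $0$, $1$, or $2$ common circles, and in the last case either the circles split or merge — and in every case the Frobenius-algebra relations (commutativity and (co)associativity, which here become the statement that the composite merge/split operation is independent of order) give that the two enhanced outputs are literally equal as labeled states, while the sign rule $(-1)^{t^-_{v,S}}$ introduces exactly one discrepancy. Concretely, if the label of $v$ is smaller than that of $v'$, then when we apply $d_{v'}$ first the count $t^-_{v,S'}$ seen by $d_v$ is one larger than $t^-_{v,S}$ (because $v'$ is now an $A^{-1}$-smoothed crossing with a larger label than $v$), whereas applying $d_v$ first does not change the count $t^-_{v',S}$ seen by $d_{v'}$; hence the two orders differ by exactly a factor $-1$, which is the desired anti-commutation.

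One should also check that the gradings behave consistently, i.e. that both $d_v$ and $d_{v'}$ genuinely land in the subspace where $j$ and $k$ are preserved and $i$ drops by one, so that the composites are defined and the incidence numbers $[S:S']_v$ that appear are the correct ones; but this is immediate from the definition of $[S:S']_v$ in paragraph \ref{par:BraidLikeDiff}, conditions iii) and iv). I expect the main obstacle to be purely organizational rather than conceptual: one must verify the order-independence of the composite local operation in each of the finitely many configurations of Figure \ref{fig:BraidDiff}, being careful about the case where one crossing merges a $d$-circle into an $h$-circle while the other crossing splits the result — the rule ``$h$ splits as $h+d$ or $d+h$'' has to be matched against ``$d$ splits as $d+d$'' in a way compatible with coassociativity. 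Since the $h/d$ labeling is exactly the labeling $1/X$ of the rank-two Frobenius algebra $\Z[X]/(X^2)$ (with $d\leftrightarrow 1$, $h\leftrightarrow X$, and the merge of $h$ with $h$ being $X^2=0$, which matches ``move $6$'' in Figure \ref{fig:BraidDiff} giving zero), this order-independence is precisely the Frobenius property used in the classical Khovanov case, so no genuinely new algebraic input is needed — one is simply re-verifying a known computation in a mildly disguised form.
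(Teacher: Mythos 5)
Your formal skeleton matches the paper's: $d_v^2 = 0$ because a crossing cannot be switched twice, the sign rule $(-1)^{t^-_{v,S}}$ contributes exactly one transposition when the order of $v$ and $v'$ is swapped, and $(d_{br}^D)^2 = 0$ then follows from pairwise anti-commutativity. That much is sound and is what the paper's sketch also relies on. The gap is in the central claim that the unsigned compositions agree, where you identify the $h/d$ circle types with the elements $1/X$ of the Frobenius algebra $\Z[X]/(X^2)$ and conclude that order-independence is "precisely the Frobenius property used in the classical Khovanov case." That identification is incorrect: each circle carries both a type ($h$ or $d$), which is a combinatorial invariant of the underlying Kauffman state, and a $\pm$ label, which is the Frobenius-algebra enhancement. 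The $1/X$ labels in $\Z[X]/(X^2)$ correspond to the $\pm$ labels, not to the $h/d$ types; the type rule $d\cdot d = d$, $d\cdot h = h$, $h\cdot h = d$ is a separate $\Z/2$-grading sitting on top of the Frobenius structure, not a disguised version of it.

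Consequently $d_v$ is not just the classical partial differential — it is the restriction to the terms that also preserve $k = \tau_h$, and this restriction genuinely discards terms (e.g., merging a $+$-labeled $h$-circle with a $-$-labeled $d$-circle is killed because it would drop $\tau_h$ by two). Commutativity of the full classical $\delta_v,\delta_{v'}$ does not formally imply commutativity of their $k$-preserving pieces: decomposing $\delta_v = \delta_{v,0} + \delta_{v,+} + \delta_{v,-}$ by $k$-degree, the $k$-degree-zero part of $\delta_v\delta_{v'} = \delta_{v'}\delta_v$ is $\delta_{v,0}\delta_{v',0} + \delta_{v,+}\delta_{v',-} + \delta_{v,-}\delta_{v',+}$ and its mirror, so $\delta_{v,0}\delta_{v',0} = \delta_{v',0}\delta_{v,0}$ only after one shows the cross-terms cancel. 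This is exactly the content that needs the case-by-case check of configurations of $h$- and $d$-circles linked by two crossings, which the paper outsources to cases $1A$–$1D$ through $5A$–$5D$ of Lemma 4.4 in \cite{Asaeda}. You do acknowledge at the end that a finite case check is needed, but the claimed conceptual shortcut ("no genuinely new algebraic input") rests on the faulty identification above, so the key step of your proof is not actually established.
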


\begin{proof}[Sketch of proof]
  To prove the proposition, all the possible configurations of labeled $h$ or $d$--circles linked by two crossings need to be listed. This corresponds to cases $1A$---$1D$ to $5A$---$5D$ in the proof of lemma 4.4 in \cite{Asaeda}.
\end{proof}

\begin{prop}
  The homology $\HH_{br}$ associated to $(C_{br}(D),d_{br}^D)$ does not depend on the ordering of the crossings.
\end{prop}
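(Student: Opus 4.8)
The plan is to show that any two orderings of the crossings of $D$ yield isomorphic trigraded chain complexes, the isomorphism being an explicit diagonal rescaling of the generators. Fix two orderings $\mathcal{O}$ and $\mathcal{O}'$ of the crossings and write $d=d_{br}^D$, $d'=d_{br}^D$ for the two associated differentials. The only place the ordering enters the construction is the sign $(-1)^{t^-_{v,S}}$ in the partial differential $d_v$, where $t^-_{v,S}$ counts the $A^{-1}$--smoothed crossings of $S$ whose label exceeds that of $v$; the module $C_{br}(D)$, the trigradings $i,j,k$ and the incidence numbers $[S:S']_v$ are all ordering-independent. So it is enough to produce a degree-preserving isomorphism $C_{br}(D)\to C_{br}(D)$ intertwining $d$ and $d'$.

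First I would, for an enhanced state $S$ with underlying Kauffman state $s$, write $A^{-1}(s)$ for its set of $A^{-1}$--smoothed crossings and set
$$
\varepsilon(S):=(-1)^{N(s)},\qquad N(s):=\#\big\{\{v,w\}\subset A^{-1}(s)\ :\ v,w\text{ appear in opposite relative order in }\mathcal{O}\text{ and }\mathcal{O}'\big\}.
$$
Define $\func{\varphi}{C_{br}(D)}{C_{br}(D)}$ on generators by $\varphi(S)=\varepsilon(S)\,S$ and extend by linearity. Since $\varphi$ multiplies each generator by $\pm1$ without altering its underlying state, it is an isomorphism of trigraded modules: it preserves $i$, $j$ and $k$, and in particular it is compatible with the conditions (iii)--(iv) used to define $[S:S']_v$.

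Then I would check that $\varphi$ is a chain map. It suffices to compare coefficients of a single target generator: if $v$ is $A$--smoothed in $S$ and $S'$ is obtained from $S$ by re-smoothing $v$ (so $[S:S']_v=1$), then $A^{-1}(s')=A^{-1}(s)\cup\{v\}$, and a direct count gives
$$
t^{-,\mathcal{O}}_{v,S}-t^{-,\mathcal{O}'}_{v,S}\ \equiv\ N(s')-N(s)\pmod 2,
$$
because both sides count exactly the crossings $w\in A^{-1}(s)$ for which $v$ and $w$ occur in opposite relative order in the two orderings. Hence $\varepsilon(S)(-1)^{t^{-,\mathcal{O}}_{v,S}}=\varepsilon(S')(-1)^{t^{-,\mathcal{O}'}_{v,S}}$, which says precisely that the $S'$--coefficient of $\varphi(d(S))$ equals that of $d'(\varphi(S))$. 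Summing over $v$ and $S'$ yields $\varphi\circ d=d'\circ\varphi$, so $\varphi$ is an isomorphism of trigraded chain complexes and induces an isomorphism on $\HH_{br}$.

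To keep the sign bookkeeping short I would first reduce to the case where $\mathcal{O}'$ is obtained from $\mathcal{O}$ by transposing two adjacent labels: then $N(s)\in\{0,1\}$ is just the indicator that both transposed crossings lie in $A^{-1}(s)$, and the displayed congruence is a one-line verification; the general case follows by composing such isomorphisms along a chain of adjacent transpositions connecting $\mathcal{O}$ to $\mathcal{O}'$ in the symmetric group. The only real obstacle is this sign computation, and it is entirely routine — the one point that needs a word is that the $j$-- and $k$--gradings, which also appear in $[S:S']_v$, are irrelevant here precisely because $\varphi$ is diagonal and hence automatically respects them.
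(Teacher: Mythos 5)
Your proof is correct and follows essentially the same route as the paper: a diagonal sign-rescaling isomorphism of the trigraded complex intertwining the two differentials, with the key mod-2 count of $A^{-1}$--smoothed crossings whose relative order is inverted. The paper only records the adjacent-transposition case (where the sign is $-1$ exactly when both permuted crossings are $A^{-1}$--smoothed) and composes; your closed-form $\varepsilon(S)$ for an arbitrary reordering is a mild but harmless generalization of the same idea.
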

\begin{proof}[Sketch of proof]
It is sufficient to prove the invariance when permuting the order of two crossings $v$ and $v'$ which are consecutive for the considered orderings. But then, the values $(-1)^{t^-_{w,S}}[S,S']_w$ associated to each ordering differ only when one of the two following cases occurs:
\begin{itemize}
\item[i)] $w=v$ is $A$--smoothed in $S$ and $A^{-1}$--smoothed in $S'$ while $v'$ is $A^{-1}$--smoothed in both cases;
\item[ii)] $w=v'$ is $A$--smoothed in $S$ and $A^{-1}$--smoothed in $S'$ while $v$ is $A^{-1}$--smoothed in both cases. 
\end{itemize}
Then they differ from a minus sign.\\
On that account, the morphism of modules which sends an enhanced state $S$ to $-S$ if $v$ and $v'$ are $A^{-1}$--smoothed in $S$ and to $S$ otherwise, commutes with the two differentials and is therefore a quasi-isomorphism.
\end{proof}

By construction of the bigraded chain complex $C_{br}(D)$, its bigraded Euler characteristic is equal to $V_{br}$. According to \ref{Euler} (\S\ref{par:Homologies}), this remains true for its homology $\HH_{br}$

\subsubsection{Case of closed braids}
\label{par:APS}

As a corollary of the correspondence given in the proof of Proposition \ref{ClosedBraid} (\S\ref{par:BraidLikeJonesProp}), the merging rules between $d$ and $h$--circles are identical to the merging rules for annulus given in \cite{Asaeda}. This means that in the case of closed braids in a solid torus, the braid-like homology $\HH_{br}$ corresponds to homology developped in \cite{Asaeda}.

\section{Invariance under restricted isotopies}
\label{sec:InvRestricted}
\subsection{Braid-like invariance}
\label{ssec:InvBraidLikeHomology}

\begin{theo}
  The homology groups $\HH_{br}$ are invariant under braid-like isotopies.
\end{theo}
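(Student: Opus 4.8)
The plan is to follow the classical scheme for proving invariance of Khovanov homology (Khovanov \cite{Khovanov}, Viro \cite{Viro}), keeping track at every step of the third grading $k$ and of the partition of the circles of a Kauffman state into $h$--circles and $d$--circles. First I would cut down the list of moves to be checked. A braid-like isotopy is a finite sequence of braid-like Reidemeister moves of type II and type III only (the two type I moves are not braid-like), and, by Lemma \ref{jeudemove} (\S\ref{par:BraidLikeLinks}), the six braid-like type III moves are joined, in the graph of Figure \ref{fig:TypeIII}, by edges which are braid-like type II moves. Running that argument with "induces an isomorphism on $\HH_{br}$" in place of "is invariant", it therefore suffices to exhibit, for each braid-like type II move and for one fixed braid-like type III move, a chain map between the associated complexes $C_{br}(D)$ and $C_{br}(D')$ which is a quasi-isomorphism; the relation of Figure \ref{fig:Trick} then propagates this along the graph to the remaining type III moves. (Since $w(K)$ is braid-like invariant, the normalizations entering the gradings $i,j$ cause no trouble.)

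For the braid-like type II move, I would localize the cube of resolutions at the two crossings $v_1,v_2$ created by the move, so that $C_{br}(D')$ is built from a local complex $C_{00}\to C_{01}\oplus C_{10}\to C_{11}$. Exactly one of these four vertices carries the small circle born from the bubble of the move, and, as in the computation of $\langle\,\cdot\,\rangle_{br}$ in the proof of Theorem \ref{InvBraidLike} (\S\ref{par:BraidLikeJonesCons}), that circle is a $d$--circle. Consequently it can be delooped: the corresponding summand of $C_{br}(D')$ splits as a direct sum of two copies of an adjacent complex, with the usual $j$--grading shifts $\pm 1$ and, crucially, no shift in $k$. One then removes the resulting acyclic direct summand by the standard cancellation manipulation (a mapping-cone/Gaussian-elimination argument, legitimate by Corollary \ref{AcyclicCone}, \S\ref{par:MapCones}), checking along the way that the incidence signs $(-1)^{t^-_{v,S}}$ work out as in \cite{Asaeda}. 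What survives is canonically isomorphic to $C_{br}(D)$, so $\HH_{br}(D)\cong\HH_{br}(D')$.

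For the fixed braid-like type III move I would carry out the analogous localized analysis on the cube over its three crossings; alternatively, one reuses the two braid-like type II quasi-isomorphisms already produced to factor the type III chain map up to homotopy, as in the classical proof. In either case all the local identities needed concern labelled $h$-- and $d$--circles linked by two crossings, and they are precisely the ones catalogued as cases $1A$--$5D$ in the proof of Lemma~4.4 of \cite{Asaeda}, since the merging rules $d+d=d$, $d+h=h$, $h+h=d$ recorded in \S\ref{par:BraidLikeDiff} coincide with the annular ones. Combined with the already-established facts that distinct partial differentials anti-commute and that $\HH_{br}$ does not depend on the ordering of the crossings, this yields the quasi-isomorphism.

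The main obstacle is not homological algebra but the combinatorial bookkeeping of circle types: one must verify that, in every resolution occurring in each braid-like move, the circles have exactly the types — $d$ for the circle that is removed in the reduction — that make the delooping and cancellation go through while simultaneously preserving $j$ and $k$. This is a finite, purely local check, since the number of break points on a circle is read off from the local picture, but it is the step that genuinely uses the braid-like nature of the moves: for a non-braid-like type II or type III move the circle created in a resolution can be an $h$--circle, delooping would then involve a shift in $k$, and the argument breaks down — which is exactly why $\HH_{br}$ is only a braid-like invariant.
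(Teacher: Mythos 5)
Your plan matches the paper's proof: reduce to braid-like type~II and one braid-like type~III move via Lemma~\ref{jeudemove}, then decompose the local cube into acyclic summands plus a piece isomorphic to $C_{br}(D)$. Your ``delooping plus Gaussian elimination'' is exactly the paper's explicit decomposition $C_{br}(D')\cong C_1\oplus C_2\oplus C_3$ (with $C_1,C_2$ acyclic mapping cones built from the injective map $d_v+d_{0\star}$ and the surjective map $d_{\star 1}$, and $C_3\cong C_{br}(D)$ via $\psi_{II}$), and your key observation that the small circle born from the bubble is a $d$--circle --- hence no $k$--shift under delooping --- is precisely what makes the paper's subcomplexes respect the trigrading.
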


This section is devoted to the proof of this theorem. We adopt notation inspired from those described in Appendix \ref{appendix:Subcomplexes}. Modules are depicted by sets of generators and sets of generators by diagrams partially smoothed. A given set is obtained by considering all the choices for smoothing the unsmoothed crossings and labeling the unlabeled connected components.

\subsubsection{Invariance under braid-like move of type II}
\label{par:InvBraidReidII}

We consider two diagrams which differ only locally by a braid-like move of type II.
$$
\begin{array}{ccc}
D & \hspace{2cm} & D'\\
\dessin{1.1cm}{SchIIa1} & & \dessin{1.1cm}{SchII2}.
\end{array}
$$
We order the crossings of $D$ and $D'$ in the same fashion, letting $v$ and $v'$ be the last two ones of $D'$. Then we consider the following four diagrams which are partial smoothings of $D'$:
$$
\begin{array}{ccc}
D'_{00} & \hspace{1.5cm} & D'_{10}\\
\dessin{1cm}{II00} & & \dessin{1cm}{II10}\\[5mm]
D'_{01} & & D'_{11}\\
\dessin{1cm}{II01} & & \dessin{1cm}{II11},\\
\end{array}
$$
and two maps:
$$
\begin{array}{c}
  \func{d_{\star 1}}{\left\{\dessin{.8cm}{II01}\right\}}{\left\{\dessin{.8cm}{II11}\right\}},\\[.5cm]
  \func{d_{0\star}}{\left\{\dessin{.8cm}{II00}\right\}}{\left\{\dessin{.8cm}{II01}\right\}}.
\end{array}
$$
which are restriction of partial differentials.

\begin{lemme}
The morphisms $d_{\star 1}$ and $d_{0\star}$ are respectively surjective and injective.
\end{lemme}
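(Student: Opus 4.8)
The first step is to identify the four local pictures precisely. Among the four partial smoothings of the bigon, $D'_{01}$ is the one that carries an extra small circle $c_0$ born from the Reidemeister~II region: re-smoothing the crossing $v'$ turns both corners of the bigon into break points and pinches off $c_0$, which, exactly as in the computation in the proof of Theorem~\ref{InvBraidLike}, is a $d$--circle; the other three pictures $D'_{00}$, $D'_{10}$, $D'_{11}$ contain no such circle. Consequently the saddle realised by $d_{0\star}$ (re-smoothing $v'$, from $D'_{00}$ to $D'_{01}$) increases the number of circles by one, so it is a \emph{split}, cutting some circle $C$ of $D'_{00}$ into $c_0$ and a residual circle $C'$; and the saddle realised by $d_{\star 1}$ (re-smoothing $v$, from $D'_{01}$ to $D'_{11}$) decreases the number of circles by one, so it is a \emph{merge}, gluing $c_0$ back to a neighbouring circle $C_1$ of $D'_{01}$. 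Every circle away from the bigon is untouched, and by condition (iii) in the definition of the incidence numbers (\S\ref{par:BraidLikeDiff}) its label is carried along unchanged.

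The core of the argument is then local. I would observe that, up to the sign $(-1)^{t^-_{v,S}}$ of \S\ref{par:BraidLikeDiff} — a unit, hence irrelevant to injectivity or surjectivity — the map $d_{0\star}$ has the form $\mathrm{id}\otimes\Delta$, where $\mathrm{id}$ acts on the two-dimensional modules of the circles untouched by the bigon and $\Delta$ is the comultiplication on the module of $C$, sending its label to a labelling of the pair $(c_0,C')$; dually $d_{\star 1}$ has the form $\mathrm{id}\otimes m$, with $m$ the multiplication sending a labelling of $(c_0,C_1)$ to a label of the merged circle. Reading off the explicit action of $d_v$ in Figure~\ref{fig:BraidDiff} for the split $C\rightsquigarrow c_0\sqcup C'$ and the merge $c_0\sqcup C_1\rightsquigarrow C_1'$ confirms that these local maps are exactly the comultiplication and multiplication of Khovanov's rank-two Frobenius algebra. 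Since the differential $d_{br}^D$ has tridegree $(-1,0,0)$, i.e.\ already preserves the gradings $j$ and $k$, the grading-matching constraint in the definition of $d_v$ does not truncate $\Delta$ or $m$: one recovers the full maps, for which injectivity of $\Delta$ and surjectivity of $m$ are standard. Tensoring with an identity map preserves both properties, so $d_{0\star}$ is injective and $d_{\star 1}$ is surjective.

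The one point requiring genuine care — and the main obstacle in a complete write-up — is the $\tau_d/\tau_h$ bookkeeping needed to verify that the local maps really are $\Delta$ and $m$, rather than some degenerate truncation, when the circles involved are of mixed type: $c_0$ is a $d$--circle, whereas its partners $C$, $C'$, $C_1$ may be of either type, governed by the merge rules $d+d=d$ and $d+h=h$ (so that the $\pm$ labels feed the $j$--grading via $\tau_d$ or the $k$--grading via $\tau_h$ accordingly). In every case, however, the short computation shows that the grading-preserving split $d_{0\star}$ sends each basis label of $C$ to a single nonzero basis tensor in $C$'s module — hence is injective — and the grading-preserving merge $d_{\star 1}$ sends the $(c_0,C_1)$--labellings onto both basis labels of the merged circle — hence is surjective. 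No configuration other than these two saddles occurs, so there is nothing further to check.
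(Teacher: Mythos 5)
Your proposal tries to replace the paper's one-line ``direct calculus on generators'' by a conceptual identification of $d_{0\star}$ and $d_{\star 1}$ with the Frobenius comultiplication and multiplication tensored with identity, and the overall frame (split gives injectivity, merge gives surjectivity) is the right one. But the key step --- the assertion that ``the grading-matching constraint in the definition of $d_v$ does not truncate $\Delta$ or $m$: one recovers the full maps'' --- is false, and the justification you give (that the differential has tridegree $(-1,0,0)$) is circular, since the differential has that tridegree precisely \emph{because} of the constraint. The refined theory preserves $j$ and $k$ separately rather than a single combined quantum degree, so the partial differentials are genuine truncations of the Khovanov operations whenever circles of mixed type are involved. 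Concretely, in the merge $d_{\star 1}$ the small circle $c_0$ is always a $d$--circle; if its neighbour $C_1$ is an $h$--circle, the two conditions $\tau_d(S')=\tau_d(S)+1$ and $\tau_h(S')=\tau_h(S)$ force the label of $c_0$ to one fixed value, so half the source generators are sent to zero --- something the full multiplication $m$ never does. Likewise your follow-up claim that the split ``sends each basis label of $C$ to a single nonzero basis tensor'' fails in the $d$--$d$ case, where one of the two labels of $C$ goes to a \emph{sum} of two basis tensors. Neither error ultimately changes the answer --- the truncated maps are still injective (split) and surjective (merge) --- but verifying that requires listing the handful of generators and checking by hand, which is precisely the direct computation you set out to bypass and is all the paper's proof amounts to.
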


\begin{proof}
This can be checked by direct calculus on generators.
\end{proof}

\begin{lemme}
The mapping cone of
$$
\xymatrix@C=2cm{ \left\{\ \dessin{.8cm}{II00}\ \right\}
  \ar[r]^{d_v+d_{0\star}}_\sim \ar@{}|{}="Nya"
  \ar@(dl,ul)"Nya"!<-1.35cm,-.3cm>;"Nya"!<-1.35cm,.3cm>^{d_{br}^{D'_{00}}}
  & \Im(d_v+d_{0\star}) \ar@{}|{}="Nya2"
  \ar@(dr,ur)"Nya2"!<1.1cm,-.3cm>;"Nya2"!<1.1cm,.3cm>_{d_{br}^{D'}}}
$$
is an acyclic subcomplex of $C_{gr}(D')$, which we denote by $C_1$.
\end{lemme}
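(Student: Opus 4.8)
The plan is to recognize $C_1$ as the mapping cone of an \emph{injective} chain map and to invoke the elementary principle — Corollary \ref{AcyclicCone} (\S\ref{par:MapCones}) — that the cone over an isomorphism of chain complexes is acyclic.

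First I would unpack the ambient structure. Displaying the resolutions of the two new crossings $v,v'$ as a $2\times2$ square, the trigraded module underlying $C_{gr}(D')$ splits as $\{D'_{00}\}\oplus\{D'_{10}\}\oplus\{D'_{01}\}\oplus\{D'_{11}\}$, and $d_{br}^{D'}$ decomposes into the four internal differentials $d_{br}^{D'_{ij}}$ together with the edge maps of the square. Since $v$ and $v'$ are taken last in the ordering, the internal differential at the vertex $00$ is literally $d_{br}^{D'_{00}}$, and $d_v+d_{0\star}$ is, up to sign, exactly the edge map $\{D'_{00}\}\to\{D'_{10}\}\oplus\{D'_{01}\}$; being graded of tridegree $(-1,0,0)$, its image is a trigraded submodule of $\{D'_{10}\}\oplus\{D'_{01}\}$. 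Because $d_{br}^{D'_{00}}$ is itself a differential (the braid-like Khovanov differential of the diagram $D'_{00}$) and $(d_{br}^{D'})^2=0$, expanding $d_{br}^{D'}$ on $d_{br}^{D'}(x)$ for $x\in\{D'_{00}\}$ yields
$$
d_{br}^{D'}\bigl((d_v+d_{0\star})(x)\bigr)=-(d_v+d_{0\star})\bigl(d_{br}^{D'_{00}}(x)\bigr)\qquad(x\in\{D'_{00}\}).
$$
This says simultaneously that $d_v+d_{0\star}$ is a chain map from $\bigl(\{D'_{00}\},d_{br}^{D'_{00}}\bigr)$ to $\bigl(\{D'_{10}\}\oplus\{D'_{01}\},d_{br}^{D'}\bigr)$ and that $d_{br}^{D'}$ preserves $\Im(d_v+d_{0\star})$; together with the obvious fact that $d_{br}^{D'}$ sends $\{D'_{00}\}$ into $\{D'_{00}\}\oplus\Im(d_v+d_{0\star})$, this shows that $C_1$, the submodule $\{D'_{00}\}\oplus\Im(d_v+d_{0\star})$ with the homological shift coming from the square, is a subcomplex of $C_{gr}(D')$ and is precisely $\Cone(d_v+d_{0\star})$.

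For acyclicity I would invoke the preceding lemma: $d_{0\star}$ is injective, hence \emph{a fortiori} $d_v+d_{0\star}$ is injective. An injective chain map is an isomorphism onto its image (the image being stable under the target differential, as just noted), so $d_v+d_{0\star}\colon\bigl(\{D'_{00}\},d_{br}^{D'_{00}}\bigr)\longrightarrow\bigl(\Im(d_v+d_{0\star}),d_{br}^{D'}\bigr)$ is an isomorphism of chain complexes, in particular a quasi-isomorphism. By Corollary \ref{AcyclicCone} (\S\ref{par:MapCones}), applied \emph{mutatis mutandis} to trigraded chain complexes, its mapping cone $C_1$ is acyclic.

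I do not expect a genuine obstacle here: every step is either bookkeeping in the $2\times2$ resolution cube of $v$ and $v'$ or an appeal to a result already established. The one delicate point is sign-tracking, since the convention of this chapter is that chain maps \emph{commute} with differentials; one either absorbs the customary $(-1)^{i}$ into the edge maps or simply observes that the cone-over-an-isomorphism argument is insensitive to the overall sign of the isomorphism. The only input that is really about the local geometry of the braid-like move of type II rather than formal homological algebra is the injectivity of $d_{0\star}$, which is supplied by the previous lemma (proved by the announced direct computation on generators).
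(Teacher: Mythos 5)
Your proof is correct and follows essentially the same route as the paper: both reduce stability of $\Im(d_v+d_{0\star})$ to the (anti-)commutativity relation $d_{br}^{D'}\circ(d_v+d_{0\star})=-(d_v+d_{0\star})\circ d_{br}^{D'_{00}}$ and deduce acyclicity from the injectivity of $d_{0\star}$ (the previous lemma), which makes $d_v+d_{0\star}$ an isomorphism onto its image and hence a quasi-isomorphism with acyclic cone. Your explicit unpacking of the $2\times2$ resolution cube and the remark about the sign convention are just more detailed bookkeeping around the same argument.
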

\begin{proof}
First, we need to prove that the left and the right chain complexes are indeed left invariant by their respective differential.\\
Stability for the left summand is clear. Now we focus on the right one. As $d_{0\star}$ is injective, the map $(d_v+d_{0\star})$ has a left inverse on its image, and by anti-commutativity of distinct partial differentials we have for any element $S\in\Im(d_v+d_{0\star})$
$$
d_{gr}^{D'}(S) = -(d_v+d_{0\star})\circ d_{br}^{D'_{00}} \circ (d_v+d_{0\star})^{-1}(S) \in Im(d_v+d_{0\star}).
$$
It is now straighforward to check that it is actually a subcomplex of $C_{gr}(D')$. The acyclicity follows from the fact that $(d_v+d_{0\star})$ is injective and thus an isomorphism on its image.
\end{proof}

\begin{lemme}
The mapping cone of
$$
\xymatrix@C=2cm{ \left\{\ \dessin{.8cm}{Inj1}\ \right\}
  \ar[r]^{d_{\star 1}}_\sim \ar@{}|{}="Nya"
  \ar@(dl,ul)"Nya"!<-1.35cm,-.3cm>;"Nya"!<-1.35cm,.3cm>^{d_{br}^{D'_{01}}}
  & \left\{\ \dessin{.8cm}{II11}\ \right\} \ar@{}|{}="Nya2"
  \ar@(dr,ur)"Nya2"!<1.25cm,-.3cm>;"Nya2"!<1.25cm,.3cm>_{d_{br}^{D'}}}
$$
is an acyclic subcomplex of $C_{gr}(D')$, which we denote by $C_2$.
\end{lemme}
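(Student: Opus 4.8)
The plan is to run the argument of the preceding lemma in reverse, exchanging the injectivity of $d_{0\star}$ for the surjectivity of $d_{\star 1}$, and right inverses for left inverses. First I would write $C_{gr}(D')$ as the direct sum of the four sub-modules obtained by fixing the smoothings of $v$ and $v'$, so that on $\left\{\ \dessin{.8cm}{II01}\ \right\}$ the differential decomposes as $d_{br}^{D'}=d_{br}^{D'_{01}}+d_{\star 1}$, the first summand preserving $\left\{\ \dessin{.8cm}{II01}\ \right\}$ and the second landing in $\left\{\ \dessin{.8cm}{II11}\ \right\}$, while on $\left\{\ \dessin{.8cm}{II11}\ \right\}$ one has $d_{br}^{D'}=d_{br}^{D'_{11}}$ since neither $v$ nor $v'$ can be further smoothed there. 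In particular $\left\{\ \dessin{.8cm}{II11}\ \right\}$ is already a subcomplex of $C_{gr}(D')$.

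Next I would pin down the source complex of the cone. From $(d_{br}^{D'})^2=0$, the $\left\{\ \dessin{.6cm}{II11}\ \right\}$-component of $(d_{br}^{D'})^2$ restricted to $\left\{\ \dessin{.6cm}{II01}\ \right\}$ yields the anticommutation $d_{\star 1}\circ d_{br}^{D'_{01}}=-d_{br}^{D'_{11}}\circ d_{\star 1}$ (equivalently: distinct partial differentials anticommute), so $\Ker d_{\star 1}$ is stable under $d_{br}^{D'_{01}}$. The sub-module $\left\{\ \dessin{.6cm}{Inj1}\ \right\}$ is the span of those enhanced states of $D'_{01}$ on which $d_{\star 1}$ is injective; it is a complement of $\Ker d_{\star 1}$ in $\left\{\ \dessin{.6cm}{II01}\ \right\}$ which is itself preserved by $d_{br}^{D'_{01}}$, a fact one checks by direct inspection of the generators using the local description of the partial differential recorded in Figure \ref{fig:BraidDiff}. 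Consequently $C_2:=\left\{\ \dessin{.6cm}{Inj1}\ \right\}[1]\oplus\left\{\ \dessin{.6cm}{II11}\ \right\}$ is a sub-module of $C_{gr}(D')$, it is stable under $d_{br}^{D'}$, and the induced differential is precisely that of $\Cone\big(\func{d_{\star 1}}{(\left\{\ \dessin{.5cm}{Inj1}\ \right\},d_{br}^{D'_{01}})}{(\left\{\ \dessin{.5cm}{II11}\ \right\},d_{br}^{D'_{11}})}\big)$.

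It then remains to observe that $d_{\star 1}$ restricts to an isomorphism of chain complexes from $\left\{\ \dessin{.6cm}{Inj1}\ \right\}$ onto $\left\{\ \dessin{.6cm}{II11}\ \right\}$: it is injective there by construction and surjective because $d_{\star 1}$ is onto and vanishes on $\Ker d_{\star 1}$. Being thus a quasi-isomorphism, Corollary \ref{AcyclicCone} (\S\ref{par:MapCones}) shows that $C_2$ is acyclic, which finishes the proof. The one genuinely delicate step, exactly as in the case of $C_1$, is the explicit choice of the complement $\left\{\ \dessin{.6cm}{Inj1}\ \right\}$ that is simultaneously compatible with $d_{br}^{D'_{01}}$; everything else is the formal mapping-cone bookkeeping of \S\ref{par:MapCones}.
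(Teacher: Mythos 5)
Your proof is correct and follows the same route as the paper's: realize $C_2$ as a submodule of $C_{gr}(D')$, check it is preserved by $d_{br}^{D'}$, observe that $d_{\star 1}$ restricted to the source is an isomorphism (a surjective map restricted to a complement of its kernel), and conclude acyclicity of the cone via Corollary~\ref{AcyclicCone}. Two small remarks: your preliminary observation that $\Ker d_{\star 1}$ is $d_{br}^{D'_{01}}$-stable, while true, is never used — what the argument needs is stability of the chosen \emph{complement}, which does not follow from anticommutativity alone; and the paper makes that step concrete by identifying $\{\,\dessin{.6cm}{Inj1}\,\}$ as the span of the generators of $D'_{01}$ whose small middle $d$--circle is labeled $-$, so that stability under $d_{br}^{D'_{01}}$ is immediate (this circle lives entirely in the local disk and meets no crossing of $D'$ other than $v$ and $v'$, hence no partial differential occurring in $d_{br}^{D'_{01}}$ ever touches its label).
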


\begin{proof}
The stability of the right summand is clear. Since the middle $d$--circle of the generators of the left summand are labeled by $-$, the partial differential $d_v$ involved in $d_{br}^{D'_{01}}$ sends these generators to zero. Consequently, this $d$--circle is let unchanged by $d_{br}^{D'_{01}}$ and the left summand is stable.\\
Finally, the left summand is a supplementary space for $\Ker d_{\star 1}$ which is already surjective. It is then an isomorphism.
\end{proof}

We define the morphism of modules
$$
\begin{array}{rccc}
.\otimes v_-: & \left\{\ \dessin{.8cm}{II11}\ \right\} & \longrightarrow & \left\{\ \dessin{.8cm}{II01}\ \right\}\\[4mm]
& \dessin{.65cm}{II11} & \mapsto & -\dessin{.65cm}{Inj1}.
\end{array}
$$
It is grading-preserving and a right inverse for $d_{\star 1}$.

\begin{lemme}
  The chain complex
  $$
  \xymatrix@C=2cm{ \left\{\ \dessin{.8cm}{II10}-\big( d_{1 \star}(\dessin{.8cm}{II10}) \big)\otimes v_-\ \right\}
    \ar@{}|{}="Nya"
    \ar@(dl,ul)"Nya"!<-3.05cm,-.3cm>;"Nya"!<-3.05cm,.3cm>^{d_{br}^{D'}}}
  $$
is a subcomplex of $C_{gr}(D')$, which we denote by $C_3$, isomorphic to $C_{gr}(D)$ via the morphism of modules $\psi_{II}$ defined on generators by
$$
\psi_{II}\left(\ \dessin{.65cm}{isomII2}\ \right)=\dessin{.65cm}{II10}-\big( \ d_{1 \star}(\dessin{.65cm}{II10})\ \big)\otimes v_-.
$$
\end{lemme}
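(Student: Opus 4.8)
The plan is to realise $C_3$ as the graph of an explicit module map between two summands of $C_{gr}(D')$ and then verify stability under $d_{br}^{D'}$ by a single computation on generators. First recall that, as a graded module, $C_{gr}(D')$ splits according to the smoothing of the two crossings $v,v'$ of the Reidemeister~$II$ region into the four blocks $\{D'_{00}\}$, $\{D'_{01}\}$, $\{D'_{10}\}$, $\{D'_{11}\}$, and that the partial smoothing $D'_{10}$ is isotopic to $D$, which pins down a bijection between the enhanced states of $D$ and the generators of $\{D'_{10}\}$. Under this bijection the $\{D'_{10}\}$-component of $\psi_{II}$ is the identity, so $\psi_{II}$ is injective and its image is, by construction, exactly the submodule $C_3$ spanned by the elements $S-\bigl(d_{1\star}(S)\bigr)\otimes v_-$ with $S$ a generator of $\{D'_{10}\}$. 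One checks $\psi_{II}$ is tridegree-preserving: $.\otimes v_-$ preserves all three gradings, while $d_{1\star}$, being a restriction of a partial differential, keeps $j$ and $k$ and lowers $i$ by one, which is compensated by the homological shift with which $\{D'_{01}\}$ sits inside $C_{gr}(D')$. Thus $\psi_{II}\colon C_{gr}(D)\to C_3$ is an isomorphism of trigraded modules.

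It remains to show that $C_3$ is $d_{br}^{D'}$-stable and that $\psi_{II}$ intertwines $d_{br}^{D}$ with $d_{br}^{D'}|_{C_3}$; both follow at once from computing $d_{br}^{D'}$ on a generator $x=S-\bigl(d_{1\star}(S)\bigr)\otimes v_-$ of $C_3$. Writing $d_{br}^{D'}$ in terms of its components and using that, among $\{v,v'\}$, only $v'$ is $A$-smoothed in the $10$-block and only $v$ is $A$-smoothed in the $01$-block, one gets
$$d_{br}^{D'}(x)=\bigl(d_{1\star}(S)+d_{br}^{D'_{10}}(S)\bigr)-\bigl(d_{\star 1}((d_{1\star}(S))\otimes v_-)+d_{br}^{D'_{01}}((d_{1\star}(S))\otimes v_-)\bigr).$$
Since $.\otimes v_-$ is a right inverse of $d_{\star 1}$, the two $d_{1\star}(S)$ terms cancel exactly, leaving $d_{br}^{D'_{10}}(S)-d_{br}^{D'_{01}}((d_{1\star}(S))\otimes v_-)$. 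The anti-commutativity of distinct partial differentials gives $d_{br}^{D'_{11}}\circ d_{1\star}=-\,d_{1\star}\circ d_{br}^{D'_{10}}$, and inserting the $-$-labelled middle $d$-circle of the Reidemeister region commutes with the internal differential up to a global sign; hence $d_{br}^{D'_{01}}((d_{1\star}(S))\otimes v_-)=-\,(d_{1\star}(d_{br}^{D'_{10}}(S)))\otimes v_-$ once that global sign is shown to be $-1$. Substituting, $d_{br}^{D'}(x)=d_{br}^{D'_{10}}(S)-(d_{1\star}(d_{br}^{D'_{10}}(S)))\otimes v_-$, which lies in $C_3$ and is precisely $\psi_{II}$ applied to $d_{br}^{D}$ of the enhanced state of $D$ corresponding to $S$. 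This proves simultaneously that $C_3$ is a subcomplex and that $\psi_{II}$ is a chain isomorphism.

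The one delicate point, and the step I expect to cost the most care, is pinning down the signs: one must check, using the fixed ordering of the crossings in which $v$ and $v'$ come last, that the $\{D'_{11}\}$-contributions cancel on the nose (they do, thanks to the minus built into the definition of $v_-$, which is exactly what makes $d_{\star 1}\circ(.\otimes v_-)$ the identity) and that the global sign above equals $-1$ --- this is because exactly one more of $\{v,v'\}$ is $A^{-1}$-smoothed in the $11$-state than in the $01$-state, so the factor $(-1)^{t^-_{w,\cdot}}$ attached to every internal partial differential flips uniformly. As with the anti-commutativity of the partial differentials, I would organise this through the finite list of local merge/split configurations of $d$- and $h$-circles, following the analogous verification in \cite{Asaeda}, rather than through a global argument. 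Together with the already-established acyclicity of $C_1$ and $C_2$ and the module decomposition $C_{gr}(D')=C_1\oplus C_2\oplus C_3$, this lemma gives the invariance of $\HH_{br}$ under braid-like moves of type~$II$.
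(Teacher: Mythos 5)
Your proof is correct and follows the paper's route: you split $d_{br}^{D'}$ on a generator $S - \big(d_{1\star}(S)\big)\otimes v_-$ into the $v,v'$-pieces, which cancel because $\cdot\otimes v_-$ is a right inverse of $d_{\star 1}$, and the internal $d_c$ with $c\neq v,v'$, which respect the graph construction via the same two $-1$'s the paper uses (anticommutativity of distinct partials, and the flip of $(-1)^{t^-_{c,\cdot}}$ caused by $\cdot\otimes v_-$). Two sign-level slips worth tidying, neither a genuine gap: the two minuses compose to $+$, so the intermediate identity should read $d_{br}^{D'_{01}}\big((d_{1\star}(S))\otimes v_-\big)= +\,\big(d_{1\star}(d_{br}^{D'_{10}}(S))\big)\otimes v_-$ (your final substitution already uses this $+$); and $\cdot\otimes v_-$ raises $i$ by one since it turns an $A^{-1}$-smoothed crossing into an $A$-smoothed one --- there is no homological shift between the four blocks --- which is exactly what cancels the degree drop from $d_{1\star}$ and makes $\psi_{II}$ tridegree-preserving.
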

\begin{proof}
To prove this lemma, we split the differential into the sum of the underlying partial differentials. First we consider $d_c$ where $c \neq v,v'$. Since $d_c$ lets unchanged the middle $d$--circle in $\dessin{.6cm}{Inj1}$ and since $.\otimes v_-$ removes one $A^{-1}$--smoothed crossing labeled with a number greater than the label of $c$, we have
$$
\begin{array}{rcl}
d_c \Big( \big( d_{1 \star}(\dessin{.65cm}{II10}) \big)\otimes v_- \Big) & = & -\Big( d_c \big( d_{1 \star}(\dessin{.65cm}{II10}) \big)\Big) \otimes v_-\\[5mm]
& = & \Big( d_{1 \star} \big( d_c (\dessin{.65cm}{II10}) \big) \Big) \otimes v_-.
\end{array}
$$
And hence we have
$$
d_c \Big( \dessin{.65cm}{II10}-\big( d_{1 \star}(\dessin{.65cm}{II10}) \big)\otimes v_-  \Big) = d_c\Big(\dessin{.65cm}{II10}\Big)-\Big( d_{1 \star}\big(d_c(\dessin{.65cm}{II10})\big) \Big)\otimes v_-.
$$
Then, $d_{v'}$ and $d_v$ act respectively on  $\dessin{.6cm}{II10}$ and $\big(\ d_{1 \star}(\dessin{.6cm}{II10})\ \big)\otimes v_-$. But since $.\otimes v_-$ is a right inverse for $d_{\star 1}$, we have
$$
\begin{array}{rcl}
d_{1 \star}\Big( \dessin{.65cm}{II10} \Big) - d_{\star 1}\Big(  \big( d_{1 \star}(\dessin{.65cm}{II10}) \big)\otimes v_-\Big) & = & d_{1 \star}( \dessin{.65cm}{II10} ) - d_{1 \star}(\dessin{.65cm}{II10})\\[5mm]
& = & 0.
\end{array}
$$
These two points show not only that the subcomplex is stable, but also that $\psi_{II}$ is a chain map. Actually, for every $c \neq v,v'$, $d_c$ has the same definition when considered on $\dessin{.6cm}{isomII2}$ and on $\dessin{.6cm}{II10}$. Then, it is clearly a grading-preserving isomorphism since $\dessin{.6cm}{II10}$ has only two crossings more than $\dessin{.6cm}{isomII2}$, one which is $A$--smoothed whereas the other is $A^{-1}$--smoothed.
\end{proof}

Now, the invariance under braid-like moves of type II is reduced to proving that $C_{br}(D')$ is isomorphic to $C_1 \oplus C_2 \oplus C_3$. This can be done at the level of modules.\\
The subcomplex $C_2$ contains all the elements of the form  $\dessin{.6cm}{Inj1}$, thus we can clean all parasite terms in $\C_3$ and get
$$
C_2 \oplus C_3 \simeq \left\{\ \dessin{.8cm}{II10} \ ,\ \dessin{.8cm}{Inj1} \ ,\ \dessin{.8cm}{II11}\ \right\}.
$$
Moreover,
$$
\Im(d_v+d_{0\star}) = \left\{\ \dessin{.8cm}{Inj1pp} + \alpha_1\ ,\ \dessin{.8cm}{Inj1pm} + \dessin{.8cm}{Inj1mp} + \alpha_2\ ,\ \dessin{.8cm}{Inj1p} + \alpha_3 \ \right\},
$$
with $\alpha_i \in \dessin{.6cm}{SchII10}$ for $i=1,2,3$. Then, with $C_2 \oplus C_3$ we can again clean and get
$$
\Im(d_v+d_{0\star})\oplus \C_2 \oplus \C_3 \simeq \left\{\ \dessin{.8cm}{II10} \ ,\ \dessin{.8cm}{II01}\ ,\ \dessin{.8cm}{II11}\ \right\}.
$$
Finally $C_1 = \dessin{.5cm}{II00}\oplus \Im(d_v+d_{0\star})$ and this concludes the proof.

\subsubsection{Invariance under braid-like move of type III}
\label{par:InvBraidReidIII}

According to Lemma \ref{jeudemove} (\S\ref{par:BraidLikeLinks}), it is sufficient to prove the invariance between two diagrams which differ only locally by the following braid-like move of type III.
$$
\begin{array}{ccc}
D & \hspace{2cm} & D'\\
\dessin{1.7cm}{SchIII1} && \dessin{1.7cm}{SchIII2}.
\end{array}
$$
We order the crossings of $D$ and $D'$ in the same fashion, letting the three crossings involved in the Reidemeister move be the last three ones. We can now consider the following ten partial smoothings of $D$ or $D'$:
$$
\begin{array}{ccc}
\begin{array}{ccc}
D_{000}& \hspace{.6cm} & D_{001}\\
\dessin{1.5cm}{III00b} &  & \dessin{1.5cm}{III01b}
\end{array}
& \hspace{1cm} &
\begin{array}{ccc}
D'_{000}& \hspace{.6cm} & D'_{001}\\
\dessin{1.5cm}{III00} & & \dessin{1.5cm}{III01}
\end{array}\\[10mm]
\begin{array}{ccc}
D_{010}& \hspace{.6cm} & D_{011}\\
\dessin{1.5cm}{cad1} & & \dessin{1.5cm}{cad2}
\end{array}
& &
\begin{array}{ccc}
D'_{010}& \hspace{.6cm} & D'_{011}\\
\dessin{1.5cm}{cae1} & & \dessin{1.5cm}{cae3}
\end{array}\\[10mm]
D_{1\bullet \bullet}&  & D'_{1\bullet \bullet}\\
\dessin{1.5cm}{passeb} & & \dessin{1.5cm}{passe}.
\end{array}
$$
It is immediate to note that $D_{1\bullet \bullet}\simeq D'_{1\bullet \bullet}$ and $D_{010}\simeq D'_{010}$. With respect to $D$, we consider also two maps
$$
\begin{array}{c}
  \func{d_{0\star 1}}{\left\{\dessin{1.16cm}{III01b}\right\}}{\left\{\dessin{1.16cm}{cad2}\right\}},\\[.7cm]
  \func{d_{00\star}}{\left\{\dessin{1.16cm}{III00b}\right\}}{\left\{\dessin{1.16cm}{III01b}\right\}}.
\end{array}
$$
which are restriction of partial differentials.\\

Now, the proof goes on similarly to the case of braid-like moves of type II, using the same arguments.

\begin{lemme}
The maps $d_{0\star 1}$ and $d_{00\star 1}$ are, respectively, surjective and injective.
\end{lemme}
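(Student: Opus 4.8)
The plan is to mimic, essentially verbatim in spirit, the proof of the analogous statement in the braid-like type~II case (the lemma asserting that $d_{\star 1}$ is surjective and $d_{0\star}$ injective): both claims here are purely local and reduce to a finite, explicit check on the enhanced-state generators of the partial smoothings $D_{000},D_{001},D_{011}$ of the type~III tangle. First I would fix the ordering of the three crossings of the Reidemeister~III disc and draw, inside that disc, the Kauffman smoothings underlying $\left\{\dessin{1.16cm}{III00b}\right\}$, $\left\{\dessin{1.16cm}{III01b}\right\}$ and $\left\{\dessin{1.16cm}{cad2}\right\}$; each of these partial smoothings has a completely determined configuration of arcs in the disc, hence a determined set of closed components and — via the break/Seifert points inherited from the ambient orientation — a determined partition of those components into $h$--circles and $d$--circles. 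The maps $d_{00\star}$ and $d_{0\star 1}$ are restrictions of the partial differentials, so they act by switching the relevant crossing from an $A$--smoothing to an $A^{-1}$--smoothing, which locally performs a merge of two circles or a split of one, following the rules recalled in Figure~\ref{fig:BraidDiff}; one then reads off the effect on the labels and on the gradings $j$ and $k$.

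For \emph{injectivity} of $d_{00\star}$ I would argue that distinct enhanced states of $\left\{\dessin{1.16cm}{III00b}\right\}$ differ either in the smoothing of the other crossings — which $d_{00\star}$ leaves untouched — or only in the signs carried by the circles adjacent to the switched crossing; the constraints $j(S)=j(S')$, $k(S)=k(S')$ built into the incidence number $[S:S']_v$, together with the merge/split rules, then force distinct non-zero images, exactly as in the type~II situation. For \emph{surjectivity} of $d_{0\star 1}$ I would check that every enhanced state of $\left\{\dessin{1.16cm}{cad2}\right\}$ is hit: running the local merge/split backwards, one must exhibit for each target generator a compatible labelling of a preimage in $\left\{\dessin{1.16cm}{III01b}\right\}$, which is possible precisely because the grading conditions in $[S:S']_v$ always admit a solution when the relevant source circle has the correct $h/d$--type. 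The finitely many configurations of labelled $h$-- and $d$--circles linked by the crossing that must be inspected are the same ones tabulated for the anti-commutation of partial differentials (cases $1A$--$5D$ in \cite{Asaeda}).

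The only real subtlety — and the step I expect to be the main obstacle — is the one already encountered in the type~II lemma: when one of the circles involved is a $d$--circle carrying a minus label, the corresponding partial differential can annihilate it, so a naive count of kernel and cokernel could fail on that sub-family. I would therefore keep careful track of which of the local circles in $\left\{\dessin{1.16cm}{III00b}\right\}$, $\left\{\dessin{1.16cm}{III01b}\right\}$ and $\left\{\dessin{1.16cm}{cad2}\right\}$ are of type $d$ and which of type $h$, isolate the generators on which $d_{00\star}$ or $d_{0\star 1}$ vanishes, and verify that they obstruct neither injectivity nor surjectivity — either for grading reasons or because they are absorbed when parasite terms are cleaned, as in the decomposition $C_{br}(D')\simeq C_1\oplus C_2\oplus C_3$ used in the surrounding invariance argument. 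Once this $h/d$ bookkeeping is pinned down, the remainder is a routine enumeration over the handful of local pictures.
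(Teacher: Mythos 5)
Your proposal matches the paper's own treatment: the lemma is there justified exactly as in the braid-like type~II case, where the corresponding statement ("$d_{\star 1}$ surjective, $d_{0\star}$ injective") is proved by direct computation on the enhanced-state generators, and your reduction to the same finite check of labelled $h$/$d$--circle configurations (with the grading constraints in $[S:S']_v$) is precisely that argument. Your extra bookkeeping of the $d$--circles killed by a partial differential is consistent with how the paper handles it, e.g.\ via the explicit right inverse $.\otimes v_-$ for the surjective map, so no gap.
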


\begin{lemme}
The mapping cone of
$$
\xymatrix@C=2cm{ \left\{\ \dessin{1.16cm}{III00b}\ \right\}
  \ar[r]^(.46){d_v+d_{v'}+d_{00\star}}_(.46)\sim \ar@{}|{}="Nya"
  \ar@(dl,ul)"Nya"!<-1.45cm,-.3cm>;"Nya"!<-1.45cm,.3cm>^{d_{br}^{D_{000}}}
  & \Im(d_v+d_{v'}+d_{00\star}) \ar@{}|{}="Nya2"
  \ar@(dr,ur)"Nya2"!<1.6cm,-.3cm>;"Nya2"!<1.6cm,.3cm>_{d_{br}^D}}
$$
is an acyclic subcomplex of $C_{gr}(D)$, which we denote by $C_1$.
\end{lemme}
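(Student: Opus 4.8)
The final statement is a lemma asserting that a certain mapping cone inside $C_{gr}(D)$, built from the map $d_v + d_{v'} + d_{00\star}$ applied to the set $\left\{\dessin{1.16cm}{III00b}\right\}$, is an acyclic subcomplex. This is exactly parallel to the corresponding lemma in the type II invariance proof (\S\ref{par:InvBraidReidII}), so the plan is to follow that template essentially verbatim, adjusting for the extra crossing and the extra partial differential $d_{v'}$.

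\textbf{Plan of proof.} The argument has three movements. First I would verify that both the source summand $\left\{\dessin{1.16cm}{III00b}\right\}$ and the target summand $\Im(d_v+d_{v'}+d_{00\star})$ are stable under their indicated differentials. Stability of the source summand is immediate since $d_{br}^{D_{000}}$ is by definition the restriction of the total differential to that partial-smoothing stratum. For the target summand I would use the preceding lemma: since $d_{00\star}$ is injective, the composite map $d_v+d_{v'}+d_{00\star}$ is injective (its $d_{00\star}$--component already is), hence has a left inverse on its image; then for any $S \in \Im(d_v+d_{v'}+d_{00\star})$ one writes, using anti-commutativity of distinct partial differentials,
$$
d_{br}^D(S) = -(d_v+d_{v'}+d_{00\star}) \circ d_{br}^{D_{000}} \circ (d_v+d_{v'}+d_{00\star})^{-1}(S) \in \Im(d_v+d_{v'}+d_{00\star}),
$$
which simultaneously shows stability and that the off-diagonal component of the cone differential is the one displayed.

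\textbf{Second and third movements.} Next I would check that the resulting two-term complex really is a subcomplex of $C_{gr}(D)$, i.e. that the total differential $d_{br}^D$ restricted to $\left\{\dessin{1.16cm}{III00b}\right\}\oplus\Im(d_v+d_{v'}+d_{00\star})$ stays inside it; this follows from the decomposition of $d_{br}^D$ into partial differentials together with the identities just established. Finally, acyclicity is the formal observation that a mapping cone $\Cone(f)$ is acyclic precisely when $f$ is a quasi-isomorphism (Corollary \ref{AcyclicCone}, \S\ref{par:MapCones}), and here $f = d_v+d_{v'}+d_{00\star}$ is injective and surjects onto its image by construction, hence is an isomorphism onto $\Im(d_v+d_{v'}+d_{00\star})$, a fortiori a quasi-isomorphism.

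\textbf{Main obstacle.} The only genuinely non-routine point is the bookkeeping in the stability computation for $\Im(d_v+d_{v'}+d_{00\star})$: one must be careful that the three partial differentials $d_v$, $d_{v'}$, $d_{00\star}$ together with all the remaining $d_c$ ($c \neq v,v',v''$) interact correctly, in particular that the anti-commutativity signs are consistent and that the left-invertibility on the image is genuine rather than merely formal. This is the analogue of the corresponding verification in \S\ref{par:InvBraidReidII}, but with one more crossing in play, so the sign combinatorics are slightly heavier; I expect no conceptual difficulty, only care with the ordering-dependent signs $(-1)^{t^-_{w,S}}$.
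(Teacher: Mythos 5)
Your proof is correct and takes the same route the paper intends: the paper explicitly defers this lemma to the type II argument, saying the proof ``goes on similarly to the case of braid-like moves of type II, using the same arguments.'' Your adaptation --- injectivity of $d_v+d_{v'}+d_{00\star}$ via its $d_{00\star}$ component, stability of the image summand via the anti-commutativity identity $d_{br}^D(S)=-(d_v+d_{v'}+d_{00\star})\circ d_{br}^{D_{000}}\circ(d_v+d_{v'}+d_{00\star})^{-1}(S)$, and acyclicity from the cone of an isomorphism onto its image --- reproduces that template faithfully with one extra crossing threaded through.
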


\begin{lemme}
The mapping cone of
$$
\xymatrix@C=2cm{ \left\{\ \dessin{1.16cm}{surjIII}\ \right\}
  \ar[r]^(.41){d_v+d_{0\star 1}}_(.41)\sim \ar@{}|{}="Nya"
  \ar@(dl,ul)"Nya"!<-1.45cm,-.3cm>;"Nya"!<-1.45cm,.3cm>^{d_{br}^{D_{001}}}
  & \left\{\ \dessin{1.16cm}{cad2}+\dessin{1.16cm}{surjIII1}\ \right\} \ar@{}|{}="Nya2"
  \ar@(dr,ur)"Nya2"!<2.5cm,-.3cm>;"Nya2"!<2.5cm,.3cm>_{d_{br}^D}}
$$
is an acyclic subcomplex of $C_{gr}(D')$, which we denote by $C_2$.
\end{lemme}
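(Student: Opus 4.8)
The plan is to transpose, essentially verbatim, the proof of the analogous statement in the braid-like Reidemeister~II case (the lemma there producing the subcomplex $C_2$), replacing the ``middle'' partial differential $d_{\star 1}$ by $d_{0\star 1}$, the partially smoothed diagram $D'_{01}$ by $D_{001}$, and the ambient complex $C_{gr}(D')$ by $C_{gr}(D)$. The map under scrutiny, $d_v+d_{0\star 1}$, should again be recognised as an isomorphism of modules between its source and the prescribed target, and then acyclicity will follow from Corollary~\ref{AcyclicCone} (\S\ref{par:MapCones}).

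First I would check that $\left\{\ \dessin{1cm}{surjIII}\ \right\}$ is a subcomplex. By construction its generators carry a distinguished circle whose label is pinned (exactly as the middle $d$--circle was forced to carry a $-$ in the Reidemeister~II argument), so the partial differential $d_v$ occurring inside $d_{br}^{D_{001}}$ annihilates these generators. Hence $d_{br}^{D_{001}}$ leaves that circle, and therefore the whole smoothing/labelling pattern cutting out $\left\{\ \dessin{1cm}{surjIII}\ \right\}$, unchanged, so the source is stable. For the target $\left\{\ \dessin{1cm}{cad2}+\dessin{1cm}{surjIII1}\ \right\}$ I would reuse the computation that worked for the right summand of $C_1$: since $d_{0\star 1}$ is surjective, $d_v+d_{0\star 1}$ has a one-sided inverse on its image, and the anticommutativity of distinct partial differentials (\S\ref{par:BraidLikeDiff}) gives, for every $S\in\Im(d_v+d_{0\star 1})$,
$$
d_{br}^{D}(S)=-(d_v+d_{0\star 1})\circ d_{br}^{D_{001}}\circ(d_v+d_{0\star 1})^{-1}(S)\in\Im(d_v+d_{0\star 1}),
$$
which both identifies the target with $\Im(d_v+d_{0\star 1})$ and shows it is stable; since all generators involved are enhanced states of $D$, the direct sum sits inside $C_{gr}(D)$ as a subcomplex.

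It then remains to see that $d_v+d_{0\star 1}$, restricted as in the statement, is bijective. Injectivity: $\left\{\ \dessin{1cm}{surjIII}\ \right\}$ is a direct supplement of $\Ker d_{0\star 1}$ inside $\left\{\ \dessin{1cm}{III01b}\ \right\}$ (the analogue of $\left\{\ \dessin{.8cm}{Inj1}\ \right\}$ being a supplement of $\Ker d_{\star 1}$), and $d_v$ only contributes terms supported on a disjoint batch of generators, so $d_v+d_{0\star 1}$ is injective there. Surjectivity onto $\left\{\ \dessin{1cm}{cad2}+\dessin{1cm}{surjIII1}\ \right\}$ follows from the surjectivity of $d_{0\star 1}$ together with the bookkeeping identifying the extra $d_v$--terms with the summand $\dessin{1cm}{surjIII1}$. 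An isomorphism of modules that is a chain map is a quasi-isomorphism, and Corollary~\ref{AcyclicCone} then finishes the argument.

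The hard part is the combinatorics concealed in this last step: one must pin down, among the circles of the partial smoothings $D_{001}$, $D_{011}$ and their siblings, which are of type $d$ and which of type $h$; verify that the pinned label really does force $d_v$ to vanish on $\left\{\ \dessin{1cm}{surjIII}\ \right\}$; and check that the image of $d_v+d_{0\star 1}$ is \emph{precisely} $\left\{\ \dessin{1cm}{cad2}+\dessin{1cm}{surjIII1}\ \right\}$, neither larger nor smaller. The accompanying sign bookkeeping --- anticommutativity of distinct partial differentials and the effect on $t^-_{v,S}$ of removing an $A^{-1}$--smoothed crossing --- is routine, being parallel to what was already verified for $C_1$ and for the Reidemeister~II subcomplex $C_2$.
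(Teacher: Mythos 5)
Your overall strategy is indeed the one the paper intends: the paper offers no separate argument for this lemma, stating only that the proof ``goes on similarly to the case of braid-like moves of type II, using the same arguments'', so transposing the type~II $C_2$ argument (exhibit a module isomorphism onto the prescribed target, check stability of source and target, conclude by Corollary~\ref{AcyclicCone}, \S\ref{par:MapCones}) is exactly the expected route, and your target-stability computation via anticommutativity of distinct partial differentials is the right adaptation of the $C_1$ trick.

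There is, however, a concrete error in your source-stability step, and it collides with your own surjectivity step. You claim that ``the partial differential $d_v$ occurring inside $d_{br}^{D_{001}}$ annihilates these generators''. First, $d_v$ is not a summand of $d_{br}^{D_{001}}$: here $v$ is one of the three crossings of the move, still $A$--smoothed in $D_{001}$, and $d_v$ is precisely one of the two summands of the cone map $d_v+d_{0\star 1}$. Second, $d_v$ cannot vanish on the source: if it did, the image of $d_v+d_{0\star 1}$ would be the plain $\{D_{011}\}$--summand and the remainder term appearing in the stated target would never be produced --- which is exactly the term your own surjectivity bookkeeping needs $d_v$ to supply. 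The correct (and simpler) reason the source is stable is the honest transposition of the type~II situation: the crossings of $D_{001}$, i.e.\ those away from the move, are not adjacent to the distinguished small circle, so $d_{br}^{D_{001}}$ can neither merge nor split it and the pinned label survives. Relatedly, the conjugation formula you borrow from the $C_1$ lemma is justified by injectivity of the restricted map (so that it has a left inverse on its image), not by surjectivity of $d_{0\star 1}$; injectivity does hold, because the source is a supplement of $\Ker d_{0\star 1}$ while the $d_v$--terms lie in a summand of $C(D)$ disjoint from the $\{D_{011}\}$--part, and that is the statement you should invoke. With these corrections your argument coincides with the paper's.
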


Now, we define the morphism of modules
$$
\begin{array}{rccc}
.\otimes v_-: & \dessin{1.16cm}{cad2} & \longrightarrow & \dessin{1.16cm}{III01b}\\[6mm]
& \dessin{.9425cm}{cad2} & \mapsto & -\dessin{.9425cm}{surjIII1}.
\end{array}
$$
It is grading-preserving and a right inverse for $d_{0\star 1}$.

\begin{lemme}
  The chain complex
  $$
  \xymatrix@C=2cm{ \left\{\ \dessin{1.16cm}{cad1}-\big( d_{1 \star}(\dessin{1.16cm}{cad1}) \big)\otimes v_-\ ,\ \dessin{1.16cm}{passeb}\ \right\}
    \ar@{}|{}="Nya"
    \ar@(dl,ul)"Nya"!<-4.5cm,-.3cm>;"Nya"!<-4.5cm,.3cm>^{d_{br}^D}}
  $$
is a subcomplex of $C_{gr}(D )$, which we denote by $C_3$
\end{lemme}

Moreover, at the level of modules, $C(D)$ is isomorphic to $ C_1 \oplus C_2 \oplus C_3$.\\

With the same reasoning on $D'$, we get:
$$
C(D') \simeq C'_1 \oplus C'_2 \oplus C'_3
$$
where $C'_1$ and $C'_2$ are acyclic and
$$
C'_3 = \left\{\ \dessin{1.16cm}{cae1}-\big( d_{01 \star}(\dessin{1.16cm}{cae1}) \big)\otimes v_-\ \right\}\oplus \dessin{1.16cm}{passe}.
$$

Finally, we conclude using to the following lemma:
\begin{lemme}
The chain complexes $C_3$ and $C'_3$ are isomorphic via the morphism of modules $\psi_{III}$ which is the identity on $\dessin{.85cm}{passeb}$ and is defined otherwise by
$$
\psi\left(\ \dessin{1.16cm}{cad1}-\big( d_{01 \star}(\dessin{1.16cm}{cad1}) \big)\otimes v_-\ \right)=\dessin{1.16cm}{cae1}-\big( d_{01 \star}(\dessin{1.16cm}{cae1}) \big)\otimes v_-.
$$
\end{lemme}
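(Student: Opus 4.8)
The plan is to follow, \emph{mutatis mutandis}, the argument used for the morphism $\psi_{II}$ in paragraph \ref{par:InvBraidReidII}. Two points have to be established: that $\psi_{III}$ is a grading-preserving isomorphism of modules, and that it intertwines the differentials $d_{br}^D$ and $d_{br}^{D'}$.

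First I would deal with the module structure. On the summand carrying the factor $\dessin{.85cm}{passeb}$ the map is the identity, and since $D_{1\bullet \bullet}\simeq D'_{1\bullet \bullet}$ there is nothing to check there. On the other summand, a generator of $C_3$ is completely determined by a generator of $\big\{\dessin{1.16cm}{cad1}\big\}$ together with its prescribed correction term, and likewise for $C_3'$; since $D_{010}\simeq D'_{010}$ identifies $\big\{\dessin{1.16cm}{cad1}\big\}$ with $\big\{\dessin{1.16cm}{cae1}\big\}$ and since $.\otimes v_-$ preserves the trigrading, $\psi_{III}$ restricts to a grading-preserving bijection between these summands, hence is a module isomorphism.

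For the chain-map property I would split $d_{br}^D=\sum_c d_c$ into partial differentials and treat them separately. For a crossing $c$ not among the three affected by the move, $d_c$ leaves unchanged the distinguished $d$--circle of the correction term, and $.\otimes v_-$ removes exactly one $A^{-1}$--smoothed crossing whose label exceeds that of $c$; hence, exactly as in the type II computation, $d_c$ commutes with $.\otimes v_-$ up to the expected sign and therefore with the whole correction term, and since $d_c$ has the same description on $\dessin{1.16cm}{cad1}$ and on $\dessin{1.16cm}{cae1}$ it commutes with $\psi_{III}$. For the three remaining crossings one uses that $.\otimes v_-$ is a right inverse for the relevant restricted partial differential, so the two contributions cancel; this simultaneously shows that $C_3$ and $C_3'$ are genuine subcomplexes and that $\psi_{III}$ is a chain map. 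Combining this lemma with the acyclicity of $C_1,C_2$ and of $C_1',C_2'$ and the module decompositions $C(D)\simeq C_1\oplus C_2\oplus C_3$, $C(D')\simeq C_1'\oplus C_2'\oplus C_3'$ then yields $\HH_{br}(D)\cong\HH_{br}(D')$, completing the proof of braid-like invariance under type III moves.

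The main obstacle I anticipate is the sign bookkeeping around the three crossings of the type III move: one must verify that the incidence numbers $[S:S']_v$ and the prefactors $(-1)^{t^-_{v,S}}$ occurring in $d_{br}^D$ and $d_{br}^{D'}$ match under the identification $\psi_{III}$, which forces a short case analysis over the admissible labelings of the $h$-- and $d$--circles appearing in the partial smoothings $D_{000}$, $D_{001}$, $D_{010}$, $D_{011}$ and their primed analogues. This is the same kind of verification already performed for $\psi_{II}$, so although tedious it presents no conceptual difficulty.
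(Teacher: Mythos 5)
Your proposal is, in spirit, the same as the paper's: the text gives no explicit proof of this lemma but simply asserts that ``the proof goes on similarly to the case of braid-like moves of type II, using the same arguments.'' Your filling-in of the module isomorphism and the $d_c$-for-$c$-outside-the-move discussion is faithful to the $\psi_{II}$ computation and correct.

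There is, however, one place where your sketch conflates three distinct phenomena into a single wrong sentence. You write ``For the three remaining crossings one uses that $.\otimes v_-$ is a right inverse for the relevant restricted partial differential, so the two contributions cancel.'' This is not what happens at all three crossings, and the discrepancy traces back to a structural difference with the type II case that your ``\emph{mutatis mutandis}'' does not cover: here $C_3$ has a second direct summand carrying $\dessin{.85cm}{passeb}$ (the generators of $D_{1\bullet\bullet}$), which had no analogue in the type II argument. Going through the three crossings of the move, only the \emph{last} one is handled by the right-inverse property of $.\otimes v_-$: the contribution of $\dessin{1.16cm}{cad1}$ at that crossing cancels against the corresponding contribution of the correction term. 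The \emph{middle} crossing contributes nothing, but not by cancellation: the correction term has the distinguished $d$-circle labeled $-$, so that partial differential vanishes outright, exactly as in the lemma describing $C_2$. The \emph{first} crossing sends both $\dessin{1.16cm}{cad1}$ and its correction term into the $\dessin{.85cm}{passeb}$ summand; these terms do not cancel and are not meant to, and the chain-map property there requires instead that the isomorphism $D_{1\bullet\bullet}\simeq D'_{1\bullet\bullet}$ on which $\psi_{III}$ acts as the identity be compatible with $D_{010}\simeq D'_{010}$ through the relevant partial differentials. Relatedly, your remark that on the $\dessin{.85cm}{passeb}$ summand ``there is nothing to check'' is only true at the level of modules; for the chain-map property the arrows into that summand must still be checked. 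None of this defeats your approach --- the verifications are routine, and the paper itself does not spell them out --- but as written the single cancellation argument you give would not close the case of the first two crossings.
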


\subsection{Star-like invariance}
\label{ssec:InvStarLikeHomology}

\begin{theo}
  The homology groups $\HH_{st}$ are invariant under star-like isotopies.
\end{theo}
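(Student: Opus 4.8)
The plan is to replay, \emph{mutatis mutandis}, the proof of braid-like invariance carried out in \S\ref{par:InvBraidReidII} and \S\ref{par:InvBraidReidIII}, since the chain complex $(C_{st}(D),d_{st}^D)$ is built from $(C_{br}(D),d_{br}^D)$ by changing only the definition of $h$-- and $d$--circles. As in the braid-like case, invariance under global orientation reversing is immediate from the definition. The algebraic backbone of the argument --- that $d_{st}^D$ is a differential of tridegree $(-1,0,0)$, that $\HH_{st}$ does not depend on the chosen ordering of the crossings, and that its bigraded Euler characteristic is $\chi(V_{st})$ (Proposition \ref{Euler}, \S\ref{par:Homologies}) --- transcribes verbatim once the star-like merging rules for circle types are established. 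So the first step I would take is to draw up the star-like analogue of Figure \ref{fig:BraidDiff}: namely, how the type of the circle(s) involved changes when a partial differential $d_v$ merges or splits circles, the new feature being that the crossing $v$ itself contributes Seifert or break points to the incident circles according to the new smoothing. With that table in hand, the anti-commutativity of distinct partial differentials and the independence of the ordering follow exactly as before.

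\textbf{Invariance under the star-like move of type II.} This is the \emph{same} local move as the braid-like move of type II, so I would reuse the decomposition of \S\ref{par:InvBraidReidII} unchanged: writing $D$ and $D'$ for the two diagrams differing by the move, one exhibits subcomplexes $C_1$, $C_2$, $C_3$ of $C_{st}(D')$ with $C_1$ and $C_2$ acyclic mapping cones of restrictions of partial differentials, and $C_3\cong C_{st}(D)$ via the same isomorphism $\psi_{II}$, then checks $C_{st}(D')\cong C_1\oplus C_2\oplus C_3$ at the level of modules. The only points needing verification are that the small circle created by the move is still of type $d$ in the star-like sense, and that the maps $d_{\star 1}$, $d_{0\star}$ remain respectively surjective and injective; both are settled by a direct computation on generators, just as in the braid-like proof.

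\textbf{Invariance under the star-like moves of type III.} Here lies the real work. Unlike the braid-like situation, there is no star-like analogue of Lemma \ref{jeudemove} (\S\ref{par:BraidLikeLinks}): the two star-like Reidemeister moves of type III are not related through star-like moves of type II, exactly as already witnessed when proving invariance of $V_{st}$ (Theorem \ref{InvStarLike}, \S\ref{par:StarLikeJonesCons}). Consequently each of the two moves must be treated separately. For a fixed such move, relating diagrams $D$ and $D'$, I would order the crossings so that the three crossings involved come last, list the ten partial smoothings $D_{ijk}$ and $D'_{ijk}$ as in \S\ref{par:InvBraidReidIII}, note which of them coincide on the two sides, and then build the acyclic mapping-cone subcomplexes $C_1,C_2$ (resp. $C'_1,C'_2$) out of suitable partial differentials, leaving $C_3\cong C'_3$ to be matched by an explicit grading-preserving isomorphism $\psi_{III}$. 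Running this for the first star-like move of type III should be essentially a transcription of \S\ref{par:InvBraidReidIII}; for the second one the partial-smoothing picture is genuinely different, and the surjectivity/injectivity of the relevant partial differentials, together with the module-level splitting $C_{st}(D)\cong C_1\oplus C_2\oplus C_3$, have to be re-examined from scratch.

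The main obstacle, then, is twofold: re-deriving the star-like circle-type merging rules (the analogue of Figure \ref{fig:BraidDiff}), with the extra role played by Seifert points, and carrying out the second type III move by hand since nothing reduces it to the first. Once both mapping-cone decompositions are in place, Corollary \ref{QuasiIso} (\S\ref{par:MapCones}) --- or, more elementarily here, the fact that a short exact sequence with an acyclic term preserves homology --- finishes the proof that $\HH_{st}$ is a star-like link invariant.
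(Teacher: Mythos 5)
You correctly identify that the two star-like moves of type III must be treated separately (there is no star-like analogue of Lemma \ref{jeudemove}), and your handling of the type II move tracks the paper. But for the type III moves your route diverges from the paper's in a way worth flagging. The paper does \emph{not} redo the mapping-cone decomposition from scratch for either move: for the first star-like type III move it exhibits an isomorphism between the relevant resolution cube and the corresponding braid-like one (this is the displayed $\dessin{1.16cm}{IIIspp} \cong \dessin{1.16cm}{IIIpp}$ identification), after which the braid-like argument of \S\ref{par:InvBraidReidIII} applies verbatim; and for the second move it declares that this reduction fails and instead invokes a mirror-image duality. That duality is Proposition \ref{KhCohomologie}, proved in \S\ref{par:KhoCodiff}--\S\ref{par:KhoDualMirror} via a codifferential, a dual chain-complex isomorphism, and the universal coefficient theorem: the two star-like type III moves are mirror images of each other, so once one is handled the other follows formally.

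Your plan to ``re-examine from scratch'' the subcomplex splitting for the second move is therefore a genuinely different and riskier path. You do not actually carry it out, and the paper's remark that the cube isomorphism ``cannot be achieved in this way'' is a warning sign: the Seifert/break-point bookkeeping that governs which circles are of type $h$ or $d$ is not symmetric under this move, so the small circle that drives the acyclicity of $C_1,C_2$ in the braid-like case may not come out the right type here. It is plausible that a direct decomposition can still be found, but nothing you have written guarantees it, and it would have to be checked carefully (including re-verifying the surjectivity/injectivity lemmas and the module-level splitting with the altered circle types). The mirror-duality trick is both cleaner and already proved in the paper; you would do well to replace the from-scratch plan for the second move with the argument that $\HH_{st}(D)$ is determined by $\HH_{st}(\overline{D})$ up to torsion reindexing, and that the second move applied to $D$ corresponds to the first move applied to $\overline{D}$.
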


The star-like case is similar in most respects to the braid-like one.

\subsubsection{Invariance under star-like moves}
\label{par:InvStarReid}

With the proviso of adding Seifert points, the proof for invariance under star-like moves of type II can be copied out from the braid-like one.\\

This is also true for the following star-like move of type III:
$$
\dessin{1.7cm}{IIIb2}\ \longleftrightarrow \ \dessin{1.7cm}{IIIb1}
$$
as soon as we have noticed that
$$
 \dessin{1.16cm}{IIIspp}\ = \ \vcenter{\hbox{\xymatrix{\dessin{.9425cm}{III3s1} \ar[r] \ar[d] & \dessin{.9425cm}{III1s1} \ar[d]\\\dessin{.9425cm}{III2s3} \ar[r] & \dessin{.9425cm}{III3s2}}}} \ \ \cong \ \ \vcenter{\hbox{\xymatrix{\dessin{.9425cm}{III31} \ar[r] \ar[d] & \dessin{.9425cm}{III14} \ar[d]\\\dessin{.9425cm}{III21} \ar[r] & \dessin{.9425cm}{III32}}}} \ = \ \dessin{1.16cm}{IIIpp}.
$$

Unfortunately, the invariance under the second star-like move of type III cannot be acheived in this way. Moreover, there is no way to connect it to the other one using only star-like moves of type II. However, these two moves are mirror image one to the other. The proof can thus be completed by using the following proposition.
\begin{prop}\label{KhCohomologie}
  For any diagram $D$ and any integers $i$, $j$ and $k$, the homology groups $\HH_i^{j,k}(D)$ and $\big(\HH_{-i}^{-j,-k}(\overline{D})/\T_{-i}^{-j,-k}(\overline{D})\big)\oplus \T_{-i+1}^{-j,-k}(\overline{D})$ are isomorphic where $\T(D)$  denotes the torsion part of $\HH_{st}(D)$ and $\overline{D}$ the mirror image of $D$.
\end{prop}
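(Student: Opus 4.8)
The plan is to recognize Proposition \ref{KhCohomologie} as an instance of the Universal Coefficient Theorem combined with the duality between a chain complex and its dual cochain complex, exactly as Proposition \ref{UCT} (\S\ref{par:Cohomologies}) does for link Floer homology. First I would observe that the braid-like/star-like Khovanov chain complex $(C_{st}(D),d_{st}^D)$ has a natural dual $(C_{st}(D)^*,(d_{st}^D)^*)$ as defined in paragraph \ref{par:Cohomologies}, and that Proposition \ref{UCT} (applied to the trigraded complex $C_{st}(D)$ rather than a bigraded one, the argument being identical) gives
$$
H^{*,i}_{j,k}\big(C_{st}(D)^*\big)\cong \big(\HH_i^{j,k}(D)/\T_i^{j,k}(D)\big)\oplus \T_{i+1}^{j,k}(D).
$$
So the entire content reduces to producing an isomorphism of trigraded cochain complexes (or at least a quasi-isomorphism) between $C_{st}(D)^*$ and $C_{st}(\overline{D})$ after the sign changes $(i,j,k)\mapsto(-i,-j,-k)$ on the gradings.

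The key step is therefore the construction of this duality map. An enhanced state $S$ of $D$ consists of a choice of $A$ or $A^{-1}$ smoothing at each crossing together with $\pm$ labels on the circles; the mirror image $\overline{D}$ swaps positive and negative crossings, so that an $A$-smoothing of a crossing of $D$ becomes an $A^{-1}$-smoothing of the corresponding crossing of $\overline{D}$ and vice versa. I would define, on generators, the map sending $S$ to $\overline{S}$, the enhanced state of $\overline{D}$ with the opposite smoothing at every crossing and the opposite $\pm$ label on every circle. One checks directly from the formulas $i(S)=\tfrac{\sigma(S)-w}{2}$, $j(S)=\tfrac{\sigma(S)-3w+2\tau_d(S)}{2}$, $k(S)=\tau_h(S)$ (\S\ref{par:KhChainCplx}) that $\sigma(\overline{S})=-\sigma(S)$, $w(\overline{D})=-w(D)$, $\tau_d(\overline{S})=-\tau_d(S)$, $\tau_h(\overline{S})=-\tau_h(S)$, hence $i(\overline{S})=-i(S)$, $j(\overline{S})=-j(S)$, $k(\overline{S})=-k(S)$. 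Crucially one must also note that the partition of circles into $h$-circles and $d$-circles is preserved by mirroring: the parity count defining the type of a circle (half the number of break points, resp. half the number of break points plus the number of Seifert points, in the star-like case) is unchanged, since under mirroring break points stay break points and Seifert points stay Seifert points. So $\tau_h$ and $\tau_d$ transform as claimed. Identifying $C_{st}(D)^*$ with the free module on the dual basis $\{S^*\}$, the assignment $S^*\mapsto \overline{S}$ then must be checked to intertwine the codifferential $(d_{st}^D)^*$ with $d_{st}^{\overline{D}}$ up to the ordering sign conventions $(-1)^{t^-_{v,S}}$; reversing the smoothing states turns ``decrease $i$ by one'' into ``increase $i$ by one,'' so a generator-by-generator comparison of incidence numbers $[S:S']_v$ against $[\overline{S'}:\overline{S}]_v$ finishes it, possibly after a global rescaling of the ordering signs as in the proof that $\HH_{br}$ is independent of the crossing ordering (\S\ref{par:BraidLikeDiff}).

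The main obstacle I anticipate is precisely the sign bookkeeping in the last comparison: the incidence number $[S:S']_v$ carries the factor $(-1)^{t^-_{v,S}}$ where $t^-_{v,S}$ counts $A^{-1}$-smoothed crossings above $v$, and under mirroring ``$A^{-1}$-smoothed'' becomes ``$A$-smoothed,'' so $t^-_{v,S}$ for $D$ does not equal $t^-_{v,\overline{S'}}$ for $\overline{D}$ on the nose. One repairs this exactly as in the ordering-invariance argument: the difference between the two sign conventions is realized by the module automorphism $S\mapsto (-1)^{n_-(S)}S$, where $n_-(S)$ is the total number of $A^{-1}$-smoothed crossings in $S$, and this automorphism is a chain isomorphism. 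Once that chain-level twist is absorbed, the map is a genuine isomorphism of trigraded cochain complexes, so it induces an isomorphism $H^{*,i}_{j,k}(C_{st}(D)^*)\cong \HH_{-i}^{-j,-k}(\overline{D})$, and combining with the Universal Coefficient computation above yields the stated isomorphism
$$
\HH_i^{j,k}(D)\cong \big(\HH_{-i}^{-j,-k}(\overline{D})/\T_{-i}^{-j,-k}(\overline{D})\big)\oplus \T_{-i+1}^{-j,-k}(\overline{D}).
$$
I would remark that the braid-like case is word-for-word the same, only the definition of $h$- versus $d$-circles differing, and that definition is likewise mirror-invariant.
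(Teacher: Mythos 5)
Your proposal follows essentially the same route as the paper: the paper also proves this via the Universal Coefficient Theorem (Prop.~\ref{UCT}) together with (i) an isomorphism of cochain complexes $(C_{st}(D),d^{st}_D)\cong(C^*_{st}(D),d^*_{st})$ realized by a diagonal sign twist $\psi(S)=(-1)^{u(S)}S^*$, and (ii) a chain-to-cochain isomorphism $\phi\colon(C_{st}(D),d_{st}^D)\to(C_{st}(\overline D),d^{st}_{\overline D})$ that reinterprets each enhanced state of $D$ as one of $\overline D$ with opposed smoothings and inverted circle labels. You merely collapse (i) and (ii) into a single map $S^*\mapsto\overline S$; the degree computations, the observation that break/Seifert points and hence $h$/$d$-types are mirror-invariant, and the plan to absorb the ordering-sign discrepancy into a diagonal automorphism, all mirror the paper's argument, so the approach is the same up to packaging.
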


The proof is given in paragraphs \ref{par:KhoCodiff}---\ref{par:KhoDualMirror}.

\subsubsection{Invariance under pairs of Reidemeister moves of type I}
\label{par:InvStarReidI}

Proposition \ref{PairsOfI} (\S\ref{par:StarLikeJonesProp}) states that star-like Jones polynomial is invariant under pairs of adjacent Reidemeister moves of type I. This property remains true for star-like Khovanov homology.\\

To prove it, we consider the three following diagrams which differ only locally by Reidemeister moves of type I and the occurence of a Seifert point.
$$
\begin{array}{ccccc}
D' & \hspace{1.5cm} & D & \hspace{1.5cm} & D''\\
\dessin{1.5cm}{RI1} & & \dessin{1.5cm}{RIss} & & \dessin{1.5cm}{RI2}.
\end{array}
$$
We order the crossings of $D$, $D'$ and $D''$ in the same fashion, letting $v$ and $v'$ be the last ones of $D'$ and $D''$. Now we consider the following four partial smoothings of $D$ or $D''$:
$$
\begin{array}{cccl}
D'_{0} & \hspace{1.5cm}  & D''_{0}&\\
\dessin{1.36cm}{RI10} & & \dessin{1.36cm}{RI11}&\\[5mm]
D'_{1} & & D''_{1}&\\
\dessin{1.36cm}{RI11} & & \dessin{1.36cm}{RI10}&,\\
\end{array}
$$
and the two following restrictions of partial differentials:
$$
\begin{array}{c}
  \func{d^{D'}_\star}{\left\{\dessin{1cm}{RI10}\right\}}{\left\{\dessin{1cm}{RI11}\right\}},\\[.5cm]
  \func{d^{D''}_\star}{\left\{\dessin{1cm}{RI11}\right\}}{\left\{\dessin{1cm}{RI10}\right\}}.
\end{array}
$$

\begin{lemme}
The morphisms $d^{D'}_{\star}$ and $d^{D''}_{\star}$ are respectively injective and surjective.
\end{lemme}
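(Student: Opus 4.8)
The final statement is the Lemma asserting that the morphisms $d^{D'}_{\star}$ and $d^{D''}_{\star}$ are respectively injective and surjective, where $D'$ and $D''$ are the two diagrams differing from $D$ by a single Reidemeister I move (plus a Seifert point), $v$ and $v'$ are the two crossings introduced, and the partial differentials restrict to maps between the indicated partially-smoothed modules.

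The plan is to verify the claim by a direct computation on generators, exactly as was done for the analogous lemmas about $d_{\star 1}$ and $d_{0\star}$ in the braid-like Reidemeister II proof (\S\ref{par:InvBraidReidII}) and for $d_{0\star 1}$ and $d_{00\star}$ in the type III proof (\S\ref{par:InvBraidReidIII}). First I would fix the local picture: in $D'_{0}=\dessin{1.36cm}{RI10}$ the crossing $v$ is $A$--smoothed, producing a small circle attached at the kink; in $D'_{1}=\dessin{1.36cm}{RI11}$ it is $A^{-1}$--smoothed. The map $d^{D'}_\star$ is the restriction of the partial differential $d_v$, so on an enhanced state $S$ of $D'_0$ it either kills $S$ (when the relevant merge/split fails the grading conditions $j(S)=j(S')$, $k(S)=k(S')$ of the incidence number) or sends it to a single enhanced state $S'$ of $D'_1$. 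Because the small circle created by the $A$--smoothing at a Reidemeister I kink has a fixed type ($d$ or $h$ according to the number of break and Seifert points, which is dictated by the orientation and the presence of the Seifert point), the passage from $S$ to $S'$ is governed by exactly one line of the merge table in Figure \ref{fig:BraidDiff}: a split of that circle away from the rest of the diagram. One checks that on the sign-$(+)$ labeling of the kink circle the map is an isomorphism onto its image and on the sign-$(-)$ labeling it vanishes, so that $d^{D'}_\star$ is injective; dually, for $D''$ the $A^{-1}$--smoothed configuration merges the kink circle into the diagram, the map is a surjection because the label of the merged circle can be recovered, and the kernel consists of the sign-$(-)$ (or sign-$(+)$) pieces, so $d^{D''}_\star$ is surjective. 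The orientation of the kink and the Seifert point determine which of the two sign cases survives, but this only permutes the roles of $+$ and $-$ and does not affect injectivity or surjectivity.

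The only genuine subtlety — and the step I expect to cost the most care — is bookkeeping the grading conditions: one must confirm that the circle produced (resp.\ absorbed) at the Reidemeister I kink is indeed of the type for which the split (resp.\ merge) preserves $j$ and $k$, using the definitions of $i(S)$, $j(S)$, $k(S)$ in \S\ref{par:KhChainCplx} together with the star-like notion of $h$-- and $d$--circle (counting break points \emph{plus} Seifert points, \S\ref{par:StarLikeJonesDef}). This is where the presence of the Seifert point enters decisively: it changes the parity count on the kink circle, and hence whether it is an $h$--circle or a $d$--circle, which in turn selects the relevant row of Figure \ref{fig:BraidDiff}. Since the statement is local, a finite case check over the two smoothings of $v$, the sign on the kink circle, and the two orientation conventions for the kink suffices. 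Once this lemma is in place, the invariance proof under pairs of type I moves proceeds exactly as in \S\ref{par:InvBraidReidII}: one builds acyclic mapping-cone subcomplexes $C_1$, $C_2$ out of the injective and surjective maps, isolates a complementary subcomplex $C_3\cong C_{st}(D)$ via a right inverse $.\otimes v_-$ to $d^{D''}_\star$ (and the analogous left-inverse splitting on the $D'$ side), and applies Corollary \ref{AcyclicCone} (\S\ref{par:MapCones}) to conclude that $\HH_{st}(D')\cong\HH_{st}(D)\cong\HH_{st}(D'')$.
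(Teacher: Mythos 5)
Your overall strategy — verify the two claims by a direct case-check on generators, using the merge/split rules of Figure \ref{fig:BraidDiff} and imitating the corresponding lemmas of paragraph \ref{par:InvBraidReidII} — is exactly what the paper does: it proves the braid-like type-II lemma by ``direct calculus on generators'' and then asserts that the type-I lemma ``can be argued like in the proofs of paragraph \ref{par:InvBraidReidII}.'' The identification of $d^{D'}_\star$ as a split map (hence injective) and $d^{D''}_\star$ as a merge map (hence surjective), and the remark that the presence of the Seifert point shifts the parity count that decides whether the kink circle is of type $h$ or $d$, are both correct and pertinent.

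However, the detailed sketch of the case-check contains statements that would make the argument fail if carried out literally. You assert that $d^{D'}_\star$ ``vanishes'' on the sign-$(-)$ labeling of the kink circle and conclude injectivity from this; but a map that vanishes on a nonzero piece of its source has a nonzero kernel and cannot be injective. The split is injective precisely because it is nonzero on \emph{both} labelings of the circle of $D'_0$ passing through the kink region, sending one labeling to a single enhanced state and the other to a two-term sum, with linearly independent images. Your earlier claim that $d^{D'}_\star$ either kills $S$ or ``sends it to a single enhanced state'' likewise misses the two-term sums appearing in the split rule of Figure \ref{fig:BraidDiff}. Note also that the kink circle lives in $D'_1$ (the \emph{target} of $d^{D'}_\star$), not in $D'_0$, so it is not what carries the label on which you are conditioning. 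The conclusion of the lemma is correct, but the computation as you describe it does not establish it.
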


Here again, all proofs can be argued like in the proofs of paragraph \ref{par:InvBraidReidII}.

\begin{lemme}
The mapping cone of
$$
\xymatrix@C=2cm{ \left\{\ \dessin{1cm}{RI10}\ \right\}
  \ar[r]^{d^{D'}_{\star}}_\sim \ar@{}|{}="Nya"
  \ar@(dl,ul)"Nya"!<-1.4cm,-.3cm>;"Nya"!<-1.4cm,.3cm>^{d_{br}^{D'_0}}
  & \left\{\ \dessin{1cm}{RI11p}\ \right\} \ar@{}|{}="Nya2"
  \ar@(dr,ur)"Nya2"!<1.4cm,-.3cm>;"Nya2"!<1.4cm,.3cm>_{d_{br}^{D'}}}
$$
is an acyclic subcomplex of $C_{gr}(D')$, which we denote by $C'_1$.
\end{lemme}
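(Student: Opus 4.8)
The plan is to transcribe, essentially word for word, the argument used to establish the acyclic subcomplex $C_1$ in the proof of braid-like invariance under a type II move (\S\ref{par:InvBraidReidII}); the only crossing-specific ingredient — that $d^{D'}_\star$ is an isomorphism onto the submodule pictured as its target — is already the content of the preceding lemma. Concretely, I would first split the chain module $C_{gr}(D')$ according to whether the last kink crossing $v$ is $A$- or $A^{-1}$-smoothed, writing $C_{gr}(D')=\{D'_0\}[1]\oplus\{D'_1\}$ as modules; under this splitting the Khovanov differential decomposes as $d_{br}^{D'}=d_{br}^{D'_0}+d^{D'}_\star+d_{br}^{D'_1}$, where $d^{D'}_\star=d_v$ is the partial differential of $v$. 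By the previous lemma — together with the grading constraints $j(S)=j(S')$, $k(S)=k(S')$ built into the incidence numbers, which pin down the label forced on the small circle born from the kink — the map $d^{D'}_\star$ carries $\{D'_0\}$ bijectively onto the submodule $N\subseteq\{D'_1\}$ displayed as the target of the statement, so that $N=\Im(d^{D'}_\star)$ and $C'_1$ is precisely $\{D'_0\}[1]\oplus N$ equipped with the differential induced from $d_{br}^{D'}$ (note that on $N$ one has $d_{br}^{D'}|_N=d_{br}^{D'_1}|_N$, since $v$ cannot be switched back).

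Next I would check stability, i.e.\ that $\{D'_0\}[1]\oplus N$ is a subcomplex of $C_{gr}(D')$. For $S\in\{D'_0\}$ one has $d_{br}^{D'}(S)=d_{br}^{D'_0}(S)+d^{D'}_\star(S)$ with the two summands lying in $\{D'_0\}$ and in $N$ respectively, so that side is clear. For $T\in N$, write $T=d^{D'}_\star(S)$ with $S$ unique (injectivity); using $(d_{br}^{D'})^2=0$ and the proposition that distinct partial differentials anti-commute, one gets $d_{br}^{D'_1}(T)=d_{br}^{D'_1}\bigl(d^{D'}_\star(S)\bigr)=-\,d^{D'}_\star\bigl(d_{br}^{D'_0}(S)\bigr)\in N$, which is the precise analogue of the identity $d_{gr}^{D'}(S)=-(d_v+d_{0\star})\circ d_{br}^{D'_{00}}\circ(d_v+d_{0\star})^{-1}(S)$ from \S\ref{par:InvBraidReidII}. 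Hence $d_{br}^{D'}$ preserves $\{D'_0\}[1]\oplus N$.

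Finally, for acyclicity I would observe that $d^{D'}_\star\colon(\{D'_0\},d_{br}^{D'_0})\to(N,d_{br}^{D'_1}|_N)$ is a bijective chain map, hence an isomorphism of chain complexes, and that $C'_1$ is by construction its mapping cone; by Corollary \ref{AcyclicCone} (\S\ref{par:MapCones}) the mapping cone of a quasi-isomorphism is acyclic, and an isomorphism is a fortiori a quasi-isomorphism. Equivalently, as phrased in the braid-like proof, $d^{D'}_\star$ being injective makes it an isomorphism onto its image $N$, which forces the cone to be acyclic.

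I do not anticipate a genuine obstacle: the proof is a routine transcription of the braid-like type II case, and everything move-specific has been isolated in the preceding lemma. The only points that deserve a line of care are (i) matching sign conventions, since this chapter works with commuting rather than anti-commuting chain maps, when identifying the submodule $\{D'_0\}[1]\oplus N$ of $C_{gr}(D')$ with the mapping cone of \S\ref{par:MapCones}; and (ii) being explicit that the grading constraints single out which labelled summand of $\{D'_1\}$ is hit by $d^{D'}_\star$, so that $N$ is exactly the module drawn in the statement and not a larger one.
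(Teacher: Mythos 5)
Your proof is correct and follows exactly the route the paper intends: the paper's own treatment of this lemma consists of the single remark that ``all proofs can be argued like in the proofs of paragraph \ref{par:InvBraidReidII}'', and your write-up is the faithful transcription of that type~II argument (stability of the left summand, stability of the image via anti-commutativity of distinct partial differentials, and acyclicity from the cone of an isomorphism via Corollary~\ref{AcyclicCone}). The two points of care you flag — the sign bookkeeping between commuting and anti-commuting conventions, and the identification of the pictured target with $\Im(d^{D'}_\star)$ forced by the grading constraints in the incidence numbers — are precisely the move-specific details that the type~II template leaves to be filled in.
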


\begin{lemme}
The mapping cone of
$$
\xymatrix@C=2cm{ \left\{\ \dessin{1cm}{RI11m}\ \right\}
  \ar[r]^{d^{D'}_{\star}}_\sim \ar@{}|{}="Nya"
  \ar@(dl,ul)"Nya"!<-1.4cm,-.3cm>;"Nya"!<-1.4cm,.3cm>^{d_{br}^{D''_0}}
  & \left\{\ \dessin{1cm}{RI10}\ \right\} \ar@{}|{}="Nya2"
  \ar@(dr,ur)"Nya2"!<1.4cm,-.3cm>;"Nya2"!<1.4cm,.3cm>_{d_{br}^{D''}}}
$$
is an acyclic subcomplex of $C_{gr}(D')$, which we denote by $C''_1$.
\end{lemme}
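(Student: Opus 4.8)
The plan is to transcribe, for the diagram $D''$, the construction of the second acyclic summand (called $C_2$) from the proof of invariance under braid-like moves of type~II in \S\ref{par:InvBraidReidII}; the only extra feature here is the presence of the Seifert point, which is carried along exactly as its braid-like counterpart, as in the rest of the star-like discussion. Concretely, $C''_1$ is the shifted direct sum $\{RI11m\}[1]\oplus\{RI10\}$ equipped with the differential whose components are $d_{br}^{D''_0}$ on the first summand, $d^{D''}_\star$ between the two, and $d_{br}^{D''}$ on the second.

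First I would verify that $C''_1$ is genuinely a subcomplex of $C_{gr}(D'')$. The summand $\{RI10\}=\{D''_1\}$, the full $1$--smoothing of $v'$, is stable under $d_{br}^{D''}$ because any partial differential only turns $A$--smoothed crossings into $A^{-1}$--smoothed ones and hence never escapes the $1$--stratum of $v'$. For $\{RI11m\}$: the distinguished $d$--circle of the Reidemeister~I kink is labelled by $-$, so the partial differential appearing in $d_{br}^{D''_0}$ that would absorb that circle sends these generators to $0$; the circle is therefore left untouched and $\{RI11m\}$ is stable under $d_{br}^{D''_0}$. Since $d^{D''}_\star$ is a restriction of the partial differential $d_{v'}$, the identity ${d_{br}^{D''}}^2=0$ together with the anti-commutativity of distinct partial differentials forces, exactly as in \S\ref{par:InvBraidReidII}, that on $\{RI11m\}$ the ambient differential splits as $d_{br}^{D''_0}+d^{D''}_\star$; this both makes $d^{D''}_\star$ a chain map and shows $C''_1$ is closed under $d_{br}^{D''}$.

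For acyclicity I would show that $d^{D''}_\star$ restricts to an isomorphism $\{RI11m\}\xrightarrow{\ \sim\ }\{RI10\}$. We have seen that $d^{D''}_\star\colon\{RI11\}\to\{RI10\}$ is surjective, and — by the same label bookkeeping as in \S\ref{par:InvBraidReidII} — the subspace $\{RI11m\}$ of generators whose kink $d$--circle carries a $-$ is a complement to $\Ker d^{D''}_\star$, hence $d^{D''}_\star|_{\{RI11m\}}$ is also injective. An isomorphism of chain complexes is in particular a quasi-isomorphism, so Corollary~\ref{AcyclicCone} (\S\ref{par:MapCones}) gives that its mapping cone $C''_1$ is acyclic. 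The only real work, as in the braid-like case, is the sign and merge-rule bookkeeping needed to confirm that the three pieces $d_{br}^{D''_0}$, $d^{D''}_\star$, $d_{br}^{D''}$ assemble into a differential and that $\{RI11m\}$ is exactly the claimed complement; everything else is formal.
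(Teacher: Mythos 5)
Your proof is correct and follows precisely the route the paper intends: the paper itself only says that all the lemmata of \S\ref{par:InvStarReidI} ``can be argued like in the proofs of paragraph \ref{par:InvBraidReidII},'' and you have done exactly that, transplanting the $C_2$-argument (stability of the $-$--labelled summand, surjectivity plus the complement-of-kernel bookkeeping giving an isomorphism, then Corollary~\ref{AcyclicCone}) to the $D''$ kink. You have also correctly read $C_{gr}(D'')$ and $d^{D''}_\star$ for the evident typos $C_{gr}(D')$ and $d^{D'}_\star$ in the statement.
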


\begin{lemme}
The chain complexes
$$
\xymatrix@C=2cm{ \left\{\ \dessin{1cm}{RI11m}\ \right\}
   \ar@{}|{}="Nya"
  \ar@(dl,ul)"Nya"!<-1.4cm,-.3cm>;"Nya"!<-1.4cm,.3cm>^{d_{br}^{D'}}}
$$
and
$$
\xymatrix@C=2cm{ \left\{\ \dessin{1cm}{RI11pp}\ ,\ \dessin{1cm}{RI11pm} + \dessin{1cm}{RI11mp}\ \right\}
   \ar@{}|{}="Nya"
  \ar@(dl,ul)"Nya"!<-3.45cm,-.3cm>;"Nya"!<-3.5cm,.3cm>^{d_{br}^{D''}}}
$$
are subcomplexes of, respectively, $C(D')$ and $C(D'')$, which we denote by $C'_2$ and $C''_2$. Both of them are isomorphic to $C(D)$.
\end{lemme}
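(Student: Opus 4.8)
The plan is to imitate, for the two Reidemeister~I curls joining $D$ to $D'$ and to $D''$, the scheme already used for the braid-like moves of type~II and type~III in paragraphs \ref{par:InvBraidReidII} and \ref{par:InvBraidReidIII}: first check that $C'_2$ and $C''_2$ are stable under the total differential, then observe that they are module complements of the acyclic pieces $C'_1$ and $C''_1$ produced by the preceding lemmas, and finally exhibit explicit grading-preserving chain isomorphisms onto $C(D)$. The only genuinely non-formal input---the matching of all three gradings under a curl insertion, together with the change of writhe normalization---is exactly the bookkeeping already carried out at the polynomial level in the proof of Proposition \ref{PairsOfI} (\S\ref{par:StarLikeJonesProp}), now lifted to the chain level.

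For $D'$: let $v$ be the extra crossing, placed last in the chosen ordering, and split $C(D')$ over the two smoothings of $v$, so that its total differential is the sum of the Khovanov differential of each smoothed diagram and the partial differential $d_v$ going from the $A$--smoothed summand to the $A^{-1}$--smoothed one. In the $A^{-1}$--smoothing the curl produces a small circle $c_0$ lying outside a neighbourhood of every other crossing, so no $d_c$ with $c\neq v$ merges, splits or relabels $c_0$, and $d_v$ has no component landing back in the $A$--smoothed summand; hence the differential restricted to the $A^{-1}$--smoothed summand is just the Khovanov differential of that diagram, which preserves $C'_2$, the submodule of generators with $c_0$ labelled by $-$. (As in the $C_2$--lemma of \S\ref{par:InvBraidReidII}, these are precisely the generators $d_v$ annihilates, which is what sends the complementary labelling into the acyclic $C'_1$.) Since the $A^{-1}$--smoothing of the curl is the disjoint union of $D$ with $c_0$, the assignment that $A^{-1}$--smooths $v$, adds $c_0$ with label $-$, and leaves everything else unchanged is a module bijection $\psi'\colon C(D)\to C'_2$; it commutes with the differentials because for $c\neq v$ the incidence numbers and the tags $t^-_{c,S}$ are literally unchanged (the last crossing $v$ contributes nothing to the tag of any earlier crossing), and it preserves $i$, $j$ and $k$ by the grading computation of Proposition \ref{PairsOfI}.

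For $D''$ the two smoothings of the extra crossing $v'$ are interchanged, so now the $A$--smoothed picture carries the extra circle $c_0$ and the relevant partial differential is the merge absorbing it. The submodule $C''_2$, spanned by the generators with both labels $+$ and by the symmetrized one-plus-one-minus combination, is exactly the kernel of this merge---the ``$v_+$ survives'' summand of the usual Khovanov computation for a Reidemeister~I curl, read through the $d/h$--circle merging rules (Figure \ref{fig:BraidDiff})---and the same argument (no $d_c$ with $c\neq v'$ touches $c_0$, and merge and split act symmetrically on the two labels, so the symmetrized generator is respected termwise) shows $C''_2$ is a subcomplex, complementary at the module level to the acyclic $C''_1$ of the preceding lemma. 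Inserting the $v'$--curl and labelling the new circle so that the image falls in this surviving summand defines $\psi''\colon C(D)\to C''_2$, which is again a grading-preserving chain isomorphism by the same tag and grading arguments. The main obstacle is thus purely this grading and sign bookkeeping: verifying that $\psi'$ and $\psi''$ preserve all of $i$, $j$, $k$ once the curl is inserted and the writhe adjusted, and that the symmetrized generator is stable under every partial differential in each local configuration---the chain-level counterpart of the case analysis in the proof of Proposition \ref{PairsOfI}, done once for $D'$ and once for $D''$. Granting it, $C(D')\cong C'_1\oplus C(D)$ and $C(D'')\cong C''_1\oplus C(D)$ with $C'_1$, $C''_1$ acyclic, which yields $\HH_{st}(D')\cong\HH_{st}(D)\cong\HH_{st}(D'')$ and hence the announced invariance under a pair of adjacent Reidemeister~I moves.
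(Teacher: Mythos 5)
Your overall plan --- split off the acyclic cones $C'_1$, $C''_1$ given by the preceding lemmas, identify a complementary subcomplex, and exhibit a grading-preserving isomorphism with $C(D)$, exactly as in paragraphs \ref{par:InvBraidReidII}--\ref{par:InvBraidReidIII} --- is the approach the paper intends (it only says the proofs ``can be argued like in the proofs of paragraph \ref{par:InvBraidReidII}''). The $D'$ side of your argument is essentially complete: on the $A^{-1}$--smoothed summand of $v$ the partial differential $d_v$ vanishes, no other $d_c$ touches the curl circle, so $\{\dessin{.6cm}{RI11m}\}$ is a subcomplex, and inserting the $A^{-1}$--smoothed curl with its circle labelled $-$ is a module bijection that visibly commutes with all $d_c$ and whose effect on $i$, $j$, $k$ is exactly what the writhe normalisation absorbs.

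The $D''$ side, however, has a genuine gap. You write that $C''_2$ is ``exactly the kernel of this merge'' and that it is stable under the $d_c$, $c\neq v'$, because ``merge and split act symmetrically on the two labels, so the symmetrized generator is respected termwise.'' Neither assertion is verified, and the second is not even coherent as stated: $d_c$ with $c\neq v'$ leaves the curl circle $c_0$ untouched but does act nontrivially on the circle carrying the strand, so there is no symmetry between the two labels appearing in $pp$, $pm$, $mp$. The kernel of $d_{v'}$ in fact depends on the $d$/$h$-type of the circle carrying the curl (which varies over Kauffman states of the rest of $D$), and the nontrivial computational content of this lemma --- as in the $C_3$-lemma of \S\ref{par:InvBraidReidII}, where one has to check that the correction term $(d_{1\star}(\cdot))\otimes v_-$ (anti-)commutes with every $d_c$ before concluding stability and that $\psi_{II}$ is a chain map --- is precisely that the correction appearing in the second generator of $C''_2$ is compatible, termwise, with merges and splits of the strand circle elsewhere in the diagram. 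You assert the compatibility by analogy with the polynomial computation of Proposition \ref{PairsOfI}, but that proposition only controls the Euler characteristic; it does not lift automatically to the chain level, and the $d$/$h$-case analysis of Figure \ref{fig:BraidDiff} applied to $d_c\circ\psi''$ versus $\psi''\circ d_c$ is exactly the step that remains to be done. Without it, neither the stability of $C''_2$ nor the fact that $\psi''$ is a chain map is established.
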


Moreover, at the level of modules, $C(D')$ and $C(D'')$ are respectively isomorphic to $C'_1 \oplus C'_2$ and $C''_1 \oplus C''_2$.\\

Finally, the proof is completed by noticing that the operation of adding two adjacent Seifert points on a diagram is a blind operation for the Khovanov construction.

\subsection{Restricted Khovanov cohomology}
\label{ssec:KhCohomologie}

All the statements in this section hold straighforwardly in the braid-like context.\\

\subsubsection{Khovanov codifferential}
\label{par:KhoCodiff}

Just like the differential $d_{st}^D$ was defined for every diagram $D$, we can set a codifferential $\func{d^{st}_D}{C_{st}(D)}{C_{st}(D)}$ of tridegree $(1,0,0)$ defined on generators by
$$
d^{st}_D(S) =  \sum_{\substack{v\textrm{ crossing}\\A\textrm{--smoothed in }S}} d^v(S)
$$
where the partial differential $d^v$ is defined by
$$
d^v(S) = (-1)^{t^+_{v,S}} \sum_{S'\textrm{enhanced state of }D} [S',S]_v S',
$$
with $t^+_{v,S}$ the number of $A$--smoothed crossings in $S$ labeled with numbers greater than the label of $v$.

\subsubsection{Duality}
\label{par:KhoDuality}

\begin{lemme}
  The bigraded cochain complex $(C_{st}(D),d^{st}_D)$ is isomorphic to the dual cochain complex $(C^*_{st}(D),d_{st}^*)$ associated to $(C_{st}(D),d_{st}^D)$.
\end{lemme}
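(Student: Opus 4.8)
The plan is to exhibit an explicit module isomorphism between $C_{st}(D)$ and its dual $C^*_{st}(D)=\Hom(C_{st}(D),\Z)$ and then verify that this isomorphism intertwines the codifferential $d^{st}_D$ with the dual differential $d_{st}^*$. First I would fix, for every enhanced state $S$ of $D$, its \emph{conjugate} enhanced state $\overline{S}$: keep the same underlying Kauffman smoothing but reverse the sign assigned to every connected component. Since reversing all signs on all circles changes $\tau_d(S)$ and $\tau_h(S)$ to their negatives while leaving $\sigma(S)$ unchanged, one has $i(\overline{S})=i(S)$, $j(\overline{S})=-j(S)$, $k(\overline{S})=-k(S)$; in particular $S\mapsto\overline{S}$ is an involution on the set of enhanced states that is bijective in each homological degree $i$. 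This lets me define a module map $\Phi\colon C_{st}(D)\to C^*_{st}(D)$ by $\Phi(S)=\overline{S}^{\,*}$, where $\overline{S}^{\,*}$ is the element of the dual basis dual to $\overline{S}$; equivalently $\Phi(S)(T)=\delta_{\overline{S},T}$. Being defined on a basis by a bijection of bases, $\Phi$ is visibly an isomorphism of modules, and it carries the $i$--grading to itself and negates the $j$-- and $k$--gradings, which is exactly what is required for an isomorphism of bigraded (here, trigraded) cochain complexes in the sense of paragraph \ref{par:Cohomologies}.

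Next I would check that $\Phi$ is a chain map between $(C_{st}(D),d^{st}_D)$ and $(C^*_{st}(D),d_{st}^*)$. By definition $d_{st}^*(\phi)=\phi\circ d_{st}^D$, so for generators $S,T$ the coefficient of $T^*$ in $d_{st}^*(\Phi(S))=d_{st}^*(\overline{S}^{\,*})$ is the coefficient of $\overline{S}$ in $d_{st}^D(T)$, which by the definition of $d_{st}^D$ in paragraph \ref{par:BraidLikeDiff} equals $(-1)^{t^-_{v,T}}[T,\overline{S}]_v$ summed over the crossings $v$ that are $A$--smoothed in $T$ and $A^{-1}$--smoothed in $\overline{S}$. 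On the other side, the coefficient of $T^*$ in $\Phi(d^{st}_D(S))$ is the coefficient of $\overline{T}$ in $d^{st}_D(S)$, which by the definition of the codifferential $d^v$ in paragraph \ref{par:KhoCodiff} equals $(-1)^{t^+_{v,S}}[\overline{T},S]_v$ summed over the crossings $v$ that are $A$--smoothed in $S$ (and hence, since $[\overline T,S]_v\neq0$ forces $v$ to be $A^{-1}$--smoothed in $\overline T$, i.e. $A$--smoothed in $T$). The core combinatorial identity to establish is therefore
$$
(-1)^{t^-_{v,T}}[T,\overline{S}]_v \;=\; \pm\,(-1)^{t^+_{v,S}}[\overline{T},S]_v
$$
for each relevant crossing $v$, with the overall sign being consistent across all $v$ (an overall sign does not affect the isomorphism class of a cochain complex, and can in any case be absorbed). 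Here I would use two observations: first, the incidence condition $[T,\overline S]_v=1$ is equivalent to $[\overline T,S]_v=1$, because passing to conjugates simultaneously reverses all labels on $T$ and on $\overline S$, so the "common components labeled the same way" and the "$j,k$ preserved" conditions of paragraph \ref{par:BraidLikeDiff} are symmetric under conjugation together with swapping source and target; second, the sign bookkeeping matches because $t^-_{v,T}$ counts $A^{-1}$--smoothed crossings above $v$ in $T$ while $t^+_{v,S}$ counts $A$--smoothed crossings above $v$ in $S$, and $S$ is obtained from $T$ by switching exactly the crossing $v$ (plus relabeling), so the two counts agree up to a global contribution independent of $v$.

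The main obstacle I expect is precisely the last sign verification: one must be careful that the exponents $t^-_{v,T}$ and $t^+_{v,S}$ produce matching signs \emph{uniformly} in $v$, and not merely up to a $v$--dependent sign, since a $v$--dependent discrepancy would genuinely break the chain-map property rather than just rescale it. I would handle this by the standard device of composing $\Phi$ with the sign automorphism $S\mapsto(-1)^{g(S)}S$ for a suitable function $g$ of the smoothing data (e.g. the number of $A^{-1}$--smoothed crossings, as already used in the proof that $\HH_{br}$ is independent of the crossing ordering), choosing $g$ so that the residual signs cancel; this is a routine but slightly delicate finite-crossing computation. Once the identity holds, $\Phi$ is the desired isomorphism of trigraded cochain complexes, completing the proof of the lemma.
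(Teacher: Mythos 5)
Your map $\Phi(S)=\overline S^{\,*}$, where $\overline S$ reverses every label, is not the paper's map and in fact is not a chain map, and the defect is not one of signs. Writing out both sides as you do, the chain-map property requires, for each crossing $v$, that $[T:\overline S]_v$ and $[\overline T:S]_v$ be simultaneously nonzero; but condition iv) in the definition of the incidence number (preservation of $j$ and $k$, \S\ref{par:BraidLikeDiff}) is \emph{not} symmetric under conjugating one of the two states. Concretely, if $v$ is $A$--smoothed in $T$ and $A^{-1}$--smoothed in $S$, then $\sigma(T)=\sigma(S)+2$ while conjugation flips only $\tau_d,\tau_h$ (not $\sigma$ or $w$), so $j(T)=j(\overline S)$ amounts to $\tau_d(T)+\tau_d(S)=-1$ whereas $j(\overline T)=j(S)$ amounts to $\tau_d(T)+\tau_d(S)=+1$. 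For a merge of two $d$--circles labeled $(+,+)$ in $T$ into a $d$--circle labeled $-$ in $S$ (common components labeled oppositely in $T$ and $S$), one gets $[\overline T:S]_v=1$ but $[T:\overline S]_v=0$. Since the two sides of your identity then have different supports, no diagonal sign automorphism $S\mapsto(-1)^{g(S)}S$ can restore commutativity; that device only repairs sign discrepancies, not vanishing patterns. Your grading bookkeeping has the same root problem: $j(\overline S)=j(S)-2\tau_d(S)\neq -j(S)$ in general, so $\Phi$ does not negate $j$ and $k$ as claimed, and in any case the lemma does not ask for such a negation.

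The paper's proof (\S\ref{par:KhoDuality}) keeps the dual-basis identification untouched: $\psi(S)=(-1)^{u(S)}S^*$ with $u(S)=\sum_{v_i\ A\text{--smoothed in }S}(n-i)$. No label reversal is involved, because the codifferential $d^{st}_D$ of \S\ref{par:KhoCodiff} is already the transpose of $d^D_{st}$ in the canonical basis up to the sign conventions $t^+$ versus $t^-$; the only thing to check is that the $v$--dependent discrepancy $t^+_{v,S}+t^-_{v,S_0}=n-i$ (your ``global contribution independent of $v$'' is actually $v$--dependent, which is precisely why the twist is needed) is absorbed by $u(S_0)-u(S)=n-i$. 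Label reversal is the content of the \emph{next} lemma (\S\ref{par:KhoDualMirror}), where it intertwines the differential on $D$ with the codifferential on the mirror image $\overline D$, and it is there — because mirroring also flips $\sigma$ and $w$ — that the gradings get negated. Your proposal in effect fuses the two lemmas while staying on the same diagram $D$, and that is where it breaks.
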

\begin{proof}
  We set the map $\func{\psi}{C_{st}(D)}{C^*_{st}(D)}$ defined on generators by $\psi(S)=(-1)^{u(S)}S^*$ where $S^*$ is the dual element of $S$ associated to the canonical base and
$$
u(S)=\sum_{\substack{v_i\textrm{ crossing}\\A\textrm{--smoothed in }S}} n-i.
$$
Since $\psi$ is obviously an isomorphism of modules, we only need to check that it commutes with the differentials.\\
Let $S$ and $S_0$ be enhanced states. In one hand, we have

\begin{eqnarray*}
\psi(d^{st}_D(S))(S_0) & = & \sum_{\substack{v \textrm{ crossing in }D\\S' \textrm{ enhanced state}}} (-1)^{t^+_{S,v}+u(S')}[S',S]_v S'^*(S_0)\\
& = & \sum_{v \textrm{ crossing in }D}(-1)^{t^+_{S,v}+u(S_0)}[S_0,S]_v,
\end{eqnarray*}
and in the other
\begin{eqnarray*}
d_{st}^*(\psi(S))(S_0) & = & \sum_{\substack{v \textrm{ crossing in}D\\S' \textrm{ enhanced state}}} (-1)^{t^-_{S_0,v}+u(S)}[S_0,S']_v S^*(S')\\
& = & \sum_{v \textrm{ crossing in }D}(-1)^{t^-_{S_0,v}+u(S)}[S_0,S]_v.
\end{eqnarray*}

Moreover, if $[S_0,S]_v\neq 0$, then $S$ and $S_0$ differ only on $v$. We have thus $u(S_0)-u(S)=n-i$ for some $i$. But on the other side
$$
t^+_{S,v}+t^-_{S_0,v}=t^+_{S,v}+t^-_{S,v}=n-i.
$$
As a matter of fact, $(t^+_{S,v}+u(S_0))+(t^-_{S_0,v}+u(S))=2(n-i+u(S))$ and $t^+_{S,v}+u(S_0)$ and $t^-_{S_0,v}+u(S)$ have the same parity.
\end{proof}

\subsubsection{Duality for mirror image}
\label{par:KhoDualMirror}

\begin{lemme}
The map
$$
\phi : \vcenter{\hbox{\xymatrix{
\cdots \ar[r]^(.37){d^D_{st}} & C_i^{j,k}(D) \ar[r]^(.45){d^D_{st}} \ar[d]^\phi & \C_{i-1}^{j,k}(D) \ar[r]^(.58){d^D_{st}} \ar[d]^\phi& \cdots\\
\cdots \ar[r]^(.32){d_{\overline{D}}^{st}} & \C^{-i}_{-j,-k}(\overline{D}) \ar[r]^(.47){d_{\overline{D}}^{st}} & \C'^{-i+1}_{-j,-k}(\overline{D}) \ar[r]^(.66){d_{\overline{D}}^{st}} & \cdots,\\
}}}
$$
defined on any enhanced state $S$ by inversing the labels of all circles, is a chain complex isomorphism.
\end{lemme}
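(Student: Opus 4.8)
The statement to prove is that the map $\phi$, which sends an enhanced state $S$ of a diagram $D$ to the enhanced state of the mirror image $\overline{D}$ obtained by inverting the labels (plus $\leftrightarrow$ minus) of all circles, is an isomorphism of chain complexes between $(C_{st}(D), d^D_{st})$ (viewed as a cochain complex via the codifferential) and $(C_{st}(\overline{D}), d_{\overline{D}}^{st})$ with the negated gradings.

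\medskip

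The plan is to proceed in three steps. First I would check that $\phi$ is a well-defined bijection on the underlying modules and that it carries the trigrading $(i,j,k)$ of $C_{st}(D)$ to the trigrading $(-i,-j,-k)$ of $C_{st}(\overline{D})$. This is the bookkeeping step: passing to the mirror image $\overline{D}$ swaps $A$--smoothings and $A^{-1}$--smoothings at every crossing, so $\sigma(S)$ changes sign, and the writhe $w(D)$ also changes sign; hence $i(S) = \tfrac{\sigma(S)-w}{2}$ negates. Inverting all circle labels negates $\tau_d(S)$ and $\tau_h(S)$, so $k(S) = \tau_h(S)$ negates and, combined with the sign change of $\sigma$ and $w$, $j(S) = \tfrac{\sigma(S) - 3w + 2\tau_d(S)}{2}$ negates as well. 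One must also verify that the notion of $h$--circle versus $d$--circle is preserved under mirroring --- this is immediate since mirroring does not change the parity of (half the number of break points plus the number of Seifert points) on any circle, it only relabels. So $\phi$ maps $C_i^{j,k}(D)$ bijectively onto $C_{-i}^{-j,-k}(\overline{D})$, and the bijectivity is clear since inverting labels is an involution.

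\medskip

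Second, I would show $\phi$ intertwines $d^D_{st}$ with $d_{\overline{D}}^{st}$. The key observation is that the incidence number $[S:S']_v$ is manifestly symmetric under mirroring combined with label inversion: an $A$--smoothed crossing becoming $A^{-1}$--smoothed in $D$ corresponds, in $\overline{D}$, to an $A^{-1}$--smoothed crossing becoming $A$--smoothed, which is exactly the relation governing the codifferential $d^v$; the conditions on common components being labeled alike and on preservation of $j,k$ are symmetric under the label inversion since both $\tau_d$ and $\tau_h$ simply change sign coherently on both states. Thus the coefficient rules match up to the sign factors $(-1)^{t^-_{v,S}}$ and $(-1)^{t^+_{v,S}}$. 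The remaining point is a sign comparison: one needs $t^-_{v,S}$ computed in $D$ and $t^+_{v,\phi(S)}$ computed in $\overline{D}$ to agree (with the fixed common ordering of crossings), because the set of $A^{-1}$--smoothed crossings of $S$ in $D$ is precisely the set of $A$--smoothed crossings of $\phi(S)$ in $\overline{D}$. This is an immediate consequence of the definitions of $t^-$ and $t^+$ once one notes that $\phi$ does not touch the underlying smoothing, only the mirror reflection and the labels. Therefore $\phi \circ d^D_{st} = d_{\overline{D}}^{st} \circ \phi$, possibly after inserting the standard $(-1)^i$ adjustment already discussed in paragraph \ref{par:Chain} to pass between commuting and anti-commuting conventions; I would simply note that this sign twist is harmless.

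\medskip

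The main obstacle, such as it is, lies entirely in the sign verification of the second step: one must track carefully how the ordering-dependent signs $(-1)^{t^\pm}$ behave under the label inversion, and confirm that no global sign discrepancy survives. Since an earlier proposition (the one showing $\HH_{br}$ is independent of the ordering of crossings) guarantees that any residual overall sign ambiguity does not affect the homology, this is not a serious difficulty, but it is the place where a careless argument could go wrong. Once the isomorphism $\phi$ is established at the cochain level, Proposition \ref{KhCohomologie} follows by combining it with the duality isomorphism of the previous lemma ($(C_{st}(D), d^{st}_D) \cong (C^*_{st}(D), d_{st}^*)$) and the Universal Coefficient Theorem (cf. Proposition \ref{UCT}), which introduces the torsion terms $\T_{-i+1}^{-j,-k}(\overline{D})$ in the statement.
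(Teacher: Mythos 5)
Your proposal is correct and takes essentially the same approach as the paper: identify that mirroring swaps $A$ and $A^{-1}$ smoothings so $\phi$ negates all three gradings, then check that the incidence numbers satisfy $[\phi(S'):\phi(S)]_v = [S:S']_v$ and that the ordering signs match via $t^-_{v,S} = t^+_{v,\phi(S)}$, which holds precisely because the $A^{-1}$-smoothed crossings of $S$ in $D$ are the $A$-smoothed crossings of $\phi(S)$ in $\overline{D}$. Two of your hedges are actually unnecessary: the $(-1)^i$ adjustment is never needed here because this chapter explicitly adopts the commuting (not anti-commuting) convention for chain maps, and appealing to ordering-independence to dismiss a residual sign would only give a quasi-isomorphism --- but since the signs match exactly by the $t^\pm$ identity you correctly derive, no such appeal is required and the stronger conclusion (a genuine chain isomorphism) holds as claimed.
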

\begin{proof}
  Any enhanced state $S$ of $D$ can be seen as an enhanced state
  for $\overline{D}$ with opposed smoothing at every crossing.
  Moreover, since $w(\overline{D})=-w(D)$ and since the map $\phi$
  inverses all the labels, this map is well defined as a graded
  isomorphism of modules. Thus, it is sufficient to verify that the diagram is
  commutative.  Indeed, for any enhanced state $S$
  $$
  \phi(d^D_{st}(S)) = \sum_{\substack{v \textrm{ crossing in }D\\S' \textrm{
        enhanced state of }D}} (-1)^{t^-_{S,v}}[S,S']_v \phi(S'),
  $$
  and
  \begin{eqnarray*}
    d_{\overline{D}}^{st}(\phi(S)) & = & \sum_{\substack{v \textrm{ crossing in }\overline{D}\\S' \textrm{ enhanced state of }\overline{D}}} (-1)^{t^+_{\phi(S),v}}[S',\phi(S)]_v S'\\
    & = & \sum_{\substack{v \textrm{ crossing in }\overline{D}\\\phi(S') \textrm{ enhanced state of }\overline{D}}} (-1)^{t^+_{\phi(S),v}}[\phi(S'),\phi(S)]_v \phi(S'),
  \end{eqnarray*}
  since $\phi$ is a bijection between enhanced states of $D$ and
  $\overline{D}$. But, by definition, $t^-_{S,v}=t^+_{\phi(S),v}$ and by
  construction $[\phi(S'),\phi(S)]_v = [S,S']_v$.
\end{proof}

Finally Proposition \ref{KhCohomologie} (\S\ref{par:InvStarReid}) is proved by using the Proposition \ref{UCT} (\S\ref{par:Cohomologies}).


\chapter*{Conclusion}
\label{chap:Conclusion}
\addcontentsline{toc}{part}{Conclusion}

The work initiated in this thesis leaves open many questions.
As a conclusion, we try to draw up a non exhaustive list of them.\\

Concerning the generalization of link Floer homology, the property
\begin{eqnarray}
\widehat{HF}\left(\ \dessin{1.1cm}{Torsion}\right)\cong 0
\label{eq:Torsion}
\end{eqnarray}
should be proved as a general fact.\\
In a second time, a possible finite type behavior should be examined. In particular, the filtrated inverse of degree $1$ for  the switch morphism should lead to the extraction of acyclic subcomplexes. Hopes for full acyclicity under any simple conditions are certainly too optimistic.
Computations lead to the conjecture that a fully singular link, \ie a link which has a diagram with only double points as crossings, is acyclic when computed with the orientation induced on the double points by the planar orientation.\\
More generally, computations seems to indicate that some orientations for doubles points are more relevant than other.
Finally, it should be interesting to understand the possible links between our construction and the construction given in \cite{OSSing}. Notably, the latter leads to a description of link Floer homology in the spirit of Khovanov-Rozansky homology \cite{OSCube}. {\it A priori}, a third generalization of link Floer homology to singular links can thus be defined using the approach of N. Shirokova and B. Webster \cite{ShiroWeb}. Nevertheless, as shown in section \ref{sec:Sum}, being the mapping cone of a map between Seifert smoothing can exclude the (\ref{eq:Torsion}) relation.\\

Concerning the second part, there are many questions on restricted links which are still open. Above all, the relations between braid-like and usual links should be clarified; by exhibiting, in particular, a minimal set of moves which generates, associated with the braid-like moves, all the diagrams isotopies. We can wonder if the Reidemeister moves of type I are sufficient or, equivalently, if the Reidemeister moves corresponding to the singularity $\dessin{.5cm}{sing1bis}$ are really necessary. This would lead to a new diagrammatical approach of links which may be adapted, for instance, to genus questions and minimality of Seifert surfaces since braid-like moves are precisely moves which preserve the Seifert state.\\
In this thesis, we only categorify a lightened version of the braid-like (and star-like) Jones polynomial. A possible continuation would be a categorification of the full bracket. Finally, Khovanov homology can be seen as a particular case of Khovanov-Rozansky. Nevertheless, it differs from other cases by its invariance under non braid-like Reidemeister moves of type II. Our refinement precisely insists on the internal mechanisms of Khovanov homology at the time of these moves. It is thus likely to throw some light on this peculiar behavior.


\part*{Appendices}
\addcontentsline{toc}{part}{Appendices}
\appendix

\chapter{Decomposition in subcomplexes}
\label{appendix:Subcomplexes}

This appendix is devoted to the details of the proof of Proposition \ref{AcyclicSubcomplexes} (\S\ref{par:LastFiltrationExit}).\\

The strategy is to enumerate subcomplexes described as mapping cones of isomorphisms. They are thus all acyclic.\\

We use the following notation:
\begin{itemize}
\item[-] sets of generators are described by the points they have in common; forbidden location for points are crossed;
\item[-] when summing sets of dots, we mean the sums of generators obtained by using the same complementary set of dots; such elements should be understood as a leading term and a remainder;
\item[-] minus signs should be replaced by $-\e(r_1)\e(r_2)$ where $r_1$ and $r_2$ are the polygons connecting the two summands to a same generator;
\item[-] modules are described by a set of generators;
\item[-] differentials are described by curved arrows illustrated by the rectangles we are summing over;
\item[-] chain isomorphisms simply associate generators of each side obtained by using the same complementary set of points; however, they are illustrated by the polygon which connects the two leading terms.
\end{itemize}

Now, using the description of $\widetilde{d}_\gr$ initiated in Proposition \ref{AcyclicDiff} (\S\ref{par:AnotherCrushingFiltration}) and completed in paragraph \ref{par:LastFiltrationExit}, it is straighforward to check that they are all subcomplexes of $\big( \widetilde{CL}(G_s),\widetilde{d}_\gr\big)$.\\

Then, to prove that their direct sum is isomorphic to $\widetilde{CL}^-(G_s)$, it is sufficient to work at the level of modules. If the usual generators of $\widetilde{CL}^-(G_s)$ are indexed by their order of appearance in the subcomplexes below, then the linear map which sends every summand to its leading term has a unit upper triangular matrix. It is then invertible. To prove it, subcomplexes are indexed by integers, and for all of them, we give between parenthesis the indices of the subcomplexes where the terms of the remainder appear for the first time.\\
Finally, there is no difficulty in checking that every generator appears exactly once as a leading term.\\

\vspace{-.5cm}

$$
\xymatrix@C=2cm{ \left\{\dessin{1.95cm}{Acyc11}\right\}
  \ar[r]^{\dessinH{1.2cm}{Isom1}}_\sim \ar@{}|{}="Nya"
  \ar@(dl,ul)"Nya"!<-1.8cm,-.2cm>;"Nya"!<-1.8cm,.4cm>^{\dessin{1.2cm}{Diff1}}
  & \left\{\dessin{1.95cm}{Acyc12}\right\}\ar@{}|{}="Nya2"
  \ar@(dr,ur)"Nya2"!<1.7cm,-.2cm>;"Nya2"!<1.7cm,.4cm>_{\dessin{1.2cm}{Diff1}}}
$$
\vspace{-1.3cm}
\begin{flushright}
  1 {\footnotesize (-)}
\end{flushright}
\vspace{.55cm}
$$
\xymatrix@C=2cm{ \left\{\dessin{1.95cm}{Acyc21}\right\}
  \ar[r]^(.4){\dessinH{1.2cm}{Isom2}}_(.4)\sim \ar@{}|{}="Nya"
  \ar@(dl,ul)"Nya"!<-1.8cm,-.2cm>;"Nya"!<-1.8cm,.4cm>^{\dessin{1.2cm}{Diff1}}
  &
  \left\{\dessin{1.95cm}{Acyc22}+\dessin{1.95cm}{Acyc23}\right\}\ar@{}|{}="Nya2"
  \ar@(dr,ur)"Nya2"!<3.1cm,-.2cm>;"Nya2"!<3.1cm,.4cm>_{\dessin{1.2cm}{Diff1}}}
$$
\vspace{-1cm}
\begin{flushright}
  2 {\footnotesize (1)}
\end{flushright}
\vspace{.25cm}
$$
\xymatrix@C=2cm{ \left\{\dessin{1.95cm}{Acyc31}\right\}
  \ar[r]^(.4){\dessinH{1.2cm}{Isom3}}_(.4)\sim \ar@{}|{}="Nya"
  \ar@(dl,ul)"Nya"!<-1.8cm,-.2cm>;"Nya"!<-1.8cm,.4cm>^{\dessin{1.2cm}{Diff1}}
  &
  \left\{\dessin{1.95cm}{Acyc32}+\dessin{1.95cm}{Acyc33}\right\}\ar@{}|{}="Nya2"
  \ar@(dr,ur)"Nya2"!<3.1cm,-.2cm>;"Nya2"!<3.1cm,.4cm>_{\dessin{1.2cm}{Diff1}}}
$$
\vspace{-1cm}
\begin{flushright}
  3 {\footnotesize (2)}
\end{flushright}
\vspace{.25cm}

$$
\xymatrix@C=2cm{ \left\{\dessin{1.95cm}{Acyc41}\right\}
  \ar[r]^{\dessinH{1.2cm}{Isom4}}_\sim \ar@{}|{}="Nya"
  \ar@(dl,ul)"Nya"!<-1.8cm,-.2cm>;"Nya"!<-1.8cm,.4cm>^{\dessin{1.2cm}{Diff1}}
  & \left\{\dessin{1.95cm}{Acyc42}\right\}\ar@{}|{}="Nya2"
  \ar@(dr,ur)"Nya2"!<1.7cm,-.2cm>;"Nya2"!<1.7cm,.4cm>_{\dessin{1.2cm}{Diff1}}}
$$ 
\vspace{-1.3cm}
\begin{flushright}
  4 {\footnotesize (-)}
\end{flushright}
\vspace{.55cm}
$$
\xymatrix@C=2cm{ \left\{\dessin{1.95cm}{Acyc51}\right\}
  \ar[r]^{\dessinH{1.2cm}{Isom5}}_\sim \ar@{}|{}="Nya"
  \ar@(dl,ul)"Nya"!<-1.8cm,-.2cm>;"Nya"!<-1.8cm,.4cm>^{\dessin{1.2cm}{Diff1}}
  & \left\{\dessin{1.95cm}{Acyc52}\right\}\ar@{}|{}="Nya2"
  \ar@(dr,ur)"Nya2"!<1.7cm,-.2cm>;"Nya2"!<1.7cm,.4cm>_{\dessin{1.2cm}{Diff1}}}
$$ 
\vspace{-1.3cm} 
\begin{flushright}
  5 {\footnotesize (-)}
\end{flushright}
\vspace{.55cm}
$$
\xymatrix@C=2cm{ \left\{\dessin{1.95cm}{Acyc71}\right\}
  \ar[r]^{\dessinH{1.2cm}{Isom7}}_\sim \ar@{}|{}="Nya"
  \ar@(dl,ul)"Nya"!<-1.8cm,-.2cm>;"Nya"!<-1.8cm,.4cm>^{\dessin{1.2cm}{Diff71}}
  & \left\{\dessin{1.95cm}{Acyc72}\right\}\ar@{}|{}="Nya2"
  \ar@(dr,ur)"Nya2"!<1.7cm,-.2cm>;"Nya2"!<1.7cm,.4cm>_{\dessin{1.2cm}{Diff72}}}
$$ 
\vspace{-1.3cm} 
\begin{flushright}
  6 {\footnotesize (-)}
\end{flushright}
\vspace{.55cm}
$$
\hspace{-1cm}
\xymatrix@C=2cm{ \left\{\dessin{1.95cm}{Acyc61}\right\}
  \ar[r]^(.32){\dessinH{1.2cm}{Isom6}}_(.32)\sim \ar@{}|{}="Nya"
  \ar@(dl,ul)"Nya"!<-1.8cm,-.2cm>;"Nya"!<-1.8cm,.4cm>^{\dessin{1.2cm}{Diff1}}
  &
  \left\{\dessin{1.95cm}{Acyc62}+\dessin{1.95cm}{Acyc63}+\dessin{1.95cm}{Acyc64}\right\}\ar@{}|{}="Nya2"
  \ar@(dr,ur)"Nya2"!<4.5cm,-.2cm>;"Nya2"!<4.5cm,.4cm>_{\dessin{1.2cm}{Diff1}}}
$$ 
\vspace{-1cm} 
\begin{flushright}
  7 {\footnotesize (6)}
\end{flushright}
\vspace{.25cm}
$$
\hspace{-1cm}
\xymatrix@C=2cm{ \left\{\dessin{1.95cm}{Acyc61b}\right\}
  \ar[r]^(.32){\dessinH{1.2cm}{Isom6b}}_(.32)\sim \ar@{}|{}="Nya"
  \ar@(dl,ul)"Nya"!<-1.8cm,-.2cm>;"Nya"!<-1.8cm,.4cm>^{\dessin{1.2cm}{Diff1}}
  &
  \left\{\dessin{1.95cm}{Acyc62b}+\dessin{1.95cm}{Acyc63b}+\dessin{1.95cm}{Acyc64b}\right\}\ar@{}|{}="Nya2"
  \ar@(dr,ur)"Nya2"!<4.5cm,-.2cm>;"Nya2"!<4.5cm,.4cm>_{\dessin{1.2cm}{Diff1}}}
$$ 
\vspace{-.6cm} 
\begin{flushright}
  8 {\footnotesize (6,7)}
\end{flushright}
\vspace{.25cm}

\newpage

\vspace{-.5cm}

$$
\xymatrix@C=2cm{
  \left\{\dessin{1.95cm}{Acyc101}\right\} \ar[r]^{\dessinH{1.2cm}{Isom10}}_\sim \ar@{}|{}="Nya"  \ar@(dl,ul)"Nya"!<-1.8cm,-.2cm>;"Nya"!<-1.8cm,.4cm>^{\dessin{1.2cm}{Diff2}} & \left\{\dessin{1.95cm}{Acyc102}\right\}\ar@{}|{}="Nya2"  \ar@(dr,ur)"Nya2"!<1.7cm,-.2cm>;"Nya2"!<1.7cm,.4cm>_{\dessin{1.2cm}{Diff2}}}
$$ 
\vspace{-1.3cm} 
\begin{flushright}
  9 {\footnotesize (-)}
\end{flushright}
\vspace{.55cm}

$$
\xymatrix@C=2cm{
  \left\{\dessin{1.95cm}{Acyc171}\right\} \ar[r]^{\dessinH{1.2cm}{Isom17}}_\sim \ar@{}|{}="Nya"  \ar@(dl,ul)"Nya"!<-1.8cm,-.2cm>;"Nya"!<-1.8cm,.4cm>^{\dessin{1.2cm}{Diff2}} & \left\{\dessin{1.95cm}{Acyc172}\right\}\ar@{}|{}="Nya2"  \ar@(dr,ur)"Nya2"!<1.7cm,-.2cm>;"Nya2"!<1.7cm,.4cm>_{\dessin{1.2cm}{Diff2}}}
$$ 
\vspace{-1.3cm} 
\begin{flushright}
  10 {\footnotesize (-)}
\end{flushright}
\vspace{.55cm}
$$
\hspace{-1cm}
\xymatrix@C=2cm{
  \left\{\dessin{1.95cm}{Acyc181}\right\} \ar[r]^(.32){\dessinH{1.2cm}{Isom18}}_(.32)\sim \ar@{}|{}="Nya"  \ar@(dl,ul)"Nya"!<-1.8cm,-.2cm>;"Nya"!<-1.8cm,.4cm>^{\dessin{1.2cm}{Diff2}} & \left\{\dessin{1.95cm}{Acyc182}+\dessin{1.95cm}{Acyc183}+\dessin{1.95cm}{Acyc184}\right\}\ar@{}|{}="Nya2"  \ar@(dr,ur)"Nya2"!<4.5cm,-.2cm>;"Nya2"!<4.5cm,.4cm>_{\dessin{1.2cm}{Diff2}}}
$$ 
\vspace{-.7cm} 
\begin{flushright}
  11 {\footnotesize (9,10)}
\end{flushright}
\vspace{.25cm}
$$
\hspace{-1cm}
\xymatrix@C=2cm{
  \left\{\dessin{1.95cm}{Acyc191}\right\} \ar[r]^(.32){\dessinH{1.2cm}{Isom19}}_(.32)\sim \ar@{}|{}="Nya"  \ar@(dl,ul)"Nya"!<-1.8cm,-.2cm>;"Nya"!<-1.8cm,.4cm>^{\dessin{1.2cm}{Diff2}} & \left\{\dessin{1.95cm}{Acyc192}+\dessin{1.95cm}{Acyc193}+\dessin{1.95cm}{Acyc194}\right\}\ar@{}|{}="Nya2"  \ar@(dr,ur)"Nya2"!<4.5cm,-.2cm>;"Nya2"!<4.5cm,.4cm>_{\dessin{1.2cm}{Diff2}}}
$$ 
\vspace{-.7cm} 
\begin{flushright}
  12 {\footnotesize (9,11)}
\end{flushright}
\vspace{.25cm}
$$
\xymatrix@C=2cm{
  \left\{\dessin{1.95cm}{Acyc211}\right\} \ar[r]^{\dessinH{1.2cm}{Isom21}}_(.42)\sim \ar@{}|{}="Nya"  \ar@(dl,ul)"Nya"!<-1.8cm,-.2cm>;"Nya"!<-1.8cm,.4cm>^{0} & \left\{\dessin{1.95cm}{Acyc212}\right\}\ar@{}|{}="Nya2"  \ar@(dr,ur)"Nya2"!<1.8cm,-.2cm>;"Nya2"!<1.8cm,.4cm>_{0}}
$$ 
\vspace{-1.3cm} 
\begin{flushright}
  13 {\footnotesize (-)}
\end{flushright}
\vspace{.55cm}
$$
\xymatrix@C=2cm{
  \left\{\dessin{1.95cm}{Acyc121}-\dessin{1.95cm}{Acyc122}\right\} \ar[r]^(.6){\dessinH{1.2cm}{Isom12}}_(.6)\sim \ar@{}|{}="Nya"  \ar@(dl,ul)"Nya"!<-3.1cm,-.2cm>;"Nya"!<-3.1cm,.4cm>^{0} & \left\{\dessin{1.95cm}{Acyc123}\right\}\ar@{}|{}="Nya2"  \ar@(dr,ur)"Nya2"!<1.7cm,-.2cm>;"Nya2"!<1.7cm,.4cm>_{0}}
$$ 
\vspace{-1.3cm} 
\begin{flushright}
  14 {\footnotesize (13)}
\end{flushright}
\vspace{.55cm}
$$
\xymatrix@C=2cm{
  \left\{\dessin{1.95cm}{Acyc201}\right\} \ar[r]^(.42){\dessinH{1.2cm}{Isom20}}_(.42)\sim \ar@{}|{}="Nya"  \ar@(dl,ul)"Nya"!<-1.8cm,-.2cm>;"Nya"!<-1.8cm,.4cm>^{0} & \left\{\dessin{1.95cm}{Acyc202}+\dessin{1.95cm}{Acyc203}\right\}\ar@{}|{}="Nya2"  \ar@(dr,ur)"Nya2"!<3.1cm,-.2cm>;"Nya2"!<3.1cm,.4cm>_{0}}
$$ 
\vspace{-1.3cm} 
\begin{flushright}
  15 {\footnotesize (14)}
\end{flushright}
\vspace{.55cm}

\newpage

\vspace{-.5cm}

$$
\xymatrix@C=2cm{
  \left\{\dessin{1.95cm}{Acyc141}\right\} \ar[r]^{\dessinH{1.2cm}{Isom14}}_\sim \ar@{}|{}="Nya"  \ar@(dl,ul)"Nya"!<-1.8cm,-.2cm>;"Nya"!<-1.8cm,.4cm>^{0} & \left\{\dessin{1.95cm}{Acyc142}\right\}\ar@{}|{}="Nya2"  \ar@(dr,ur)"Nya2"!<1.7cm,-.2cm>;"Nya2"!<1.7cm,.4cm>_{0}}
$$ 
\vspace{-1.3cm} 
\begin{flushright}
  16 {\footnotesize (-)}
\end{flushright}
\vspace{.55cm}
$$
\xymatrix@C=2cm{
  \left\{\dessin{1.95cm}{Acyc151}\right\} \ar[r]^{\dessinH{1.2cm}{Isom15}}_\sim \ar@{}|{}="Nya"  \ar@(dl,ul)"Nya"!<-1.8cm,-.2cm>;"Nya"!<-1.8cm,.4cm>^{0} & \left\{\dessin{1.95cm}{Acyc152}\right\}\ar@{}|{}="Nya2"  \ar@(dr,ur)"Nya2"!<1.7cm,-.2cm>;"Nya2"!<1.7cm,.4cm>_{0}}
$$ 
\vspace{-1.3cm} 
\begin{flushright}
  17 {\footnotesize (-)}
\end{flushright}
\vspace{.55cm}
$$
\xymatrix@C=2cm{
  \left\{\dessin{1.95cm}{Acyc161}\right\} \ar[r]^(.42){\dessinH{1.2cm}{Isom16}}_(.42)\sim \ar@{}|{}="Nya"  \ar@(dl,ul)"Nya"!<-1.8cm,-.2cm>;"Nya"!<-1.8cm,.4cm>^{0} & \left\{\dessin{1.95cm}{Acyc162}+\dessin{1.95cm}{Acyc163}\right\}\ar@{}|{}="Nya2"  \ar@(dr,ur)"Nya2"!<3.1cm,-.2cm>;"Nya2"!<3.1cm,.4cm>_{0}}
$$ 
\vspace{-1.3cm} 
\begin{flushright}
  18 {\footnotesize (17)}
\end{flushright}
\vspace{.55cm}

$$
\xymatrix@C=2cm{
  \left\{\dessin{1.95cm}{Acyc251}\right\} \ar[r]^{\dessinH{1.2cm}{Isom25}}_\sim \ar@{}|{}="Nya"  \ar@(dl,ul)"Nya"!<-1.8cm,-.2cm>;"Nya"!<-1.8cm,.4cm>^{0} & \left\{\dessin{1.95cm}{Acyc252}\right\}\ar@{}|{}="Nya2"  \ar@(dr,ur)"Nya2"!<1.7cm,-.2cm>;"Nya2"!<1.7cm,.4cm>_{0}}
$$ 
\vspace{-1.3cm} 
\begin{flushright}
  19 {\footnotesize (-)}
\end{flushright}
\vspace{.55cm}
$$
\xymatrix@C=2cm{
  \left\{\dessin{1.95cm}{Acyc241}\right\} \ar[r]^(.32){\dessinH{1.2cm}{Isom24}}_(.32)\sim \ar@{}|{}="Nya"  \ar@(dl,ul)"Nya"!<-1.8cm,-.2cm>;"Nya"!<-1.8cm,.4cm>^{0} & \left\{\dessin{1.95cm}{Acyc242}+\dessin{1.95cm}{Acyc243}+\dessin{1.95cm}{Acyc244}\right\}\ar@{}|{}="Nya2"  \ar@(dr,ur)"Nya2"!<4.5cm,-.2cm>;"Nya2"!<4.5cm,.4cm>_{0}}
$$ 
\vspace{-.7cm} 
\begin{flushright}
  20 {\footnotesize (18,19)}
\end{flushright}
\vspace{-.05cm}
$$
\hspace{-1.3cm}
\xymatrix@C=2cm{
  \left\{\dessin{1.95cm}{Acyc231}\right\} \ar[r]^(.28){\dessinH{1.2cm}{Isom23}}_(.28)\sim \ar@{}|{}="Nya"  \ar@(dl,ul)"Nya"!<-1.8cm,-.2cm>;"Nya"!<-1.8cm,.4cm>^{0} & \left\{\dessin{1.95cm}{Acyc232}+\dessin{1.95cm}{Acyc233}+\dessin{1.95cm}{Acyc234}+\dessin{1.95cm}{Acyc235}\right\}\ar@{}|{}="Nya2"  \ar@(dr,ur)"Nya2"!<5.8cm,-.2cm>;"Nya2"!<5.8cm,.4cm>_{0}}
$$ 
\vspace{-.7cm} 
\begin{flushright}
  21 {\footnotesize (16,19,20)}
\end{flushright}
\vspace{-.05cm}
$$
\hspace{-1.3cm}
\xymatrix@C=2cm{
  \left\{\dessin{1.95cm}{Acyc221}\right\} \ar[r]^(.28){\dessinH{1.2cm}{Isom22}}_(.28)\sim \ar@{}|{}="Nya"  \ar@(dl,ul)"Nya"!<-1.8cm,-.2cm>;"Nya"!<-1.8cm,.4cm>^{0} & \left\{\dessin{1.95cm}{Acyc222}+\dessin{1.95cm}{Acyc223}+\dessin{1.95cm}{Acyc224}+\dessin{1.95cm}{Acyc225}\right\}\ar@{}|{}="Nya2"  \ar@(dr,ur)"Nya2"!<5.8cm,-.2cm>;"Nya2"!<5.8cm,.4cm>_{0}}
$$ 
\vspace{-.7cm} 
\begin{flushright}
  22 {\footnotesize (18,20,21)}
\end{flushright}
\vspace{-.05cm}

$$
\xymatrix@C=2cm{
  \left\{\dessin{1.95cm}{Acyc92}\right\} \ar[r]^{\dessinH{1.2cm}{Isom9}}_\sim \ar@{}|{}="Nya"  \ar@(dl,ul)"Nya"!<-1.8cm,-.2cm>;"Nya"!<-1.8cm,.4cm>^{\dessin{1.2cm}{Diff2}} & \left\{\dessin{1.95cm}{Acyc91}\right\}\ar@{}|{}="Nya2"  \ar@(dr,ur)"Nya2"!<1.7cm,-.2cm>;"Nya2"!<1.7cm,.4cm>_{\dessin{1.2cm}{Diff2}}}
$$ 
\vspace{-1.3cm} 
\begin{flushright}
  23 {\footnotesize (-)}
\end{flushright}
\vspace{.55cm}

\newpage

\vspace{-.5cm}

$$
\hspace{-.75cm}
\xymatrix@C=2cm{
  \left\{\dessin{1.95cm}{Acyc83}-\dessin{1.95cm}{Acyc84}\right\} \ar[r]^{\dessinH{1.2cm}{Isom8}}_\sim \ar@{}|{}="Nya"  \ar@(dl,ul)"Nya"!<-3.1cm,-.2cm>;"Nya"!<-3.1cm,.4cm>^{\dessin{1.2cm}{Diff2}} & \left\{\dessin{1.95cm}{Acyc81}+\dessin{1.95cm}{Acyc82}\right\}\ar@{}|{}="Nya2"  \ar@(dr,ur)"Nya2"!<3.1cm,-.2cm>;"Nya2"!<3.1cm,.4cm>_{\dessin{1.2cm}{Diff2}}}
$$ 
\vspace{-.7cm} 
\begin{flushright}
  24 {\footnotesize (11,12)}
\end{flushright}
\vspace{-.05cm}
$$
\xymatrix@C=2cm{
  \left\{\dessin{1.95cm}{Acyc111}-\dessin{1.95cm}{Acyc113}\right\} \ar[r]^(.6){\dessinH{1.2cm}{Isom11}}_(.6)\sim \ar@{}|{}="Nya"  \ar@(dl,ul)"Nya"!<-3.1cm,-.2cm>;"Nya"!<-3.1cm,.4cm>^{0} & \left\{\dessin{1.95cm}{Acyc112}\right\}\ar@{}|{}="Nya2"  \ar@(dr,ur)"Nya2"!<1.7cm,-.2cm>;"Nya2"!<1.7cm,.4cm>_{0}}
$$ 
\vspace{-1.3cm} 
\begin{flushright}
  25 {\footnotesize (22)}
\end{flushright}
\vspace{.55cm}
$$
\hspace{-1.2cm}
\xymatrix@C=2cm{
  \left\{\dessin{1.95cm}{Acyc131}-\dessin{1.95cm}{Acyc132}\right\} \ar[r]^(.42){\dessinH{1.2cm}{Isom7}}_(.42)\sim \ar@{}|{}="Nya"  \ar@(dl,ul)"Nya"!<-3.1cm,-.2cm>;"Nya"!<-3.1cm,.4cm>^{0} & \left\{\dessin{1.95cm}{Acyc133}+\dessin{1.95cm}{Acyc134}+\dessin{1.95cm}{Acyc135}\right\}\ar@{}|{}="Nya2"  \ar@(dr,ur)"Nya2"!<4.5cm,-.2cm>;"Nya2"!<4.5cm,.4cm>_{0}}
$$ 
\vspace{-.6cm} 
\begin{flushright}
  26 {\footnotesize (20,25)}
\end{flushright}
\vspace{-.05cm}

$$
\xymatrix@C=2cm{
  \left\{\dessin{1.95cm}{Acyc271}\right\} \ar[r]^{\dessinH{1.2cm}{Isom27}}_\sim \ar@{}|{}="Nya"  \ar@(dl,ul)"Nya"!<-1.8cm,-.2cm>;"Nya"!<-1.8cm,.4cm>^{\dessin{1.2cm}{Diff2}} & \left\{\dessin{1.95cm}{Acyc272}\right\}\ar@{}|{}="Nya2"  \ar@(dr,ur)"Nya2"!<1.7cm,-.2cm>;"Nya2"!<1.7cm,.4cm>_{\dessin{1.2cm}{Diff2}}}
$$ 
\vspace{-1.3cm} 
\begin{flushright}
  27 {\footnotesize (-)}
\end{flushright}
\vspace{.55cm}
$$
\xymatrix@C=2cm{
  \left\{\dessin{1.95cm}{Acyc281}\right\} \ar[r]^{\dessinH{1.2cm}{Isom28}}_\sim \ar@{}|{}="Nya"  \ar@(dl,ul)"Nya"!<-1.8cm,-.2cm>;"Nya"!<-1.8cm,.4cm>^{\dessin{1.2cm}{Diff2}} & \left\{\dessin{1.95cm}{Acyc282}\right\}\ar@{}|{}="Nya2"  \ar@(dr,ur)"Nya2"!<1.7cm,-.2cm>;"Nya2"!<1.7cm,.4cm>_{\dessin{1.2cm}{Diff2}}}
$$ 
\vspace{-1.3cm} 
\begin{flushright}
  28 {\footnotesize (-)}
\end{flushright}
\vspace{.55cm}
$$
\xymatrix@C=2cm{
  \left\{\dessin{1.95cm}{Acyc261}\right\} \ar[r]^(.42){\dessinH{1.2cm}{Isom20}}_(.42)\sim \ar@{}|{}="Nya"  \ar@(dl,ul)"Nya"!<-1.8cm,-.2cm>;"Nya"!<-1.8cm,.4cm>^{\dessin{1.2cm}{Diff2}} & \left\{\dessin{1.95cm}{Acyc262}+\dessin{1.95cm}{Acyc263}\right\}\ar@{}|{}="Nya2"  \ar@(dr,ur)"Nya2"!<3.1cm,-.2cm>;"Nya2"!<3.1cm,.4cm>_{\dessin{1.2cm}{Diff2}}}
$$ 
\vspace{-1cm} 
\begin{flushright}
  29 {\footnotesize (28)}
\end{flushright}
\vspace{.25cm}

$$
\xymatrix@C=2cm{
  \left\{\dessin{1.95cm}{Acyc291}\right\} \ar[r]^{\dessinH{1.2cm}{Isom12}}_\sim \ar@{}|{}="Nya"  \ar@(dl,ul)"Nya"!<-1.8cm,-.2cm>;"Nya"!<-1.8cm,.4cm>^{0} & \left\{\dessin{1.95cm}{Acyc292}\right\}\ar@{}|{}="Nya2"  \ar@(dr,ur)"Nya2"!<1.7cm,-.2cm>;"Nya2"!<1.7cm,.4cm>_{0}}
$$ 
\vspace{-1.3cm} 
\begin{flushright}
  30 {\footnotesize (-)}
\end{flushright}
\vspace{.55cm}
$$
\xymatrix@C=2cm{
  \left\{\dessin{1.95cm}{Acyc321}\right\} \ar[r]^(.5){\dessinH{1.2cm}{Isom15}}_(.5)\sim \ar@{}|{}="Nya"  \ar@(dl,ul)"Nya"!<-1.8cm,-.2cm>;"Nya"!<-1.8cm,.4cm>^{0} & \left\{\dessin{1.95cm}{Acyc322}\right\}\ar@{}|{}="Nya2"  \ar@(dr,ur)"Nya2"!<1.8cm,-.2cm>;"Nya2"!<1.8cm,.4cm>_{0}}
$$ 
\vspace{-1.3cm} 
\begin{flushright}
  31 {\footnotesize (-)}
\end{flushright}
\vspace{.55cm}

\newpage

\vspace{-.5cm}

$$
\xymatrix@C=2cm{
  \left\{\dessin{1.95cm}{Acyc331}\right\} \ar[r]^(.5){\dessinH{1.2cm}{Isom33}}_(.5)\sim \ar@{}|{}="Nya"  \ar@(dl,ul)"Nya"!<-1.8cm,-.2cm>;"Nya"!<-1.8cm,.4cm>^{0} & \left\{\dessin{1.95cm}{Acyc332}\right\}\ar@{}|{}="Nya2"  \ar@(dr,ur)"Nya2"!<1.8cm,-.2cm>;"Nya2"!<1.8cm,.4cm>_{0}}
$$ 
\vspace{-1.3cm} 
\begin{flushright}
  32 {\footnotesize (-)}
\end{flushright}
\vspace{.55cm}
$$
\xymatrix@C=2cm{
  \left\{\dessin{1.95cm}{Acyc341}\right\} \ar[r]^(.5){\dessinH{1.2cm}{Isom34}}_(.5)\sim \ar@{}|{}="Nya"  \ar@(dl,ul)"Nya"!<-1.8cm,-.2cm>;"Nya"!<-1.8cm,.4cm>^{0} & \left\{\dessin{1.95cm}{Acyc342}\right\}\ar@{}|{}="Nya2"  \ar@(dr,ur)"Nya2"!<1.8cm,-.2cm>;"Nya2"!<1.8cm,.4cm>_{0}}
$$ 
\vspace{-1.3cm} 
\begin{flushright}
  33 {\footnotesize (-)}
\end{flushright}
\vspace{.55cm}
$$
\xymatrix@C=2cm{
  \left\{\dessin{1.95cm}{Acyc301}\right\} \ar[r]^(.42){\dessinH{1.2cm}{Isom30}}_(.42)\sim \ar@{}|{}="Nya"  \ar@(dl,ul)"Nya"!<-1.8cm,-.2cm>;"Nya"!<-1.8cm,.4cm>^{0} & \left\{\dessin{1.95cm}{Acyc302}+\dessin{1.95cm}{Acyc303}\right\}\ar@{}|{}="Nya2"  \ar@(dr,ur)"Nya2"!<3.1cm,-.2cm>;"Nya2"!<3.1cm,.4cm>_{0}}
$$ 
\vspace{-1.3cm} 
\begin{flushright}
  34 {\footnotesize (31)}
\end{flushright}
\vspace{.55cm}
$$
\xymatrix@C=2cm{
  \left\{\dessin{1.95cm}{Acyc311}\right\} \ar[r]^(.32){\dessinH{1.2cm}{Isom31}}_(.32)\sim \ar@{}|{}="Nya"  \ar@(dl,ul)"Nya"!<-1.8cm,-.2cm>;"Nya"!<-1.8cm,.4cm>^{0} & \left\{\dessin{1.95cm}{Acyc312}+\dessin{1.95cm}{Acyc313}+\dessin{1.95cm}{Acyc314}\right\}\ar@{}|{}="Nya2"  \ar@(dr,ur)"Nya2"!<4.5cm,-.2cm>;"Nya2"!<4.5cm,.4cm>_{0}}
$$
\vspace{-.7cm} 
\begin{flushright}
  35 {\footnotesize (33,34)}
\end{flushright}



\chapter{Construction for mixed grids}
\label{appendix:MixedSimplification}

This appendix completes and achieves the construction given in this thesis.
As a matter of fact, if authorizing singular columns and rows at the same time make the set of grid diagrams larger, it simplifies the elementary moves.
The proofs of invariance are then more intelligible.\\
Moreover, it enables more operations on grids which clarify some symetric properties of the associated link Floer homologies.

\section{Construction}
\label{sec:MixedConstruction}

\subsubsection{Row desingularization}
\label{par:RowDesing}

All the machinery developped for the desingularization of a singular column can be adapted to singular rows by rotating all the pictures $90°$.
But then, vertical and horizontal strands are swapped and so are the nature of crossings.
Thus, for a given singular row in a standard configuration, \ie with its two $X$'s on the left of its $O$'s, we define its $0$ and $1$-resolution as shown in Figure \ref{fig:DesingRow}\index{resolution!Aresolution@$0$--resolution}\index{resolution!Bresolution@$1$--resolution}.

\begin{figure}[h]
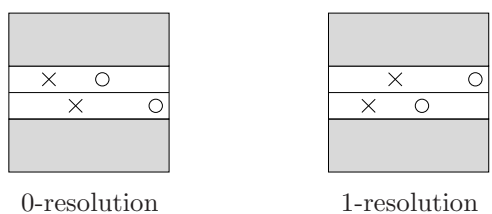

  $$
\begin{array}{ccc}
  \dessin{2.5cm}{rotGp} & \hspace{1cm} & \dessin{2.5cm}{rotGm}\\[1cm]
0\textrm{-resolution} && 1\textrm{-resolution}
\end{array}
$$  
  \caption{Resolution for a singular row in a standard configuration}
  \label{fig:DesingRow}
\end{figure}

They correspond, respectively, to a positive and a negative resolution of the associated double point.
We extend this definition to any singular row, using the fact that, up to cyclic permutations of the columns, any singular row can be set in a standard configuration.\\

Figure \ref{fig:rotCombinedGrid} shows how these two desingularizations can be depicted on the singular grid with some winding arcs, denoted $\alpha$ and $\beta$.

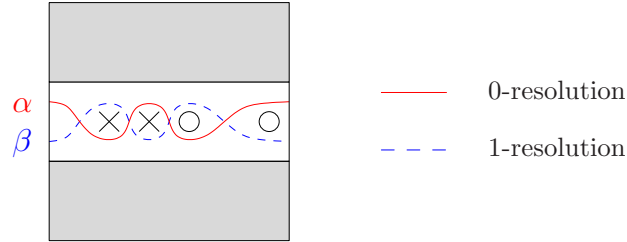
\begin{figure}[h]
$$
  \hspace{.5cm} \dessin{3.8cm}{rotCombinedG} \hspace{.5cm}
\begin{array}{cl}
  \xymatrix{\ar@*{[red]}@{-}[r]&} & 0\textrm{-resolution}\\[3mm]
  \xymatrix{\ar@*{[blue]}@{--}[r]&} & 1\textrm{-resolution}
\end{array}
$$
\caption{Combined grid for a singular row}
\label{fig:rotCombinedGrid}
\end{figure}

The arcs $\alpha$ and $\beta$ meet in four points which can be identified according to their position with regard to the decorations of the singular row.
By analogy with the case of singular columns, we denote such an intersection by the picture of the two decorations surrounding it.
We are essentially interested in the intersections of type $\dessinH{.4cm}{rotCstyle}$\index{zzzzaaa@$\dessinH{.4cm}{rotCstyle}$} and \emph{$\dessinH{.4cm}{rotCpstyle}$}\index{zzzzaab@$\dessinH{.4cm}{rotCpstyle}$}.

\subsubsection{Grid polygons}
\label{par:GridPolygons}

Now that the singular grids are, in a sense, more sophisticated, the definition of the differential requires more polygons.
Here, we give a general definition for \emph{grid polygons}.\\
Let $G$ be a singular grid with $k\in\N$ singular rows and columns.
For convenience, we label these singular elements with integer from $1$ to $k$.
For $I=(i_1,\cdots,i_k)\in\{0,1\}^k$, we denote by $G_I$ the regular grid obtained by performing a $i_j$-resolution to the $j^\textrm{th}$ singular element for all $j\in \llbracket 1,k\rrbracket$.\\
We set
$$
C^-(G)= \bigoplus_{I\in \{0,1\}^k}C^-(G_I)[\# 0(I)].
$$
Now, let $P$ be a set of peaks \ie a choice, for each singular row or column, of an arcs intersection of type $\dessin{.4cm}{Cstyle}$, $\dessin{.4cm}{Cpstyle}$, $\dessinH{.4cm}{rotCstyle}$ or $\dessinH{.4cm}{rotCpstyle}$.
Let $\T$ the torus obtained by gluing the borders of $G$.\\

  Let $x$ and $y$ be generators of  $C^-(G)$. A \emph{grid polygon connecting $x$ to $y$}\index{polygon!grid polygon} is an embedded polygon $\pi$ in $\T$, with set of corners $C(\pi)$, which satisfies:
  \begin{itemize}
  \item[-] $\p \pi$ is embedded in the grid lines (including the winding arcs);
  \item[-] $C(\pi)$ is a subset of $x\cup y \cup P$, with $C(\pi) \cap x \neq \emptyset$;
  \item[-] starting at a element of $C(\pi)\cap x$ and running positively along $\p \pi$, according to the orientation of $\pi$ inherited from the one of $\T$, we first meet a horizontal arc and, then, the corners of $\pi$ are successively and alternatively points of $x$ and $y$ with, possibly, an element of $P$ inserted between them ;
  \item[-] except on $\p \pi$, the sets $x$ and $y$ coincide;
  \item[-] for all element $c\in C(\pi)\cap P$, the interior of $\pi$ does not intersect the winding arcs in a neighborhood of $c$.
  \end{itemize}
The set $C(\pi)\cap P$ is called the \emph{set of peaks of $\pi$}\index{polygon!peak}\index{peak|see{polygon}}.\\
It is said to be \emph{empty}\index{polygon!empty} if $\Int (\pi)\cap x=\emptyset$.\\

\begin{figure}[h]
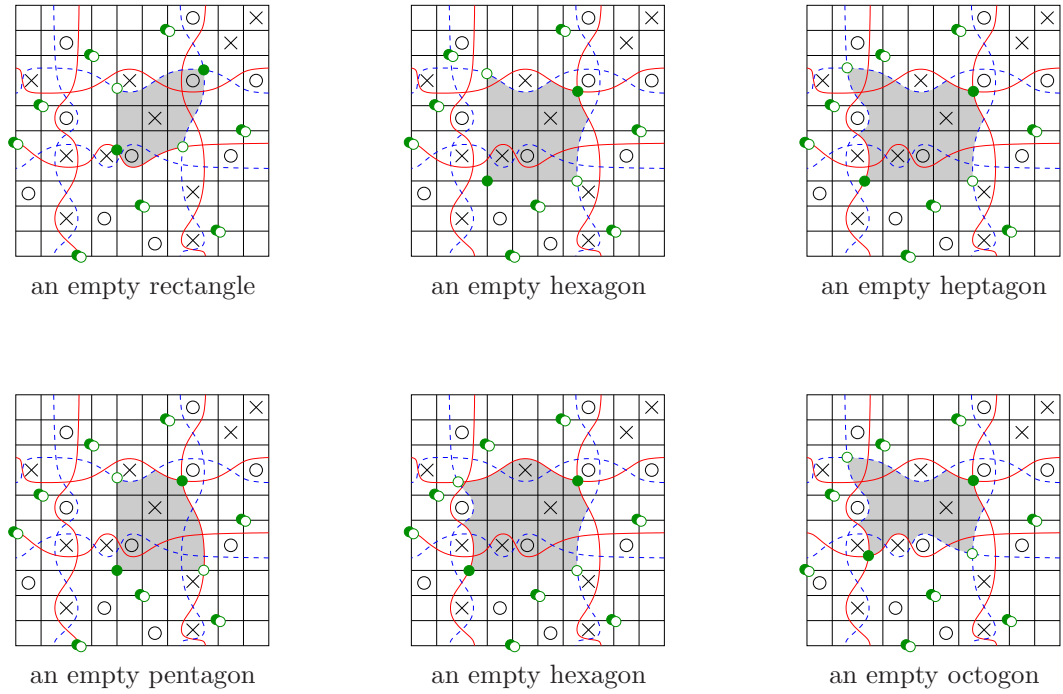

 $$
  \begin{array}{ccccc}
    \dessin{3.7cm}{Rectangle} & \hspace{.8cm} & \dessin{3.7cm}{Hexagone1} & \hspace{.8cm} & \dessin{3.7cm}{Heptagone}\\[1cm]
    \textrm{an empty rectangle} && \textrm{an empty hexagon} && \textrm{an empty heptagon}\\[1cm]
    \dessin{3.7cm}{Pentagone} & \hspace{.8cm} & \dessin{3.7cm}{Hexagone2} & \hspace{.8cm} & \dessin{3.7cm}{Octogone}\\[1cm]
    \textrm{an empty pentagon} && \textrm{an empty hexagon} && \textrm{an empty octogon}
  \end{array}
$$ 
  \caption{Examples of grid polygons: {\footnotesize dark dots describe the generator $x$ while hollow ones describe $y$. Grid polygons are depicted by shading. Depending on the number of peaks, there are five kind of grid polygons.}}
    \label{fig:GridPolygons}
\end{figure}

We denote by $\Pol^\circ (G)$ the set of all empty grid polygons on $G$ and by $\Pol^\circ (x,y)$ the set of those which connect $x$ to $y$.

\subsubsection{Signs for grid polygons}
\label{par:SignsGridPolygons}

Following Figure \ref{fig:Hex->Rect} (\S \ref{par:SpiHex}), we define a map
$$
\func{\phi}{\Pol^\circ (G)}{\Rect (G_{0\cdots 0})}
$$
which embank the peaks of a grid polygon using spikes.\\
Then, we define the map $\func{\e}{\Pol^\circ (G)}{\{\pm 1\}}$ by
$$
\e(\pi)=(-1)^{\mu(\pi)}\e\big(\phi(\pi)\big)
$$
where $\mu(\pi)$ counts the number of peaks in $\p \pi$ which point toward the left or toward the top.

\subsubsection{Differential}
\label{par:FinalDiff}

We can now define the map
$$
\func{\p^-_G}{C^-(G)}{C^-(G)}
$$
as the morphism of $\Z[U_{O_1},\cdots,U_{O_{n+k}}]$--modules defined on the generators by
$$
\p^-_G(x) = \sum_{\substack{y \textrm{ generator}\textcolor{white}{R}\\ \textrm{of }C^-(G)}\textcolor{white}{I}} \sum_{\pi \in \Pol^\circ (x,y)} \e(\pi)U_{O_1}^{O_1(\pi)}\cdots U_{O_{n+k}}^{O_{n+k}(\pi)}\cdot y.
$$

\begin{prop}
    The couple $\big(C^-(G),\p^-_G\big)$ is a filtrated chain complex.
\end{prop}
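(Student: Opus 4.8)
The plan is to reduce the statement to the three algebraic facts that a differential on $C^-(G)$ must satisfy (it decreases the Maslov grading by one, preserves the Alexander filtration, and squares to zero), and to obtain each of them by recycling the work already done for singular columns. For the grading statements, the point is that each grid polygon $\pi$ can be turned, via the embanking map $\func{\phi}{\Pol^\circ(G)}{\Rect(G_{0\cdots 0})}$, into a rectangle by attaching one spike per peak; Corollary \ref{PentDegree} (\S\ref{par:SpiPent}) and Proposition \ref{D2Preserves} (\S\ref{par:Completion}) already computed the grading shift contributed by a spike, and for a singular row one gets the analogous statement by the $90^\circ$ rotation described in paragraph \ref{par:RowDesing}. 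So first I would check that, for any grid polygon $\pi$ connecting $x$ to $y$, one has $M(x)-M(y)=-2\#(\pi\cap\O)+(\text{number of peaks of }\pi)$ and $A(x)-A(y)=\#(\pi\cap\X)-\#(\pi\cap\O)$; this is a peak-by-peak bookkeeping, the contribution of a column-peak being exactly the one in Corollary \ref{PentDegree}, that of a row-peak its rotated analogue. Combined with the shift $[\#0(I)]$ built into the definition of $C^-(G)=\bigoplus_I C^-(G_I)[\#0(I)]$, this gives that $\p^-_G$ lowers $M$ by one and respects the Alexander filtration, and simultaneously that it respects the third (resolution) filtration of Remark \ref{3emeFiltration} up to the usual defect.

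The core of the proof is $(\p^-_G)^2=0$. I would expand $(\p^-_G)^2$ as a sum over juxtapositions of two grid polygons $\pi_1,\pi_2$ and show the terms cancel in pairs. The bookkeeping is organized exactly as in the proof of Proposition \ref{Cchain} (\S\ref{par:Consistency}): when $\pi_1$ and $\pi_2$ have disjoint corner sets, the two orders of application furnish the cancelling partner, with signs handled by the anti-commutativity of the sign assignment for rectangles (which transfers through $\phi$, since $\e(\pi)=(-1)^{\mu(\pi)}\e(\phi(\pi))$ and $\mu$ is additive under juxtaposition). When $\pi_1$ and $\pi_2$ share a corner, one gets the familiar L-shape phenomenon with its two rectangle decompositions (Figure \ref{fig:Bidecomp}); when the shared corner is a peak, one gets the ``embanking'' configurations of Figures \ref{fig:AlternativePent} and \ref{fig:PentvsHex}, where the two polygons point in opposite directions at the peak and the extra sign from the parity of left/top-pointing peaks is exactly what makes the pair cancel. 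Finally the thin-annulus cases must be treated separately, as in Theorems \ref{D0Preserves} and \ref{D1Preserves}: horizontal and vertical thin annuli cancel within their own type after matching $\O$-decorations, and no annulus can be filled by a polygon carrying a peak since the winding arcs are always separated from each other by a genuine (regular) grid line. The new feature compared to \S\ref{par:Consistency} is only that peaks of types $\dessinH{.4cm}{rotCstyle}$, $\dessinH{.4cm}{rotCpstyle}$ may also appear; each such case is the $90^\circ$-rotation of a case already analysed, with $X$'s and $O$'s and the two slope signs interchanged, so no genuinely new configuration arises.

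The step I expect to be the main obstacle is the sign verification in the mixed peak cases — those juxtapositions of two grid polygons where one polygon's peak sits on a singular row and the other's on a singular column, or where the shared corner is a peak of one polygon and an ordinary corner of the other. There the sign carried by $\e$ depends on three separately varying inputs: the parity of $\mu$ (left/top-pointing peaks), the rectangle sign $\e(\phi(\cdot))$ pulled back through the spike-filling, and the homological-order sign implicit in writing $(\p^-_G)^2$. One must check these combine to $-1$ across each candidate cancelling pair, and the rotation symmetry does not quite do this for free because the rotation swaps ``left'' with ``top'' and changes crossing signs, so the pairing it induces on configurations has to be matched up carefully. As in \S\ref{par:SemiSingMap}, the cleanest route is to fill every polygon to a rectangle, translate all rectangles into a common grid (legitimate since $\e(\phi(\cdot))$ does not see the decorations), compute in the spin group $\Sn$ using Lemma \ref{SpinProp}, and reduce the remaining count of cases by the rotation involution; I would present a representative cancelling pair for each of the finitely many configuration types and assert the rest follow {\it mutatis mutandis}, exactly in the style of the existing invariance proofs.
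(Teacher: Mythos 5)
Your plan reproduces the paper's reduction to Proposition \ref{Cchain}: check the gradings polygon by polygon, prove $(\p^-_G)^2=0$ by cancelling juxtapositions (disjoint corners, $L$-shapes, embanked peaks, thin annuli), and carry every singular-column fact to its singular-row twin via the $90^\circ$ rotation. The one step that fails as written is your Maslov formula $M(x)-M(y)=-2\#(\pi\cap\O)+p$, where $p$ is the number of peaks of $\pi$: each peak should \emph{lower} the shift rather than raise it, and you have dropped the rectangle base constant, so it should read $M(x)-M(y)=1-p-2\#(\pi\cap\O)$ (compare Proposition \ref{RectGradings} for $p=0$ and Corollary \ref{PentDegree} for $p=1$). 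This is not cosmetic: the shift $[\#0(I)]$ contributes $+p$ to the total Maslov difference and the $U_O$ factors contribute $-2\#(\pi\cap\O)$ on the target, so with the corrected formula $\p^-_G$ has constant homological degree $-1$, whereas your version would give degree $-2p$, which is not even homogeneous across the summands and hence does not define a chain complex. Once that is fixed, the rest of your outline --- the peak-by-peak bookkeeping, the sign analysis by filling to rectangles and computing in $\Sn$, the observation that winding arcs are separated by regular grid lines, and the flagging of mixed row/column peak configurations as the genuinely new sign checks --- matches the argument the paper is referring to.
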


The proof is totally similar to the proof of Proposition \ref{Cchain} (\S \ref{par:Consistency}).

\subsubsection{Invariance theorem}
\label{par:FinalStatement}

Let $G$ be a singular grid diagram.
By convention, we draw the associated singular link diagram $D$ by bending the singular strands to the right or to the top.
Now, we consider a set of winding arcs for every singular row or column.
Then, we choose an orientation for every double point \ie an orientation of the plan spanned by the two transverse tangent vectors at the double point.
Subsequently, we define a set of peaks $P$ by choosing, for a singular row (resp. singular column), the arc intersection of type $\dessin{.4cm}{Cstyle}$ (resp. $\dessinH{.4cm}{rotCstyle}$) if the orientation for the associated double point coincides with the planar orientation inherited from the one of $D$.
Otherwise, we choose the other arc intersection.\\

Finally, we define the chain complex $(C^-(G),\p^-_G)$ as above.

\begin{theo}\label{FinalStatement}
  The homology $H_*(C^-(G),\p^-_G)$ depends only on the associated singular link and on the choice of orientation of the double points.
\end{theo}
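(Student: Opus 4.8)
The plan is to reduce Theorem \ref{FinalStatement} to the invariance statements already established (or sketched) in Chapter \ref{chap:Invariance} and its appendices, via the new refined Reidemeister-type result for \emph{mixed} singular grids proved in this appendix. Concretely, by the mixed elementary moves theorem (\S\ref{par:MixedElemMoves}), any two mixed singular grids presenting the same singular link are connected by a finite sequence of cyclic permutations of regular rows/columns, commutations of regular rows/columns, (restricted) stabilizations/destabilizations, and rotations. So it suffices to prove that the homology $H_*(C^-(G),\p^-_G)$, together with the dependence only on the orientation data of the double points, is unchanged under each of these four types of moves, plus the auxiliary invariances under isotopy of the winding arcs and under the choice, for each singular row or column, of which $\dessin{.4cm}{Cstyle}$-type (resp. $\dessinH{.4cm}{rotCstyle}$-type) intersection is used as a peak once the orientation of the double point is fixed. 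The point of using mixed grids is precisely that the delicate fully-singular commutation and flip moves of Theorem \ref{SingElemMoves} are replaced by the single rotation move, which, after passing to the resolution filtration, projects onto regular commutations only.

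\textbf{Order of the steps.} First I would handle invariance under isotopies of the winding arcs: the argument is a verbatim generalization of \S\ref{par:ArcsIsotopies}, exhibiting the chain isomorphism $\varphi = \Id \mp h$ built from small spikes (now for whichever singular row or column is affected), using that a rectangle-plus-spike is a pentagon, a pentagon-plus-spike is a hexagon, etc.; the only new feature is the horizontal version of a small spike for singular rows, which is obtained from Figure \ref{fig:Spikes} by a $90^\circ$ rotation, and the sign bookkeeping is governed by the same rule $\mu(\pi)$ (number of peaks pointing left \emph{or up}). Second, since the peak-choice for a fixed double-point orientation is exactly an ``isotopy of arcs'' interchanging $\dessin{.4cm}{Cstyle}$ with a neighbouring intersection of the same type, this step is subsumed by the first. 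Third, regular cyclic permutations are trivial with $\FF_2$-coefficients (all polygons live on the torus $\T$) and, for $\Z$-coefficients, follow from the essential uniqueness of the sign assignment on rectangles proved in \cite{MOST}, \cite{Gallais}, exactly as in the opening remarks of \S\ref{sec:Invariance}; regular commutations are handled by the resolution filtration of \S\ref{par:LastFiltration}: the associated graded map is, on each desingularized summand, the commutation quasi-isomorphism $\phi_{\alpha\beta}$ of \cite{MOST}, so Corollary \ref{QuasiIso} (\S\ref{par:MapCones}) applies. Fourth, stabilization/destabilization is the argument of \S\ref{par:RegStab}--\ref{par:FgrExtension}, whose points i), ii), v) are noted there to extend trivially to the singular case, whose point iii) is replaced by the crushing filtration of Proposition \ref{CrushingFilt} (\S\ref{par:CrushingFiltration}), and whose points iv), vi) go through as in the regular case; in the mixed setting the crushed row or column may carry peaks of singular \emph{rows} as well, but the restrictions of \S\ref{par:SingGridEtElemMoves} still guarantee that arcs can be chosen so no peak lies on a crushed line, so the combinatorics is unchanged. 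Fifth, and this is the substantive new case, I would treat the rotation move.

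\textbf{The rotation move and the main obstacle.} For the rotation move I would again use the filtration associated to the third grading (number of positively resolved singular rows/columns, \S\ref{par:MainDefinition}), which projects the move onto a sequence of \emph{regular} column commutations, exactly as displayed in Figure \ref{fig:Rot->Flip}: rotating a singular row into a singular column amounts, on each desingularized summand, to a chain of regular commutations, for each of which a quasi-isomorphism is already available (\S\ref{par:RegCommutation}); composing them gives a graded quasi-isomorphism, and Corollary \ref{QuasiIso} (\S\ref{par:MapCones}) upgrades it to a filtered one. One checks that the orientation datum attached to the double point is carried along bijectively under rotation (a rotation does not reverse the orientation of the plane spanned by the tangent vectors), so the stated dependence is respected. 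The genuinely delicate point — the main obstacle — is Step 5's extension of the graded quasi-isomorphism to a \emph{filtered} chain map: this is precisely ``Step III'' flagged as incomplete in \S\ref{par:Strategy} for the fully-singular commutation and the flip, and the mixed reformulation is what makes it tractable, because one now only ever needs the \emph{semi-singular} commutation chain map $\psi$ of \S\ref{par:SemiSingMap} (whose filtered-ness is established, modulo the lengthy but finite sign tables there) rather than a brand-new map with two interacting singular columns. Concretely I would argue that a rotation is, up to regular moves, a single semi-singular commutation in disguise (a singular column commuted past the regular column created by ``unrolling'' the singular row), so the filtered map $\psi$ of \S\ref{par:SemiSingMap} does the job directly; verifying that dictionary — that the multi-combined grid of \S\ref{par:MultiCombGrid} specializes correctly to the rotation configuration, and that the exceptional hexagon $\HH$ and the maps $\phi_R,\phi_L,\phi_{ex}$ behave as required — is the one place where a careful, case-by-case check of polygon configurations (analogous to Figures \ref{fig:ConfigFH}--\ref{fig:ConfigFB}) cannot be avoided, though for $\FF_2$-coefficients or the graded versions the exceptional term $\phi_{ex}$ drops out and the verification is much shorter.
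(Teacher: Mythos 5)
Your overall framework is right: reduce to the mixed elementary moves (cyclic permutations, regular commutations, restricted (de)stabilizations, rotations), observe that the first four plus the arc-isotopy and peak-choice invariances carry over essentially verbatim from Chapter III, and then isolate the rotation move as the one genuinely new case to be settled via the resolution filtration and Corollary \ref{QuasiIso}. Your steps 1--4 match the paper and are sound.

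The gap is in your treatment of the rotation move. You propose to view a rotation as ``a single semi-singular commutation in disguise'' and then invoke the map $\psi$ of \S\ref{par:SemiSingMap}. That dictionary is not established, and it is not what the paper does --- nor is it clear it can work, since a rotation changes the grid dimensions (replacing a $3\times 4$ subgrid by a $4\times 3$ one) and swaps a singular column for a singular row, whereas a semi-singular commutation preserves the grid size and the singular direction and only slides a singular column past an adjacent regular one. The paper's actual construction for the rotation move is direct: it introduces a genuinely new chain map $\Phi=\phi_{\gamma_1\beta_1}\circ\psi\circ\phi_{\beta_2\gamma_2}+\phi$ on $C^-(G_{\alpha_1\beta_2})$ (and $\Id$ on $C^-(G_{\beta_1\beta_2})$), where $\psi$ is the spike-pair map which is simply the identity on permutations, and $\phi$ is assembled from new $\beta\gamma\beta$-polygons (via $\varphi_1$) and grid polygons with two prescribed peaks (via $\varphi_2$). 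The commutativity of $\Phi$ with the differentials then rests on the visual correspondence of polygon configurations in Figure \ref{fig:Magie} together with the key algebraic identity borrowed from \cite{MOST} (Props.\ $3.2$, $4.24$),
$$
\Id + \phi_{\gamma_2\beta_2}\circ\phi_{\beta_2\gamma_2} + \p^-_{G_{\alpha_1\beta_2}}\circ\varphi_1 + \varphi_1\circ\p^-_{G_{\alpha_1\beta_2}}\equiv 0,
$$
none of which appears in your proposal. In short, you correctly identify \emph{where} the difficulty is (building the filtered chain map for rotation, as opposed to just a graded quasi-isomorphism), but the mechanism you offer for closing it --- re-using $\psi$ from the semi-singular commutation --- is the wrong map, and you acknowledge yourself that the needed verification ``cannot be avoided'' without actually supplying it. The correct ingredient is the explicit $\Phi$ with $\beta\gamma\beta$-polygons, after which the filtration argument you outline does apply.

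A second, smaller omission: the paper also checks that the four variants of the rotation move (Figure \ref{fig:FourRotations}), corresponding to the two rotation directions and the two peak conventions, simultaneously respect or simultaneously flip the planar-induced orientation of the double point, which is what makes the dependence ``only on the choice of orientation of the double points'' literally true. Your proposal asserts this (``a rotation does not reverse the orientation of the plane spanned by the tangent vectors'') but does not tie it to the choice of which arcs intersection serves as a peak, which is the point that actually needs to be matched.
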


\section{Invariance under rotation}
\label{sec:MixedInvariance}

What is left to prove Theorem \ref{FinalStatement} is to deal with the rotation moves.
As a matter of fact, the proofs of invariance under other elementary grid moves and others convention choices given earlier straighforwardly extend to the mixed grid case.

\subsubsection{Presentation of the rotation move}
\label{par:PresentationRotation}

Let $G_h$ and $G_v$ be two grids which differ only from a rotation move:
$$
\begin{array}{ccc}
  \dessin{2.2cm}{RotD1} & \ \longleftrightarrow \  & \dessin{2.2cm}{RotD2}.\\[1.3cm]
G_h && G_v
\end{array}
$$

We consider the four desingularization of the involved singular row and column.
They all can be obtained by removing all winding arcs but two in the following grid $G$ (see (\ref{eq:RotInvDiag})):
$$
\dessin{5cm}{TotalGrid}.
$$

We denote by $G_{XY}$ where $(X,Y)\in\{\alpha_1,\beta_1,\gamma_1\}\times\{\alpha_2,\beta_2,\gamma_2\}$ the grid obtained by considering in $G$ only the arcs $X$ and $Y$.\\

The chain complexes $C^-(G_h)$ (resp. $C^-(G_v)$) corresponds to the mapping cones of 
$$
\func{f_{\alpha_1\beta_1}^{\dessin{.2cm}{Cstyle}}}{C^-(G_{\alpha_1\beta_2})}{C^-(G_{\beta_1\beta_2})}\textrm{ (resp. }\func{f_{\alpha_2 \beta_2}^{\dessinH{.2cm}{rotCstyle}}}{C^-(G_{\beta_1\alpha_2})}{C^-(G_{\beta_1\beta_2})}\textrm{)}
$$
which counts the grid polygons containing as a peak the element of $\alpha_1\cup\beta_1$ (resp. $\alpha_2\cup\beta_2$) of type $\dessin{.4cm}{Cstyle}$ (resp. $\dessinH{.4cm}{rotCstyle}$).\\
Moreover, the positive desingularization of $G_h$ and $G_v$ can be linked by a sequence of two regular commutations.
Quasi-isomorphisms $\func{\phi_{\beta_2\gamma_2}}{C^-(G_{\alpha_1\beta_2})}{C^-(G_{\alpha_1\gamma_2})}$ and $\func{\phi_{\beta_1\gamma_1}}{C^-(G_{\gamma_1\alpha_2})}{C^-(G_{\beta_1\alpha_2})}$ have already been defined for these moves.\\

The following diagram sums up all the grids and the chain maps:
\begin{eqnarray}
  \xymatrix@!0@C=3.5cm@R=3.5cm{
    \dessin{2cm}{MIT1} \ar[r]^{\phi_{\beta_2\gamma_2}} \ar[d]_{f_{\alpha_1\beta_1}^{\dessin{.2cm}{Cstyle}}} \ar@/_.55cm/@{-->}[drrr]^\phi & \dessin{2cm}{MIT2} \ar@{-->}[r]^\psi & \dessin{2cm}{MIT3} \ar[r]^{\phi_{\gamma_1\beta_1}} & \dessin{2cm}{MIT4} \ar[d]_{f_{\alpha_2\beta_2}^{\dessinH{.2cm}{rotCstyle}}}\\
    \dessin{2cm}{MIT5} \ar[rrr]^\sim_{\Id} &&& \dessin{2cm}{MIT5}}
  \label{eq:RotInvDiag}
\end{eqnarray}

\subsubsection{Morphisms}
\label{par:RotMorph}

Now, to complete the diagram and make it anti-commuting, we define two maps
\begin{gather*}
  \func{\psi}{C^-(G_{\alpha_1\gamma_2})}{C^-(G_{\gamma_1\alpha_2})}\\
  \func{\phi}{C^-(G_{\alpha_1\beta_2})}{C^-(G_{\beta_1\beta_2})}.
\end{gather*}

The map $\psi$ send a generator $x\in C^-(G_{\alpha_1\gamma_2})$ to the unique generator $y\in C^-(G_{\gamma_1\alpha_2})$ connected to $x$ by a pair of disjoint spikes, containing no $O$ or $X$.
When $\alpha_1\cap\gamma_2\in x$, the spikes are then degenerated and $x=y$ as sets of dots on $G$.

\begin{figure}[h]
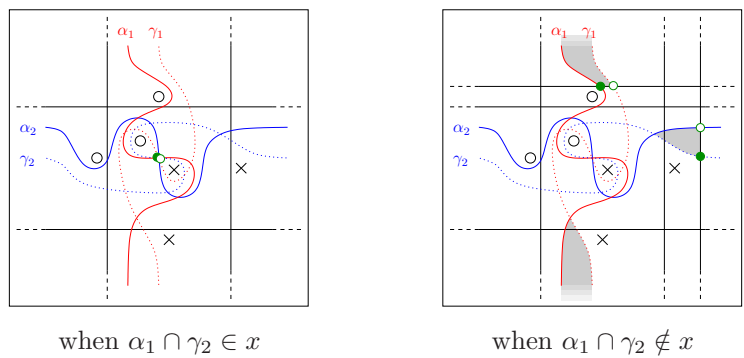

  $$
  \begin{array}{ccc}
    \dessin{4cm}{Psi1} & \hspace{1cm} & \dessin{4cm}{Psi2} \\[2cm]
    \textrm{when }\alpha_1\cap\gamma_2\in x &&  \textrm{when }\alpha_1\cap\gamma_2\notin x
  \end{array}
  $$
  \caption{A picture for $\psi$:  {\footnotesize the generators $x$ and $y$ coincide except concerning the dark dot which belong to $x$ and the hollow one which belong to $y$. Spikes are depicted by shading.}}
  \label{fig:Psi}
\end{figure}

With generators seen as permutations, the map $\psi$ is nothing more than the identity map.\\ 

To define $\phi$, we need to introduce a new kind of polygons.
For any generators $x$ and $y$ in $C^-(G_{\alpha_1\beta_2})$, a polygon $\Pi=\overline{\pi \setminus B}$ is a \emph{$\beta\gamma\beta$-polygons} connecting $x$ to $y$ if
\begin{itemize}
\item[-] $\pi$ is a grid polygon in $\Pol^\circ (G_{\alpha_1\beta_2})$ connecting $x$ to $y$;
\item[-] $B$ is one of the two bigons delimited by the arcs $\beta_2$ and $\gamma_2$;
\item[-] $B\cap\beta_2\subset\p \pi \cap \p B$.
\end{itemize}
It is said to be \emph{empty} if $\Int (\Pi)\cap x=\emptyset$.

\begin{figure}[h]
 $$
  \begin{array}{ccc}
    \dessin{4cm}{abaPol1} & \hspace{1cm} & \dessin{4cm}{abaPol2}
  \end{array}
$$ 
  \caption{Examples of $\beta\gamma\beta$-polygons: {\footnotesize dark dots describe the generator $x$ while hollow ones describe $y$. Grid polygons are depicted by shading.}}
    \label{fig:bgbPolygons}
\end{figure}

We denote by $\beta\gamma\beta$-$\Pol^\circ (x,y)$ the set of all empty $\beta\gamma\beta$-polygons connecting $x$ to $y$.\\

Now, we define the map $\func{\varphi_1}{C^-(G_{\alpha_1\beta_2})}{C^-(G_{\alpha_1\beta_2})}$ as the morphism of $\Z[U_{O_1},\cdots,U_{O_{n+k}}]$--modules defined on the generators by
$$
\varphi_1(x) = \sum_{\substack{y \textrm{ generator}\textcolor{white}{R}\\ \textrm{of }C^-(G_{\alpha_1\beta_2})}\textcolor{white}{I}} \sum_{\Pi \in \beta\gamma\beta\textrm{-}\Pol^\circ (x,y)} \e(\pi)U_{O_1}^{O_1(\Pi)}\cdots U_{O_{n+k}}^{O_{n+k}(\Pi)}\cdot y
$$
where $\pi$ is the element of $\Pol^\circ (G_{\alpha_1\beta_2})$ such that $\Pi=\overline{\pi \setminus B}$ for a bigon $B$.\\

We define also $\func{\varphi_2}{C^-(G_{\alpha_1\gamma_2})}{C^-(G_{\beta_1\beta_2})}$ by
$$
\varphi_2(x) = \sum_{\substack{y \textrm{ generator}\textcolor{white}{R}\\ \textrm{of }C^-(G_{\beta_1\beta_2})}\textcolor{white}{I}} \sum_{\pi \in \Pol^\circ (x,y)} \e(\pi)U_{O_1}^{O_1(\pi)}\cdots U_{O_{n+k}}^{O_{n+k}(\pi)}\cdot y
$$
where $x$ is a generator of $C^-(G_{\alpha_1\gamma_2})$ and $\Pol^\circ (x,y)$ is the set of empty grid polygons connecting $x$ to $y$. The latter correspond to grid polygons which contain the topmost element of $\alpha_1\cap\beta_1$ and the leftmost of $\beta_2\cap\gamma_2$ as peaks.\\

Then we can set $\phi=f_{\alpha_1\beta_1}^{\dessin{.2cm}{Cstyle}}\circ \varphi_1+\varphi_2\circ\phi_{\beta_2\gamma_2}$.
Since $C^-(G_v)$ and $C^-(G_h)$ can, respectively, be seen as the mapping cones of $f_{\alpha_1\beta_1}^{\dessin{.2cm}{Cstyle}}$ and $f_{\alpha_2\beta_2}^{\dessinH{.2cm}{rotCstyle}}$, we finally define $\func{\Phi}{C^-(G_v)}{C^-(G_h)}$ by
$$
\Phi(x)=\left\{
  \begin{array}{ll}
    \phi_{\gamma_1\beta_1}\circ\psi\circ\phi_{\beta_2\gamma_2}(x) + \phi(x) & \textrm{if }x\textrm{ is a generator of }C^-(G_{\alpha_1\beta_2})\\
    x & \textrm{if }x\textrm{ is a generator of }C^-(G_{\beta_1\beta_2})
  \end{array}
\right..
$$

\subsubsection{Quasi-isomorphism}
\label{par:RotQuasiIsom}

\begin{prop}
  The map $\Phi$ commutes with the differentials.
\end{prop}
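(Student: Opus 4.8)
The statement to establish is that $\Phi$ commutes with the differentials $\p^-_{G_v}$ and $\p^-_{G_h}$. Since both $C^-(G_v)$ and $C^-(G_h)$ are mapping cones (of $f_{\alpha_1\beta_1}^{\dessin{.2cm}{Cstyle}}$ and $f_{\alpha_2\beta_2}^{\dessinH{.2cm}{rotCstyle}}$ respectively), the verification splits according to the two summands of the source. On the summand $C^-(G_{\beta_1\beta_2})$, where $\Phi$ is the identity, the relation reduces to showing that the differential $\p^-_{G_{\beta_1\beta_2}}$ together with the maps $f_{\alpha_1\beta_1}^{\dessin{.2cm}{Cstyle}}$ and $f_{\alpha_2\beta_2}^{\dessinH{.2cm}{rotCstyle}}$ close up correctly; this is a direct consequence of the fact that $\big(C^-(G_v),\p^-_{G_v}\big)$ and $\big(C^-(G_h),\p^-_{G_h}\big)$ are already chain complexes (Proposition in \S\ref{par:FinalDiff}), so no extra work is needed there. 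The real content lies on the summand $C^-(G_{\alpha_1\beta_2})$, where $\Phi = \phi_{\gamma_1\beta_1}\circ\psi\circ\phi_{\beta_2\gamma_2} + \phi$ with $\phi = f_{\alpha_1\beta_1}^{\dessin{.2cm}{Cstyle}}\circ\varphi_1 + \varphi_2\circ\phi_{\beta_2\gamma_2}$.

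\textbf{First steps.} I would first record the intermediate commutation relations that are already available: $\phi_{\beta_2\gamma_2}$ and $\phi_{\gamma_1\beta_1}$ anti-commute with the differentials of the corresponding regular grids (shown in \S\ref{par:RegCommutation}), and $\psi$, being the identity map on permutations (Figure \ref{fig:Psi}), commutes with the differentials up to the usual spike bookkeeping exactly as in the proof of the isotopy-of-arcs invariance (\S\ref{par:ArcsIsotopies}, Lemma about $\varphi$). Composing these three, $\phi_{\gamma_1\beta_1}\circ\psi\circ\phi_{\beta_2\gamma_2}$ satisfies a commutation relation with a controlled error term, and that error must be cancelled by the contribution of $\phi$. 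So the heart of the argument is the identity
$$
\p^-_{G_h}\circ\phi - \phi\circ\p^-_{G_v} \;=\; -\big(\p^-_{G_h}\circ(\phi_{\gamma_1\beta_1}\circ\psi\circ\phi_{\beta_2\gamma_2}) - (\phi_{\gamma_1\beta_1}\circ\psi\circ\phi_{\beta_2\gamma_2})\circ\p^-_{G_v}\big),
$$
which I would prove by the standard juxtaposition-of-polygons technique: expand every composition as a sum over stacked grid polygons and $\beta\gamma\beta$-polygons on the multi-combined grid $G$, and show that all terms either cancel in pairs (because a juxtaposition admits two decompositions differing by a sign, as in the proof of Theorem \ref{D0Preserves} and Proposition \ref{Cchain}) or match the terms coming from the $\psi$-error. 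The key geometric observations are: (i) attaching a bigon $B$ between $\beta_2$ and $\gamma_2$ to a grid polygon converts the $\gamma_2$-peak into a $\beta_2$-peak, which is exactly how $\varphi_1$ interpolates; (ii) the maps $\varphi_2$ and $f_{\alpha_1\beta_1}^{\dessin{.2cm}{Cstyle}}\circ\varphi_1$ together account for all grid polygons on $G_{\beta_1\beta_2}$ with peaks on both singular elements; and (iii) the sign function $\e$, governed by the count $\mu$ of left/top-pointing peaks, is compatible with these decompositions.

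\textbf{Main obstacle.} The hard part will be the sign analysis and the enumeration of special cases. As in the semi-singular commutation proof (\S\ref{par:SemiSingMap}), there will be thin-annulus configurations and exceptional terms (the analogue of $\phi_{ex}$ and $\HH$) where a polygon covers a degenerate region between two arcs and has a unique decomposition; these must be shown to cancel against each other, and the four sign sources (Order, Configuration, Peaks, Maps) must be tracked through each configuration so that every column in the summary table has an odd number of minus signs. I expect this to be the bulk of the work and, as the authors themselves note for the related commutation moves, potentially the step one would want to reduce to $\FF_2$-coefficients first and then upgrade using computations in the spin group $\Sn$ via Lemma \ref{SpinProp}. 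Once the chain-map property is established, invariance under rotation follows immediately: $\Phi$ restricts on the resolution filtration (\S\ref{par:LastFiltration}, Remark \ref{3emeFiltration}) to the identity and the two regular commutation quasi-isomorphisms $\phi_{\beta_2\gamma_2}$, $\phi_{\gamma_1\beta_1}$ composed with $\psi$, all of which are quasi-isomorphisms, so the associated graded map is a quasi-isomorphism and Corollary \ref{QuasiIso} (\S\ref{par:MapCones}) finishes the proof of Theorem \ref{FinalStatement}.
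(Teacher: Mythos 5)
You reduce the chain--map condition to a single identity on the summand $C^-(G_{\alpha_1\beta_2})$, which is the right starting point, but the proof stops there: the cancellation you announce --- a full expansion into juxtaposed grid polygons and $\beta\gamma\beta$-polygons, with a case enumeration and a four-source sign table in the style of \S\ref{par:SemiSingMap} --- is precisely the part you leave unexecuted, and you yourself flag it as the main obstacle. That is the heart of the proposition, so as written this is a plan rather than a proof. Moreover, the difficulties you anticipate (exceptional thin-annulus terms analogous to $\phi_{ex}$ and $\HH$, a possible retreat to $\FF_2$-coefficients) do not occur in the actual argument, which suggests that the mechanism you have in mind is not the one that makes the crossing terms cancel.

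The missing ideas are two precise identities that bypass the case analysis. First, $\varphi_1$ is not merely an ``interpolation'': by Propositions 3.2 and 4.24 of \cite{MOST} it is a homotopy realizing
$$
\Id + \phi_{\gamma_2\beta_2}\circ\phi_{\beta_2\gamma_2} + \p^-_{G_{\alpha_1\beta_2}}\circ\varphi_1 + \varphi_1\circ\p^-_{G_{\alpha_1\beta_2}}\equiv 0,
$$
and this is the sole reason the term $f_{\alpha_1\beta_1}\circ\varphi_1$ appears in $\phi$. Second, the peak-by-peak correspondence of polygons through $\psi$ (Figure \ref{fig:Magie}) gives, for every generator of $C^-(G_{\alpha_1\gamma_2})$,
$$
f_{\alpha_2\beta_2}\circ\phi_{\gamma_1\beta_1}\circ\psi + f_{\alpha_1\beta_1}\circ\phi_{\gamma_2\beta_2} + \p^-_{G_{\beta_1\beta_2}}\circ\varphi_2 + \varphi_2\circ\p^-_{G_{\alpha_1\gamma_2}} = 0,
$$
with matching signs and decorations. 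Composing the first identity on the left with $f_{\alpha_1\beta_1}$, using that $f_{\alpha_1\beta_1}$ anti-commutes with the differentials, precomposing the second with $\phi_{\beta_2\gamma_2}$ and adding, one obtains (\ref{eq:Formule}) in a few lines, over $\Z$ and with no exceptional configurations. If you insist on the brute-force route you must actually exhibit the pairwise cancellations and the sign bookkeeping you promise; but these two lemmas are exactly what the definition of $\Phi$ was engineered around, and without at least the homotopy identity for $\varphi_1$ your expansion has no structural reason to close up.
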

\begin{proof}
It is sufficient to prove that

\begin{eqnarray}
f_{\alpha_1\beta_1}^{\dessin{.2cm}{Cstyle}}+\Phi\circ\p^-_{G_{\alpha_1\beta_2}} \equiv f_{\alpha_2\beta_2}^{\dessinH{.2cm}{rotCstyle}}\circ\phi_{\gamma_1\beta_1}\circ\psi\circ\phi_{\beta_2\gamma_2} + \p^-_{G_{\beta_1\beta_2}}\circ\Phi.
\label{eq:Formule}
\end{eqnarray}

Let $x$ be a generator of $C^-(G_{\alpha_1\gamma_2})$, then it is clear from all the constructions made earlier that
\begin{eqnarray}
f_{\alpha_2\beta_2}^{\dessinH{.2cm}{rotCstyle}}\circ\phi_{\gamma_1\beta_1}\circ\psi(x) +  f_{\alpha_1\beta_1}^{\dessin{.2cm}{Cstyle}}\circ\phi_{\gamma_2\beta_2}(x)+\p^-_{G_{\beta_1\beta_2}}\circ\varphi_2(x) + \varphi_2\circ\p^-_{G_{\alpha_1\gamma_2}}(x)=0.
\label{eq:Magie}
\end{eqnarray}

As a matter of fact, Figure \ref{fig:Magie} shows how, thanks to $\psi$, the polygons involved in $\phi_{\gamma_1\beta_1}$ (resp. $f_{\alpha_2\beta_2}^{\dessinH{.2cm}{rotCstyle}}$) can be replaced by the polygons involved in $f_{\alpha_1\beta_1}^{\dessin{.2cm}{Cstyle}}$ (resp. $\phi_{\gamma_2\beta_2}$).\\

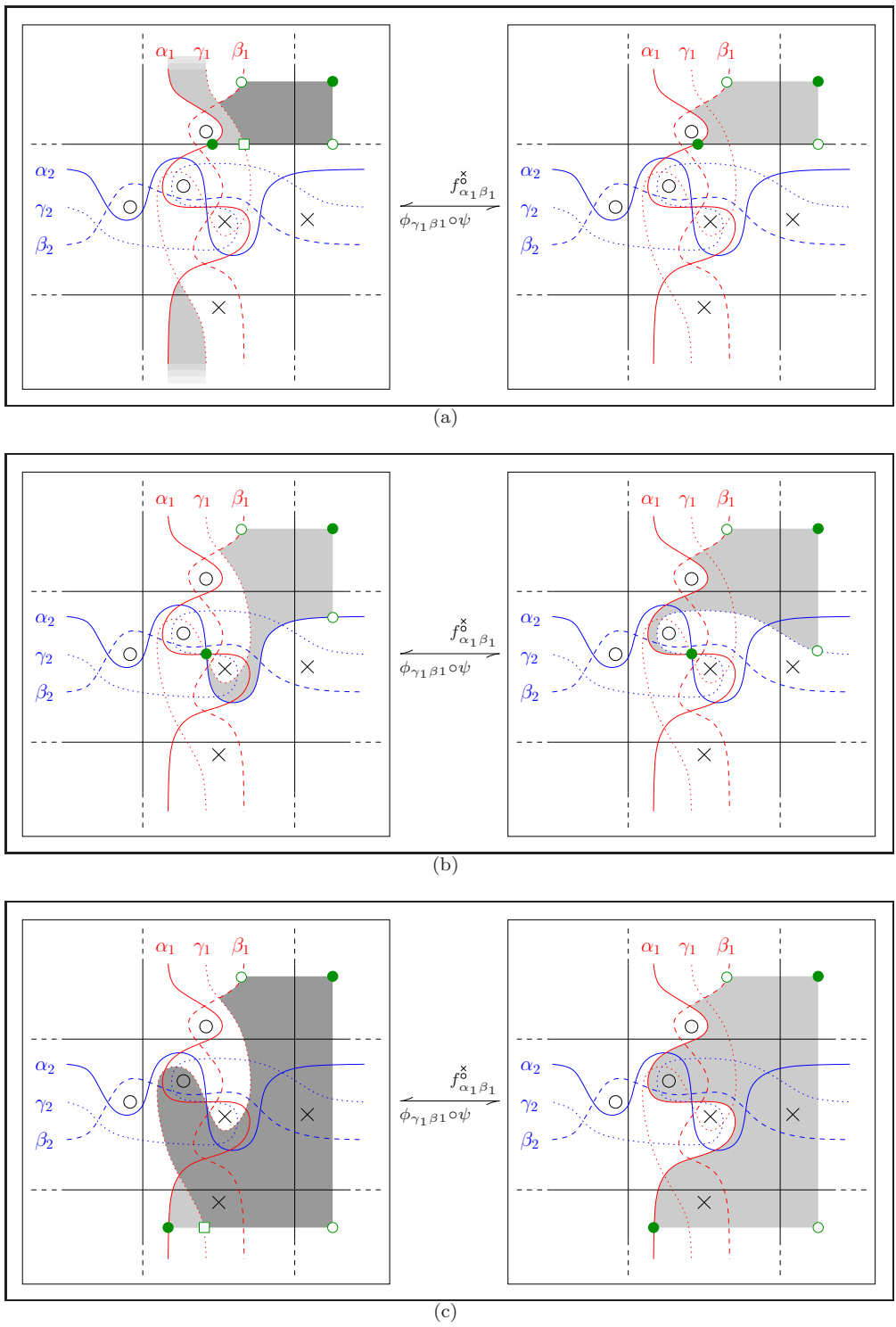
\begin{figure}[p]
\vspace{-1cm}
  \centering
    \subfigure[]{\fbox{\xymatrix@R=.6cm@C=1.45cm{
          \dessin{5.5cm}{Cor1a} \ar@{^<-_>}[r]_(.47){\phi_{\gamma_1\beta1}\circ \psi}^(.55){f_{\alpha_1\beta_1}^{\dessin{.2cm}{Cstyle}}} & \dessin{5.5cm}{Cor1b}}}}\\
    \subfigure[]{\fbox{\xymatrix@R=.6cm@C=1.45cm{
          \dessin{5.5cm}{Cor2a} \ar@{^<-_>}[r]_(.47){\phi_{\gamma_1\beta1}\circ \psi}^(.55){f_{\alpha_1\beta_1}^{\dessin{.2cm}{Cstyle}}} & \dessin{5.5cm}{Cor2b}}}}\\
    \subfigure[]{\fbox{\xymatrix@R=.6cm@C=1.45cm{
          \dessin{5.5cm}{Cor3a} \ar@{^<-_>}[r]_(.47){\phi_{\gamma_1\beta1}\circ \psi}^(.55){f_{\alpha_1\beta_1}^{\dessin{.2cm}{Cstyle}}} & \dessin{5.5cm}{Cor3b}}}}
  \caption{Correspondence between grid polygons: {\footnotesize Dark dots
      describe the initial generator while hollow ones describe the
      final one. Squares describe intermediate states. Polygons are
      depicted by shading. The lightest is the first to occur
      whereas the darkest one is the last. For each polygon(s), we indicate to which map it belongs. Note that associated polygons share the same initial and final generators and the same sign rule. Moreover, they contain the same decorations. Other cases are similar.}}
  \label{fig:Magie}
\end{figure}

Moreover, it follows from the proof of propositions 3.2 and 4.24 in \cite{MOST} that
$$
\Id + \phi_{\gamma_2\beta_2}\circ\phi_{\beta_2\gamma_2} + \p^-_{G_{\alpha_1\beta_2}}\circ\varphi_1 + \varphi_1\circ\p^-_{G_{\alpha_1\beta_2}}\equiv 0.
$$
Then
\begin{eqnarray*}
  0 & \equiv & f_{\alpha_1\beta_1}^{\dessin{.2cm}{Cstyle}} + f_{\alpha_1\beta_1}^{\dessin{.2cm}{Cstyle}}\circ\phi_{\gamma_2\beta_2}\circ\phi_{\beta_2\gamma_2} + f_{\alpha_1\beta_1}^{\dessin{.2cm}{Cstyle}}\circ\p^-_{G_{\alpha_1\beta_2}}\circ\varphi_1 + f_{\alpha_1\beta_1}^{\dessin{.2cm}{Cstyle}}\circ\varphi_1\circ\p^-_{G_{\alpha_1\beta_2}}\\[.3cm]
& \equiv & f_{\alpha_1\beta_1}^{\dessin{.2cm}{Cstyle}} + f_{\alpha_1\beta_1}^{\dessin{.2cm}{Cstyle}}\circ\phi_{\gamma_2\beta_2}\circ\phi_{\beta_2\gamma_2} - \p^-_{G_{\beta_1\beta_2}}\circ f_{\alpha_1\beta_1}^{\dessin{.2cm}{Cstyle}}\circ\varphi_1 + f_{\alpha_1\beta_1}^{\dessin{.2cm}{Cstyle}}\circ\varphi_1\circ\p^-_{G_{\alpha_1\beta_2}}\\
\end{eqnarray*}
Then, from (\ref{eq:Magie}), we obtain
\begin{eqnarray*}
  0 & \equiv & f_{\alpha_1\beta_1}^{\dessin{.2cm}{Cstyle}} - f_{\alpha_2\beta_2}^{\dessinH{.2cm}{rotCstyle}}\circ\phi_{\gamma_1\beta_1}\circ\psi\circ\phi_{\beta_2\gamma_2} - \p^-_{G_{\beta_1\beta_2}}\circ\varphi_2\circ\phi_{\beta_2\gamma_2} - \varphi_2\circ\p^-_{G_{\alpha_1\gamma_2}}\circ\phi_{\beta_2\gamma_2}\\
&& \hspace{6cm} - \p^-_{G_{\beta_1\beta_2}}\circ f_{\alpha_1\beta_1}^{\dessin{.2cm}{Cstyle}}\circ\varphi_1 + f_{\alpha_1\beta_1}^{\dessin{.2cm}{Cstyle}}\circ\varphi_1\circ\p^-_{G_{\alpha_1\beta_2}}\\[.3cm]
& \equiv & f_{\alpha_1\beta_1}^{\dessin{.2cm}{Cstyle}} + f_{\alpha_1\beta_1}^{\dessin{.2cm}{Cstyle}}\circ\varphi_1\circ\p^-_{G_{\alpha_1\beta_2}} + \varphi_2\circ\phi_{\beta_2\gamma_2}\circ\p^-_{G_{\alpha_1\beta_2}}  - f_{\alpha_2\beta_2}^{\dessinH{.2cm}{rotCstyle}}\circ\phi_{\gamma_1\beta_1}\circ\psi\circ\phi_{\beta_2\gamma_2}\\
&& \hspace{6cm}  - \p^-_{G_{\beta_1\beta_2}}\circ\varphi_2\circ\phi_{\beta_2\gamma_2} - \p^-_{G_{\beta_1\beta_2}}\circ f_{\alpha_1\beta_1}^{\dessin{.2cm}{Cstyle}}\circ\varphi_1.
\end{eqnarray*}

This proves (\ref{eq:Formule}).
\end{proof}

Now we consider the filtration which send a generator $x$ of $C^-(G_v)$ (resp $C^-(G_h)$) to $0$ or $1$ depending on the resolution of the considered singular column (resp. singular row).
The graded part associated to $\Phi$ is then a composition of quasi-isomorphisms.
Acoording to Corollary \ref{QuasiIso} (\S \ref{par:MapCones}), the map $\Phi$ induces thus an isomorphism in homology.\\

By symetry, the proof can be adapted to fit the four rotation moves given in Figure \ref{fig:FourRotations} which correspond, up to equivalency, to the two rotation moves and the two choice of arcs intersection.

\begin{figure}[h]
  $$
\xymatrix{
\dessin{2.5cm}{4Rot1a} \ar@{^<-_>}[r]_(.42){\dessin{.3cm}{Cstyle}}^(.57){\dessinH{.3cm}{rotCstyle}} & \dessin{2.5cm}{4Rot1b} & & \dessin{2.5cm}{4Rot2a} \ar@{^<-_>}[r]_(.42){\dessin{.3cm}{Cstyle}}^(.57){\dessinH{.3cm}{rotCpstyle}} & \dessin{2.5cm}{4Rot2b}\\
 \dessin{2.5cm}{4Rot3a} \ar@{^<-_>}[r]_(.42){\dessin{.3cm}{Cpstyle}}^(.57){\dessinH{.3cm}{rotCpstyle}} & \dessin{2.5cm}{4Rot3b} & &\dessin{2.5cm}{4Rot4a} \ar@{^<-_>}[r]_(.42){\dessin{.3cm}{Cpstyle}}^(.57){\dessinH{.3cm}{rotCpstyle}} & \dessin{2.5cm}{4Rot4b}
}
  $$
  \caption{Four rotation moves: {\footnotesize For each move, we indicate the convention for arcs intersection which is considered in the proof of invariance.}}
  \label{fig:FourRotations}
\end{figure}
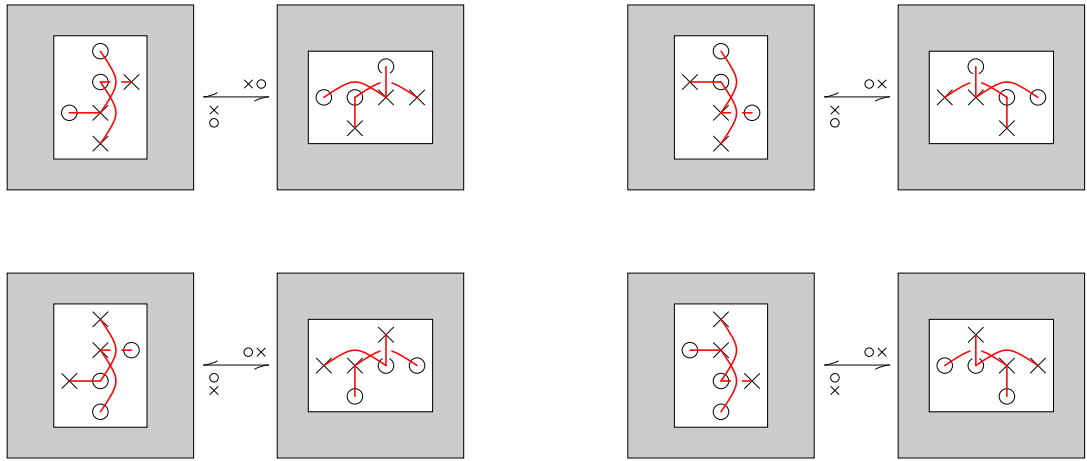

Finally we conclude by noticing that these four moves simultaneously change or preserve the convention for the choice of the arcs intersection and the orientation of the double point induced by the planar diagram.

\section{Symetry properties}
\label{sec:SymetryProperties}

Finally, we conclude with symetry properties which reflect the properties of the Alexander polynomial.

\subsubsection{Behavior of $HF^-$ with regard to usual knots operations}
\label{par:SymProp}

Let $L$ be an oriented link with $\ell$ components.
We denote by $L^!$ its mirror image and by $-L$ the link obtained by reversing the orientation.  
Moreover, we denote by $P$ a choice of orientation for all double points of $L$ and by $-P$ the opposite choice for all of them.

\begin{prop}
  If we denote by $\widehat{HF}(K,S)$ the singular link Floer homologies for a link $K$ obtained by considering a set of peaks $S$, then
  \begin{itemize}
  \item[-] $\widehat{HF}(L,P)\simeq  \widehat{HF}(-L,P)$;
  \item[-] $\forall i,j\in\N,\widehat{HF}_i^j(L,P)\simeq  \widehat{HF}_{i-2j}^{-j}(L,-P)$;
  \item[-] $\forall i,j\in\N,\widehat{HF}_i^j(L^!,P)\simeq\widehat{HF}_{-j}^{\circ}{}^{\hspace{-.2cm}-i+1-\ell+k}(L,P)$
  \end{itemize}
where $\widehat{HF}_j^\circ{}^i$ stands for the Maslov $i^\textrm{th}$ and Alexander $j^\textrm{th}$ group of cohomology associated to $\widehat{HF}$.
\end{prop}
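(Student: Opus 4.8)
The three symmetry statements should follow the same pattern as most other invariance proofs in this thesis: exhibit an explicit operation on mixed singular grids that realizes the desired symmetry of links, check that it induces a (filtered, graded or cograded) chain isomorphism or quasi-isomorphism on the complexes $C^-(G)$, and read off the effect on the Maslov and Alexander gradings from Proposition \ref{RectGradings} (\S\ref{par:Rect}) and Corollary \ref{PentDegree} (\S\ref{par:SpiPent}). For the first item, reversing the orientation of $L$ corresponds, at the level of grids, to swapping every $O$ with the $X$ in its row and column (equivalently, reflecting the running convention for horizontal strands); as already observed in paragraph \ref{par:FlippingGrids}, this only reverses the link orientation and gives a bijection of generators preserving both $M$ and $A$. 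One checks that it commutes with $\p_G^-$ after replacing $U_i$ by $-U_i$, exactly as in \S\ref{par:FlippingGrids}; since reversing orientation leaves the chosen set of peaks $P$ (the planar-orientation data on the double points) well-defined and unchanged up to this bijection, we get $\widehat{HF}(L,P)\simeq\widehat{HF}(-L,P)$.

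**Second item.** Changing the orientation choice for all double points, i.e. passing from $P$ to $-P$, means replacing every peak of type $\dessin{.4cm}{Cstyle}$ (or $\dessinH{.4cm}{rotCstyle}$) by the corresponding one of type $\dessin{.4cm}{Cpstyle}$ (resp. $\dessinH{.4cm}{rotCpstyle}$). The plan is to compare $H^-_{\dessin{.2cm}{Cstyle}}$ and $H^-_{\dessin{.2cm}{Cpstyle}}$ directly. The key computation is the grading behavior of pentagons and hexagons with peaks of the two types: redoing the case analysis in the proof of Corollary \ref{PentDegree} (\S\ref{par:SpiPent}) for a $\dessin{.4cm}{Cpstyle}$-peak shows that the two resolutions of a given double point exchange roles and that the complex built from $-P$ is, cube-by-cube, the same module but with the resolution grading (\S\ref{par:MainDefinition}) complemented and the internal differentials on the $G_I$ unchanged — geometrically this is the reflection exchanging the $O$'s and $X$'s surrounding a given arc-intersection. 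Tracking the resulting shift: each complemented double point shifts the homological degree $i$ by $-2j$ (in Alexander degree $j$) and negates the Alexander grading, which yields $\widehat{HF}_i^j(L,P)\simeq\widehat{HF}_{i-2j}^{-j}(L,-P)$. (This is consistent with the skein relation and with the behavior of $t^{(1-\ell)/2}\Delta$ under $t\mapsto t^{-1}$.)

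**Third item.** Mirror image. Reflecting a planar link diagram switches all crossings, so at the grid level one should reflect the grid along a diagonal — but that swaps singular rows and columns, which is precisely why mixed grids were introduced (Remark in \S\ref{par:FlippingGrids}; see the last sentence of Appendix \ref{appendix:MixedSimplification}). The plan is: reflect $G$ across the line $y=x$, which sends a mixed singular grid for $L$ to one for $L^!$, sends $\X$ to $\X$ and $\O$ to $\O$, preserves the peak-type data, and (after the sign correction $U_i\mapsto -U_i$ of \S\ref{par:FlippingGrids}) gives a module isomorphism. However this map does \emph{not} preserve the differential — it intertwines $\p_G^-$ with its transpose, i.e. it identifies $C^-(G_{L^!})$ with the dual cochain complex of $C^-(G_L)$ in the spirit of Proposition \ref{KhCohomologie} and its proof (\S\ref{par:KhoCodiff}--\S\ref{par:KhoDualMirror}). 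Combining this with the duality between $\Rect(G)$ and $\Rect(\overline{G})$ and with Proposition \ref{UCT} (\S\ref{par:Cohomologies}), and noting that reflection sends a pentagon pointing left/right to a spike-completion on the opposite side so that Maslov degree $i$ goes to $1-i$ up to the global shift $-\ell-k$ coming from the normalization of $M$ and $A$ (here one uses the computation of $M$, $A$ under reflection, again via Proposition \ref{RectGradings}), one gets $\widehat{HF}_i^j(L^!,P)\simeq \widehat{HF}_{-j}^{\circ}{}^{-i+1-\ell+k}(L,P)$.

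**Main obstacle.** The routine parts are the three grid operations and the verification that each is a module isomorphism after $U_i\mapsto -U_i$; these copy \S\ref{par:FlippingGrids} almost verbatim. The genuine work is bookkeeping the gradings: carefully recomputing $M$ and $A$ (and the resolution grading $\#O(I)$) under each operation, keeping track of the normalization constants $\frac{n-\ell}{2}$ and $\#0(I)$ in the definitions of \S\ref{par:MaslAlex} and \S\ref{par:MainDefinition}, and — for the mirror case — threading the Universal Coefficient Theorem through so that the torsion-shift in Proposition \ref{UCT} is correctly accounted for. I expect the sign/degree bookkeeping in the mirror statement (the interplay of the cochain-complex identification with the peak-orientation data $P$, which must be shown to be \emph{preserved} rather than reversed under reflection) to be the main subtlety; everything else is a direct transcription of arguments already carried out in the thesis.
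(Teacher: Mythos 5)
Your plan identifies the right general tools (grid operations, degree bookkeeping, Universal Coefficient Theorem), but it conflates the three distinct grid operations that the paper actually uses, and this confusion makes at least two of your three arguments non-functional as stated. Concretely: you invoke the reflection along $y=x$ both for item~1 (reversing orientation, citing \S\ref{par:FlippingGrids}) and for item~3 (mirror image), assigning it contradictory effects. Paragraph \ref{par:FlippingGrids} --- which you cite --- states precisely that this reflection sends $L$ to $-L$, preserves both $M$ and $A$, and commutes with the differential after $U_i\mapsto -U_i$; it therefore cannot also produce $L^!$, and it cannot also intertwine $\p_G^-$ with the codifferential. The duality of item~3 is instead realized in the paper by a $90^\circ$ rotation of the grid, which yields a grid for $-L^!$ (not $L^!$) and reverses the direction of all polygons counted by the differential, giving the identification with the dual cochain complex that Proposition \ref{UCT} then converts into the stated isomorphism. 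Your item~1 is also internally inconsistent: swapping every $O$ with the $X$ in its row and column is the operation the paper uses for \emph{item~2}, and it does \emph{not} preserve $M$ and $A$ --- it produces the twisted relations $M_2(x)-2A_2(x)=M_1(x)+n-\ell$ and $-A_2(x)=A_1(x)+n-\ell$, which is exactly where the degree shift in item~2 comes from.

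For item~2, your proposed direct comparison of the two peak choices rests on the claim that switching $P$ to $-P$ ``complements the resolution grading'' and leaves the internal differentials unchanged. That is not correct: which of the two desingularizations of a singular column is the $0$- and which the $1$-resolution is fixed by the arrangement of the four decorations, independently of the choice of peak; the peak only governs which pentagons and hexagons enter the connecting maps. You therefore do not obtain the shift $(i,j)\mapsto(i-2j,-j)$ from a cube complementation; you have essentially asserted the answer. The paper proves item~2 indirectly: the $\O\leftrightarrow\X$ swap sends $(L,P)$ to $(-L,-P)$, the graded differentials agree, and the degree relations above --- transported through $\widehat{HL}\otimes V^{\otimes(n-\ell)}$ by identifying $\widehat{HF}$ with $\widehat{HF}\otimes v_+$ on one side and with $\widehat{HF}\otimes v_-$ on the other --- produce $(i,j)\mapsto(i-2j,-j)$; composing with item~1 removes the $-L$ and gives $\widehat{HF}_i^j(L,P)\simeq\widehat{HF}_{i-2j}^{-j}(L,-P)$. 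So in all three items the tools you need do exist in the thesis, but not where you reached for them.
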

\begin{proof}
  Let $G$ be a grid diagram for $L$.
  Let $n$ be the size of the regular grids obtained by desingularizing all the singular rows and columns.\\
  
  Flipping $G$ along the line $x=y$ gives a grid $-G$ for $-L$.
  The same operation induces a bijection between the generators of $C^-(G)$ and those of $C^-(-G)$ seen as set of dots on the grids.
  Since it sends the $0$--resolution (resp. $1$--resolution) of a singular row or column to the $0$--resolution (resp. $1$--resolution) of its image; and since it doesn't affect the relative positions of dots and decorations, this bijection preserves the Maslov and Alexander degrees.
  Moreover, the orientation on double points inherited from the one of $-G$ is reversed compared with the one of $G$.
  So, as soon as we substitute $-U_{O_i}$ to $U_{O_i}$ in the image; and because of the antisymetry of the convention used with regard to the choice of orientation for the double points, it clearly commutes with the differential.\\
  This proves the first statement.\\
  
  The second is obtained by switching the role of $\O$ and $\X$.
  The new grid, denoted by $-G'$, describe then $-L$.
  We denote by $M_1$, $M_2$, $A_1$ and $A_2$ the Maslov and Alexander degrees associated to each grid.
  Because of their definition and since we have switched the $O$'s and the $X$'s, there is no difficulty in checking that, for a given generator $x$ seen as a set of dots,
  $$
  M_2(x)-2A_2(x)=M_1(x) + n -\ell
  $$
  $$
  -A_2(x)=A_1(x) + n -\ell
  $$
  Moreover, since they avoid the polygons containing any decoration, the differentials, when sending all the $U_O$ to zero, clearly coincide.
  Then, according to the Proposition \ref{Dunealautre} (\S \ref{par:RelationGraded}), for all integers $i$ and $j$, we have defined an isomorphism
  
  \begin{eqnarray}
    \left(\widehat{HF}(G,P)\otimes V^{\otimes (n-\ell)}\right)_i^j \simeq \left(\widehat{HF}(-G',-P)\otimes V^{\otimes (n-\ell)}\right)_{i-2j-n+\ell}^{-j - n +\ell}
    \label{eq:Orient}
  \end{eqnarray}
  
  As a matter of fact, switching the decorations also switch the conventions for the orientations of double points.\\
  Now, we denote by $v_+$ and $v_-$ the generators of, respectively, highest and lowest degree in $V^{\otimes (n-\ell)}$.
  Then, by identifying $\widehat{HF}$ in $\widehat{HF}\otimes V^{\otimes (n-\ell)}$ with $\widehat{HF}\otimes v_+$ in the left-hand side of (\ref{eq:Orient}), and with $\widehat{HF}\otimes v_-$ in its right-hand side, we obtain
  $$
  \widehat{HF}_i^j(G,P) \simeq \widehat{HF}_{i-2j}^{-j}(-G',-P).
  $$
  Finally, we use the first statement to conclude.\\
  
  For the last statement, we rotate $G$ $90°$ and get a grid $G^!$ for $-L^!$.
  Nature of desingularization for singular rows and columns are then swapped.
  We denote by $M_1$, $M'_2$, $A_1$ and $A'_2$ the Maslov and Alexander degrees associated to each grid.
  Since all dots and decorations are on distincts vertical and horizontal lines, we can check that, for a given generator $x$,
  $$
  M_1(x)+M_2(x)= 1 - n + k
  $$
  $$
  A_1(x) + A_1(x) = \ell - n
  $$
  The differential induced on $G$ by $G^!$ counts the preimages of a generator under the differential given by $G$.
  Then, after having substituing $-U_{O_i}$ to $U_{O_i}$, it corresponds to the codifferential defined by $G$ on the dual basis of the usual generators.
  Moreover, the changes on double points orientation and arcs intersections conventions correspond.
Finally, we obtain
$$
\left(\widehat{HF}(G^!,P)\otimes V^{\otimes (n-\ell)}\right)_i^j \simeq \left(\widehat{HF}^\circ(G,P)\otimes V^{\otimes (n-\ell)}\right)_{-j-n+\ell}^{-i+1-n+k}.
$$
As above, it induces then
$$
\widehat{HF}_i^j(G^!,P) \simeq \widehat{HF}_{-j}^\circ{}^{-i+1-\ell+k}(G,P).
$$
\end{proof}

\begin{remarque}
  Since a global reversing of the orientation of double points may modify the homology, we loose the symetry satisfied by link Floer homologies in the regular case. A counter-example is given in Appendix \ref{sec:Computations}. It corresponds to the knot $90_{44}$ with one singularized crossing.
\end{remarque}



\chapter{Computations}
\label{sec:Computations}

We have written a program in oCaml which computes both graded singular link Floer homologies $\widehat{HL}$ and  $\widehat{HL}_{\dessin{.2cm}{Cpstyle}}$ with $\FF_2$--coefficients for any given singular grid. But regrettably, times of computation make it unusable as soon as the grid exceeds $9$ rows.\\

In this last part, we gather results of computations. For every link, we also give the grid we have used. Maslov grading is given horizontally and Alexander one vertically. When grids become too large, we give the bigraded dimension only.\\
As far as possible, we have tried to give only computations which cannot be deduced from the other.\\

For knots with less than six crossings, orientation for double points is denoted by a $+$ sign if it corresponds to the planar orientation and by a $-$ sign otherwise.
For knots for more crossings, the computations are all made using the $\dessin{.4cm}{Cstyle}$ convention.

\section*{Trivial knot}
\label{sec:Trivial}
$$
\fbox{$\dessin{1.86cm}{Trivial}$}
$$

\vspace{.5cm}

\begin{center}
  \begin{tabular}{p{.3\columnwidth}p{.52\columnwidth}p{.15\columnwidth}}
    \centering{$\dessin{.8cm}{TrivialG}$} & 
    \centering{
      \begin{tabular}{|c|c|}
        \cline{2-2}
        \multicolumn{1}{c|}{} & 0 \\
        \hline
        {\Large \strut} 0 & $\2Z$ \\ 
        \hline 
      \end{tabular}}
    & \centering{$\dessin{.84cm}{Trivialns}$}
  \end{tabular}
\end{center}

\newpage{}

\section*{Trefoil \& its singularizations}
\label{sec:Trefoil}
$$
\fbox{$\dessin{3.1cm}{3_1}$}
$$

\vspace{.5cm}

\begin{center}
  \begin{tabular}{p{.3\columnwidth}p{.52\columnwidth}p{.15\columnwidth}}
    \centering{$\dessin{2cm}{3_1G}$} & 
    \centering{
      \begin{tabular}{|c|c|c|c|}
        \cline{2-4}
        \multicolumn{1}{c|}{} & 0 & 1 & 2\\
        \hline
        {\Large \strut} 1 & & &$\2Z$ \\ 
        \hline 
        {\Large \strut} 0 & & $\2Z$ &\\ 
        \hline
        {\Large \strut} -1 & $\2Z$ & &\\ 
        \hline 
      \end{tabular}}
    & \centering{$\dessin{1.4cm}{3_1ns}$}
  \end{tabular}
\end{center}

\begin{center}
  \begin{tabular}{p{.3\columnwidth}p{.52\columnwidth}p{.15\columnwidth}}
    \centering{$\dessin{2cm}{3_1sG}$} & 
    \centering{
      \begin{tabular}{|c|c|c|c|}
        \cline{2-4}
        \multicolumn{1}{c|}{} & 0 & 1 & 2\\
        \hline
        {\Large \strut} 1 & & &$\2Z$ \\ 
        \hline 
        {\Large \strut} 0 & & $\2Z^2$ &\\ 
        \hline
        {\Large \strut} -1 & $\2Z$ & &\\ 
        \hline 
      \end{tabular}}
    & \centering{$\dessin{1.4cm}{3_1s}$}
  \end{tabular}
\end{center}

\begin{center}
  \begin{tabular}{p{.3\columnwidth}p{.52\columnwidth}p{.15\columnwidth}}
    \centering{$\dessin{2cm}{3_1ssG}$} & 
    \centering{
      \begin{tabular}{|c|c|c|c|}
        \cline{2-4}
        \multicolumn{1}{c|}{} & 0 & 1 & 2\\
        \hline
        {\Large \strut} 1 & & & $\2Z$ \\ 
        \hline 
        {\Large \strut} 0 & & $\2Z^2$ & \\ 
        \hline
        {\Large \strut} -1 & $\2Z$ & & \\ 
        \hline 
      \end{tabular}}
    & \centering{$\begin{array}{c}
        \dessin{1.4cm}{3_1++} \\ \dessin{1.4cm}{3_1+-}
      \end{array}$}
  \end{tabular}
\end{center}

\begin{center}
  \begin{tabular}{p{.3\columnwidth}p{.52\columnwidth}p{.15\columnwidth}}
    \centering{$\dessin{2.4cm}{3_1sssG}$} & 
    \centering{Acyclic}
    & \centering{$\dessin{1.4cm}{3_1+++}$}
  \end{tabular}
\end{center}

\begin{center}
  \begin{tabular}{p{.3\columnwidth}p{.52\columnwidth}p{.15\columnwidth}}
    \centering{$\dessin{2.4cm}{3_1sssG}$} & 
    \centering{
      \begin{tabular}{|c|c|c|c|c|}
        \cline{2-5}
        \multicolumn{1}{c|}{} & 0 & 1 & 2 & 3\\
        \hline
        {\Large \strut} 1 & & & $\2Z$ & $\2Z$ \\ 
        \hline 
        {\Large \strut} 0 & & $\2Z^2$ & $\2Z^2$ & \\ 
        \hline
        {\Large \strut} -1 & $\2Z$ &  $\2Z$ & & \\ 
        \hline 
      \end{tabular}}
    & \centering{$\dessin{1.4cm}{3_1++-}$}
  \end{tabular}
\end{center}

\newpage{}

\section*{Figure eight \& its singularizations}
\label{sec:Fig8}
$$
\fbox{$\dessin{3.1cm}{4_1}$}
$$

\vspace{.5cm}

\begin{center}
  \begin{tabular}{p{.3\columnwidth}p{.52\columnwidth}p{.15\columnwidth}}
    \centering{$\dessin{2.4cm}{4_1G}$} & 
    \centering{
      \begin{tabular}{|c|c|c|c|}
        \cline{2-4}
        \multicolumn{1}{c|}{} & -1 & 0 & 1\\
        \hline
        {\Large \strut} 1 & & &$\2Z$ \\ 
        \hline 
        {\Large \strut} 0 & & $\2Z^3$ &\\ 
        \hline
        {\Large \strut} -1 & $\2Z$ & &\\ 
        \hline 
      \end{tabular}}
    & \centering{$\dessin{1.4cm}{4_1ns}$}
  \end{tabular}
\end{center}

\begin{center}
  \begin{tabular}{p{.3\columnwidth}p{.52\columnwidth}p{.15\columnwidth}}
    \centering{$\dessin{2.4cm}{4_1s1G}$} & 
    \centering{
      \begin{tabular}{|c|c|c|c|}
        \cline{2-4}
        \multicolumn{1}{c|}{} & 0 & 1 & 2\\
        \hline
        {\Large \strut} 1 & & &$\2Z$ \\ 
        \hline 
        {\Large \strut} 0 & & $\2Z^2$ &\\ 
        \hline
        {\Large \strut} -1 & $\2Z$ & &\\ 
        \hline 
      \end{tabular}}
    & \centering{$\dessin{1.4cm}{4_1s1}$}
  \end{tabular}
\end{center}

\begin{center}
  \begin{tabular}{p{.3\columnwidth}p{.52\columnwidth}p{.15\columnwidth}}
    \centering{$\dessin{2.4cm}{4_1s2G}$} & 
    \centering{
      \begin{tabular}{|c|c|c|c|}
        \cline{2-4}
        \multicolumn{1}{c|}{} & -1 & 0 & 1\\
        \hline
        {\Large \strut} 1 & & &$\2Z$ \\ 
        \hline 
        {\Large \strut} 0 & & $\2Z^2$ &\\ 
        \hline
        {\Large \strut} -1 & $\2Z$ & &\\ 
        \hline 
      \end{tabular}}
    & \centering{$\dessin{1.4cm}{4_1s2}$}
  \end{tabular}
\end{center}

\begin{center}
  \begin{tabular}{p{.3\columnwidth}p{.52\columnwidth}p{.15\columnwidth}}
    \centering{$\dessin{2.4cm}{4_1ss1G}$} & 
    \centering{
      \begin{tabular}{|c|c|c|c|}
        \cline{2-4}
        \multicolumn{1}{c|}{} & 0 & 1 & 2 \\
        \hline
        {\Large \strut} 1 & & & $\2Z$ \\ 
        \hline 
        {\Large \strut} 0 & & $\2Z^2$ & \\ 
        \hline
        {\Large \strut} -1 & $\2Z$ &  & \\ 
        \hline 
      \end{tabular}}
    & \centering{$
      \begin{array}{c}
        \dessin{1.4cm}{4_1++1} \\ \dessin{1.4cm}{4_1+-1}
      \end{array}
      $}
  \end{tabular}
\end{center}

\begin{center}
  \begin{tabular}{p{.3\columnwidth}p{.52\columnwidth}p{.15\columnwidth}}
    \centering{$\dessin{2.4cm}{4_1ss2G}$} & 
    \centering{Acyclic}
    & \centering{$\dessin{1.4cm}{4_1++2}$}
  \end{tabular}
\end{center}

\begin{center}
  \begin{tabular}{p{.3\columnwidth}p{.52\columnwidth}p{.15\columnwidth}}
    \centering{$\dessin{2.4cm}{4_1ss2G}$} & 
    \centering{
      \begin{tabular}{|c|c|c|c|c|}
        \cline{2-5}
        \multicolumn{1}{c|}{} & 0 & 1 & 2 & 3\\
        \hline
        {\Large \strut} 1 & & & $\2Z$ & $\2Z$ \\ 
        \hline 
        {\Large \strut} 0 & & $\2Z^2$ & $\2Z^2$ & \\ 
        \hline
        {\Large \strut} -1 & $\2Z$ &  $\2Z$ & & \\ 
        \hline 
      \end{tabular}}
    & \centering{$\dessin{1.4cm}{4_1+-2}$}
  \end{tabular}
\end{center}

\begin{center}
  \begin{tabular}{p{.3\columnwidth}p{.52\columnwidth}p{.15\columnwidth}}
    \centering{$\dessin{3.2cm}{4_1ssssG}$} & 
    \centering{
      \begin{tabular}{|c|c|c|c|c|c|}
        \cline{2-6}
        \multicolumn{1}{c|}{} & 0 & 1 & 2 & 3 & 4 \\
        \hline
        {\Large \strut} 1 & & & $\2Z$ & $\2Z^2$ & $\2Z$ \\ 
        \hline 
        {\Large \strut} 0 & & $\2Z^2$ & $\2Z^4$ & $\2Z^2$ & \\ 
        \hline
        {\Large \strut} -1 & $\2Z$ & $\2Z^2$ & $\2Z$ & & \\ 
        \hline 
      \end{tabular}}
    & \centering{$
      \begin{array}{c}
        \dessin{1.4cm}{4_1+-++} \\ \dessin{1.4cm}{4_1+++-}
      \end{array}
      $}
  \end{tabular}
\end{center}

\begin{center}
  \begin{tabular}{p{.3\columnwidth}p{.52\columnwidth}p{.15\columnwidth}}
    \centering{$\dessin{3.2cm}{4_1ssssG}$} & 
    \centering{
      \begin{tabular}{|c|c|c|c|c|c|}
        \cline{2-6}
        \multicolumn{1}{c|}{} & 0 & 1 & 2 & 3 & 4 \\
        \hline
        {\Large \strut} 1 & & & $\2Z$ & $\2Z^2$ & $\2Z$ \\ 
        \hline 
        {\Large \strut} 0 & & $\2Z^2$ & $\2Z^4$ & $\2Z^2$ & \\ 
        \hline
        {\Large \strut} -1 & $\2Z$ & $\2Z^2$ & $\2Z$ & & \\ 
        \hline 
      \end{tabular}}
    & \centering{$
      \begin{array}{c}
        \dessin{1.4cm}{4_1++++} \\ \dessin{1.4cm}{4_1+-+-}
      \end{array}
      $}
  \end{tabular}
\end{center}

\newpage{}

\section*{Knot $5_1$ \& its singularizations}
\label{sec:5.1}
$$
\fbox{$\dessin{3.41cm}{5_1}$}
$$

\vspace{.5cm}

\begin{center}
  \begin{tabular}{p{.3\columnwidth}p{.52\columnwidth}p{.15\columnwidth}}
    \centering{$\dessin{2.8cm}{5_1G}$} & 
    \centering{
      \begin{tabular}{|c|c|c|c|c|c|}
        \cline{2-6}
        \multicolumn{1}{c|}{} & 0 & 1 & 2 & 3 & 4 \\
        \hline
        {\Large \strut} 2 & & & & & $\2Z$ \\ 
        \hline
        {\Large \strut} 1 & & & & $\2Z$ & \\ 
        \hline 
        {\Large \strut} 0 & & & $\2Z$ & & \\ 
        \hline
        {\Large \strut} -1 & & $\2Z$ & & & \\ 
        \hline 
        {\Large \strut} -2 & $\2Z$ & & & & \\ 
        \hline 
      \end{tabular}}
    & \centering{$\dessin{1.54cm}{5_1ns}$}
  \end{tabular}
\end{center}

\begin{center}
  \begin{tabular}{p{.3\columnwidth}p{.52\columnwidth}p{.15\columnwidth}}
    \centering{$\dessin{2.8cm}{5_1sG}$} & 
    \centering{
      \begin{tabular}{|c|c|c|c|c|c|}
        \cline{2-6}
        \multicolumn{1}{c|}{} & 0 & 1 & 2 & 3 & 4 \\
        \hline
        {\Large \strut} 2 & & & & & $\2Z$ \\ 
        \hline
        {\Large \strut} 1 & & & & $\2Z^2$ & \\ 
        \hline 
        {\Large \strut} 0 & & & $\2Z^2$ & & \\ 
        \hline
        {\Large \strut} -1 & & $\2Z^2$ & & & \\ 
        \hline 
        {\Large \strut} -2 & $\2Z$ & & & & \\ 
        \hline 
      \end{tabular}}
    & \centering{$\dessin{1.54cm}{5_1s}$}
  \end{tabular}
\end{center}

\begin{center}
  \begin{tabular}{p{.3\columnwidth}p{.52\columnwidth}p{.15\columnwidth}}
    \centering{$\dessin{2.8cm}{5_1ssG}$} & 
    \centering{
      \begin{tabular}{|c|c|c|c|c|c|}
        \cline{2-6}
        \multicolumn{1}{c|}{} & 0 & 1 & 2 & 3 & 4 \\
        \hline
        {\Large \strut} 2 & & & & & $\2Z$ \\ 
        \hline
        {\Large \strut} 1 & & & & $\2Z^3$ & \\ 
        \hline 
        {\Large \strut} 0 & & & $\2Z^4$ & & \\ 
        \hline
        {\Large \strut} -1 & & $\2Z^3$ & & & \\ 
        \hline 
        {\Large \strut} -2 & $\2Z$ & & & & \\ 
        \hline 
      \end{tabular}}
    & \centering{$
      \begin{array}{c}
        \dessin{1.54cm}{5_1++} \\ \dessin{1.54cm}{5_1+-}
      \end{array}
      $}
  \end{tabular}
\end{center}

\begin{center}
  \begin{tabular}{p{.3\columnwidth}p{.52\columnwidth}p{.15\columnwidth}}
    \centering{$\dessin{2.8cm}{5_1sssG}$} & 
    \centering{
      \begin{tabular}{|c|c|c|c|c|c|}
        \cline{2-6}
        \multicolumn{1}{c|}{} & 0 & 1 & 2 & 3 & 4 \\
        \hline
        {\Large \strut} 2 & & & & & $\2Z$ \\ 
        \hline
        {\Large \strut} 1 & & & & $\2Z^4$ & \\ 
        \hline 
        {\Large \strut} 0 & & & $\2Z^6$ & & \\ 
        \hline
        {\Large \strut} -1 & & $\2Z^4$ & & & \\ 
        \hline 
        {\Large \strut} -2 & $\2Z$ & & & & \\ 
        \hline 
      \end{tabular}}
    & \centering{$
      \begin{array}{c}
        \dessin{1.54cm}{5_1+++} \\ \dessin{1.54cm}{5_1+-+}
      \end{array}
      $}
  \end{tabular}
\end{center}

\begin{center}
  \begin{tabular}{p{.3\columnwidth}p{.52\columnwidth}p{.15\columnwidth}}
    \centering{$\dessin{3.2cm}{5_1ssssG}$} & 
    \centering{
      \begin{tabular}{|c|c|c|c|c|c|}
        \cline{2-6}
        \multicolumn{1}{c|}{} & 0 & 1 & 2 & 3 & 4 \\
        \hline
        {\Large \strut} 2 & & & & & $\2Z$ \\ 
        \hline
        {\Large \strut} 1 & & & & $\2Z^4$ & \\ 
        \hline 
        {\Large \strut} 0 & & & $\2Z^6$ & & \\ 
        \hline
        {\Large \strut} -1 & & $\2Z^4$ & & & \\ 
        \hline 
        {\Large \strut} -2 & $\2Z$ & & & & \\ 
        \hline 
      \end{tabular}}
    & \centering{$\dessin{1.54cm}{5_1++++}$}
  \end{tabular}
\end{center}

\begin{center}
  \begin{tabular}{p{.3\columnwidth}p{.52\columnwidth}p{.15\columnwidth}}
    \centering{$\dessin{3.2cm}{5_1ssssG}$} & 
    \centering{
      \begin{tabular}{|c|c|c|c|c|c|}
        \cline{2-6}
        \multicolumn{1}{c|}{} & 0 & 1 & 2 & 3 & 4 \\
        \hline
        {\Large \strut} 2 & & & & & $\2Z$ \\ 
        \hline
        {\Large \strut} 1 & & & & $\2Z^5$ & $\2Z$ \\ 
        \hline 
        {\Large \strut} 0 & & & $\2Z^8$ & $\2Z^2$ & \\ 
        \hline
        {\Large \strut} -1 & & $\2Z^5$ & $\2Z$ & & \\ 
        \hline 
        {\Large \strut} -2 & $\2Z$ & & & & \\ 
        \hline 
      \end{tabular}}
    & \centering{$\dessin{1.54cm}{5_1+--+}$}
  \end{tabular}
\end{center}

\newpage{}

\section*{Knot $5_2$ \& its singularizations}
\label{sec:5.2}
$$
\fbox{$\dessin{3.41cm}{5_2}$}
$$

\vspace{.5cm}

\begin{center}
  \begin{tabular}{p{.3\columnwidth}p{.52\columnwidth}p{.15\columnwidth}}
    \centering{$\dessin{2.8cm}{5_2G}$} &
    \centering{ 
      \begin{tabular}{|c|c|c|c|}
        \cline{2-4}
        \multicolumn{1}{c|}{} & 0 & 1 & 2 \\
        \hline
        {\Large \strut} 1 & & & $\2Z^2$ \\ 
        \hline 
        {\Large \strut} 0 & & $\2Z^3$ & \\ 
        \hline
        {\Large \strut} -1 & $\2Z^2$ &  & \\ 
        \hline 
      \end{tabular}}
    & \centering{$\dessin{1.54cm}{5_2ns}$}
  \end{tabular}
\end{center}

\begin{center}
  \begin{tabular}{p{.3\columnwidth}p{.52\columnwidth}p{.15\columnwidth}}
    \centering{$\dessin{2.8cm}{5_2s1G}$} & 
    \centering{
      \begin{tabular}{|c|c|c|c|}
        \cline{2-4}
        \multicolumn{1}{c|}{} & 0 & 1 & 2 \\
        \hline
        {\Large \strut} 1 & & & $\2Z^2$ \\ 
        \hline 
        {\Large \strut} 0 & & $\2Z^4$ & \\ 
        \hline
        {\Large \strut} -1 & $\2Z^2$ &  & \\ 
        \hline 
      \end{tabular}}
    & \centering{$\dessin{1.54cm}{5_2s1}$}
  \end{tabular}
\end{center}

\begin{center}
  \begin{tabular}{p{.3\columnwidth}p{.52\columnwidth}p{.15\columnwidth}}
    \centering{$\dessin{2.8cm}{5_2s2G}$} & 
    \centering{
      \begin{tabular}{|c|c|c|c|}
        \cline{2-4}
        \multicolumn{1}{c|}{} & 0 & 1 & 2 \\
        \hline
        {\Large \strut} 1 & & & $\2Z$ \\ 
        \hline 
        {\Large \strut} 0 & & $\2Z^2$ & \\ 
        \hline
        {\Large \strut} -1 & $\2Z$ &  & \\ 
        \hline 
      \end{tabular}}
    & \centering{$\dessin{1.54cm}{5_2s2}$}
  \end{tabular}
\end{center}

\begin{center}
  \begin{tabular}{p{.3\columnwidth}p{.52\columnwidth}p{.15\columnwidth}}
    \centering{$\dessin{2.8cm}{5_2ss1G}$} & 
    \centering{
      \begin{tabular}{|c|c|c|c|}
        \cline{2-4}
        \multicolumn{1}{c|}{} & 0 & 1 & 2 \\
        \hline
        {\Large \strut} 1 & & & $\2Z$ \\ 
        \hline 
        {\Large \strut} 0 & & $\2Z^2$ & \\ 
        \hline
        {\Large \strut} -1 & $\2Z$ &  & \\ 
        \hline 
      \end{tabular}}
    & \centering{$
      \begin{array}{c}
        \dessin{1.54cm}{5_2++1} \\ \dessin{1.54cm}{5_2+-1}
      \end{array}
      $}
  \end{tabular}
\end{center}

\begin{center}
  \begin{tabular}{p{.3\columnwidth}p{.52\columnwidth}p{.15\columnwidth}}
    \centering{$\dessin{2.8cm}{5_2ss2G}$} & 
    \centering{Acyclic}
    & \centering{$\dessin{1.54cm}{5_2++2}$}
  \end{tabular}
\end{center}

\begin{center}
  \begin{tabular}{p{.3\columnwidth}p{.52\columnwidth}p{.15\columnwidth}}
    \centering{$\dessin{2.8cm}{5_2ss2G}$} & 
    \centering{
      \begin{tabular}{|c|c|c|c|c|}
        \cline{2-5}
        \multicolumn{1}{c|}{} & 0 & 1 & 2 & 3 \\
        \hline
        {\Large \strut} 1 & & & $\2Z^2$ & $\2Z$ \\ 
        \hline 
        {\Large \strut} 0 & & $\2Z^4$ & $\2Z^2$ & \\ 
        \hline
        {\Large \strut} -1 & $\2Z^2$ & $\2Z$ & & \\ 
        \hline 
      \end{tabular}}
    & \centering{$\dessin{1.54cm}{5_2+-2}$}
  \end{tabular}
\end{center}

\begin{center}
  \begin{tabular}{p{.3\columnwidth}p{.52\columnwidth}p{.15\columnwidth}}
    \centering{$\dessin{2.8cm}{5_2ss3G}$} & 
    \centering{Acyclic}
    & \centering{$\dessin{1.54cm}{5_2ss3}$}
  \end{tabular}
\end{center}

\begin{center}
  \begin{tabular}{p{.3\columnwidth}p{.52\columnwidth}p{.15\columnwidth}}
    \centering{$\dessin{2.8cm}{5_2sss3G}$} & 
    \centering{Acyclic}
    & \centering{$\dessin{1.54cm}{5_2++-3}$}
  \end{tabular}
\end{center}

\begin{center}
  \begin{tabular}{p{.3\columnwidth}p{.52\columnwidth}p{.15\columnwidth}}
    \centering{$\dessin{2.8cm}{5_2sss3G}$} & 
    \centering{
      \begin{tabular}{|c|c|c|c|c|}
        \cline{2-5}
        \multicolumn{1}{c|}{} & 0 & 1 & 2 & 3 \\
        \hline
        {\Large \strut} 1 & & & $\2Z$ & $\2Z$ \\ 
        \hline 
        {\Large \strut} 0 & & $\2Z^2$ & $\2Z^2$ & \\ 
        \hline
        {\Large \strut} -1 & $\2Z$ & $\2Z$ & & \\ 
        \hline 
      \end{tabular}}
    & \centering{$
      \begin{array}{c}
         \dessin{1.54cm}{5_2+++3} \\ \dessin{1.54cm}{5_2-++3} \\ \dessin{1.54cm}{5_2-+-3}
      \end{array}
      $}
  \end{tabular}
\end{center}

\begin{center}
  \begin{tabular}{p{.3\columnwidth}p{.52\columnwidth}p{.15\columnwidth}}
    \centering{$\dessin{3.2cm}{5_2ssss1G}$} & 
    \centering{Acyclic}
    & \centering{$\dessin{1.54cm}{5_2ssss1}$}
  \end{tabular}
\end{center}

\begin{center}
  \begin{tabular}{p{.3\columnwidth}p{.52\columnwidth}p{.15\columnwidth}}
    \centering{$\dessin{3.2cm}{5_2ssss2G}$} & 
    \centering{Acyclic}
    & \centering{$
      \begin{array}{c}
       \dessin{1.54cm}{5_2++++2} \\ \dessin{1.54cm}{5_2+++-} \\ \dessin{1.54cm}{5_2++--2}
      \end{array}
      $}
  \end{tabular}
\end{center}

\begin{center}
  \begin{tabular}{p{.3\columnwidth}p{.52\columnwidth}p{.15\columnwidth}}
    \centering{$\dessin{3.2cm}{5_2ssss2G}$} & 
    \centering{
      \begin{tabular}{|c|c|c|c|c|c|}
        \cline{2-6}
        \multicolumn{1}{c|}{} & 0 & 1 & 2 & 3 & 4 \\
        \hline
        {\Large \strut} 1 & & & $\2Z$ & $\2Z^2$ & $\2Z$ \\ 
        \hline 
        {\Large \strut} 0 & $\2Z^2$ & $\2Z^4$ & $\2Z^2$ & & \\ 
        \hline
        {\Large \strut} -1 & $\2Z$ & $\2Z^2$ & $\2Z$ & & \\ 
        \hline 
      \end{tabular}}
    & \centering{$\dessin{1.54cm}{5_2+-++}$}
  \end{tabular}
\end{center}

\newpage

\section*{Knot $8_{19}$ \& its singularizations}
\label{sec:8.19}
$$
\fbox{$\dessin{3.72cm}{8_19}$}
$$

\vspace{.5cm}

\begin{center}
  \begin{tabular}{p{.3\columnwidth}p{.67\columnwidth}}
    \centering{\begin{tabular}{c}$\dessin{1.68cm}{8_19ns}$\\[.9cm]$\dessin{2.8cm}{8_19G}$\end{tabular}} & 
    \centering{
      \begin{tabular}{|c|c|c|c|c|c|c|c|}
        \cline{2-8}
        \multicolumn{1}{c|}{} & -6 & -5 & -4 & -3 & -2 & -1 & 0 \\
        \hline
        {\Large \strut} 3 & & & & & & & $\2Z$ \\ 
        \hline 
        {\Large \strut} 2 & & & & & & $\2Z$ & \\ 
        \hline 
        {\Large \strut} 1 & & & & & & & \\ 
        \hline 
        {\Large \strut} 0 & & & & & $\2Z$ & & \\ 
        \hline 
        {\Large \strut} -1 & & & & & & & \\ 
        \hline 
        {\Large \strut} -2 & & $\2Z$ & & & & & \\ 
        \hline
        {\Large \strut} -3 & $\2Z$ & & & & & & \\ 
        \hline 
      \end{tabular}}
  \end{tabular}
\end{center}

\begin{center}
  \begin{tabular}{p{.3\columnwidth}p{.67\columnwidth}}
    \centering{\begin{tabular}{c}$\dessin{1.68cm}{8_19s1}$\\[.9cm]$\dessin{2.8cm}{8_19s1G}$\end{tabular}} & 
    \centering{
      \begin{tabular}{|c|c|c|c|c|c|c|c|}
        \cline{2-8}
        \multicolumn{1}{c|}{} & -5 & -4 & -3 & -2 & -1 & 0 & 1 \\
        \hline
        {\Large \strut} 3 & & & & & & & $\2Z$ \\ 
        \hline 
        {\Large \strut} 2 & & & & & & $\2Z^2$ & \\ 
        \hline 
        {\Large \strut} 1 & & & & & $\2Z$ & & \\ 
        \hline 
        {\Large \strut} 0 & & & & & & & \\ 
        \hline 
        {\Large \strut} -1 & & & $\2Z$ & & & & \\ 
        \hline 
        {\Large \strut} -2 & & $\2Z^2$ & & & & & \\ 
        \hline
        {\Large \strut} -3 & $\2Z$ & & & & & & \\ 
        \hline 
      \end{tabular}}
  \end{tabular}
\end{center}

\begin{center}
  \begin{tabular}{p{.3\columnwidth}p{.67\columnwidth}}
    \centering{\begin{tabular}{c}$\dessin{1.68cm}{8_19s2}$\\[.9cm]$\dessin{3.2cm}{8_19s2G}$\end{tabular}} & 
    \centering{
      \begin{tabular}{|c|c|c|c|c|c|c|c|}
        \cline{2-8}
        \multicolumn{1}{c|}{} & -5 & -4 & -3 & -2 & -1 & 0 & 1 \\
        \hline
        {\Large \strut} 3 & & & & & & & $\2Z$ \\ 
        \hline 
        {\Large \strut} 2 & & & & & & $\2Z^2$ & \\ 
        \hline 
        {\Large \strut} 1 & & & & & $\2Z$ & & \\ 
        \hline 
        {\Large \strut} 0 & & & & & & & \\ 
        \hline 
        {\Large \strut} -1 & & & $\2Z$ & & & & \\ 
        \hline 
        {\Large \strut} -2 & & $\2Z^2$ & & & & & \\ 
        \hline
        {\Large \strut} -3 & $\2Z$ & & & & & & \\ 
        \hline 
      \end{tabular}}
  \end{tabular}
\end{center}

\begin{center}
  \begin{tabular}{p{.3\columnwidth}p{.67\columnwidth}}
    \centering{\begin{tabular}{c}$\dessin{1.68cm}{8_19ss3}$\\[.9cm]$\dessin{3.2cm}{8_19ss3G}$\end{tabular}} & 
    \centering{
      \begin{tabular}{|c|c|c|c|c|c|c|c|}
        \cline{2-8}
        \multicolumn{1}{c|}{} & -4 & -3 & -2 & -1 & 0 & 1 & 2 \\
        \hline
        {\Large \strut} 3 & & & & & & & $\2Z$ \\ 
        \hline 
        {\Large \strut} 2 & & & & & & $\2Z^3$ & \\ 
        \hline 
        {\Large \strut} 1 & & & & & $\2Z^4$ & & \\ 
        \hline 
        {\Large \strut} 0 & & & & $\2Z^4$ & & & \\ 
        \hline 
        {\Large \strut} -1 & & & $\2Z^4$ & & & & \\ 
        \hline 
        {\Large \strut} -2 & & $\2Z^3$ & & & & & \\ 
        \hline
        {\Large \strut} -3 & $\2Z$ & & & & & & \\ 
        \hline 
      \end{tabular}}
  \end{tabular}
\end{center}

\begin{center}
  \begin{tabular}{p{.3\columnwidth}p{.67\columnwidth}}
    \centering{\begin{tabular}{c}$\dessin{1.68cm}{8_19ss4}$\\[.9cm]$\dessin{2.8cm}{8_19ss4G}$\end{tabular}} & 
    \centering{
      \begin{tabular}{|c|c|c|c|c|c|c|c|}
        \cline{2-8}
        \multicolumn{1}{c|}{} & -4 & -3 & -2 & -1 & 0 & 1 & 2 \\
        \hline
        {\Large \strut} 3 & & & & & & & $\2Z$ \\ 
        \hline 
        {\Large \strut} 2 & & & & & & $\2Z^3$ & \\ 
        \hline 
        {\Large \strut} 1 & & & & & $\2Z^3$ & & \\ 
        \hline 
        {\Large \strut} 0 & & & & $\2Z^2$ & & & \\ 
        \hline 
        {\Large \strut} -1 & & & $\2Z^3$ & & & & \\ 
        \hline 
        {\Large \strut} -2 & & $\2Z^3$ & & & & & \\ 
        \hline
        {\Large \strut} -3 & $\2Z$ & & & & & & \\ 
        \hline 
      \end{tabular}}
  \end{tabular}
\end{center}

\begin{center}
  \begin{tabular}{p{.3\columnwidth}p{.67\columnwidth}}
    \centering{\begin{tabular}{c}$\dessin{1.68cm}{8_19sss5}$\\[.9cm]$\dessin{2.8cm}{8_19sss5G}$\end{tabular}} & 
    \centering{
      \begin{tabular}{|c|c|c|c|c|c|c|c|}
        \cline{2-8}
        \multicolumn{1}{c|}{} & -3 & -2 & -1 & 0 & 1 & 2 & 3 \\
        \hline
        {\Large \strut} 3 & & & & & & & $\2Z$ \\ 
        \hline 
        {\Large \strut} 2 & & & & & & $\2Z^4$ & \\ 
        \hline 
        {\Large \strut} 1 & & & & & $\2Z^7$ & & \\ 
        \hline 
        {\Large \strut} 0 & & & & $\2Z^8$ & & & \\ 
        \hline 
        {\Large \strut} -1 & & & $\2Z^7$ & & & & \\ 
        \hline 
        {\Large \strut} -2 & & $\2Z^4$ & & & & & \\ 
        \hline
        {\Large \strut} -3 & $\2Z$ & & & & & & \\ 
        \hline 
      \end{tabular}}
  \end{tabular}
\end{center}

\begin{center}
  \begin{tabular}{p{.3\columnwidth}p{.67\columnwidth}}
    \centering{\begin{tabular}{c}$\dessin{1.68cm}{8_19sss6}$\\[.9cm]$\dessin{3.2cm}{8_19sss6G}$\end{tabular}} & 
    \centering{
      \begin{tabular}{|c|c|c|c|c|c|c|c|}
        \cline{2-8}
        \multicolumn{1}{c|}{} & -3 & -2 & -1 & 0 & 1 & 2 & 3 \\
        \hline
        {\Large \strut} 3 & & & & & & & $\2Z$ \\ 
        \hline 
        {\Large \strut} 2 & & & & & & $\2Z^4$ & \\ 
        \hline 
        {\Large \strut} 1 & & & & & $\2Z^7$ & & \\ 
        \hline 
        {\Large \strut} 0 & & & & $\2Z^8$ & & & \\ 
        \hline 
        {\Large \strut} -1 & & & $\2Z^7$ & & & & \\ 
        \hline 
        {\Large \strut} -2 & & $\2Z^4$ & & & & & \\ 
        \hline
        {\Large \strut} -3 & $\2Z$ & & & & & & \\ 
        \hline 
      \end{tabular}}
  \end{tabular}
\end{center}

\begin{center}
  \begin{tabular}{p{.3\columnwidth}p{.67\columnwidth}}
    \centering{\begin{tabular}{c}$\dessin{1.68cm}{8_19ssss7}$\\[.9cm]$\dessin{3.2cm}{8_19ssss7G}$\end{tabular}} & 
    \centering{
      \begin{tabular}{|c|c|c|c|c|c|c|c|}
        \cline{2-8}
        \multicolumn{1}{c|}{} & -2 & -1 & 0 & 1 & 2 & 3 & 4 \\
        \hline
        {\Large \strut} 3 & & & & & & & $\2Z$ \\ 
        \hline 
        {\Large \strut} 2 & & & & & & $\2Z^5$ & \\ 
        \hline 
        {\Large \strut} 1 & & & & $\2Z$ & $\2Z^{12}$ & & \\ 
        \hline 
        {\Large \strut} 0 & & & $\2Z^2$ & $\2Z^{16}$ & & & \\ 
        \hline 
        {\Large \strut} -1 & & $\2Z$ & $\2Z^{12}$ & & & & \\ 
        \hline 
        {\Large \strut} -2 & & $\2Z^5$ & & & & & \\ 
        \hline
        {\Large \strut} -3 & $\2Z$ & & & & & & \\ 
        \hline 
      \end{tabular}}
  \end{tabular}
\end{center}

\begin{center}
  \begin{tabular}{p{.3\columnwidth}p{.67\columnwidth}}
    \centering{\begin{tabular}{c}$\dessin{1.68cm}{8_19ssss8}$\\[.9cm]$\dessin{3.2cm}{8_19ssss8G}$\end{tabular}} & 
    \centering{
      \begin{tabular}{|c|c|c|c|c|c|c|c|}
        \cline{2-8}
        \multicolumn{1}{c|}{} & -2 & -1 & 0 & 1 & 2 & 3 & 4 \\
        \hline
        {\Large \strut} 3 & & & & & & & $\2Z$ \\ 
        \hline 
        {\Large \strut} 2 & & & & & & $\2Z^5$ & \\ 
        \hline 
        {\Large \strut} 1 & & & & & $\2Z^{11}$ & & \\ 
        \hline 
        {\Large \strut} 0 & & & & $\2Z^{14}$ & & & \\ 
        \hline 
        {\Large \strut} -1 & & & $\2Z^{11}$ & & & & \\ 
        \hline 
        {\Large \strut} -2 & & $\2Z^5$ & & & & & \\ 
        \hline
        {\Large \strut} -3 & $\2Z$ & & & & & & \\ 
        \hline 
      \end{tabular}}
  \end{tabular}
\end{center}

\newpage

\section*{Knot $9_{44}$ \& its singularizations}
\label{sec:9.44}
$$
\fbox{$\dessin{3.72cm}{9_44}$}
$$

\vspace{.5cm}

\begin{center}
  \begin{tabular}{p{.3\columnwidth}p{.52\columnwidth}p{.15\columnwidth}}
    \centering{$\dessin{3cm}{9_44G}$} & 
    \centering{
      \begin{tabular}{|c|c|c|c|c|c|}
        \cline{2-6}
        \multicolumn{1}{c|}{} & -2 & -1 & 0 & 1 & 2 \\
        \hline
        {\Large \strut} 2 & & & & & $\2Z$ \\ 
        \hline 
        {\Large \strut} 1 & & & & $\2Z^4$ & \\ 
        \hline 
        {\Large \strut} 0 & & & $\2Z^7$ & & \\ 
        \hline 
        {\Large \strut} -1 & & $\2Z^4$ & & & \\ 
        \hline 
        {\Large \strut} -2 & $\2Z$ & & & & \\ 
        \hline 
      \end{tabular}}
    & \centering{$\dessin{1.68cm}{9_44ns}$}
  \end{tabular}
\end{center}

\begin{center}
  \begin{tabular}{p{.3\columnwidth}p{.52\columnwidth}p{.15\columnwidth}}
    \centering{$\dessin{3cm}{9_44s1G}$} & 
    \centering{
      \begin{tabular}{|c|c|c|c|c|c|}
        \cline{2-6}
        \multicolumn{1}{c|}{} & -1 & 0 & 1 & 2 & 3 \\
        \hline
        {\Large \strut} 2 & & & & & $\2Z$ \\ 
        \hline 
        {\Large \strut} 1 & & & & $\2Z^4$ & \\ 
        \hline 
        {\Large \strut} 0 & & & $\2Z^6$ & & \\ 
        \hline 
        {\Large \strut} -1 & & $\2Z^4$ & & & \\ 
        \hline 
        {\Large \strut} -2 & $\2Z$ & & & & \\ 
        \hline 
      \end{tabular}}
    & \centering{$\dessin{1.68cm}{9_44s1}$}
  \end{tabular}
\end{center}

\begin{center}
  \begin{tabular}{p{.3\columnwidth}p{.52\columnwidth}p{.15\columnwidth}}
    \centering{$\dessin{3cm}{9_44s3G}$} & 
    \centering{
      \begin{tabular}{|c|c|c|c|c|c|}
        \cline{2-6}
        \multicolumn{1}{c|}{} & -2 & -1 & 0 & 1 & 2 \\
        \hline
        {\Large \strut} 2 & & & & & $\2Z$ \\ 
        \hline 
        {\Large \strut} 1 & & & & $\2Z^4$ & \\ 
        \hline 
        {\Large \strut} 0 & & & $\2Z^6$ & & \\ 
        \hline 
        {\Large \strut} -1 & & $\2Z^4$ & & & \\ 
        \hline 
        {\Large \strut} -2 & $\2Z$ & & & & \\ 
        \hline 
      \end{tabular}}
    & \centering{$\dessin{1.68cm}{9_44s3}$}
  \end{tabular}
\end{center}

\begin{center}
  \begin{tabular}{p{.3\columnwidth}p{.52\columnwidth}p{.15\columnwidth}}
    \centering{$\dessin{3cm}{9_44s4G}$} & 
    \centering{
      \begin{tabular}{|c|c|c|c|c|c|}
        \cline{2-6}
        \multicolumn{1}{c|}{} & -1 & 0 & 1 & 2 & 3 \\
        \hline
        {\Large \strut} 2 & & & & & $\2Z^2$ \\ 
        \hline 
        {\Large \strut} 1 & & & & $\2Z^9$ & \\ 
        \hline 
        {\Large \strut} 0 & & & $\2Z^{14}$ & & \\ 
        \hline 
        {\Large \strut} -1 & & $\2Z^9$ & & & \\ 
        \hline 
        {\Large \strut} -2 & $\2Z^2$ & & & & \\ 
        \hline 
      \end{tabular}}
    & \centering{$\dessin{1.68cm}{9_44s4}$}
  \end{tabular}
\end{center}

\begin{center}
  \begin{tabular}{p{.3\columnwidth}p{.52\columnwidth}p{.15\columnwidth}}
    \centering{$\dessin{3cm}{9_44s2G}$} & 
    \centering{
      \begin{tabular}{|c|c|c|c|c|}
        \cline{2-5}
        \multicolumn{1}{c|}{} & -2 & -1 & 0 & 1 \\
        \hline
        {\Large \strut} 1 & & & & $\2Z$ \\ 
        \hline 
        {\Large \strut} 0 & & & $\2Z^3$ & $\2Z$ \\ 
        \hline 
        {\Large \strut} -1 & & $\2Z^3$ & $\2Z^2$ & \\ 
        \hline 
        {\Large \strut} -2 & $\2Z$ & $\2Z$ & & \\ 
        \hline 
      \end{tabular}}
    & \centering{$\dessin{1.68cm}{9_44s2}$}
  \end{tabular}
\end{center}

\begin{center}
  \begin{tabular}{p{.3\columnwidth}p{.52\columnwidth}p{.15\columnwidth}}
    \centering{$\dessin{3cm}{9_44ss7G}$} & 
    \centering{
      \begin{tabular}{|c|c|c|c|c|c|}
        \cline{2-6}
        \multicolumn{1}{c|}{} & 0 & 1 & 2 & 3 & 4 \\
        \hline
        {\Large \strut} 2 & & & & & $\2Z^2$ \\ 
        \hline 
        {\Large \strut} 1 & & & & $\2Z^7$ & \\ 
        \hline 
        {\Large \strut} 0 & & & $\2Z^{10}$ & & \\ 
        \hline 
        {\Large \strut} -1 & & $\2Z^7$ & & & \\ 
        \hline 
        {\Large \strut} -2 & $\2Z^2$ & & & & \\ 
        \hline 
      \end{tabular}}
    & \centering{$\dessin{1.68cm}{9_44ss7}$}
  \end{tabular}
\end{center}

\begin{center}
  \begin{tabular}{p{.3\columnwidth}p{.52\columnwidth}p{.15\columnwidth}}
    \centering{$\dessin{3cm}{9_44ss8G}$} & 
    \centering{
      \begin{tabular}{|c|c|c|c|c|c|}
        \cline{2-6}
        \multicolumn{1}{c|}{} & 0 & 1 & 2 & 3 & 4 \\
        \hline
        {\Large \strut} 2 & & & & & $\2Z^2$ \\ 
        \hline 
        {\Large \strut} 1 & & & & $\2Z^7$ & \\ 
        \hline 
        {\Large \strut} 0 & & & $\2Z^{10}$ & & \\ 
        \hline 
        {\Large \strut} -1 & & $\2Z^7$ & & & \\ 
        \hline 
        {\Large \strut} -2 & $\2Z^2$ & & & & \\ 
        \hline 
      \end{tabular}}
    & \centering{$\dessin{1.68cm}{9_44ss8}$}
  \end{tabular}
\end{center}

\begin{center}
  \begin{tabular}{p{.3\columnwidth}p{.52\columnwidth}p{.15\columnwidth}}
    \centering{$\dessin{3cm}{9_44ss9G}$} & 
    \centering{
      \begin{tabular}{|c|c|c|c|c|c|}
        \cline{2-6}
        \multicolumn{1}{c|}{} & -1 & 0 & 1 & 2 & 3 \\
        \hline
        {\Large \strut} 2 & & & & & $\2Z$ \\ 
        \hline 
        {\Large \strut} 1 & & & & $\2Z^4$ & \\ 
        \hline 
        {\Large \strut} 0 & & & $\2Z^6$ & & \\ 
        \hline 
        {\Large \strut} -1 & & $\2Z^4$ & & & \\ 
        \hline 
        {\Large \strut} -2 & $\2Z$ & & & & \\ 
        \hline 
      \end{tabular}}
    & \centering{$\dessin{1.68cm}{9_44ss9}$}
  \end{tabular}
\end{center}

\begin{center}
  \begin{tabular}{p{.3\columnwidth}p{.52\columnwidth}p{.15\columnwidth}}
    \centering{$\dessin{3cm}{9_44ss11G}$} & 
    \centering{
      \begin{tabular}{|c|c|c|c|c|c|c|}
        \cline{2-7}
        \multicolumn{1}{c|}{} & -1 & 0 & 1 & 2 & 3 & 4 \\
        \hline
        {\Large \strut} 2 & & & & & $\2Z$ & $\2Z$ \\ 
        \hline 
        {\Large \strut} 1 & & & & $\2Z^4$ & $\2Z^4$ & \\ 
        \hline 
        {\Large \strut} 0 & & & $\2Z^6$ & $\2Z^6$ & & \\ 
        \hline 
        {\Large \strut} -1 & & $\2Z^4$ & $\2Z^4$ & & & \\ 
        \hline 
        {\Large \strut} -2 & $\2Z$ & $\2Z$ & & & & \\ 
        \hline 
      \end{tabular}}
    & \centering{$\dessin{1.68cm}{9_44ss11}$}
  \end{tabular}
\end{center}

\begin{center}
  \begin{tabular}{p{.3\columnwidth}p{.52\columnwidth}p{.15\columnwidth}}
    \centering{$\dessin{3cm}{9_44ss10G}$} & 
    \centering{
      \begin{tabular}{|c|c|c|c|c|}
        \cline{2-5}
        \multicolumn{1}{c|}{} & -1 & 0 & 1 & 2 \\
        \hline
        {\Large \strut} 1 & & & & $\2Z$ \\ 
        \hline 
        {\Large \strut} 0 & & & $\2Z^3$ & $\2Z$ \\ 
        \hline 
        {\Large \strut} -1 & & $\2Z^3$ & $\2Z^2$ & \\ 
        \hline 
        {\Large \strut} -2 & $\2Z$ & $\2Z$ & & \\ 
        \hline 
      \end{tabular}}
    & \centering{$\dessin{1.68cm}{9_44ss10}$}
  \end{tabular}
\end{center}

\newpage

\section*{Knot $10_{124}$ \& its singularizations}
\label{sec:10.124}
$$
\fbox{$\dessin{3.72cm}{10_124}$}
$$

\vspace{.5cm}

\begin{center}
  \begin{tabular}{p{.3\columnwidth}p{.52\columnwidth}p{.15\columnwidth}}
    \centering{$\dessin{3cm}{10_124G}$} & 
    \centering{$t^{-8}q^{-4}+t^{-7}q^{-3}+t^{-4}q^{-1}+t^{-3}+t^{-2}q +t^{-1}q^3+q^4$}&
    \centering{$\dessin{1.68cm}{10_124ns}$}
  \end{tabular}
\end{center}

\vspace{0cm}

\begin{center}
  \begin{tabular}{p{.3\columnwidth}p{.52\columnwidth}p{.15\columnwidth}}
    \centering{$\dessin{3cm}{10_124s1G}$} & 
    \centering{$(1+tq)^2(t^{-7}q^{-4}+t^{-3}q^{-1}+t^{-1}q^2)$}&
    \centering{$\dessin{1.68cm}{10_124s1}$}
  \end{tabular}
\end{center}

\vspace{0cm}

\begin{center}
  \begin{tabular}{p{.3\columnwidth}p{.52\columnwidth}p{.15\columnwidth}}
    \centering{$\dessin{3cm}{10_124s4G}$} & 
    \centering{$(1+tq)^2(t^{-7}q^{-4}+t^{-1}q^2)$}&
    \centering{$\dessin{1.68cm}{10_124s4}$}
  \end{tabular}
\end{center}

\vspace{0cm}

\begin{center}
  \begin{tabular}{p{.3\columnwidth}p{.52\columnwidth}p{.15\columnwidth}}
    \centering{$\dessin{3cm}{10_124ss2G}$} & 
    \centering{$(1+tq)^3(t^{-6}q^{-4}+t^{-1}q)+t^{-2}q^{-1}(1+tq)^2$}&
    \centering{$\dessin{1.68cm}{10_124ss2}$}
  \end{tabular}
\end{center}

\vspace{0cm}

\begin{center}
  \begin{tabular}{p{.3\columnwidth}p{.52\columnwidth}p{.15\columnwidth}}
    \centering{$\dessin{3cm}{10_124ss6G}$} & 
    \centering{$(1+tq)^3(t^{-6}q^{-4}+t^{-4}q^{-3}+t^{-1}q)+t^{-2}(1+tq)^2$}&
    \centering{$\dessin{1.68cm}{10_124ss6}$}
  \end{tabular}
\end{center}

\vspace{0cm}

\begin{center}
  \begin{tabular}{p{.3\columnwidth}p{.52\columnwidth}p{.15\columnwidth}}
    \centering{$\dessin{3cm}{10_124sss3G}$} & 
    \centering{$(1+tq)^4(t^{-5}q^{-4}+t^{-1})+(1+tq)^2(t^{-2}q^{-1}+t^{-1}q^{-1})$}&
    \centering{$\dessin{1.68cm}{10_124sss3}$}
  \end{tabular}
\end{center}

\vspace{0cm}

\begin{center}
  \begin{tabular}{p{.3\columnwidth}p{.52\columnwidth}p{.15\columnwidth}}
    \centering{$\dessin{3cm}{10_124sss7G}$} & 
    \centering{$(1+tq)^4(t^{-5}q^{-4}t^{-3}q^{-2}+t^{-1})$}&
    \centering{$\dessin{1.68cm}{10_124sss7}$}
  \end{tabular}
\end{center}







\printindex 
\addcontentsline{toc}{part}{Index}

\nocite{*}
\bibliographystyle{amsalpha}
\bibliography{These}

\providecommand{\bysame}{\leavevmode\hbox to3em{\hrulefill}\thinspace}
\providecommand{\MR}{\relax\ifhmode\unskip\space\fi MR }
\providecommand{\MRhref}[2]{%
  \href{http://www.ams.org/mathscinet-getitem?mr=#1}{#2}
}
\providecommand{\href}[2]{#2}
\begin{thebibliography}{MOST06}

\bibitem[AF05]{Moi1}
Benjamin Audoux and Thomas Fiedler, \emph{A {J}ones polynomial for braid-like
  isotopies of oriented links and its categorification}, Algebr. Geom. Topol.
  \textbf{5} (2005), 1535--1553 (electronic). \MR{MR2186108 (2006h:57008)}

\bibitem[APS04]{Asaeda}
Marta~M. Asaeda, J{\'o}zef~H. Przytycki, and Adam~S. Sikora,
  \emph{Categorification of the {K}auffman bracket skein module of
  {$I$}-bundles over surfaces}, Algebr. Geom. Topol. \textbf{4} (2004),
  1177--1210 (electronic). \MR{MR2113902 (2006a:57010)}

\bibitem[Aud06]{Moi2}
Benjamin Audoux, \emph{Khovanov homology and star-like isotopies}, Submitted
  paper, 2006.

\bibitem[Aud07]{Moi3}
\bysame, \emph{Heegaard--floer homology for singular knots},
  \textsf{arXiv:math.GT/0705.2377}, 2007.

\bibitem[Bir74]{Birman2}
Joan~S. Birman, \emph{Braids, links, and mapping class groups}, Princeton
  University Press, Princeton, N.J., 1974, Annals of Mathematics Studies, No.
  82. \MR{MR0375281 (51 \#11477)}

\bibitem[BL93]{Birman}
Joan~S. Birman and Xiao-Song Lin, \emph{Knot polynomials and {V}assiliev's
  invariants}, Invent. Math. \textbf{111} (1993), no.~2, 225--270.
  \MR{MR1198809 (94d:57010)}

\bibitem[BN95]{BarNatanVass}
Dror Bar-Natan, \emph{On the {V}assiliev knot invariants}, Topology \textbf{34}
  (1995), no.~2, 423--472. \MR{MR1318886 (97d:57004)}

\bibitem[BN05]{BarNatanKho}
\bysame, \emph{Khovanov's homology for tangles and cobordisms}, Geom. Topol.
  \textbf{9} (2005), 1443--1499 (electronic). \MR{MR2174270 (2006g:57017)}

\bibitem[BZ03]{Burde}
Gerhard Burde and Heiner Zieschang, \emph{Knots}, second ed., de Gruyter
  Studies in Mathematics, vol.~5, Walter de Gruyter \& Co., Berlin, 2003.
  \MR{MR1959408 (2003m:57005)}

\bibitem[CE99]{Cartan}
Henri Cartan and Samuel Eilenberg, \emph{Homological algebra}, Princeton
  Landmarks in Mathematics, Princeton University Press, Princeton, NJ, 1999,
  With an appendix by David A. Buchsbaum, Reprint of the 1956 original.
  \MR{MR1731415 (2000h:18022)}

\bibitem[Cro95]{Cromwell}
Peter~R. Cromwell, \emph{Embedding knots and links in an open book. {I}.
  {B}asic properties}, Topology Appl. \textbf{64} (1995), no.~1, 37--58.
  \MR{MR1339757 (96g:57006)}

\bibitem[Dyn06]{Dynnikov}
I.~A. Dynnikov, \emph{Arc-presentations of links: monotonic simplification},
  Fund. Math. \textbf{190} (2006), 29--76. \MR{MR2232855 (2007e:57006)}

\bibitem[Fie93]{FiedlerArt}
Thomas Fiedler, \emph{A small state sum for knots}, Topology \textbf{32}
  (1993), no.~2, 281--294. \MR{MR1217069 (94c:57006)}

\bibitem[Fie01]{FiedlerBook}
\bysame, \emph{Gauss diagram invariants for knots and links}, Mathematics and
  its Applications, vol. 532, Kluwer Academic Publishers, Dordrecht, 2001.
  \MR{MR1948012 (2003m:57031)}

\bibitem[Gal07]{Gallais}
Etienne Gallais, \emph{Sign refinment for combinatorial link floer homology},
  \textsf{arXiv:0706.0089}, 2007.

\bibitem[GM03]{Gelfand}
Sergei~I. Gelfand and Yuri~I. Manin, \emph{Methods of homological algebra},
  second ed., Springer Monographs in Mathematics, Springer-Verlag, Berlin,
  2003. \MR{MR1950475 (2003m:18001)}

\bibitem[HP89]{Hoste}
Jim Hoste and J{\'o}zef~H. Przytycki, \emph{An invariant of dichromatic links},
  Proc. Amer. Math. Soc. \textbf{105} (1989), no.~4, 1003--1007. \MR{MR989100
  (90d:57002)}

\bibitem[Kau83]{Formal}
Louis~H. Kauffman, \emph{Formal knot theory}, Mathematical Notes, vol.~30,
  Princeton University Press, Princeton, NJ, 1983. \MR{MR712133 (85b:57006)}

\bibitem[Kau87]{StateJones}
\bysame, \emph{State models and the {J}ones polynomial}, Topology \textbf{26}
  (1987), no.~3, 395--407. \MR{MR899057 (88f:57006)}

\bibitem[Kau89]{Kauffman}
\bysame, \emph{Invariants of graphs in three-space}, Trans. Amer. Math. Soc.
  \textbf{311} (1989), no.~2, 697--710. \MR{MR946218 (89f:57007)}

\bibitem[Kho00]{Khovanov}
Mikhail Khovanov, \emph{A categorification of the {J}ones polynomial}, Duke
  Math. J. \textbf{101} (2000), no.~3, 359--426. \MR{MR1740682 (2002j:57025)}

\bibitem[Kon93]{Kontsevich}
Maxim Kontsevich, \emph{Vassiliev's knot invariants}, I. M. Gelfand Seminar,
  Adv. Soviet Math., vol.~16, Amer. Math. Soc., Providence, RI, 1993,
  pp.~137--150. \MR{MR1237836 (94k:57014)}

\bibitem[MOS06]{MOS}
Ciprian Manolescu, Peter Ozsv{\'a}th, and Sucharit Sarkar, \emph{A
  combinatorial description of knot floer homology},
  \textsf{arXiv:math.GT/0607691}, 2006.

\bibitem[MOST06]{MOST}
Ciprian Manolescu, Peter Ozsv{\'a}th, Zolt{\'a}n Szab{\'o}, and Dylan Thurston,
  \emph{On combinatorial link floer homology}, \textsf{arXiv:math.GT/0610559},
  2006.

\bibitem[MOY98]{Yamada}
Hitoshi Murakami, Tomotada Ohtsuki, and Shuji Yamada, \emph{Homfly polynomial
  via an invariant of colored plane graphs}, Enseign. Math. (2) \textbf{44}
  (1998), no.~3-4, 325--360. \MR{MR1659228 (2000a:57023)}

\bibitem[OS04a]{OS2}
Peter Ozsv{\'a}th and Zolt{\'a}n Szab{\'o}, \emph{Holomorphic disks and knot
  invariants}, Adv. Math. \textbf{186} (2004), no.~1, 58--116. \MR{MR2065507
  (2005e:57044)}

\bibitem[OS04b]{OS1}
\bysame, \emph{Holomorphic disks and topological invariants for closed
  three-manifolds}, Ann. of Math. (2) \textbf{159} (2004), no.~3, 1027--1158.
  \MR{MR2113019 (2006b:57016)}

\bibitem[OS05]{OS3}
\bysame, \emph{Holomorphic disks and link invariants},
  \textsf{arXiv:math/0512286}, 2005.

\bibitem[OS07a]{OSCube}
\bysame, \emph{A cube of resolutions for knot floer homology},
  \textsf{arXiv:math.GT/0705.3852}, 2007.

\bibitem[OS07b]{OSSkein}
\bysame, \emph{On the skein exact sequence for knot floer homology},
  \textsf{arXiv:math.GT/0707.1165}, 2007.

\bibitem[OSS07]{OSSing}
Peter Ozsv{\'a}th, Andr{\'a}s~I. Stipsicz, and Zolt{\'a}n Szab{\'o},
  \emph{Floer homology and singular knots}, \textsf{arXiv:math.GT/0705.2661},
  2007.

\bibitem[Ras03]{Rasmussen}
Jacob Rasmussen, \emph{Floer homology and knot complements}, Ph.D. thesis,
  Harvard University, 2003.

\bibitem[Rei72]{Reidemeister}
Kurt Reidemeister, \emph{Einf\"uhrung in die kombinatorische {T}opologie},
  Wissenschaftliche Buchgesellschaft, Darmstadt, 1972, Unver\"anderter
  reprografischer Nachdruck der Ausgabe Braunschweig 1951. \MR{MR0345088 (49
  \#9827)}

\bibitem[Shi07]{Shiro}
Nadya Shirokova, \emph{On the classification of floer-type theories},
  \textsf{arXiv:0704.1330}, 2007.

\bibitem[SW07]{ShiroWeb}
Nadya Shirokova and Ben Webster, \emph{Wall-crossing morphisms in
  khovanov-rozansky homology}, \textsf{arXiv:0706.1388}, 2007.

\bibitem[Vas90]{Vassiliev}
V.~A. Vassiliev, \emph{Cohomology of knot spaces}, Theory of singularities and
  its applications, Adv. Soviet Math., vol.~1, Amer. Math. Soc., Providence,
  RI, 1990, pp.~23--69. \MR{MR1089670 (92a:57016)}

\bibitem[Vir02]{Viro}
Oleg Viro, \emph{Remarks on definition of khovanov homology},
  \textsf{arXiv:math.GT/0202199}, 2002.

\bibitem[Wei94]{Weibel}
Charles~A. Weibel, \emph{An introduction to homological algebra}, Cambridge
  Studies in Advanced Mathematics, vol.~38, Cambridge University Press,
  Cambridge, 1994. \MR{MR1269324 (95f:18001)}

\end{thebibliography}
\addcontentsline{toc}{part}{Bibliography}

\end{document}